\newtheorem{theorem}{Theorem}[section]
\newtheorem{corollary}[theorem]{Corollary}
\newtheorem{lemma}[theorem]{Lemma}
\newtheorem{proposition}[theorem]{Proposition}
\newtheorem{conjecture}[theorem]{Conjecture}
\newtheorem*{condition}{Condition (BC)}
\newcommand{\condbc}{Condition~\hyperlink{BC}{BC}}
\theoremstyle{definition}
\newtheorem{defi}[theorem]{Definition}
\newtheorem{definition}[theorem]{Definition}
\newtheorem{example}[theorem]{Example}
\newtheorem{notation}[theorem]{Notation}
\newtheorem{notation*}{Notation*}
\theoremstyle{remark}
\newtheorem{remark}[theorem]{Remark}
\numberwithin{equation}{section}
\def \Hom {\operatorname{Hom}}
\def \mHom {\mathcal{H}om}
\def \tr {\operatorname{tr}}
\def \Frob {\operatorname{Frob}}
\def \End {\operatorname{End}}
\def \Aut {\operatorname{Aut}}
\def \rank {\operatorname{rank}}
\newcommand{\im}{\operatorname{Im}}
\def \Spec {\operatorname{Spec}}
\def \Ql {\overline{\mathbb Q}_\ell}
\def \Bun {\operatorname{Bun}}
\def \BunGD {\operatorname{Bun}_{G(D)}}
\def \Hecke {\mathcal{H}k}
\def \lWR {\langle}
\def \rWR {\rangle}
\def \charr {\operatorname{char}}
\def \weights {W}
\def \FrK {\mathsf{K}}
\def \KK { \mathbf{K}(D)}
\def \kkF {F}
\def \kkL {L}
\def \kkE {E}
\def \kottK {J}
\newcommand{\Gr}{\operatorname{Gr}}
\newcommand{\mapsfrom}{\mathrel{\reflectbox{\ensuremath{\mapsto}}}}
\newcommand{\bG}{\mathbf{G}}
\providecommand{\bseries}[1]{ [\hspace{-0,5mm}[ {#1} ]\hspace{-0,5mm}] } 
\providecommand{\pseries}[1]{ (\hspace{-0,7mm}( {#1} )\hspace{-0,7mm}) }  
\newcommand{\A}{\mathbb{A}}
\newcommand{\C}{\mathbb{C}}
\newcommand{\F}{\mathbb{F}}
\newcommand{\G}{\mathbb{G}}
\newcommand{\PP}{\mathbb{P}}
\newcommand{\Q}{\mathbb{Q}}
\newcommand{\Z}{\mathbb{Z}}
\DeclareMathOperator {\SL} {SL}
\DeclareMathOperator {\PGL} {PGL}
\DeclareMathOperator {\Sp} {Sp}
\DeclareMathOperator {\Gal}  {Gal}
\newcommand{\ko}{\mathfrak{o}}
\providecommand{\cB}{\mathcal{B}}
\providecommand{\cC}{\mathcal{C}}
\providecommand{\cG}{\mathcal{G}}
\providecommand{\cH}{\mathcal{H}}
\providecommand{\cL}{\mathcal{L}}
\providecommand{\cN}{\mathcal{N}}
\providecommand{\cO}{\mathcal{O}}
\providecommand{\cP}{\mathcal{P}}
\providecommand{\cS}{\mathcal{S}}
\providecommand{\cV}{\mathcal{V}}
\newcommand{\lH}{\mathcal{H}}
\newcommand{\lL}{\mathcal{L}}
\newcommand{\lV}{\mathcal{V}}
\DeclareMathOperator {\GL} {GL}
\DeclareMathOperator {\vol}  {vol}
\providecommand{\set}[1]{\left\{#1\right\}}
\DeclareMathOperator {\Res}  {res}
\newcommand{\isom}{\stackrel{\sim}{\rightarrow}}
\DeclareMathOperator{\cind}{c\--Ind}
\title[On the Ramanujan conjecture over function fields]{On the Ramanujan conjecture for 
automorphic forms over function fields I. Geometry}
\author{Will Sawin}
\address{ETH Institute for Theoretical Studies \\ ETH Zurich \\ 8092 Z\"{u}rich, 
Switzerland}
\author{Nicolas Templier}
\address{Department of Mathematics \\ Cornell University \\ Ithaca, NY 14853, USA}
\subjclass[2010]{14D24 11F70 14F20 22E57 20G30}
\begin{document}

\begin{abstract} Let $G$ be a split semisimple group over a function field.
We prove the temperedness at unramified places of automorphic representations of 
$G$, subject to a local assumption at one place, stronger than supercuspidality, and 
assuming the existence of cyclic base change with good properties. 
Our method relies on the geometry of $\Bun_G$. 
It is independent of the work of Lafforgue on the global Langlands correspondence.
\end{abstract}

\maketitle
\thispagestyle{empty}
\tableofcontents

\section{Main result}

Let $F$ be the function field of a smooth projective curve over a finite field $k$.
\index{$F=k(X)$, global function field}
The Ramanujan conjecture that every cuspidal automorphic representation of $\GL(r)$ with 
unitary central character is 
tempered is
established by L.~Lafforgue~\cite{Lafforgue:chtoucas}.
For general reductive groups, cuspidal 
automorphic representations that are known to be 
tempered arise in the works of Lomeli~\cite{Lomeli:functoriality-classical} for 
generic representations of split classical groups, and of 
Heinloth--Ng\^o--Yun~\cite{HNY:Kloosterman} and 
Yun~\cite{Yun:epipelagic,Yun:motives-Serre} for rigid representations.

For a reductive group $G$, it is well-known that the cuspidality condition is not 
sufficient to imply temperedness, which
led to the formulation of Arthur's 
conjectures~\cite{Arthur:unipotent-II}.  
For example, there are two classical constructions of cuspidal non-tempered automorphic 
representations for $\Sp_4$ by Saito--Kurokawa and
Howe--Piatetskii-Shapiro~\cite{Howe-PS:corvallis}.

Thus, if we want to prove that $\pi$ is tempered, we need a condition on $\pi$ stronger than cuspidality. We shall impose that $\pi_u$ 
is supercuspidal for one place $u$.
This is still not sufficient as the above examples~\cite{Howe-PS:corvallis} show, and
Arthur's conjecture points towards the condition that $\pi_u$ belongs to a supercuspidal 
$L$-packet.
We shall introduce a further condition that $\pi_u$ is \emph{monomial 
geometric 
supercuspidal}, and establish the Ramanujan bound in this case.
The concept will be discussed in detail below. In brief it means that $\pi_u$ is 
compactly induced 
	from a character on a ``nice enough" open subgroup of $G(F_u)$.
We also need another \condbc{} from Section~\ref{s:BC} below, on the existence of 
an automorphic base change for constant field extensions.

\begin{theorem}\label{t:intro:main}
Assume that $G$ is split semisimple, and that $\operatorname{char}(F)>2$. 
Suppose that
\begin{itemize}
\item for at least one place $u$, the representation $\pi_u$ is monomial geometric 
supercuspidal; 
\item $\pi$ is base-changeable in the sense of \condbc{}. 
\end{itemize}
Then $\pi$ is tempered at every unramified place.
\end{theorem}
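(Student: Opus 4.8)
\emph{Overall strategy.} The plan is to realize $\pi$ inside the $\ell$-adic cohomology of a moduli stack built out of $\Bun_G$ and to read off temperedness from Deligne's purity theorem, with no appeal to the global Langlands correspondence. The first step is to geometrize $\pi$ using the local hypothesis at $u$: writing $\pi_u\cong\cind_{K}^{G(F_u)}\chi$ with $(K,\chi)$ monomial geometric supercuspidal, the force of ``geometric'' is that $\chi$ is of geometric origin, so it is the trace function of a rank-one character sheaf. Passing to the level structure at $u$ cut out by the conductor of $\chi$ produces a smooth moduli stack $\moduli$, a form of $\BunGD$ for the divisor $D$ supported at $u$, carrying a rank-one local system $\mathcal{K}$ whose trace function --- via the sheaf--function dictionary and the adelic description of $\BunGD(\F_q)$ --- recovers $\chi$. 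The geometric part of the paper should then identify the space of cusp forms of the prescribed type at $u$ with a Hecke-stable direct summand of $H^{*}_{c}(\moduli_{\overline k},\mathcal{K})$, equivariantly for the spherical Hecke algebra $\cH^{S}$ away from a finite set $S\ni u$; in particular $\pi$ contributes to some $H^{d_0}_{c}(\moduli_{\overline k},\mathcal{K})$.

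\emph{Purity.} The heart of the argument is to show this $\pi$-isotypic cohomology is \emph{pure}, and this is exactly where the geometry of $\Bun_G$ is used. Although $\Bun_G$ is not of finite type, cuspidality confines the relevant class to a quasi-compact open substack --- a Harder--Narasimhan truncation suffices --- so Deligne's bounds apply and give weights at most $w_0$ for a suitable integer $w_0$. For the matching lower bound I would invoke Poincar\'e duality on the smooth stack $\moduli$ together with the principle that cuspidal classes do not distinguish $H^{*}_{c}$ from $H^{*}$; this is precisely the point of strengthening supercuspidal to \emph{geometric} supercuspidal, which is tailored to force $\mathcal{K}$ to be a clean extension across the boundary of the truncation, so that $H^{*}_{c}(\moduli_{\overline k},\mathcal{K})\to H^{*}(\moduli_{\overline k},\mathcal{K})$ is an isomorphism on the $\pi$-part. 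Sandwiching the two bounds yields purity of weight $w_0$ and, with it, concentration in a single degree $d_0$; hence every geometric Frobenius eigenvalue on the $\pi$-isotypic piece is a $q$-Weil number of weight $w_0$.

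\emph{From Frobenius weights to Satake parameters.} It remains to convert purity into the Ramanujan bound on the Satake parameter $c(\pi_v)\in\LG$ at an unramified place $v$, and this is where \condbc{} enters. Choosing $n$ divisible by $\deg v$ and applying cyclic base change to the degree-$n$ constant field extension $F_{n}=F\F_{q^{n}}$ keeps $\pi^{(n)}$ cuspidal and still monomial geometric supercuspidal at the places over $u$ (by the good properties assumed in \condbc), so the realization above is available over $\F_{q^{n}}$, where now $v$ splits into degree-one places $v'$. To see the Satake parameter, I would allow a single modification at $v'$ --- a one-legged Hecke, or shtuka-type, variant of $\moduli$ studied through the geometry of $\Bun_G$ rather than the general moduli-of-shtukas formalism --- so that the partial Frobenius at $v'$ acts on the pure $\pi^{(n)}$-isotypic cohomology and, by the Grothendieck--Lefschetz trace formula, its eigenvalue there is, up to the standard normalization, $c(\pi^{(n)}_{v'})=c(\pi_v)$; purity then forces this to be a $q^{n}$-Weil number of weight zero, i.e.\ pins the normalized Satake parameter to the unit circle in every representation of $\LG$ and under every embedding $\overline{\Q}\hookrightarrow\C$. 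With the descent of temperedness along base change --- also part of \condbc{} --- this is temperedness of $\pi_v$; as $v$ was arbitrary among the unramified places, $\pi$ is tempered everywhere unramified.

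\emph{Main obstacle.} The delicate step is purity: one must control the interplay between Harder--Narasimhan truncations of $\Bun_G$, the boundary of the level stack $\moduli$, and the support of cuspidal cohomology so that the character sheaf $\mathcal{K}$ is genuinely clean and the \emph{sharp} weight, not merely an upper bound, comes out --- it is this sharpness that produces the Ramanujan bound rather than an approximation to it. Isolating the exact form of ``geometric supercuspidal'' that secures cleanness, arranging the one-legged modification so that partial Frobenius still acts on a pure module, and checking that all of this persists under the constant-field base change of \condbc{}, is the technical core of the proof.
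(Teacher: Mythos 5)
Your first two steps are broadly in the spirit of the paper (geometrize the monomial local condition by a character sheaf on a level stack, and use geometric supercuspidality to get a cleanness/purity statement), but the third step — and with it the whole mechanism that is supposed to produce the Ramanujan bound — does not work. The space of automorphic forms here is the space of functions on $\BunGD(\F_q)$ with $(J,\chi)$-equivariance; it is not a Galois module, and $\pi$ does not occur as an isotypic summand of $H^*_c$ of $\Bun_{G(D)}$ with coefficients in a character sheaf. The cohomology of $\Bun_G$-type stacks only controls \emph{aggregate} quantities over the whole family as $q^n\to\infty$ (dimensions, traces of Hecke operators summed over all $\Pi$ in the family), because Lefschetz converts Frobenius traces into sums over $\F_{q^n}$-points; it cannot isolate a Frobenius eigenvalue attached to the single representation $\pi$. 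Worse, the device you invoke to see the Satake parameter — a ``one-legged modification'' on which partial Frobenius acts with eigenvalue $c(\pi_v)$ — is exactly the moduli-of-shtukas formalism: a Hecke correspondence over $\Bun_{G(D)}\times\Bun_{G(D)}$ has no leg and carries no partial Frobenius, and identifying such an eigenvalue with the Satake parameter of $\pi_v$ is the Drinfeld--Lafforgue parameterization, which this theorem is specifically designed to avoid. Finally, even granting a pure realization, the weight that comes out of these stacks is a positive ``dimension'' weight of size $(\dim G)(g+|D|-1)-\dim H+d(\lambda)$, not weight zero: purity alone gives a bound that is off by $q^{nd/2}$ with $d>0$, and nothing in your argument removes that factor.

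What the paper actually does is quantitative rather than realizational. Geometric supercuspidality at $u$ yields cleanness $\Delta^{W}_!\simeq\Delta^{W}_*$ for the Hecke \emph{kernel} sheaf $K_W$ on $\Bun_{G(D)}\times\Bun_{G(D)}$, hence $K_W$ is a pure perverse sheaf; Lefschetz and the weight formalism then give the family bound $\sum_{\Pi\in\lV_n}|\tr_\lambda(\Pi_v)|^2\le C_\lambda\,q^{nd}$ with $d=(\dim G)(g+|D|-1)-\dim H$ and, crucially, $C_\lambda$ independent of $n$. \condbc{} is used to place the constant-field base changes $\Pi_n$ of $\pi$ into the family $\lV_n$ (this also needs the local transfer of Section~\ref{s:kottwitz} and the fact that mgs data stay supercuspidal over all constant extensions, which keeps the family cuspidal); positivity bounds the individual term by the whole sum, and since the Satake parameter of $\Pi_{n,w}$ is $t(\pi_v)^{n_1}$, the spectral-radius lemma (Lemma~\ref{l:recursive-sequence}) applied as $n\to\infty$ converts the bound $|\tr(t(\pi_v)^{n_1}|V_\lambda)|\ll (q^{n_1})^{d/2}$ into $|\tr_\lambda(\pi_v)|\le \dim V_\lambda\cdot q^{d/2}$, and then uniformity in $\lambda$ upgrades this to $|\tr_\lambda(\pi_v)|\le\dim V_\lambda$, i.e.\ temperedness. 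So purity is the input to an average bound, and the exponent $q^{nd}$ is killed by amplification over $n$ — not by forcing an individual Frobenius eigenvalue to have weight zero. If you want to repair your write-up, the missing ideas are precisely this family/averaging structure and the $n\to\infty$ amplification; the cohomological realization of $\pi$ itself should be abandoned.
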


Langlands theorem on the analytic continuation of Eisenstein series implies that CAP 
representations are non-tempered at every unramified place. Combined with 
Theorem~\ref{t:intro:main}, it follows that $\pi$ is not CAP.

\begin{remark}\label{rem:unramified}
Recently, V.~Lafforgue~\cite{Lafforgue:reductifs-chtoucas} constructed 
global parameters using shtukas and excursion operators. 
An automorphic consequence is that $\pi$ is tempered at one unramified place if and only 
if it is tempered at every unramified place (Theorem \ref{temperedOneplace} below), which 
was~\cite[Conj.4(1)]{Clozel:park-city}.
\end{remark}

The present paper focuses on establishing a Ramanujan bound on 
average, see~\eqref{intro:average} below, and deducing~Theorem~\ref{t:intro:main}. 
It is part of a series of two articles, and  
the next~\cite{Sawin-Templier:II-bc} will focus on providing examples of 
representations that satisfy 
\condbc{}, and on establishing the functorial image between inner-forms which will enable 
us to reduce cases of the Ramanujan bounds for general reductive groups to the split 
semisimple case. 

\subsection{Monomial geometric supercuspidal representations 
(mgs)}\label{sub:intro-mgs}
The definition of monomial geometric supercuspidal is motivated by features of the 
problem and our method to attack it.

We rely on studying families defined by \emph{local prescribed behavior}, which means in 
our context a set of automorphic representations of $G(\A_F)$ that satisfy some given 
conditions at a fixed finite set of places and are unramified 
outside. If we can show temperedness for one member of the family by our method, the 
same argument applies to every member of the family. So we must impose strong 
enough local conditions. 
At minimum, we 
should avoid Eisenstein series, and, for at least one place $u$, requiring that $\pi_u$ be supercuspidal is the easiest way to 
achieve this.

Our method is geometric, and requires a geometric way to check the local condition. We 
focus on \emph{monomial local conditions}. These are the conditions defined by fixing a 
subgroup $J$ of $G(F_u)$ and a character $\chi: J \to \mathbb C^\times$, and demanding that the 
local representation $\pi_u$ of $G(F_u)$ contains a vector that transforms according to 
$\chi$ under the action of $J$. There is a natural geometric description of the set of 
automorphic forms satisfying a monomial local condition as long as $J$ is the group of 
$k$-rational points of a pro-algebraic subgroup of the loop group $G\bseries{t}$ and 
$\chi$ is the trace function of a character sheaf. This is certainly not the most general 
possible way to construct a geometric object that defines a local condition on 
automorphic representations --- in fact 
the 
geometric Langlands program suggests that there should be geometric objects corresponding 
to all automorphic  representations, in a suitable sense --- but it is easy to work with 
and contains many important examples. A general formalism of monomial local conditions 
for automorphic representations was already used by Yun~\cite[\S2.6.2]{YunRigidity}. Our 
setup (Section~\ref{s:geometric-setup}) is essentially Yun's formalism restricted to a 
special case for both geometric and notational simplicity (and for this reason we use 
somewhat different notation).

Geometric objects behave similarly over different fields. In our case, the relevant 
geometric objects are defined over the constant field $k$, and so it is possible to base 
change them along a constant field extension. If we use any geometric property to prove 
temperedness, this property will be maintained over constant field extensions, and so 
temperedness must hold not only for all members of the family, but also for all members 
of 
the analogous family after extension of the constant field $k$. In particular, these 
representations must not be Eisenstein series. Again, the easiest way to ensure this is 
to ensure that our character $(J,\chi)$ still prescribes a supercuspidal representation 
after a constant 
field extension.
This yields the notion of \emph{monomial geometric supercuspidal datum}
(Definition~\ref{def:geometric-ind-data}).

Another advantage of adding the monomial and geometric modifiers to the supercuspidal
local condition is that it allows us to sidestep the unipotent supercuspidal 
representations. The usual construction of these is not by a monomial representation but 
rather from representations of finite groups of Lie type.
We expect that no monomial geometric construction of unipotent representations exists.
For example in Deligne--Lusztig theory, irreducible representations are induced from 
characters on elliptic tori, but this fails to work uniformly after finite field extensions, 
since every torus eventually splits.

The local conditions we define are \emph{geometric} in precisely the sense of the geometric 
Langlands program. However, there is one major difference in our approach. Progress in 
the geometric Langlands program has mainly focused on first studying automorphic forms 
that are 
everywhere unramified, and then generalizing to unipotent or tame ramification, 
before beginning to tackle the general case. In our problem, we find it is convenient to 
study highly ramified automorphic forms  --- in particular, including 
local factors with wildly ramified Langlands parameters --- which necessitates working 
in a 
more general setup. We do this because when one of the local factors is supercuspidal, 
the Hecke kernels in the family will correspond to pure perverse sheaves 
(Theorem~\ref{mainduality-semisimple}), although we also believe the more general setup 
is interesting on its own terms. 

More formally, let $G$ be a quasi-split reductive group over a field 
$k$. We start with the datum of a pro-algebraic subgroup $H$ of $G\bseries{t}$ 
containing the subgroup of elements congruent to $1$ modulo $t^m$ for some $m$, and a 
character sheaf $\cL$ on $H$ which is trivial on that subgroup. We say this datum is 
\emph{geometrically supercuspidal} if for every parabolic subgroup 
$P\subset G_{\overline k}$ with radical $N$, and every $g\in G_{\overline 
k}\bseries{t}$, the restriction of 
$\cL_{\overline k}$ to the identity component of
$gN_{\overline{k} } \bseries{t} g^{-1}\cap H_{\overline k}$ is non-trivial. (The intersection takes place 
in $G_{\overline k}\bseries{t}$.)

If $k=\F_q$ is a finite field, this occurs if and only if
$\cind^{G(\mathbb F_{q^n} \pseries{t})}_{J_n} \chi_n$ is admissible supercuspidal for 
every 
integer $n\ge 
1$, where $J_n:=H(\mathbb F_{q^n} )$ and $\chi_n$ is the trace function of $\mathcal L$ over 
$\mathbb F_{q^n}$ (Lemma~\ref{checking-supercuspidality}).

\subsection{Ramanujan bound for $\GL(r)$}\label{sub:intro:gln}
For the general linear group, the Ramanujan bound is the statement that a cuspidal 
automorphic representation of $\GL(r)$ with unitary central character is tempered at 
every 
place.
One can distinguish two main approaches: 
\begin{itemize}
\item Laumon~\cite{Laumon:book:cohomology-Drinfeld} under a 
cohomological condition at one place, 
extending Drinfeld's first proof~\cite{Drinfeld:elliptic-II} for $\GL(2)$, using elliptic 
modules. 
\item L.~Lafforgue~\cite{Lafforgue:chtoucas} in general, extending Drinfeld's second 
proof~\cite{Drinfeld:Petersson} for $\GL(2)$, using shtukas.
\end{itemize}

Our approach is yet different, even in the case of $\GL(r)$, under the mgs (monomial geometric supercuspidal) condition. 
Rather than using moduli 
spaces of elliptic modules or shtukas, we study moduli spaces $\Bun_{\GL(r)}$ of 
vector bundles, as in the 
geometric Langlands program. Functions on these moduli spaces give rise to families of 
automorphic forms satisfying certain local prescribed conditions. We will prove 
temperedness using 
estimates for an entire family at once, rather than working with 
individual automorphic forms in the family.

\subsection{Outline of the proof}

We embed $\pi$ in a suitable \emph{automorphic family} $(\lV_n)_{n\ge 1}$. We let $\lV_1$ 
consist of the 
multi-set 
of 
automorphic 
representations $\Pi$ of $G(\A_F)$, counted with multiplicities, such that $\Pi_u$ has a 
non-zero $(J,\chi)$-invariant 
vector, $\Pi$ has bounded ramification at a fixed finite set of places, and $\Pi$ is unramified 
elsewhere. The ramification bound is chosen compatibly with the original representation $\pi$ in 
such a way that $\pi\in \lV_1$. Since $(J,\chi)$ arises from a supercuspidal datum, all $\pi\in 
\lV_1$ are supercuspidal.

For every integer $n\ge 1$, consider the constant field extension $F_n:= F 
\otimes_{\F_q} \F_{q^n}$, assuming $k=\F_q$. We let
$\mathcal V_n$ consist of
automorphic representations of $G(\A_{F_n})$ with similar bounded ramification and with 
mgs prescribed 
behavior at the places of $F_n$ above $u$, namely with a non-zero $(J_n,\chi_n)$-invariant 
vector. Again all $\Pi\in \mathcal V_n$ are cuspidal.

Let $v\in X(k)$ be a $k$-rational point such that $\pi_v$ is unramified. To study the 
temperedness of $\pi_v$, we shall consider 
the local components $\Pi_v$ for $\Pi\in \lV_n$. More precisely, for a coweight 
$\lambda$ of $G$, we shall consider the collection of all traces of Hecke operators 
$\tr_\lambda(\Pi_v)$ for $\Pi\in \lV_n$.

We express the kernel of this Hecke operator as the trace function of a complex of 
sheaves, which we will show, as consequence of our mgs local prescribed behavior, is a 
pure perverse sheaf 
(Theorem~\ref{mainduality-semisimple}). This will 
imply, by standard estimates for the trace functions of perverse sheaves, a bound for the 
trace of a Hecke operator in the family (Theorem~\ref{t:average-Ramanujan}), which takes 
the form
\begin{equation}\label{intro:average}
\sum_{\Pi \in \lV_n} |\tr_{\lambda}(\Pi_v)|^2
\lesssim C_\lambda \cdot q^{nd}
\end{equation}
Here $d$ depends on the underlying group $G$ and the prescribed conditions, and $C_{\lambda}$ 
is 
the 
dimension of some cohomology groups and it is essential for us 
that it is independent of $n$ (it only depends on the underlying group $G$, the fixed local 
prescribed conditions, and the chosen unramified place $v$).

If we first examine the $\lambda=0$ case, we see that the number of automorphic 
representations in the family is at most $C_0 \cdot q^{nd}$. This bound should be close to 
the truth --- one expects that the sum on the geometric side of the trace formula for 
the 
number of automorphic forms in the family $\lV_n$ is dominated by the contribution of the 
trivial conjugacy class, which is an adelic volume, and one can show this adelic volume $\approx 
C \cdot q^{nd}$ for another explicit constant $C$.

Furthermore, the Ramanujan bound would imply  $|\tr_{\lambda}(\Pi_v)| \leq \dim(V_\lambda)$, 
so conditionally on the Ramanujan bound for all representations of $\lV_n$, we obtain
\[ \sum_{\Pi \in \lV_n} |\tr_{\lambda}(\Pi_v)|^2
\lesssim  C \cdot \dim (V_\lambda)^2 \cdot  q^{nd} .\]
Thus, \eqref{intro:average} is as strong as the Ramanujan bound on average over the family 
$\mathcal V_n$, except that the constant $C_\lambda$ has unknown dependence on $\lambda$, 
whereas in the Ramanujan bound on average the constant $\dim (V_\lambda)^2 $ has explicit, 
mild dependence on $\lambda$. 

This suggests that we are on the right track, but that the constant $C_\lambda$ is 
problematic.

Here comes the final step. Because $C_\lambda$ is constant in $n$ while every other term is 
exponential in $n$, the 
 quality of the estimate~\eqref{intro:average} improves as 
$n$ goes to infinity. To take advantage of this, we will use automorphic base change for 
constant field 
extensions $F_n/F$ to amplify 
the estimate, and deduce $|\tr_{\lambda}(\pi_v)| \le \dim(V_{\lambda}) \cdot 
q^{\frac{d}{2}}$ for our 
original representation $\pi$.
Varying $\lambda$,  we can further bootstrap this estimate to 
\[
|\tr_{\lambda}(\pi_v)| \le 
\dim(V_{\lambda}),
\]
 which is the temperedness 
of the unramified representation $\pi_v$.

\begin{remark}
Recall from~\cite{Clozel:park-city} the following conjecture:
$\pi$ should be tempered at every 
unramified place as soon as $\pi_u$ is the 
Steinberg representation for some place $u$. Compared to this, our situation consists in
replacing the Steinberg condition by 
a more ramified condition.
Our method of proof doesn't extend to the case of the Steinberg representation because 
the Euler--Poincar\'e function is an alternating sum, which we do not know how to 
geometrize globally to a pure 
sheaf on $\Bun_G$. 
\end{remark}

\subsection{Contrasting Drinfeld's modular varieties and 
$\Bun_G$}\label{sub:Drinfeld-var}

This subsection does not directly describe our argument, but we hope it provides some 
intuition that will be helpful to the reader.

The moduli spaces of shtukas and $\Bun_G$ are both stacks whose geometries carry
information about automorphic forms over function fields, but they carry it in different 
ways and have different properties.

Each moduli space of shtukas can be related to a particular family of automorphic forms 
with a particular set of Hecke operators acting on it. For example, the moduli space of 
shtukas $\operatorname{Cht}_{D, I,W}^{(I)}$ defined in 
\cite[Def.0.2]{Lafforgue:reductifs-chtoucas} can be related to the family of automorphic 
forms of 
level $D$ on $G(\A_F)$, with the set of Hecke operators determined by the representations 
$W$. 

 The geometry of the moduli space 
casts light on this family. More precisely, the cohomology of the moduli space $\operatorname{Cht}_{D, I,W}^{(I)}$ relative 
to the base is expected to be a sum over automorphic forms of level $D$ of 
local systems constructed from their Langlands parameters 
\cite[Rem.0.30]{Lafforgue:reductifs-chtoucas}. 
The arithmetic structure on the moduli space carries additional information about the 
automorphic forms in this family. For instance, the Galois action on the cohomology of a 
moduli space of $\GL(r)$-shtukas with level structure determines the Galois action on the 
Langlands parameters of the cusp forms of that level \cite[Lem.VI.26 and 
Thm.VI.27]{Lafforgue:chtoucas}. 

On the other hand, $\Bun_G$ is related to a 
sequence $\lV_n$ of spaces of automorphic forms, one over each finite field extension 
$\mathbb F_{q^n}$ of the base 
field $\mathbb F_q$. In fact, the set of rational points $\Bun_G(\mathbb F_{q^n}) $ is the quotient of $G(\mathbb F_{q^n}(X) ) \backslash G(\mathbb A_{ \mathbb F_{q^n}(X)})  $ by a maximal compact subgroup, so the space of functions on $\Bun_G(\mathbb F_{q^n}) $ is the space of automorphic forms of level $1$ on $G_{ \mathbb F_{q^n}}(X)$. Thus, the space $\Bun_G$ contains information about automorphic forms of level $1$ on $G_{ \mathbb F_{q^n}}(X)$ for all $n$.  (Variants of $\Bun_G$ with level structure hold the same relationship to spaces of automorphic forms of higher level.) 

Because geometry is insensitive to base change, the geometry of $\Bun_G$ is only 
related to asymptotic information about these spaces of automorphic forms as $q^n\to \infty$ 
(or possibly other subtler sorts of information that are invariant on passing to 
subsequences). For instance, by the Lefschetz fixed point formula, the dimension of the 
space of automorphic forms of level $1$ on $G_{ \mathbb F_{q^n}}(X)$  equals the number of $\mathbb F_{q^n}$-points of $\Bun_G$ which equals the 
supertrace of Frobenius on the cohomology of $\Bun_G$ (Lemma~\ref{arithmetic-to-geometry} and Proposition~\ref{p:spectral-trace}), so the cohomology of $\Bun_G$ gives 
information about the dimension of all the 
spaces of automorphic forms in the sequence. (However, for any nontrivial $G$, there 
exists some $n$ such that $\Bun_G$ will have infinitely many $\mathbb F_{q^n}$-points. To 
rigorously relate cohomology to counting automorphic forms we must make this count 
finite, 
which requires us to fix a central character, and, in addition, do something to remove 
Eisenstein series. In our paper the supercuspidal local prescribed
conditions discussed in \S\ref{sub:intro-mgs} are used to remove the Eisenstein series.)

This fundamental difference can explain many of the more basic differences between the 
geometry of the moduli space of shtukas and $\Bun_G$ --- for instance, their dimensions. 

The dimension of the moduli space of shtukas $\operatorname{Cht}_{N, I,W}^{(I)}$ depends on 
the group $G$ and on the representations $W_i$ of the Langlands dual group occuring at 
the 
legs $i\in I$, but does not depend on the level $N$ --- in fact, moduli spaces of shtukas of 
higher 
level are finite \'{e}tale covers of moduli spaces of shtukas of lower level.  On the other 
hand, the dimension of the moduli space $\Bun_{G(N)} $ of $G$-bundles with level $N$ 
structure depends on both the group $G$ and the level $N$, while the representations $W$ 
do not appear in the definition.

We can explain this discrepancy between the dimensions of $\operatorname{Cht}_{N, I,W}^{(I)}$ and $\Bun_{G(N)}$ by 
looking at how the dimension is reflected in the associated spaces of automorphic forms. 
Recall here that the dimension of a space determines the largest possible size of 
Frobenius eigenvalues on its compactly supported cohomology. (Of course, in each case it 
is possible to calculate the dimensions much more directly than this. The point of this 
argument is to see why the simple concrete properties of these two spaces are 
necessary for their respective applications.)

We expect the cohomology of the moduli space of shtukas $\operatorname{Cht}_{N, I,W}^{(I)}$  to be a sum of contributions associated to different automorphic forms, with each contribution the tensor product over legs $i$ of the representation $W_i$ composed with the Langlands parameter. The size of the Frobenius eigenvalues acting on $W_i$ depends on the weights of the representation $W_i$. On the other hand, there is no reason for highly ramified Langlands parameters to have different Frobenius eigenvalues from less ramified parameters. (For instance, because Langlands parameters can become more or less ramified under pullback, without changing their Frobenius weights.)  Thus, it is reasonable to expect that the dimension depends on the choice of $W_i$, but not on the level. 

On the other hand, the Frobenius eigenvalues on the cohomology of $\Bun_{G(N)}$ are 
relevant because they give a formula for the dimension of the spaces of automorphic 
forms of level $N$ on $G(\A_{ \mathbb F_{q^n}(X)})$. In particular, as $n$ goes to $\infty$, the 
largest Frobenius eigenvalue should dominate, and so the largest Frobenius eigenvalue 
should match the asymptotic growth rate in $n$ of the dimension of this space of 
automorphic forms. We can calculate the dimension of this space of automorphic forms 
by the trace formula, where the main term is one over the volume of the level $N$ 
subgroup of $G (\mathbb A_{ \mathbb F_{q^n}}(X)$. This inverse volume grows with both the 
degree $n$ and level $N$ --- in fact, it is approximately $q^{  n (\dim G) (g + |N|-1 ) } $, 
where 
$|N|$ is the degree of the divisor $N$. Thus, it is reasonable to expect the dimension of 
$\Bun_{G(N)}$ is $ (\dim G) (g+ |N|-1)$, as indeed it is.

Similarly, the number of forms of level $N$ on $G(\A_{ \mathbb F_{q^n}(X)})$ with a nonzero 
$(J,\chi)$-equivariant vector, is approximately $q^{ n ( (\dim G) ( g+ |D|- 1) - \dim 
H)}$(see~\S\ref{sub:sums-Weil}).

This also suggests differences in their potential arithmetic applications. The moduli 
spaces of shtukas are well-suited to prove the automorphic-to-Galois direction of the 
Langlands correspondence because each automorphic form, and its associated Langlands 
parameter, appears in their cohomology. Of course this is exactly why 
Drinfeld~\cite{Drinfeld:elliptic-II} introduced 
them and how L.~Lafforgue~\cite{Lafforgue:chtoucas} and 
V.~Lafforgue~\cite{Lafforgue:reductifs-chtoucas} used 
them, and it seems likely that researchers 
will continue to deduce 
information about the Langlands correspondence from study of these 
moduli spaces in the future. But $\Bun_G$ is not well-suited for this purpose, as with 
the number of 
automorphic forms going to infinity as $q^n \to \infty$, it is harder to pick out a 
single one. 
Though an analogue of the automorphic-to-Galois direction of the Langlands correspondence 
is part of the geometric 
Langlands program over the complex numbers, it is not clear what, if any, the finite 
field analogue might be.

On the other hand, $\Bun_G$ does seem well-suited to answer 
asymptotic questions about how analytic quantities, such as averages of Hecke operators, 
behave when $q^n \to \infty$, as we demonstrate in the present paper. The 
Ramanujan bound and Arthur's conjectures seem to lie in the intersection of these two 
domains --- it can be 
attacked using Langlands parameters, but also can be viewed as a question of the $q^n \to 
\infty$ limit. Thus there is potential to use both approaches to prove new cases of Arthur's 
conjectures.

\subsection{Results on families} 

Because our method to prove the main theorem relies on families of automorphic forms 
defined by geometric monomial local conditions, along the way we obtain some new results 
about these families. We expect further results can be obtained this way using our work 
in the future. For this reason we discuss the strengths and weaknesses of restricting to 
monomial representations from the point of view of families (rather than with regards to 
proving the Ramanujan bound for individual automorphic forms). Given a family of 
automorphic forms unramified away from some finite set of places, and defined by some 
local conditions at the remaining places, questions such as the following have been considered:

\begin{enumerate}

\item Can the number of forms in the family be expressed as a finite sum of Weil numbers?

\item What about the trace of a Hecke operator on this space of forms?

\item Can the Weil numbers that appear in these sums be calculated explicitly?

\item Can these sums be approximated, or can the largest Weil numbers appearing in them 
be estimated?

\end{enumerate}

Question (1) and question (3) were answered affirmatively by 
Drinfeld~\cite{Drinfeld:count} in the case of everywhere unramified automorphic forms on 
$\GL(2)$, by Flicker for forms on $\GL(2)$ that are Steinberg at 
one place and unramified everywhere else~\cite{Flicker-GL2}, by Deligne and 
Flicker~\cite{Deligne-Flicker} for forms on $\GL(r)$ that are Steinberg at at least two 
places, and unramified everywhere else, and by Yu~\cite{YuNumber} for forms on $\GL(r)$ 
that are unramified everywhere. Of course answering (3) is sufficient to answer 
question (4).

In this paper we answer question (1) in the case of monomial geometric conditions, 
supercuspidal at at least one place, and unramified elsewhere 
(Proposition~\ref{p:count-Weil}).
And most importantly we answer question (2), in the form 
that $\sum_{\Pi\in \cV_n} q^{n\langle \lambda, \rho \rangle} |\tr_\lambda(\Pi_v)|^2$ is 
a signed 
sum of length $C_\lambda$ of $n$th powers of $q$-Weil integers of weight $\le 2d+\langle 
\lambda,2\rho\rangle$. This 
 is actually how we establish the main estimate~\eqref{intro:average}.
See~Theorem~\ref{weil-numbers-semisimple} and \S\ref{sub:sums-Weil} for details.

\subsection{Local prescribed behavior}
There are many different kinds of local conditions that appear in the theory of 
automorphic forms.  As mentioned before, we work with local conditions that demand the 
representation contain an eigenvector of a compact open subgroup $J$ with eigenvalue 
$\chi$, where $J$ and $\chi$ arise from geometric objects ---  an algebraic subgroup of 
$G(\kappa[t]/t^m)$ for some $m$ and a character sheaf on that algebraic subgroup. The 
theory of inertial types produces many examples where this condition, for a suitable 
choice of $(J,\chi)$, characterizes the representation up to an unramified twist (e.g. the 
twist-minimal supercuspidal representations of $\GL(2)$ with conductor not congruent to 
$2$ 
modulo $4$). However, not all representations can be characterized up to an unramified 
twist this way (e.g., the twist-minimal supercuspidal representations of $\GL(2)$ with 
conductor congruent to $2$ mod $4$). But it may still be possible to characterize the 
representation up to a tamely ramified twist or other mild variant. 

Choosing $(J,\chi)$ whose associated local condition uniquely picks out a given 
representation is very similar to the problem of constructing the representation as an 
induced representation (but slightly easier as one is allowed to produce the 
representation with multiplicity). Yu has shown how to construct a wide class of 
supercuspidal representations using Deligne--Lusztig representations of algebraic groups 
over finite fields and Heisenberg--Weil representations. (For instance, in the $\GL(2)$ 
twist-minimal case with conductor congruent to $2$ mod $4$, Deligne--Lusztig theory is 
needed for conductor $2$ and Heisenberg--Weil representations are needed for higher 
conductor). 

 The matrix coefficients of the Weil representation were 
 expressed as the trace function of a perverse sheaf in a 1982 letter of Deligne, and the 
 same was done in \cite{GurevichHadani} to the coefficients in a basis consisting of the 
 matrices appearing in the Heisenberg representation. It is likely that much of what we 
 do can be generalized using this geometrization. Sheaves whose trace functions are the 
 traces of discrete series representations were constructed \cite{Lusztig85} but we do 
 not know if there is any way to do the same for matrix coefficients (it is not clear 
 what basis to use). It could also be possible to replicate our methods using just the 
 trace and not all the matrix coefficients, but we are less certain of it.  
 
 Using these tools to make these representations geometric would follow the strategy of 
 \cite{CR17}. Note, however, some differences with their work. Their goal was to 
 geometrize the trace of the automorphic representation, while our construction has the 
 effect of geometrizing a test function, and they handled $p$-adic groups while we work 
 in the equal characteristic case.
 
 For our problem, new difficulties appear when adding Heisenberg--Weil and 
 Deligne--Lusztig representations and their 
 more complicated sheaves. Because restricting to one-dimensional characters, and their 
 associated character sheaves, will simplify things at several points, we leave the full 
 theory to a later date.

\section{Preliminaries}

\subsection{Unramified groups}
Let $k$ be a finite field.
We say a connected reductive group over $k\pseries{t}$ is \emph{unramified} if it is quasi-split 
and splits over $\overline{k}\pseries{t}$. 
 The following is well-known. Since we couldn't locate the result in the literature, we 
 provide a quick proof.
\begin{lemma}\label{l:unramified}
	An unramified group over $k\pseries{t}$ is the base change 
	$G_{k\pseries{t}}$ of a reductive group $G$ over $k$.
\end{lemma}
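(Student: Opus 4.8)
The plan is to reduce the statement to a descent problem for the $\star$-action on the based root datum. Set $F:=k\pseries{t}$ and let $k_n/k$ be the extension of degree $n$; then $F^{\mathrm{ur}}:=\bigcup_{n\ge 1}k_n\pseries{t}$ is the maximal unramified extension of $F$, and its $t$-adic completion is $\overline{k}\pseries{t}$. I would begin from the standard classification: a quasi-split connected reductive group over a field $L$ of a fixed Chevalley type (split form $G_0$) is determined, up to isomorphism, by a continuous cocycle $\rho\colon\Gal(L^{\mathrm{sep}}/L)\to\operatorname{Out}(G_0)$ up to conjugacy, where $\operatorname{Out}(G_0)=\Aut(\text{based root datum of }G_0)$ is a constant group scheme; such a $\rho$ has finite image, and, viewed as the $\star$-action, it detects splitting: for a separable extension $L'/L$ the group splits over $L'$ iff $\rho$ is trivial on $\Gal(L^{\mathrm{sep}}/L')$. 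The content of the lemma is thus that an unramified $H$ over $F$ already splits over \emph{some} finite unramified $k_n\pseries{t}$; granting that, its cocycle $\rho\colon\Gal(k_n\pseries{t}/F)\to\operatorname{Out}(G_0)$ transports along the canonical isomorphism $\Gal(k_n\pseries{t}/F)\cong\Gal(k_n/k)$ and we win.

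So the heart of the matter is: if $H$ is quasi-split over $F$ and splits over $\overline{k}\pseries{t}$, then it splits over a finite unramified extension. First I would reduce this to a statement about a single maximal torus. Choose a Borel $B\supseteq T$ defined over $F$ (possible since $H$ is quasi-split), and observe that for \emph{any} field extension $F'/F$, $H$ splits over $F'$ if and only if $T$ splits over $F'$: one direction is the definition of split, and the other uses that in a split reductive group the rational points act transitively on the rational Borel subgroups (Bruhat decomposition over an arbitrary field) together with the fact that the maximal tori of a Borel are permuted transitively by the rational points of its unipotent radical (as $H^1$ of a split unipotent group vanishes). Hence $H$ splits over $\overline{k}\pseries{t}$ iff the torus $T$ does, and the splitting field $M$ of $H$ (a finite Galois extension of $F$ inside $F^{\mathrm{sep}}$) coincides with the splitting field of $T$.

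Now I would show $M\subseteq F^{\mathrm{ur}}$. The elementary input is that $\overline{k}\pseries{t}$ carries a $\Z$-valued valuation, so it contains no $e$-th root of $t$ for $e>1$; hence every finite subextension of $\overline{k}\pseries{t}/F$ that is algebraic over $F$ is unramified, i.e. $F^{\mathrm{sep}}\cap\overline{k}\pseries{t}=F^{\mathrm{ur}}$. Combined with the standard fact that a henselian valued field and its completion have canonically isomorphic absolute Galois groups --- applied to $F^{\mathrm{ur}}$, which is henselian with completion $\overline{k}\pseries{t}$ --- this identifies the absolute Galois group of $\overline{k}\pseries{t}$ with the inertia subgroup $I_F=\Gal(F^{\mathrm{sep}}/F^{\mathrm{ur}})$, compatibly with the actions on $X^*(T)$. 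Since $T$ splits over $\overline{k}\pseries{t}$, the group $I_F$ acts trivially on $X^*(T)$, so $\ker(\star)\supseteq I_F$ and $M\subseteq (F^{\mathrm{sep}})^{I_F}=F^{\mathrm{ur}}$. As $H$ is of finite type and $F^{\mathrm{ur}}=\bigcup_n k_n\pseries{t}$, it follows that $H$ splits over $k_n\pseries{t}$ for some $n$.

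Finally, $H$ being a quasi-split $F$-form of $G_0$ split by $F_n:=k_n\pseries{t}$ corresponds to a class $\rho\in\operatorname{Hom}(\Gal(F_n/F),\operatorname{Out}(G_0))/\mathrm{conj}$; transporting $\rho$ through $\Gal(F_n/F)\cong\Gal(k_n/k)$ gives $\bar\rho\in\operatorname{Hom}(\Gal(k_n/k),\operatorname{Out}(G_0))/\mathrm{conj}$, and I let $G/k$ be the quasi-split $k$-form of $G_0$ it classifies. By naturality of the classification under the base change $k\to F$, the $\star$-action of $G_{k\pseries{t}}$ is the composite $\Gal(F^{\mathrm{sep}}/F)\twoheadrightarrow\Gal(F^{\mathrm{ur}}/F)=\Gal(\overline{k}/k)\xrightarrow{\bar\rho}\operatorname{Out}(G_0)$, which factors through $\Gal(F_n/F)$ precisely as $\rho$; by uniqueness in the classification, $G_{k\pseries{t}}\cong H$. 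The main obstacle is the middle step --- upgrading ``splits over $\overline{k}\pseries{t}$'' (a non-algebraic, infinite extension) to ``splits over a finite unramified extension'' --- whose real content is that the effective Galois theory of $\overline{k}\pseries{t}/F$ is governed by the inertia subgroup of $\Gal(F^{\mathrm{sep}}/F)$; once one passes to a maximal torus it reduces to the structure of local fields together with the comparison of a henselian field with its completion. Everything else is bookkeeping with the classification of quasi-split forms by outer Galois actions.
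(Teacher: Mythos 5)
Your proof is correct, but it follows a genuinely different route from the paper's. The paper argues via integral models: by Bruhat--Tits theory an unramified group admits a smooth affine model $\cG$ over $k\bseries{t}$ with reductive special fiber $G$, and then, since the scheme of isomorphisms between two reductive group schemes over $k\bseries{t}$ is smooth and has a $k$-point (the identification of special fibers), Hensel's lemma produces a section over $k\bseries{t}$, so $\cG\cong G_{k\bseries{t}}$ and the generic fiber is $G_{k\pseries{t}}$ (quasi-splitness of $G$ over the finite field being automatic by Lang's theorem). You instead work entirely with the classification of quasi-split forms by the outer ($\star$-)action: you reduce splitting of $H$ to splitting of a maximal torus in a rational Borel (the two conjugacy facts you invoke --- rational conjugacy of Borels, and conjugacy of maximal tori of a Borel under rational points of its unipotent radical, where in fact $N_B(T)=T$ so the transporter is a single $U$-coset --- are standard and correct), then use that the value group of $\overline{k}\pseries{t}$ is $\Z$ and the henselian-versus-completion invariance of Galois theory to identify $\Gal\bigl(\overline{k}\pseries{t}^{\,\mathrm{sep}}/\overline{k}\pseries{t}\bigr)$ with the inertia subgroup, concluding that the splitting field is a finite unramified extension, and finally descend the homomorphism along $\Gal(k_n\pseries{t}/k\pseries{t})\cong\Gal(k_n/k)$ and invoke uniqueness of the quasi-split form with a given outer class. (One phrase is loose: ``no $e$-th root of $t$'' is not by itself the reason a finite subextension is unramified --- the correct point, which you also state, is that the value group is already $\Z$, forcing $e=1$, with separable residue extension.) The trade-off: your argument is more self-contained, avoiding Bruhat--Tits theory and reductive group schemes over a DVR; the paper's argument, besides being shorter given those inputs, simultaneously produces the integral model and hence the hyperspecial subgroup $G(k\bseries{t})$, which is exactly what the subsequent remark and the rest of the paper use, and it transposes directly to the mixed-characteristic setting discussed there.
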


\begin{proof}
Bruhat--Tits~\cite[\S4.6.10]{BT84}, and 
\cite[Chap.II]{Landvogt:LNM:compactification-BT}, establish the existence of a model 
$\cG$ that is a smooth 
affine group 
scheme over 
$k\bseries{t}$, with reductive special fiber. Let $G:=\cG_\kappa$ be this special fiber.
According to~\cite[Rem.7.2.4]{Conrad:reductive-gp-schemes}, the classification of 
forms 
of a 
reductive group over a Henselian local field with finite residue field is the same as the 
classification over the residue field.
Indeed let $\mathcal{G}$, and $\mathcal{G}'$ be two connected reductive group schemes 
over $k\bseries{t}$. Suppose their special fibers over $k$ are isomorphic. The scheme of 
isomorphisms
from $\mathcal{G}$ to $\mathcal{G}'$ is smooth, and has a point over $k$, so has a 
section over $k\bseries{t}$. In particular if we take $\mathcal{G}'$ to be a
constant group scheme $G$, we get that $\mathcal{G}$ is constant as well.
\end{proof}

\begin{remark}
The same notion of unramified group arises in mixed characteristic, that is over a 
finite extension $K$ of $\Q_p$.
In that context, it is standard that there is a smooth model $\mathcal{G}$ over the local 
ring $\mathfrak{o}_K$, and that $\mathcal{G}( \mathfrak{o}_K )$
is a hyperspecial maximal subgroup.  
This is analogous to Lemma~\ref{l:unramified}, where the model is given by 
$G_{k\bseries{t}}$, and the hyperspecial maximal subgroup by
$G(k\bseries{t})$, only that in equal characteristic the statement is
simpler, and it is not necessary to introduce the group scheme $\mathcal{G}$. 
In mixed characteristic, the lifting argument still works, but there is no notion of 
constant group scheme over $\mathfrak{o}_K$ (though an analogue could likely be 
constructed using Witt vectors).
\end{remark}

\begin{lemma}\label{trivialization-generic} Let $G$ be a reductive group over a finite field $k$. Let $X$ be a smooth connected algebraic curve over $k$. Then every $G$-torsor on $X$ admits a trivialization over the generic point. \end{lemma}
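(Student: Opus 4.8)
The plan is to reduce to the fact that a $G$-torsor over the function field of $X$ is automatically trivial, which in turn follows from the theorem of Steinberg (via Springer's lemma on $H^1$ of fields with Galois cohomological dimension $\le 1$, or more precisely from the theorem that $H^1(k(X), G) = \ast$ for $G$ semisimple simply connected over the field $k(X)$ of transcendence degree $1$ over a finite field, combined with an analysis of the reductive center). More carefully: let $K = k(X)$ be the function field; I want to show $H^1(K, G) = \ast$, since a trivialization over $\Spec K$ extends to a trivialization over some nonempty open $U \subset X$ (spreading out the data witnessing the isomorphism of the torsor with the trivial one), and the generic point lies in $U$, which is all that is asked.

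First I would recall that $K$ has cohomological dimension $\le 1$: this is a theorem of Lang / Tsen-type, since $K$ is a field of transcendence degree $1$ over the finite field $k$, hence is a $C_1$ field, and finite fields have cohomological dimension $1$, so $\mathrm{cd}(K) \le 2$; but for the $H^1$ statement with \emph{connected} reductive groups the relevant input is that $K$ has dimension $\le 1$ in the sense of Serre (every finite extension has trivial Brauer group after the appropriate twist, equivalently $\mathrm{cd}(K) \le 1$ for the purposes of Galois cohomology of linear algebraic groups). Then I would invoke the theorem of Steinberg (proving a conjecture of Serre): if $K$ is a perfect field of cohomological dimension $\le 1$, then $H^1(K, G) = \ast$ for every connected linear algebraic group $G$ over $K$. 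Since $k$ is finite, hence perfect, $K = k(X)$ is perfect; and $\mathrm{cd}(K) \le 1$ holds because $K$ is a $C_1$ field. Therefore $H^1(K, G) = \ast$, i.e., every $G$-torsor over $\Spec K$ is trivial.

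The remaining step is the spreading-out argument: given a $G$-torsor $\mathcal{P}$ on $X$ whose restriction to the generic point $\Spec K$ is trivial, choose an isomorphism $\mathcal{P}_K \cong G_K$; this isomorphism is a $K$-point of the scheme $\underline{\mathrm{Isom}}(G, \mathcal{P})$, which is a torsor under $G$ over $X$ (in particular of finite type over $X$). By the standard limit argument (EGA~IV, 8.8.2 / 8.10.5), a $K$-point of a finite-type $X$-scheme, where $K = \mathcal{O}_{X,\eta}$ is the local ring at the generic point $\eta$ — actually $K$ is the residue field, but since $X$ is a curve, the generic point's local ring \emph{is} its residue field $K$ — extends to a $U$-point over some nonempty open $U$. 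Shrinking, we get a trivialization of $\mathcal{P}|_U$, and since $\eta \in U$ this gives in particular a trivialization over the generic point (indeed the original one).

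The main obstacle — really the only nontrivial point — is justifying $H^1(K, G) = \ast$. If one wants to avoid citing Steinberg's theorem for general connected $G$, an alternative is: reduce to the derived group via the exact sequence $1 \to G^{\der} \to G \to T \to 1$ with $T$ a torus (using that $H^1(K,T)$ and $H^2(K, \text{stuff})$ vanish or are controlled since $\mathrm{cd}(K)\le 1$ plus class field theory for the torus part over the global function field — though here $K = k(X)$ is a \emph{global} field, so $H^1$ of a torus need not vanish globally; this is why one should phrase things at the generic point where $K$ is just a $C_1$ field of $\mathrm{cd} \le 1$ with no arithmetic, so Steinberg applies cleanly), then handle $G^{\der}$ via its simply connected cover using that both the kernel (a finite group scheme of multiplicative type) and the quotient contribute trivially in the relevant degrees over a field of cohomological dimension $\le 1$. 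In the write-up I would simply cite Steinberg's theorem (see Serre, \emph{Galois Cohomology}, III.2.3) together with the fact that $k(X)$ is $C_1$, and then give the short spreading-out argument.
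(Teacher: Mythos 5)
Your argument has a fatal gap at its central step: the claim that $H^1(K,G)=\ast$ for $K=k(X)$, justified by asserting that $K$ is $C_1$ of cohomological dimension $\le 1$. That assertion is false. A function field of a curve over a \emph{finite} field is $C_2$ (Lang: transcendence degree $1$ over a $C_1$ field), its cohomological dimension is $2$, and its Brauer group is large (by class field theory it surjects onto collections of local invariants summing to zero). You appear to be conflating $k(X)$ with $\overline{k}(X)$; it is over the algebraic closure that Tsen's theorem gives $C_1$, $\mathrm{cd}\le 1$, and Steinberg's theorem. Over $k(X)$ itself the statement you want is simply not true for general reductive $G$: for instance $H^1(k(X),\operatorname{PGL}_2)$ contains the classes of quaternion division algebras over $k(X)$, of which there are many. (It \emph{is} true that $H^1(F,G)=1$ for $G$ split simply connected semisimple over a global function field $F$ — Harder's theorem, which the paper mentions parenthetically — but the lemma is stated for arbitrary reductive $G$, e.g. adjoint groups and tori, where blanket vanishing fails.) Since your proof never uses anything beyond the restriction of the torsor to the generic point, it would prove that \emph{every} class in $H^1(K,G)$ is trivial, which is false; so the approach cannot be repaired by a better citation.

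The hypothesis you never exploit — that the torsor is defined on the whole curve $X$ — is exactly what the correct argument needs. Over the complete local ring at each closed point the residue field is finite, so by Lang's theorem the restricted torsor is trivial there; hence the generic fibre gives a class in the everywhere-locally-trivial kernel $\ker^1(F,G)=\ker\bigl(H^1(F,G)\to\prod_x H^1(F_x,G)\bigr)$, and the real content of the lemma is that this kernel vanishes. That is how the paper proceeds: it invokes \cite[Lemma 1.1]{Ngo06} to reduce generic triviality to the vanishing of $\ker^1(F,G)$, then uses \cite[Theorem 2.6(1)]{thang11} to identify $\ker^1(F,G)$ with $\ker^1(F,Z(\widehat{G}))$, and finally kills the latter by an elementary argument with the finite Frobenius-module $Z(\widehat{G})$ (a cyclic-quotient observation plus Chebotarev to find a place whose Frobenius detects any nontrivial class). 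Your concluding spreading-out discussion is fine but beside the point, since only triviality at the generic point is asked for; the missing ingredient is a proof that the relevant Galois cohomology classes vanish, which requires the local–global (Tate–Shafarevich kernel) structure rather than any $\mathrm{cd}\le 1$ vanishing theorem.
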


\begin{proof} Let $F = \mathbb F_q(X)$. By \cite[Lem.1.1]{Ngo06}, it is sufficient to 
check that the kernel $\ker^1(F,G)$ of the natural map from $H^1(F,G)$ to the product 
over all places $x$ of $H^1(F_x,G)$ is trivial. By \cite[Thm.2.6(1)]{thang11}, the 
kernel $\ker^1(F,G)$ is Pontryagin dual to $\ker^1(F, Z(\widehat{G}))$. To show that $\ker^1(F, 
Z(\widehat{G}))$ is trivial, it suffices to fix a nontrivial $F$-torsor $\mathcal T$ of $Z(\widehat{G})$ 
and show it remains nontrivial upon restriction to some place.

We can describe an $F$-torsor $\mathcal T$ by the action of  $\operatorname{Gal}(F)$  on $\mathcal 
T_{\overline{F}}$, where  $\mathcal T_{\overline{F}}$ is a $Z(\widehat{G})_{\overline{F}}$-torsor in the sense 
of algebraic groups. Because torsors are by definition trivial over some \'{e}tale open set, 
this $\operatorname{Gal}(F)$-action must factor through a finite group $H$.

If the $\operatorname{Gal}(F)$-action on $\mathcal T_{\overline{F}}$ factors through $\operatorname{Gal}(k)$, 
then we can take $H$ to be a finite quotient of $\operatorname{Gal}(k)$, necessarily cyclic. Thus 
the Frobenius element at any place of degree prime to $|H|$ generates $H$, and so $\mathcal 
T$ is nontrivial if and only if it is nontrivial at one of these places.

If the $\operatorname{Gal}(F)$-action on $\mathcal T_{\overline{F}}$ does not factor through 
$\operatorname{Gal}(k)$, then because the $\operatorname{Gal}(F)$-action on $Z(\widehat{G})_{\overline{F}}$ 
does factor through $\operatorname{Gal}(k)$, we may find some conjugacy class $\sigma \in H$ 
which acts trivially on $Z(\widehat{G})_{\overline{F}}$ but nontrivially on $\mathcal T_{\overline{F}}$. By 
the Chebotarev density theorem, there exists some place $v$ such that the image of 
$\operatorname{Frob}_v$ in $H$ is conjugate to $\sigma$. The restriction $\mathcal T_v$  of $\mathcal T$ 
to $v$ must be nontrivial because, since $\operatorname{Frob}_v$ acts trivially on 
$Z(\widehat{G})_{\overline{F}}$, it acts trivially on the trivial torsor over $Z(\widehat{G})_{\overline{F}}$, 
so $\mathcal T_v$ cannot be isomorphic to the trivial torsor over $Z(\widehat{G})$ as a set with 
Frobenius action.

(In the case when $G$ is split simply-connected semisimple, this result could instead be deduced from a 
result of Harder~\cite[Thm.2.4.1]{Harder:Chevalley-fn-fields} that if $G$ is split and 
simply-connected semisimple, then $H^1(F,G)$ is trivial.) 
\end{proof}

\subsection{Satake isomorphism}\label{sub:Satake}
In this subsection, let $G$ be a split connected reductive group over a finite field $k$. 
Let 
$F=k\pseries{t}$, $\ko=k\bseries{t}$, $K=G(\ko)$, and 
consider the \emph{unramified Hecke algebra}
\[
\cH(G) = \cH(G(F),K) = \cC_c(K \backslash G(F) / K,\C).
\]
We are fixing the Haar measure on $G(F)$ to give $K$ volume one.
The below results hold more generally over the base ring $\Z[q^{\frac12},q^{-\frac12}]$ 
rather than $\C$.
Let $T\subset G$ be a maximal torus.
There is an identication of the lattice $\Lambda:=X_*(T)$ of coweights of $G$ with the 
quotient group $T(F)/T(\ko)$, 
where a cocharacter $\mu:\G_m \to T$ corresponds to the element $\mu(t)\in T(F)$ modulo 
multiplication by $T(\ko)$.
This induces an algebra isomorphism 
$
\cC_c(T(F)/T(\ko))\simeq \C[X_*(T)]$.
The Weyl group $W = N_G(T) / Z_G(T)$ acts on both sides of this isomorphism, in 
particular we can form the subalgebras 
of $W$-invariant functions
\[
\cC_c(T(F)/T(\ko),\C)^W
\simeq \C[X_*(T)]^W.
\]

Choose a Borel subgroup $B=TU$ and let $\Lambda^+\subset \Lambda$ be the positive Weyl 
chamber. Let 
$\delta:B(F)\to q^\Z$ be the modulus character, where 
$q$ is the size of $k$.
Denote by $\delta^{\frac12}:B(F)\to q^{\frac12\Z}$, the positive square-root.
For every $\mu\in \Lambda$, we have 
$\delta^{\frac12}(\mu(t)) = q^{-\langle 
\rho,\mu 
\rangle}$, 
where 
$\rho\in X^*(T)\otimes_\Z \C$ 
is the half-sum of the positive roots. 
The \emph{Satake transform} $\cS(f)$ of a function $f\in \cH(G)=\cH(G(F),K)$ is defined by
\[
\cS(f)(s) := 
\delta^{\frac12}(s) 
\int_{U(F)} 
f(su) du
,\quad s\in T(F) \subseteq B(F),
\]
where $du$ is the Haar measure on $U(F)$ that gives $U(F)\cap K= U(\ko)$ volume one.
The value of the integral depends only on $s$ modulo $T(\ko)$.
It induces an algebra isomorphism~\cite{Gross:satake} 
\[
\cS:\cH(G) \xrightarrow[]{\sim} \cC_c(T(F)/T(\ko),\C)^W,
\] 

Denote by $V_\lambda$ the irreducible representation of 
$\widehat G(\C)$ with 
highest weight 
\[
\lambda\in \Lambda^+ \subset \Lambda=  X_*(T)=X^*(\widehat T).
\]
The trace of a finite dimensional representation $V$ of $\widehat G(\C)$ can be viewed as an 
element of 
$\C[X^*(\widehat T)]^W$ by recording its weight spaces multiplicities $\dim 
\operatorname{Hom}_{\widehat T(\C)}(\mu, V)$ for all $\mu\in X^*(\widehat T)$, hence it corresponds 
to 
an element of the 
Hecke algebra $\cH(G)$ 
under the Satake isomorphism.
In particular, the trace of the representation $V_\lambda$ 
is of the form 
\begin{equation}\label{Salambda}
\operatorname{tr}(V_\lambda) = \sum_{\mu \in X^*(\widehat T)} \dim \operatorname{Hom}_{\widehat 
T(\C)}(\mu,V_\lambda)\cdot [\mu]  = 
\cS(a_\lambda)
\end{equation}
 for a \emph{unique element} $a_\lambda\in 
\cH(G)$.
As we vary $\lambda \in \Lambda^+$, the elements $a_\lambda$ form a linear basis of $\cH(G)$ since 
$\tr(V_\lambda)$ form a linear basis of $\C[X^*(\widehat T)]^W$ in view of highest weight theory.

\begin{proposition}[Satake] \label{p:Satake}
There is a bijection between isomorphism classes of irreducible $K$-unramified 
representations $\pi$, algebra homomorphisms $\tr(\pi):\cH(G) \to \C$, $W$-conjugacy 
classes of unramified characters 
$\chi:T(F)/T(\ko)\to 
\C^\times$, and semisimple conjugacy classes $t_\pi$ in $\widehat G(\C)$, characterized as 
follows:

(i) The bijection $\pi\mapsto t_\pi$ coincides with the composition 
of the two 
bijections $\pi \mapsto \tr(\pi) \mapsto t_\pi$, where $\tr(\pi):\cH(G)\to \C$ is the trace functional, and
where $t_\pi$ is characterized by the equalities
\[
\tr(\pi)(a_\lambda) =
\tr(t_\pi | V_\lambda),
\]
for every $\lambda\in \Lambda^+$.

(ii) The bijection between $\chi$ and $t_\pi$ modulo $W$-conjugation is via the three 
identifications 
\[
 \chi \in 
\Hom(T(F)/T(\ko),\C^\times)
\simeq
\Hom(X_*(T),\C^\times) =
\Hom(X^*(\widehat T),\C^\times)
=
\widehat T(\C) \ni t_\pi.
\]

(iii) The bijection $\chi\mapsto \tr(\pi)$ is characterized via the Satake 
isomorphism 
by the equalities
\begin{equation}\label{specS}
 \tr(\pi)(f) = \sum_{s\in T(F)/T(\ko)}  \cS(f)(s) \chi(s), \quad
f\in \cH(G),
\end{equation}
\end{proposition}
\index{$\Lambda^+$, Weyl cone in the cocharacter lattice of $G$}

\begin{proof}
The bijection $\pi\mapsto \tr(\pi)$ in (i) is standard and follows from that $(G(F),K)$ is a Gelfand 
pair. By the second orthogonality relation of characters of the group $\widehat G(\C)$, we 
have 
that the values of $\tr(t_\pi | V_\lambda)$ for varying $\lambda\in \Lambda^+$ characterize the 
element $t_\pi$ up to $\widehat G(\C)$-conjugation in $\widehat G(\C)$, hence up to $W$-conjugation 
in $\widehat T(\C)$. This shows that the identities in (i) characterise the map $\tr(\pi)\mapsto 
t_\pi$ uniquely. We shall verify below that $t_\pi$ exists and the map is a bijective.

The identifications in (ii) have been given before the proposition.
The Satake isomorphism induces 
\[
\chi \in \Hom(T(F)/T(\ko),\C^\times)/W  \subset \operatorname{Spec}(\cC_c(T(F)/T(\ko))^W) \xrightarrow[]{ 
\operatorname{Spec}(\cS)}
 \operatorname{Spec}(\cH(G)) \ni \tr(\pi),
\]
where we identify $\Hom(T(F)/T(\ko),\C^\times)/W$ with the closed points of 
$\operatorname{Spec}(\cC_c(T(F)/T(\ko))^W)$ which are algebra functionals
$\cC_c(T(F)/T(\ko))^W\to \C$. This yields the bijection in (iii) between $\chi$ and $\tr(\pi)$
via~\eqref{specS}.

The rest of the proposition amounts to the following commuting triangle of bijections:
\begin{equation*}\label{diagSatake}
 \begin{tikzcd}
\chi \ar[rr,mapsto,"\operatorname{Spec}(S)"]  
& & \tr(\pi) \ar[dl,dashrightarrow,mapsto]
\\ & t_\pi \ar[ul,leftrightarrow] &
\end{tikzcd}
\end{equation*}
Indeed, we have verified above that the middle map, and the 
lower-left map 
are bijections. As a final step, it remains to show that the element $t_\pi$ obtained by 
following the 
 inverse bijections $\tr(\pi) \mapsfrom \chi \mapsfrom t_\pi$ so as to make 
the triangle commute satisfies the equalities $\tr(\pi)(a_\lambda)= 
\tr(t_\pi|V_\lambda)$ in (i).

In view of~\eqref{specS}, we have for every $\lambda\in \Lambda^+$,
\[
 \tr(\pi)(a_\lambda) = \sum_{s\in T(F)/T(\ko)} \cS(a_\lambda)(s)  \chi(s).
\]
Under the identifications
\[
s\in T(F)/T(\ko) \simeq X_*(T) = X^*(T) \ni \mu,
\]
which are dual to those of (ii), 
we have the equality $\chi(s) = \mu(t_\pi)$.
Moreover, 
\[\dim \operatorname{Hom}_{\widehat T(\C)}(\mu,V_\lambda) = \cS(a_\lambda)(s)
\]
by the
definition~\eqref{Salambda} of 
$a_\lambda$.
We obtain that the latter integral is equal to 
\[
 \sum_{\mu \in X_*(T) = X^*(\widehat T)} \dim 
\operatorname{Hom}_{\widehat T(\C)}
(\mu,V_\lambda) \cdot
\mu(t_\pi) 
= \tr(t_\pi | V_\lambda),
\]
which concludes the proof of the claim.
\end{proof}

\begin{definition}\label{d:trlambda}
\index{$\tr_\lambda(\pi)$, trace of $\lambda$-Hecke operator}
For $\lambda\in \Lambda^+$, and an irreducible $K$-unramified representation 
$\pi$, define
\[
\tr_\lambda(\pi) :=  \tr(\pi)(a_\lambda) = 
\tr(t_\pi | V_\lambda).
\]
\end{definition}

The \emph{unramified principal series}
$\operatorname{Ind}^{G(F)}_{B(F)}(\delta^{\frac12} \chi)$ contains a unique non-zero $K$-fixed vector 
$v^\circ$ 
given in 
the induced model 
\[
\{v:G(F)\to \C,\quad v(tug) = \delta^{\frac12}(t) \chi(t)v(g) ,\quad t\in T(F),\ u\in U(F),\ g\in G(F) \}
\]
by the formula
\[
 v^\circ(tuk) :=  \delta^{\frac12}(t)\chi(t), \quad t\in T(F),\ u\in U(F),\ k\in K,
\]
which is justified 
by the Iwasawa 
decomposition $G=B(F)K$
 because $\delta^{\frac12}\chi$ is trivial on $T(\mathfrak o) = T(F)\cap K = B(F) \cap K$.
Every $f\in \lH(G)$ acts on the vector $v^\circ$ by the scalar
\[
 \sum_{s\in T(F)/T(\mathfrak o)} \cS(f)(s)\chi(s) = \int_{T(F)} \cS(f)(s)\chi(s)ds ,
\] 
as can be seen from the following calculation. For every $t\in T(F)$,
\[
 \int_{G(F)} v^\circ(t g) f(g) dg = 
 \int_{B(F)} v^\circ(t b) f(b)  d_{\rm left}b
\]
\begin{equation}\label{calculatev0}
 = \delta^{\frac12}(t) \chi(t) \int_{T(F)} \delta^{\frac12}(s)\chi(s)  \int_{U(F)} f(su) du ds
 = v^{\circ}(t) \int_{T(F)} \cS(f)(s)\chi(s) ds,
\end{equation}
where $d_{\rm left}(su) = dsdu$ is the left Haar measure on $B(F)$ that gives $B(\mathfrak o)$ 
volume one, and $d(bk)=d_{\rm left}b 
dk$ is the Haar measure on $G(F)$ that gives $K$ volume one.

The functor of $K$-fixed vectors $(\pi,V)\leadsto V^K$  is exact from the category of 
admissible 
$G(F)$-representations to the category of finite-dimensional $\cH(G)$-modules as follows 
from the existence of the projection $\int_K \pi(k) dk:V \twoheadrightarrow V^K$.
Since the unramified principal series $\operatorname{Ind}^{G(F)}_{B(F)}(\delta^{\frac12} \chi)$ has 
finite length, it has a unique irreducible $K$-unramified $G(F)$-subquotient (in its 
Jordan--H\"older 
decomposition).

\begin{proposition}\label{p:Casselman}
For every unramified character $\chi:T(F)/T(\mathfrak o)\to \C$, the irreducible $K$-unramified 
$G(F)$-subquotient of the unramified principal series
$\operatorname{Ind}^{G(F)}_{B(F)}(\delta^{\frac12} \chi)$ corresponds with $\chi$ under the 
bijection of Proposition~\ref{p:Satake}. This completes the following 
commutative diagram of bijections:
\begin{equation*}\label{diagSatake2}
 \begin{tikzcd}
& \pi \ar[dr,leftrightarrow] & 
\\ \chi \ar[rr,mapsto,"\operatorname{Spec}(S)"]  \ar[ur,dashrightarrow,mapsto]
& & \tr(\pi) \ar[dl,mapsto,"\ref{p:Satake}(i)"]
\\ & t_\pi \ar[ul,leftrightarrow,"\ref{p:Satake}(ii)"] &
\end{tikzcd}
\end{equation*}
\end{proposition}

\begin{proof}
Let $(\pi,V)$ be the irreducible $K$-unramified $G(F)$-subquotient of 
$\operatorname{Ind}^{G(F)}_{B(F)}(\delta^{\frac12} \chi)$, and write $V_2 \hookrightarrow V_1 \twoheadrightarrow 
V$ 
for two 
$G(F)$-subrepresentations $V_1,V_2$ of $\operatorname{Ind}^{G(F)}_{B(F)}(\delta^{\frac12} \chi)$.
We verify that the equalities~\eqref{specS} which characterize the middle arrow of the 
diagram are satisfied.
Since $V_1^K \to V^K$ is surjective, $\dim V_1^K \le 1$ and $\dim V^K=1$, we deduce that it is a 
bijection (in fact an $\cH(G)$-isomorphism).
In particular $\dim V_1^K = 1$ 
and the $K$-fixed vector $v^\circ\in \operatorname{Ind}^{G(F)}_{B(F)}(\delta^{\frac12} \chi)^K$  
necessarily 
belongs to $V_1^K$.
In the above calculation~\eqref{calculatev0}, we have found the action of $f\in \cH(G)$ on 
$V_1^K = \C \cdot v^\circ$ is given by the right-hand side of~\eqref{specS}. On the other hand, 
the action of  
$f\in \cH(G)$ on $V^K$ is via the scalar $\tr(\pi)(f)$. Since $V_1^K \isom  V^K$, this 
verifies~\eqref{specS}.
\end{proof}

A smooth representation of $G(F)$ is said to be \emph{tempered} if it is unitary and weakly 
contained in the regular representation by translation on $L^2(G(F))$.
An irreducible smooth unitary representation of $G(F)$ is tempered if and only if its 
matrix coefficients belong to $L^{2+\epsilon}(G^{\rm der}(F))$ for 
every $\epsilon >0$ (this follows from~\cite{CHH:tempered}).
Denote by $\Xi(g) := \int_K \delta^{\frac 12}(kg) dk$ the Harish-Chandra function,
where $\delta$ is inflated to $G(F)=B(F)K$ using the Iwasawa decomposition.

\begin{proposition}\label{p:tempered}
Let $\pi$ be an irreducible $K$-unramified representation of $G(F)$, and let $\chi$, $t_\pi$, 
$\tr(\pi)$ be as in Proposition~\ref{p:Satake}. The following six properties are 
equivalent:
\begin{enumerate}[(i)]
\item $\pi$ is tempered,
\item $\pi$ is unitary and $|\tr(\pi)(f)| \le \int_G |f(g)| \Xi(g) dg$ for every $f\in \mathcal H(G)$,
\item $\chi$ is unitary,
\item $t_\pi$ is a compact element, i.e., $t_\pi$ belongs to the maximal compact subgroup of 
$\widehat T(\C)$,
\item $|\tr_\lambda(\pi)|\le \dim V_\lambda$ for every $\lambda \in \Lambda^+$,
\item there exists $C >0$ such that $|\tr_\lambda(\pi)| \le C \cdot \dim 
V_\lambda$ for every $\lambda\in \Lambda^+$.
\end{enumerate}
\end{proposition}

\begin{proof} The result is implicit in early work of 
Langlands. We couldn't locate a proof in the literature, hence we provide one.

(i) $\Leftrightarrow$ (ii). Let $v^\circ$  be a $K$-fixed vector of $\pi$ with $\langle v^\circ,v^\circ \rangle = 1$.
Then
\[
 \tr(\pi)(f) = \int_{G(F)} f(g) \langle \pi(g) v^\circ,  v^\circ \rangle dg,\quad f\in \mathcal H(G).
\]
If $\pi$ is tempered, then \cite[Thm.2]{CHH:tempered} says that the matrix coefficient is 
bounded 
by
$|\langle \pi(g) v^\circ,  v^\circ \rangle| \le \Xi(g)$, which implies (ii).
Conversely, (ii) implies the inequality $|\langle \pi(g) v^\circ,  v^\circ \rangle| \le \Xi(g)$, and 
since $\Xi\in
L^{2+\epsilon}(G^{\rm der}(F))$ for every $\epsilon>0$, we have that 
\cite[Thm.1]{CHH:tempered} implies that $\pi$ is tempered.

(ii) $\Leftrightarrow$ (iii). Since Proposition~\ref{p:Casselman} says that $\pi$ is a 
$G(F)$-subquotient of 
the unramified 
principal series 
$\operatorname{Ind}^{G(F)}_{B(F)}(\delta^{\frac12} \chi)$, we have that (iii) 
implies that $\operatorname{Ind}^{G(F)}_{B(F)}(\delta^{\frac12} \chi)$ is unitary which in turn implies 
that $\pi$ is unitary.
Applying~\eqref{specS}, we find
\[
 \tr(\pi)(f) = \int_{T(F)} \chi(s) \mathcal S(f)(s) ds
= \int_{T(F)} \chi(s) \delta^{\frac 12}(s) \int_{U(F)}  f(su) du ds
\]
\[
 = \int_{B(F)} \chi(b)f(b) \delta^{\frac 12}(b) d_{\rm left}b 
=  \int_{B(F)} \chi(b)  
\int_{K} f(bk) 
\delta^{\frac12}(b) dk d_{\rm left}b
\]
This shows that (iiii) implies the inequalities in (ii).
The converse follows from Macdonald's formula for the spherical function.

(iii) $\Leftrightarrow$ (iv).
We may identify the maximal compact subgroup of $\widehat T(\C)$ with $\Hom(X^*(\widehat 
T),S^1)$, which in turn can be identified following Proposition~\ref{p:Satake}(ii) with 
$\Hom(T(F)/T(\mathfrak o),S^1)$, the group of unitary unramified characters of $T(F)/T(\mathfrak o)$.

The equivalences (iv) $\Leftrightarrow$ (v) and (iv) $\Leftrightarrow$ (vi)  follow from $\tr_\lambda(\pi) 
= \tr(t_\pi | V_\lambda)$ in Definition~\ref{d:trlambda}.
\end{proof}

\begin{remark}
Property (ii) is related to Harish-Chandra's definition of temperedness of an 
admissible representation as having the property that its trace character extends to a 
continuous distribution on the Schwartz space.
\end{remark}

We have been using consistently the \emph{unitary normalization} of the character $\chi$, of 
the
Satake transform, and of the Satake parameter $t_\pi$.
There is also an algebraic normalization, which is that
$q^{\langle \lambda ,\rho \rangle} a_\lambda$ corresponds to the trace function of the 
IC-sheaf of the closure of the cell of the affine Grassmannian associated to $\lambda$. 
\begin{example}
For the trivial representation $\mathbf{1}$, we have
$\tr_\lambda(\mathbf{1}) =  
\tr(\mathbf{1})(a_\lambda)$. 
The Satake parameter $t_{\mathbf{1}}$ is equal to the principal semisimple element 
$\rho(q)\in \widehat T(\C)$, where $\rho$ is seen as a cocharacter 
$X_*(\widehat T)_\C$.
In particular, we obtain
\[
\sum_{x\in K\backslash G /K}
a_\lambda(x)
=\tr(\mathbf{1})(a_\lambda)
=\tr(\rho(q)|V_\lambda)
=
q^{\langle \lambda,\rho \rangle}
\left(
1+ O(q^{-\frac12})
\right).
\]
\index{$d(\lambda)=\langle 
\lambda, 
2\rho 
\rangle = \dim \operatorname{Gr}_\lambda$, degree of $\lambda$-Hecke operator}
 We conclude that $\tr_\lambda(\mathbf{1}) = q^{\frac{d(\lambda)}{2}}
\left(
1+ O(q^{-\frac12})
\right)
$,
where 
\[
d(\lambda):= \langle 
\lambda, 
2\rho 
\rangle = \dim \operatorname{Gr}_\lambda \in \Z_{\ge 0},
\] 
which we interpret as the \emph{degree} 
of the Hecke operator of coweight 
$\lambda\in \Lambda^+$.
\end{example}

\subsection{Base change}
Notation is as in the previous subsection, and we consider the degree $n$ extension  
$k'=\F_{q^n}$ of $k=\F_q$.
There is a base change algebra homomorphism $b:\cH(G_{k'}) \to \cH(G)$, see 
e.g.~\cite{Kottwitz:base-change}.
For any $K$-unramified irreducible representation $\pi$ of $G(k\pseries{t})$, there 
corresponds a unique $K'$-unramified irreducible representation $\Pi$ of 
$G(k'\pseries{t})$ such that
$\tr(\Pi)(f) = \tr(\pi)(b(f))$ for every $f\in \cH(G_{k'})$.
Indeed the corresponding Satake parameters satisfy the 
relation $t_\Pi = t_\pi^n$.
In particular the representation $\pi$ is tempered if and only if the base change 
representation $\Pi$ 
is tempered.
We can identify the positive Weyl chamber $\Lambda^+ \subset X_*(T)$ for the groups $G$ and 
$G_{k'}$. We have then the relation,
\[
\tr_\lambda(\Pi) = 
\tr(t^n_\pi | V_\lambda),
\]
which will be used often in relation to taking the limit as $n\to \infty$.

\subsection{Character sheaves}
\index{$\cL$, character sheaf}
\begin{defi} For a connected algebraic group $H$, say a \emph{character sheaf} on $H$ is 
a rank one lisse sheaf $\mathcal L$ with an isomorphism  $\mathcal L \boxtimes 
\mathcal L \cong m^* \mathcal L $ for $m: H \times H \to 
H$ the multiplication map. \end{defi}

\begin{remark} Given a character sheaf $\lL$, we have an isomorphism $\mathcal L_e = 
\mathcal L_e \otimes \mathcal L_e$, hence an isomorphism $\Ql = \mathcal L_e$. 

Let $m_3: H \times H \times H \to H$ be the multiplication of three elements. Because $m_3 = m \circ ( m \times id) = m \circ (id \times m)$, the isomorphism $\mathcal L \boxtimes 
\mathcal L \cong m^* \mathcal L $ induces two different isomorphism $\mathcal L \boxtimes \mathcal L \boxtimes \mathcal L \cong m_3^* \mathcal L$. These 
two isomorphisms are necessarily equal, because they are maps between lisse sheaves on a 
connected scheme and are equal on the identity point.
\end{remark}

For convenience, we give here many important facts about character sheaves, almost all of 
which are surely well-known.

\begin{lemma}\label{trace-function-character-sheaf} Let $H$ be an algebraic group over a 
finite field $\mathbb F_q$. The trace function of a character sheaf is a one-dimensional 
character of $H(\mathbb F_q)$. \end{lemma}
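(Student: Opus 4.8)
The plan is to unpack the definition of "trace function" and see that the compatibility isomorphism $\mathcal L \boxtimes \mathcal L \simeq m^*\mathcal L$ forces multiplicativity. Let me recall what the trace function is: for each $h \in H(\mathbb F_q)$, the geometric Frobenius $\Frob_q$ acts on the stalk $\mathcal L_h$ (a one-dimensional $\Qellbar$-vector space, since $\mathcal L$ is rank one lisse), and the trace function is $h \mapsto \operatorname{tr}(\Frob_q \mid \mathcal L_h)$, which is just the scalar by which $\Frob_q$ acts. First I would observe that this scalar is nonzero — indeed $\Frob_q$ acts invertibly on each stalk — so the trace function takes values in $\Qellbar^\times$, which is the first half of being a character.

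The key step is multiplicativity. Given $h_1, h_2 \in H(\mathbb F_q)$, consider the $\mathbb F_q$-point $(h_1,h_2)$ of $H \times H$. The stalk of $m^*\mathcal L$ at $(h_1,h_2)$ is $\mathcal L_{h_1 h_2}$, while the stalk of $\mathcal L \boxtimes \mathcal L$ at $(h_1,h_2)$ is $\mathcal L_{h_1} \otimes \mathcal L_{h_2}$. The given isomorphism of sheaves, being defined over $\mathbb F_q$, induces on stalks a $\Frob_q$-equivariant isomorphism $\mathcal L_{h_1} \otimes \mathcal L_{h_2} \simeq \mathcal L_{h_1 h_2}$. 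Taking traces of Frobenius on both sides, and using that Frobenius acts on a tensor product of stalks by the product of the scalars, gives $\operatorname{tr}(\Frob_q \mid \mathcal L_{h_1})\cdot \operatorname{tr}(\Frob_q \mid \mathcal L_{h_2}) = \operatorname{tr}(\Frob_q \mid \mathcal L_{h_1 h_2})$, which is exactly the homomorphism property. One should also check normalization at the identity: the remark preceding the lemma gives an isomorphism $\Qellbar \simeq \mathcal L_e$, compatible with Frobenius, so the trace function takes the value $1$ at $e \in H(\mathbb F_q)$; combined with multiplicativity this is automatic but worth noting, and it confirms the function lands in the roots of unity if $H(\mathbb F_q)$ is finite — in any case it is a genuine one-dimensional character $H(\mathbb F_q) \to \Qellbar^\times$.

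There is essentially no obstacle here; the only point requiring a word of care is the compatibility of the sheaf isomorphism with the Frobenius structure. This is automatic because the isomorphism $\mathcal L \boxtimes \mathcal L \simeq m^*\mathcal L$ is part of the data of a character sheaf over $\mathbb F_q$ — it is a morphism in the category of Weil sheaves (or $\mathbb F_q$-sheaves) on $H \times H$ — and hence its stalk maps commute with the respective geometric Frobenius actions by functoriality. Once that is granted, the statement follows by the multiplicativity of traces under tensor product of Frobenius eigenlines, and no computation beyond that is needed.
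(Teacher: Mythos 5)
Your proposal is correct and follows essentially the same route as the paper's proof: the paper likewise derives multiplicativity $\chi(xy)=\chi(x)\chi(y)$ directly from the defining isomorphism $\mathcal L \boxtimes \mathcal L \simeq m^*\mathcal L$ and nonvanishing from $\mathcal L$ being rank one lisse; you have simply spelled out the Frobenius-equivariance of the stalk isomorphism, which the paper leaves implicit.
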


Throughout this paper, the \emph{trace function} of a sheaf $\mathcal F$ will be the function that takes a point $x$ to the trace of the \emph{geometric} Frobenius on the stalk of $\mathcal F$ at $x$.

\begin{proof} Let $\mathcal L$ be a character sheaf and let $\chi$ be the trace function 
of $\mathcal L$ on $H(\mathbb F_q)$. Then by the definition of a character sheaf, for 
$x,y \in H(\mathbb F_q)$, $\chi(xy)=\chi(x)\chi(y)$. Moreover because $\mathcal L$ is a 
rank one lisse sheaf, $\chi$ is nonzero. Hence it is an homomorphism to $\Ql^\times$ and 
thus a character. \end{proof}

\begin{remark} Not every character of $H(\mathbb F_q)$ necessarily arises from a 
character sheaf.  Consider the group of matrices of the form \[\begin{pmatrix} 1 & a & b 
\\ 0 & 1 & a^p \\ 0 & 0 & 1 \end{pmatrix}\] under matrix multiplication. Any character 
sheaf, restricted to the subgroup $H'$ defined by $a=0$, is a lisse character sheaf on $H' \cong \mathbb 
A^1$. By evaluating the character sheaf on a commutator, one can see that this sheaf is 
necessarily trivial when pulled back along the map $(x,y) \to (x^py - xy^p)$ whose 
generic fiber is geometrically irreducible, and hence the sheaf is trivial when 
restricted to $H'(\mathbb F_p)$. However, not all characters of $H(\mathbb F_p)$ are trivial 
on $H'(\mathbb F_p)$. \end{remark}

 Let $\sigma$ be the arithmetic Frobenius automorphism of $H( \overline{\mathbb F}_q)$. The Lang 
 isogeny is the covering $H \to H$ sending $g$ to $ \sigma(g) g^{-1}$, which is finite 
 \'{e}tale Galois with automorphism group $H(\mathbb F_q)$.

 \begin{lemma}\label{characters-come-from-Lang} Let $H$ be an algebraic group over a finite 
 field $\mathbb F_q$, $\mathcal L$ a character sheaf on $H$, and $\chi$ its trace function. 
 Then the pullback of $\mathcal L$ along the Lang isogeny is trivial, and as a 
 representation of the fundamental group, $\mathcal L$ is equal to the composition of the 
 map 
 $\pi_1( H_{\mathbb F_q})\to H(\mathbb F_q) $ with the character $\chi^{-1}: H(\F_q) \to 
 \overline{\mathbb Q}_\ell^\times$. \end{lemma}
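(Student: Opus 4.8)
The plan is to show first that $L^{*}\mathcal{L}$ is canonically trivial by a formal manipulation of the multiplicativity isomorphism, to deduce that the rank-one representation $\rho_{\mathcal{L}}$ of $\pi_{1}(H_{\mathbb{F}_q})$ attached to $\mathcal{L}$ factors through the map $c\colon\pi_{1}(H_{\mathbb{F}_q})\twoheadrightarrow H(\mathbb{F}_q)$ classifying the Lang torsor, and finally to identify the resulting character of $H(\mathbb{F}_q)$ with $\chi^{-1}$ by comparing trace functions. For the first step, let $\phi$ be the $q$-power Frobenius endomorphism of $H$ over $\mathbb{F}_q$, so that on $\overline{\mathbb{F}}_q$-points $\sigma(g)g^{-1}=\phi(g)g^{-1}$ and hence $L=m\circ(\phi\times\mathrm{inv})\circ\Delta$ as a morphism over $\mathbb{F}_q$, with $\Delta$ the diagonal, $\mathrm{inv}$ inversion and $m$ multiplication. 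Pulling $m^{*}\mathcal{L}\simeq\mathcal{L}\boxtimes\mathcal{L}$ back along $(\phi\times\mathrm{inv})\circ\Delta$ gives $L^{*}\mathcal{L}\simeq\phi^{*}\mathcal{L}\otimes\mathrm{inv}^{*}\mathcal{L}$. Here $\mathrm{inv}^{*}\mathcal{L}\simeq\mathcal{L}^{\vee}$, because $m\circ(\mathrm{id},\mathrm{inv})\circ\Delta$ is the constant map $e$, so pulling $m^{*}\mathcal{L}\simeq\mathcal{L}\boxtimes\mathcal{L}$ back along $(\mathrm{id},\mathrm{inv})\circ\Delta$ yields $\mathcal{L}\otimes\mathrm{inv}^{*}\mathcal{L}\simeq\mathcal{L}_{e}\simeq\Qellbar$; and $\phi^{*}\mathcal{L}\simeq\mathcal{L}$ canonically, since $\phi$ is a power of the absolute Frobenius of $H$ and the latter induces the identity functor on the étale site. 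Hence $L^{*}\mathcal{L}\simeq\mathcal{L}\otimes\mathcal{L}^{\vee}\simeq\Qellbar$, canonically and already over $\mathbb{F}_q$.

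For the descent, recall that by Lang's theorem $L$ is a torsor over $H$ under the finite constant group scheme $\underline{H(\mathbb{F}_q)}$ acting by right translations; it is therefore a connected (as $H$ is connected) Galois cover, classified by a surjection $c\colon\pi_{1}(H_{\mathbb{F}_q})\twoheadrightarrow H(\mathbb{F}_q)$ whose kernel is the image of $\pi_{1}$ of the total space. Since $L^{*}\mathcal{L}$ is trivial over $\mathbb{F}_q$, the character $\rho_{\mathcal{L}}\colon\pi_{1}(H_{\mathbb{F}_q})\to\Qellbar^{\times}$ is trivial on $\ker c$, so $\rho_{\mathcal{L}}=\psi\circ c$ for a unique character $\psi\colon H(\mathbb{F}_q)\to\Qellbar^{\times}$. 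It is crucial here that the trivialization holds over $\mathbb{F}_q$ and not merely geometrically, so that all of the arithmetic fundamental group is killed.

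It remains to identify $\psi$. For $x\in H(\mathbb{F}_q)$ we have $\chi(x)=\tr(\Frob_{x}\mid\mathcal{L})=\psi\bigl(c(\Frob_{x})\bigr)$, where $\Frob_{x}$ is the geometric Frobenius at $x$. Unwinding the torsor at the decomposition group at $x$: the arithmetic Frobenius at $x$ acts on the fibre $L^{-1}(x)=\{g:\phi(g)g^{-1}=x\}$ by $g\mapsto\phi(g)=xg$, that is, after trivializing the fibre by a base point $g_{0}$, by left translation by the conjugate $g_{0}^{-1}xg_{0}$ of $x$; so $c$ sends the arithmetic Frobenius at $x$ to a conjugate of $x$ and the geometric Frobenius at $x$ to a conjugate of $x^{-1}$. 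As $\psi$ is conjugation-invariant this gives $\chi(x)=\psi(x^{-1})=\psi(x)^{-1}$, hence $\psi=\chi^{-1}$ and $\mathcal{L}=\chi^{-1}\circ c$. \textbf{The one delicate point} is precisely this last step, the careful tracking of arithmetic versus geometric Frobenius that produces the inverse in $\chi^{-1}$; the rest is formal. (Conversely, once one knows $\mathcal{L}=\chi^{-1}\circ c$, the triviality of $L^{*}\mathcal{L}$ is automatic, since $c$ becomes trivial after pullback along the cover it classifies.)
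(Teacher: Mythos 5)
Your proposal is correct and follows essentially the same route as the paper: triviality of the pullback via the multiplicativity isomorphism (the paper phrases it as $\sigma^*\mathcal L\otimes\mathcal L^{-1}\cong\Qellbar$ since $\mathcal L$ is defined over $\F_q$), descent of the monodromy through the Lang torsor to $H(\F_q)$, and identification of the character by examining Frobenius elements, with the inverse arising from the arithmetic/geometric Frobenius discrepancy. Your write-up merely makes explicit the bookkeeping the paper leaves implicit.
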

 
 \begin{proof} For the first fact, observe that the pullback of $\mathcal L$ along the 
 Lang isogeny is $\sigma^* \mathcal L \otimes \mathcal L^{-1} = \overline{\mathbb 
 Q}_\ell$ as $\mathcal L$ is defined over $\mathbb F_q$ and hence invariant under 
 $\sigma$. It follows that the monodromy representation of $\mathcal L$ factors through 
 $H(\mathbb F_q)$. By examining the Frobenius elements at points of $H(\mathbb F_q)$, we 
 obtain $\chi$ --- the inverse is obtained because of the difference between arithmetic and 
 geometric Frobenius. \end{proof}

\begin{lemma}\label{character-sheaf-uniqueness} Let $H$ be an algebraic group over a 
finite field $\mathbb F_q$. Every one-dimensional character of $H(\mathbb F_q)$ arises 
from at most one character sheaf.

The orders of the arithmetic monodromy group of the character sheaf, the geometric 
monodromy group of the character sheaf, and the character all agree.  \end{lemma}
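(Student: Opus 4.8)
The strategy is to deduce everything from Lemma~\ref{characters-come-from-Lang}, which pins down a character sheaf $\lL$ with character $\chi$ completely: as a representation of $\pi_1(H_{\F_q})$ it is the composite of the surjection $\pi_1(H_{\F_q})\to H(\F_q)$ attached to the Lang isogeny with the character $\chi^{-1}$. Given this, uniqueness is immediate. A rank one lisse $\Ql$-sheaf on the connected scheme $H_{\F_q}$ is determined up to isomorphism by the associated character of $\pi_1(H_{\F_q})$; by Lemma~\ref{characters-come-from-Lang} this character is forced to equal $\chi^{-1}$ composed with the Lang surjection, hence depends only on $\chi$. Thus two character sheaves with the same trace function have isomorphic monodromy representations and so are isomorphic, which is the assertion that $\chi$ arises from at most one character sheaf.

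For the statement about monodromy groups I would argue as follows. The Lang isogeny $g\mapsto\sigma(g)g^{-1}$ is finite \'etale Galois over $H$ with deck group $H(\F_q)$, and since $H$ is connected its source is connected, so the cover is connected and $\pi_1(H_{\F_q})\to H(\F_q)$ is surjective. The key additional point is that the same holds after base change to $\overline{\F}_q$: a connected group scheme over a field is geometrically connected, so $H_{\overline{\F}_q}$ is connected, and by Lang's theorem the base-changed isogeny is again a connected finite \'etale Galois cover with deck group $H(\F_q)$, whence $\pi_1(H_{\overline{\F}_q})\to H(\F_q)$ is also surjective. Combining these with Lemma~\ref{characters-come-from-Lang}, the image of the monodromy representation of $\lL$ on $\pi_1(H_{\F_q})$ (the arithmetic monodromy group) equals $\im(\chi^{-1})$, and the image on $\pi_1(H_{\overline{\F}_q})$ (the geometric monodromy group) equals $\im(\chi^{-1})$ as well. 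Finally $\im(\chi^{-1})=\im(\chi)$ is a finite cyclic subgroup of $\Ql^\times$ of order equal to the order of $\chi$, and being finite it is Zariski closed, so it is the monodromy group whether one reads this as the image of $\pi_1$ or as its Zariski closure. Hence all three orders coincide.

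The one step carrying genuine content is the surjectivity of $\pi_1(H_{\overline{\F}_q})\to H(\F_q)$, equivalently the connectedness of the Lang cover already over $\overline{\F}_q$: this is what forces the geometric monodromy group to be all of $\im(\chi)$ rather than a proper subgroup, and it rests on Lang's theorem over $\overline{\F}_q$ together with the geometric connectedness of $H$. Everything else is formal, using only Lemma~\ref{characters-come-from-Lang} and the equivalence between rank one lisse $\Ql$-sheaves and characters of the fundamental group. I would also remark at the start that $H$ is taken to be connected, as in the definition of character sheaf, since connectedness of $H$ is needed both for Lang's theorem and for the sheaf--representation dictionary on $H_{\F_q}$.
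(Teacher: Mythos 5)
Your proof is correct and follows the same route as the paper: both claims are deduced from Lemma~\ref{characters-come-from-Lang}, and the agreement of the geometric monodromy group hinges, exactly as in the paper, on the geometric connectedness of the total space of the Lang isogeny forcing $\pi_1(H_{\overline{\F}_q})$ to surject onto $H(\F_q)$. Your extra remarks (connectedness of $H$, finiteness of $\im(\chi)$ making the Zariski-closure issue moot) are just spelled-out details of the same argument.
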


\begin{proof} These statements follow immediately from Lemma 
\ref{characters-come-from-Lang}. For the second, it is sufficient to observe that the 
image of the geometric fundamental group inside $H(\mathbb F_q)$ is also $H(\mathbb 
F_q)$, because the total space $H$ of the Lang isogeny is geometrically connected. 
\end{proof}

To check that a character arises from a character sheaf, we will mainly use the following 
lemma:

\begin{lemma}\label{character-sheaf-construction}

	\begin{enumerate}[(i)]

\item Let $H$ be an abelian algebraic group over $\mathbb F_q$. Every one-dimensional 
character of $H(\mathbb F_q)$ arises from a unique character sheaf. The trace function 
on $H(\mathbb F_{q^n})$ of this sheaf is the composition of the original character 
with the norm map.

\item Let $f: H_1 \to H_2$ be an algebraic group homomorphism and let $\mathcal L$ be a 
character sheaf on $H_2$. Then $f^* \mathcal L$ is a character sheaf on $H_1$ whose trace 
function is the composition of the trace function of $\mathcal L$ with $h$. 

\end{enumerate}

Hence every character of the $\mathbb F_q$-points of an algebraic group that factors 
through a homomorphism to an abelian algebraic group arises from a unique character 
sheaf. \end{lemma}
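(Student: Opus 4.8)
The plan is to prove (ii) first, then (i), then combine them for the final sentence, with uniqueness throughout coming from Lemma~\ref{character-sheaf-uniqueness}.

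\emph{Part (ii).} Write $m_i \colon H_i \times H_i \to H_i$ for the multiplication maps. Since $f$ is a homomorphism, $m_2 \circ (f\times f) = f \circ m_1$. Pulling back the structure isomorphism $\mathcal L \boxtimes \mathcal L \isom m_2^* \mathcal L$ along $f \times f$ and using this identity rewrites it as an isomorphism $(f^*\mathcal L)\boxtimes (f^*\mathcal L) \isom m_1^*(f^*\mathcal L)$; together with the evident $(f^*\mathcal L)_e = \Qellbar$ this exhibits $f^*\mathcal L$ as a character sheaf on $H_1$. That its trace function is the composition of the trace function of $\mathcal L$ with $f$ on $\F_q$-points is the standard compatibility of trace functions of $\ell$-adic sheaves with pullback along an $\F_q$-morphism (Grothendieck's formula applied to stalks). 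This part is routine.

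\emph{Part (i).} The key point is that when $H$ is commutative the Lang isogeny $L\colon H \to H$, $g \mapsto \sigma(g)g^{-1}$, is a \emph{homomorphism} of algebraic groups with kernel $H(\F_q)$; indeed $L(gh) = \sigma(g)\sigma(h)h^{-1}g^{-1} = L(g)L(h)$ using commutativity. Given a character $\chi \colon H(\F_q) \to \Qellbar^\times$, let $\mathcal L_\chi$ be the rank-one lisse summand of $L_\ast \Qellbar$ on which $H(\F_q) = \Aut(L)$ acts through $\chi^{-1}$, so that its monodromy representation is $\chi^{-1}$ composed with $\pi_1(H_{\F_q}) \to H(\F_q)$, as in Lemma~\ref{characters-come-from-Lang}; by Lemma~\ref{characters-come-from-Lang} its trace function on $H(\F_q)$ is then $\chi$. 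To see $\mathcal L_\chi$ is a character sheaf, note that because $L$ is a homomorphism and $H$ is commutative we have $L \circ m = m \circ (L\times L)$, so the map $(a,b) \mapsto ab$ defines a morphism of $\F_q$-torsors from $(L\times L)\colon H\times H \to H\times H$ to $m^*L$ over $H\times H$, equivariant along the multiplication homomorphism $H(\F_q)\times H(\F_q) \to H(\F_q)$. Pushing out along $\chi^{-1}$ and using $\chi^{-1}(\alpha\beta) = \chi^{-1}(\alpha)\chi^{-1}(\beta)$ produces the required isomorphism $\mathcal L_\chi \boxtimes \mathcal L_\chi \isom m^*\mathcal L_\chi$. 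Uniqueness is immediate from Lemma~\ref{character-sheaf-uniqueness}.

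\emph{Trace function over $\F_{q^n}$, and the final claim.} For $x \in H(\F_{q^n})$, choose $a$ in the fiber of the Lang torsor over $x$, i.e.\ $\sigma(a)a^{-1} = x$; then in the commutative group $H$ a telescoping computation gives $\sigma^n(a)a^{-1} = x\,\sigma(x)\cdots\sigma^{n-1}(x) = \mathrm{Nm}_{\F_{q^n}/\F_q}(x)$, so $\Frob_x$ corresponds to $\mathrm{Nm}_{\F_{q^n}/\F_q}(x) \in H(\F_q)$, whence the trace function of $\mathcal L_\chi$ on $H(\F_{q^n})$ is $\chi \circ \mathrm{Nm}_{\F_{q^n}/\F_q}$. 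Finally, if a character $\chi$ of $H(\F_q)$ factors as $\psi \circ f_\ast$ for a homomorphism $f\colon H \to A$ to an abelian algebraic group and a character $\psi$ of $A(\F_q)$, take $\mathcal L_\psi$ from (i); then $f^*\mathcal L_\psi$ is a character sheaf by (ii) with trace function $\chi$ on $\F_q$-points, and uniqueness is Lemma~\ref{character-sheaf-uniqueness}. I expect the only delicate point to be presenting the torsor push-out manipulation in (i) cleanly enough that the multiplicativity isomorphism is manifestly the correct one, and keeping the arithmetic/geometric Frobenius normalization consistent with Lemma~\ref{characters-come-from-Lang}; everything else is formal.
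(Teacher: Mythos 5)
Your proposal is correct and follows essentially the same route as the paper: the paper's proof of (i) is precisely "construct the sheaf from the character via Lemma~\ref{characters-come-from-Lang} (the Lang isogeny, a homomorphism since $H$ is abelian) and check the multiplicativity isomorphism," which is what you carry out in detail, including the norm computation $\sigma^n(a)a^{-1}=\mathrm{Nm}_{\F_{q^n}/\F_q}(x)$ and the pullback argument for (ii). Your write-up just supplies the torsor push-out and telescoping details that the paper leaves as "checks immediately" and "a direct calculation."
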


\begin{proof} For assertion (i), one uses the construction of Lemma 
\ref{characters-come-from-Lang} to construct a sheaf from a character, and then checks 
immediately the necessary isomorphism to make it a character sheaf.

Assertion (ii) is a direct calculation.
\end{proof}

When performing harmonic analysis calculations with character sheaves, it is helpful to 
have a description of character sheaves directly in terms of points. This is provided, 
based on central extensions, by the following lemmas:

\index{central extension with Frobenius action}
\begin{lemma}\label{sheaf-central}Let $\tilde{H}$ be a central extension $1 \to \Ql^\times \to \tilde{H}  \to H(\overline{\mathbb F}_q) 
\to 1$ with an action of $\sigma$ such that both maps involved are equivariant. 

Then there exists a unique character sheaf $\mathcal L$ on $H$ whose trace function over 
$\mathbb F_{q^n}$ is given by  $g \mapsto \sigma^n ( \tilde{g} ) \tilde{g}^{-1}$ for 
$\tilde{g}$ any lift of 
$g$ from $H(\overline{\mathbb F}_q)$ to $\tilde{H}$.

Furthermore, every character sheaf arises from a central extension in this way.\end{lemma}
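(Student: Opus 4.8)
The plan is to construct, from a central extension $\tilde{H}$ with compatible $\sigma$-action, a character sheaf $\mathcal L$ via the Lang isogeny, and then to show conversely that every character sheaf arises this way. For the forward direction, first I would observe that the function $c(g) = \sigma^n(\tilde g)\tilde g^{-1}$ lies in $\Ql^\times$ (since $\sigma^n$ and conjugation agree on the central kernel and $\sigma^n(g)g^{-1}$ must be checked to equal the image of $\sigma^n(\tilde g)\tilde g^{-1}$ under the projection, which vanishes precisely when $g \in H(\F_{q^n})$) and is independent of the lift $\tilde g$, because two lifts differ by an element of $\Ql^\times$ which is $\sigma$-fixed. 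A short computation shows $c$ is multiplicative on $H(\F_{q^n})$, hence a character; since this holds compatibly for all $n$ (the $n$-th power construction is the restriction of the $(n\cdot m)$-th to the appropriate subgroup, up to norm), the family of characters is the trace function of a well-defined $\ell$-adic sheaf by the standard correspondence — but more cleanly, I would directly build the sheaf.

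**Building the sheaf.** Concretely, I would push out the central extension $1 \to \Ql^\times \to \tilde H \to H(\overline{\F}_q) \to 1$ along a surjection $\Ql^\times \to \mu$ onto a finite group (every continuous character $\Ql^\times \to \Ql^\times$ relevant here has finite image, so we may reduce to a finite quotient $\tilde H \to \overline H \to H(\overline{\F}_q)$ with kernel a finite group $A$), obtaining a finite $A$-cover $Y \to H$ in the étale sense once we twist by $\sigma$ appropriately — actually the right move is: the pair (central extension, $\sigma$-action) is exactly the data of an $A$-torsor on $H$ over $\F_q$, namely $Y = \{(h, \tilde h) : \pi(\tilde h) = h\}/(\text{suitable equivalence})$, and the associated rank-one sheaf $\mathcal L$ via the character $A \hookrightarrow \Ql^\times$ has the asserted trace function by the usual Lang–Steinberg bookkeeping, exactly as in Lemma~\ref{characters-come-from-Lang}. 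That the resulting $\mathcal L$ is a character sheaf — i.e. carries an isomorphism $\mathcal L \boxtimes \mathcal L \cong m^*\mathcal L$ — follows because the centrality of $\Ql^\times$ in $\tilde H$ makes the multiplication on $\tilde H$ descend to the required compatibility, and an isomorphism of lisse rank-one sheaves on the connected scheme $H \times H$ agreeing at the identity point is unique, so coherence is automatic. Uniqueness of $\mathcal L$ given the extension is Lemma~\ref{character-sheaf-uniqueness}.

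**The converse.** For the last sentence, given a character sheaf $\mathcal L$ on $H$, I would produce its central extension as follows: take $\tilde H$ to be the set of pairs $(h, \phi)$ where $h \in H(\overline{\F}_q)$ and $\phi$ is an isomorphism $\Ql \isom \mathcal L_h$ of stalks — equivalently a nonzero element of the stalk $\mathcal L_h$ — with group law coming from the character-sheaf structure $\mathcal L_h \otimes \mathcal L_{h'} \cong \mathcal L_{hh'}$. This sits in an extension $1 \to \Ql^\times \to \tilde H \to H(\overline{\F}_q) \to 1$ (the fiber over $h$ is a $\Ql^\times$-torsor, namely $\mathcal L_h \setminus 0$), it is central because the isomorphism $\mathcal L \boxtimes \mathcal L \cong m^* \mathcal L$ is symmetric up to the coherence already noted, and the geometric Frobenius $\sigma$ acts on stalks making both maps equivariant. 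Unwinding definitions, $\sigma^n(\tilde g)\tilde g^{-1}$ is precisely the scalar by which geometric Frobenius acts on $\mathcal L_g$ for $g \in H(\F_{q^n})$, which is the trace function of $\mathcal L$ (up to the arithmetic/geometric Frobenius inversion flagged in Lemma~\ref{characters-come-from-Lang}), so this extension recovers $\mathcal L$.

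**Main obstacle.** The routine parts are the multiplicativity and lift-independence checks. The subtler point, where I would spend the most care, is the bookkeeping between arithmetic and geometric Frobenius — i.e. pinning down whether the trace function comes out as $g \mapsto \sigma^n(\tilde g)\tilde g^{-1}$ or its inverse — and making sure the $\sigma$-equivariance of $\tilde H$ is exactly the data needed to descend $Y \to H$ from $\overline{\F}_q$ to $\F_q$ so that the trace-function formula is literally the one stated for every $n$ simultaneously. This is the same delicate sign/inverse issue already handled in Lemmas~\ref{characters-come-from-Lang} and~\ref{character-sheaf-uniqueness}, so I would phrase the argument to reduce to those.
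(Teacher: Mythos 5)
Your skeleton matches the paper's proof: in the forward direction you build $\mathcal L$ from the character $\chi\colon g \mapsto \sigma(\tilde g)\tilde g^{-1}$ of $H(\mathbb F_q)$ via the Lang cover, and in the converse you take $\tilde H$ to be pairs of a point $x$ and a nonzero vector in $\mathcal L_x$, with group law coming from $\mathcal L \boxtimes \mathcal L \cong m^*\mathcal L$; your converse paragraph is essentially the paper's argument and is fine, as are your checks that $c_n(g)=\sigma^n(\tilde g)\tilde g^{-1}$ is well defined and multiplicative.

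The gap is in the forward direction, where the two verifications that carry the content of the lemma are left undone, and one is mis-attributed. First, the assertion that the constructed sheaf has trace function $g\mapsto \sigma^n(\tilde g)\tilde g^{-1}$ over every $\mathbb F_{q^n}$ does not reduce to Lemma~\ref{characters-come-from-Lang}: that lemma only identifies the monodromy with $\chi^{-1}$ composed with $\pi_1(H_{\mathbb F_q}) \to H(\mathbb F_q)$, so what it hands you at a point $g \in H(\mathbb F_{q^n})$ is $\chi\bigl(h^{-1}\sigma^n(h)\bigr)$ for a Lang preimage $h$ (i.e.\ $\sigma(h)h^{-1}=g$). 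The identity $\chi\bigl(h^{-1}\sigma^n(h)\bigr)=\sigma^n(\tilde g)\tilde g^{-1}$ is exactly where the centrality of $\Ql^\times$ and the $\sigma$-equivariance of the extension are used, and it is the main computation in the paper's proof; calling it ``Lang--Steinberg bookkeeping'' and deferring it to earlier lemmas does not supply it, since no earlier lemma contains it. Second, the existence of the isomorphism $\mathcal L \boxtimes \mathcal L \cong m^*\mathcal L$ is justified only by saying the multiplication on $\tilde H$ ``descends''; since $H$ need not be commutative the Lang map is not a homomorphism, so this is not an argument (your uniqueness remark about lisse rank-one sheaves on a connected scheme addresses coherence, not existence). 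The paper instead checks that the trace function over every $\mathbb F_{q^n}$ is multiplicative (again a centrality computation, which you do sketch) and then deduces the isomorphism from equality of trace functions over all extensions via Chebotarev; with your first paragraph in hand this closes the gap, but you never invoke such a comparison. Relatedly, the detour through a pushout to a finite quotient is unnecessary, and its premise --- that all the characters $\chi_n$ land in a single finite subgroup of $\Ql^\times$ --- is only known a posteriori, as a consequence of the very trace identity deferred above.
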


\begin{proof} For the purposes of this proof, it is simpler to define the trace function 
using the arithmetic Frobenius, and then we invert to get the true trace function.

Given a central extension $\tilde{H}$, we form the associated character $\chi: 
H(\mathbb F_q) \to \Ql^\times$, $g \mapsto \sigma(\tilde{g}) \tilde{g}^{-1}$. It is easy to check 
that this is actually a group homomorphism.  %
We compose the Lang isogeny  homomorphism $\pi_1(H_{\mathbb F_q}) \to H(\mathbb F_q)$ with $\chi$ to produce a homomorphism $\pi_1 (H) \to \Ql^\times$ and hence a rank 
one sheaf $\mathcal L_\chi$, as in Lemma \ref{characters-come-from-Lang}.

Let us check that the trace function of $\mathcal L_\chi$ over $\mathbb F_{q^n}$ is given by $ 
g\mapsto \sigma^n(\tilde{g}) \tilde{g}^{-1}$.  Let $g$ be an element of $H(\mathbb 
F_{q^n})$ and let $\sigma(h) h^{-1}=g$.  By definition, the trace function of $\mathcal L_\chi$ at $g$ is defined as $\chi(a)$ for the unique $a \in H(\mathbb F_{q})$ such that $\sigma^n(h) = h a$. (Such $a$ exists because $\sigma ( \sigma^n(h) ) (\sigma^n(h))^{-1} = \sigma^n(g) =g$.)  In other words, the 
trace of $\mathcal L_\chi$ at $g$ is $\chi( h^{-1} \sigma^n(h))$. Choose $\tilde{h}$ a lift of $h$ and let 
$\tilde{g}= \sigma(\tilde{h}) \tilde{h}^{-1}$, so that \[\chi ( h^{-1} \sigma^n(h)) = 
\sigma( \tilde{h}^{-1} \sigma^n(\tilde{h}) ) \left( \tilde{h}^{-1} 
\sigma^n(\tilde{h})\right)^{-1}  = \sigma(\tilde{h})^{-1} \sigma^{n+1}(\tilde{h}) 
\sigma^n(\tilde{h})^{-1} \tilde{h} \] \[= 
\sigma^{n+1}(\tilde{h}) \sigma^n(\tilde{h})^{-1} \tilde{h}\sigma(\tilde{h})^{-1} = \sigma^n(\tilde{g}) \tilde{g}^{-1},\]
where we use the fact that we are working with an element of the center and 
hence may freely conjugate it by any element.

Second, let us check that the trace function of $\mathcal L_\chi$ over $\mathbb F_{q^n}$ is actually a 
character. This follows because \[\sigma^n(\tilde{g}_1\tilde{g}_2) (\tilde{g}_1 
\tilde{g}_2)^{-1} = \sigma^n (\tilde{g}_1 ) \sigma^{n}(\tilde{g}_2) \tilde{g}_2^{-1} 
\tilde{g}_1^{-1} = \sigma^n (\tilde{g}_1 )  \tilde{g}_1^{-1}  \sigma^{n}(\tilde{g}_2) 
\tilde{g}_2^{-1}  \] where we use that $\sigma^{n}(\tilde{g}_2) \tilde{g}_2^{-1} 
$ is central.

It now follows by the Chebotarev density theorem that $\mathcal L_\chi$ admits an isomorphism 
$\mathcal L_\chi \boxtimes \mathcal L_\chi \cong m^* \mathcal L_\chi$ because these two sheaves have the 
same trace function over every finite field. The uniqueness follows from Lemma 
\ref{character-sheaf-uniqueness}.

Conversely, given a character sheaf $\mathcal L$, define $\tilde{H}$ to be 
the set of pairs of a point  $x \in {H} (\overline{\mathbb F}_q)$ and a nonzero section 
of $\mathcal L_x$. Multiplication is given by $(x,s_x)(y,s_y) = (xy, s_x \otimes s_y)$ 
where we use the isomorphism $\mathcal L_x \otimes \mathcal L_y  = \mathcal L_{xy}$ 
induced by taking stalks in the isomorphism $\mathcal L \boxtimes \mathcal L= m^* 
\mathcal L$ that is part of the definition of a character sheaf.  Associativity for this 
multiplication follows from associativity for the isomorphism. To find units and 
inverses, it is sufficient to find them in the stalk over the identity of $H$, where they 
are obvious. 

By definition, the trace function of $\mathcal L$ at $x$ is the trace of Frobenius on $\mathcal L_x$, which because 
$\mathcal L_x$ is one-dimensional is the eigenvalue of Frobenius on the $\mathcal L_x$, which can be 
calculated as $\sigma^n(s_x) s_x^{-1}$ for $s_x$ a section of $\mathcal L_x$, which is equal 
to $\sigma^n(x,s_x)  (x,s_x)^{-1}$ for $(x,s_x)$ a lift of $x$.\end{proof}

\begin{lemma}\label{all-factorizable} \begin{enumerate} \item For $H_1, H_2$ two 
algebraic groups, any character sheaf on $H_1 \times H_2$ is $\mathcal L_1 \boxtimes 
\mathcal L_2$ for $\mathcal L_1$ and $\mathcal L_2$ character sheaves on $H_1$ and $H_2$. 
\item For $H$ an algebraic group over $\mathbb F_{q^n}$, any character sheaf on 
$\operatorname{Res}_{\mathbb F_{q^n}}^{\mathbb F_q} H$ is the Weil restriction of a 
character sheaf on $H$. \end{enumerate}
\end{lemma}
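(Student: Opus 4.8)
The plan is to reduce both assertions to Lemma~\ref{characters-come-from-Lang} and Lemma~\ref{character-sheaf-construction}(i)--(ii), which together say that the assignment from character sheaves to characters of rational points is injective, and that every character factoring through a homomorphism to an abelian group comes from a (unique) character sheaf. For (1), I would first pass to the character $\chi$ of $(H_1\times H_2)(\mathbb F_q)=H_1(\mathbb F_q)\times H_2(\mathbb F_q)$ attached to a character sheaf $\mathcal L$. Restricting $\chi$ to the two factors $H_i(\mathbb F_q)$ gives characters $\chi_1,\chi_2$, and clearly $\chi(x,y)=\chi_1(x)\chi_2(y)$ since $(x,y)=(x,e)(e,y)$ and $\chi$ is multiplicative. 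The characters $\chi_i$ are the restrictions of $\chi$ along the algebraic group homomorphisms $H_i\hookrightarrow H_1\times H_2$, so by Lemma~\ref{character-sheaf-construction}(ii) applied in reverse -- that is, using that the inclusions and projections are algebraic group homomorphisms -- one sees that $\chi_i$ is the character of the character sheaf $\mathcal L_i:=\iota_i^*\mathcal L$, where $\iota_i\colon H_i\to H_1\times H_2$. But I cannot conclude at the level of sheaves merely over $\mathbb F_q$: the correct approach is to form the candidate $\mathcal L_1\boxtimes\mathcal L_2$, which is a character sheaf on $H_1\times H_2$ by Lemma~\ref{character-sheaf-construction}(ii) (pullback along the two projections, then tensor -- noting that tensoring two character sheaves via the analogue of the multiplication isomorphism again yields a character sheaf), and observe that its trace function over every $\mathbb F_{q^n}$ agrees with that of $\mathcal L$, since over $\mathbb F_{q^n}$ the point set still factors as a product and the trace function of a character sheaf is multiplicative. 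By Lemma~\ref{character-sheaf-uniqueness} (or rather the uniqueness in Lemma~\ref{sheaf-central}), two character sheaves with the same trace function over every finite field are isomorphic, so $\mathcal L\cong\mathcal L_1\boxtimes\mathcal L_2$.

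For (2), the cleanest route is again through trace functions and the central-extension description of Lemma~\ref{sheaf-central}. Let $\mathcal L$ be a character sheaf on $\operatorname{Res}_{\mathbb F_{q^n}}^{\mathbb F_q}H$; its $\mathbb F_{q^m}$-points are $(\operatorname{Res}_{\mathbb F_{q^n}}^{\mathbb F_q}H)(\mathbb F_{q^m})=H(\mathbb F_{q^n}\otimes_{\mathbb F_q}\mathbb F_{q^m})$, which when $n\mid m$ is $\prod H(\mathbb F_{q^m})$ over the $\mathbb F_q$-embeddings $\mathbb F_{q^n}\hookrightarrow\mathbb F_{q^m}$, permuted transitively by $\operatorname{Frob}_q$. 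After base change to $\mathbb F_{q^n}$, the group $\operatorname{Res}_{\mathbb F_{q^n}}^{\mathbb F_q}H$ becomes $\prod_{i\in\mathbb Z/n}H$ with $\sigma=\operatorname{Frob}_q$ acting by cyclic shift composed with $\operatorname{Frob}_q$ on each factor, so by part (1) over $\overline{\mathbb F}_q$ the sheaf $\mathcal L_{\overline{\mathbb F}_q}$ is an external tensor product $\boxtimes_i\mathcal M_i$ of character sheaves on the factors. The $\sigma$-equivariant structure on $\mathcal L$ then forces $\mathcal M_{i+1}\cong\sigma^*\mathcal M_i$ (up to the identifications), so all $\mathcal M_i$ are determined by $\mathcal M_0=:\mathcal M$, and $\mathcal M$ inherits an $\operatorname{Frob}_{q^n}$-structure, hence descends to a character sheaf on $H$ over $\mathbb F_{q^n}$. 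One then checks that the Weil restriction of $\mathcal M$ has the same trace function as $\mathcal L$ over every $\mathbb F_{q^m}$ -- this is precisely the compatibility of Weil restriction of sheaves with the norm-type map on points, matching the fact that the trace function of $\mathcal L$ at a point of $H(\mathbb F_{q^n}\otimes\mathbb F_{q^m})$ is a product over embeddings -- and concludes by uniqueness of character sheaves with given trace function.

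The main obstacle, and the step to be careful about, is the descent/equivariance bookkeeping in part (2): making precise the identification of $(\operatorname{Res}_{\mathbb F_{q^n}}^{\mathbb F_q}H)_{\overline{\mathbb F}_q}$ with $\prod_{\mathbb Z/n}H_{\overline{\mathbb F}_q}$ together with the twisted $\sigma$-action, and verifying that the isomorphisms $\mathcal M_{i+1}\cong\sigma^*\mathcal M_i$ coming from $\sigma$-equivariance of $\mathcal L$ satisfy the cocycle condition needed to produce an honest descent datum for $\mathcal M$ over $\mathbb F_{q^n}$. Once the point-counting identity between trace functions is written down, the rest is immediate from the uniqueness statements already proved, so I would phrase the argument entirely in terms of trace functions over all $\mathbb F_{q^m}$ and invoke Lemma~\ref{sheaf-central} (or Lemma~\ref{character-sheaf-uniqueness}) to avoid manipulating the isomorphisms directly where possible. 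Part (1) I expect to be routine given the lemmas already in hand.
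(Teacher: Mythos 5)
Your part (1) is essentially the paper's proof: pull back $\mathcal L$ along the two factor inclusions, form $\mathcal L_1\boxtimes\mathcal L_2$, and conclude by equality of trace functions together with the uniqueness statement of Lemma~\ref{character-sheaf-uniqueness}. For part (2), however, you take a genuinely different and noticeably heavier route. The paper does not decompose $\mathcal L_{\overline{\mathbb F}_q}$ at all: it simply pulls $\mathcal L$ back to $(\operatorname{Res}_{\mathbb F_{q^n}}^{\mathbb F_q} H)_{\mathbb F_{q^n}}$ and then along the natural embedding of $H$ into that base change, which by Lemma~\ref{character-sheaf-construction}(ii) is automatically a character sheaf $\mathcal L'$ on $H$ \emph{already defined over} $\mathbb F_{q^n}$; one then checks that $\operatorname{Res}_{\mathbb F_{q^n}}^{\mathbb F_q}\mathcal L'$ and $\mathcal L$ have the same trace function and concludes by uniqueness, exactly as in (1). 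Your route --- decompose over $\overline{\mathbb F}_q$ into $\boxtimes_i\mathcal M_i$, extract $\mathcal M_0$, and descend it to $\mathbb F_{q^n}$ via the Frobenius equivariance --- produces the same candidate sheaf, but all the descent/cocycle bookkeeping you flag as the delicate step is avoidable, since pullback along an $\mathbb F_{q^n}$-morphism of algebraic groups needs no descent at all.

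Two caveats on your version, neither fatal but both in the step you yourself single out: first, you invoke ``part (1) over $\overline{\mathbb F}_q$,'' but part (1) as proved rests on trace functions and uniqueness of the sheaf attached to a character of the $\mathbb F_q$-points, so it is only available over finite fields; you would need to spread the decomposition out to a sufficiently large finite extension (and iterate (1) to $n$ factors) before arguing. Second, turning the isomorphisms $\mathcal M_{i+1}\cong\sigma^*\mathcal M_i$ into an honest descent datum gives a priori only a Weil-sheaf structure on $\mathcal M_0$, and you should say why it is an actual lisse sheaf over $\mathbb F_{q^n}$ (here the finite-order monodromy of a character sheaf, Lemma~\ref{character-sheaf-uniqueness}, rescues you). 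Both issues evaporate if you adopt the paper's direct construction of the candidate sheaf and keep your trace-function comparison as the final step.
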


\begin{proof} \begin{enumerate}

\item Let $\mathcal L$ be the character sheaf, let $\mathcal L_1$ be its pullback to 
$H_1$, and let $\mathcal L_2$ be its pullback to $H_2$. Then $\mathcal L_1 \boxtimes 
\mathcal L_2$ and $\mathcal L$ have the same trace function, hence are equal. 

\item Let $\mathcal L$ be the character sheaf, let $\mathcal L'$ be its pullback to 
$(\operatorname{Res}_{\mathbb F_{q^n}}^{\mathbb F_q} H)_{\mathbb F_{q^n}}$ and then to 
$H$, embedded diagonally. Then $\mathcal L$ and $\operatorname{Res}_{\mathbb 
F_{q^n}}^{\mathbb F_q} \mathcal L'$ have the same trace function and thus are equal. 
\qedhere
\end{enumerate}
\end{proof} 

\subsection{Weil Restrictions}

\begin{notation}\index{$G\lWR R \rWR$, Weil restriction of base change $G_R$} We work 
with 
the convention that, for an algebraic 
group $G$ over $k$ and a finite-dimensional ring $R$ over $k$,  $G \lWR R \rWR$ is the 
algebraic group whose $S$-points for a ring $S$ over $k$ are the $R \otimes_k S$ points 
of $G$. Equivalently, $G \lWR R \rWR$ is the Weil restriction $\operatorname{Res}_k G_R$ 
from $R$ to $k$ of the base-change $G_R$. \end{notation}

\begin{example} If we view $k^n$ as a ring by pointwise multiplication, then $G \lWR k^n 
\rWR = G^n$.  More generally, $G \lWR R_1 \times R_2 \rWR = G \lWR R_1 \rWR \times G \lWR 
R_2 \rWR$. For another generalization, if $k'$ is a separable $k$-algebra of degree $n$, 
and $\overline{k}$ is the algebraic closure of $k$, then $\left( G \lWR k' \rWR 
\right)_{\overline{k}} = G_{\overline{k}}^n$. \end{example}

\begin{example} $G \lWR k[t]/t^2 \rWR $ is an extension of $G$ by the Lie algebra 
$\mathfrak g$ of $G$, where $\mathfrak g$ is viewed as an additive group scheme. More 
generally, $G \lWR k[t]/t^n \rWR $ is an $n-1$-fold iterated extension of $G$ by 
$\mathfrak g$. \end{example}

By definition, we have $G \lWR R \rWR (k) = G(R)$, which we will use several times. This 
method of constructing a scheme whose $k$-points are $G(R)$ has many good properties. For 
us, the most important is that it is stable under base change, i.e., for any field $k'$ 
over $k$, 
$G_{k'} \lWR R \otimes_k k' \rWR = \left( G \lWR R \rWR \right)_{k'}$.

\subsection{Sheaves on Stacks} 

We always denote Verdier duality by $D$.

\begin{lemma}\label{ext-relation} Let $Y$ be a stack of finite type over an algebraically 
closed field and let $K_1$ and $K_2$ be bounded complexes of $\ell$-adic sheaves on $Y$.
\begin{enumerate} 
\item $H^i_c (Y, DK_1 \otimes K_2)$ is naturally dual to $\operatorname{Ext}^{-i}_Y ( 
K_2, K_1)$;
\item If $K_1$ and $K_2$ are perverse, then $H^i_c (Y, DK_1 \otimes K_2)$ vanishes for 
$i> 0$;
\item If $K_1$ and $K_2$ are perverse and semisimple, then $H^0_c(Y, DK_1 \otimes K_2) = 
\Hom(K_1,K_2)$.
\end{enumerate}
\end{lemma}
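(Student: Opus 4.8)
The plan is to deduce all three parts from Verdier duality on the stack $Y$, which holds in the required generality because $Y$ is of finite type over an algebraically closed field (so the formalism of \cite{LaszloOlsson} or \cite{LO2} applies). First I would establish (1). Recall the adjunction $\mathrm{R}\Hom_Y(K_2,K_1) = \mathrm{R}\Gamma(Y, \mathrm{R}\mHom(K_2,K_1))$, and the standard identity $\mathrm{R}\mHom(K_2,K_1) = DK_2 \otimes^{L} D(DK_1) = D(K_2 \otimes^{L} DK_1)$, using $D\circ D = \mathrm{id}$ and $D(A\otimes^L DB) = \mathrm{R}\mHom(A,B)$. Hence $\mathrm{R}\Hom_Y(K_2,K_1) = \mathrm{R}\Gamma(Y, D(DK_1\otimes^L K_2))$. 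Now Verdier duality on $Y$ identifies $\mathrm{R}\Gamma(Y, DM)$ with the dual of $\mathrm{R}\Gamma_c(Y,M)$ (as complexes of $\Qellbar$-vector spaces, with finite-dimensional cohomology since $Y$ is of finite type). Taking $M = DK_1\otimes^L K_2$ and passing to cohomology in degree $-i$ gives the natural isomorphism $\Ext^{-i}_Y(K_2,K_1) \cong H^i_c(Y, DK_1\otimes^L K_2)^\vee$, which is the claim.

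Next, (2). By part (1) it suffices to show $\Ext^{-i}_Y(K_2,K_1) = 0$ for $i>0$, i.e.\ $\Ext^j_Y(K_2,K_1) = 0$ for $j<0$. This is exactly the statement that negative Ext groups between perverse sheaves vanish, which is a formal consequence of the $t$-structure axioms: for $K_1,K_2$ in the heart of a $t$-structure, $\Hom_{D(Y)}(K_2, K_1[j]) = 0$ whenever $j<0$, since $K_1[j]$ lies in $D^{\ge -j}\subset D^{\ge 1}$ and $K_2\in D^{\le 0}$. The only point to check is that the perverse $t$-structure on the (unbounded, or at least bounded) derived category of $\ell$-adic sheaves on the finite-type stack $Y$ exists and has these properties; this is precisely the content of the construction of the perverse $t$-structure for Artin stacks. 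Then (3) follows by combining (1) and (2) with the case $i=0$: part (1) gives $H^0_c(Y, DK_1\otimes^L K_2)^\vee \cong \Ext^0_Y(K_2,K_1) = \Hom_Y(K_2,K_1)$, and I would then observe that when $K_1$ and $K_2$ are in addition semisimple perverse sheaves, $\Hom_Y(K_2,K_1)$ is canonically self-dual: decomposing both into simple summands, $\Hom$ between non-isomorphic simples vanishes and $\Hom$ between copies of a fixed simple $L$ is a matrix space $\mathrm{Hom}(\Qellbar^{a},\Qellbar^{b})$ whose dual is $\mathrm{Hom}(\Qellbar^{b},\Qellbar^{a}) = \Hom(K_1,K_2)$ in the symmetric pairing — equivalently, by Schur the pairing $\Hom(K_2,K_1)\times\Hom(K_1,K_2)\to \Qellbar$ given by composition and taking the coefficient on each simple is perfect. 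Dualizing the isomorphism of part (1) in degree $0$ and inserting this identification yields $H^0_c(Y, DK_1\otimes^L K_2) \cong \Hom(K_1,K_2)$.

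The main obstacle is not any single computation but rather making sure the duality and $t$-structure formalism is actually available on Artin stacks of finite type (rather than just schemes): one needs Verdier duality, the compatibility $\mathrm{R}\Gamma(Y,D(-)) = \mathrm{R}\Gamma_c(Y,-)^\vee$ with finiteness of the cohomology, and the existence and standard properties of the perverse $t$-structure. I would handle this by citing the Laszlo--Olsson theory of the six operations and $t$-structures for Artin stacks, after which parts (2) and (3) become the purely formal arguments above; everything else is bookkeeping with $D$, $\otimes^L$, and $\mathrm{R}\mHom$ that I would not spell out in detail.
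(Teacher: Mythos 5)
Your proposal is correct and follows essentially the same route as the paper: part (1) via the Laszlo–Olsson identification of compactly supported cohomology with the dual of cohomology of the Verdier dual together with the identity $D(DK_1\otimes K_2)=\mHom(K_2,K_1)$, part (2) from vanishing of negative Ext in the heart of the perverse $t$-structure on Artin stacks, and part (3) from semisimplicity making $\Hom(K_2,K_1)$ dual to $\Hom(K_1,K_2)$. The only difference is that you spell out the Schur-lemma pairing and the formal $D$, $\otimes^L$, $\mHom$ manipulations that the paper simply cites from Laszlo–Olsson.
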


\begin{proof} For part 1, by the definition of cohomology with compact supports 
\cite[\S9.1]{LO08},
\[ H^i_c (Y, DK_1 \otimes K_2) = \left(  H^{-i} ( Y, D ( DK_1 \otimes K_2) 
)\right)^{\vee}. \]
By \cite[Prop.6.0.12 and Thm.7.3.1]{LO08},
\[ H^{-i} ( Y, D ( DK_{1} \otimes K_{2}) ) = H^{-i} (Y, \mHom (K_{2}, K_{1})),\]
which in turn is equal to 
$ \operatorname{Ext}^{-i}_{Y}(K_{2},K_{1} ),$
by definition of $\operatorname{Ext}$, see \cite[Rem.5.0.11]{LO08}.

Part 2 follows because perverse sheaves are the heart of a t-structure by  
\cite[Thm.5.1]{LO09} and so their $\operatorname{Ext}^{-i}$ vanishes for $i>0$.

Part 3 follows because for semisimple perverse sheaves $\operatorname{Ext}^0 ( K_{2}, 
K_{1}) = \Hom(K_{2}, K_{1})$ is dual to $\Hom (K_{1}, K_{2})$.
\end{proof}

\begin{lemma}\label{cohbound-weak} Let $\iota: \overline{\mathbb Q}_\ell \to \mathbb C$ 
be an embedding. Let $Y$ be an Artin stack of finite type over $\mathbb F_q$ with affine 
stabilizers and let $K_1$ and $K_2$ be bounded complexes of $\ell$-adic sheaves on $Y$, 
$\iota$-pure of weights $w_1$ and $w_2$.
Then for any $j \in \mathbb Z$,  \[ \sum_{i =-\infty}^j   (-1)^i \tr
\left(
\Frob_{q^e} , \iota 
(  H^i_c (Y_{\overline{\mathbb F}_q},  DK_1 \otimes K_{2} 
)) 
\right)= 
O\left(\left(q^e\right)^{ \frac{j + w_2 -w_1 }{2}}\right),\]
where the constant in the big $O$ is independent of $e$ but may depend on $(Y,K_1,K_2)$.  
\end{lemma}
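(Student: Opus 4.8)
The plan is to bound each term of the sum by purity, factor out the top power of $q^e$, and then see that the remaining (infinite) sum converges uniformly in $e$ thanks to the finiteness properties of the cohomology of a finite-type stack with affine stabilizers.

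First I would do the weight bookkeeping. Since $K_1$ is $\iota$-pure of weight $w_1$, the Verdier dual $DK_1$ is $\iota$-pure of weight $-w_1$; as $K_2$ is $\iota$-mixed of weights $\le w_2$, the tensor product $DK_1 \otimes K_2$ is $\iota$-mixed of weights $\le w_2 - w_1$. Because $Y$ has affine stabilizers, the analogue for Artin stacks of Deligne's weight bound for compactly supported cohomology applies — this is the one point where the affine-stabilizer hypothesis is essential — so $H^i_c(Y_{\overline{\mathbb F}_q}, DK_1 \otimes K_2)$ is $\iota$-mixed of weights $\le i + w_2 - w_1$. Hence every eigenvalue of $\Frob_{q^e}$ on $\iota\bigl(H^i_c(Y_{\overline{\mathbb F}_q}, DK_1 \otimes K_2)\bigr)$ has absolute value at most $(q^e)^{(i + w_2 - w_1)/2}$, and therefore
\[
\Bigl| \tr\bigl( \Frob_{q^e}, \iota( H^i_c(Y_{\overline{\mathbb F}_q}, DK_1 \otimes K_2)) \bigr) \Bigr|
\le \dim H^i_c(Y_{\overline{\mathbb F}_q}, DK_1 \otimes K_2)\cdot (q^e)^{(i + w_2 - w_1)/2}.
\]
Summing over $i \le j$ and using the triangle inequality, the quantity in the lemma is bounded by
\[
(q^e)^{(j + w_2 - w_1)/2}\sum_{i \le j} \dim H^i_c(Y_{\overline{\mathbb F}_q}, DK_1 \otimes K_2)\,(q^e)^{(i-j)/2}.
\]
As $q^e \ge q \ge 2$ we have $(q^e)^{(i-j)/2} \le 2^{(i-j)/2}$ for every $i \le j$, so this last sum is at most the $e$-independent constant $C := \sum_{i \le j} \dim H^i_c(Y_{\overline{\mathbb F}_q}, DK_1 \otimes K_2)\, 2^{(i-j)/2}$, which would give exactly the claimed $O\bigl((q^e)^{(j+w_2-w_1)/2}\bigr)$ once $C$ is shown to be finite.

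The heart of the matter is thus the finiteness of $C$, i.e.\ controlling $\dim H^i_c(Y_{\overline{\mathbb F}_q}, DK_1 \otimes K_2)$ as $i \to -\infty$. Here I would invoke that, for a finite-type Artin stack $Y$ with affine stabilizers, each $H^i_c(Y_{\overline{\mathbb F}_q}, -)$ of a bounded constructible complex is finite-dimensional and, moreover, $\dim H^i_c$ grows at most polynomially in $|i|$. The growth bound I would establish by stratifying $Y$ into global quotient stacks $[X_\alpha/\GL_{n_\alpha}]$ and using the long exact sequences of the stratification to reduce to $Y = [X/\GL_n]$, where the cohomology is assembled from the finitely many groups $H^*_c(X,-)$ and the graded pieces of $H^*(B\GL_n) = \overline{\mathbb Q}_\ell[c_1,\dots,c_n]$, whose dimensions are polynomial in the degree; alternatively one can quote the convergence of the associated zeta function of such a stack. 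Polynomial growth forces $\sum_{i \le j} \dim H^i_c \cdot 2^{(i-j)/2}$ to converge, so $C < \infty$ and the estimate follows.

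I expect this growth/finiteness input — and, more concretely, pinning down the cleanest general statement and reference for both the weight bound and the finite-dimensionality of compactly supported cohomology of Artin stacks with affine stabilizers over $\mathbb F_q$ — to be the only genuine obstacle; everything else is purity followed by summing a geometric series, with the harmless simplification that the terms with $i$ close to $j$ are automatically finite in number and finite-dimensional.
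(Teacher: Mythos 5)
Your argument is sound in outline but it takes a genuinely different route from the paper at the crucial step, namely the treatment of the infinite tail $i \to -\infty$. The weight bookkeeping (purity of $DK_1$ of weight $-w_1$, mixedness of $DK_1 \otimes K_2$ of weight $\le w_2 - w_1$, and Sun's stacky analogue of Deligne's bound giving weights $\le i + w_2 - w_1$ on $H^i_c$) is exactly what the paper does. After that, you bound each trace by $\dim H^i_c$ times the maximal eigenvalue and must therefore control $\dim H^i_c(Y_{\overline{\mathbb F}_q}, DK_1 \otimes K_2)$ as $i \to -\infty$; the paper instead never bounds dimensions on the tail at all. It picks $j' \le j-1$ with $j' + w_2 - w_1 \le 0$, so that all Frobenius eigenvalues in degrees $i \le j'$ have absolute value $\le 1$, writes $|\Frob_q|^e = |\Frob_q|^{e-s}\,|\Frob_q|^s$, bounds the first factor by the maximal eigenvalue and the second by the $e$-independent convergent sum $\sum_i \tr(|\Frob_q|^s, H^i_c)$ supplied by Sun's Theorem 4.2(i), and only uses dimensions for the finitely many degrees $j' < i \le j$. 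What the paper's arrangement buys is that everything is a direct citation of Sun; what your arrangement buys is a more transparent ``purity plus geometric series'' shape, at the cost of a new lemma.

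That new lemma is the one weak point. The polynomial (or even just subexponential, below $2^{|i|/2}$) growth of $\dim H^i_c$ for a bounded constructible complex on a finite-type Artin stack with affine stabilizers is true, and your sketch is the right one — stratify by global quotients $[X/\GL_n]$ (Kresch; this is where affine stabilizers enter a second time), use excision triangles, and over $[X/\GL_n]$ use the fibration to $B\GL_n$ together with $H^*(B\GL_n) = \overline{\mathbb Q}_\ell[c_1,\dots,c_n]$ — but it is not a statement you can simply quote, and it needs the identification $H^i_c = (H^{-i}(\,\cdot\,, D(\,\cdot\,)))^\vee$ plus an argument that constructible sheaves on $B\GL_n$ over $\overline{\mathbb F}_q$ are constant, so it is a genuine (if routine) piece of work to write out. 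Your proposed shortcut, ``quote the convergence of the associated zeta function,'' does not substitute for it: zeta-function convergence controls weighted sums of traces of $|\Frob_q|$, and since eigenvalues can be arbitrarily small in very negative degrees this gives no lower bound per eigenvalue and hence no bound on dimensions. Ironically, that convergence statement is exactly the input the paper does use, but applied directly to $\tr(|\Frob_q|^s, H^i_c)$ rather than to extract Betti-number bounds.
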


\begin{proof}%

The tensor product $DK_1 \otimes K_{2}$ is necessarily mixed of weight $\leq w_2-w_1$ so 
by \cite[Thm.1.4]{Sun12L}, $H^i_c (Y_{\overline{\mathbb F}_q},  DK_1 \otimes K_{2} 
)$ is mixed of weight $\leq i+ w_2-w_1$.

Let $|\Frob_{q}|$ be the operator that acts on generalized eigenspaces of $\Frob_{q}$ 
with eigenvalue the absolute value of the corresponding eigenvalue of $\Frob_{q} $. 
Then we have 
\[ \left|  \sum_{i =-\infty}^{j-1}   (-1)^i \tr\left(\Frob_{q^e} , \iota (H^i_c (Y_{\overline{\mathbb 
F}_q},  DK_1 \otimes K_{2} )  ) \right) \right| \]
\[ \leq \sum_{i =-\infty}^{j-1}   \tr\left(|\Frob_{q}|^e,\iota ( H^i_c (Y_{\overline{\mathbb F}_q},  
DK_1 \otimes K_{2} ) )\right)   .\]
Then because all eigenvalues of $\Frob_{q}$ are $\leq q^{ \frac{j+ w_2 -w_1-1}{2}  }$, 
for any $0<s \leq   e$, we have
\begin{align*}
&\sum_{i =-\infty}^{j-1}   \tr(|\Frob_{q}|^e,  \iota ( H^i_c (Y_{\overline{\mathbb F}_q},  DK_1 
\otimes K_{2} )   ))\\
\leq  q^{ (e-s)  \frac{j+ w_2-w_1-1}{2}  }& \left( \sum_{i =-\infty}^{j-1}   
\tr(|\Frob_{q}|^s, \iota (H^i_c (Y_{\overline{\mathbb F}_q},  DK_1 \otimes K_{2} ) ) )  \right) 
\end{align*} 
(by \cite[Thm.4.2(i)]{Sun12L})
\[  \leq q^{ (e-s)  \frac{j+ w_2-w_1-1}{2}  }  O_s(1) .\]

Note that for $s$ sufficiently small we have
 \[q^{ (e-s)  \frac{j
+w_2-w_1-1 }{2}  }   < q^{ e\frac{j+ w_2-w_1}{2} } = \left(q^e\right) ^{  \frac{j+ 
w_2-w_1}{2}} \] because if $j +w_2 -w_1 -1 \geq 0$ this holds for all nonnegative $s$ and if $j+w_2-w_1-1 < 0$ this holds for all $s<   \frac{e}{ - (j+w_2-w_1 -1)}$.

Thus we have \[ \left|  \sum_{i =-\infty}^{j-1}   (-1)^i \tr\left(\Frob_{q^e} , \iota (H^i_c (Y_{\overline{\mathbb 
F}_q},  DK_1 \otimes K_{2} )  ) \right) \right|  = O\left( \left(q^e\right) ^{  \frac{j+ 
w_2-w_1}{2}}\right).\]

The remaining term satisfies  \[(-1)^j \tr(\Frob_{q^e} , \iota (H^j_c (Y_{\overline{\mathbb 
F}_q},  DK_1 \otimes K_{2} ) )  ) = O\left(\left(q^e\right) ^{ \frac{j + 
w_2-w_1}{2}}\right) 
\] where the constant in the big $O$ is the dimension of $H^j_c (Y_{\overline{\mathbb 
F}_q},  DK_1 \otimes K_{2} ) $.  Thus, the desired bound holds for both terms.
\end{proof}

\subsection{Linear recursive sequences and tensor power trick}
The following is a variant of Gelfand's formula $\lim\limits_{n\to \infty} 
||t^n||^{\frac1n}$ for the spectral radius of an endomorphism $t$.
\begin{lemma}[{\cite[\S3]{Deligne:Weil-I}},~\cite{Bombieri-Katz}]
\label{l:recursive-sequence}
Let $V$ be a finite-dimensional complex vector space, and $t\in \End(V)$. Then 
\[
\rho:= 
\limsup\limits_{n\to \infty} |\tr(t^n | V)|^{\frac1n}
\]
is the spectral radius of $t$, and
\[
|\tr(t^n | V )| \le \dim V \cdot \rho^n,\quad \text{for every $n\ge 0$}.
\]
\end{lemma}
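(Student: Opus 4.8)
The plan is to diagonalize the problem. Put $t$ in upper-triangular (Jordan) form, so that $\tr(t^n\mid V)=\sum_{i=1}^{N}\lambda_i^{\,n}$ for every $n\ge 0$, where $\lambda_1,\dots,\lambda_N$ are the eigenvalues of $t$ repeated according to algebraic multiplicity and $N=\dim V$. Write $r:=\max_i|\lambda_i|$ for the spectral radius. The displayed inequality, and the inequality $\rho\le r$, are then immediate: $|\tr(t^n\mid V)|\le\sum_i|\lambda_i|^n\le N r^n$, and extracting $n$-th roots gives $\rho=\limsup_n|\tr(t^n\mid V)|^{1/n}\le r$. So it remains only to prove $\rho\ge r$, after which $|\tr(t^n\mid V)|\le N r^n=\dim V\cdot\rho^n$ holds for all $n\ge 0$ (with $\rho^0=1$).

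For the reverse bound I would study the generating function $\Phi(z):=\sum_{n\ge 0}\tr(t^n\mid V)\,z^n$. Summing geometric series, $\Phi(z)=\sum_{i}\frac{1}{1-\lambda_i z}=\sum_{\mu}\frac{m_\mu}{1-\mu z}$ for $|z|<1/r$, where $\mu$ runs over the \emph{distinct} eigenvalues and $m_\mu\ge 1$ is the multiplicity of $\mu$. (If $r=0$, that is, $t$ is nilpotent, then $\tr(t^n\mid V)=0$ for $n\ge 1$ and $\rho=0=r$; so assume $r>0$.) On the one hand, by Cauchy--Hadamard the radius of convergence of the power series $\Phi$ is exactly $1/\rho$. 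On the other hand, $\Phi$ extends to a rational function of $z$, whose Taylor series at the origin converges precisely in the largest disc about $0$ containing no pole, so the radius of convergence equals the distance from $0$ to the nearest pole of $\Phi$; that nearest pole is at distance $1/r$. Comparing the two computations yields $\rho=r$, and the proof is complete.

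The one step that genuinely needs an argument -- and which I would flag as the main (if mild) obstacle -- is verifying that $\Phi$ really has a pole at each point $1/\mu$ with $\mu\ne 0$, in particular at distance $1/r$: a priori the dominant terms $\lambda_i^{\,n}$ with $|\lambda_i|=r$ could interfere destructively and force $\tr(t^n\mid V)$ to be small for every $n$. In the partial-fraction form this is transparent: near $z=1/\mu_0$ only the single summand $m_{\mu_0}/(1-\mu_0 z)$ is singular, with nonzero residue $-m_{\mu_0}/\mu_0$, so distinct eigenvalues contribute poles at distinct locations and no cancellation can occur. An equivalent, more hands-on way to rule out cancellation is to compute the Ces\`aro average $\frac1M\sum_{n=1}^{M}r^{-2n}|\tr(t^n\mid V)|^2\to\sum_{|\mu|=r}m_\mu^2>0$ as $M\to\infty$, using that $\frac1M\sum_{n\le M}e^{in\theta}\to 0$ whenever $\theta\not\equiv 0\pmod{2\pi}$; this forces $\limsup_n r^{-n}|\tr(t^n\mid V)|>0$ and hence $\rho\ge r$. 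Either route is short, so the whole argument is elementary.
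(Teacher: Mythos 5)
Your proof is correct and takes essentially the same route as the paper: both pass to a generating function of the traces, apply Cauchy--Hadamard to identify its radius of convergence with $1/\rho$, and then locate the nearest singularity at distance $1/r$ from the origin to conclude $\rho=r$. The only cosmetic difference is that the paper works with the logarithmic series $\sum_{n\ge 1}\tr(t^n|V)\,z^n/n=-\log\det(1-zt|V)$, whose singularities at $z=\lambda_i^{-1}$ are branch points, whereas you use the plain series $\sum_{n\ge 0}\tr(t^n|V)\,z^n=\sum_\mu m_\mu/(1-\mu z)$, whose singularities are poles with visibly nonzero residues (making the no-cancellation point, which you rightly flag, transparent).
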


\begin{proof}
Let $\lambda_1,\ldots,\lambda_{\dim(V)}$ denote the eigenvalues of $t$, so that
$\tr(t^n | V ) = 
\sum_{i} \lambda_i^n$.
The power series
\[
\sum^\infty_{n=1}
\tr(t^n|V) \frac{z^n}{n}
=
- \log \det(1-zt|V)
=
- \sum_i \log (1-\lambda_i z)
\]
has radius of convergence equal to $\rho^{-1}$ by the Cauchy--Hadamard theorem (note that 
$n^{\frac1n} \to 1$ as $n\to \infty$). Since it cannot be extended to an holomorphic 
function past the singularities at $z=\lambda_i^{-1}$, we deduce that $\rho$ is equal to 
$\max_i 
|\lambda_i|$, the spectral radius of $t$.
This establishes the first assertion, and then the inequality of the second assertion 
follows.
\end{proof}

\section{Compactly induced representations}\label{s:compactly-induced}

This section is concerned with first developing some preliminary material, leading up to the key definition of mgs representations, followed by giving some basic properties of the definition, then providing some examples and non-examples, and finally describing some additional useful properties. 

We begin, in \S\ref{s:vanishing-Jacquet}, with some purely representation-theoretic computations. In particular, we give in Corollary \ref{c:vanishing-Jmod} a concrete criterion on a subgroup $J \subset G(F) $ and a character $\chi$ such that every irreducible smooth representation of $G(F)$ containing a nonzero  $(J, \chi)$-invariant vector is supercuspidal. 

In \S\ref{s:geometric-version}, we define a ``monomial datum" as a geometric version of 
$(J,\chi)$, and say a datum is ``geometrically supercuspidal" if it satisfies a geometric 
version of this concrete criterion. These geometric analogues contain the classical 
versions in the sense that we can extract from a monomial datum a subgroup $J$ and 
character $\chi$, and they do satisfy the concrete condition if the original monomial datum 
is geometrically supercuspidal.

In \S\ref{s:mgsr}, we define the notion of a mgs representation as an irreducible smooth representation containing a $(J,\chi)$-invariant vector, where $(J,\chi)$ arise from a geometrically supercuspidal monomial datum in this way. (In particular, mgs representations are always supercuspidal.)

The calculations in \S\ref{s:vanishing-Jacquet} involve the compact induction $\cind^{G(F)}_J \chi$, but they do not require us to show that $\cind^{G(F)}_J \chi$ is itself an irreducible supercuspidal representation. Thus they have a different approach than works which aim to construct supercuspidal representations as inductions, where showing that the induced representation is irreducible is of the highest importance. On the other hand, in \S\ref{s:intertwining}, we give a way to check that a monomial datum is geometrically supercuspidal, which does involve showing that $\cind^{G(F)}_J \chi$ is irreducible and supercuspidal, and in \S\ref{sub:admissibility}, we show that $\cind^{G(F)}_J \chi$ is at worst a finite direct sum of supercuspidal representations under mild additional assumptions.

Our examples of mgs representations come in \S\ref{sub:epipelagic} and \S\ref{sub:adler}. These examples arise from existing constructions of supercuspidal representations, such as epipelagic representations and toral supercuspidals. 

In \S\ref{sub:non-examples}, we give examples of monomial data that are not geometrically supercuspidal. 

In \S\ref{sub:preservation}, we check that mgs representations are preserved under certain natural operations.

\subsection{Vanishing of Jacquet modules}\label{s:vanishing-Jacquet}
Let $G(F)$ be a reductive group over a non-archimedean local field $F$.
Let $P$ be a parabolic subgroup with Levi decomposition $P=MN$. The \emph{Jacquet module} 
$(\pi_N,V_N)$ of a smooth representation $(\pi,V)$ of $G(F)$ is the $N$-coinvariants 
of $V$, regarded as an $M$-module. This is an exact functor.
\begin{lemma}\label{l:vanishing-Jacquet}
	Let $\chi$ be a character of an open-compact subgroup $J$ and $P=MN$ a parabolic 
	subgroup.
The following properties are equivalent:
\begin{enumerate}[(i)]
\item The Jacquet module of $N$-coinvariants of the induced representation $\cind^{G(F)}_J 
\chi$ 
vanishes; 
\item for every $g\in {G(F)}$, the restriction of $\chi$ to $gNg^{-1}\cap J$ is 
non-trivial;
\item for every $g_1,g_2\in {G(F)}$, $\int_N f_\chi(g_1 n g_2)dn=0$, where 
\[
f_\chi(g) := 
\begin{cases}
\chi(g),& \text{if $g\in J$},\\
0,&\text{if $g\not\in J$}.
\end{cases}
\]
\end{enumerate}
\end{lemma}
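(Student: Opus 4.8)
The plan is to establish the three conditions as a cycle $(i)\Rightarrow(iii)\Rightarrow(ii)\Rightarrow(i)$. Throughout I would realize $\pi:=\cind^{G(F)}_J\chi$ concretely as the space of locally constant functions $f\colon G(F)\to\C$ with $f(jg)=\chi(j)f(g)$ for all $j\in J$ and with support compact modulo $J$, equipped with the right-translation action $(\pi(h)f)(g)=f(gh)$; let $V(N)\subseteq\pi$ be the span of the vectors $\pi(n)f-f$, so that the $N$-Jacquet module is $\pi_N=\pi/V(N)$. Two preliminary remarks: first, the function $f_\chi$ appearing in (iii) is exactly the standard generator $f_e:=\chi\cdot\mathbf{1}_J\in\pi$, and $(\pi(g_2)f_\chi)(g_1n)=f_\chi(g_1ng_2)$; second, every integral over $N$ below converges and is in fact a finite sum, because each $f\in\pi$ has support compact modulo the compact group $J$, so the relevant integrands are compactly supported on $N$.

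For $(i)\Rightarrow(iii)$ I would, for each fixed $g_1$, introduce the linear functional $\ell_{g_1}\colon\pi\to\C$, $\ell_{g_1}(f):=\int_N f(g_1n)\,dn$. The substitution $n\mapsto nn_0$ shows $\ell_{g_1}(\pi(n_0)f)=\ell_{g_1}(f)$, so $\ell_{g_1}$ kills $V(N)$ and descends to $\pi_N$; if $\pi_N=0$ then $\ell_{g_1}\equiv 0$, and evaluating at $f=\pi(g_2)f_\chi$ gives $\int_N f_\chi(g_1ng_2)\,dn=0$, i.e.\ (iii). For $(iii)\Rightarrow(ii)$ I would specialize (iii) to $g_1=g$, $g_2=g^{-1}$: since $f_\chi$ is supported on $J$, the integrand $n\mapsto f_\chi(gng^{-1})$ is supported on the compact open subgroup $N\cap g^{-1}Jg$ of $N$, on which it equals $\chi(gng^{-1})$, and the isomorphism $n\mapsto gng^{-1}$ identifies this subgroup with the compact group $gNg^{-1}\cap J$. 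Hence $\int_N f_\chi(gng^{-1})\,dn$ is a positive multiple of $\int_{gNg^{-1}\cap J}\chi$, which vanishes exactly when $\chi$ is non-trivial on $gNg^{-1}\cap J$; so (iii) forces (ii).

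For $(ii)\Rightarrow(i)$ I would use the standard fact that if $\int_{N_0}\pi(n)v\,dn=0$ for a compact open subgroup $N_0\subseteq N$ then $v\in V(N)$ --- indeed $\vol(N_0)\,v=\int_{N_0}\pi(n)v\,dn-\int_{N_0}(\pi(n)v-v)\,dn$, and the second integral is a finite linear combination of elements of $V(N)$ by smoothness of $v$. It then suffices to show each basis vector $f_{g_2}$ (supported on $Jg_2$, with $f_{g_2}(jg_2)=\chi(j)$) lies in $V(N)$, as these span $\pi$. Taking $N_0:=N\cap g_2^{-1}Jg_2$, the averaged vector $\Phi:=\int_{N_0}\pi(n)f_{g_2}\,dn$ is supported on the single $(J,N_0)$-double coset $Jg_2N_0$, is left $(J,\chi)$-equivariant and right $N_0$-invariant, hence $\Phi(jg_2n)=\chi(j)\Phi(g_2)$ and $\Phi$ is entirely determined by $\Phi(g_2)=\int_{N_0}\chi(g_2ng_2^{-1})\,dn$, which is a positive multiple of $\int_{g_2Ng_2^{-1}\cap J}\chi$ and so vanishes by (ii). Therefore $\Phi=0$, so $f_{g_2}\in V(N)$, and $\pi_N=0$.

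I expect the implication $(ii)\Rightarrow(i)$ to be the delicate one: it requires the Jacquet-type criterion for membership in $V(N)$ and the bookkeeping that the averaged basis vector is supported on a single $(J,N_0)$-double coset, so that its vanishing at the single point $g_2$ forces it to vanish identically. The other two implications are essentially formal, the only point requiring care being the convergence of the integrals over $N$, which rests solely on the compactness of $J$.
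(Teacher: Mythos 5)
Your proof is correct, and it is arranged differently from the paper's. The paper proves (i) $\Leftrightarrow$ (ii) and (ii) $\Leftrightarrow$ (iii) separately, whereas you close a single cycle (i) $\Rightarrow$ (iii) $\Rightarrow$ (ii) $\Rightarrow$ (i); your (i) $\Rightarrow$ (iii) and (iii) $\Rightarrow$ (ii) are essentially the paper's computations (an $N$-invariant period functional that factors through the Jacquet module, evaluated on translates of $f_\chi$, then the specialization $g_1=g$, $g_2=g^{-1}$), just packaged so that (iii) is reached directly from (i) rather than from (ii). The genuine divergence is in (ii) $\Rightarrow$ (i): the paper observes that for $n\in N_0=N\cap g_2^{-1}Jg_2$ the action of $n$ on the basis vector $f_{g_2}$ supported on the single coset of $g_2$ is just multiplication by the scalar $\chi(g_2ng_2^{-1})$, so in the coinvariants $\overline{f_{g_2}}=\chi(g_2ng_2^{-1})\,\overline{f_{g_2}}$ and a single nontrivial value of $\chi$ kills $\overline{f_{g_2}}$ outright, with no integration; you instead average over $N_0$ and invoke the standard criterion that a smooth vector annihilated by $\int_{N_0}\pi(n)\,dn$ lies in $V(N)$. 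Both are valid, and your averaging argument is the heavier of the two: in fact $\Phi=\bigl(\int_{N_0}\chi(g_2ng_2^{-1})\,dn\bigr)f_{g_2}$, because each $\pi(n)f_{g_2}$ with $n\in N_0$ is already the scalar multiple $\chi(g_2ng_2^{-1})f_{g_2}$, so noticing this would let you skip both the support/equivariance bookkeeping for $\Phi$ and the appeal to the Jacquet-type criterion. A minor by-product of the paper's arrangement is a direct proof of (ii) $\Rightarrow$ (iii) for arbitrary $g_1,g_2$, by translating the integral into a full character sum over $g_2^{-1}Ng_2\cap J$; in your scheme this implication is only obtained indirectly through (i), which is logically harmless for the equivalence.
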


\begin{proof}
We first show the direction (i) $\implies$ (ii).
	We view $\cind_{J}^{{G(F)}} 
	\chi$ as the space of smooth compactly supported functions $f$ on ${G(F)}$ satisfying 
	$f(gh)=f(g)\chi(h)$ for $h\in J$.
	 Since the functional $f \mapsto \int_{n\in 
		N} f(ng^{-1}) $ factors through the Jacquet module of $N$-coinvariants of  
		$\cind_{J}^{{G(F)}} 
	\chi$, it vanishes. 
	Take $f$ in this space to be the function supported on the left coset $g^{-1} J$ 
		such that 
	$f(g^{-1}h)=\chi(h)$ for  $h \in J$.
Then 
\[0=\int_{n\in 
		N} f(ng^{-1}) = \int_{n\in N \cap g^{-1}Jg} \chi(gng^{-1}) =  \int_{ h\in gN 
		g^{-1} \cap 
		J} \chi(h),\]
where the integrations are with respect to Haar measures.
	This implies that the restriction of $\chi$ to the subgroup $gNg^{-1}\cap J$ is 
	non-trivial.

For the direction (ii) $\implies$ (i), observe that a linear basis of $\cind_{J}^{{G(F)}} 
	\chi$ consists of
	the above functions $f_g: gh \mapsto \chi(h)$ supported on the left cosets $gJ$
 of $J$ in ${G(F)}$ for varying $g\in G(F)/J$. 
For $ h \in g N g^{-1} \cap 
	J$, the right translation of $f_g$ by $h$ is equal to $\chi(h) f_g$, and the right 
	translation of $f_g$ by $h$ is equal to the left translation of $f_g$ by an element 
	of $N$, which implies that the images of $f_g$ and $\chi(h) f_g$ in the Jacquet module 
	of 
	$N$-coinvariants are equal. Since property (ii) says that $\chi$ is nontrivial when 
	restricted to $g 
	N 
	g^{-1} \cap J$, this implies that the image of $f_g$ in the Jacquet module is zero. 
Since 
$\cind_{J}^{{G(F)}} 
	\chi$ projects onto the Jacquet module, and the $f_g$'s form a linear basis, we deduce 
	property (i), i.e,  that the Jacquet module vanishes.

For the implication (ii) $\Rightarrow$ (iii), suppose that $g_1 n_0 g_2\in J$ for some 
$n_0\in N$.
Then the condition $g_1 n g_2 \in J$ is equivalent to $g_2^{-1} n^{-1}_0 n g_2 \in J$.
Therefore
\[
\int_{n\in N} f_\chi(g_1 n g_2) =
\chi(g_1 n_0 g_2)
\int_{h\in g_2^{-1} N g_2\cap J}
\chi(h) = 0.
\]
The implication (iii) $\Rightarrow$ (ii) follows by taking $g_1=g$ and $g_2=g^{-1}$.
\end{proof}

\begin{lemma}\label{l:Frob-reciprocity}
The following properties of a smooth irreducible representation $(\pi,V)$ of $G(F)$ are 
equivalent:
\begin{enumerate}[(i)]
\item it has a non-zero $(J,\chi)$-invariant vector; 
\item it is a quotient of $\cind^{G(F)}_J \chi$.
\end{enumerate}
If one of these conditions holds and the Jacquet module of $N$-coinvariants of 
$\cind_J^{G(F)} \chi$ 
vanishes, 
then the Jacquet module of $N$-coinvariants of $\pi$ also vanishes: $V_N=0$.
\end{lemma}
\begin{proof}
The equivalence of
$\Hom_J(\chi,\pi) = 0$ and
$\Hom(\cind^{G(F)}_J \chi, \pi) = 0$
 is a form of Frobenius reciprocity~\cite[Thm.3.2.4]{Cas}.
The second assertion is consequence of the exactness of the Jacquet functor.
\end{proof}

Recall that an admissible representation $(\pi,V)$ is 
\emph{supercuspidal} if $V_N=0$ for every proper parabolic subgroup $P=MN$ of $G(F)$.
It is equivalent~\cite[Thm.5.3.1]{Cas} to that all the matrix coefficients of $(\pi,V)$ have 
compact 
support mod center.
If $(\pi,V)$ is irreducible, then it is sufficient to verify that one nonzero matrix 
coefficient has compact support mod center.
We deduce from Lemma~\ref{l:vanishing-Jacquet} and Lemma~\ref{l:Frob-reciprocity} the 
following which will be used often.

\begin{corollary}\label{c:vanishing-Jmod}
Let $\chi$ be a character of an open-compact subgroup $J$ of $G(F)$.
 The following four properties are equivalent:
\begin{enumerate}[(i)]
\item $\cind_J^{G(F)} \chi$ has vanishing Jacquet module of $N$-coinvariants for every proper 
parabolic subgroup $P=MN$;
\item the restriction of $\chi$ to $N\cap J$ is non-trivial for every proper 
parabolic subgroup $P=MN$ of $G(F)$;
\item  $f_\chi$ is a cuspidal function on $G(F)$;
\item every irreducible smooth 
representation of $G(F)$ with a non-zero $(J,\chi)$-invariant vector is supercuspidal.
\end{enumerate}
\end{corollary}

\begin{remark}
It is proved in~\cite{Bushnell:c-ind} that the following properties on the induced 
representation $\cind^{G(F)}_J \chi$ are 
equivalent: 
\begin{enumerate}[(i')]
\item it is admissible; 
\item it is supercuspidal; 
\item it is a finite direct sum of irreducible supercuspidals. 
\end{enumerate}
These properties (i')-(iii') are stronger that the properties (i)-(iv) of 
Corollary~\ref{c:vanishing-Jmod}, because (ii') $\implies$ (i), or because (iii') $\implies$ (iv).
\end{remark}

\subsection{Geometric version}\label{s:geometric-version}
Let $G$ be a reductive group over a 
finite field $\kappa$, $m$ a natural number, $H$ a connected subgroup of $G \lWR \kappa 
[t]/t^m 
\rWR$,  and $\mathcal L$ a character sheaf on $H$.
We call the quadruple $(G,m,H,\lL)$ a \emph{monomial datum}.

Let $J$ be the inverse image of $H(\kappa)$ in $G(\kappa \bseries{t})$ and let $\chi$ 
be the character induced by $\mathcal L$ on $H(\kappa)$ (see 
Lemma~\ref{trace-function-character-sheaf}), pulled back to $J$. The situation is 
described by the diagram
\begin{equation}\label{J}
\begin{tikzcd}
U_{m}(G(\kappa \bseries{t})) \arrow[r,hook] & J \arrow[r,two heads] \arrow[d,hook] & 
H(\kappa) 
\arrow[d,hook]\\
& G(\kappa \bseries{t} ) \arrow[r,two heads] & G(\kappa[t]/t^{m})
\end{tikzcd}
\end{equation}
 where $U_m ( G (\kappa\bseries{t}))$ is the subgroup of 
$G(\kappa\bseries{t})$ consisting of elements congruent to $1$ modulo 
$t^m$.\index{$U_m(G(\kappa\bseries{t}))$, principal congruence subgroup} In this 
diagram, the square is Cartesian and the sequence $U_m ( G (\kappa\bseries{t})) \to J 
\to H(\kappa)$ is short exact.

This datum defines a monomial representation 
$\operatorname{c-ind}_{J}^{G(\kappa\pseries{t})}
	\chi$. The following definition gives the geometric version of the property 
	that all of the Jacquet modules of $\cind_{J}^{G(\kappa\pseries{t})}
	\chi$ vanish: 

\begin{defi}\label{def:geometric-ind-data}
We say that the monomial datum $(G,m,H,\mathcal L)$ is \emph{geometrically supercuspidal} if 
for 
every proper parabolic subgroup $P=MN$ of $G_{\overline\kappa}$, and every $g \in G 
(\overline\kappa[t]/t^m)$, the restriction of $\mathcal L_{\overline\kappa}$ to the 
identity component of the
intersection 
$g N 
\lWR 
\overline\kappa [t]/ t^{m} \rWR g^{-1} \cap H_{\overline\kappa}$ is non-trivial. 
\index{geometric supercuspidal datum}
\end{defi}

The next Lemma~\ref{checking-supercuspidality} will imply 
a close relationship between this geometric property and the previous vanishing 
property of the Jacquet modules. 
For any finite field extension $\kappa'$ 
of $\kappa$, the datum $(G,m,H,\cL)$ is geometrically supercuspidal if and only 
$(G_{\kappa'},m,H_{\kappa'},\cL_{\kappa'})$ is geometrically supercuspidal.
Let 
	$J_{\kappa'}$ be the inverse image of $H(\kappa')$ in $G ( \kappa' \bseries{t})$ as 
	in the diagram~\eqref{J}. 
	Let 
	$\chi_{\kappa'}$ 
	be 
	the character induced by $\mathcal L$ on $H(\kappa')$, pulled back to $J_{\kappa'}$.

\begin{lemma}\label{checking-supercuspidality} The following properties are equivalent:
\begin{enumerate}[(i)]
\item for every finite extension $\kappa'$ of $\kappa$, the 
	induction 
	$\cind_{J_{\kappa'}}^{G(\kappa'\pseries{t})} 
	\chi_{\kappa'}$ has vanishing Jacquet modules;
\item for every finite extension 
$\kappa'$ of $\kappa$, 
every proper parabolic subgroup $P=MN$ of $G_{{\kappa'}\pseries{t}}$,
the restriction of $\chi_{\kappa'}$ to $N(\kappa'\pseries{t})\cap J_{\kappa'}$ is 
non-trivial;
\item $(G, m,H,\mathcal L)$ is geometrically supercuspidal;
\item for every field extension $\kappa'$ of $\kappa$, any proper parabolic subgroup $P=MN$ 
of $G_{\kappa'}$, and any $g \in G 
(\kappa'[t]/t^m)$, the restriction of $\mathcal L_{\kappa'}$ to the intersection of $g N 
\lWR \kappa' [t]/ t^{m} \rWR g^{-1}$ with $H_{\kappa'}$ is not geometrically isomorphic 
to a constant sheaf.
\end{enumerate}
\end{lemma}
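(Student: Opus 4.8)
The plan is to establish the cycle of implications (i) $\Rightarrow$ (ii) $\Rightarrow$ (iii) $\Rightarrow$ (iv) $\Rightarrow$ (i), using the already-proven Lemma~\ref{l:vanishing-Jacquet} to handle the first implication and the last (over each individual field $\kappa'$), and using general facts about character sheaves and connected groups to bridge the geometric and the pointwise conditions. The key observation underlying everything is that for a connected algebraic group $K$ over a field, a character sheaf on $K$ is geometrically trivial (i.e. isomorphic to a constant sheaf after base change to $\overline\kappa$) if and only if its restriction fails to ``see'' the geometry — and by Lemma~\ref{characters-come-from-Lang} and Lemma~\ref{character-sheaf-uniqueness}, over a finite field this is detected by the associated character of $K(\kappa')$ for $\kappa'$ large enough.

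First I would prove (i) $\Leftrightarrow$ (ii): this is immediate from the equivalence (i) $\Leftrightarrow$ (ii) of Lemma~\ref{l:vanishing-Jacquet}, applied to the local field $F = \kappa'\pseries{t}$, the compact open subgroup $J_{\kappa'}$, the character $\chi_{\kappa'}$, and each parabolic $P = MN$ of $G_{\kappa'\pseries{t}}$; one just has to note that $gN(\kappa'\pseries{t})g^{-1} \cap J_{\kappa'}$ matches the subgroup appearing there, and that by Lemma~\ref{l:unramified} every parabolic of $G_{\kappa'\pseries{t}}$ is defined over $\kappa'$, so the parabolics of $G_{\kappa'\pseries{t}}$ are exactly the base changes of parabolics of $G_{\kappa'}$. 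Next, (iii) $\Leftrightarrow$ (iv): condition (iv) is a reformulation of (iii) which makes explicit that the ``non-triviality'' in Definition~\ref{def:geometric-ind-data} means geometric non-triviality and that it suffices to quantify over $\overline\kappa$ — indeed one direction is trivial (take $\kappa' = \overline\kappa$ in (iv) and note that the identity-component restriction controls the whole-group restriction up to the constant-sheaf ambiguity), and for the other direction one uses that a character sheaf on $gN\lWR\kappa'[t]/t^m\rWR g^{-1}\cap H_{\kappa'}$ is geometrically constant iff its pullback to the identity component is, since the identity component has finite index and a character sheaf is a rank-one lisse sheaf.

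The substantive step is (ii) $\Rightarrow$ (iv) together with (iv) $\Rightarrow$ (i). For (ii) $\Rightarrow$ (iv): given a field extension $\kappa'/\kappa$, a parabolic $P \subset G_{\kappa'}$ with radical $N$, and $g \in G(\kappa'[t]/t^m)$, set $K := gN\lWR\kappa'[t]/t^m\rWR g^{-1}\cap H_{\kappa'}$, a connected (after passing to identity component — but I will argue this is automatic here, or restrict to it) subgroup of $H_{\kappa'}$. The point is that $K$, $P$, $N$, and $g$ are all defined over a finitely generated subfield, hence — after spreading out — over some finite extension $\kappa''$ of $\kappa$ inside $\kappa'$; then if $\mathcal L|_K$ were geometrically constant, by Lemma~\ref{character-sheaf-uniqueness} (applied over finite extensions $\kappa'''\supseteq\kappa''$) the associated character $\chi_{\kappa'''}$ would be trivial on $K(\kappa''')$ for all such $\kappa'''$, hence (pulling back through $g$ and the congruence-subgroup projection) $\chi_{\kappa'''}$ would be trivial on the relevant $N(\kappa'''\pseries{t})\cap J_{\kappa'''}$, contradicting (ii) for that $\kappa'''$. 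For (iv) $\Rightarrow$ (i): by Corollary~\ref{c:vanishing-Jmod} and Lemma~\ref{l:vanishing-Jacquet}, vanishing of Jacquet modules of $\operatorname{c-Ind}_{J_{\kappa'}}^{G(\kappa'\pseries{t})}\chi_{\kappa'}$ is equivalent to $\chi_{\kappa'}$ being nontrivial on $gN(\kappa'\pseries{t})g^{-1}\cap J_{\kappa'}$ for all $g$ and all $N$; since $\chi_{\kappa'}$ is the trace function of $\mathcal L$, if (iv) holds then $\mathcal L$ restricted to $K$ is not geometrically constant, so (by Lemma~\ref{character-sheaf-uniqueness} again, the character and the geometric monodromy group having the same order) its trace function over sufficiently large $\kappa'$ is nontrivial on $K(\kappa')$ — but one must get nontriviality over the \emph{given} $\kappa'$, which is why one leverages that passing to a further extension only helps and reduces to checking at the level where the monodromy is already witnessed. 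The main obstacle I anticipate is precisely this matching of ``geometric nontriviality of a sheaf'' with ``nontriviality of a character on $\kappa'$-points for the specified $\kappa'$'' rather than merely for all sufficiently large extensions: one has to be careful, using connectedness of $K$ and the Lang-isogeny description, that the geometric monodromy group of $\mathcal L|_K$ injects into $K(\kappa')$ and is detected already over $\kappa'$, so that no genuinely new extension is needed; handling the subtlety that $K$ may a priori be disconnected (passing to the identity component, which does not change the conclusion since the index is prime to nothing relevant and a rank-one character sheaf on a group is geometrically trivial iff it is so on the identity component) is the secondary bookkeeping point.
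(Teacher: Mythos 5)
Your overall architecture for the implications among (i), (ii) and (iii) matches the paper's: (i) $\Leftrightarrow$ (ii) via Lemma~\ref{l:vanishing-Jacquet}, the identity-component comparison to pass between the two geometric formulations, and Lemma~\ref{character-sheaf-uniqueness} (order of the character equals order of the geometric monodromy) to turn geometric nontriviality of $\mathcal L$ on the intersection into nontriviality of the trace character over the \emph{given} finite field. The genuine gap is in reaching (iv), which quantifies over \emph{arbitrary} field extensions $\kappa'$ of $\kappa$. Your reduction --- ``$K$, $P$, $N$, $g$ are defined over a finitely generated subfield, hence after spreading out over some finite extension $\kappa''$ of $\kappa$ inside $\kappa'$'' --- is false: a finitely generated extension of a finite field generally has positive transcendence degree, and $g\in G(\kappa'[t]/t^m)$ (and the parabolic $P\subset G_{\kappa'}$) may genuinely involve transcendental parameters, so they are not defined over any finite extension of $\kappa$. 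As written, your argument only treats algebraic extensions $\kappa'$, so the implication (ii)/(iii) $\Rightarrow$ (iv) is not established.

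What the paper does at this point is prove a constructibility statement: for a morphism $f\colon Y\to X$ of finite-type schemes and a lisse sheaf $\mathcal F$ on $Y$, the locus of $x\in X$ over which $\mathcal F$ restricted to the fiber is constant is constructible (by Noetherian induction and the generic base change theorem of \cite{sga4h}). Applying this to the family $gN\lWR\kappa[t]/t^m\rWR g^{-1}\cap H$ parameterized by $g\in G\lWR\kappa[t]/t^m\rWR$, condition (iii) says this constructible locus has no $\overline\kappa$-points, hence it is empty, hence has no $\kappa'$-points for \emph{any} field extension $\kappa'$, which is exactly (iv). Some device of this kind (constructibility, or an equivalent specialization argument for geometric constancy of the restricted sheaf) is indispensable and is absent from your proposal. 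Two smaller repairs: your claim that geometric constancy on the whole intersection is equivalent to triviality on the identity component is true, but the justification must use the multiplicative structure (translation by a point identifies the sheaf on each component with a twist of its restriction to the identity component), not merely ``finite index plus rank one'', for which the statement fails; and in your (ii) $\Rightarrow$ (iv) contradiction you assert $\chi_{\kappa'''}$ is trivial on $K(\kappa''')$ for \emph{all} further finite extensions, whereas geometric triviality only makes the trace function locally constant with values roots of unity on the components, and one must pass to a sufficiently large further extension to kill these Frobenius eigenvalues, as the paper does in its proof of (ii) $\Rightarrow$ (iii).
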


\begin{proof} The equivalence between (i) and (ii) follows from Lemma 
\ref{l:vanishing-Jacquet}. The implication (iv) $\implies$ (iii) follows by taking 
$\kappa'=\overline \kappa$. 

The direction (iii) $\implies$ (ii) is straightforward. It uses the fact that any quasi-split reductive group with 
a Borel subgroup, the Galois group of the base field acts on its Dynkin diagram, and 
parabolic subgroups are classified up to conjugacy by Galois-invariant subsets of the 
roots. Let $B$ be a Borel of $G_{\kappa'}$ and $B_{\kappa'\pseries{t}} \subseteq G_{\kappa'\pseries{t}}$ its 
pullback. Let $P$ be a parabolic subgroup of $G_{\kappa'\pseries{t}}$. Then $P$ is conjugate to a 
parabolic $P'$ containing $B_{\kappa'\pseries{t}}$. Let $S$ be the set of simple roots of 
$B_{\kappa'\pseries{t}}$ contained in the Levi of $P'$. Then $S$ is invariant under $\Gal( 
\kappa'\pseries{t})$. Because the action of $\Gal( \kappa'\pseries{t})$ on the simple roots of 
$B_{\kappa'\pseries{t}}$ factors through the action of $\Gal(\kappa')$ on the simple roots of $B'$, $S$ 
is also invariant under $\Gal(\kappa')$, so it corresponds to a parabolic subgroup $P_0$ of 
$G_{\kappa"} $ containing $B$. Because $P_{0, \kappa'\pseries{t}}$ contains $B_{\kappa'\pseries{t}}$, and 
has the same set $S$ of simple roots in its Levi, we have $P_{0, \kappa'\pseries{t}} = P'$ and 
thus $P_{0, \kappa'\pseries{t}}$ is conjugate to $P$. 
See~\cite[Ex.7.2.3]{Conrad:reductive-gp-schemes}.

Because of the Iwasawa decomposition 
$G(\kappa'\pseries{t})=P_0(\kappa'\pseries{t})G(\kappa'\bseries{t})$, we have that $P$ 
is $G(\kappa'\bseries{t})$-conjugate to $P_{0,\kappa' \pseries{t}} $. Thus $P = g P_0 g^{-1}$ for some $g \in G(\kappa'\bseries{t})$, and so we have $N = g N_{0, \kappa' \pseries{t}} g^{-1}$ for $N_0$ the unipotent radical of $P_0$. Hence $N (\kappa'\pseries{t}) = g N_0(\kappa' \pseries{t}) g^{-1}$. Finally, note that the restriction of 
$\chi_{\kappa'}$ to $gN_0(\kappa'\pseries{t})g^{-1}\cap J_{\kappa'}$ is the trace function 
over $\kappa'$ of the restriction of $\mathcal L$ to $gN_0 \lWR 
\kappa' [t]/ t^{m} \rWR g^{-1}\cap H$. 
Since this restriction is non-trivial, Lemma \ref{character-sheaf-uniqueness} implies 
that its trace function is a
non-trivial character.

So it remains to prove the converse (ii) $\implies$ (iii) $\implies$ (iv).

To verify (iii) $\implies$ (iv), let us first check that, given a morphism $f: Y\to X$ of 
schemes of finite 
type over 
	a field and a lisse sheaf $\mathcal F$ on $Y$, the property that $\mathcal F$ 
	restricted to a 
	fiber 
	of $f$ is constant defines a constructible subset of $X$. By Noetherian induction, it 
	is 
	sufficient 
	to solve the problem after restricting to any open subset of $X$. By \cite[Thm. 
	Finitude, 
	Th\'{e}or\`{e}me 1.9(2)]{sga4h}, there exists an open subset of $X$ such that for 
	each point 
	$x$ in 
	that subset,  \[\left( f_* \mathcal F\right)_x= H^0(Y_x,\mathcal F).\] Restrict to 
	that open 
	subset. Because the image of each irreducible component of $Y$ is constructible, we 
	can choose 
	a 
	smaller open subset of $X$ which is contained in the image of each irreducible 
	component of 
	$Y$ with 
	dense 
	image and  does not intersect the image of any irreducible component of $Y$ without dense image.
	After 
	base-changing to this open subset, each irreducible component of $Y$ maps 
	surjectively onto $X$ 
	(because the irreducible components without dense image no longer exist).  Now we 
	prove the 
	result 
	in this case. At any point $x$, if there is a section of $H^0(Y_x,\mathcal F)$ that 
	gives an 
	isomorphism between $\mathcal F$ and the constant sheaf, then the corresponding 
	section of $f_* 
	\mathcal F$ extends to some neighborhood, which gives an extension of the section of 
	$\mathcal 
	F$ 
	to the inverse image of that neighborhood, where because $\mathcal F$ is lisse it 
	must be an 
	isomorphism on every connected component of $Y$ that intersects that fiber. By 
	construction, every connected component of $Y$ intersects the fiber over $x$, so the 
	map is an isomorphism on the inverse image of the neighborhood of $x$. Hence the set 
	where 
	$\mathcal F$ is isomorphic to the constant sheaf is open, thus constructible, 
	verifying the claim.
	
	Consider the family of schemes $g N \lWR \kappa [t]/t^m  \rWR g^{-1} \cap H$ 
	parameterized by $ 
	g\in G \lWR \kappa [t]/t^m \rWR$. Let $\mathcal F$ be the pullback of $\mathcal L$ 
	to this 
	family. The set in $G \lWR \kappa [t]/t^m \rWR$ where $\mathcal F$ is 
	geometrically trivial on 
	the fiber is constructible. Property (iv) is equivalent to the claim 
	that this set 
	does not contain any point defined over any field extension of $\kappa$.  Because 
	this set is 
	constructible, it is sufficient to check this for every point defined over 
	$\overline\kappa$, which is exactly geometric supercuspidality. This establishes the direction (iii) $\implies$ (iv).

	We now establish (ii) $\implies$ (iii).
Fix a point $g\in G(\overline{\kappa} [t]/t^m)$.	 There exist some finite field 
extension $\kappa^*$ of $\kappa$ such that $g$ is defined over $\kappa^*$ and every 
	connected 
	component of $g N \lWR \kappa [t]/t^m \rWR g^{-1} \cap H$ is defined over $\kappa^*$. 
	If the 
	character sheaf $\mathcal L$ is geometrically trivial on $g N \lWR \kappa [t]/t^m 
	\rWR g^{-1} 
	\cap H$, then its trace function is necessarily constant on each connected component 
	of $g N 
	\lWR \kappa^*[t]/t^m \rWR g^{-1} \cap H$, and hence it corresponds to a character of 
	the 
	component group $\pi_0 \left(g N \lWR \kappa^* [t]/t^m \rWR g^{-1} \cap H \right)$. 
	Thus the 
	eigenvalue of Frobenius at each point is a root of unity of order dividing the order 
	of the 
	component group. We can pass to a further finite field extension $\kappa'/\kappa^*$ 
	that 
	trivializes the 
	eigenvalues of Frobenius at each point. Over this field extension, the corresponding 
	character 
	$\chi_{\kappa'}$ must be trivial when restricted to \[ \left( g N \lWR \mathbb \kappa 
	[t]/t^m  
	\rWR g^{-1} \cap H \right) \left(\kappa'\right)= g N(\kappa' [t]/t^m)g^{-1} \cap 
	H(\kappa'). \qedhere \]
\end{proof}

We deduce from Lemma \ref{checking-supercuspidality} that, in order to establish that 
a monomial datum $(G,m,H,\mathcal L)$ 
is 
geometrically supercuspidal, it suffices to 
verify 
the vanishing of all the Jacquet modules of all the representations compactly induced 
from (the inflation of) the characters $\chi_{\kappa'}$ 
of $H(\kappa')$, for all finite field extension $\kappa'/\kappa$.
This will 
enable us to apply standard techniques from representation theory of reductive groups 
over local fields to verify geometric supercuspidality.
Indeed, we will see examples of $(G,m,H,\mathcal L)$ satisfying these properties later in 
this section.

The following lemma shows that $H$ always lies in a maximal unipotent subgroup. This is useful because a maximal unipotent subgroup of $G (\kappa \pseries{t})$ sometimes lies in two subgroups, both isomorphic to $G(\kappa \bseries {t})$, but not conjugate to each other. The lemma allows us to transfer geometrically supercuspidal monomial data between the two subgroups.

\begin{lemma}\label{supercuspidal-parahoric} If $(G , m, H, \mathcal L)$ is 
geometrically supercuspidal, then $H$ is unipotent mod center. \end{lemma}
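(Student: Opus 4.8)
The plan is to argue by contradiction: suppose $H$ is not unipotent modulo the center $Z = Z(G)$, and produce a proper parabolic $P \subset G_{\overline\kappa}$ with unipotent radical $N$ and an element $g$ such that $\mathcal L_{\overline\kappa}$ restricted to the identity component of $gN\lWR\overline\kappa[t]/t^m\rWR g^{-1} \cap H_{\overline\kappa}$ is trivial, contradicting geometric supercuspidality. First I would reduce to $\overline\kappa$ and analyze the structure of the connected subgroup $H \subseteq G\lWR \kappa[t]/t^m\rWR$. Write $G\lWR\kappa[t]/t^m\rWR$ as an iterated extension of $G$ (the $m=1$ quotient) by copies of the Lie algebra $\mathfrak g$ viewed additively (as in the Example preceding the Weil restriction subsection). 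Let $\overline H$ be the image of $H$ in $G$ under the reduction $t \mapsto 0$. The hypothesis that $H$ is not unipotent mod $Z$ means precisely that $\overline H$ is not unipotent mod $Z$, i.e. $\overline H$ contains a nontrivial semisimple element, equivalently $\overline H$ modulo $Z$ contains a nontrivial torus (after conjugation).

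The key step is then: if $\overline H$ (mod $Z$) meets a nontrivial torus, one can find a proper parabolic $P$ with radical $N$ such that $N$ contains the full congruence piece $U := \ker(G\lWR\kappa[t]/t^m\rWR \to G)$ and such that $N\lWR\cdots\rWR \cap H$ has identity component on which $\mathcal L$ is forced to be trivial. Concretely, I would pick a cocharacter $\mu:\mathbb G_m \to G_{\overline\kappa}$ whose image lies in (a conjugate of) $\overline H$ modulo $Z$, not central, and let $P = P(\mu)$ be the associated parabolic with radical $N$; then $N\lWR\overline\kappa[t]/t^m\rWR$ is a unipotent group, and I claim its intersection with $H$ has identity component that is a \emph{unipotent} algebraic group which is generated by (or at least whose relevant part is) images of one-parameter unipotent subgroups conjugate under the $\mathbb G_m$-action of $\mu$. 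On such a group a character sheaf is automatically trivial: a character sheaf on a connected group is trivial on its commutator subgroup, and more to the point, if a connected unipotent group $V$ admits a $\mathbb G_m$-action with strictly positive weights contracting it to the identity, then any character sheaf on $V$ is geometrically trivial — because the $\mathbb G_m$-action gives a homotopy (in the sense of $\mathbb A^1$-invariance of lisse rank-one sheaves, or concretely: pull back along the action map $\mathbb A^1 \times V \to V$ and specialize) between the sheaf and its restriction to the identity. This last point is the crux.

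The main obstacle I expect is verifying that the intersection $gN\lWR\overline\kappa[t]/t^m\rWR g^{-1} \cap H$ — or rather its identity component — is actually large enough and sits inside a contracting $\mathbb G_m$-orbit closure so that the triviality-of-character-sheaf argument applies. A clean way around this is to leverage Lemma~\ref{checking-supercuspidality}: geometric supercuspidality is equivalent to vanishing of Jacquet modules of $\operatorname{c-Ind}_{J_{\kappa'}}^{G(\kappa'\pseries{t})}\chi_{\kappa'}$ over all finite extensions $\kappa'$, which by Lemma~\ref{l:vanishing-Jacquet}(iii) and Corollary~\ref{c:vanishing-Jmod} says $f_{\chi_{\kappa'}}$ is a cuspidal function. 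If $\overline H$ is not unipotent mod $Z$, then over a suitable finite extension $\kappa'$ the group $J_{\kappa'}$ contains a nontrivial $\kappa'$-split torus $S$ modulo the center; conjugating, $S$ lies in a Levi $M$ of a proper parabolic $P = MN$ of $G_{\kappa'\pseries{t}}$, and one shows using the structure of $J_{\kappa'}$ (it is the preimage of $H(\kappa')$, containing the deep congruence subgroup $U_m$) that $J_{\kappa'} \cap gNg^{-1}$ and $J_{\kappa'}\cap gN^-g^{-1}$ together with $S$ generate a subgroup on which $\chi_{\kappa'}$ cannot be cuspidal unless $\chi_{\kappa'}$ is trivial on the unipotent part — and by matching up the Bruhat cells / using that $S$ normalizes $N$ one derives that $\chi_{\kappa'}$ restricted to $gNg^{-1}\cap J_{\kappa'}$ is trivial, contradicting (ii) of Lemma~\ref{checking-supercuspidality}. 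I would present the argument in the geometric language of the $\mathbb G_m$-contraction since it is the shortest, falling back on the representation-theoretic reformulation only to handle the bookkeeping of which parabolic to choose.
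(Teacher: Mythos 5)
Your central geometric claim — that if a connected unipotent group $V$ carries a $\mathbb{G}_m$-action with strictly positive weights contracting it to the identity, then every character sheaf on $V$ is geometrically trivial "by $\mathbb{A}^1$-homotopy invariance" — is false in characteristic $p$, and this is exactly the crux you flag. The Artin--Schreier sheaf on $\mathbb{G}_a$ is a nontrivial character sheaf, $\mathbb{G}_a$ is contracted by scaling, and rank-one lisse sheaves on $\mathbb{A}^1$ over $\overline{\mathbb{F}}_p$ are not homotopy invariant (wild ramification at infinity). Worse, if your claim were true it would contradict the existence of geometrically supercuspidal data altogether: in the epipelagic examples of Section 3 the relevant intersections with unipotent radicals are vector groups carrying pulled-back Artin--Schreier sheaves, and nontriviality there is precisely what geometric supercuspidality demands. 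So contractibility alone cannot trivialize $\mathcal{L}$; what does the work in the paper's proof is \emph{invariance}: the cocharacter $\alpha:\mathbb{G}_m\to H$ (nontrivial since $H$ is not unipotent) is, after conjugating all reductive subgroups of $G\langle\kappa[t]/t^m\rangle$ into the constant copy of $G$, a one-parameter subgroup \emph{of $H$ itself}; since $\mathcal{L}$ is a character sheaf on $H$ it is invariant under conjugation by $\alpha(\mathbb{G}_m)$, which acts on the $\alpha$-invariant filtration of $H\cap N\langle\kappa[t]/t^m\rangle$ (for $P=P(\alpha)$, $N$ its radical) by scaling with nonzero weights on each $\mathbb{G}_a$-quotient — and no nontrivial rank-one lisse sheaf on $\mathbb{G}_a$ is invariant under scaling by arbitrary constants. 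The Artin--Schreier sheaf is killed by invariance, not by contraction.

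Your fallback via Lemma~\ref{checking-supercuspidality} gestures at the right objects (a torus $S$ meeting $J_{\kappa'}$ normalizing $N$), but it never identifies this mechanism: you appeal to cuspidality of $f_{\chi_{\kappa'}}$ and to $S$, $gNg^{-1}\cap J$, $gN^-g^{-1}\cap J$ "generating a subgroup on which $\chi$ cannot be cuspidal," which is not an argument; the correct statement is simply that $\chi$, being a character of $J$, is invariant under conjugation by $S(\kappa')\subseteq J$, which scales the unipotent coordinates — and even then the resulting statement over a fixed finite field is weaker than the geometric one, which is why the paper runs the argument with sheaves over $\overline{\kappa}$ rather than with characters. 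A second, repairable, slip: you choose your cocharacter inside the image $\overline{H}\subseteq G$ (mod $Z$), but the invariance argument needs a torus inside $H$ itself; the paper gets this by taking $\alpha:\mathbb{G}_m\to H$ directly and then using that every reductive subgroup of $G\langle\kappa[t]/t^m\rangle$ is conjugate into the constant section $G$, so that after conjugating the whole datum one may assume $\alpha$ lands in $G\cap H$ and acts on root groups with the expected weights.
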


\begin{proof} We will prove the contrapositive. Assume that $H$ is not unipotent modulo 
the center of $G\langle \kappa[t]/t^m \rangle$; we 
will show 
that $(G, m , H, \mathcal L)$ is not geometrically supercuspidal. A smooth connected 
algebraic group fails to be unipotent if and only if it admits a nontrivial homomorphism 
from $\mathbb G_m$ (possibly after extending the base field $\kappa$, which we may freely do).  Thus $H$ admits a homomorphism $\alpha:\mathbb G_m \to H$ that 
doesn't factor through the center of $G\langle \kappa[t]/t^m \rangle$.

The natural map $\kappa \to \kappa [t]/t^m$ defines a map $G \to G \lWR \kappa[t]/t^m \rWR$. Let us check that $G$ is a maximal reductive subgroup of $G \lWR \kappa[t]/t^m \rWR $. To do this, observe that the projection $G \lWR \kappa[t]/t^m \rWR  \twoheadrightarrow  G$ has reductive image and unipotent kernel, and because the composition $G \to  G \lWR \kappa[t]/t^m \rWR \twoheadrightarrow G$ is an isomorphism, $G$ is a maximal reductive subgroup.

It follows that every reductive subgroup of $ G \lWR \kappa[t]/t^m \rWR$ is conjugate to a subgroup of $G$. In particular, the image of $\alpha$ is conjugate to a a subgroup of $G$. Because the definition of geometrically supercuspidal is invariant under conjugacy, we may assume without loss of generality that the image of $\alpha$ is a subgroup of $G$. In other words, we may assume that
the composition $\G_m \xrightarrow{\alpha} H \subset G \lWR \kappa[t]/t^m \rWR$
factors through a 
nontrivial homomorphism $\G_m \xrightarrow{\beta} G \to G \lWR \kappa[t]/t^m \rWR$.

Let $T$ be a maximal $\kappa$-split torus of $G$ containing the image of $\beta$. Let $P$ be 
the parabolic 
subgroup of $G$ containing $T$ and every root subgroup of $G$ on which $\beta$ acts by 
conjugating with 
eigenvalue a nonnegative power of the parameter $\mathbb G_m \xrightarrow{\operatorname{id}} \mathbb 
G_m$. 
Let $N$ be 
the maximal 
unipotent 
subgroup of $P$. Then $\beta$ acts on each root subgroup of $N$ with eigenvalue a positive 
power of $\operatorname{id}$. 
Let 
$H' :=H \cap N \lWR \kappa[t]/t^m \rWR$. Then $H'$ is an iterated extension of copies of 
$\mathbb G_a$, on each of which $\beta$ acts by conjugation by a nonzero power of 
$\operatorname{id}$. In other 
words, $H'$ admits a $\beta$-invariant filtration $\{1\} = H'_0 \subseteq H'_1 \subseteq 
\dots \subseteq
H'_m 
= H'$. Let $i$ be the largest natural number such that $\mathcal L$ is geometrically 
trivial on $H'_i$.   Then $\mathcal L$ descends to $H' / H'_i$ and is nontrivial on 
$H'_{i+1}/H'_i$. Because $\mathcal L$ is a character sheaf on $H$, it is 
conjugacy-invariant. Hence it is invariant by the conjugacy action of $\beta$. Thus its 
restriction to $H'_{i+1}$ followed by descent to $H'_{i+1} / H'_i$ is invariant under the 
action 
of 
$\beta$, which is scaling by some nonzero power of $\operatorname{id}$. But there is no nontrivial 
lisse sheaf 
on 
$\mathbb G_a$ which is invariant by scaling by a nonzero power.  So in fact $i=m$, 
and $\mathcal L$ is trivial on $H'$, so $(G, m , H, \mathcal L)$ is not geometrically 
supercuspidal.
\end{proof}

\subsection{Intertwining}\label{s:intertwining}
Let $G$ be a reductive group over a finite field 
$\kappa$, $m$ a natural number, $H$ a connected subgroup of $G \lWR \kappa [t]/t^m \rWR$ 
containing 
the 
center,  and $\mathcal 
L$ a character sheaf on $H$. We can check that $(G,m, H, \mathcal L)$ is geometrically 
supercuspidal using a geometric analogue of the standard method, based on intertwining 
sets. 

The \emph{intertwining set} of $\mathcal{L}$ is the set of $g\in G(\overline{\kappa}\pseries{t})$ 
such that 
$\mathcal{L}\simeq \mathcal{L}^g$ on $H_{\overline{\kappa}}\cap H_{\overline{\kappa}}^g$.
\begin{lemma}
	If the intertwining set is equal to $J_{\overline{\kappa}}$, then  
	$(G,m,H,\mathcal{L})$ is 
	geometrically supercuspidal.
\end{lemma}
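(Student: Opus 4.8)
The statement is the geometric incarnation of the classical Mackey-theoretic fact that a character of a compact open subgroup of a reductive group over a local field, whose intertwining set is as small as it can be, has irreducible supercuspidal compact induction; the plan is to reduce to that fact, one finite extension at a time, by means of Lemma~\ref{checking-supercuspidality}.

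First I would invoke Lemma~\ref{checking-supercuspidality}: it suffices to show that for every finite extension $\kappa'/\kappa$ the representation $\operatorname{c-Ind}_{J_{\kappa'}}^{G(\kappa'\pseries{t})}\chi_{\kappa'}$ has vanishing Jacquet modules. Over a finite field $H(\kappa')$ is finite, so $J_{\kappa'}$ is a genuine compact open subgroup of $G(\kappa'\pseries{t})$, and it contains the centre since $H$ does. By the classical criterion — Mackey theory for compactly induced characters of a reductive group over a local field, in the circle of ideas of \cite{Bushnell:c-ind} — it therefore suffices to prove that the intertwining set of the character $\chi_{\kappa'}$ inside $G(\kappa'\pseries{t})$ equals $J_{\kappa'}$: then $\operatorname{c-Ind}_{J_{\kappa'}}^{G(\kappa'\pseries{t})}\chi_{\kappa'}$ is irreducible supercuspidal, hence cuspidal. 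One inclusion is automatic, $\chi_{\kappa'}$ being a homomorphism.

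For the reverse inclusion I would argue by contraposition and over all finite extensions at once. Assume $(G,m,H,\mathcal L)$ is not geometrically supercuspidal. By Lemma~\ref{checking-supercuspidality}, and by following the field-of-definition bookkeeping in the proof of its implication (ii)$\Rightarrow$(iii), one obtains a proper parabolic $P=MN$ of $G_{\overline\kappa}$, an element $g\in G(\overline\kappa[t]/t^m)$, and a finite extension $\kappa'/\kappa$ such that $\chi_{\kappa''}$ is trivial on $gN(\kappa''\pseries{t})g^{-1}\cap J_{\kappa''}$ for \emph{every} finite $\kappa''\supseteq\kappa'$ — once one has passed to the extension trivialising the character of the relevant component group, its Frobenius eigenvalues are roots of unity which stay equal to $1$ over every further extension. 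On each such $\kappa''$ the classical argument underlying the criterion (a contraction, using a cocharacter $\lambda$ of $Z(M)$ pairing positively with the roots of $N$, together with the Iwahori-type factorisation of $J_{\kappa''}$ relative to $gPg^{-1}$, which is available because $J_{\kappa''}$ contains the principal congruence subgroup $U_m$) produces an element of the intertwining set of $\chi_{\kappa''}$ in $G(\kappa''\pseries{t})$ lying outside $J_{\kappa''}$ and escaping to infinity; tracing through it, one sees this element can be taken to be a single power $a^{N_0}$ of $a:=\tilde g\,\lambda(t)\,\tilde g^{-1}$ (for $\tilde g\in G(\overline\kappa\bseries{t})$ a lift of $g$), with $N_0$ independent of $\kappa''$. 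Since $a^{N_0}$ then intertwines $\chi_{\kappa''}$ for all $\kappa''\supseteq\kappa'$, and a character sheaf is determined by its trace functions over all finite fields (Lemmas~\ref{characters-come-from-Lang} and~\ref{character-sheaf-uniqueness}), $a^{N_0}$ lies in the geometric intertwining set of $\mathcal L$; but $\lambda$ is not central in $G$, so $a^{N_0}\notin H_{\overline\kappa}$ for $N_0$ large, contradicting the hypothesis. This yields the reverse inclusion, hence the lemma.

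The step I expect to be the main obstacle is making the contraction argument work uniformly: producing a single intertwiner $a^{N_0}$, with $N_0$ independent of $\kappa''$, for all the characters $\chi_{\kappa''}$ simultaneously. This requires care with the Iwahori factorisation of the non-standard compact open subgroups $J_{\kappa''}$ (reduce to $U_m$, which is pro-unipotent and does factor), with keeping track of which part of $J_{\kappa''}\cap a^{N_0}J_{\kappa''}a^{-N_0}$ is pushed deep into $U_m$ (where $\chi_{\kappa''}$ is automatically trivial), and with the fact — which is exactly why one cannot work over an arbitrary finite field but must use the ``good'' extension supplied by Lemma~\ref{checking-supercuspidality} — that triviality of a trace function over one finite field does not propagate to larger ones.
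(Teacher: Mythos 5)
Your reduction via Lemma~\ref{checking-supercuspidality} to the vanishing of the Jacquet modules of $\cind_{J_{\kappa'}}^{G(\kappa'\pseries{t})}\chi_{\kappa'}$ over every finite extension, and the passage from ``intertwining $=J_{\kappa'}$'' to irreducibility and hence supercuspidality via the criterion of \cite{Bushnell:c-ind}, is exactly the paper's route. The divergence, and the gap, is in how you try to pin down the intertwining of $\chi_{\kappa'}$. The intended step is short and direct: if $g\in G(\kappa'\pseries{t})$ intertwines $\chi_{\kappa'}$, then the two restrictions of $\mathcal L$ and $\mathcal L^{g}$ to (the identity component of) the intersection are character sheaves with equal trace functions over $\kappa'$, hence are isomorphic by Lemma~\ref{character-sheaf-uniqueness}; so $g$ lies in the geometric intertwining set, which by hypothesis is $H_{\overline\kappa}$, and therefore $g\in J_{\kappa'}$. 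Instead you argue by contraposition from ``not geometrically supercuspidal'', which is the negation of the lemma's \emph{conclusion}, not of the inclusion you announced you would prove; as organized, your steps (1)--(2) are dead weight and the entire burden falls on the explicit construction in step (3).

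That construction is where the argument would fail. Triviality of $\chi_{\kappa''}$ on $gN(\kappa''\pseries{t})g^{-1}\cap J_{\kappa''}$ alone does not make a strictly positive element $a^{N_0}=\tilde g\,\lambda(t)^{N_0}\tilde g^{-1}$ intertwine $\chi_{\kappa''}$: the standard computation that a $P$-positive element intertwines a character requires an Iwahori-type factorization of the whole group $J_{\kappa''}$ with respect to $gPg^{-1}$ (containing $U_m$ gives such a factorization for $U_m$, not for $J_{\kappa''}$, whose reductive-quotient part $H(\kappa'')$ need not be compatible with the conjugated parabolic), and, more seriously, control of $\chi_{\kappa''}$ on the \emph{opposite} unipotent intersection $J_{\kappa''}\cap g\bar N g^{-1}$, about which the failure of geometric supercuspidality says nothing. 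So the element $a^{N_0}$ need not intertwine, and the contradiction with the hypothesis is not reached. (If you insist on the contrapositive, the correct route is abstract, not constructive: a nonvanishing Jacquet module forces $\cind_{J_{\kappa'}}^{G(\kappa'\pseries{t})}\chi_{\kappa'}$ to be non-supercuspidal, hence reducible, hence by Mackey there is \emph{some} extra intertwiner, and one finite field already suffices to promote it to a geometric intertwiner by Lemma~\ref{character-sheaf-uniqueness} --- the uniformity in $\kappa''$ you worry about is a non-issue --- but this is just the paper's argument read backwards, with no need for, and no access to, an intertwiner of the special shape $a^{N_0}$.)
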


\begin{proof}
	Applying Lemma~\ref{checking-supercuspidality}, it suffices to verify the 
	vanishing of all the 
	Jacquet modules of the induced representation $\cind^{G(\kappa' \pseries{t})}_{J_{\kappa'}} 
	\chi_{\kappa'}$ for every finite extension 
	$\kappa'/\kappa$.
By assumption, an element $g\in G(\kappa' \pseries{t})$ interwines $\chi_{\kappa'}$ in the sense that
$\chi_{\kappa'}(h) = \chi_{\kappa'}(g h g^{-1})$ for every $h\in J_{\kappa'} \cap g^{-1} J_{\kappa'} g$ if and 
only if $g\in J_{\kappa'}$.
	The vanishing of all the Jacquet modules of $\cind^{G(\kappa' \pseries{t})}_{J_{\kappa'}} 
	\chi_{\kappa'}$ then follows, in the stronger form of the irreducibility and cuspidality of  $\cind^{G(\kappa' \pseries{t})}_{J_{\kappa'}} 
	\chi_{\kappa'}$,
from the argument of \cite[\S3.11.4]{BH}. See also \cite[Prop.2.4]{Bushnell:c-ind}.
\end{proof}

\begin{lemma}
	Suppose that there is another subgroup $K$ containing $H$ as a normal subgroup, and 
	that the 
	intertwining set is equal to the set of $g$ whose reduction modulo $t^m$ is in $K(\overline 
	\kappa)$ and such that 
	$\mathcal{L}\simeq 
	\mathcal{L}^g$.
	Then $(G,m,H,\mathcal{L})$ is 
	geometrically supercuspidal.
\end{lemma}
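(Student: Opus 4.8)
The plan is to reduce, via Lemma~\ref{checking-supercuspidality}, to a statement about vanishing Jacquet modules, and then to run the standard argument with compactly induced representations. By the equivalence (i)$\Leftrightarrow$(iii) of Lemma~\ref{checking-supercuspidality} it suffices to show that for every finite extension $\kappa'/\kappa$ the representation $\cind_{J_{\kappa'}}^{G(\kappa'\pseries{t})}\chi_{\kappa'}$ has vanishing Jacquet modules, and by the Remark following Corollary~\ref{c:vanishing-Jmod} (which cites \cite{Bushnell:c-ind}) this holds as soon as that representation is admissible, equivalently a finite direct sum of irreducible supercuspidals. Let $\widetilde K_{\kappa'}$ be the inverse image of $K(\kappa')$ in $G(\kappa'\bseries{t})$; since $H\triangleleft K$ and $K(\kappa')$ is a finite group, $\widetilde K_{\kappa'}$ is a compact open subgroup of $G(\kappa'\pseries{t})$ in which $J_{\kappa'}$ is normal of finite index. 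Granting the containment $I_{G(\kappa'\pseries{t})}(\chi_{\kappa'})\subseteq\widetilde K_{\kappa'}$ (discussed below), one decomposes the finite-dimensional representation $\cind_{J_{\kappa'}}^{\widetilde K_{\kappa'}}\chi_{\kappa'}$ into irreducibles $\rho_i$ of $\widetilde K_{\kappa'}$, so that $\cind_{J_{\kappa'}}^{G(\kappa'\pseries{t})}\chi_{\kappa'}\cong\bigoplus_i\bigl(\cind_{\widetilde K_{\kappa'}}^{G(\kappa'\pseries{t})}\rho_i\bigr)^{\oplus m_i}$ by transitivity of induction. Since $\chi_{\kappa'}$ occurs in $\rho_i|_{J_{\kappa'}}$, Clifford theory relative to the normal subgroup $J_{\kappa'}$ gives $I_{G(\kappa'\pseries{t})}(\rho_i)\subseteq I_{G(\kappa'\pseries{t})}(\chi_{\kappa'})\subseteq\widetilde K_{\kappa'}$, while trivially $\widetilde K_{\kappa'}\subseteq I_{G(\kappa'\pseries{t})}(\rho_i)$; hence by the irreducibility criterion \cite[Prop.~(2.4)]{Bushnell:c-ind} each $\cind_{\widetilde K_{\kappa'}}^{G(\kappa'\pseries{t})}\rho_i$ is irreducible, so admissible, so supercuspidal as it is compactly induced from a compact open subgroup. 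Then $\cind_{J_{\kappa'}}^{G(\kappa'\pseries{t})}\chi_{\kappa'}$ is a finite sum of supercuspidals, and we conclude by Lemma~\ref{checking-supercuspidality}.

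It remains to establish the containment $I_{G(\kappa'\pseries{t})}(\chi_{\kappa'})\subseteq\widetilde K_{\kappa'}$, which is where the hypothesis on the intertwining set of $\mathcal L$ enters. If $g\in G(\kappa'\pseries{t})$ intertwines $\chi_{\kappa'}$, then the trace functions of $\mathcal L_{\kappa'}$ and of its conjugate $\mathcal L^{g}_{\kappa'}$ agree on the $\kappa'$-points of the identity component of the intersection $H_{\kappa'}\cap H^{g}_{\kappa'}$, so by the uniqueness of a character sheaf with a given trace function (Lemma~\ref{character-sheaf-uniqueness}) the sheaves $\mathcal L$ and $\mathcal L^{g}$ become geometrically isomorphic on that identity component, and therefore $g$ lies in the intertwining set of $\mathcal L$. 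By hypothesis that set consists of elements of $K(\overline{\kappa}\pseries{t})$, so $g\in K(\overline{\kappa}\pseries{t})\cap G(\kappa'\pseries{t})$, and since $K$ is a closed subgroup this forces $g$ into the corresponding group over $\kappa'$, hence into $\widetilde K_{\kappa'}$.

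The step I expect to be the main obstacle is precisely this containment. One must pass faithfully between the sheaf-theoretic intertwining condition over $\overline{\kappa}$ supplied by the hypothesis and the representation-theoretic intertwining of $\chi_{\kappa'}$ over each finite $\kappa'$: this requires keeping careful track of the identity components of the (possibly disconnected) groups $H_{\kappa'}\cap H^{g}_{\kappa'}$, and, for $g$ with poles, of the fact that the conjugate $H^{g}$ is no longer visibly a subgroup ``at level $t^{m}$'', so that the statement ``$\mathcal L^{g}$ on $H\cap H^{g}$'' has to be interpreted correctly before Lemma~\ref{character-sheaf-uniqueness} can be invoked. Everything downstream of the containment is the same standard manipulation of compactly induced representations as in the proof of the preceding lemma, with one extra Clifford-theoretic decomposition step because the intertwining need no longer be reduced all the way down to $H$; note in particular that one cannot simply extend $\mathcal L$ to a character sheaf on $K$ and apply the preceding lemma, since that extension may be obstructed.
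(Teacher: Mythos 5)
Your proposal is correct and follows essentially the same route as the paper: the paper's proof likewise reduces via Lemma~\ref{checking-supercuspidality} to the vanishing of Jacquet modules over every finite extension $\kappa'$, and then disposes of that step by citing \cite[Lemma~2.2]{Reeder-Yu:epipelagic}, which is precisely the Clifford-theory/Bushnell-type argument (intertwining contained in a compact-mod-center open subgroup $\widetilde K_{\kappa'}$ implies the compact induction is a finite sum of irreducible supercuspidals) that you write out. The translation from the geometric intertwining hypothesis over $\overline{\kappa}$ to the intertwining of $\chi_{\kappa'}$ over each finite extension, which you flag as the main obstacle, is left implicit in the paper's one-line proof as well, so it is not a gap specific to your argument.
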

\begin{proof}
		We again apply Lemma~\ref{checking-supercuspidality}, and verify the vanishing of 
		the 
	Jacquet module for every finite extension $\kappa'$.
	This follows from~\cite[Lem.2.2]{Reeder-Yu:epipelagic}.
\end{proof}

\subsection{Monomial geometric supercuspidal representations}\label{s:mgsr}

Let $G$ be a reductive group over a finite field $\kappa$. 

\begin{defi}\label{def:mgs-rep}
We say 
that an irreducible smooth representation of $G(\kappa\pseries{t})$ is \emph{mgs} if 
there exists a natural number $m$, a connected subgroup $H$ of $G \lWR \kappa[t]/t^m 
\rWR$, and a character sheaf $\mathcal L$ on $H$, such that
\begin{enumerate}
\item  $(G,m,H,\mathcal L)$ is 
geometrically supercuspidal;
\item  $\pi$ is a quotient of 
$\cind_J^{G(\kappa\pseries{t})} \chi$ where $J$ is the inverse image of 
$H(\kappa)$ in $G (\kappa\bseries{t})$ and $\chi$ is the trace function of $\mathcal 
L$ on 
$H(\kappa)$, pulled back to $J$. 
\end{enumerate}
Furthermore, in this setting, we say that $(G, m, H, \mathcal L)$ is an \emph{mgs datum} 
for $\pi$.
By Lemma~\ref{l:Frob-reciprocity}, condition (2) is equivalent to that the representation 
has a non-zero $(J,\chi)$-invariant vector.
\index{mgs, monomial geometric supercuspidal} \index{mgs datum}
\end{defi}

\begin{lemma}\label{mgs-galois} Let $\pi$ be an mgs representation of $G( 
\kappa\pseries{t})$. Then the pullback of $\pi$ by any automorphism of the field 
$\kappa\pseries{t}$ is an mgs representation. \end{lemma}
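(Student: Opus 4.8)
The plan is to reduce an arbitrary automorphism $\phi$ of $\kappa\pseries{t}$ to two elementary types and transport the mgs data through each. Recall first that $\phi$ preserves the valuation ring $\kappa\bseries{t}$ and its maximal ideal $\mathfrak m$; one way to see this is that $1+\mathfrak m=\bigcap_{\ell\neq p}(\kappa\pseries{t}^\times)^\ell$ and $\kappa\bseries{t}=\{x:x\mathfrak m\subseteq\mathfrak m\}$ are defined purely field-theoretically. Hence $\phi$ induces an automorphism $\sigma$ of the residue field $\kappa$ and sends $t$ to a uniformizer $u$. Writing $\sigma_0$ for the coefficientwise automorphism $\sum a_it^i\mapsto\sum\sigma(a_i)t^i$, the composite $\sigma_0^{-1}\circ\phi$ is a $\kappa$-algebra automorphism sending $t$ to the uniformizer $\sigma_0^{-1}(u)$; so it is enough to treat the case of a $\kappa$-algebra automorphism (a ``change of uniformizer'') and the case $\phi=\sigma_0$ separately.

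For a $\kappa$-algebra automorphism $\psi$ with $\psi(t)=u\in t\kappa\bseries{t}^\times$: since $\psi((t^m))\subseteq(t^m)$ and likewise for $\psi^{-1}$, $\psi$ descends to a $\kappa$-algebra automorphism $\psi_m$ of $\kappa[t]/t^m$, which functorially induces an automorphism $\Psi_m$ of the algebraic group $G\lWR\kappa[t]/t^m\rWR$ over $\kappa$, compatible under the reduction $r_m\colon G(\kappa\bseries{t})\to G(\kappa[t]/t^m)$ with the automorphism $\psi_G$ of $G(\kappa\pseries{t})$ induced by $\psi$. Given mgs data $(G,m,H,\mathcal L)$ for $\pi$, I would set $H':=\Psi_m^{-1}(H)$ and $\mathcal L':=\Psi_m^*\mathcal L$. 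Extended over $\overline\kappa$, the automorphism $\Psi_m$ fixes each subgroup $N\lWR\overline\kappa[t]/t^m\rWR$ for $N$ the radical of a parabolic $P\subseteq G_{\overline\kappa}$ (such $N$ are cut out by equations over $\overline\kappa$, hence preserved by any $\overline\kappa$-linear ring automorphism of $\overline\kappa[t]/t^m$) and permutes the conjugating elements $g\in G(\overline\kappa[t]/t^m)$; since it is an isomorphism it preserves identity components, so it carries the condition of Definition~\ref{def:geometric-ind-data} for $(G,m,H,\mathcal L)$ to the same condition for $(G,m,H',\mathcal L')$, and the latter is again geometrically supercuspidal. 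Using the compatibility of $\Psi_m$ with $r_m$ one gets $J':=r_m^{-1}(H'(\kappa))=\psi_G^{-1}(J)$, and Lemma~\ref{character-sheaf-construction}(ii) applied to the isomorphism $\Psi_m\colon H'\to H$ gives $\chi'=\chi\circ\psi_G$ for the associated characters. Since $\pi$ has a nonzero $(J,\chi)$-eigenvector $v$, this same $v$ is a $(J',\chi')$-eigenvector for $\psi^*\pi=\pi\circ\psi_G$, which is smooth irreducible; by Lemma~\ref{l:Frob-reciprocity}, $\psi^*\pi$ is a quotient of $\operatorname{c-Ind}_{J'}^{G(\kappa\pseries{t})}\chi'$, so $(G,m,H',\mathcal L')$ are mgs data for $\psi^*\pi$.

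For $\phi=\sigma_0$ the same strategy applies, except that the algebraic data are now Galois-twisted: applying $\sigma$ produces $(G^\sigma,m,H^\sigma,\mathcal L^\sigma)$ with $H^\sigma\subseteq G^\sigma\lWR\kappa[t]/t^m\rWR$, since the construction $G\lWR\cdot\rWR$ commutes with base change and hence with the twist. Extending $\sigma$ to an automorphism of $\overline\kappa$ gives a $\sigma$-semilinear isomorphism from the geometric picture of $G_{\overline\kappa}$ — its parabolics, their radicals, the conjugating elements, and the sheaf $\mathcal L_{\overline\kappa}$ — to that of $G^\sigma_{\overline\kappa}$, under which triviality of a lisse sheaf is preserved; hence $(G^\sigma,m,H^\sigma,\mathcal L^\sigma)$ is geometrically supercuspidal, and as before $\sigma_0^*\pi$ acquires a nonzero eigenvector for the corresponding pair $(J^\sigma,\chi^\sigma)$, so it is mgs. (When $G$ is split, as in our application, $G^\sigma\cong G$ canonically and $\sigma_0^*\pi$ is an mgs representation of $G(\kappa\pseries{t})$ itself.) Composing the two cases treats a general $\phi$.

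The only delicate point is the bookkeeping: checking that a change of uniformizer genuinely descends to $\kappa[t]/t^m$, and that $\Psi_m$ (resp. the $\sigma$-twist) really matches up both the intersections $gN\lWR\cdot\rWR g^{-1}\cap H$ that enter Definition~\ref{def:geometric-ind-data} and the pairs $(J,\chi)$ as claimed. None of this is hard, but it is exactly where care is required; no input beyond Lemmas~\ref{l:Frob-reciprocity} and~\ref{character-sheaf-construction} and the definitions is needed.
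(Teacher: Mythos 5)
Your proof is correct and follows essentially the same route as the paper's: factor the automorphism as a coefficient-field automorphism composed with a change of uniformizer, transport the data $(G,m,H,\mathcal L)$ through each, and check that the $(J,\chi)$-eigenvector and supercuspidality are carried along (you verify geometric supercuspidality directly from Definition~\ref{def:geometric-ind-data}, while the paper invokes preservation of the Jacquet-module vanishing condition; the two are equivalent via Lemma~\ref{checking-supercuspidality}). The extra preliminary detail showing that $\phi$ preserves the valuation ring is fine, except that in the divisibility characterization of the principal units one should intersect over all exponents prime to $p$ (not just primes $\ell\neq p$), or simply quote that automorphisms of a local field preserve the valuation.
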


\begin{proof} Any such automorphism is a composition of an automorphism of $\kappa$ with 
a change of variables that sends $t$ to a power series with leading term a constant 
multiple of $t$. Automorphisms of $\kappa$ act in a natural way on the mgs datum $(G,m, 
H, \mathcal L)$.  Changes of variables in $t$ act in a natural way on $G \lWR \kappa 
[t]/t^m \rWR$ and hence act in a natural way on $H$ and $\mathcal L$. Both of these 
automorphisms agree with the action of the field automorphism on the induced 
representation, hence preserve the vanishing property of Jacquet modules vanishing, and 
also therefore agree with the pullback of $\chi$. \end{proof}

\begin{lemma}\label{mgs-aut} Let $\pi$ be an mgs representation of 
$G(\kappa\pseries{t})$. Then the pullback of $\pi$ by any automorphism of the group 
$G$ defined over $\kappa\pseries{t}$ is an mgs representation. \end{lemma}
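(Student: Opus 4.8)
The plan is to decompose an arbitrary automorphism of $G_{\kappa\pseries{t}}$ into elementary pieces, each to be treated by an argument parallel to Lemma~\ref{mgs-galois}. Since the automorphism group scheme $\Aut(G)$ is an extension of the finite \'etale group $\operatorname{Out}(G)$ by $G^{\mathrm{ad}}$, and $\kappa\bseries{t}$ is Henselian with residue field $\kappa$ (so $\operatorname{Out}(G)(\kappa\pseries{t})=\operatorname{Out}(G)(\kappa)$), while $H^1(\kappa,G^{\mathrm{ad}})=0$ by Lang's theorem, every automorphism of $G_{\kappa\pseries{t}}$ can be written $\operatorname{Int}(g)\circ\phi_0$ with $\phi_0\in\Aut(G)(\kappa)$ and $g\in G^{\mathrm{ad}}(\kappa\pseries{t})$. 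Using the Iwasawa decomposition of $G^{\mathrm{ad}}(\kappa\pseries{t})$ relative to a $\kappa$-rational Borel $B^{\mathrm{ad}}=T^{\mathrm{ad}}U$ and the hyperspecial subgroup $G^{\mathrm{ad}}(\kappa\bseries{t})$, we may further write $g=u\,\mu(t)\,k$ with $u\in U(\kappa\pseries{t})$, $k\in G^{\mathrm{ad}}(\kappa\bseries{t})$, $\mu\in X_*(T^{\mathrm{ad}})$. Since being mgs is preserved under pullback along a composite of automorphisms once it is preserved under each factor, it suffices to treat $\phi_0$, $\operatorname{Int}(u)$, $\operatorname{Int}(k)$ and $\operatorname{Int}(\mu(t))$ separately.

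Three of these are routine. For $\phi_0$, exactly as in Lemma~\ref{mgs-galois}: $\phi_0$ acts functorially on $G\lWR\kappa[t]/t^m\rWR$, carrying mgs data $(G,m,H,\cL)$ to $(G,m,\phi_0(H),(\phi_0^{-1})^*\cL)$; geometric supercuspidality is intrinsic, $\phi_0$ matches $J$ with $\phi_0^{-1}(J)$ and $\chi$ with $\chi\circ\phi_0$, so $\phi_0^*\pi$ is a quotient of the induced representation of the new datum. For $\operatorname{Int}(u)$: for semisimple $G$ the element $u$, lying in the unipotent radical of a Borel, lifts to $G(\kappa\pseries{t})$, so $\operatorname{Int}(u)^*\pi\cong\pi$ via $\pi(u)$. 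For $\operatorname{Int}(k)$: $k$ reduces modulo $t^m$ to a $\kappa$-point of $G^{\mathrm{ad}}\lWR\kappa[t]/t^m\rWR$, which acts on $G\lWR\kappa[t]/t^m\rWR$ by a $\kappa$-rational automorphism, so one argues as for $\phi_0$.

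The case $\psi=\operatorname{Int}(\mu(t))$, $\mu\in X_*(T^{\mathrm{ad}})$, is the main obstacle: $\mu(t)$ need not lie in $G(\kappa\pseries{t})$, and conjugation by it moves the hyperspecial maximal compact $G(\kappa\bseries{t})$. Writing $\mu=\varpi+\nu$ with $\nu\in X_*(T)$ (so $\operatorname{Int}(\nu(t))$ is inner) and $\varpi$ a dominant minuscule coweight of $G^{\mathrm{ad}}$ representing $\mu$ modulo $X_*(T)$, one reduces to $\psi=\operatorname{Int}(\varpi(t))$. Now $\psi$ carries $G(\kappa\bseries{t})$ to a second hyperspecial maximal compact, which by Lemma~\ref{l:unramified} is $G'(\kappa\bseries{t})$ for a split reductive $G'$ over $\kappa$ with the same root datum as $G$, hence $\kappa$-isomorphic to $G$. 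The crux is that, because $\varpi$ is minuscule, conjugation by $\varpi(t)$ shifts the $t$-adic depths of root groups only by $\pm1$, so --- after first conjugating $H$ into a standard position, possible since $H$ is unipotent modulo the center (Lemma~\ref{supercuspidal-parahoric}) --- on each sufficiently deep congruence quotient $\psi$ is induced by a morphism of algebraic groups over $\kappa$ between the jet groups of the two models, namely the rescaling of jet coordinates coming from the $\G_m$-grading specialized along $\varpi$. Consequently $\psi$ carries $H$ and $\cL$ to a connected subgroup $H'\subseteq G'\lWR\kappa[t]/t^{m'}\rWR$ and a character sheaf $\cL'$ on it, with $\psi^{-1}(J)$ the preimage of $H'(\kappa)$ and $\chi\circ\psi$ the trace function of $\cL'$. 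I expect the technical heart of the argument to be exactly this: checking that the depth-rescaling is algebraic over $\kappa$, carries connected groups and character sheaves correctly, and is matched to the definition, which is phrased for the standard hyperspecial model. (Equivalently, since $\psi^*\pi$ is again supercuspidal, one may restrict a $(\psi^{-1}(J),\chi\circ\psi)$-eigenvector to $G(\kappa\bseries{t})$ and pass to the identity component of the reduction of the resulting subgroup, producing a monomial datum for the standard model directly.)

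It then remains to check that the datum $(G',m',H',\cL')$ is geometrically supercuspidal and that $\psi^*\pi$ is a quotient of its induced representation. For the first, apply Lemma~\ref{checking-supercuspidality}: over every finite extension $\kappa'$ the automorphism $\psi$ and its base change carry parabolic subgroups to parabolic subgroups and $(J_{\kappa'},\chi_{\kappa'})$ to $(\psi^{-1}(J_{\kappa'}),\chi_{\kappa'}\circ\psi)$, so the vanishing of Jacquet modules transfers, and geometric supercuspidality of $(G,m,H,\cL)$ forces that of $(G',m',H',\cL')$. For the second, $\psi^*\pi$ is a quotient of $\psi^*\bigl(\operatorname{c-Ind}_{J}^{G(\kappa\pseries{t})}\chi\bigr)=\operatorname{c-Ind}_{\psi^{-1}(J)}^{G(\kappa\pseries{t})}(\chi\circ\psi)$, which under the $\kappa$-isomorphism $G'\cong G$ is the induced representation of the datum just constructed. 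Combining the four cases shows $\phi^*\pi$ is mgs for every automorphism $\phi$ of $G_{\kappa\pseries{t}}$.
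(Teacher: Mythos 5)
The overall plan (reduce to a decomposition of the automorphism and push the monomial datum through jet groups, checking geometric supercuspidality via Lemma~\ref{checking-supercuspidality}) is reasonable, and the cases you call routine ($\phi_0$ over $\kappa$, $\operatorname{Int}(u)$ from the unipotent radical, $\operatorname{Int}(k)$ with $k\in G^{\mathrm{ad}}(\kappa\bseries{t})$) are indeed fine. But the lemma's actual content is concentrated in the one case you defer, $\psi=\operatorname{Int}(\varpi(t))$ for a minuscule coweight of $T^{\mathrm{ad}}$ (e.g.\ the automorphism of $SL_2$ swapping the two vertex classes), and there the argument as written has genuine gaps. First, the ``depth-rescaling is algebraic over $\kappa$ after conjugating $H$ into a standard position'' step is exactly the technical heart and is not carried out; you acknowledge this yourself. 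Second, even granting it, the datum $(G',m',H',\cL')$ you produce lives in the jet groups of the \emph{other} hyperspecial model $G'$, whereas Definition~\ref{def:mgs-rep} requires data whose associated $J$ sits inside the standard $G(\kappa\bseries{t})$. Transporting the datum along an abstract $\kappa$-isomorphism $\theta\colon G\simeq G'$ does not fix this: the identification of $F$-points coming from $\theta$ differs from the one in play (which sends $G'(\kappa\bseries{t})$ to $\varpi(t)^{-1}G(\kappa\bseries{t})\varpi(t)$) by another automorphism of $G_{\kappa\pseries{t}}$ that again moves the standard hyperspecial, so what you actually verify is mgs-ness of the pullback of $\pi$ by a different automorphism of the same hard type --- the step is circular as written. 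Third, your parenthetical alternative (restrict a $(\psi^{-1}(J),\chi\circ\psi)$-eigenvector to $G(\kappa\bseries{t})$ and take the identity component of the reduction) does not work either, because shrinking $(J,\chi)$ can destroy geometric supercuspidality: the condition is the nontriviality of $\cL$ on intersections with conjugated unipotent radicals, and these intersections shrink along with $H$, so the new datum need not satisfy Definition~\ref{def:geometric-ind-data}.

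For comparison, the paper's proof needs no decomposition of the automorphism and sidesteps the change-of-hyperspecial problem entirely: by Lemma~\ref{supercuspidal-parahoric} the group $H$ is unipotent, so $J$ lies in a minimal parahoric (Iwahori); since all minimal parahorics are $G(\kappa\pseries{t})$-conjugate, one composes with an inner automorphism so that the given automorphism $\sigma$ preserves this Iwahori, which already sits inside the standard $G(\kappa\bseries{t})$. Then boundedness of the denominators of $\sigma$ shows that $\sigma(g)\bmod t^{m}$ depends only on $g\bmod t^{m+\delta}$, giving an algebraic surjection over $\kappa$ between the ``congruent to $B$ mod $t$'' loci of $G\lWR\kappa[t]/t^{m+\delta}\rWR$ and $G\lWR\kappa[t]/t^{m}\rWR$; one takes the full preimage datum $(G,m+\delta,\sigma^{-1}(H),\sigma^{*}\cL)$ (no shrinking, no change of model), and geometric supercuspidality transfers because the compact inductions over every finite extension are literally pullbacks of one another, so Lemma~\ref{checking-supercuspidality} applies. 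If you want to salvage your route, the missing ingredient is precisely this Iwahori-based normalization; without it, the $\operatorname{Int}(\varpi(t))$ case remains unproved.
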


\begin{proof} Let $(G, m, H, \mathcal L)$ be an mgs datum for $\pi$.  By Lemma 
\ref{supercuspidal-parahoric}, $H$ is unipotent. In particular, its image inside $G$ 
is solvable, and hence contained in a Borel subgroup $B$. The inverse image of $B$ in 
$G(\kappa\bseries{t})$ is a minimal parahoric subgroup $I$ of $G(\kappa\pseries{t})$. (This 
follows from the explicit description of the parahoric subgroup in terms of roots. If we 
take an apartment corresponding to the inverse image of a torus of $G$ and perturb  
the hyperspecial point associated to $G(\kappa\bseries{t})$ in a generic direction, 
producing a point in the interior of a chamber whose associated subgroup is a minimal 
parahoric, we see that the parahoric subgroup is the inverse image of some Borel, and 
because all Borels are conjugate all such subgroups are minimal parahoric.)  Because 
all minimal parahoric subgroups are conjugate \cite[\S9]{Landvogt:LNM:compactification-BT}, 
every automorphism of $G_{\kappa\pseries{t}}$ can 
be 
expressed as an inner automorphism composed with an automorphism $\sigma$ such that 
$\sigma(I)=I$. Conjugation by an element of 
$G(\kappa\pseries{t})$ produces a representation isomorphic to 
$\pi$, so it suffices to show that the pullback of $\pi$ by $\sigma$ is mgs. 

Expressing $\sigma$ in the coordinates of $G$, let $\delta$ be the highest power of 
$t^{-1}$ that appears. Then for any $g \in I$, $\sigma(g)\in I$ 
and $\sigma(g)$ modulo $t^m$ depends only on $g$ modulo $t^{m+\delta}$. Hence $\sigma$ 
defines a map $\overline{\sigma}$ from the subset of $G \lWR \kappa[t]/t^{m+\delta} \rWR $ congruent to $B$ 
modulo $t$ to the subset of $G \lWR \kappa[t]/t^{m} \rWR $ congruent to $B$ modulo $t$. 
Because $\sigma$ acts as an automorphism of $I$,  $\overline{\sigma}$ is surjective. 

Consider the datum $(G, m+\delta, \overline{\sigma}^{-1}(H), \overline{\sigma}^* \mathcal L)$. Let $J'$ be 
the inverse image of $\overline{\sigma}^{-1}(H) (\kappa)$ in $G ( \kappa\bseries{t})$ and let 
$\chi'$ be the pullback of the trace function of $\overline{\sigma}^* \mathcal L$ to $J'$. Then $J'= 
\sigma^{-1}(J)$ and $\chi'= \chi \circ \sigma$, so 
$\cind_{J'}^{G(\kappa\pseries{t})} \chi'$ is the pullback of 
$\cind_{J}^{G(\kappa\pseries{t})} \chi$ by $\sigma$ and hence $\pi 
\circ \sigma$ is a quotient of it.

Similarly, over any finite field extension $\kappa'$ of $\kappa$, 
$\cind_{J'_{\kappa'} }^{G(\kappa'\pseries{t})} \chi'_{\kappa'}$ is the 
pullback of $\cind_{J_{\kappa'}'}^{G(\kappa'\pseries{t})} 
\chi_{\kappa'}$, hence has vanishing Jacquet module for every parabolic subgroup, so by 
Lemma \ref{checking-supercuspidality} it is geometrically supercuspidal. 

Hence $\pi \circ \sigma$ is an mgs representation. 
\end{proof}

Any unramified reductive group over an equal characteristic local field $F$
necessarily descends to a 
reductive group $G$ over the residue field $\kappa$ (Lemma~\ref{l:unramified}). 
Combined with the previous two 
lemmas, that allows us to 
give an intrinsic definition of mgs representations of an unramified group over $F$.
Namely $\pi$ is mgs if for some (equivalently any) uniformizer $t$ of $F$, and 
for reductive group $G$ over $\kappa$ and some (equivalently any) isomorphism with
$G_{\kappa\pseries{t}}$, the representation $\pi$ is a quotient 
of $\cind^{G(F)}_J \chi$ for some geometrically supercuspidal datum $(G,m,H,\cL)$.

\begin{remark}
We can make a similar definition over a mixed characteristic local field $F$, and for a 
general reductive group $G$ over $F$ as follows.
Let $\mathfrak o_F$ be its ring of integers, $\varpi$ an uniformizer, and 
$\kappa$ its residue field. Let $\cG$ be a 
smooth group scheme over $\mathfrak o_F$ whose generic fiber is isomorphic to $G$.  Let 
$\cG_m$ 
be the algebraic group over $\kappa$ whose $R$-points for a ring $R$ over $\kappa$ are 
the 
$W_m(R) \otimes_{W(\mathbb \kappa)} \mathfrak o_F$-points of $\cG$, where $W$ is the Witt 
vectors functor and $W_m(R)$ is the ring of truncated Witt vectors modulo $p^m$. (Here 
the Witt 
vectors are 
defined 
using universal polynomials over an imperfect ring). A \emph{monomial datum} consists of a 
connected closed subgroup $H$ 
of $\cG_m$ and a character sheaf $\mathcal L$ on $H$. The datum is \emph{geometrically 
supercuspidal} if for every proper parabolic 
subgroup $P\subset G$ with maximal unipotent $N$, closure $\cN$ in $\cG$, and associated 
$\kappa$-group $\cN_m$, 
and every $g \in \cG_m(\overline \kappa)$, the 
restriction of 
$\mathcal L_{\overline \kappa}$ to the identity component of $H_{\overline \kappa} \cap g 
\cN_{m,\overline{\kappa}} 
g^{-1}$ is nontrivial. Let $J$ be the inverse image of 
$H(\kappa) \subseteq \cG_m(\kappa) = \cG 
(\mathfrak o_F / \varpi^m) $ in $\cG(\mathfrak o_F)$ and let $\chi$ be the pullback of 
the 
trace function of $\mathcal L$ from $H(\kappa)$ to $J$. We say that an 
irreducible smooth representation of $G(F)$ is \emph{mgs} if is has a non-zero 
$(J,\chi)$-invariant vector, or equivalently if it appears as a quotient of 
$\cind_J^{G(F)} \chi$.
\end{remark}

\subsection{Moy--Prasad types and epipelagic representations}\label{sub:epipelagic}
Let $G$ be a quasi-split reductive group over $\kappa$, and $F=\kappa\pseries{t}$. Let 
$x$ 
be a 
point in the Bruhat--Tits building of $G(F)$, and let $r>0$ be a positive real. Let 
$G(F)_{x,r}$ and $G(F)_{x,r+}$ refer as usual to Moy--Prasad subgroups of $G(F)_x$. Then 
$G(F)_{x,r}/ 
G(F)_{x,r+}$ is a vector space over $\kappa$. Let $\chi$ be a character of 
$G(F)_{x,r}$ that factors through this vector space.

\begin{lemma} $G(F)_{x,r}$ is conjugate to a subgroup of $G (\kappa \bseries{t})$. 
\end{lemma}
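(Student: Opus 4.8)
The plan is to observe that for $r>0$ the group $G(F)_{x,r}$ is pro-unipotent and in fact sits inside the pro-unipotent radical of an Iwahori subgroup, and that every Iwahori subgroup of $G(F)$ is conjugate into the hyperspecial parahoric $G(\kappa\bseries t)$, so the result follows by chaining inclusions; no estimates are involved, the content being entirely the Bruhat--Tits dictionary.

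First I would reduce to the pro-unipotent radical: since the Moy--Prasad filtration is decreasing and $G(F)_{x,0+}=\bigcup_{s>0}G(F)_{x,s}$, for $r>0$ we have $G(F)_{x,r}\subseteq G(F)_{x,0+}$, the pro-unipotent radical of the parahoric $G(F)_{x,0}$. Next, pick a chamber $C$ of the building of $G(F)$ having $x$ in its closure, let $I_C$ be the Iwahori subgroup attached to $C$ and $I_C^{+}$ its pro-unipotent radical. The standard nesting of Moy--Prasad subgroups --- equivalently: $C$ corresponds to a Borel subgroup $\mathsf B_C$ of the reductive quotient $\mathsf G_x:=G(F)_{x,0}/G(F)_{x,0+}$, with $I_C$ the preimage of $\mathsf B_C$, $I_C^{+}$ the preimage of $R_u(\mathsf B_C)$, and $G(F)_{x,0+}$ the preimage of the identity --- gives $G(F)_{x,0+}\subseteq I_C^{+}\subseteq I_C$.

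Finally, because $G=G_\kappa$ is quasi-split over $\kappa$ and $F=\kappa\pseries t$, the group $G_F$ is unramified; hence $G(\kappa\bseries t)$ is a hyperspecial maximal parahoric, and for a Borel subgroup $B\subseteq G$ over $\kappa$ the preimage $I_0$ of $B(\kappa)$ under $G(\kappa\bseries t)\to G(\kappa)$ is an Iwahori subgroup of $G(F)$ contained in $G(\kappa\bseries t)$, whose pro-unipotent radical $I_0^{+}$ (the preimage of $R_u(B)(\kappa)$) is also contained in $G(\kappa\bseries t)$. All Iwahori subgroups of $G(F)$ are $G(F)$-conjugate, so there is $g\in G(F)$ with $gI_Cg^{-1}=I_0$, and therefore $gI_C^{+}g^{-1}=I_0^{+}$. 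Chaining the inclusions,
\[ gG(F)_{x,r}g^{-1}\ \subseteq\ gG(F)_{x,0+}g^{-1}\ \subseteq\ gI_C^{+}g^{-1}\ =\ I_0^{+}\ \subseteq\ G(\kappa\bseries t), \]
as desired.

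The points to handle with care are the conjugacy of all Iwahori subgroups of $G(F)$, the nesting $G(F)_{x,0+}\subseteq I_C^{+}$ for a chamber $C$ in the closed star of $x$, and --- most importantly for the present setting --- the fact that $G_F$ is unramified, which is precisely what guarantees that the standard Iwahori $I_0$ (and its pro-unipotent radical) actually lie inside the hyperspecial parahoric $G(\kappa\bseries t)$; this is where the hypothesis that $F$ has constant residue field $\kappa$ with $G$ already defined over $\kappa$ enters. One could alternatively argue by hand: conjugate $x$ into the closed fundamental alcove $\overline{\mathbf a}$ (a fundamental domain for $G(F)$ acting on the building) and then read off from the affine root group decomposition of $G(F)_{x,r}$ that, for $x\in\overline{\mathbf a}$ and $r>0$, every affine root subgroup occurring has non-negative level, so that $G(F)_{x,r}\subseteq G(\kappa\bseries t)$ already without conjugating. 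I expect the only real subtlety is bookkeeping this Bruhat--Tits dictionary correctly rather than any substantive difficulty.
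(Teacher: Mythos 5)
Your argument is correct and is essentially the paper's own proof spelled out in more detail: the paper likewise notes that $G(F)_{x,r}$ lies in the minimal parahoric (Iwahori) attached to an adjacent chamber and then conjugates that Iwahori to the standard one inside $G(\kappa\bseries{t})$, citing Landvogt for the conjugacy of minimal parahorics. Your extra step through $G(F)_{x,0+}\subseteq I_C^{+}$ just makes the containment in the adjacent-chamber Iwahori explicit, so nothing differs in substance.
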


\begin{proof} It is contained in a minimal parahoric subgroup (e.g. the one associated to any 
adjacent chamber of the Bruhat--Tits building), and we may conjugate it to a minimal 
parahoric 
subgroup inside $G(\kappa \bseries{t})$ \cite[\S9]{Landvogt:LNM:compactification-BT}. \end{proof}

\begin{lemma}\label{moy-prasad-construction} Suppose that $G(F)_{x,r} 
\subseteq G (\kappa \bseries{t})$. Then there exists a natural number $m$, algebraic subgroups $H 
\subseteq G 
\lWR \kappa[t]/t^m \rWR$, $H' \subseteq H$ such that the inverse image of $H(\kappa)$ in 
$G(\kappa\bseries{t})$ is $G(F)_{x,r}$ and the inverse image of $H'(\kappa)$ is 
$G(F)_{x,r+}$. Furthermore, for any finite field extension $\kappa'$ of $\kappa$, the 
inverse image of $H(\kappa')$ is $G(\kappa'\pseries{t})_{x,r}$ and the inverse image of 
$H'(\kappa')$ is $G(\kappa'\pseries{t})_{x,r+}$.

Finally, $H/H'$ is isomorphic to a vector space as an algebraic group. \end{lemma}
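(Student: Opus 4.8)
The plan is to exhibit $G(F)_{x,r}$ (and $G(F)_{x,r+}$) as a congruence condition modulo a high enough power of $t$, and then to read off $H$ (resp.\ $H'$) as the corresponding algebraic subgroup of the jet group $G\lWR\kappa[t]/t^m\rWR$. First I would fix a maximal $\kappa$-split torus $S\subseteq G$ whose apartment in the building of $G(F)$ contains $x$, and recall the Bruhat--Tits/Moy--Prasad description: for $r>0$ the group $G(F)_{x,r}$ is the product, in a fixed order, of the filtered subgroup $T(F)_r$ of $T=Z_G(S)$ and of the filtered affine root subgroups $U_\psi$ for the affine roots $\psi$ with $\psi(x)\ge r$; each factor is cut out inside $G(\kappa\bseries{t})$ by the vanishing of finitely many low-degree Taylor coefficients in $t$, the relevant degrees being bounded uniformly over the (finite) set of roots, and similarly for $G(F)_{x,r+}$ with $\psi(x)>r$. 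Hence there is an integer $m\ge 1$, larger than all the congruence exponents occurring for both $r$ and $r+$, such that the principal congruence subgroup $U_m(G(\kappa\bseries{t}))=\ker\bigl(G(\kappa\bseries{t})\to G(\kappa[t]/t^m)\bigr)$ is contained in $G(F)_{x,r+}\subseteq G(F)_{x,r}$; the hypothesis $G(F)_{x,r}\subseteq G(\kappa\bseries{t})$ guarantees these exponents are well-defined and non-negative, so such an $m$ exists. The identical computation over $\kappa'\bseries{t}$ shows, with the \emph{same} $m$, that $U_m\bigl(G(\kappa'\bseries{t})\bigr)\subseteq G(\kappa'\pseries{t})_{x,r+}$ for every finite extension $\kappa'/\kappa$.

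With this $m$ fixed, I would define $H\subseteq G\lWR\kappa[t]/t^m\rWR$ (and likewise $H'$) as the subgroup generated by the jet-level truncations of the pieces above: the truncation of the torus congruence subgroup, a smooth closed subgroup of $T\lWR\kappa[t]/t^m\rWR$ which is an iterated extension of copies of $\mathfrak t=\operatorname{Lie} T$; and, for each relevant affine root $\psi$, the linear subspace of the jet space $U_\alpha\lWR\kappa[t]/t^m\rWR$ defined by vanishing of the appropriate low-degree coefficients. Reducing the Bruhat--Tits product decomposition modulo $t^m$ (equivalently: taking a smooth $\kappa\bseries{t}$-group scheme attached to the concave function defining $G(F)_{x,r}$, base-changing it to $\kappa[t]/t^m$, and Weil-restricting to $\kappa$), one sees that the product map from the product of these pieces is an isomorphism onto $H$; thus $H$ is a smooth closed subgroup of $G\lWR\kappa[t]/t^m\rWR$ whose formation commutes with extension of $\kappa$, because the jet/Weil-restriction construction is stable under base change. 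It follows that $H(\kappa')$ is precisely the image of $G(\kappa'\pseries{t})_{x,r}$ in $G(\kappa'[t]/t^m)$, and since $U_m\bigl(G(\kappa'\bseries{t})\bigr)\subseteq G(\kappa'\pseries{t})_{x,r}$ the preimage of $H(\kappa')$ in $G(\kappa'\bseries{t})$ is exactly $G(\kappa'\pseries{t})_{x,r}$; the same argument with $r+$ gives $H'$ and $G(\kappa'\pseries{t})_{x,r+}$. Taking $\kappa'=\kappa$ yields the first two assertions.

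For the last assertion, the Moy--Prasad bracket relation $\bigl[G(F)_{x,r},G(F)_{x,r}\bigr]\subseteq G(F)_{x,2r}\subseteq G(F)_{x,r+}$ (using $r>0$), transported to the jet level through the integral model, shows $[H,H]\subseteq H'$, so $H/H'$ is commutative. The product decompositions of $H$ and of $H'$ by corresponding pieces then identify $H/H'$ with the direct product of the quotient of the torus piece for $r$ by that for $r+$ and the quotients of the affine-root pieces for $r$ by those for $r+$. Each affine-root quotient is either trivial or a single graded piece of the root-group jet space, i.e.\ a vector group; and the torus quotient is trivial when $r\notin\Z$ and is $\mathfrak t$ when $r\in\Z_{>0}$, by the description of $G\lWR\kappa[t]/t^n\rWR$ as an iterated extension by $\mathfrak g$ recalled above. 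A direct product of vector groups is a vector group, so $H/H'$ is isomorphic to a vector space as an algebraic group.

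The main obstacle is the middle paragraph: producing $H$ as a genuine algebraic group \emph{over} $\kappa$ — not merely as a compatible family of point-sets — with the asserted behaviour over every finite extension. The most efficient route is to invoke a Bruhat--Tits-type smooth integral model over $\kappa\bseries{t}$ representing $G(F)_{x,r}$ and compatible with unramified base change, and then to check that its reduction modulo $t^m$ maps to $G_{\kappa[t]/t^m}$ by a closed immersion — this is exactly where the containment $U_m\subseteq G(F)_{x,r}$ is used — after which Weil restriction along $\kappa[t]/t^m\to\kappa$ produces $H$ together with its base-change compatibility formally. Carrying out this verification, or instead assembling $H$ directly from the relative root groups (which requires some care in the quasi-split case, e.g.\ when the relative root system is non-reduced), is the technical heart of the argument.
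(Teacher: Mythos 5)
Your proposal is correct and follows essentially the same route as the paper's (much terser) proof: choose $m$ with the principal congruence subgroup inside $G(F)_{x,r+}$, realize $H$ and $H'$ as the algebraic subgroups of $G\lWR\kappa[t]/t^m\rWR$ generated by the truncated torus and affine-root pieces of the Moy--Prasad groups, and use $r>0$ together with the commutator relation to see that $H/H'$ is a vector group. The extra work you do on base-change compatibility and the product decomposition is a more careful elaboration of steps the paper leaves implicit, not a different argument.
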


The conditions on the rational points uniquely characterize the groups $H$ and $H'$. 

\begin{proof} For some $m$, 
$G(F)_{x,r+}$ contains the subgroup of elements congruent to the identity modulo $t^m$, 
so 
that $G(F)_{x,r}$ and $G(F)_{x,r+}$ are the inverse images of their projections to $G( 
\kappa[t]/t^m)$.

 It is clear from the definition of $G(F)_{x,r}$ and $G(F)_{x,r+}$ that these 
 projections are 
 algebraic subgroups --- the Moy--Prasad subgroups are defined as the subgroups 
 generated by 
 certain additive and multiplicative groups, and we can simply take the algebraic 
 subgroup generated by these groups.
 
 Furthermore, because $r>0$, all the involved subgroups are additive, and their 
 commutators lie in $G(F)_{x,r+}$, so the $H/H'$ is a vector space. 
 \end{proof}
 
 Any character $\chi$ of $G(F)_{x,r}$ trivial on $G(F)_{x,r+}$ defines a character of 
 $G(F)_{x,r}/G(F)_{x,r+}= H(\kappa)/H'(\kappa) = H/H'(\kappa)$ and hence,  
 by Lemma \ref{character-sheaf-construction}, a character sheaf $\mathcal L$ on $H$. By 
 construction, this datum $(G,m,  H, \mathcal L)$ satisfies $J = G(F)_{x,r}$ and 
 $\chi=\chi$. Hence if $(G,m,H,\mathcal L)$ is geometrically supercuspidal, any 
irreducible representation containing a vector on which $G(F)_{x,r}$ acts through the character 
 $\chi$ is mgs. 
 
 A concrete description of when this occurs is provided by Lemma 
 \ref{checking-supercuspidality}. 
 
 We give here a different condition, inspired by the construction of epipelagic 
 representations of Reeder and Yu \cite{Reeder-Yu:epipelagic}. 
 
 \begin{lemma} Let $H$ and $H'$ be the subgroups of Lemma \ref{moy-prasad-construction}. 
 Let $\lambda: H/ H' \to \mathbb G_a$ be a linear map, let $pr: H \to H/H'$ be the 
 projection, let $\mathcal \psi$ be an additive character of $\kappa$, and let $\chi 
 = \psi \circ \lambda \circ pr$ be the trace function of the character sheaf $pr^* 
 \lambda^* 
 \mathcal L_\psi$.
 
 Then $(G,m,H , pr^* \lambda^* \mathcal L_\psi)$ is geometrically supercuspidal if and 
 only if $\lambda$ is GIT-stable for the action of $G(F)_{x,0}/ G(F)_{x,0+}$ on  
 $(H/H')^\vee$. \end{lemma}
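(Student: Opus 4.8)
The plan is to reduce the statement to the characterization of geometric supercuspidality in Lemma~\ref{checking-supercuspidality}(iv), and then to translate the resulting geometric non-triviality condition into GIT-semistability via the theory of the Hilbert--Mumford criterion. First I would unwind what needs to be checked: by Lemma~\ref{checking-supercuspidality}, $(G,m,H,pr^*\lambda^*\cL_\psi)$ is geometrically supercuspidal if and only if for every field extension $\kappa'$, every proper parabolic $P\subset G_{\kappa'}$ with unipotent radical $N$, and every $g\in G(\kappa'[t]/t^m)$, the restriction of $(pr^*\lambda^*\cL_\psi)_{\kappa'}$ to $gN\lWR\kappa'[t]/t^m\rWR g^{-1}\cap H_{\kappa'}$ is not geometrically constant. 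Since $\cL_\psi$ is the Artin--Schreier sheaf, its restriction to a connected unipotent group (here the relevant intersection, which sits inside $H/H'$ after projection because $H'$ is in every such intersection modulo the center — one must check this using $r>0$ so that $gN\lWR\cdot\rWR g^{-1}$ is deep enough) is geometrically trivial exactly when the corresponding linear functional $\lambda$ vanishes on the image of that intersection in $H/H'$. So geometric supercuspidality becomes: $\lambda$ is non-zero on the image in $H/H'$ of $gN\lWR\kappa'[t]/t^m\rWR g^{-1}\cap H$ for every such $g$ and every proper $P$.

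Next I would identify these images with the ``limit subspaces'' appearing in the Hilbert--Mumford criterion. The group $Q:=G(F)_{x,0}/G(F)_{x,0+}$ is a reductive group over $\kappa$ (the reductive quotient of the parahoric attached to $x$), and it acts on the $\kappa$-vector space $H/H'=G(F)_{x,r}/G(F)_{x,r+}$; dually it acts on $(H/H')^\vee$, where $\lambda$ lives. A one-parameter subgroup of $Q$ corresponds, via the building, to moving toward a face/parabolic, and the associated ``attracting'' or ``limit'' subspace of $H/H'$ — the span of the non-negative weight spaces, or more precisely the subspace one needs so that $\lim_{s\to 0}\nu(s)\cdot\lambda$ exists — is exactly the image of an intersection $gN\lWR\cdot\rWR g^{-1}\cap H$ of the above form; conversely every such intersection arises this way. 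This dictionary is the technical heart and is where I expect the main obstacle: one has to match up, carefully and over all field extensions, parabolics of $G_{\kappa'\pseries{t}}$ (equivalently, by the Iwasawa argument already used in the proof of Lemma~\ref{checking-supercuspidality}, parahoric-conjugates of loop parabolics) with one-parameter subgroups of $Q$, and verify that the filtration of $H/H'$ by weight spaces matches the filtration coming from the depth-$r$ Moy--Prasad intersections. I would lean on the structure theory in Reeder--Yu~\cite{Reeder-Yu:epipelagic} and Bruhat--Tits, precisely as Reeder--Yu do in their construction of epipelagic representations.

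With the dictionary in hand, the equivalence is immediate from the Hilbert--Mumford criterion: $\lambda\in(H/H')^\vee$ is GIT-semistable for the $Q$-action if and only if for every nontrivial one-parameter subgroup $\nu$ of $Q$, the limit $\lim_{s\to 0}\nu(s)\cdot\lambda$ does not exist, equivalently $\lambda$ is non-zero on the corresponding attracting subspace of $H/H'$; and by the previous paragraph this is exactly the condition that $\lambda$ is non-zero on every image of a proper-parabolic intersection $gN\lWR\cdot\rWR g^{-1}\cap H$, i.e. geometric supercuspidality. One subtlety to flag in the write-up: semistability is a geometric (base-change-stable) notion, so checking it over $\kappa$ suffices to handle all $\kappa'$ in Lemma~\ref{checking-supercuspidality}(iv), which is consistent with — and in fact re-derives — the field-extension-independence already recorded for geometric supercuspidality. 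I would close by remarking that unstable $\lambda$ gives, via the destabilizing one-parameter subgroup and the associated parabolic, an explicit proper $P$ on which $\cL_\psi$ is trivial, making the failure of supercuspidality completely concrete.
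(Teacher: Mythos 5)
Your proposal follows essentially the same route as the paper's proof: reduce geometric supercuspidality to the nonvanishing of $\lambda$ on the images in $H/H'$ of the intersections $g N \lWR \cdot \rWR g^{-1} \cap H$ (using that the pullback of $\mathcal L_\psi$ along $\lambda$ is trivial exactly when $\lambda$ kills that image), identify those images with the spans of positive-eigenvalue vectors for cocharacters of $G(F)_{x,0}$, and conclude with the Hilbert--Mumford criterion. The "dictionary" you defer to Reeder--Yu/Bruhat--Tits is precisely the step the paper executes directly via the affine Bruhat decomposition, writing $g$ (up to $P$) as $g_0 w$ with $g_0$ in the standard minimal parahoric and $w$ in the Weyl group, so that the image becomes the span of eigenvectors of positive eigenvalue under the cocharacter $g_0 w \alpha w^{-1} g_0^{-1}$ of $G(F)_{x,0}$.
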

 
 \begin{proof}By conjugation, we may assume that $G(F)_{x,0}$ contains the standard 
 minimal parahoric subgroup $I$ (the inverse image in $G (\kappa\bseries{t})$ of a fixed Borel subgroup 
 of the quasi-split group $G(\kappa)$) and hence that $x$ lies in the apartment of the 
 standard maximal torus $T$. Let $P$ be a standard parabolic, and consider a conjugate $g P 
 g^{-1}$. Because $G/P$ is proper and $H^1 ( \operatorname{Spec} \kappa \bseries{t},P)$ is trivial, the natural map $G (\kappa \pseries{t} )/ P (\kappa \bseries{t} ) \to G(\kappa \pseries{t} )/P (\kappa \pseries{t})$ is a bijection, and so we may assume $g \in G (\kappa\bseries{t})$. By further multiplying on the right by an element of $P$, we may assume that $g$ mod $t$ is an element of the Borel times an element of the Weyl group, so $g= g_0w$ where $g_0\in I$ and $w$ lies in the Weyl group.

 Because $P$ is a standard parabolic subgroup, there is some cocharacter $\alpha: \mathbb 
 G_m \to T$ of the standard maximal torus $T$ such that the unipotent subgroup $N$ of $P$ 
 consists of those roots which have positive eigenvalue under $\alpha$. Then $w N w^{-1}$ 
 consists of those roots which have a positive eigenvalue under $w \alpha w^{-1}$. Hence 
 $w N w^{-1} \cap G(F)_{x,r} / (w N w^{-1}  \cap G(F)_{x,r+}) =w N w^{-1} \cap H /( w N 
 w^{-1}  
 \cap H')  $ is generated by the elements of $H/H'$ which have a positive eigenvalue 
 under $w \alpha w^{-1}$, as $H/H'$ has a basis consisting of roots. Thus the projection 
 onto $H/H'$ of $g N g^{-1}$ is generated by the elements which have a positive 
 eigenvalue under $g_0 w \alpha w^{-1} g_0^{-1} $, which is a cocharacter of 
 $G(F)_{x,0}$. Therefore the set of linear forms on $H/H'$ that vanish on that projection is 
 the subspace generated by the elements which have a nonnegative eigenvalue under $g 
 \alpha g^{-1}$. If $\lambda$ is GIT-stable, then by the Hilbert--Mumford criterion it 
 does not lie in this space, so it is nontrivial on the image, hence the pullback of 
 $\mathcal L_\psi$ under $\lambda$ is nontrivial on this image, as desired.
 
 For the converse, if $\lambda$ is not stable, we have a cocharacter of $G(F)_{x,0}$ 
 such that $\lambda$ is a sum of linear forms on $H/H'$ that are eigenvectors of this 
 cocharacter with nonnegative eigenvalue. Hence $\lambda$ vanishes on all elements of 
 $H/H'$ that have positive eigenvalue under the cocharacter. Now let $P$ be the parabolic 
 subgroup of $G$ generated by the maximal torus and all the roots that have nonnegative 
 eigenvalue under this character. Then all elements of $N$ have positive eigenvalue, so 
 $\lambda$ vanishes on $H \cap N$ and therefore $(G,m,H , pr^* \lambda^* \mathcal 
 L_\psi)$  is not mgs. \end{proof}
 
 \begin{corollary} If $G$ is unramifed semisimple, then the epipelagic supercuspidal 
 representations constructed in \cite{Reeder-Yu:epipelagic} are mgs. \end{corollary}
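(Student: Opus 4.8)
The plan is to recognize the Reeder--Yu epipelagic construction as a special instance of the Moy--Prasad monomial datum set up just above, so that the corollary reduces to the GIT criterion in the preceding lemma together with Lemma~\ref{l:Frob-reciprocity}. Recall from \cite{Reeder-Yu:epipelagic} that an epipelagic representation is attached to a point $x$ in the Bruhat--Tits building of $G(F)$, the smallest $r>0$ with $G(F)_{x,r}\neq G(F)_{x,r+}$, a nontrivial additive character $\psi$ of $\kappa$, and a functional $\lambda$ on $V_x:=G(F)_{x,r}/G(F)_{x,r+}$ that is \emph{stable}, in the sense of geometric invariant theory, for the linear action of (an enlargement of) the reductive quotient $G(F)_{x,0}/G(F)_{x,0+}$ on $V_x^\vee$. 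One regards $\psi\circ\lambda$ as a character of $G(F)_{x,r}$ through the projection to $V_x$, and each epipelagic representation $\pi$ attached to $(x,\lambda)$ is irreducible supercuspidal and is a direct summand, hence a quotient, of $\cind_{G(F)_{x,r}}^{G(F)}(\psi\circ\lambda)$; in particular it has a nonzero vector on which $G(F)_{x,r}$ acts through $\psi\circ\lambda$.

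First I would conjugate $x$ so that $G(F)_{x,r}\subseteq G(\kappa\bseries{t})$ (possible by the lemma preceding Lemma~\ref{moy-prasad-construction}), and then invoke Lemma~\ref{moy-prasad-construction} to produce $m$, $H\subseteq G\lWR\kappa[t]/t^m\rWR$ and $H'\subseteq H$ with $H/H'$ a vector group, realizing $G(F)_{x,r}$ and $G(F)_{x,r+}$ as the inverse images of $H(\kappa)$ and $H'(\kappa)$ in $G(\kappa\bseries{t})$. Because the rational points agree over every finite extension of $\kappa$, the algebraic group $H/H'$ is identified with the algebraic group $\underline{V_x}$ attached to the $\kappa$-vector space $V_x$, compatibly with the action of $G(F)_{x,0}/G(F)_{x,0+}$, so $\lambda$ becomes a linear map $H/H'\to\mathbb{G}_a$. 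Setting $\mathcal{L}:=pr^*\lambda^*\mathcal{L}_\psi$ with $pr:H\to H/H'$ the projection, the monomial datum $(G,m,H,\mathcal{L})$ has associated subgroup $J=G(F)_{x,r}$ and character $\chi=\psi\circ\lambda$, exactly as in the discussion following Lemma~\ref{moy-prasad-construction}.

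Next I would apply the lemma immediately preceding this corollary: $(G,m,H,pr^*\lambda^*\mathcal{L}_\psi)$ is geometrically supercuspidal if and only if $\lambda$ is GIT-semistable for the action of $G(F)_{x,0}/G(F)_{x,0+}$ on $(H/H')^\vee=V_x^\vee$. Since Reeder--Yu require $\lambda$ to be stable, it is a fortiori semistable for their (possibly larger) acting group; and since an orbit closure can only shrink when the acting group is replaced by a subgroup, semistability persists for the connected reductive quotient $G(F)_{x,0}/G(F)_{x,0+}$. Hence the datum is geometrically supercuspidal. Combining this with the $(G(F)_{x,r},\psi\circ\lambda)$-eigenvector in the irreducible representation $\pi$ and Lemma~\ref{l:Frob-reciprocity}, we get that $\pi$ is a quotient of $\cind_J^{G(F)}\chi$, so by Definition~\ref{def:mgs-rep} $\pi$ is mgs.

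The step I expect to require the most care is the bookkeeping around \emph{which} group acts and in \emph{which} GIT sense: matching the stability condition of \cite{Reeder-Yu:epipelagic} (phrased through the invariant theory of the Moy--Prasad grading, possibly for a disconnected stabilizer of $x$) with the semistability condition for the connected reductive quotient $G(F)_{x,0}/G(F)_{x,0+}$ appearing in the preceding lemma, and checking that the identification $H/H'\cong\underline{V_x}$ furnished by Lemma~\ref{moy-prasad-construction} is an identification of representations of that quotient. Once these comparisons are in place, the rest is a direct transcription of results already established in the excerpt.
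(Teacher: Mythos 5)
Your proposal is correct and follows essentially the same route as the paper: the paper's proof simply observes that the epipelagic representations are by definition summands (hence quotients) of $\operatorname{c-Ind}_{G(F)_{x,r}}^{G(F)}\chi$ for a GIT-semistable character $\chi$ of $G(F)_{x,r}/G(F)_{x,r+}$, and then invokes the preceding lemma (via Lemma~\ref{moy-prasad-construction}) exactly as you do. Your additional bookkeeping — identifying $H/H'$ with the Moy--Prasad quotient and noting that Reeder--Yu's stability implies semistability for the connected reductive quotient — is a correct and harmless elaboration of what the paper leaves implicit.
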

 
 \begin{proof} They are by definition summands of 
 $\cind_{G(F)_{x,r}}^{G(F)} \chi$ for $r$ the minimum positive value and 
 $\chi$ a GIT-stable character of $G(F)_{x,r}/ G(F)_{x,r}^+$.  \end{proof}

\begin{example} We review the simplest example of an epipelagic representation, which 
is also the simplest example of an mgs representation. Let $G =SL_2$ and let $x$ be the 
midpoint of an edge between two vertices of the Bruhat--Tits tree. Let $\mathfrak o = 
\kappa\bseries{t}$ and $\mathfrak p = t\kappa\bseries{t}$. Then $G(F)_{x,0+} = G(F)_{x,1/2}$ is the 
subgroup of matrices of the form $ \begin{pmatrix} 1 +\mathfrak p & \mathfrak o \\ \mathfrak p &1 + 
\mathfrak p\\ \end{pmatrix} $ and $G(F)_{x,1/2+}$ is the subgroup of matrices of the form  $ 
\begin{pmatrix} 1 + \mathfrak p & \mathfrak p \\ \mathfrak p^2&1+ \mathfrak p\\ \end{pmatrix} $, so the 
quotient is isomorphic to $\kappa^2$, given by extracting the leading terms of the top-right 
and bottom-left matrix entries.

Furthermore $G(F)_{x,0}$ is the subgroup of matrices of the form $ \begin{pmatrix} 
\mathfrak o & \mathfrak o \\ \mathfrak p & \mathfrak o\\ \end{pmatrix} $, and so 
$G(F)_{x,0}/ G(F)_{x,0+}$ consists of the cosets $  \begin{pmatrix} a+\mathfrak p & 
\mathfrak o\\ \mathfrak p & a^{-1}+\mathfrak p \\ \end{pmatrix} \in G(F)_{x,0}/ 
G(F)_{x,0+}$ for $a\in \kappa^\times$. The action of such a coset is by multiplication by 
$a^2$ on the top-right entry and $a^{-2}$ on the bottom-left entry, so the stable 
characters are exactly the characters nontrivial on the top-right and bottom-left 
entries. 

The associated mgs datum has $m=2$,  $H$ the four-dimensional subgroup of matrices in 
$SL_2 \lWR \kappa[t]/t^2 \rWR$ congruent mod $t$ to an upper-triangular unipotent matrix, 
and $\mathcal L$ the unique character sheaf on $H$ whose trace function is any fixed character nontrivial on the top-right and bottom-left entries. 
\end{example}

\subsection{Adler datum and toral representations}\label{sub:adler}

We now describe a special case of the construction of \cite{adler-unrefined} that 
produces mgs representations.  To that end, we borrow some notation from 
\cite{adler-unrefined}. Let $G$ be an unramified semisimple group over $F = 
\kappa\pseries{t}$ satisfying 
\cite[Hypothesis 2.1.1]{adler-unrefined}. This allows us to take a $G$-equivariant 
symmetric bilinear form on the Lie 
algebra $\mathfrak g$ of $G$, so that there is an induced isomorphism between $\mathfrak g$ and 
its dual.

 Let $T$ be a maximal $F$-torus of $G$ that splits over a tamely ramified extension $E$ 
 of  $F$ but such that $T/Z(G)$ has no nontrivial map to $\mathbb G_m$ defined over any 
 unramified extension of $F$. Let $X$ be an element of the Lie algebra of $T$. Assume 
 that there is a positive rational number $r$ such that the valuation of $d\alpha(X)$ for 
 every root $\alpha$ of $T$ defined over $E$ is equal to $r$.

Let $x$ be the unique point of the Bruhat--Tits building of $G$ that belongs to the 
apartment of $T$ inside the Bruhat--Tits building of $G(E)$. Let $G(F)_{x,r}, 
G(F)_{x,r+}, 
\mathfrak g_{x,r}, \mathfrak g_{x,r+}$ be the corresponding Moy--Prasad subgroups of $G$ 
and $\mathfrak g$. Then because $r>0$, we may identify $G(F)_{x,r} / G(F)_{x,r+} = 
\mathfrak 
g_{x,r}/ \mathfrak g_{x,r+}$ \cite[(1.5.2)]{adler-unrefined}.  Using the bilinear form, 
we may view $X$ as a character 
of $\mathfrak g_{x,r}/\mathfrak g_{x,r+}$, defining a character $\chi$ of $G(F)_{x,r}$.

\begin{proposition}\label{adler-mgs} Any irreducible representation $\pi$ of $G(F)$ that contains 
$(G(F)_{x,r}, \chi)$ is mgs. \end{proposition}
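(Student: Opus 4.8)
The plan is to apply Lemma~\ref{checking-supercuspidality}, specifically the equivalence between mgs and the vanishing of the Jacquet module of $\operatorname{c-Ind}_{J_{\kappa'}}^{G(\kappa'\pseries{t})}\chi_{\kappa'}$ over every finite extension $\kappa'$ of $\kappa$. Since the construction in \cite{adler-unrefined} is insensitive to unramified base change -- the torus $T$, the point $x$, the element $X$, and the bilinear form all persist over $\kappa'\pseries{t}$, and the hypothesis that $T/Z(G)$ has no nontrivial $\mathbb G_m$-quotient over an unramified extension of $F$ is preserved -- it suffices to verify the vanishing of the Jacquet module over $\kappa$ itself (applying the same argument verbatim with $\kappa$ replaced by $\kappa'$). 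So the real content is: any $\pi$ containing $(G(F)_{x,r},\chi)$ is supercuspidal, equivalently, by Corollary~\ref{c:vanishing-Jmod} and Lemma~\ref{l:vanishing-Jacquet}, for every proper parabolic $P=MN$ and every $g\in G(F)$, the restriction of $\chi$ to $gNg^{-1}\cap G(F)_{x,r}$ is nontrivial.

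First I would recall the key property of the Adler datum from \cite{adler-unrefined}: the character $\chi$ corresponds, under the identification $G(F)_{x,r}/G(F)_{x,r^+}=\mathfrak g_{x,r}/\mathfrak g_{x,r^+}$ and the bilinear form, to the element $X$, which is a \emph{good} element in the sense that it satisfies a genericity condition -- $d\alpha(X)$ has valuation exactly $r$ for every root $\alpha$ of $T$ over $E$. This is precisely the condition that makes $G_x = C_G(X)$ (the connected centralizer) behave well, and in particular it forces $X$ to be ``non-degenerate'' with respect to every parabolic. Concretely, for a parabolic $P=MN$, one has a Moy-Prasad filtration on $\mathfrak n = \operatorname{Lie}(N)$, and I would show that the pairing of $X$ against $\mathfrak n_{x,r}/\mathfrak n_{x,r^+}$ is nonzero: because $\mathfrak n$ is spanned by root spaces $\mathfrak g_\alpha$ (after passing to $E$ and using that $T$ is elliptic mod center, so $\mathfrak n \otimes E$ is a sum of root spaces for roots not in the Levi), and $X$ pairs nontrivially with the root space $\mathfrak g_{-\alpha}$ by the good-element condition $\operatorname{val}(d\alpha(X))=r$, at least one root space in $\mathfrak n$ has a partner $\mathfrak g_{-\alpha}$ also in $\mathfrak n$ unless... -- and here is the subtlety -- unless the parabolic is degenerate in a way ruled out by the ellipticity of $T$.

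The cleanest route is to reduce the statement about general $g$ to one about standard parabolics containing (a conjugate of) $T$, using the affine Bruhat--Tits machinery exactly as in the epipelagic case above: write $gNg^{-1}\cap G(F)_{x,r}$ in terms of its image in $\mathfrak g_{x,r}/\mathfrak g_{x,r^+}$, observe this image is a sum of root spaces (for roots of $T_E$, or rather the relevant affine roots) cut out by the parabolic, and then the good/generic condition on $X$ guarantees $X$ pairs nontrivially against that sum. The point where ellipticity enters -- $T/Z(G)$ having no unramified $\mathbb G_m$-quotient -- is exactly what prevents $N$ from containing a root space whose negative lies in the Levi: if such a cancellation could occur uniformly, $T$ would admit a nontrivial cocharacter into the relevant Levi defined over an unramified extension, contradicting ellipticity. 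Alternatively, and perhaps more efficiently, I would simply cite that the hypotheses on $(T,X)$ are exactly Adler's conditions for $\operatorname{c-Ind}_{G(F)_{x,r}}^{G(F)}\chi$ to be supercuspidal (this is the main theorem of \cite{adler-unrefined}, or follows from the Moy--Prasad/Kim--Murnaghan analysis), hence its Jacquet modules vanish; the same holds over every finite $\kappa'$ by the base-change stability just noted; and then Lemma~\ref{checking-supercuspidality} yields that the datum $(G,m,H,\mathcal L)$ built from $(x,r,\chi)$ via Lemma~\ref{moy-prasad-construction} is geometrically supercuspidal, so $\pi$ is mgs.

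The main obstacle I anticipate is the bookkeeping needed to pass from Adler's supercuspidality statement (phrased for the group $G(F)$ over a fixed local field) to the uniform statement over \emph{all} finite extensions $\kappa'\pseries{t}$ simultaneously, and to check that ``good element / genericity of $X$'' is genuinely preserved -- the valuation condition $\operatorname{val}(d\alpha(X))=r$ is stable under unramified base change since $E/F$ stays tamely ramified and the roots are unchanged, but one must confirm the ellipticity hypothesis (no unramified $\mathbb G_m$-quotient of $T/Z(G)$) does not degrade, which it does not precisely because an unramified extension of $\kappa'\pseries{t}$ is still an unramified extension of $\kappa\pseries{t}$. Once that stability is in hand, Lemma~\ref{checking-supercuspidality} does the rest and the proof is short.
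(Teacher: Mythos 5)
Your proposal is correct and follows essentially the same route as the paper: reduce via Lemma~\ref{checking-supercuspidality} to the vanishing of Jacquet modules over every finite constant-field extension, observe that Adler's hypotheses on $(T,X)$ are stable under unramified base change, and then invoke supercuspidality from \cite{adler-unrefined}. The only point the paper spells out that you leave to the citation is the bridge from Adler's theorem (which concerns induction of an irreducible $\sigma$ from the compact subgroup $MJ$) to the vanishing of the Jacquet modules of $\cind_{G(F)_{x,r}}^{G(F)}\chi$ itself: one factors the induction through $MJ$, using that $M=T(F)$ (since $d\alpha(X)\neq 0$ for all roots $\alpha$) and that $T$ is anisotropic, so $MJ$ is compact and $\cind_{G(F)_{x,r}}^{G(F)}\chi$ decomposes as a sum of Adler's supercuspidals.
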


\begin{proof}  We use the datum $(G,m,H ,\psi)$ constructed in the previous section. It 
remains to check that this datum is geometrically supercuspidal, which we do using 
Lemma 
\ref{checking-supercuspidality}.
 
 It is sufficient to show that, after base-changing to a finite extension of $\kappa$, 
 the Jacquet modules of this induced representation vanish. Because all our assumptions 
 are stable under base change of $\kappa$, it in fact suffices to show that, for all $F, 
 G, T, X$ satisfying these assumptions, the Jacquet module of 
 $\cind_{G(F)_{x,r}}^{G(F)} \chi$ vanishes. 
 
 Adler defines $M$ to be the centralizer of $X$ in $G$. In our case, that is simply 
 $T(F)$, because our assumptions imply that $d\alpha(X) \neq 0$ for every root $\alpha$ 
 of $T$. Because $T$ is anisotropic, $M$ is compact. Adler defines $J$ as $\phi_x( 
 \mathfrak m_{x,r} \oplus \mathfrak m_{x,(r/2)}^{\perp})$, where $\mathfrak m_{x,r}$ and 
 $\mathfrak m_{x,(r/2)}^{\perp}$ are the Moy--Prasad subspaces of the Lie algebra of $T$ 
 and its orthogonal complement in the Lie algebra of $G$ respectively, and $\phi_x$ is an 
 approximate exponential map. For our purposes, it is most significant that $J$ is 
 compact and contains $G(F)_{x,r}$, and is normalized by $M$, so $MJ$ is compact and 
 contains $G(F)_{x,r}$ as an open subgroup.
 
 Thus $\cind_{G(F)_{x,r}}^{MJ} \chi$ is 
 a sum of irreducible representations $\sigma$ of $MJ$, each containing 
 $(G(F)_{x,r},\chi)$ 
 by Frobenius reciprocity. The induced representations $\cind_{G(F)_{x,r}}^{G(F)} \chi =  
 \cind_{MJ}^{G(F)} \cind_{G(F)_{x,r}}^{MJ} \chi$ is the sum of $ 
 \cind_{MJ}^{G(F)} 
 \sigma$, and by the discussion at the beginning of \cite[\S2.5]{adler-unrefined}, these 
 are supercuspidal, so it is a sum of 
 supercuspidal representations, hence has vanishing Jacquet modules.
 \end{proof}

Proposition \ref{adler-mgs} shows that all the representations produced by the construction of 
Adler in the case where $G$ is unramified and semisimple and the centralizer $M$ of $X$ 
is not just anisotropic over the base field but over all unramified extensions are mgs. (To see 
this, we must observe that $M$ anisotropic over unramified extensions implies that $M$ is 
a torus, as all groups become quasi-split over some unramified extension, and hence 
equals $T$. If $M=T$, then $d\alpha(X)\neq 0$ for any root $\alpha$ of $T$. This 
condition, plus the stronger anisotropic condition for $T$, are our only points of 
departure from the setup of~\cite{adler-unrefined}.)

\subsection{Non-examples}\label{sub:non-examples}

We discuss some examples of data $(G,m,H,\mathcal L)$ that are not geometrically supercuspidal and so do not lead to mgs representations.

\begin{example} If $\mathcal L$ is trivial then $(G, m, H, \mathcal L)$ cannot be mgs 
unless $G$ is a torus, as there will always be at least one proper parabolic subgroup. In 
particular, we can simply take $H$ to be trivial.
\end{example}

\begin{example} If the order of the monodromy group of $\mathcal L$, which, by Lemma 
\ref{character-sheaf-uniqueness}, is equal to the order of the associated character, is 
prime to $p$, then its pullback to the intersection with any unipotent subgroup will have 
order prime to $p$, but the order of the unipotent subgroup is a power of $p$, so the 
character sheaf is trivial on that intersection. Thus $(G, m ,H, \mathcal L)$ is not mgs 
unless $G$ is a torus.

For instance, we can take $m= 1$, $H$ a Borel subgroup of $G$, and $\mathcal L$ the pullback of a character sheaf on the maximal torus. It is possible in this case for $\cind_{J}^{G(\kappa\bseries{t})} \chi$ to be irreducible (the inflation of an irreducible principle series representation of $G(\kappa)$) but the Jacquet module of the induced representation is nonvanishing.
\end{example}

\begin{example} We provide an example of a $(G,m , H, \mathcal L)$ which is not mgs even 
though the Jacquet modules of the induced representation are trivial. Let $G=\GL(2)$, 
$m=2$, 
and $H$ be the subgroup of elements congruent to $1$ mod $t$, which is isomorphic to the 
Lie algebra of $G$, i.e., the vector space of $2 \times 2$ matrices.  Consider the linear 
function $A \mapsto \tr(AB)$ on the Lie algebra of $G$, where $B$ is a non-scalar element 
of a 
non-split Cartan of $M_2(\kappa)$. View $H$ as the Lie algebra of $G$ and pull back an 
Artin--Schreier sheaf $\mathcal 
L_\psi$ to $H$ under this linear function. Then for any parabolic subgroup $P$, $g N g^{-1} \cap H$ is a 
one-dimensional vector space of nilpotent matrices, so the character is trivial when 
pulled back to that subgroup if and only if the trace of $B$ times the nilpotent matrix 
vanishes, which happens if and only if $B$ is contained in the associated Borel.  Over 
$\kappa$, this is impossible, so the Jacquet module vanishes, and the induced 
representation is a sum of supercuspidals. However, over a quadratic extension of $\kappa$, there are two Borels 
containing $B$, so $(G,m , H, \mathcal L)$ is not geometrically supercuspidal. 

\end{example}

\subsection{Preservation of mgs}\label{sub:preservation}
We show that mgs representations are preserved under some natural 
operations on algebraic groups. For this subsection and~\S\ref{s:character-datum} only, 
we denote 
groups over the local 
field $F$ by the roman letter $G$, and groups over the residue field $\kappa$ by the bold 
letter $\bG$.

\begin{lemma}\label{l:mgs-restriction}
Let $f: G_1 \to G_2$ be a homomorphism of unramified reductive groups over an equal 
characteristic local field $F$ whose kernel is a torus and whose image is a normal 
subgroup with quotient a torus. Let $\pi_2$ be an mgs representation of $G_2(F)$. Then 
any irreducible quotient $\pi_1$ of $\pi_2 \circ f$ is mgs.  \end{lemma}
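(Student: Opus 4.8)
The plan is to transport monomial geometric supercuspidal data for $\pi_2$ along $f$. First I would reduce to the case where $f$ is defined over the residue field $\kappa$: fixing reductive models $\bG_1,\bG_2$ over $\kappa$ (Lemma~\ref{l:unramified}), one checks, using that unramified reductive groups and their normal reductive subgroups and central tori descend to $\kappa$, that $f$ can be written as $\operatorname{Ad}(h)\circ f_0$ with $h\in(\bG_2)_{\mathrm{ad}}(F)$ and $f_0\colon\bG_1\to\bG_2$ a homomorphism over $\kappa$; since $\operatorname{Ad}(h)$ is an automorphism of $G_2$ over $F$, the representation $\pi_2\circ\operatorname{Ad}(h)$ is again mgs (Lemma~\ref{mgs-aut}), so replacing $\pi_2$ by it we may assume $f$ is over $\kappa$. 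Let $(\bG_2,m,H_2,\cL_2)$ be mgs data for $\pi_2$, with associated $J_2\subseteq G_2(F)$ and $\chi_2$; write $T_0:=\ker f$, a torus, hence central in $G_1$ (a normal torus of a connected group is central), and $\bar G_1:=\im f$, a normal reductive subgroup of $G_2$ with torus quotient $T_1:=G_2/\bar G_1$. The map $f$ induces over $\kappa$ a homomorphism $f_m\colon\bG_1 \lWR \kappa[t]/t^m \rWR\to\bG_2 \lWR \kappa[t]/t^m \rWR$ (base change to $\kappa[t]/t^m$, then Weil restriction), and I would take as candidate data for $\pi_1$ the quadruple $(\bG_1,m,H_1,\cL_1)$ with $H_1:=(f_m^{-1}(H_2))^\circ$ and $\cL_1:=(f_m^*\cL_2)|_{H_1}$, the latter a character sheaf by Lemma~\ref{character-sheaf-construction}(ii); the attached subgroup and character $J_1\subseteq G_1(F)$, $\chi_1$ of~\eqref{J} satisfy $f(J_1)\subseteq J_2$ and $\chi_1=\chi_2\circ f|_{J_1}$.

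The first step is to show $(\bG_1,m,H_1,\cL_1)$ is geometrically supercuspidal via criterion~(iv) of Lemma~\ref{checking-supercuspidality}. Since the unipotent radical of a parabolic lies in the derived group and $f$ restricts to a central isogeny $[\bG_1,\bG_1]\to[\bG_2,\bG_2]$ (kernel $T_0\cap[\bG_1,\bG_1]$ finite central, image $[\bar G_1,\bar G_1]=[\bG_2,\bG_2]$ as $G_2/\bar G_1$ is abelian), the parabolic subgroups of $\bG_1$ correspond bijectively to those of $\bG_2$ so that $f$ induces an isomorphism $N_1\xrightarrow{\sim}N_2$ on unipotent radicals --- injective because $N_1$ is unipotent and the isogeny kernel is of multiplicative type, surjective by a dimension count. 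Hence for a field extension $\kappa'/\kappa$, a proper parabolic $P_1\subseteq(\bG_1)_{\kappa'}$ with radical $N_1$, and $g\in\bG_1(\kappa'[t]/t^m)$, the homomorphism $f_m$ carries the connected group $g\,N_1 \lWR \kappa'[t]/t^m \rWR\,g^{-1}$ isomorphically onto $\tilde g\,N_2 \lWR \kappa'[t]/t^m \rWR\,\tilde g^{-1}$ with $\tilde g=f_m(g)$ and $N_2$ the radical of the matching parabolic. Because that group is connected and $H_1=(f_m^{-1}H_2)^\circ$, this isomorphism identifies the identity component of $g\,N_1 \lWR \kappa'[t]/t^m \rWR\,g^{-1}\cap(H_1)_{\kappa'}$ with that of $\tilde g\,N_2 \lWR \kappa'[t]/t^m \rWR\,\tilde g^{-1}\cap(H_2)_{\kappa'}$, compatibly with $\cL_1=f_m^*\cL_2$; geometric supercuspidality of $(\bG_2,m,H_2,\cL_2)$, applied to $\tilde g$, then forces the restriction of $\cL_1$ to the former to be geometrically nontrivial, as required.

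The remaining, and most delicate, step is to exhibit a nonzero $(J_1,\chi_1)$-invariant vector in $\pi_1$ --- delicate because $\pi_1$ is only a \emph{quotient} of $\pi_2\circ f$, and an invariant vector of the latter need not descend to it. By Frobenius reciprocity (Lemma~\ref{l:Frob-reciprocity}), $\pi_2$ has a nonzero $(J_2,\chi_2)$-vector $v$, which viewed in $\pi_2\circ f$ is $(J_1,\chi_1)$-invariant since $f(J_1)\subseteq J_2$ and $\chi_1=\chi_2\circ f$. To use it, I would decompose $\pi_2\circ f$. Factoring $f=\iota\circ q$ with $q\colon G_1\twoheadrightarrow\bar G_1$ (central torus kernel) and $\iota\colon\bar G_1\hookrightarrow G_2$ (normal, with $G_2(F)/\bar G_1(F)$ embedding in $T_1(F)$): $\pi_2$ is irreducible admissible and supercuspidal (Corollary~\ref{c:vanishing-Jmod}), so by the standard theory of restricting supercuspidal representations to such subgroups (as for $\SL_n\subset\GL_n$), $\pi_2|_{\bar G_1(F)}$ --- and then $\pi_2\circ f$ after restricting further along the finite-index subgroup $q(G_1(F))\subseteq\bar G_1(F)$ (its cokernel lying in the finite group $H^1(F,T_0)$) --- is a finite direct sum of irreducible supercuspidals, any two of which are $G_2(F)$-conjugate and, $G_2$ being connected, related by an inner automorphism of $G_1$ defined over $F$. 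Now $\pi_1$ is one of these summands; writing $v=\sum v_i$ along them, each $v_i$ is $(J_1,\chi_1)$-invariant (the $J_1$-action preserves each summand) and some $v_{i_0}\neq0$, so the summand containing it has a nonzero $(J_1,\chi_1)$-vector and is mgs by Step~1 and Lemma~\ref{l:Frob-reciprocity}; every other summand, $\pi_1$ in particular, is an inner twist over $F$ of it, hence mgs by Lemma~\ref{mgs-aut}.
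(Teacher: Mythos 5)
Your construction of the transported datum and the verification of geometric supercuspidality are essentially the paper's: take $f^{-1}(H_2)$ (you pass to its identity component, which is harmless and if anything more careful), pull back $\mathcal L_2$, match parabolics of $\bG_1$ with those of $\bG_2$, and use that $f$ induces an isomorphism $N_1\lWR\kappa[t]/t^m\rWR\to N_2\lWR\kappa[t]/t^m\rWR$ because the kernel of $f$ is a torus and the cokernel is a torus; the reduction to $f$ defined over $\kappa$ is phrased differently (via $\operatorname{Ad}(h)\circ f_0$ and Lemma~\ref{mgs-aut}) but is asserted at about the same level of detail as the paper's one-line justification. Where you genuinely diverge is the step you rightly single out as delicate, namely producing a nonzero $(J_1,\chi_1)$-eigenvector in the quotient $\pi_1$. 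You invoke the Gelbart--Knapp/Tad\'i\'c-type theory: $\pi_2$ restricted to $\bar G_1(F)\supseteq G_2^{\mathrm{der}}$, and then to the finite-index subgroup $f(G_1(F))$, is a finite direct sum of irreducibles forming a single $G_2(F)$-orbit, so $\pi_1$ is a summand conjugate (by an $F$-automorphism of $G_1$) to a summand that visibly carries an eigenvector. The paper avoids all of this with a short direct trick: since the $G_2(F)$-translates of the $(J_2,\chi_2)$-vector span the irreducible $\pi_2$, some translate $\pi_2(g)v$ has nonzero image in the nonzero quotient $\pi_1$; that image is an eigenvector for the $g$-conjugated datum $(f^{-1}(gJ_2g^{-1}),\chi_2^g\circ f)$, and since conjugation by $g\in G_2(F)$ induces an $F$-automorphism of $G_1$, Lemma~\ref{mgs-aut} removes the twist. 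Your route gives more (the full decomposition of $\pi_2\circ f$) but at the cost of importing finite-length semisimplicity and transitivity statements for such restrictions, which for general reductive groups over local fields of positive characteristic is a nontrivial input you would need to source carefully; the paper's argument is self-contained and needs no semisimplicity of $\pi_2\circ f$ at all.
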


\begin{proof} Let $F = \kappa\pseries{t}$. We may choose descents $\bG_1$ and $\bG_2$ 
of $G_1 $ and $G_2$ to $\kappa$ such that $f$ is defined over $\kappa $, because $G_1$ 
and $G_2$ have the same Bruhat--Tits buildings and the same hyperspecial points.

Let $(\bG_2, m, H, \mathcal L)$ be mgs datum for $\pi_2$.  Let $J_2$ be the subgroup 
defined 
by this datum and $\chi$ the character.  Let $v$ be a vector in $\pi_2$ which 
transforms under the subgroup $J_2$ by the character $\chi_2$. Because $\pi_2$ is irreducible, there must be some $g \in G_2(F)$ such that $g v$ remains nonzero in the quotient $\pi_1$. This vector $gv$ transforms under the subgroup $ 
f^{-1} (g J_2 g^{-1})$ by $\chi_2 \circ f$. Because conjugation by $g$ is an outer 
automorphism of $G_1$, and geometric supercuspidality is preserved by automorphisms (Lemma 
\ref{mgs-aut}), we may assume $\pi_1$ contains a vector that transforms under the 
subgroup $f^{-1}(J_2)$ by the character $\chi_2\circ f$.

We have a map $f : \bG_1 \lWR \kappa[t]/t^m \rWR \to  \bG_2  \lWR \kappa[t]/t^m \rWR$. 
It suffices to show that $(G_1, m , f^{-1}(H), f^* \mathcal L)$ is mgs datum for 
$\pi_1$. Let $J_1$ be the subgroup defined by this datum and let $\chi_1$ be the 
character. We have $J_1 = f^{-1}(J)$ and $\chi_1 = \chi_2 \circ f$, so it remains to show 
that $(\bG_1,m ,f^{-1}(H), f^* \mathcal L)$ is geometrically supercuspidal. Let $P_1$ be 
a parabolic subgroup of $\bG_1$. Then $P_1$ is the inverse image under $f$ of a 
parabolic subgroup $P_2$ of $\bG_2$. Moreover, for $N_1$ and $N_2$ the maximal 
unipotent 
subgroups of $P_1$ and $P_2$, $f: N_1 \lWR \kappa[t]/t^m \rWR \to N_2 \lWR \kappa[t]/t^m \rWR$ is an 
isomorphism, because the kernel of $f$ is a torus and does not intersect the unipotent 
subgroups, while the cokernel of $f$ is a torus and so the image of the unipotent 
subgroup in it is trivial. So for any $g$ in $\bG_1 \lWR \kappa[t]/t^m \rWR$,   $f: g N_1 
g^{-1} \cap f^{-1}(H) \to f(g) N_2 f(g^{-1}) \cap H$ is an isomorphism, and since the 
pullback of $\mathcal L$ to $ f(g) N_2 f(g^{-1}) \cap H$ is nontrivial, the pullback of $\mathcal L$ to 
$g N_1 g^{-1} \cap f^{-1}(H)$ is nontrivial.
\end{proof}

\begin{lemma}\label{l:mgs-product}
Let $G_1$ and $G_2$ be unramified reductive groups over an equal characteristic local 
field $F$. Let $\pi= \pi_1 \boxtimes \pi_2$ be a mgs representation of $G_1(F)\times 
G_2(F)$, where $\pi_1$ is a representation of $G_1(F)$ and $\pi_2$ is a 
representation of $G_2(F)$. Then $\pi_1$ and $\pi_2$ are mgs representations of 
$G_1(F)$ and $G_2(F)$ respectively. \end{lemma}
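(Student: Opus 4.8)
The plan is to manufacture mgs data for $\pi_1$ (and symmetrically for $\pi_2$) out of mgs data for $\pi=\pi_1\boxtimes\pi_2$, by intersecting with the first factor and passing to identity components. Write $F=\kappa\pseries{t}$. As $G_1,G_2$ are unramified they descend (Lemma~\ref{l:unramified}) to reductive groups $\bG_1,\bG_2$ over $\kappa$, and $G_1\times G_2$ descends to $\bG_1\times\bG_2$, which shares its Bruhat--Tits building and hyperspecial point with the product; using the intrinsic characterisation of mgs together with Lemmas~\ref{l:unramified}, \ref{mgs-galois} and~\ref{mgs-aut}, I would fix mgs data $(\bG_1\times\bG_2,m,H,\mathcal L)$ for $\pi$, with $J$ the inverse image of $H(\kappa)$ in $(G_1\times G_2)(\kappa\bseries{t})$ and $\chi$ the trace function of $\mathcal L$ pulled back to $J$. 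Via $(\bG_1\times\bG_2)\lWR\kappa[t]/t^m\rWR=\bG_1\lWR\kappa[t]/t^m\rWR\times\bG_2\lWR\kappa[t]/t^m\rWR$, set $H_1'':=H\cap\bigl(\bG_1\lWR\kappa[t]/t^m\rWR\times\{1\}\bigr)$, a closed subgroup of $\bG_1\lWR\kappa[t]/t^m\rWR$, and let $H_1:=(H_1'')^{\circ}$ be its identity component. The closed immersion $H_1\hookrightarrow H$, $h_1\mapsto(h_1,1)$, pulls $\mathcal L$ back to a character sheaf $\mathcal L_1$ on $H_1$ by Lemma~\ref{character-sheaf-construction}(ii); write $J_1$ for the inverse image of $H_1(\kappa)$ in $G_1(\kappa\bseries{t})$ and $\chi_1$ for the trace function of $\mathcal L_1$ pulled back to $J_1$, so that $\chi_1(g_1)=\chi(g_1,1)$ for $g_1\in J_1$. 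Define $H_2,\mathcal L_2,J_2,\chi_2$ symmetrically. Finally, $\pi$ is irreducible (being mgs), hence so are $\pi_1$ and $\pi_2$.

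Next I would show that $\pi_1$ contains a nonzero $(J_1,\chi_1)$-eigenvector. By Frobenius reciprocity (Lemma~\ref{l:Frob-reciprocity}) there is a nonzero $(J,\chi)$-invariant vector $v\in\pi_1\otimes\pi_2$; write $v=\sum_i w_i\otimes u_i$ with the $u_i$ linearly independent and $w_1\neq 0$. For $h_1\in J_1$ one has $(h_1,1)\in J$, and since $(h_1,1)$ acts by $h_1$ on the first tensor factor, comparing coefficients of the $u_i$ in $(h_1,1)\cdot v=\chi(h_1,1)\,v$ forces $h_1w_i=\chi(h_1,1)\,w_i=\chi_1(h_1)\,w_i$ for each $i$; thus $w_1$ is the desired eigenvector in $\pi_1$.

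The main point is that $(\bG_1,m,H_1,\mathcal L_1)$ is geometrically supercuspidal. Let $P_1\subsetneq(\bG_1)_{\overline\kappa}$ be a proper parabolic with unipotent radical $N_1$ and let $g_1\in\bG_1(\overline\kappa[t]/t^m)$. I would apply the geometric supercuspidality of $(\bG_1\times\bG_2,m,H,\mathcal L)$ to the proper parabolic $P:=P_1\times(\bG_2)_{\overline\kappa}$, whose unipotent radical is $N_1\times\{1\}$, and to $g:=(g_1,1)$. Since $(N_1\times\{1\})\lWR\overline\kappa[t]/t^m\rWR=N_1\lWR\overline\kappa[t]/t^m\rWR\times\{1\}$, the subgroup $g\,(N_1\times\{1\})\lWR\overline\kappa[t]/t^m\rWR\,g^{-1}\cap H_{\overline\kappa}$ is carried, under the projection to $\bG_1\lWR\kappa[t]/t^m\rWR$, isomorphically onto $g_1N_1\lWR\overline\kappa[t]/t^m\rWR g_1^{-1}\cap (H_1'')_{\overline\kappa}$; and since $H_1$ is open of finite index in $H_1''$, this group has the same identity component as $g_1N_1\lWR\overline\kappa[t]/t^m\rWR g_1^{-1}\cap H_{1,\overline\kappa}$ (an open subgroup of an algebraic group contains its identity component, and that identity component is geometrically connected). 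On this common identity component, which lies in $H_{1,\overline\kappa}$, the sheaf $\mathcal L_{\overline\kappa}$ restricts to $(\mathcal L_1)_{\overline\kappa}$; so the hypothesis says exactly that $(\mathcal L_1)_{\overline\kappa}$ is nontrivial on the identity component of $g_1N_1\lWR\overline\kappa[t]/t^m\rWR g_1^{-1}\cap H_{1,\overline\kappa}$, i.e.\ that $(\bG_1,m,H_1,\mathcal L_1)$ is geometrically supercuspidal. Combined with the eigenvector found in the previous paragraph and Lemma~\ref{l:Frob-reciprocity} applied to the irreducible $\pi_1$, this exhibits $(\bG_1,m,H_1,\mathcal L_1)$ as mgs data for $\pi_1$; the argument for $\pi_2$ is identical.

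The one genuinely delicate step is this last comparison of identity components. Because $H$ need not be a product $H_1\times H_2$, one is forced to replace $H\cap(\bG_1\lWR\kappa[t]/t^m\rWR\times\{1\})$ by its identity component before it can serve in an mgs datum, and the crux of the proof is verifying that this replacement does not destroy the non-triviality condition in Definition~\ref{def:geometric-ind-data}. Everything else — the choice of descent, the transport of the character sheaf along the inclusion, and the linear-algebra extraction of the eigenvector — is routine bookkeeping.
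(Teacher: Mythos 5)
Your proof is correct and follows essentially the same route as the paper's: intersect $H$ with each factor, pull back $\mathcal L$, test geometric supercuspidality against parabolics of the form $P_1\times\bG_2$ with $g=(g_1,1)$, and then produce a $(J_1,\chi_1)$-eigenvector in $\pi_1$. The only differences are cosmetic: the paper works directly with $H\cap\bG_1\lWR \kappa[t]/t^m\rWR$ (leaving the connectedness bookkeeping implicit, which your identity-component step makes explicit) and obtains the eigenvector via the surjections $\cind_{J_1\times J_2}^{G_1(F)\times G_2(F)}(\chi_1\times\chi_2)\twoheadrightarrow\cind_{J}^{G_1(F)\times G_2(F)}\chi\twoheadrightarrow\pi_1\boxtimes\pi_2$ rather than your tensor-decomposition argument.
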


\begin{proof} Let $F = \kappa\pseries{t}$. Choose descents $\bG_1$ and $\bG_2$ and 
isomorphisms $\bG_{1,F} = G_1, \bG_{2,F}=G_2$. Let $G = G_1 \times G_2$, and $\bG=\bG_1 
\times \bG_2$.

 Choose an mgs datum $(\bG, m, H, \mathcal L)$ for $\pi$. Let $H_1 = H \cap \bG_1 \lWR 
 \kappa[t]/t^m \rWR$ and let $H_2 = H \cap \bG_2 \lWR \kappa[t]/t^m \rWR$. Let $\mathcal 
 L_1$ be the pullback of $\mathcal L$ to $H_1$ and let $\mathcal L_2$ be the pullback of 
 $\mathcal L$ to $H_2$. 

To show that $(\bG_1, m,H_1,\mathcal L_1)$ and $(\bG_2, m, H_2, \mathcal L_2)$ are 
geometrically supercuspidal, observe that for any parabolic subgroup $P_1$ of $\bG_1$ 
with maximal unipotent subgroup $N_1$, $P_1 \times \bG_2$ is a parabolic subgroup of 
$\bG=\bG_1 \times \bG_2$ with maximal unipotent subgroup $N_1 \times e$, and for any 
$(g_1,g_2) \in \bG_1 \lWR \kappa[t]/t^m \rWR \times \bG_2 \lWR \kappa[t]/t^m \rWR$, \[ H 
\cap 
(g_1,g_2)(N_1 \times e) (g_1,g_2)^{-1} = H_1 \cap g_1 N_1 g_1^{-1} \] so the pullback of 
$\mathcal L_1$ to $g_1 N_1 g_1^{-1}$ is geometrically nontrivial. The same argument works 
symmetrically for $\bG_2$.

Letting $J, J_1,J_2, \chi,\chi_1,\chi_2$ be the subgroups and characters associated to 
the various data, we have $J_1 \times J_2 \subseteq J$ and $\chi_1 \times \chi_2$ is the 
restriction of $\chi$ to $J_1 \times J_2$, so there is a surjection

\[ \cind_{J_1}^{ G_1(F)} \chi_1 \boxtimes \cind_{J_2}^{ 
G_2(F)} \chi_2 = \cind_{J_1 \times J_2}^{ G_1(F) \times G_2(F)}( 
\chi_1 \times \chi_2 ) \to \cind_J^{G_1(F) \times G_2(F)} \chi \to \pi 
= \pi_1 \boxtimes \pi_2\] and thus surjections $\cind_{J_1}^{ G_1(F)} \to \pi_1$ and 
$\cind_{J_2}^{ G_2(F)} \chi_2  \to\pi_2$, as desired.
\end{proof}

\begin{lemma}\label{l:mgs-extension}
 Let $E/F$ be an unramified extension of local fields. Let $G$ be an unramified 
 reductive group over $E$. Let $\pi$ be a representation of $G(E)$. Then $\pi$ is an 
 mgs representation of $G$ over $E$ if $\pi$ is an mgs representation of the 
 $F$-points of the Weil restriction of $G$ from $E$ to $F$. \end{lemma}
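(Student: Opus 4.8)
The plan is to transport the mgs datum from the residue field of $F$ down to that of $E$, using the formal dictionary between Weil restriction of groups, Weil restriction of character sheaves (Lemma~\ref{all-factorizable}), and the invariance of geometric supercuspidality under field extension (Lemma~\ref{checking-supercuspidality}).

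First I would set up the residue-field picture. Since $E/F$ is unramified, a uniformizer $t$ of $F$ is also a uniformizer of $E$, and $E = F\otimes_{\kappa_F}\kappa_E$ with $\kappa_E/\kappa_F$ the corresponding extension of finite fields; write $d=[E:F]$ and $\overline{\kappa}=\overline{\kappa_E}=\overline{\kappa_F}$. By Lemma~\ref{l:unramified} we may write $G = \bG_{\kappa_E\pseries{t}}$ for a quasi-split reductive group $\bG$ over $\kappa_E$. Because Weil restriction commutes with the base change $\kappa_F\rightsquigarrow F$ (here $E=F\otimes_{\kappa_F}\kappa_E$), we get $\operatorname{Res}_{E/F}G = \mathbf H_F$ with $\mathbf H := \operatorname{Res}_{\kappa_E/\kappa_F}\bG$; this $\mathbf H$ is quasi-split reductive over $\kappa_F$, and unramified over $F$ since $\mathbf H_{\overline{\kappa}} = \bG_{\overline{\kappa}}^{\,d}$ is split. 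Under this identification $(\operatorname{Res}_{E/F}G)(F) = \mathbf H(F) = \bG(E) = G(E)$, and for every $m$ one has $\mathbf H(\kappa_F\bseries{t}) = \bG(\ko_E)$ carrying principal congruence subgroups to principal congruence subgroups, together with the crucial identity $\mathbf H\lWR\kappa_F[t]/t^m\rWR = \operatorname{Res}_{\kappa_E/\kappa_F}(\bG\lWR\kappa_E[t]/t^m\rWR)$ (both sides have $S$-points $\bG(\kappa_E[t]/t^m\otimes_{\kappa_F}S)$). By Lemmas~\ref{mgs-galois} and~\ref{mgs-aut} it then suffices to exhibit a geometrically supercuspidal datum over $\kappa_E$ with ambient group $\bG$ whose associated pair $(J,\chi)$ has $\pi$ as a quotient of $\operatorname{c-Ind}_J^{\bG(E)}\chi$.

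So let $(\mathbf H,m,H,\mathcal L)$ be mgs data for $\pi$ over $\kappa_F$. After forming $\lWR\kappa_E[t]/t^m\rWR$, apply the counit $\mathbf H_{\kappa_E}\to\bG$ of the Weil-restriction adjunction — equivalently, under $\mathbf H_{\kappa_E}\cong\prod_{\gamma\in\Gal(\kappa_E/\kappa_F)}\bG^{(\gamma)}$ the projection onto the factor indexed by the identity — and let $H_E$ be the image of $H_{\kappa_E}$ in $\bG\lWR\kappa_E[t]/t^m\rWR$, a connected $\kappa_E$-subgroup; by the adjunction $H\subseteq\operatorname{Res}_{\kappa_E/\kappa_F}(H_E)$. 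Granting the equality $H = \operatorname{Res}_{\kappa_E/\kappa_F}(H_E)$ (the crux, discussed below), Lemma~\ref{all-factorizable}(2) yields the unique character sheaf $\mathcal L_E$ on $H_E$ with $\mathcal L = \operatorname{Res}_{\kappa_E/\kappa_F}(\mathcal L_E)$, whose trace function over $\kappa_E$ is that of $\mathcal L$ over $\kappa_F$, while $\operatorname{Res}_{\kappa_E/\kappa_F}(H_E)(\kappa_F) = H_E(\kappa_E)$. Thus $(\bG,m,H_E,\mathcal L_E)$ gives exactly the same $(J,\chi)\subseteq\bG(\ko_E)=\mathbf H(\ko_F)$, so $\pi$ is a quotient of $\operatorname{c-Ind}_J^{\bG(E)}\chi = \operatorname{c-Ind}_J^{\mathbf H(F)}\chi$. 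Finally $(\bG,m,H_E,\mathcal L_E)$ is geometrically supercuspidal over $\kappa_E$: over $\overline{\kappa}$ we have $\mathbf H_{\overline{\kappa}} = \bG_{\overline{\kappa}}^{\,d}$, $H_{\overline{\kappa}} = (H_E)_{\overline{\kappa}}^{\,d}$ and $\mathcal L_{\overline{\kappa}} = \mathcal L_{E,\overline{\kappa}}\boxtimes\cdots\boxtimes\mathcal L_{E,\overline{\kappa}}$, so for a proper parabolic $P_E\subseteq\bG_{\overline{\kappa}}$ with radical $N_E$ and $g\in\bG(\overline{\kappa}[t]/t^m)$, feeding the proper parabolic $P_E\times\bG_{\overline{\kappa}}\times\cdots\times\bG_{\overline{\kappa}}$ (radical $N_E\times 1\times\cdots\times 1$) and the element $(g,1,\dots,1)$ into the geometric supercuspidality of $(\mathbf H,m,H,\mathcal L)$ shows the intersection equals $(gN_E\lWR\overline{\kappa}[t]/t^m\rWR g^{-1}\cap(H_E)_{\overline{\kappa}})\times 1\times\cdots\times 1$, on which $\mathcal L_{\overline{\kappa}}$ restricts as $\mathcal L_{E,\overline{\kappa}}$ in the first coordinate; non-triviality of one is non-triviality of the other, which is exactly the required condition over $\overline{\kappa}$, hence over $\kappa_E$ by Lemma~\ref{checking-supercuspidality}.

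The hard part is the equality $H = \operatorname{Res}_{\kappa_E/\kappa_F}(H_E)$: a priori $H$ could be a proper ``twisted-diagonal'' subgroup of $\operatorname{Res}_{\kappa_E/\kappa_F}(H_E)$. I would establish it by showing that for each coordinate $i$ the slot-$i$ piece $H\cap(1\times\cdots\times\bG\lWR\overline{\kappa}[t]/t^m\rWR\times\cdots\times 1)$ already projects onto $\mathrm{pr}_i(H_{\overline{\kappa}})$: if some slot-$i$ piece were a proper subgroup, testing geometric supercuspidality of $(\mathbf H,m,H,\mathcal L)$ against the parabolics $P_E\times\bG_{\overline{\kappa}}\times\cdots\times\bG_{\overline{\kappa}}$ placed in coordinate $i$ (and their conjugates by elements supported in coordinate $i$), for a suitable proper $P_E$, would force $\mathcal L$ to be non-trivial on the identity component of an intersection that this smallness makes too small — in the twisted-diagonal case literally the trivial group — a contradiction. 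Since these slot pieces are normal in the respective projections and their internal product lies in $H$, this gives $H\supseteq\prod_i\mathrm{pr}_i(H_{\overline{\kappa}}) = \operatorname{Res}_{\kappa_E/\kappa_F}(H_E)_{\overline{\kappa}}$, whence equality; everything else is formal bookkeeping with Weil restrictions.
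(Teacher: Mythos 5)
The gap is exactly at the step you flag as the crux: the equality $H=\operatorname{Res}_{\kappa_E/\kappa_F}(H_E)$ is neither proved by your argument nor forced by the hypotheses. Testing geometric supercuspidality of $(\mathbf H,m,H,\cL)$ against the coordinate-$i$ parabolic $P_E\times\bG_{\overline\kappa}\times\cdots\times\bG_{\overline\kappa}$ and elements supported in coordinate $i$ only tells you that $\cL$ is nontrivial on the identity component of $gN_E\lWR\overline\kappa[t]/t^m\rWR g^{-1}\cap(\text{slot-}i\text{ piece})$ for all $P_E,g$; in other words, that the slot-$i$ piece, with the restricted character sheaf, is itself a geometrically supercuspidal datum for $\bG$. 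That is much weaker than the slot piece being all of $\mathrm{pr}_i(H_{\overline\kappa})$: your ``too small, literally trivial'' contradiction only materializes in the extreme twisted-diagonal case, and says nothing about a slot piece which is a proper subgroup of the projection but still meets every conjugated radical with nontrivial character. Definition~\ref{def:geometric-ind-data} constrains $\cL$ only on identity components of radical intersections, so for instance a connected fiber product $\{(h_1,h_2):q(h_1)=q^{\tau}(h_2)\}$ of two conjugate geometrically supercuspidal subgroups over a common $\G_a$-quotient (arranged Galois-stably) is again geometrically supercuspidal as soon as the kernel of $q$ with the restricted sheaf is, yet it is not a Weil restriction. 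For such $H$ there is no $\cL_E$ with $\cL=\operatorname{Res}(\cL_E)$, and the pair you would build from $\operatorname{Res}_{\kappa_E/\kappa_F}(H_E)$ is strictly larger than the given $(J,\chi)$, so the $(J,\chi)$-eigenvector of $\pi$ gives you nothing for it. So the proposal does not go through as written.

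The repair is to go the opposite way, which is what the paper's proof does: do not enlarge to $\operatorname{Res}_{\kappa_E/\kappa_F}(H_E)$, but take as new subgroup the slot piece itself, i.e.\ the preimage $H_E^{\mathrm{slot}}$ of $H$ under the factor inclusion $\bG\lWR\kappa_E[t]/t^m\rWR\hookrightarrow(\mathbf H\lWR\kappa_F[t]/t^m\rWR)_{\kappa_E}$, with the restricted character sheaf. Its geometric supercuspidality is precisely the valid half of your coordinate-parabolic computation (every proper parabolic of $\bG$ with radical $N_E$ and every $g$ is handled by feeding $P_E\times\bG\times\cdots\times\bG$ and the image of $g$ into the given datum). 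The new compact $J_E$ attached to $H_E^{\mathrm{slot}}$ sits inside the given $J$: if $(\bar x,1,\dots,1)\in H_{\kappa_E}$, then multiplying its Galois conjugates, which again lie in $H_{\kappa_E}$ because $H$ is defined over $\kappa_F$, produces the conjugate-diagonal tuple, i.e.\ $\bar x\in H(\kappa_F)$; and the new character is the restriction of the old one. Hence the $(J,\chi)$-eigenvector of $\pi$ is automatically an eigenvector for the smaller pair, and $\pi$ is a quotient of the corresponding $\cind^{G(E)}_{J_E}\chi_E$, as required by Definition~\ref{def:mgs-rep}. Shrinking the datum is always possible; the equality you would need in order to transport the full datum is not.
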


\begin{proof} We may take $F =\kappa\pseries{t}$ and let $E= \kappa'\pseries{t}$. Let 
$\bG'$ be a group over $\kappa'$ with $\bG'_{E}= G$. Let $\bG$ be the Weil 
restriction of $\bG$ from $\kappa'$ to $\kappa$. Then $\bG_{F} $ is the Weil 
restriction of $G$ from $E$ to $F$. Let $(\bG, m, H ,\mathcal L)$ be an mgs datum for 
$\pi$. 

For $R$ a ring over $\kappa'$, by definition
\[\bG_{\kappa'} (R) =  \bG' (R \otimes_\kappa \kappa') = \bG'( \prod_{ \sigma \in \operatorname{Gal} ( \kappa' / \kappa) } R ) = \prod_{ \sigma \in \operatorname{Gal} ( \kappa' / \kappa) }  \bG'(R) .\] This defines an isomorphism  $\bG_{\kappa'}  \equiv  \bG'^{[\kappa':\kappa]}$. Let $i$ be the map $\bG' \to \bG_{\kappa'} $ defined as the inclusion of the factor corresponding to the identity element of the Galois group under this isomorphism.

 Let $H'= i^{-1}(H)$ and let $\mathcal L'$ be the restriction of $\mathcal L$ to $H$. Then to 
check that $ ( \bG', m, H', \mathcal L')$ is geometrically supercuspidal, fix $P'$ a parabolic subgroup 
of $\bG'$, and let $P$ be the product of $P'$ on the factor corresponding to the identity element with $\bG'_{\kappa}$ on all the 
other factors, so that $N = i(N')$, and thus for any $g 
\in \bG' \lWR \kappa'[t]/t^m \rWR$, $ i(g) N i(g)^{-1} = i (  g N' g^{-1})$. Hence 
because $\mathcal L$ is nontrivial on $i(g) N i(g)^{-1} \cap H$, the restriction of $\mathcal 
L$ is nontrivial on $g N' g^{-1} \cap H'$.

Next observe that for $g' \in \bG' ( \kappa' [t]/t^m) $, since $\bG' ( \kappa' [t]/t^m) = \bG(\kappa[t]/t^m)$ by the definition of Weil restriction, there is a corresponding element $g$ in $\bG(\kappa[t]/t^m)$, which we may pull back to $\bG ( \kappa'[t]/t^m)$. We can calculate \[g = \prod_{ \sigma \in   \operatorname{Gal} ( \kappa' / \kappa) } \sigma( i(g'))\] where $\sigma$ acts on $G ( \kappa'[t]/t^m)$ by its action on $\kappa'$, not on the group scheme $G$ over $\kappa$. If $g' \in H'(\kappa) $ then $ i ( g) \in H(\kappa') $. Because $H$ is defined over $\kappa$, this means  $\sigma ( i(g))\in H(\kappa')$ for all automorphisms $\sigma$. Hence $g \in H(\kappa')$, and then because $g \in \bG(\kappa[t]/t^m)$, we finally have $g \in H(\kappa)$. 

It follows that $J'$ is a subgroup of $J$ when both are viewed as subgroups of $G(E)$. Furthermore $\chi'$ is the restriction of $\chi$ to $J'$, so since 
$\pi$ contains a vector transforming under the character $\chi$ of the subgroup $J$, it 
contains a vector transforming under the character $\chi'$ of the subgroup $J'$.
\end{proof}

\subsection{Admissibility}\label{sub:admissibility} In \S\ref{s:vanishing-Jacquet}, we 
discussed the vanishing of 
Jacquet modules of certain induced representations $\cind^{G(F)}_J \chi$, but did not 
otherwise describe their structure. We now 
present a lemma giving a condition for such an induced representation  (and some more 
general ones) to be admissible, in which case it follows that it is a 
finite direct sums of supercuspidals. 
The lemma has some similarities with~\cite[\S{III.2}]{Harish-Chandra:analysis-padic-gps}.

\begin{lemma}Let $F = \kappa \pseries{t}$, $G$ a semisimple group over $\kappa$, $J$ a 
compact open subgroup of $G (F)$, and $\sigma$ a smooth 
finite-dimensional representation of $J$.
Suppose for any proper parabolic subgroup $P$ of $G(F)$, with unipotent radical $N$, the 
restricted representation $\sigma |_{ J \cap N }$ does not contain the trivial 
representation.
Then $\cind^{G(F)}_J(\sigma)$ is a finite direct sum of supercuspidal representations. 
\end{lemma}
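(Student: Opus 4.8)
The plan is to reduce the statement to two facts about $V := \cind^{G(F)}_J(\sigma)$: that all its Jacquet modules along proper parabolic subgroups vanish, and that $V$ is admissible. Granting both, the conclusion is quick; the admissibility is the step that carries the real content, and it is the one place where semisimplicity of $G$ is used.

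First I would check the vanishing of Jacquet modules, by generalizing Lemma~\ref{l:vanishing-Jacquet} from a one-dimensional character $\chi$ to the finite-dimensional $\sigma$. The hypothesis is exactly the analogue of condition (ii) there: since $gPg^{-1}$ is again a proper parabolic with unipotent radical $gNg^{-1}$, the hypothesis gives that $\sigma|_{J\cap gNg^{-1}}$ contains no trivial subrepresentation for every $g\in G(F)$. Now the argument for (ii)~$\Rightarrow$~(i) of Lemma~\ref{l:vanishing-Jacquet} goes through verbatim with this replacement: one spans $V$ by the functions $f_{g,w}$ supported on a single coset $gJ$ with $f_{g,w}(gj)=\sigma(j)w$; for $h\in gNg^{-1}\cap J$ the right translate $\rho(h)f_{g,w}$ equals $f_{g,\sigma(h)w}$ and has the same image as $f_{g,w}$ in the module of $N$-coinvariants; averaging $w$ over the compact group $N\cap gJg^{-1}$ — which plays the role of the character sum in the one-dimensional case — and using that $\sigma$ has no nonzero $(N\cap gJg^{-1})$-invariant vector, we conclude that each $f_{g,w}$, hence all of $V$, has zero image in $V_N$. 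Because the Jacquet functor is exact, every irreducible subquotient of $V$ is then supercuspidal.

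The main obstacle is admissibility. Here semisimplicity of $G$ enters: $Z(G)$ is finite, so $Z(G(F))$ is compact, which (compare the Remark after Corollary~\ref{c:vanishing-Jmod}) is precisely the setting in which a ``cuspidal'' compactly induced representation can be admissible. I would deduce admissibility of $V$ from the classical analysis of such induced representations: Harish-Chandra's treatment of cusp forms \cite[\S III.2]{Harish-Chandra:analysis-padic-gps}, or the methods of Bushnell \cite{Bushnell:c-ind}, which apply with a finite-dimensional inducing representation and show that once the Jacquet modules vanish and the centre is compact, $\cind^{G(F)}_J\sigma$ is admissible. If one wishes to remain self-contained, this can be reproved directly: fixing a small compact open $K_0\subseteq J$ and using the Cartan decomposition of $G(F)$, one checks that for $g$ deep in a Weyl chamber the group $J\cap gK_0g^{-1}$ contains a large portion of the unipotent radical of some proper parabolic, so the hypothesis forces $\sigma^{J\cap gK_0g^{-1}}=0$ for all but finitely many double cosets $JgK_0$, which bounds $\dim V^{K_0}$.

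Finally, the quick deduction. The representation $V$ is finitely generated, being generated by the finite-dimensional space of functions supported on $J$; hence, being admissible, it has finite length. Each of its irreducible subquotients is supercuspidal by the first step. A finite-length smooth representation all of whose irreducible subquotients are supercuspidal is semisimple: since $Z(G(F))$ is compact we may decompose according to central characters, and within each block supercuspidal representations are both projective and injective, so every extension splits. Therefore $\cind^{G(F)}_J(\sigma)$ is a finite direct sum of supercuspidal representations. (Alternatively, one can cite the structure result of \cite{Bushnell:c-ind} directly for this last conclusion.)
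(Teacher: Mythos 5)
Your proof is correct, and its essential content --- the Cartan-decomposition estimate showing that for $g$ deep in the cocharacter cone the conjugated congruence subgroup, intersected with $J$, contains $J\cap k^{-1}Nk$ for a conjugate of a maximal proper parabolic, so the hypothesis kills the $\sigma$-invariants and only finitely many double cosets contribute to the $K_0$-invariants of the induced representation --- is exactly the paper's proof of admissibility, carried out there with the principal congruence subgroups $U_m$ and the double cosets $U_m g J$. The packaging differs: the paper invokes \cite[Theorem 1]{Bushnell:c-ind} once, in the form that admissibility of a compact induction from an open, compact-modulo-centre subgroup already implies it is a finite direct sum of irreducible supercuspidals; it therefore needs neither your Jacquet-module step (the $\sigma$-valued version of Lemma~\ref{l:vanishing-Jacquet}) nor your finite-length-plus-projectivity argument, which become redundant once Bushnell is cited in full strength, though they are correct and make the deduction more self-contained. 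Two cautions. First, your preferred route to admissibility --- citing \cite[\S III.2]{Harish-Chandra:analysis-padic-gps} to pass from vanishing Jacquet modules and compact centre to admissibility --- is precisely the route the paper deliberately avoids (its Remark after Corollary~\ref{c:vanishing-Jmod} hedges with ``under some additional assumptions''; here $F$ has positive characteristic), so the direct Cartan argument should be regarded as the actual proof rather than a fallback. Second, since $\sigma$ is only assumed finite-dimensional, one should note that it splits into finitely many irreducibles (as $J$ is compact and $\sigma$ smooth), each inheriting the hypothesis, before applying Bushnell's theorem or the projectivity of supercuspidals; the paper leaves this point implicit as well.
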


\begin{remark}
The semisimplicity condition is necessary because $\cind^{G(F)}_J \sigma$ is never admissible 
if the center of ${G(F)}$ is non-compact. For 
example, all unramified characters of $\F_q\pseries{t}^\times$ appear as quotient of 
$\cind^{\F_q\pseries{t}^\times}_{\F_q\bseries{t}^\times} 1$.
\end{remark}

\begin{proof} By~\cite[Thm.1, (ii) $\implies$ (iv)]{Bushnell:c-ind}, the assertion 
follows if we prove that $\cind^G_J(\sigma)$ is admissible.
(The same proof works for an unramified semisimple group $G$ over a local field $F$ of 
characteristic zero.)

Let  $U_m$ be the principal congruence subgroup of $G(\kappa \bseries{t})$ consisting of elements 
congruent to 
$1$ mod $t^m$. 
To prove that $\cind^G_J(\sigma)$ is admissible, it is sufficient to prove that the subspace of $U_m$-invariant vectors is finite-dimensional for every integer $m$.
It suffices to prove that there are only finitely many double cosets $U_mgJ$ such that

\medskip

\noindent (C)\quad  $\sigma$ restricted to $g^{-1} U_m g \cap J$ contains the trivial 
representation.

\medskip

By the Cartan decomposition, we write $g=k'  \mu(t) k$ with $k, k' \in G( \kappa\bseries{t}) $ with $\mu$ a dominant cocharacter of $G$.
We have
$g^{-1} U_m g   =  k^{-1} \mu^{-1}(t) U_m \mu(t) k$  
(because $U_m$ is normalized by $k'$).

It is sufficient to prove that there are only finitely many possibilities for $\mu$ such that 
there is $g \in G(\kappa\bseries{t}) \mu G(\kappa\bseries{t})$ satisfying the condition (C), as $U_m$ and $J$ are finite index in 
$G(\kappa\bseries{t})$.
 
We shall show that (C) implies that $\langle \mu, \alpha \rangle < m$ for any simple root $\alpha$. 
This defines a finite subset of the cocharacter lattice. 

Suppose for contradiction that $\langle \mu, \alpha  \rangle > m$ for some simple root $\alpha$. Let $N$ be the maximal unipotent of the maximal parabolic associated to $\alpha$. 

Then $\mu^{-1}(t) U_m \mu(t) $ contains $N \cap G (\kappa\bseries{t})$. To check this, it 
is sufficient to check that for any element $u \in N \cap G(\kappa\bseries{t} ) $,  the 
matrix coefficients of $\mu(t) u \mu(t)^{-1}$ in any representation are congruent to the matrix coefficients of the
identity matrix mod $t^m$. We fix a representation, and choose a basis for that representation consisting of eigenvectors for the maximal torus $T$. For any $i,j$ in the index set of this basis, the function $u \mapsto u_{ij}$ that sends an element of $N$ to its $ij$ matrix coefficient is a polynomial function on $N$ which is equivariant according to some character $\chi_{ij}$ of $T$.

If $u \mapsto u_{ij}$ is the constant function on $N$, then \[ (\mu(t) u \mu(t)^{-1})_{ij} = e_ {ij} \] where $e \in N$ is the identity matrix, and so certainly $(\mu(t) u \mu(t)^{-1})_{ij} $ is congruent mod $t^m$ to $ e_{ij}$.

If $u \mapsto u_{ij}$  is nonconstant, then the character $\chi_{ij}$ is a nonempty product of characters $\chi_{\alpha'}$ associated to roots $\alpha'$ of $N$.

Each root $\alpha'$ of $N$ is a positive root, hence a sum of simple roots. Because all the simple roots of $G$ other than $\alpha$ are roots of the Levi of $P$, any sum of them lies in the root lattice of the Levi, and hence is not a root of $N$, so $\alpha'$ is a sum of simple roots, at least one of which is $\alpha$. Because $\mu$ is dominant and thus its pairings with the simple roots other than $\alpha$ are nonnegative, we have $\langle \alpha' ,\mu \rangle \geq \langle \alpha, \mu \rangle \geq m$ by assumption. It follows that $\chi_{\alpha'} ( \mu(t)) = t^{ \langle \alpha', \mu \rangle}$ is divisible by $t^m$.

Because $\chi_{ij}$ is a nonempty product of characters $\chi_{\alpha'}$, $\chi_{ij}(\mu(t) )$ is divisible by $t^m$. Because $u \mapsto u_{ij}$ is $\chi_{ij}$-equivariant, we have
\[ ( \mu(t) u \mu(t)^{-1} )_{ij} = \chi_{ij}(\mu(t) )  u_{ij}.\] 
Because $u \in G (\kappa\bseries{t})$,  $u_{ij}\in \kappa \bseries{t}$, so $( \mu(t) u \mu(t)^{-1} )_{ij}\in t^m \kappa \bseries{t}$. Because the identity matrix $e$ commutes with $T$, we have $e_{ij}=0$, so $( \mu(t) u \mu(t)^{-1} )_{ij}$  is congruent to $e_{ij}$ mod $t^m$.

As we have checked both cases, it follows that $\mu(t) u \mu(t)^{-1}$ is congruent as a matrix to the identity matrix mod $t^m$.
So after conjugation by $k$, we obtain that 
$g^{-1} U_m g$ contains $k^{-1} N k \cap G(\kappa\bseries{t})$, thus $k^{-1} N k \cap J$.
By assumption, the restriction of $\sigma$ to $k^{-1} N k \cap J$ does not contain the 
trivial representation.
A fortiori, the restriction of $\sigma$ to $g^{-1} U_m g \cap J$ does not contain the 
trivial representation, hence (C) is not satisfied. \end{proof}

\section{The base change transfer for mgs matrix coefficients}\label{s:kottwitz}
In~\cite{Kottwitz:base-change}, Kottwitz proves the base change fundamental lemma 
for unramified extensions at not just the unit elements of Hecke algebras but the 
characteristic functions of quite general compact open subgroups. In this section, we 
prove the analogous statement for one-dimensional characters of these compact open 
subgroups. This result should be useful in any attempt to describe how mgs 
representations behave under base change using the trace formula --- in particular, in a 
proof of the conjecture we make in Section \ref{s:BC} --- and may have other applications.

There is no direct way to base change an arbitrary compact open subgroup $J$ and a 
one-dimensional character $\chi$ of it from a field to an unramified extension. On the 
other hand, it is easy to base change the monomial datum $(G,m,H, \mathcal L)$ mentioned 
earlier, and this datum can be used to define a subgroup $J$ and a character $\chi$. The 
fact that the fundamental lemma holds in this setting can be motivated by the geometric 
Langlands philosophy: because the induced representations defined over two different 
fields from the datum $(G,m,H,\mathcal L)$ correspond to the same geometric object, 
i.e., the 
category of $(H,\mathcal L)$-equivariant sheaves on the loop group $G\pseries{t}$, they should 
have the same geometric Langlands parameter, so automorphic base change should take one 
to the other, which suggests that the fundamental lemma should hold.

However, in the proof of the fundamental lemma, the geometric description is not 
necessary. We have isolated the datum needed for a compact open subgroup of a group 
over a 
local field and a character to both have well-defined base changes to an arbitrary 
unramified extension. Our results hold in this setting, and work equally well over equal 
characteristic and mixed characteristic local fields. They may be of general interest.

\subsection{Character datum}\label{s:character-datum}
Let $F$ be a  non-archimedean local 
field, let $L$ be the completion of its maximal 
unramified extension, let $\sigma$ be the Frobenius of $F$ acting on $L$, and let $G$ 
be a connected reductive group over $F$. 
\begin{definition} \index{$\operatorname{Lang}_l$}  A \emph{character datum} on $G(F)$ 
consists of a bounded 
open $\sigma$-invariant subgroup $J_L$ of $G(L)$ and a central extension of topological 
groups with an action of $\sigma$ \[1 \to \mathbb C^\times \to \tilde{J}_L \to J_L \to 
1.\] We take the discrete topology and the trivial $\sigma$ action on $\mathbb 
C^\times$.  
\end{definition}

For $E\subset L$ a degree $l$ unramified extension of $F$, 
the subgroup $G(E)$ consists in the $\sigma^l$-invariant elements of of $G(L)$.
Given a character datum on $G(F)$, define the subgroup 
$J_{E}$ to be 
the $\sigma^l$-invariant subset of $J_L$ and define $\chi_{E}: J_{E} \to \mathbb 
C^\times$ to take a $\sigma^l$-invariant element $g$ to $ \sigma^l(\tilde{g}) 
\tilde{g}^{-1}$, where $\tilde{g}$ is a lift of $g$ from $J_L$ to $\tilde{J}_L$. In 
particular, 
in the $l=1$ case, the character $\chi_{F}$ sends $g \in J_{F}$ to $\sigma(\tilde{g}) 
\tilde{g}^{-1}$. Note that $J_E$ and $\chi_E$ are invariant under $\sigma$ and hence 
independent of the choice of isomorphism of $E$ with the $\sigma^l$-invariant subfield of 
$L$. 

\begin{definition}
Given an integer $l\ge 1$,
we say that the character datum satisfies the axiom $\operatorname{Lang}_l$ if the map $g 
\mapsto 
\sigma^l(g) g^{-1}$ from $J_L$ to itself is surjective.
\index{character datum = central extension $\widetilde{J}_L$ with $\sigma$-action}
\end{definition}

Let $(\bG,m, H, \mathcal L)$ be a monomial datum, that is a group $\bG$ over a finite 
field 
$\kappa$, a natural number $m$, a connected 
algebraic subgroup $H$ of $\bG\lWR \kappa[t]/t^m \rWR$, and a character sheaf $\mathcal 
L$ 
on $H$, we can define a character datum on $G( \kappa\pseries{t})$. Take $J_L$ to be the 
elements of $\bG( \overline{\kappa}\bseries{t})$ congruent mod $t^m$ to elements of $H 
(\overline{\kappa})$. Lemma~\ref{sheaf-central} defines a central extension of 
$H(\overline{\kappa})$ by $\Ql^\times$ with an action of $\sigma$ associated to $\mathcal 
L$. By applying an embedding $\iota$ of $\Ql$ into $\mathbb C$, and pulling back from 
$H(\overline{\kappa})$ to $J_L$, we obtain a central extension $1 \to \mathbb C^\times 
\to \tilde{J}_L \to J_L \to 
1.$

\begin{lemma}\label{character-data-from-geometry} When we obtain a character datum from 
$(\bG,m, H, \mathcal L)$ in this way, the following holds:
\begin{enumerate}
\item The axiom $\operatorname{Lang}_l$ is satisfied for every integer $l\ge 1$.

\item For $\kappa'$ a finite extension of $\kappa$, and $E = \kappa'\pseries{t}$, the 
character $\chi_E$ is equal to $\iota \circ \chi_{\kappa'}$, the trace function of 
$\mathcal L_{\kappa'}$, pulled-back 
from $H(\kappa')$ to $J_E=J_{\kappa'}$.

\end{enumerate}
\end{lemma}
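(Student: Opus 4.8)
The plan is to treat the two assertions separately: (1) is an instance of Lang's theorem for a connected pro-algebraic group, and (2) is a direct unwinding of the construction of $\tilde J_L$ against Lemma~\ref{sheaf-central}. For assertion (1), I would first identify $J_L$ with the group of $\overline\kappa$-points of a connected pro-algebraic group over $\kappa$. Using $\bG(\overline\kappa\bseries{t})=\varprojlim_N (\bG\lWR\overline\kappa[t]/t^N\rWR)(\overline\kappa)$, let $\mathcal J_N\subseteq \bG\lWR\kappa[t]/t^N\rWR$, for $N\ge m$, be the preimage of $H$ under reduction modulo $t^m$. Each $\mathcal J_N$ is an extension of the connected group $H$ by the connected unipotent group $\ker(\bG\lWR\kappa[t]/t^N\rWR\to\bG\lWR\kappa[t]/t^m\rWR)$, hence connected, so $J_L=(\varprojlim_N\mathcal J_N)(\overline\kappa)$ with surjective transition maps having connected kernel. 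The map $g\mapsto\sigma^l(g)g^{-1}$ is, at each level, the Lang map of the $q^l$-power Frobenius endomorphism of the $\kappa$-group $\mathcal J_N$ (with $q=|\kappa|$), hence surjective on $\mathcal J_N(\overline\kappa)$ by Lang's theorem, its fibers there being torsors under the finite group $\mathcal J_N(\mathbb F_{q^l})$. I would then pass to the inverse limit: the transition maps are compatible with the Lang maps and surjective, and the fibers over a fixed $h\in J_L$ are nonempty and finite, so their inverse limit is nonempty and produces a solution $g\in J_L$ of $\sigma^l(g)g^{-1}=h$. This establishes $\operatorname{Lang}_l$ for every $l\ge 1$.

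For assertion (2), put $l=[\kappa':\kappa]$, so $E/F$ is unramified of degree $l$ and $|\kappa'|=q^l$. I would first note that $J_E=J_{\kappa'}$: the reduction $J_L\to H(\overline\kappa)$ is $\sigma$-equivariant, so its restriction to $\sigma^l$-fixed points lands in $H(\kappa')$, and conversely an element of $\bG(\kappa'\bseries{t})$ reducing into $H(\kappa')$ modulo $t^m$ is precisely a $\sigma^l$-fixed element of $J_L$. By construction $\tilde J_L$ is the pullback along $J_L\to H(\overline\kappa)$ of the central extension $\tilde H$ of $H(\overline\kappa)$ furnished by Lemma~\ref{sheaf-central}, pushed out along $\iota\colon\Ql^\times\hookrightarrow\mathbb C^\times$ and carrying the induced $\sigma$-action. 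Given $g\in J_E$ with reduction $h\in H(\kappa')$, a lift $\tilde g$ of $g$ to $\tilde J_L$ maps to a lift $\tilde h$ of $h$ to $\iota(\tilde H)$; since $g$ and $h$ are $\sigma^l$-fixed, the elements $\sigma^l(\tilde g)\tilde g^{-1}$ and $\sigma^l(\tilde h)\tilde h^{-1}$ both lie in the central $\mathbb C^\times$ and coincide there. Lemma~\ref{sheaf-central}, applied over the degree-$l$ extension $\kappa'$ of $\kappa$, identifies $\sigma^l(\tilde h)\tilde h^{-1}$ with the trace function of $\mathcal L$ over $\kappa'$ at $h$, that is $\chi_{\kappa'}(h)$, transported to $\mathbb C$ via $\iota$. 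Hence $\chi_E(g)=\iota(\chi_{\kappa'}(h))$, which is exactly the pullback of $\iota\circ\chi_{\kappa'}$ from $H(\kappa')$ to $J_E$, as claimed.

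The hard part, to the extent there is one, is the inverse-limit step in (1): one must confirm that the fibers of the Lang maps at successive truncation levels surject onto one another, which reduces to Lang's theorem for the connected unipotent transition kernels. Everything else is bookkeeping; in particular, the geometric-versus-arithmetic Frobenius normalization is already absorbed into the statement of Lemma~\ref{sheaf-central} (which is phrased directly in terms of the true trace function), so no further sign-chasing is required, and the compatibility of the $\sigma$-action with pullback of central extensions used in (2) is immediate from the construction.
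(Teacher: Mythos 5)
Your proof is correct and follows essentially the same route as the paper: for (1) the paper also reduces to Lang's theorem at the finite truncation levels (connected, since the congruence kernels are unipotent and $H$ is connected) and then passes to the limit — it phrases this as "iteratively lifting solutions," which is the same content as your inverse-limit-of-nonempty-finite-fibers argument; for (2) the paper likewise just compares the definition of $\chi_E$ with Lemma~\ref{sheaf-central}, exactly as you do. No gaps; your write-up is simply a more detailed version of the paper's two-line proof.
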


\begin{proof}
(1)
By Lang's theorem \cite[\S4.4.17]{SpringerLinearAlgebraicGroups}, the map $g \to 
\sigma^l(g) g^{-1}$ from $H(\overline{\kappa})$ to itself is surjective for all $l$, and 
by iteratively lifting solutions to the equation $\sigma^l(g)g^{-1} = h$, the same map is 
surjective on $J_L$, so the axiom $\operatorname{Lang}_l$ is satisfied for all $l$.

(2) This follows by comparing the definition with Lemma~\ref{sheaf-central}.
\end{proof}

\begin{remark}
Character data have many of the nice geometric 
properties of monomial data, in particular those needed to prove the base change fundamental lemma below, without bringing any geometry into the definition.
A character datum does not necessarily come from an algebraic subgroup, even if one 
assumes the axioms $\operatorname{Lang}_1$ and $\operatorname{Lang}_l$. For instance, consider the group of all 
matrices in $\SL_2(\F_q\bseries{t})$ that are unipotent upper triangular $\pmod{t}$ and 
whose upper-right entry $\pmod{t}$ 
lies in an extension of $\F_{q^l}$ of degree a power of $p$, where $p$ is the characteristic of $\mathbb F_q$.  Then for any $a$ in 
$\F_{q^{lp^r}}$, the action of $\Frob_{q^{lp^r}}$ on solutions in $\overline{\mathbb F_q}$ of 
$x^q-x=a$ and $x^{q^l}-x=a$ is by translation, hence has order at most $p$, so both these equations have solutions in $\F_{q^{lp^{r+1}}}$. Because of this, the axioms $\operatorname{Lang}_1$ and $\operatorname{Lang}_l$ are satisfied for this group, by taking an upper unipotent solution mod $t$ and lifting to a $t$-adic solution.
\end{remark}

\subsection{Matching of orbital integrals}
Assume that $G_{der}$ is simply connected. 
Let $l\ge 1$, and $\tilde{J}_L\to J_{L}$ be character datum on $G(F)$ 
satisfying $\operatorname{Lang}_1$ and $\operatorname{Lang}_l$. We keep the other 
notation from the definition of character datum. 

Let $\kkE$ be an unramified extension of $\kkF$ of degree $l$, embedded as the fixed 
points of 
$\sigma^l$ in $\kkL$. Let $\theta$ be an automorphism of $\kkE$, with $\kkE^\theta=\kkF$.

Let $f$ on $G(\kkF)$ be equal to $\chi$ on $\kottK_F$ and $0$ elsewhere. Let $f_{\kkE}$ 
on 
$G(\kkE)$ equal $\chi_{\kkE}$ on $\kottK_{\kkE}$ and $0$ elsewhere.  We have the orbital 
integral
\[ O_\gamma(f) = \int_{G_\gamma(\kkF) \backslash G(\kkF)}  f( g^{-1} \gamma g) dg/dt \]
for $G_{\gamma}$ the centralizer of $\gamma$ in $G$, $dg$ the Haar measure on $g$ that 
gives $\kottK_F$ measure one, and $dt$ any fixed Haar measure on $G_\gamma(\kkF)$.

Similarly, we define \[O_{\delta\theta} (f_{\kkE} ) = \int _{I_{\delta\theta}(\kkF) 
\backslash G(\kkE) } f_{\kkE} ( g^{-1} \delta \theta(g) ) d g_{\kkE} / d\mu \] where 
$I_{\delta\theta}$ is the algebraic subgroup of $\operatorname{Res}^E_{F} G$ consisting of $h$ satisfying the equation $ h  = \delta \theta(h) \delta^{-1}  $, $g_{\kkE}$ is the Haar measure 
on 
$G$ such that $\kottK_{\kkE}$ has total mass one, and $d\mu$ is a Haar measure on 
$I_{\delta\theta}(F)$. 
We shall assume that these integrals 
converge absolutely.

Let $j$ be an integer such that $\theta=\sigma^j$ as automorphisms of $\kkE$, and let 
$a,b$ be integers with $bl-aj=1$.

Kottwitz's argument~\cite{Kottwitz:base-change} relies on the system in  
$(\gamma,\delta,c)$ of two equations 
\begin{equation}\label{keq}
\begin{cases}
c \gamma^a \sigma^l c^{-1} = \sigma^l,\\
c \gamma^b \sigma^j c^{-1} = \delta \sigma^j,
\end{cases}
\end{equation}
 valued in the semidirect product of $G(\kkL)$ with the free abelian group on $\sigma$.

\begin{lemma}\label{kottwitz-commutator-identity} Suppose that $\gamma \in \kottK_F$, 
$\delta \in \kottK_{\kkE}$, $c \in \kottK_{\kkL}$ satisfy the system~\eqref{keq}.
Then $\chi( \gamma) = \chi_{\kkE}( \delta)$.\end{lemma}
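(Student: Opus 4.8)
The identity is a computation inside the central extension $\tilde{J}_L$, so the plan is to lift everything from $J_L$ to $\tilde{J}_L$ and keep track of the resulting scalars. Fix lifts $\tilde{\gamma},\tilde{c}\in\tilde{J}_L$ of $\gamma$ and $c$. Since $\gamma\in J_F$ is $\sigma$-fixed and $\mathbb{C}^\times$ is central with trivial $\sigma$-action, the scalar $\sigma(\tilde{\gamma})\tilde{\gamma}^{-1}$ is exactly $\chi(\gamma)$; iterating, $\sigma^n(\tilde{\gamma})=\chi(\gamma)^n\tilde{\gamma}$ and hence $\sigma^n(\tilde{\gamma}^{\,m})=\chi(\gamma)^{nm}\tilde{\gamma}^{\,m}$ for all $n,m\in\mathbb{Z}$. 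Thus $\tilde{\gamma}$ and its powers interact with the $\sigma$-action in a fully controlled way, which is what makes the whole computation tractable.

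Next I would unwind the two equations of~\eqref{keq} in the semidirect product: the first is equivalent to $\gamma^a=c^{-1}\sigma^l(c)$ in $J_L$, and the second to $\delta=c\,\gamma^{\,b}\,\sigma^j(c)^{-1}$. Lifting the first relation to $\tilde{J}_L$ introduces a single scalar ambiguity $z\in\mathbb{C}^\times$, namely $\sigma^l(\tilde{c})=z\,\tilde{c}\,\tilde{\gamma}^{\,a}$; applying $\sigma^j$ to this and using the previous paragraph then expresses $\sigma^{l+j}(\tilde{c})$ in terms of $\tilde{c}$, $\sigma^j(\tilde{c})$, $\tilde{\gamma}$, $z$, and a power of $\chi(\gamma)$. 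From the second relation, $\tilde{\delta}:=\tilde{c}\,\tilde{\gamma}^{\,b}\,\sigma^j(\tilde{c})^{-1}$ is a lift of $\delta$, and by the definition of the character datum $\chi_E(\delta)=\sigma^l(\tilde{\delta})\,\tilde{\delta}^{-1}$, independently of the chosen lift.

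The computation itself is then to expand $\sigma^l(\tilde{\delta})=\sigma^l(\tilde{c})\,\bigl(\sigma^l(\tilde{\gamma})\bigr)^{b}\,\sigma^{l+j}(\tilde{c})^{-1}$, substitute the two formulas just obtained, and push all the central scalars to the front. The factors $\tilde{c}$, $\sigma^j(\tilde{c})$, and the scalar $z$ cancel against the corresponding factors in $\tilde{\delta}^{-1}$, the remaining group part $\tilde{\gamma}^{\,a}\tilde{\gamma}^{\,b}\tilde{\gamma}^{-a}$ collapses to $\tilde{\gamma}^{\,b}$, and one is left with $\chi_E(\delta)=\chi(\gamma)^{al-bj}$. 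The hypothesis $al-bj=1$ (equivalently, that $(a,b)$ and $(j,l)$ are the rows of a matrix of determinant one, tying together the two Frobenius ``speeds'' $l$ and $j$) is exactly what collapses this to $\chi_E(\delta)=\chi(\gamma)$, as desired.

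The only real obstacle is the scalar bookkeeping: one must verify that the scalar $z$ produced by lifting the first equation (and any scalar produced by lifting the second) occurs once with a positive and once with a negative exponent, so that it genuinely disappears, and that the power of $\chi(\gamma)$ that actually survives is the combination $al-bj$ and not some other $2\times2$ determinant in $a,b,j,l$. Getting this exactly right requires being careful about the direction of the $\sigma$-action on lifts (arithmetic versus geometric Frobenius); everything else is formal and uses only that $\mathbb{C}^\times$ is central and $\sigma$-invariant and that $\gamma$, $\gamma^a$, $\gamma^b$ are $\sigma$-fixed.
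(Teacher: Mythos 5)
Your strategy is essentially the paper's proof with the commutator formalism unwound: the paper writes $\chi(\gamma)$ as the commutator $[\tilde\gamma,\sigma]$ in the semidirect product of $\tilde{J}_L$ with the group generated by $\sigma$, rewrites it (bimultiplicativity of commutators of elements that commute modulo the center) as $[\tilde\gamma^{a}\sigma^l,\tilde\gamma^{b}\sigma^j]$, conjugates by $c$ (legitimate because the value is central), and uses independence of the commutator from the choice of lifts to identify the result with $[\sigma^l,\tilde\delta\sigma^j]=\chi_{E}(\delta)$. Your explicit lift-and-scalar computation is that same calculation written out, and it does close: the auxiliary scalar $z$ cancels and the group parts collapse exactly as you predict.

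The one place the bookkeeping goes wrong is precisely the point you flagged as the ``only real obstacle.'' Carrying out your expansion with $\sigma^l(\tilde c)=z\,\tilde c\,\tilde\gamma^{a}$ and $\tilde\delta=\tilde c\,\tilde\gamma^{b}\,\sigma^j(\tilde c)^{-1}$ gives
\[
\sigma^l(\tilde\delta)=\chi(\gamma)^{lb-ja}\,\tilde\delta,
\qquad\text{hence}\qquad
\chi_{E}(\delta)=\chi(\gamma)^{\,bl-aj},
\]
not $\chi(\gamma)^{\,al-bj}$. So the determinant that must equal $1$ is $bl-aj$, and with only $al-bj=1$ in hand the argument does not conclude (for instance $a=1$, $b=2$, $l=3$, $j=1$ satisfies $al-bj=1$ but the surviving exponent is $5$). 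This is in fact what the paper's own proof uses: at the key step it invokes ``because $bl-aj=1$,'' even though the hypothesis is printed a few lines earlier as $al-bj=1$; the two are inconsistent, and the other arguments in the section (e.g.\ inverting the $2\times 2$ system in Lemma~\ref{kottwitz-norm-facts}, where one needs $(\delta\sigma^j)^l\sigma^{-jl}=c\gamma^{\,bl-aj}c^{-1}=c\gamma c^{-1}$) confirm that $bl-aj=1$ is the intended normalization. So: right method, same in substance as the paper's, but your final identification of the surviving exponent with $al-bj$ is incorrect, and since you explicitly deferred that verification, the proposal as written is not a complete proof.
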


\begin{proof} Choose lifts  $\tilde{\gamma}$ and $\tilde{\delta}$ to $\tilde{G}_{\kkL}$. 
We will perform calculations in the semidirect product of $\tilde{J}_{\kkL}$ with the 
free abelian group on $\sigma$. We have
 \[  \chi(\gamma) = \sigma( \tilde{\gamma} ) \tilde{\gamma}^{-1} = [ 
 \sigma, \tilde{\gamma} ]. \]

Because $\gamma$ and $\sigma$ commute, $\tilde{\gamma}$ and $\sigma$ commute modulo 
center, so because $bl-aj=1$,
\[ [ \sigma, \tilde{\gamma}]= [ \tilde{\gamma}^{a} \sigma^l, 
\tilde{\gamma}^{b}\sigma^j].\]
Then because this commutator is central, it commutes with $c$, and thus
\[  [ \tilde{\gamma}^{a} \sigma^l, \tilde{\gamma}^{b}\sigma^j] = c  [ \tilde{\gamma}^{a} 
\sigma^l, \tilde{\gamma}^{b}\sigma^j] c^{-1} =  [ c \tilde{\gamma}^{a} \sigma^l c^{-1} , 
c \tilde{\gamma}^{b}\sigma^j c^{-1} ].\]
Finally, because this commutator is independent of the choice of lift to a central 
extension, 
\[ [ c \tilde{\gamma}^{a} \sigma^l c^{-1} , c \tilde{\gamma}^{b}\sigma^j c^{-1} ] = [ 
\sigma^l , \tilde{\delta} \sigma^j] = [ \sigma^l, \tilde{\delta} ] = \sigma^l 
(\tilde{\delta}) \tilde{\delta}^{-1} = \chi_{\kkE} (\delta).\qedhere
\]
\end{proof}

\begin{lemma}\label{kottwitz-twist-identity} Suppose that $\gamma \in G(\kkF) $, $\delta 
\in G(\kkE) $, $c \in G(\kkL)$ satisfy \eqref{keq}, and also satisfy $x^{-1} \gamma x \in 
\kottK_F$, $y^{-1} \delta \theta(y) \in \kottK_{\kkE}$, $y^{-1} c x \in \kottK_{\kkL}$.

Then $\chi_{\kkE} ( y^{-1} \delta\theta(y)) = \chi ( x^{-1} \gamma x)$. \end{lemma}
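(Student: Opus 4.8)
The plan is to reduce Lemma~\ref{kottwitz-twist-identity} to the already-proven identity of Lemma~\ref{kottwitz-commutator-identity} by conjugating the solution of the system \eqref{keq}. Recall that in the setting of the orbital integrals, $x$ ranges over $G(F)$ and $y$ over $G(E)$ (these are the variables of integration in $O_\gamma(f)$ and $O_{\delta\theta}(f_{\kkE})$); in particular $\sigma(x)=x$, $\sigma^l(y)=y$, and since $\theta=\sigma^j$ on $\kkE$ we also have $\sigma^j(y)=\theta(y)$. Set
\[
\gamma' := x^{-1}\gamma x, \qquad \delta' := y^{-1}\delta\,\theta(y), \qquad c' := y^{-1} c\, x .
\]
By hypothesis $\gamma'\in J_F$, $\delta'\in J_E$ and $c'\in J_L$, so $(\gamma',\delta',c')$ is a legitimate input for Lemma~\ref{kottwitz-commutator-identity}; that lemma would then give $\chi(\gamma')=\chi_{\kkE}(\delta')$, which is exactly the assertion to be proved.

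So the only thing to check is that $(\gamma',\delta',c')$ again solves the system \eqref{keq}. This is a direct computation inside the semidirect product of $G(\kkL)$ with the free abelian group on $\sigma$. For the first equation I would expand
\[
c'\,\gamma'^{\,a}\,\sigma^l\, c'^{-1} = y^{-1} c\,\gamma^a\,\bigl(x\sigma^l x^{-1}\bigr)\,c^{-1} y ,
\]
use $\sigma(x)=x$ to get $x\sigma^l x^{-1}=\sigma^l$, then apply the first equation of \eqref{keq} for $(\gamma,\delta,c)$ to rewrite $c\gamma^a\sigma^l c^{-1}=\sigma^l$, and finally use $\sigma^l(y)=y$ to obtain $y^{-1}\sigma^l y=\sigma^l$. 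For the second equation the same manipulation gives
\[
c'\,\gamma'^{\,b}\,\sigma^j\, c'^{-1} = y^{-1}\bigl(c\gamma^b\sigma^j c^{-1}\bigr)y = y^{-1}\delta\,\sigma^j\, y ,
\]
where one uses $\sigma(x)=x$ to move $\sigma^j$ past $x$ and then the second equation of \eqref{keq}; and $y^{-1}\delta\,\sigma^j\,y = y^{-1}\delta\,\sigma^j(y)\,\sigma^j = \delta'\sigma^j$ because $\sigma^j(y)=\theta(y)$. This establishes \eqref{keq} for $(\gamma',\delta',c')$.

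With that in hand, Lemma~\ref{kottwitz-commutator-identity} applies verbatim and yields $\chi(x^{-1}\gamma x)=\chi_{\kkE}(y^{-1}\delta\theta(y))$, as claimed. The only real subtlety — and the one place where the hypotheses $x\in G(F)$, $y\in G(E)$ are genuinely used — is the bookkeeping of how the powers of $\sigma$ commute past $x$ and $y$ in the semidirect product; once those identities are laid out there is nothing further to do. As a sanity check, the same computation with $x$ merely in $G(\kkL)$ would break, since then $x\sigma^l x^{-1}=x\,\sigma^l(x)^{-1}\sigma^l\neq\sigma^l$, consistently with the fact that the orbital integrals are taken only over the relevant rational points.
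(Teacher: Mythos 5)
Your proposal is correct and follows exactly the paper's route: the paper also proves this by applying Lemma~\ref{kottwitz-commutator-identity} to the triple $(x^{-1}\gamma x,\, y^{-1}\delta\theta(y),\, y^{-1}cx)$, merely asserting that this triple "can be immediately seen" to satisfy \eqref{keq}, while you write out that verification (correctly, using $\sigma(x)=x$, $\sigma^l(y)=y$, $\theta=\sigma^j$).
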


\begin{proof} This follows by applying Lemma~\ref{kottwitz-commutator-identity} to 
$x^{-1} \gamma x $, $y^{-1} \delta \theta(y)$, $y^{-1} c x $, which can be immediately 
seen to satisfy the system of equations~\eqref{keq}. \end{proof}

The remainder of the argument closely follows~\cite{Kottwitz:base-change}. We repeat the 
arguments in our setting for clarity, and because Kottwitz works in mixed characteristic 
only and we need equal characteristic.

\begin{lemma}\label{kottwitz-orbital-identity} Suppose that $\gamma \in G(\kkF) $, 
$\delta \in G(\kkE) $, $c \in G(\kkL)$ satisfy \eqref{keq}.
Conjugation by $c$ defines an isomorphism from $G_\gamma$ to $I_{\delta \theta}$, and 
we have 
\[ O_{\delta \theta} (f_{\kkE}) = O_{\gamma}(f),\]
where we use this isomorphism to match 
the Haar measures on $G_\gamma$ and $I_{\delta \theta}$. 
\end{lemma}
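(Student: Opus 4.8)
The plan is to follow Kottwitz's argument in~\cite{Kottwitz:base-change}, now in equal characteristic and with the one-dimensional character $\chi$ replacing the trivial character; the only genuinely new ingredient is Lemma~\ref{kottwitz-twist-identity}, which supplies the value-matching that is vacuous in the unit-element case.

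The first step is the group isomorphism $G_\gamma \isom I_{\delta\theta}$. This is immediate from~\eqref{keq}, exactly as in~\cite{Kottwitz:base-change}: since $al-bj=1$, the pair $(\gamma^a\sigma^l,\gamma^b\sigma^j)$ determines $\gamma$, and hence the condition of centralizing $\gamma$, whereas conjugation by $c$ carries this pair to $(\sigma^l,\delta\sigma^j)$, which in the same way encodes the defining condition of $I_{\delta\theta}$; so conjugation by $c$ matches the two centralizer problems. A priori this is an isomorphism of groups over $L$, but the first equation of~\eqref{keq} expresses that it is compatible with the $\sigma$-descent data on $G_\gamma$ (defined over $F$ since $\gamma\in G(F)$) and on $I_{\delta\theta}$ (defined over $F$ inside $\operatorname{Res}^E_F G$); both being smooth affine $F$-groups, the isomorphism descends to $F$, and we transport the Haar measure $dt$ on $G_\gamma(F)$ to $d\mu$ on $I_{\delta\theta}(F)$ along it.

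The second step is the comparison of the two orbital integrals, and here we follow~\cite{Kottwitz:base-change} most closely. Since $\chi$ is a $\mathbb C^\times$-valued character, $x\mapsto f(x^{-1}\gamma x)$ is right $J_F$-invariant; since $\chi_E$ is likewise a character and is moreover $\theta$-invariant (as $\sigma$ acts trivially on $\mathbb C^\times$), the function $g\mapsto f_E(g^{-1}\delta\theta(g))$ is right $J_E$-invariant. Using absolute convergence and that $J_F$, $J_E$ are compact open, it follows that $O_\gamma(f)$ and $O_{\delta\theta}(f_E)$ are \emph{finite} sums: over the double cosets $[x]\in G_\gamma(F)\backslash G(F)/J_F$ with $x^{-1}\gamma x\in J_F$ (resp.\ the double cosets $[g]\in I_{\delta\theta}(F)\backslash G(E)/J_E$ with $g^{-1}\delta\theta(g)\in J_E$), of a volume factor times $\chi(x^{-1}\gamma x)$ (resp.\ times $\chi_E(g^{-1}\delta\theta(g))$). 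Kottwitz's argument produces, from $c$ and~\eqref{keq}, a bijection between these two index sets: $[g]$ corresponds to the unique $[x]$ with $x^{-1}\gamma x\in J_F$ and $g^{-1}cx\in J_L$. Along this bijection the character values agree by Lemma~\ref{kottwitz-twist-identity} — applied to $x$, $g$, $c$, whose hypotheses $x^{-1}\gamma x\in J_F$, $g^{-1}\delta\theta(g)\in J_E$, $g^{-1}cx\in J_L$ are precisely those built into the correspondence — and the volume factors agree since $J_F$ and $J_E$ have total mass one and $dt$, $d\mu$ correspond under the isomorphism of the first step. Summing termwise gives $O_{\delta\theta}(f_E)=O_\gamma(f)$.

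The main obstacle is the combinatorial core of the second step: reproducing Kottwitz's bijection $[g]\mapsto[x]$ between double cosets in the two distinct ambient groups $G(F)$ and $G(E)$, verifying that it is well defined and bijective on the supports and that the volume factors transport correctly once all of $dg$, $dg_E$, $dt$, $d\mu$ and the absolute-convergence hypothesis are accounted for. The character contributes nothing beyond the already-proved Lemma~\ref{kottwitz-twist-identity}, and the move from mixed to equal characteristic is harmless since no step uses anything particular to the residue characteristic.
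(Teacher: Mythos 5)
Your route is the same as the paper's: expand both orbital integrals as finite sums over double cosets, match character values via Lemma~\ref{kottwitz-twist-identity}, and match volume factors through the isomorphism given by $c$; the $\theta$-invariance argument for $\chi_E$ and the precise form of the correspondence ($y=cx$ with $y^{-1}cx\in J_L$) are exactly as in the paper. The gap is that the step you label the ``main obstacle'' --- the existence and bijectivity of the double-coset correspondence, and the identification of the stabilizers needed for the volume comparison --- is precisely what the paper has to prove, and it cannot simply be quoted from \cite{Kottwitz:base-change}: there the compact subgroups are of (hyper)special/parahoric type and the relevant surjectivity comes from Lang's theorem for the underlying group schemes, whereas here $J_L$ is an arbitrary bounded open $\sigma$-stable subgroup and the only substitute is the pair of axioms $\operatorname{Lang}_1$ and $\operatorname{Lang}_l$ built into the character datum, which your write-up never invokes. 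The paper uses them as follows: $\operatorname{Lang}_l$ identifies $G(E)/J_E$ with the $\sigma^l$-fixed points of $G(L)/J_L$, and the support condition $y^{-1}\delta\theta(y)\in J_E$ with the further $\delta\sigma^j$-fixed points; $\operatorname{Lang}_1$ identifies $G(F)/J_F$ with the $\sigma$-fixed points and the condition $x^{-1}\gamma x\in J_F$ with the $\gamma$-fixed points. Read in the semidirect product of $G(L)$ with the group generated by $\sigma$, the system \eqref{keq} says exactly that left multiplication by $c$ intertwines the pair $(\gamma,\sigma)$ with $(\delta\sigma^j,\sigma^l)$, so $x\mapsto cx$ bijects the two fixed-point sets; since the elements of $G(L)$ commuting with $(\gamma,\sigma)$, resp.\ $(\delta\sigma^j,\sigma^l)$, are exactly $G_\gamma(F)$, resp.\ $I_{\delta\theta}(F)$, this descends to the bijection of double cosets. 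The same fixed-point picture is what makes your volume claim precise: the measure of $I_{\delta\theta}(F)\backslash I_{\delta\theta}(F)\,y\,J_E$ is the inverse of the measure of the stabilizer of $yJ_L$ in $I_{\delta\theta}(F)$, and for $y=cx$ conjugation by $c$ carries the stabilizer of $xJ_L$ in $G_\gamma(F)$ onto it, so the transported measures agree. Without invoking the Lang axioms none of these identifications is available, so as written the central step is missing rather than merely routine.

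A smaller point: for this lemma only the isomorphism of topological groups $G_\gamma(F)\to I_{\delta\theta}(F)$, $g\mapsto cgc^{-1}$, is needed to transport Haar measure, and it falls out of the fixed-point description above. Your assertion that the first equation of \eqref{keq} already gives descent of the conjugation isomorphism to $F$ is too quick: checking that conjugation by $c$ commutes with the Frobenius uses both equations (via $\sigma^l$ and $\sigma^j$) together with centrality arguments, and in the paper this finer algebraic statement is postponed to Lemma~\ref{kottwitz-measure-identity}, where it is needed to compare with the canonical inner twisting rather than here.
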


\begin{proof} 
We break the integral $\int _{I_{\delta\theta}(\kkF) \backslash G(\kkE) } f_{\kkE} ( 
g^{-1} \delta \theta(g) ) d g_{\kkE} / d\mu$ into a sum over double cosets $y \in 
I_{\delta\theta}(\kkF) \backslash G(\kkE)  / \kottK_{\kkE}$.  For each double coset, we 
claim that $f_{\kkE}$ is constant. This is because $f_{\kkE}$ vanishes outside 
$\kottK_{\kkE}$, a set which is invariant under twisted $\kottK_{\kkE}$-conjugation, and 
is a $\theta$-invariant character on $\kottK_{\kkE}$ which is also invariant under 
twisted $\kottK_{\kkE}$ conjugation. This follows from the fact that for $\tilde{k}$ a 
lift of $k$, and $g \in \kottK_{\kkE}$, $\tilde{g}^{-1}\tilde{k} \sigma^j(\tilde{g})$ is 
a lift of $g^{-1} k \theta(g)$, and we have \[\sigma^l ( 
\tilde{g}^{-1}\tilde{k}\sigma^j(\tilde{g}) ) = \sigma^l (\tilde{g})^{-1} 
\sigma^l(\tilde{k} )\sigma^j ( \sigma^l(\tilde{g}))  =   \tilde{g}^{-1} \chi_{\kkE}(g)^{-1}
\tilde{k} \chi_{\kkE} (k) \sigma^j ( \tilde{g} \chi_{\kkE}(g))  = \tilde{g}^{-1}
\tilde{k} \sigma^j (\tilde{g} )   \chi_{\kkE}(k).\]

Hence we can express the integral as a sum over $y \in I_{\delta\theta}(\kkF) \backslash 
G(\kkE)  / \kottK_{\kkE}$ such that $ y^{-1} \delta \theta(y) \in \kottK_{\kkE}$ of  
$\chi_{\kkE} (  y^{-1} \delta \theta(y) )$ times the measure of $I_{\delta\theta}(\kkF) 
\backslash I_{\delta\theta}(\kkF) y \kottK_{\kkE}$.  

Similarly, in the $l=1$ case, the integral is the sum over $x \in G_\gamma( \kkF) 
\backslash G(\kkF) / \kottK_F$ such that $x^{-1} \gamma x \in \kottK_F$ of $\chi ( x^{-1} 
\gamma x)$ times the measure of $G_{\gamma} (\kkF) \backslash G_{\gamma} (\kkF) x$.

Using the axiom $\operatorname{Lang}_l$, one can view $G(\kkE)  / \kottK_{\kkE}$ as the 
$\sigma^l$-fixed points in $G(\kkL)/ \kottK_{\kkL}$, and the set with $ y^{-1} \delta 
\theta(y) \in \kottK_{\kkE}$ as the $\delta\sigma^j$ -fixed points among those. 
Similarly, by $\operatorname{Lang}_1$, $ G(\kkF) / \kottK_F$ is the set of $\sigma$-fixed 
points in $G(\kkL)/ \kottK_{\kkL}$, and the subset of $x$ with  $x^{-1} \gamma x \in 
\kottK_F$  is the $\gamma$-fixed points. Now \eqref{keq} implies precisely that the map 
that sends $x$ to $y=cx$ gives a bijection between the points fixed by $\gamma$ and 
$\sigma$ and the points fixed by $\sigma^l$ and $\delta \sigma^j$. Furthermore, the 
points of $G(\kkL)$ fixed by conjugation by $\gamma$ and $\sigma$ are precisely 
$G_\gamma(\kkF) $, and the points fixed by $\delta \sigma^j$ and $\sigma^l$ are precisely 
$I_{\delta\theta}(\kkF)$, so this gives a bijection between the double cosets 
$I_{\delta\theta}(\kkF) \backslash G(\kkE)  / \kottK_{\kkE}$ and $G_\gamma( \kkF) 
\backslash G(\kkF) / \kottK_F$. 

By construction, for $x$ and $y$ paired by this bijection, we have $y =cx \in G(\kkL)/ 
\kottK_{\kkL}$, so $y^{-1} cx \in \kottK_{\kkL}$, thus by Lemma 
\ref{kottwitz-twist-identity}, $\chi_{\kkE} ( y^{-1} \delta\theta(y)) = \chi ( x^{-1} 
\gamma x)$.

It remains to check that, for $x$ and $y$ paired by this bijection, the measure 
of $I_{\delta\theta}(\kkF) \backslash I_{\delta\theta}(\kkF) y \kottK_{\kkE}$ equals the 
measure of $G_{\gamma} (\kkF) \backslash G_{\gamma} (\kkF) x \kottK_{\kkF}$. To do this, observe that we have fixed measures so that $\kottK_{\kkE}$ and $\kottK_{\kkF}$ have measure $1$, so that the measure of $I_{\delta\theta}(\kkF) \backslash I_{\delta\theta}(\kkF) y \kottK_{\kkE}$ is equal to the inverse of the measure of the stabilizer of $y \kottK_{\kkE}$ in $I_{\delta\theta}$, and $G_{\gamma} (\kkF) \backslash G_{\gamma} (\kkF) x \kottK_{\kkF}$ is equal to the inverse of the measure of the stabilizer of $x \kottK_{\kkF}$ in $G_{\gamma}(\kkF)$. We can equivalently view these stabilizers as the stabilizers of the points $x$ and $y$ in $G(\kkL)/ \kottK_{\kkL}$. Thus, because $y=cx$, these stabilizers are sent to each other by the isomorphism between $G_{\gamma}(\kkF)$ and $I_{\delta\theta}(\kkF)$ defined by conjugation by $c$, which by assumption is a measure-preserving isomorphism, so these measures are equal.

Hence the sums are equal and the orbital integrals are equal. \end{proof}

\subsection{Stable orbital integrals}

We say $\gamma, \gamma' \in G(F)$ are \emph{stably conjugate} if they are conjugate as elements of $G(\overline{F})$.

An \emph{inner twisting} between two algebraic groups is an isomorphism defined over the 
separable closure of the base field, which is Galois-invariant up to compositions with 
inner automorphisms, and where we take two inner twistings to be equivalent if they are 
equal up to composition with an inner automorphism 
\cite[p.68]{Laumon:book:cohomology-Drinfeld}. Given an inner twisting between two 
groups, there 
is a natural transfer, explained in \emph{loc.~cit}, of Haar measures from one group to 
Haar measures 
on the other via 
the Lie algebras.

In particular, if $\gamma$ and $\gamma'$ are stably conjugate, then there is a canonical 
inner twisting (i.e., canonical isomorphism over the separable closure of the base field, 
up to conjugacy) between their centralizers $G_\gamma$ and $G_{\gamma'}$. This enables us 
to define, after fixing a Haar measure on $G_{\gamma}$, the stable orbital integral
\[ SO_{\gamma}(f) = \sum_{\gamma'} e(G_{\gamma'}) O_{\gamma'}(f)\]
where $\gamma'$ traverses a system of conjugacy classes of elements stably conjugate to 
$\gamma$, and $e(G_{\gamma'})$ is the sign defined by Kottwitz.

Less obviously, for $\delta \in G_E$, let $\mathcal N \delta = \delta \theta(\delta) \theta^{2} (\delta) \dots \theta^{l-1}(\delta)$ be the norm of $\delta$.  If $\mathcal N \delta$ is stably conjugate to $\gamma$ then there is a canonical inner twisting $I_{\delta\theta} \to G_\gamma$. Indeed, 

\begin{lemma}\label{canonical-inner-twisting} Let $I= \operatorname{Res}^E_{F} G$ be the Weil restriction, and $I_E$ its base change to $E$.

Let $p$ be the projection $I_E \to G_E$ defined, using the fact that $R$-points of $\operatorname{Res}^E_{F} G$ are $R \otimes_F E$-points of $G$ for any ring $R$, by the map $G( R \otimes_F E) \to G(R)$ for an $E$-algebra $R$ induced by the multiplication map $R \otimes_F E \to R$.

For $d \in G(F^{s})$ such that $d^{-1} \mathcal N \delta d = \gamma $, the map $ g \mapsto    d^{-1}  p(g) d$ from $I_{\delta\theta, F^{s}} \to G_{\gamma, F^s}$ is an isomorphism.

This defines an isomorphism $I_{\delta\theta} \to G_{\gamma}$ which depends only on $\gamma, \delta$, up to conjugation by $G_\gamma$.  \end{lemma}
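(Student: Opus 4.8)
The plan is to make the projection $p$ completely explicit, use it to identify $I_{\delta\theta}$ after base change with the centralizer of the norm $\mathcal N\delta$, and then read off the descent data on both sides.

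First I would recall that since $E/F$ is unramified, hence cyclic with $\Gal(E/F)=\{1,\theta,\dots,\theta^{l-1}\}$ (using $E^\theta=F$), for any $E$-algebra $R$ the canonical splitting $R\otimes_F E\cong\prod_{i=0}^{l-1}R$, where the $i$-th factor corresponds to the embedding $\theta^i\colon E\to R$, induces $G(R\otimes_F E)\cong\prod_{i=0}^{l-1}G(R)$. Writing $g\in I_{\delta\theta}(R)=\{g\in G(R\otimes_F E):\theta(g)=\delta^{-1}g\delta\}$ as $(g_0,\dots,g_{l-1})$, the automorphism $1\otimes\theta$ becomes the cyclic shift $g\mapsto(g_1,\dots,g_{l-1},g_0)$ and $\delta$ becomes $(\delta,\theta(\delta),\dots,\theta^{l-1}(\delta))$, so the defining equation unwinds to $g_i=(\delta\,\theta(\delta)\cdots\theta^{i-1}(\delta))^{-1}\,g_0\,(\delta\,\theta(\delta)\cdots\theta^{i-1}(\delta))$ together with the single constraint $g_0\in G_{\mathcal N\delta}(R)$ forced by consistency at $i=l$. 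Since the multiplication map $R\otimes_F E\to R$ attached to the $E$-algebra structure of $R$ is exactly projection onto the $0$-th factor, $p(g)=g_0$, and the displayed formulas show that $p$ is an isomorphism of $E$-groups $I_{\delta\theta,E}\isom G_{\mathcal N\delta,E}$ (it is manifestly an $E$-morphism and the formulas provide an inverse). Base-changing to $F^s$ and postcomposing with $\operatorname{Int}(d^{-1})$ then gives the isomorphism $\phi\colon g\mapsto d^{-1}p(g)d$ onto $G_{d^{-1}(\mathcal N\delta)d,F^s}=G_{\gamma,F^s}$, which is the second assertion.

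Next I would check that $\phi$ is an inner twisting. Both $I_{\delta\theta}$ and $G_\gamma$ are defined over $F$, so I must show that for every $\tau\in\Gal(F^s/F)$ the composite automorphism $\phi\circ\tau\circ\phi^{-1}\circ\tau^{-1}$ of $G_{\gamma,F^s}$ (here $\tau$ denotes the respective rational Galois actions) is conjugation by an element of $G_\gamma(F^s)$. Let $j$ be such that $\tau|_E=\theta^j$. Tracking the natural Galois action on $(\operatorname{Res}^E_F G)(F^s)=G(F^s\otimes_F E)$ through the splitting of the previous paragraph one finds $(\tau g)_0=\tau\bigl(c_j^{-1}g_0c_j\bigr)$ with $c_j=\delta\,\theta(\delta)\cdots\theta^{l-j-1}(\delta)$, and a short computation (substituting $g_0=dxd^{-1}$) then yields $\phi\circ\tau\circ\phi^{-1}\circ\tau^{-1}=\operatorname{Int}(a_\tau)$ with $a_\tau=d^{-1}\tau(c_j)^{-1}\tau(d)$. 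Finally I would verify $a_\tau\in G_\gamma(F^s)$: from $\theta(\mathcal N\delta)=\delta^{-1}(\mathcal N\delta)\delta$ one gets $\tau(\mathcal N\delta)=\theta^j(\mathcal N\delta)=b_j^{-1}(\mathcal N\delta)b_j$ with $b_j=\delta\,\theta(\delta)\cdots\theta^{j-1}(\delta)$, while $b_j\,\tau(c_j)=\mathcal N\delta$ by inspection of the factors; these two identities force $\tau(c_j)^{-1}\tau(\mathcal N\delta)\tau(c_j)=\mathcal N\delta$, and combined with $\tau(d)\,\gamma\,\tau(d)^{-1}=\tau(\mathcal N\delta)$ (apply $\tau$ to $d\gamma d^{-1}=\mathcal N\delta$, using $\gamma\in G(F)$) this gives $a_\tau\gamma a_\tau^{-1}=d^{-1}(\mathcal N\delta)d=\gamma$, so $\operatorname{Int}(a_\tau)$ is an inner automorphism of $G_\gamma$.

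It remains to see the inner twisting depends only on $\gamma$ and $\delta$. If $d'$ is another element of $G(F^s)$ with $(d')^{-1}(\mathcal N\delta)d'=\gamma$, then $u:=d^{-1}d'\in G_\gamma(F^s)$, and the isomorphism built from $d'$ is $\operatorname{Int}(u^{-1})\circ\phi$, which differs from $\phi$ by an inner automorphism of $G_\gamma$; hence it represents the same inner twisting. The only delicate point in all of this is the bookkeeping in the first and third paragraphs: correctly describing how the cyclic shift, the element $\delta$, and the rational Galois action interact under $G(R\otimes_F E)\cong\prod_i G(R)$, and pinning down the indices in $c_j$ and $b_j$. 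Once the two identities $\tau(\mathcal N\delta)=b_j^{-1}(\mathcal N\delta)b_j$ and $b_j\,\tau(c_j)=\mathcal N\delta$ are in hand, everything else is a routine manipulation.
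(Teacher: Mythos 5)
Your proof is correct and follows essentially the same route as the paper: split $I_E\cong G^{\Gal(E/F)}$, observe that the $\delta\theta$-fixed points are tuples determined by their $0$-th coordinate constrained to commute with $\mathcal N\delta$, so that $p$ gives $I_{\delta\theta}\cong G_{\mathcal N\delta}$ over the separable closure, then conjugate by $d$ and note that any other choice $d'$ differs by an element of $G_\gamma$. Your explicit computation of the Galois cocycle $a_\tau$ (via $c_j$, $b_j$) simply spells out the inner-twisting property that the paper disposes of with the brief remark that a Galois conjugate of $d$ satisfies the same equation and hence differs from $d$ by an element of $G_\gamma$.
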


The proof is the same as \cite[Lem.5.8]{kottwitz-rational-conjugacy} and \cite[\S{I}, 
p.115]{Laumon:book:cohomology-Drinfeld}, though neither reference is in the exact 
context 
we work in.

\begin{proof} We use the fact that $I_E \cong  G^{ \Gal(E/F)}$. Under this isomorphism, the action of $\delta$ is by 
translation, and the map $p$ is projection onto one of the factors. (This follows from 
the fact that $E \otimes_F E = E^{\Gal(E/F)}$, with the action of $\theta$ by 
translation, and the multiplication map to $E$ is projection onto one of the factors). 

Thus the action of $\delta\theta$ on $I$ is by translation by $\theta \in \Gal(E/F)$ and then conjugation by $\delta$. So a fixed point of this action is determined by a tuple of $l$ elements of $G$, each of which when conjugated by $\delta$ becomes equal to the next one. Such a tuple is determined by its value in one copy of $G$, and an element of $G$ extends to a tuple if and only if it returns to itself when conjugated and translated $l$ times, which is equivalent to commuting with $\mathcal N \delta$. This shows that the projection $p$ defines an isomorphism $I_{\delta \theta} \cong G_{ \mathcal N \delta}$ over $L$, and then conjugating by $d$ gives a further isomorphism onto $G_{\gamma}$.

Because any $d'$ satisfying the same equation as $d$, for instance a Galois conjugate of $d$, is equal to $d$ times an element of $G_\gamma$, this map depends only on $\delta, \gamma$ up to conjugation by $G_\gamma$. \end{proof}

Using this isomorphism $I_{\delta\theta} \to G_{\gamma}$ to transfer a fixed Haar measure on $G_{\gamma}$, we can define the stable twisted orbital integral
 \[ SO_{\delta\theta}(f_{\kkE}) = \sum_{\delta'} e(I_{\delta'\theta}) 
 O_{\delta'\theta}(f_{\kkE})\] where $\delta'$ traverse a system of representatives for the 
 twisted conjugacy classes inside the stable twisted conjugacy class of $\delta$. (The transfer of Haar measure depends only on $\gamma, \delta$ because the Haar measure on $G_{\gamma}$ is invariant under conjugation.)
 
 We will now show an identity of stable twisted orbital integrals, continuing to follow~\cite{Kottwitz:base-change}.

\begin{lemma}\label{kottwitz-norm-facts} For each $\delta \in G(\kkE)$, there is at most 
one $\gamma \in G(\kkF)$ up to conjugacy satisfying \eqref{keq}, and always at least one 
if $O_{\delta\theta} (f_{\kkE} ) \neq 0$.  Similarly, for each $\gamma \in G(\kkF)$, 
there is at most one $\delta \in G(\kkE)$ up to $\theta$-conjugacy satisfying 
\eqref{keq}, and always at least one if $O_{\gamma} (f) \neq 0$. 

Finally, $\delta$ and $\gamma$ satisfying \eqref{keq} have $\mathcal N \delta = c \gamma c^{-1}$.

\end{lemma}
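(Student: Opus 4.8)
The plan is to work throughout inside the semidirect product $\Gamma:=G(\kkL)\rtimes\langle\sigma\rangle$ of $G(\kkL)$ with the free abelian group on $\sigma$, exploiting three elementary facts: $\gamma\in G(\kkF)$ commutes with $\sigma$; $\delta\in G(\kkE)$ is fixed by $\sigma^l$, so that $\sigma^l$ commutes with $\delta\sigma^j$; and the choice of $a,b$ is a B\'ezout relation between $l$ and $j$, so that $\gcd(l,j)=1$ and $\gcd(a,b)=1$. I would establish the identity $\mathcal N\delta=c\gamma c^{-1}$ first, then the two ``at most one'' assertions, then the two existence assertions.

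For the identity, the first equation of~\eqref{keq} rearranges to $\sigma^l(c)=c\gamma^a$, hence $\sigma^{kl}(c)=c\gamma^{ka}$ for all $k\ge 0$, and the second to $\delta=c\gamma^b\sigma^j(c^{-1})$. Applying $\sigma^{kj}$ and using that $\sigma$ fixes $\gamma$ gives $\theta^k(\delta)=\sigma^{kj}(c)\,\gamma^b\,\sigma^{(k+1)j}(c^{-1})$, so the product $\mathcal N\delta=\prod_{k=0}^{l-1}\theta^k(\delta)$ telescopes to $c\gamma^{bl}\sigma^{lj}(c^{-1})=c\gamma^{bl-aj}c^{-1}$, which equals $c\gamma c^{-1}$ by the choice of $a,b$; in particular $\gamma$ and $\mathcal N\delta$ are conjugate in $G(\kkL)$. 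For uniqueness of $\gamma$, given triples $(\gamma,\delta,c)$ and $(\gamma',\delta,c')$ solving~\eqref{keq}, set $d:=c^{-1}c'\in G(\kkL)$; comparing the two instances of each equation gives $d(\gamma')^a\sigma^l d^{-1}=\gamma^a\sigma^l$ and $d(\gamma')^b\sigma^j d^{-1}=\gamma^b\sigma^j$, and taking the combination of integer powers of these whose $\sigma$-exponents sum to zero yields $d(\gamma')^{bl-aj}d^{-1}=\gamma^{bl-aj}$, hence $d\gamma' d^{-1}=\gamma$ by the choice of $a,b$; then each relation separately forces $\sigma^l(d)=d$ and $\sigma^j(d)=d$, so $\sigma(d)=d$ as $\gcd(l,j)=1$, i.e. $d\in G(\kkF)$. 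Symmetrically, triples $(\gamma,\delta,c)$ and $(\gamma,\delta',c')$ produce $u:=c'c^{-1}$ with $\sigma^l(u)=u$ (since both give $c^{-1}\sigma^l(c)=\gamma^a=(c')^{-1}\sigma^l(c')$), i.e. $u\in G(\kkE)$, and $\delta'\sigma^j=u(\delta\sigma^j)u^{-1}$ shows $\delta'=u\,\delta\,\theta(u)^{-1}$ is $\theta$-conjugate to $\delta$.

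For existence, suppose $O_{\delta\theta}(f_{\kkE})\neq 0$. Since $f_{\kkE}$ is supported on $\kottK_{\kkE}=J_E$, some $\theta$-conjugate $g^{-1}\delta\theta(g)$ with $g\in G(\kkE)$ lies in $J_E$, and because $\sigma^l(g)=g$ a solution of~\eqref{keq} for $g^{-1}\delta\theta(g)$ produces one for $\delta$ (replace $c$ by $gc$); so assume $\delta\in J_E$. By the choice of $a,b$ the element $z:=(\sigma^l)^b(\delta\sigma^j)^{-a}$ has $\sigma$-exponent $bl-aj=1$ and collapses in $\Gamma$ to $w\sigma$ with $w=\sigma(\Delta_a^{-1})\in J_L$, where $\Delta_a\in G(\kkL)$ is defined by $(\delta\sigma^j)^a=\Delta_a\sigma^{aj}$; being a product of $\sigma$-conjugates of $\delta^{\pm1}$, $\Delta_a$ lies in $J_L$. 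Axiom $\operatorname{Lang}_1$ — surjectivity of $c\mapsto\sigma(c)c^{-1}$ on $J_L$ — says precisely that such a $w\sigma$ is conjugate to $\sigma$ by some $c\in J_L$, so fix $c\in J_L$ with $c^{-1}zc=\sigma$. Put $P:=c^{-1}\sigma^l c$ and $Q:=c^{-1}(\delta\sigma^j)c$: they commute (their untwisted versions do, as $\sigma^l(\delta)=\delta$) and satisfy $P^bQ^{-a}=\sigma$, so $\sigma\in\langle P,Q\rangle$; then $P\sigma^{-l}$ and $Q\sigma^{-j}$ lie in the abelian group $\langle P,Q\rangle\cap G(\kkL)$, hence commute with $\sigma$ and lie in $G(\kkF)$, and calling them $\gamma'$ and $\gamma''$ gives $P=\gamma'\sigma^l$, $Q=\gamma''\sigma^j$ with $(\gamma')^b=(\gamma'')^a$. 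Since $\gcd(a,b)=1$, picking $s,t$ with $sa+tb=1$ and $\gamma:=(\gamma')^s(\gamma'')^t\in G(\kkF)$ gives $\gamma^a=\gamma'$ and $\gamma^b=\gamma''$, so $(\gamma,\delta,c)$ solves~\eqref{keq}.

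The second existence claim is dual: if $O_\gamma(f)\neq 0$ we may assume $\gamma\in J_F$, and then axiom $\operatorname{Lang}_l$ furnishes $c\in J_L$ with $c^{-1}\sigma^l(c)=\gamma^a$, which solves the first equation; setting $\delta:=c\gamma^b\sigma^j(c^{-1})$ solves the second, and $\sigma^l(c)=c\gamma^a$ together with $\sigma$ fixing $\gamma$ gives $\sigma^l(\delta)=\delta$, so $\delta\in G(\kkE)$. The genuinely delicate point, in both existence arguments, is forcing a single $c$ to conjugate $\sigma^l$ and $\delta\sigma^j$ simultaneously into the subgroup generated by $\gamma$ and $\sigma$; this is exactly what the commutation of $\sigma^l$ with $\delta\sigma^j$ and the B\'ezout relation between $l$ and $j$ deliver, and it is why one first reduces, using the orbital-integral hypotheses, to $\delta$ (resp.\ $\gamma$) lying in the relevant compact subgroup.
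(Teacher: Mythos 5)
Your proof is correct and takes essentially the same route as the paper: both work in the semidirect product of $G(\kkL)$ with $\langle\sigma\rangle$, exploit the unimodular relation $bl-aj=1$, reduce to $\gamma\in \kottK_F$ (resp.\ $\delta\in \kottK_E$) via nonvanishing of the orbital integrals, and then invoke $\operatorname{Lang}_l$ (resp.\ $\operatorname{Lang}_1$) to produce $c$, with the norm identity coming from the same telescoping/collapsing of $(\delta\sigma^j)^l\sigma^{-jl}$. The only cosmetic difference is that the paper inverts the $2\times 2$ unimodular system once, which yields $\gamma=c^{-1}(\delta\sigma^j)^l\sigma^{-jl}c$ and the uniqueness statements directly, whereas you compare two solutions via $d=c^{-1}c'$ and rebuild $\gamma$ from $\gamma',\gamma''$ using a second B\'ezout relation in $(a,b)$; both are valid and equivalent in substance.
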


\begin{proof}
Fix $\gamma$. The identity $c \gamma^a \sigma^l c^{-1} = \sigma^l$ implies \[ c^{-1} \sigma^l (c) = \gamma^a,\] which uniquely determines $c$ up to left multiplication by something $\sigma^l$-invariant. In other words, this determines $c$ up to left-multiplication by an element of $G(\kkE)$.  For any choice of $c$, the identity $c \gamma^b \sigma^j c^{-1} = \delta \sigma^j$ determines $\delta$, and multiplying $c$ on the left by $G(F)$ is equivalent to conjugating $\delta \sigma^j$ by an element of $G(E)$ and thus is equivalent to $\theta$-conjugating $\delta$ by an element of $G(E)$. So for each $\gamma$, there is at most one $\delta$ up to $\theta$-conjugacy.

For there to exist at least one $\delta$ satisfying \eqref{keq}, it suffices that the equation $c^{-1} \sigma^l 
(c) = \gamma^a$ has a solution, for which by the axiom $\operatorname{Lang}_l$ it suffices that 
$\gamma$ is conjugate to an element of $\kottK_{\kkF}$, which is implied by the 
nonvanishing of $O_{\gamma}(f)$. Moreover, any $\delta$ satisfying \eqref{keq} lies in $G(E)$ because the two
equations together imply that $\delta$ commutes with $\sigma^l$.

For the opposite direction, we change the equations slightly. Because $\gamma$ and $\sigma$ commute with each other, and $\sigma^l$ and $\delta \sigma^j$ commute with each other, we can invert the two-by-two matrix to obtain the equivalent equations 
\[(\delta \sigma^j)^l \sigma^{- jl } = c \gamma c^{-1} \] \[ (\delta \sigma^j)^{-a} \sigma^{bl} = c \sigma c^{-1}  \] Fixing $\delta$, the second equation determines $c \sigma{c}^{-1}$, hence determines $c$ up to right multiplication by an element of $G(F)$. Examining the first equation, we see it determines $\gamma$ after fixing $\delta,c$, and right multiplying $c$ by an element of $G(F)$ has the effect of conjugating $\gamma$ by an element of $G(F)$. 

For $\gamma$ to exist, it suffices that there exists a $c$ with $ c \sigma(c)^{-1} = 
(\delta \sigma^j)^{-a}  \sigma^{bl-1}$, for which by the axiom $\operatorname{Lang}_1$ it suffices that  
$\delta $ is $\theta$-conjugate to an element of $\kottK_{E}$, which is implied 
by the nonvanishing of $O_{\delta \theta}(f_E)$. (Indeed, if $O_{\delta \theta}(f_E)\neq 0$ then there exists $g\in G(E)$ with $u= g^{-1} \delta \theta(g) \in J_E$, so  that $ (\delta \sigma^j)^{-a}  \sigma^{bl-1} = g^{-1}  (u \sigma^j )^a  g  \sigma^{bl-1} = g^{-1}  u \theta( u ) \dots \theta^{a-1} ( u) \sigma^{1-bl} (g)  $ and then applying $\operatorname{Lang}_1$ to $u\theta( u ) \dots \theta^{a-1} ( u)$ and using $\sigma^{1-bl}(g) =\sigma(g)$ we obtain $c$.) Furthermore this implies $\gamma \in 
G(\kkF)$, because it implies $\gamma $ commutes with $\sigma$.

Finally, observe that \[ c \gamma c^{-1} = (\delta \sigma^j)^l \sigma^{- jl }   = \delta 
\theta(\delta)  \theta^{2} (\delta) \dots  \theta^{l-1}(\delta)  = \mathcal N 
\delta.\qedhere
\]
\end{proof}

\begin{lemma}\label{kottwitz-measure-identity}  For any $\delta, \gamma, c$ satisfying \eqref{keq} with $\gamma$ semisimple, the map from $I_{\delta\theta}(F)$ to $G_\gamma(F)$ defined by conjugation by $c$ in fact arises from an isomorphism of group schemes over $F$, which is equivalent to the isomorphism of Lemma \ref{canonical-inner-twisting} in the case $d=c$.  

In particular, the transfer of the Haar measure from $G_\gamma(F)$ to $I_{\delta\theta}(F)$ under this map matches the transfer via the isomorphism of Lemma \ref{canonical-inner-twisting}. \end{lemma}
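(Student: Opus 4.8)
The plan is to verify, using the two identities in~\eqref{keq}, that conjugation by $c$ carries the $F$-rational structure of $I_{\delta\theta}$ onto that of $G_\gamma$, and then to deduce the Haar measure statement formally.

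I would start from Lemma~\ref{kottwitz-norm-facts}, which gives $\mathcal N\delta=c\gamma c^{-1}$, together with the fact recorded in the proof of Lemma~\ref{canonical-inner-twisting} that the projection $p$ restricts to an isomorphism $p_L\colon (I_{\delta\theta})_L\xrightarrow{\ \sim\ }(G_{\mathcal N\delta})_L$ over $L$. Composing $p_L$ with conjugation by $c^{-1}$, an $L$-isomorphism $(G_{\mathcal N\delta})_L\to(G_\gamma)_L$, produces $\phi_c:=\operatorname{Int}(c^{-1})\circ p_L\colon (I_{\delta\theta})_L\to(G_\gamma)_L$; on $F$-points this is the map $g\mapsto c^{-1}p(g)c$, i.e.\ the map of Lemma~\ref{canonical-inner-twisting} for the choice $d=c$ (an admissible choice, as $c^{-1}\mathcal N\delta c=\gamma$ by Lemma~\ref{kottwitz-norm-facts}). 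Since any two elements $d$ with $d^{-1}\mathcal N\delta d=\gamma$ differ by right translation by an element of $G_\gamma$, $\phi_c$ represents the canonical inner twisting of Lemma~\ref{canonical-inner-twisting}. So the substance of the lemma is that $\phi_c$ \emph{descends to an isomorphism of group schemes over $F$}.

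To prove this, note that $(I_{\delta\theta})_L$ and $(G_\gamma)_L$ are smooth and affine, so $\phi_c$ is determined by its effect on $L$-points and descends to $F$ as soon as it is compatible with the $\sigma$-semilinear structures defining the two $F$-forms; one then takes $\sigma$-invariants of the coordinate rings and uses $L^\sigma=F$. The $F$-structure on $G_\gamma$ has Frobenius the restriction of the Frobenius $\sigma$ of $L/F$, since $\gamma\in G(F)$. The $F$-structure on $I_{\delta\theta}\subset\operatorname{Res}^E_F G$, transported to $(G_{\mathcal N\delta})_L$ through $p_L$, becomes a twisted Frobenius $\operatorname{Int}(u)\circ\sigma$ for an explicit $u\in G(L)$ built from $\delta$: one unwinds that $(\operatorname{Res}^E_F G)_L=\prod_{i\in\Z/l}G_L$ with $\sigma$ acting by a cyclic shift composed with $\sigma$ on entries, that $p_L$ is projection onto one factor, and that closing up the resulting recursion around the $l$-cycle (a single cycle, since $al-bj=1$ forces $\gcd(l,j)=1$) produces $\mathcal N\delta$ and $u$ as products of $\sigma$-conjugates of $\delta$. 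The compatibility $\phi_c\circ(\operatorname{Int}(u)\circ\sigma)=\sigma\circ\phi_c$ then amounts to an identity in the semidirect product $G(L)\rtimes\langle\sigma\rangle$, and this is exactly what~\eqref{keq} supplies: the equation $c\gamma^a\sigma^l c^{-1}=\sigma^l$ pins down $c$ modulo $G(E)$ and governs the $\sigma^l$-part, the equation $c\gamma^b\sigma^j c^{-1}=\delta\sigma^j$ introduces $\delta$ and $\theta=\sigma^j$ and governs the $\sigma^j$-part, and $al-bj=1$ lets one combine the two to obtain compatibility over all of $\langle\sigma\rangle$. This is the same kind of conjugation manipulation in $G(L)\rtimes\langle\sigma\rangle$ as in the proof of Lemma~\ref{kottwitz-commutator-identity}, now applied to conjugation on the whole group rather than to a single central commutator; concretely it is the check that for $h\in G_\gamma(F)$ the element $chc^{-1}$ lies in $G(E)$ and is fixed by $\operatorname{Int}(\delta)\theta$.

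Finally, once $\phi_c$ is an $F$-isomorphism of group schemes it induces an $F$-isomorphism of Lie algebras, and the transfer of Haar measures along an inner twisting is defined precisely through such a Lie algebra isomorphism and depends only on the equivalence class of the inner twisting (inner automorphisms acting on the Lie algebra with determinant $1$); hence the transfer furnished by $\phi_c$ coincides with the transfer along the canonical inner twisting. The main obstacle I anticipate is the middle step: correctly identifying the twisted Frobenius on $(G_{\mathcal N\delta})_L$ coming from the $F$-structure of $I_{\delta\theta}$ through the Weil restriction, and matching it cleanly against~\eqref{keq}, where the bookkeeping with the exponents $a,b,j,l$ constrained by $al-bj=1$ must be done with care.
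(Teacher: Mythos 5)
Your plan is correct and follows essentially the same route as the paper: identify the map as the isomorphism of Lemma~\ref{canonical-inner-twisting} with $d=c$, descend it from $L$ to $F$ by checking compatibility with the Frobenius $\sigma$ — split into the $\sigma^l$- and $\sigma^j$-parts handled by the two equations of~\eqref{keq} and glued via $al-bj=1$ — and then note that the measure transfer only depends on the resulting $F$-isomorphism of Lie algebras, hence agrees with the canonical inner twisting. The paper performs the same check by verifying directly that $g\mapsto c^{-1}p(g)c$ commutes with $\sigma^l$ and with $\sigma^j$ (two short computations), rather than via your explicitly written twisted Frobenius $\operatorname{Int}(u)\circ\sigma$, but this is the same argument in substance.
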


\begin{proof} The isomorphism $g \mapsto c^{-1} p(g) c$ of Lemma \ref{canonical-inner-twisting} is, by construction, defined over $L$.

To show it descends from $L$ to $F$, we use the fact that $G_{\gamma}$ is reductive and thus, by Lemma \ref{canonical-inner-twisting}, $I_{\delta\theta}$ is reductive, so there exists a scheme parameterizing isomorphisms between $G_\gamma$ and $I_{\delta\theta}$. To check that an $L$-point of this scheme is defined over $F$, it suffices to check that it is stable under the Frobenius $\sigma$. In other words we must check that it commutes with $\sigma$. It suffices to check it commutes with $\sigma^l$ and $\sigma^j$.

Observe that $\sigma^l$ commutes with $p$, and that \[\sigma^l ( c^{-1} g c) = \sigma^l(c)^{-1} \sigma^l (g) \sigma^l(c) = \gamma^{-a}  c^{-1} \sigma^l (g)  c \gamma^{a} = c^{-1} \sigma^l(g) c \] using \eqref{keq} and the fact that $\gamma$ commutes with $c^{-1} \sigma^l(g) c \in G_\gamma$. 

Next observe that \[ \sigma^j ( c^{-1}  p (g)  c) = \sigma^j c^{-1}   p(g) c\sigma^{-j} =  \gamma^{-b} c^{-1} \delta \sigma^j p(g) \sigma^{-j} \delta^{-1} c \gamma^b \] \[  =   \gamma^{-b} c^{-1} p( \delta \theta \sigma^j g \sigma^{-j} \theta \delta^{-1} )  c \gamma^b = \gamma^{-b} c^{-1} p( \sigma^j(c)) c^{-1} = c^{-1} p (\sigma^j(c))c^{-1} \] using \eqref{keq}, the fact that $\sigma^j(g) \in I_{\delta\theta}$ commutes with $\delta\theta$, and the fact that  $c^{-1} p (\sigma^j(c))c^{-1}  \in G_\gamma$ commutes with $\gamma$.
\end{proof}

\begin{theorem}\label{kottwitz-stable} For every semisimple $\gamma \in G(\kkF)$, the 
stable orbital integral 
$SO_{\gamma}(f)$ vanishes unless the stable conjugacy class of $\gamma$ is equal 
to the norm $\mathcal N \delta$ for some $\delta \in G(\kkE)$, in which case it is given 
by $SO_\gamma(f) = SO_{\delta \theta} ( f_{\kkE})$. 

Here we define both stable orbital integrals using the same Haar measure on $G_\gamma$. \end{theorem}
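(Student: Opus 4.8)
The plan is to deduce the theorem from the termwise identities already established, by matching the terms of the two stable (twisted) orbital integrals along Kottwitz's norm correspondence. I would write $SO_\gamma(f)$ as the finite sum $\sum_{\gamma'} e(G_{\gamma'})\,O_{\gamma'}(f)$ over representatives of the conjugacy classes in the stable class of $\gamma$ with $O_{\gamma'}(f)\neq 0$, and likewise $SO_{\delta\theta}(f_\kkE) = \sum_{\delta'} e(I_{\delta'\theta})\,O_{\delta'\theta}(f_\kkE)$ over representatives of the $\theta$-conjugacy classes in the stable twisted class of $\delta$ with $O_{\delta'\theta}(f_\kkE)\neq 0$. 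I would then produce a bijection between these two finite index sets under which, for paired $(\gamma',\delta')$, one can choose $c'$ with $(\gamma',\delta',c')$ solving~\eqref{keq}; on each such pair Lemma~\ref{kottwitz-orbital-identity} gives $O_{\gamma'}(f)=O_{\delta'\theta}(f_\kkE)$ after transporting Haar measures by conjugation by $c'$, while Lemma~\ref{kottwitz-measure-identity} shows this conjugation is an isomorphism of $\kkF$-group schemes $I_{\delta'\theta}\cong G_{\gamma'}$ equivalent to the canonical inner twisting, so that $e(G_{\gamma'})=e(I_{\delta'\theta})$ and the measure transfers used to define the two stable integrals are compatible. Summing over the bijection would give $SO_\gamma(f)=SO_{\delta\theta}(f_\kkE)$.

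For the vanishing assertion I would argue by contraposition: if $SO_\gamma(f)\neq 0$ then $O_{\gamma'}(f)\neq 0$ for some semisimple $\gamma'$ in the stable class of $\gamma$, and Lemma~\ref{kottwitz-norm-facts} produces $\delta',c'$ with $(\gamma',\delta',c')$ solving~\eqref{keq}, whence $\mathcal N\delta' = c'\gamma'c'^{-1}$ is $G(\kkL)$-conjugate to $\gamma'$. Since two semisimple rational elements conjugate over a separable closure are stably conjugate, $\gamma$ is stably conjugate to $\mathcal N\delta'$, i.e.\ is a norm.

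To build the bijection, assume $\gamma$ stably conjugate to $\mathcal N\delta$ and let $A$ be the set of conjugacy classes $[\gamma']$ with $\gamma'$ in the stable class of $\gamma$ and $O_{\gamma'}(f)\neq 0$, and $B$ the analogous set of $\theta$-conjugacy classes $[\delta']$ in the stable twisted class of $\delta$ with $O_{\delta'\theta}(f_\kkE)\neq 0$. I would send $[\gamma']\in A$ to the class of the solution $\delta'$ of~\eqref{keq} provided by Lemma~\ref{kottwitz-norm-facts}; since $\mathcal N\delta'$ is $G(\kkL)$-conjugate, hence stably conjugate, to $\gamma'\sim_{\mathrm{st}}\gamma\sim_{\mathrm{st}}\mathcal N\delta$, Kottwitz's norm correspondence forces $\delta'$ stably twisted conjugate to $\delta$, and $O_{\delta'\theta}(f_\kkE)=O_{\gamma'}(f)\neq 0$ by Lemma~\ref{kottwitz-orbital-identity}, so the target lies in $B$. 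This map is injective because, by Lemma~\ref{kottwitz-norm-facts}, each class $[\delta']$ is the partner of at most one $[\gamma']$; it is surjective because, given $[\delta']\in B$, Lemma~\ref{kottwitz-norm-facts} yields $\gamma'$ with $O_{\gamma'}(f)=O_{\delta'\theta}(f_\kkE)\neq 0$, and $\mathcal N\delta'$ is stably conjugate both to $\gamma'$ and to $\mathcal N\delta\sim_{\mathrm{st}}\gamma$, so $[\gamma']\in A$ maps to $[\delta']$. Together with the termwise identities this finishes the argument.

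The hard part will be the input I have called Kottwitz's norm correspondence: that $\delta\mapsto\mathcal N\delta$ descends to a well-defined \emph{injection} from the set of stable $\theta$-twisted conjugacy classes in $G(\kkE)$ into the set of stable conjugacy classes in $G(\kkF)$ (the forward direction, that it respects these equivalences, being the easier half). This is precisely where the hypothesis $G_{\der}$ simply connected is used: by Steinberg's theorem it makes the centralizers $G_\gamma$ and $I_{\delta\theta}$ connected reductive, so that conjugacy classes inside a stable class are parametrized by the pointed kernels $\ker\bigl[H^1(\kkF,G_\gamma)\to H^1(\kkF,G)\bigr]$ and $\ker\bigl[H^1(\kkF,I_{\delta\theta})\to H^1(\kkE,G)\bigr]$, which are identified via the $\kkF$-isomorphism $I_{\delta\theta}\cong G_\gamma$ of Lemma~\ref{kottwitz-measure-identity} together with the effect of the norm on Galois cohomology. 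This is Kottwitz's computation in~\cite{Kottwitz:base-change}, and I would check it transports verbatim to $\kkF=\kappa\pseries{t}$, since the facts it relies on — connectedness of centralizers of semisimple elements, vanishing of $H^1$ of simply connected groups over a non-archimedean local field, and vanishing of $H^1$ over the completion of the maximal unramified extension — all hold in equal characteristic.
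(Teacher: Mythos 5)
Your proposal follows the same skeleton as the paper's proof: the vanishing statement via Lemma~\ref{kottwitz-norm-facts}, a bijection between the (twisted) conjugacy classes with nonvanishing integrals inside the two stable classes furnished by the same lemma, and termwise equality of orbital integrals, measures and signs via Lemmas~\ref{kottwitz-orbital-identity} and~\ref{kottwitz-measure-identity}. The one genuine divergence is the step you single out as the hard part, namely that $\mathcal N\delta'$ stably conjugate to $\mathcal N\delta$ forces $\delta'$ to be stably $\theta$-conjugate to $\delta$. The paper disposes of this in two lines: because $G_{\der}$ is simply connected, centralizers of semisimple elements are connected, so stable (twisted) conjugacy is just (twisted) conjugacy over the separable closure; after base change to $\kkE$ one has $I\cong G^l$ with $\theta$ acting by permutation, and two $l$-tuples with conjugate norms are visibly $\theta$-conjugate by adjusting the entries one at a time. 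Your cohomological route via the pointed kernels $\ker\bigl[H^1(\kkF,G_\gamma)\to H^1(\kkF,G)\bigr]$ is both heavier and aimed at a different statement: those kernels parametrize the rational classes inside a fixed stable class (which your bijection already handles through Lemma~\ref{kottwitz-norm-facts}), whereas the injectivity of the norm correspondence that you actually need lives entirely over the separable closure, where no $H^1$-vanishing input is required. Citing Kottwitz is not wrong—the injectivity of the norm on stable twisted classes is indeed available—but if you want a self-contained equal-characteristic argument, the elementary tuple computation is the one to use, and it is what the paper does.
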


\begin{proof}
For each stable conjugate $\gamma'$ of $\gamma$, if the associated orbital integral is 
nonvanishing, then $\gamma'$ is conjugate to an element of $J_F$. Hence by Lemma 
\ref{kottwitz-norm-facts} there exists a $\delta'$ satisfying Kottwitz's equations, and 
the norm of $\delta'$ is stably conjugate to $\gamma$.

So we may assume that $\gamma$ is stably conjugate to the norm of $\delta$. Now for each 
$\gamma'$ for which the orbital integral is nonvanishing there exists a unique $\delta'$ 
up to $\theta$-conjugacy satisfying \eqref{keq} by 
Lemma~\ref{kottwitz-norm-facts}, and because the norm of $\delta'$ is stably conjugate to the norm of $\delta$, $\delta'$ is
stably $\theta$-conjugate to $\delta$. (To see, this, base change to $E$, so that $I = G^l$ and $\theta$ acts by permutation. Then if two elements of $G^l$ have conjugate norms, we can $\theta$-conjugate one to the other by adjusting each element of the $l$-tuple step-by-step.) By Lemma~\ref{kottwitz-orbital-identity} and Lemma 
\ref{kottwitz-measure-identity}, the orbital integrals and signs of $\gamma'$ and 
$\delta'$ agree. (The signs agree because they depend only on the isomorphism class, and we have an isomorphism between the two groups.) Because each $\gamma'$ corresponds to a unique $\delta'$ up to stable 
$\theta$-conjugation, and by Lemma~\ref{kottwitz-norm-facts} each $\delta'$ with 
nonvanishing orbital integral corresponds to a unique $\gamma'$ up to conjugation, 
the signed sums of orbital integrals over conjugacy classes and $\theta$-conjugacy 
classes agree, so the orbital integrals agree. \end{proof}

The analogue for $\kappa$-orbital integrals should also be possible, by an argument 
analogous to that in~\cite{Kottwitz:base-change}.

\section{Automorphic base change}\label{s:BC}
For every place $y$ of every constant field extension $F_n$ of $F$ of degree $n\ge 1$, we 
will always 
take 
the standard hyperspecial maximal compact 
$G(\ko_y)$ defined by 
the globally split structure of $G$. We say that a representation is unramified when it 
is $G(\ko_y)$-unramified.
Let $\pi$ be an automorphic representation of $G(\A_F)$, and $u\in |X|$ a place such that 
$\pi_u$ is 
mgs. 
In this context, we say that $\pi$ is
base-changeable if the following holds.
\begin{condition}\hypertarget{BC}{}
There exists a finite set of mgs data at $u$, such that for every 
constant field extension $F_n$ of $F$, there exists 
a base change representation $\Pi_n$ of $G(\mathbb A_{F_n})$, which at places 
lying over $u$ is mgs with one of the given mgs data, over the unramified 
places of $\pi$ is unramified and compatible under the Satake isomorphism,  and at all 
other places has depth bounded independently of $n$.
\end{condition}

We make the following conjecture.

\begin{conjecture} Every automorphic representation of $G(\mathbb A_F)$ that is mgs at a 
place $u$ satisfies \condbc{}. \end{conjecture}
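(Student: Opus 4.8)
The plan is to deduce \condbc{} from a simple (twisted) trace formula comparison for the cyclic extensions $F_n=F\otimes_{\F_q}\F_{q^n}$, in the spirit of Arthur--Clozel cyclic base change, with the essential simplification coming from the supercuspidal place $u$. Being mgs, $\pi_u$ is supercuspidal by Corollary~\ref{c:vanishing-Jmod}, so $\pi$ is cuspidal; this is what will remove Eisenstein contributions and make the comparison tractable. First I would fix mgs data $(G,m,H,\cL)$ for $\pi_u$ and form the associated character datum of~\S\ref{s:character-datum}; by Lemma~\ref{character-data-from-geometry} it base-changes to every $F_n$, and by Lemma~\ref{checking-supercuspidality} the base-changed datum $(G,m,H_{\F_{q^n}},\cL_{\F_{q^n}})$ is again geometrically supercuspidal, so any representation it cuts out is mgs (of the same depth) with this single prescribed datum. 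Theorem~\ref{kottwitz-stable} then supplies the matching of stable (twisted) orbital integrals of $f_\chi$ with those of $f_{\chi_{F_n}}$ at $u$ --- the base change fundamental lemma in the mgs setting (after passing to a $z$-extension of $G$ if $G$ is not simply connected). At the unramified places of $\pi$ I would use the Satake base-change homomorphism $b\colon\cH(G_{F_{n,x}})\to\cH(G_{F_x})$ together with its fundamental lemma; at the remaining finite set of ramified places of $\pi$ I would fix auxiliary local test functions whose (twisted) transfer is available and which have depth bounded independently of $n$. This last ingredient is the one genuinely delicate local input: for depth-zero or tamely ramified $\pi_x$ such functions are readily produced (one can again use a monomial datum), but in general a uniform-in-$n$ construction is not presently known.

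Second, I would run the $\theta$-twisted trace formula for $(G_{F_n},\theta)$ against the ordinary trace formula for $G_F$, where $\theta$ is a generator of $\Gal(F_n/F)\cong\Z/n\Z$. Because $f_\chi$ is a cuspidal function by Corollary~\ref{c:vanishing-Jmod}, inserting it at $u$ collapses both sides to their elliptic parts --- this is the Deligne--Kazhdan simple trace formula, which avoids the non-elliptic geometric terms and the continuous spectrum. The geometric sides are then finite sums of (twisted) elliptic orbital integrals, which match place by place via the fundamental lemmas of the first step, using the norm map on twisted conjugacy classes and the bijection of Lemma~\ref{kottwitz-norm-facts}. One obtains an identity of spectral sides: a finite signed sum over $\theta$-stable cuspidal representations of $G(\A_{F_n})$ with a non-zero $(J,\chi)$-invariant vector over $u$ equals a finite signed sum over cuspidal representations of $G(\A_F)$ with a $(J,\chi)$-invariant vector at $u$. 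Linear independence of characters, together with the injectivity of Satake base-change at the unramified places, then isolates the contribution of $\pi$ and produces a $\theta$-stable cuspidal $\Pi_n$ whose local components are exactly the prescribed ones: mgs at each place over $u$ with the base-changed datum (so $\Pi_n$ is automatically cuspidal, with no Eisenstein ambiguity), unramified and Satake-compatible at the unramified places of $\pi$, and of depth bounded independently of $n$ elsewhere. This is precisely \condbc{}.

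The main obstacle is the stabilization of the twisted trace formula for a general reductive group over a function field. In characteristic zero this rests on the fundamental lemma (Ng\^o), the weighted fundamental lemma (Chaudouard--Laumon), and the (twisted) endoscopic bookkeeping of Arthur and Labesse--Waldspurger; the function-field analogue is only partially available, which is precisely why the statement is a conjecture and not a theorem, and why \cite{Sawin-Templier:II-bc} settles only special cases of \condbc{}. The secondary obstacle, noted above, is producing the controlled local transfer at the bad ramified places uniformly in $n$. An alternative, more conditional route bypasses the trace formula altogether: invoke \cite{Lafforgue:reductifs-chtoucas} to attach a global parameter $\sigma_\pi\colon W_F\to\LG$ to the cuspidal $\pi$, restrict it to $W_{F_n}$, and then appeal to the (still open) Galois-to-automorphic direction for $G$ over function fields, together with local-global compatibility at $u$, to produce $\Pi_n$ and verify its mgs property. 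This makes transparent that \condbc{} follows from the full global Langlands correspondence, but it replaces one open problem by another.
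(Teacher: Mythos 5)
The statement you are trying to prove is stated in the paper as a conjecture: the authors give no proof, only the remark that it is a standard cyclic base change statement, that Theorem~\ref{kottwitz-stable} supplies the local (stable, twisted) transfer identities for the mgs test functions, and that one should attack it by stabilizing and comparing the trace formula and twisted trace formula, with special cases deferred to the sequel \cite{Sawin-Templier:II-bc}. Your proposal is essentially this same intended route, and you are right about its architecture; but as written it is a strategy, not a proof, and you yourself name the two inputs that are not available: the stabilized twisted trace formula (and its comparison with the ordinary one) for a general split semisimple $G$ over a function field, and a transfer at the remaining ramified places with depth controlled uniformly in $n$. Since those are genuinely open, the argument cannot be completed along these lines at present, which is precisely why the statement is a conjecture.

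Two further points where your sketch is too quick even granting those inputs. First, Theorem~\ref{kottwitz-stable} is an identity of \emph{stable} (twisted) orbital integrals; the elliptic terms of the simple trace formula do not match ``place by place'' from it without endoscopic stabilization, or without first knowing some stability property of the mgs datum (this is exactly the caveat in the paper about ``establishing stability of a finite set of mgs data at $u$'' or inserting stabilizing test functions at an auxiliary place). Related to this, you claim the base change $\Pi_n$ is mgs over $u$ \emph{with the single base-changed datum}; \condbc{} deliberately allows a finite set of mgs data, because after a stable comparison one only controls a packet-level condition at $u$, not the individual $(J_E,\chi_E)$-eigenvector property for one fixed datum. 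Second, isolating $\pi$ on the spectral side by ``linear independence of characters'' needs more care: the untwisted side carries all cuspidal $\pi'$ agreeing with $\pi$ at the unramified places and satisfying the monomial condition at the ramified ones, and the twisted side carries $\theta$-stable representations weighted by twisted traces (intertwining-operator normalizations, multiplicities); one extracts a $\Pi_n$ whose Satake parameters at unramified places are $n$-th powers of those of \emph{some} member of this set, and pinning it to $\pi$ itself requires an additional separation argument. Your alternative route through V.~Lafforgue's parameters has the same status: the Galois-to-automorphic direction and local-global compatibility at $u$ are open, so it again trades one conjecture for another rather than closing the gap.
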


This is a standard conjecture on the existence of cyclic base change, analogous to 
results that have been proved over number fields by 
Labesse~\cite[Thm.4.6.2]{Labesse:changement-base}, 
except for the compatibility condition at places lying over $u$, and for the boundedness 
of depth~\cite{Ganapathy-Varma}. Our main evidence that a 
cyclic base change compatible at $u$ should exist is Theorem~\ref{kottwitz-stable}, which 
gives the local transfer identities needed to compare twisted orbital integrals involving 
a test function which detects the mgs condition with usual orbital integrals 
for an analogous test function. Hence the conjecture is 
amenable to endoscopically stabilizing the trace formula and twisted trace formula and 
proving a comparison result between them. Special cases are accessible either by 
establishing stability of a finite set of mgs data at $u$, or by inserting 
stabilizing test functions at an additional place, we hope to do this in the 
sequel~\cite{Sawin-Templier:II-bc}.

\section{Geometric setup}\label{s:geometric-setup}

We now discuss geometric models for a family of automorphic forms with prescribed 
local behavior. Afterwards, we will use these geometric models to bound the traces of 
Hecke operators on this family.

Let $k$ be a field, let $X$ be a curve over $k$, and let $F=k(X)$. When we connect to 
analysis we will assume $k$ finite, but for the purely geometric parts we will not need 
that assumption. Let $G$ be a split semisimple algebraic group over $k$. Let $D$ be an 
effective 
divisor on $X$, which we will often view as a closed subscheme in $X$. We write $D = 
\sum_{x \in D} m_x [x] $ where $m_x$ is the multiplicity of $x$ in $D$.
\index{$D=\sum_{x} m_x [x]$, divisor, level}

\begin{defi}\index{$\BunGD$, moduli of $G$-bundles with $D$-level structure} Let $\BunGD$ 
be the moduli space of $G$-bundles on $X$ with 
a trivialization along $D$ (notation is in analogy with that of principal congruence 
subgroups). 
\end{defi}

We write $|X|$ for the set of closed points of $X$ and $|X-D|$ for the points outside the 
support of $D$. 
\index{$\lvert X \rvert$, set of closed points}
For $x \in |X|$, let $\kappa_x$ be the residue field at $x$. 
\index{$\kappa_x$, residue field}
We fix a local coordinate $t$ of 
$X$ at each closed point $x$, so that $\ko_x=\kappa_x \bseries{t}$ is the complete local 
ring 
at $x$, 
\index{$\ko_x=\kappa_x \bseries{t}$, complete local ring at $x$}
but our constructions will be independent of the choice of coordinate and so this 
is really just a notational convenience.  With this convention, 
$F_x=\kappa_x\pseries{t}$. The ad\`ele ring 
$\mathbb A_F$ is the restricted product $\prod_{x \in |X|}' F_x$.

\begin{notation}\index{$\KK$, compact subgroup} 
Let \[ \KK = \prod_{x \in |X - D|} G( \ko_x)  \times \prod_{x \in 
D} 
U_{m_x} (G(\ko_x)),\]    where  $U_ {m_x} ( G(\kappa_{x} \bseries{t}) )$  
is the subgroup of $G(\kappa_{x} \bseries{t})$ consisting of elements congruent to 
$1$ modulo $t^{m_x}$. 
Then Weil's parameterization lets us write $\BunGD ( k)$ as the adelic double quotient 
$G(F)\backslash G( \mathbb A_F) / \KK$, see Lemma~\ref{weil-parameterization-semisimple} 
below.
\end{notation}

\index{$\mathcal O_D$, ring of global sections} Let $\mathcal O_D$ be the 
ring of global sections of 
the structure sheaf on the scheme $D$, so that $G \lWR \mathcal O_D \rWR$ is the group of 
automorphisms of the trivial $G$-bundle on $D$.

\begin{lemma}We have isomorphisms  \[ \mathcal O_D \simeq \prod_{x \in D} \kappa_{x}[t]/ 
t^{m_x},\quad G \lWR \mathcal O_{D} \rWR \simeq \prod_{x \in D} G \lWR 
\kappa_{x}[t]/t^{m_x} 
\rWR . \]
\end{lemma}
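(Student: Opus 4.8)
The plan is to decompose the finite scheme $D$ into its connected components, identify each local factor using the chosen coordinate, and then transport the resulting product decomposition of $\mathcal O_D$ through the functor of points of Weil restriction. First I would observe that $D$, being a closed subscheme of $X$ supported on the finite set $\{x \in |X| : m_x > 0\}$, is an Artinian (hence affine) scheme whose underlying topological space is discrete. Therefore $D = \coprod_{x \in D} D_x$, where $D_x$ is the connected component of $D$ at $x$, i.e. $D_x = \Spec \mathcal O_{D,x}$, and correspondingly the ring of global sections splits as a finite product $\mathcal O_D = \prod_{x \in D} \mathcal O_{D,x}$.

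Next I would identify $\mathcal O_{D,x}$. By definition of the divisor $D = \sum_x m_x[x]$, its ideal sheaf is locally at $x$ the ideal $(t^{m_x}) \subset \mathcal O_{X,x}$, so $\mathcal O_{D,x} = \mathcal O_{X,x}/(t^{m_x})$. This ring is Artinian, hence equals its own completion, which is $\ko_x / t^{m_x}\ko_x$; and the fixed local coordinate $t$ at $x$ provides the identification $\ko_x = \kappa_x\bseries{t}$, whence $\mathcal O_{D,x} \cong \kappa_x[t]/t^{m_x}$. Taking the product over $x \in D$ gives the first asserted isomorphism $\mathcal O_D \cong \prod_{x \in D} \kappa_x[t]/t^{m_x}$.

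For the second isomorphism I would invoke the functor-of-points description of $G\lWR{-}\rWR$ together with the first isomorphism. For any $k$-algebra $S$, a finite product of rings tensors termwise over $k$, so $\mathcal O_D \otimes_k S = \prod_{x \in D}(\mathcal O_{D,x}\otimes_k S)$; applying $G$ and using $(G\lWR R\rWR)(S) = G(R\otimes_k S)$ yields $(G\lWR\mathcal O_D\rWR)(S) = \prod_{x\in D}(G\lWR\mathcal O_{D,x}\rWR)(S)$, naturally in $S$, so that $G\lWR\mathcal O_D\rWR = \prod_{x\in D} G\lWR\mathcal O_{D,x}\rWR$. (This is just the instance of the identity $G\lWR R_1\times R_2\rWR = G\lWR R_1\rWR\times G\lWR R_2\rWR$ already recorded above, applied inductively.) Combining with the identification $\mathcal O_{D,x}\cong\kappa_x[t]/t^{m_x}$ from the previous paragraph gives $G\lWR\mathcal O_D\rWR \cong \prod_{x\in D}G\lWR\kappa_x[t]/t^{m_x}\rWR$.

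There is essentially no obstacle here; the only point requiring any care is the identification $\mathcal O_{X,x}/(t^{m_x})\cong\kappa_x[t]/t^{m_x}$, which relies on the chosen local coordinate at $x$ and the fact that the residue field sits inside the truncated local ring — automatic here since $\ko_x = \kappa_x\bseries{t}$ is literally a power series ring over $\kappa_x$, so no lifting-the-residue-field issue arises.
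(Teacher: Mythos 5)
Your argument is correct and is essentially the paper's own proof, spelled out in more detail: the paper likewise decomposes $D$ into its components supported at single points, uses the chosen local coordinates to identify each factor with $\kappa_x[t]/t^{m_x}$, and deduces the second isomorphism from the first via the product compatibility $G\lWR R_1\times R_2\rWR = G\lWR R_1\rWR\times G\lWR R_2\rWR$ already recorded in the examples on Weil restrictions.
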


\begin{proof} The first isomorphism follows from viewing $D$ as a disjoint union of 
schemes $m_x [x]$, and choosing local coordinates for each $x$, and the second 
isomorphism follows from the first. \end{proof} 

\begin{defi}\index{factorizable subgroup $H \subseteq G \lWR \mathcal O_D \rWR$} We say that 
an algebraic subgroup $H \subseteq G \lWR \mathcal O_D \rWR$ is \emph{factorizable} if it 
is equal to a product $\prod_{x \in D} \operatorname{Res}_{\kappa_{x}}^k H_x$ where $H_x$ 
is an algebraic subgroup of $G_{\kappa_{x}} \lWR \kappa_{x} [t]/t^{m_x} \rWR$ and 
$\operatorname{Res}_{\kappa_{x}}^k H_x$ is its Weil restriction from $\kappa_{x}$ to $k$, 
making it a subgroup of $G \lWR \kappa_{x} [t]/ t^{m_x} \rWR$ . \end{defi}

\begin{lemma} If $H \subseteq G \lWR \mathcal O_D \rWR$ is factorizable, then for any 
separable field extension $k'$ of $k$, the base change $H_{k'}$ of $H$ from $k$ to $k'$ 
remains factorizable as a subgroup of $G_{k'} \lWR \mathcal O_D \otimes k' \rWR $.  
\end{lemma}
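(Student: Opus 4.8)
The plan is to unwind the definition of factorizable and appeal to the standard compatibility of Weil restriction with base change, the only substantive input being the hypothesis that $k'/k$ is separable. Write $H = \prod_{x\in D}\operatorname{Res}_{\kappa_x}^k H_x$ with $H_x$ an algebraic subgroup of $G_{\kappa_x}\lWR\kappa_x[t]/t^{m_x}\rWR$, and base change one factor at a time. By the general base-change compatibility of Weil restriction along a finite flat extension of rings,
\[
\bigl(\operatorname{Res}_{\kappa_x}^k H_x\bigr)_{k'} \;=\; \operatorname{Res}_{\kappa_x\otimes_k k'}^{k'}\bigl((H_x)_{\kappa_x\otimes_k k'}\bigr),
\]
which is the analogue for this slightly more general Weil restriction of the base-change stability of $\lWR-\rWR$ already recorded after the definition of $G\lWR R\rWR$.

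Next I would use separability of $k'/k$: the finite $k'$-algebra $\kappa_x\otimes_k k'$ is then reduced, hence a finite product $\prod_{y}\kappa_y$ of finite field extensions of $k'$, indexed by the closed points $y$ of $X_{k'}$ above $x$, with $\kappa_y$ the residue field. Since $\kappa_x\otimes_k k'$ has no nilpotents, there is no ramification, so the pullback divisor $D_{k'}$ is $\sum_{x}\sum_{y\mid x} m_x[y]$, i.e. each $y$ above $x$ occurs with multiplicity $m_y:=m_x$, and correspondingly $\mathcal O_D\otimes_k k' = \prod_{y\in D_{k'}}\kappa_y[t]/t^{m_y}$. Then $(H_x)_{\kappa_x\otimes_k k'} = \prod_{y\mid x}(H_x)_{\kappa_y}$, and applying the decomposition $\operatorname{Res}_{\prod_y\kappa_y/k'}\bigl(\prod_y Z_y\bigr)=\prod_y\operatorname{Res}_{\kappa_y}^{k'}Z_y$ (the identity $G\lWR R_1\times R_2\rWR = G\lWR R_1\rWR\times G\lWR R_2\rWR$ from the examples) together with $\bigl(G_{\kappa_x}\lWR\kappa_x[t]/t^{m_x}\rWR\bigr)_{\kappa_y}=G_{\kappa_y}\lWR\kappa_y[t]/t^{m_y}\rWR$ gives
\[
H_{k'} \;=\; \prod_{x\in D}\ \prod_{y\mid x}\operatorname{Res}_{\kappa_y}^{k'}\bigl((H_x)_{\kappa_y}\bigr)\;=\;\prod_{y\in D_{k'}}\operatorname{Res}_{\kappa_y}^{k'}\bigl((H_x)_{\kappa_y}\bigr),
\]
with $(H_x)_{\kappa_y}$ an algebraic subgroup of $G_{\kappa_y}\lWR\kappa_y[t]/t^{m_y}\rWR$; this is exactly the assertion that $H_{k'}$ is factorizable as a subgroup of $G_{k'}\lWR\mathcal O_D\otimes k'\rWR$.

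There is no real obstacle here: the one place the hypothesis is used, and the only point requiring any care, is the reducedness of $\kappa_x\otimes_k k'$. Separability of $k'/k$ is precisely what guarantees both that this algebra splits as a product of (residue) fields of points above $x$ and that no extra multiplicities are introduced in $D_{k'}$, so that the resulting factorization lands in $G_{k'}\lWR\mathcal O_D\otimes k'\rWR$ with the divisor structure intact; everything else is formal manipulation of Weil restrictions.
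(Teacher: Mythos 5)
Your argument is correct and follows essentially the same route as the paper's proof: base-change compatibility of Weil restriction, $\bigl(\operatorname{Res}_{\kappa_x}^k H_x\bigr)_{k'}=\operatorname{Res}_{\kappa_x\otimes_k k'}^{k'}\bigl((H_x)_{\kappa_x\otimes_k k'}\bigr)$, followed by the splitting of $\kappa_x\otimes_k k'$ into a product of fields and the compatibility of Weil restriction with such products. Your version merely spells out the residue fields and multiplicities above $x$ more explicitly; the paper leaves that implicit, but the content is identical.
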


\begin{proof} Write $H=\prod_{x \in D} \operatorname{Res}_{\kappa_{x}}^k H_x$.  Let us check that \begin{equation}\label{factorizable-stability}
 (\operatorname{Res}_{\kappa_x}^k H_x)_{k'} =
\operatorname{Res}^{k'}_{\kappa_x\otimes k'} (H_x)_{\kappa_x\otimes k'} =
\prod_{x'|x} \operatorname{Res}^{k'}_{\kappa_{x'}} (H_x)_{\kappa_{x'}}.
\end{equation} The first equality follows from taking the definition of the Weil restriction and base changing everything from $k$ to $k'$. The second follows from the fact that $k'/k$ is separable and thus $\kappa_x \otimes k' = \prod_{x'|x} \kappa_{x'}$ is a product of fields.

Taking the product of \eqref{factorizable-stability} over $x \in D$, the 
resulting subgroup $H_{k'}$ is factorizable.\end{proof}

Fix a smooth connected factorizable subgroup $H \subseteq G \lWR \mathcal O_D \rWR$ and a 
character sheaf $\mathcal L$ on $H$. By Lemma~\ref{all-factorizable}, $\mathcal L$ splits as a 
product $\boxtimes_{x \in D} \operatorname{Res}_{\kappa_{x}}^k \mathcal L_x$ for 
character sheaves $\mathcal L_x$ on $H_x$. A datum $(G,D,H,\mathcal L)$ will give rise to a 
set of
\emph{monomial local conditions} on an automorphic representation of $G(\mathbb A_F)$ as 
follows.

\begin{notation}\label{def:Jx} Let $J_x$ be the inverse image of $H_x(\kappa_{x} )$ in  
$G ( 
\kappa_{x}\bseries{t})$, which maps to $G ( \kappa_{x}[t]/ t^{m_x}) = G \lWR 
\kappa_{x}[t]/t^{m_x} \rWR (\kappa_{x})$ by the natural projection.  
\end{notation} 

\begin{defi} Let  $\chi_x$ be the character of $H_x(\kappa_{x} )$, and thus of $J_x$, 
induced by $\mathcal L_x$ and let $\chi$ be the character of $H(k)$ induced by $\mathcal 
L$.  \end{defi} \index{$\chi_x$, character of $J_x\subset G(\ko_x)$}

Under these definitions, we have a commutative diagram
\[ 
\begin{tikzcd}
\KK \arrow[r,hook] & \prod\limits_{x \in | X - D|} G( \ko_x)  \times \prod\limits_{x \in 
D} J_x \arrow[r,two heads] \arrow[d,hook] & H(\kappa) 
\arrow[d,hook]\\
& \prod\limits_{x \in |X|} G(\ko_x) \arrow[r,two heads] & G(\mathcal O_D)
\end{tikzcd} 
\] 
where the square is a Cartesian and the top row is a short exact sequence. 

For clarity and concreteness, we explicate the datum $(G, m_x, H_x, \mathcal L_x)$ that 
will 
appear in the proof of the main theorem of the paper. At each place, we will either take 
$H_x$ the trivial group and $\mathcal L$ the trivial sheaf, or we will take $(G, m_x,H_x, 
\mathcal L_x)$ to be geometrically supercuspidal. Examples of the second kind of data 
were provided in Lemma~\ref{sub:epipelagic}.

\begin{remark}\label{space-of-automorphic-forms} Assume that $k$ is a finite field. 
Consider the space of $L^2$-functions on 
$\BunGD(k)= G(F)\backslash G( \mathbb A_F) / \KK$  that are $\chi$-equivariant for the 
natural right action of \[H(k) \subseteq  G \lWR \mathcal O_D \rWR (k) = \prod_{x \in D} 
G( \kappa_{x}[t]/t^{m_x}) =  \prod_{x \in D }  G(\ko_x ) / U_ {m_x} ( G(\ko_x) ) = G 
(\ko_F) / \KK\] on $\BunGD(k)$, where $\ko_F = \prod_{x \in |X|}  \ko_x 
=\prod_{x \in |X|} \kappa_x \bseries{t} $. 
We view this as a space of automorphic forms.

We break this space into eigenspaces under Hecke operators, with irreducible 
subconstituents given by 
automorphic representations of $G(\mathbb A_F)$. All automorphic
representations that appear as subquotients are unramified away from $D$, and at every 
point 
$x 
\in D$ admit a nontrivial map from the compact induction 
$\cind_{J_x}^{G(\kappa_{x}\pseries{t})} \chi_x$.

The dimension of the space associated to an automorphic representation $\pi$ of 
$G(\mathbb A_F)$ is equal to its global multiplicity in $L^2 ( G(F) \backslash G(\mathbb 
A_F))$ times the product over $x$ of the dimension of the $(J_x,\chi_x)$-eigenspace in 
$\pi_x$. \end{remark}

\begin{remark} We compare our datum $(G, D, H, \mathcal L)$ defining a space of 
automorphic forms to the ``geometric automorphic datum" defined by Yun in 
\cite[\S2.6.2]{YunRigidity}. Both are geometric versions of the notion of an automorphic 
representation defined by local conditions, but Yun's is somewhat more general, as we 
have made various restrictions for technical and notational simplicity. 

    We work with semisimple groups, while Yun fixes a central character. 
The group ``$\mathbf K_S$" in~\cite{YunRigidity} carries the same information as our $H$. 
The group ``$\mathbf K_S$" is a pro-algebraic subgroup of $\prod_{x\in S} G \lWR \kappa_x 
\bseries{t}\rWR$, 
whereas $H$ is an 
algebraic subgroup of $G\lWR \cO_D \rWR$. This is only a technical difference: by 
truncating, we 
	avoid working with pro-algebraic groups. More significantly, Yun allows the local 
	subgroups to be contained in any parahoric subgroup, while we allow only the standard 
	hyperspecial subgroup, and he allows them to be arbitrary subgroups of $G \lWR 
	\kappa_x \bseries{t}\rWR$ and not just Weil restrictions from 
	$G_{\kappa_x\bseries{t}}$, which means that his definition is not stable under base 
	field extension (this can be repaired by either specializing to subgroups that are 
	Weil restrictions or generalizing to subgroups of the product of local groups at all 
	places, rather than products of local subgroups). The notation ``$\mathcal K_S$" in 
	\cite{YunRigidity} is our 
	$(\mathcal L_x)_{x\in D}$.
\end{remark}

\begin{remark} Most of our methods apply over an arbitrary base field $k$, and it 
would not 
be surprising if they could be generalized to the derived category of $D$-modules. For 
instance, Theorem~\ref{mainduality-semisimple} could possibly be established for 
$D$-modules, in which case Lemma~\ref{purity-semisimple} would be the statement that a 
$D$-module pushforward is supported in a single degree.  Similarly, the Ramanujan bound 
in a particular case established in~\cite{HNY:Kloosterman} has 
been used in~\cite{Lam-Templier} to prove that certain character $D$-modules were 
concentrated in a single degree.

 If this were done, it might have relevance to the characteristic zero geometric Langlands program. 
However, it is easy to see that the geometric supercuspidality
condition cannot 
be satisfied by any tamely ramified character sheaf, and thus cannot be satisfied at all 
for sheaves
or $D$-modules with regular singularities in characteristic zero. Hence using this 
technique requires dealing with irregular singularities.

 \end{remark}

 \begin{remark}\label{r:eigensheaf} 
We note that this geometric setup can also be used to motivate \condbc{}. Let $\pi$ be an 
automorphic representation generated by some automorphic function 
 on $G(F)\backslash G( \mathbb A_F) / \KK$ which is $\chi$-equivariant for the right 
 action of $H(k)$. Suppose that it is the trace function of a Hecke eigensheaf on 
 $\BunGD$ that 
 is $\mathcal L$-equivariant for the right action of $H$. Then $\pi$ satisfies 
 \condbc{}, except possibly for finitely many extensions. Indeed, over each finite 
 field extension $k'$ of $k$, we can take the trace function of the Hecke eigensheaf over 
 $k'$, which is a Hecke eigenfunction (assuming it is non-zero), and generates one or 
 more automorphic representations with the same Satake parameters at unramified places. 
 Because the Hecke eigenvalues come from the same geometric Langlands parameter as the 
 Hecke eigensheaf, they have matching Satake parameters with $\pi$. Because the 
 automorphic function lies on $\BunGD(k')$, the generated representations have bounded 
 depth, and because it is $(H(k'), \chi_{k'})$-equivariant, the generated 
 representations 
 are compatible with the same mgs data at every mgs place. The only potential problem is 
 if the trace function is identically zero, which can only happen for finitely many field 
 extensions. \end{remark}

\subsection{Moduli Spaces}
As in~\S\ref{sub:Satake}, let $\Lambda^+$ be a Weyl cone in the cocharacter lattice of 
$G$ (which is naturally in 
bijection with a Weyl cone in the character lattice of $\widehat{G})$.

Let $x$ be a point in $X$ and let $U \subseteq X$ be a neighborhood of $x$. Let 
$\alpha_1$ and 
$\alpha_2$ be two $G$-bundles defined over $U$, and let $f: \alpha_1 \to \alpha_2$ 
be an isomorphism over $U -\{x\}$. If we choose trivializations of $\alpha_1$ and 
$\alpha_2$ in a formal neighborhood of $x$, we can represent the restriction of $f$ to the punctured formal neighborhood of $x$ as an element of 
$G ( \kappa_x \pseries{t} ) $. Changing the trivializations corresponds to the left and 
right action of $G(\kappa_x\bseries{t})$ on this element, so the isomorphism $f$ 
defines a double coset in $G(\kappa_x\bseries{t}) \backslash G ( \kappa_x \pseries{t} ) / 
G(\kappa_x\bseries{t}) $. These double cosets are naturally in bijection, under the 
Cartan decomposition, with $\Lambda^+$. We can view this decomposition as coming from the 
affine Grassmannian $G\pseries{t}/G\bseries{t}$, because each double coset in 
$G(\kappa_x\bseries{t}) \backslash G ( \kappa_x \pseries{t} ) / G(\kappa_x\bseries{t}) $ 
is a $G(\kappa_x\bseries{t})$-orbit in the $\kappa_x$-points $G ( \kappa_x \pseries{t} ) 
/ G(\kappa_x\bseries{t}) $ of the affine Grassmannian.  These orbits are the Schubert cells 
of the affine Grassmannian, which again are in bijection with $\Lambda^+$. This geometric 
description makes clear that, in any algebraic family of $G$-bundles $\alpha_1, \alpha_2$ and 
maps 
$f$ between them, the set of points where the double coset associated to $f$ is in a 
particular cell of the affine 
Grassmannian is locally closed and, moreover, the set of points where $f$ is in the 
closure of a particular Schubert cell of the affine Grassmannian is closed. Using these 
closed cells, we will define a Hecke correspondence.
\index{affine Grassmannian, Schubert cells}

Let $\weights$ be a function from $|X|$ to $\Lambda^+$, that sends all but finitely many 
points to the trivial cocharacter and sends all the points of $D$ to the trivial 
cocharacter. Define the support of $\weights$ to be the set of points that $\weights$ 
sends to a nontrivial cocharacter (i.e., the usual definition of the support of a 
function, if we view the trivial cocharacter as the zero element of $\Lambda^+$). \index{$W:\lvert X\rvert \to \Lambda^+$, finitely supported}

\begin{defi}\label{d:Hecke-G(D)} Let $\Hecke_{G(D),\weights}$ be the moduli space of 
pairs $\alpha_1,\alpha_2$ of $G$-bundles with an isomorphism $f:\alpha_1 \to \alpha_2$ 
away 
from the support of $W$, and with a 
trivialization 
\[
t_1:\alpha_1|_D \isom G \times \Spec(\cO_D)
\]
 of the first bundle along 
$D$, such that near each point $x$ of the 
support of $W$, when $f$ is viewed as a point in the  
$G(\kappa_x\pseries{t})$ as above, it projects to a point in the affine Grassmannian that lies in 
the closed 
cell corresponding to $\weights_x$. 
\index{$\Hecke_{G(D),W}$, Hecke moduli space}
\end{defi}

\begin{defi}  We define a map $\Delta^{\weights}: \Hecke_{G(D),\weights} \times H  \to 
\BunGD \times \BunGD$ that sends $(\alpha_1, \alpha_2, f, t_1)$ to $((\alpha_1,t_1), (\alpha_2, h \circ t_1 \circ f|^{-1}_D))$. In other words, 
the left projection is taking the first $G$-bundle  with 
trivialization over $D$, and the right projection is taking the second $G$-bundle 
$\alpha_2$, using 
$f$ to carry over the trivialization $t_1$, and then twisting the trivialization by the 
element $h\in H$.%
\index{$\Delta^W$, Hecke correspondence}
 \end{defi}
 
 We will work with the intersection cohomology complex $IC_{\Hecke_{G(D),\weights}} $ on $\Hecke_{G(D),\weights}$, which by definition is the unique irreducible perverse sheaf isomorphic to $\Ql [ \dim \Hecke_{G(D),\weights}]$ on the open set where $\Hecke_{G(D),\weights}$ is smooth.

\begin{remark} The trace function of $\Delta^{\weights}_! (IC_{\Hecke_{G(D),\weights}} 
\boxtimes \mathcal L)$, which is a function on $\BunGD(k) \times \BunGD(k)$, is the 
kernel for the composition of the Hecke operator associated to $W$ by the Satake 
isomorphism with the averaging operator of the $(H(k),\chi)$-action 
(Lemma~\ref{arithmetic-to-geometry}). Thus it acts as a 
Hecke operator on the space of automorphic forms described in Remark 
\ref{space-of-automorphic-forms}.  
 \end{remark} 

The aim of Section~\ref{s:cleanness} will be to prove the following cleanness 
property of $\Delta^{\weights}$.
\begin{theorem}[=Theorem \ref{mainduality-semisimple}] Assume that $(G,m_u, H_u, \mathcal 
L_u)$ is 
geometrically supercuspidal for some $u \in D$ and $\charr(k)>2$. Then the natural map \[ 
\Delta^{\weights}_! \left( IC_{\Hecke_{G(D),\weights} }  \boxtimes \mathcal L \right) \to 
\Delta^{\weights}_* \left( IC_{\Hecke_{G(D),\weights} }  \boxtimes \mathcal L \right) \] 
is an isomorphism. \end{theorem}

Using this, in Section~\ref{s:properties}, we will prove that $\Delta^{\weights}_! 
(IC_{\Hecke_{G(D),\weights}} \boxtimes \mathcal L)$ is a pure perverse sheaf, which we 
will use in Section~\ref{s:trace-function} to derive numerical consequences.

\begin{remark}\label{schieder-motivation} Let us explain some of the motivation for 
Theorem~\ref{mainduality-semisimple}. As we mentioned before, the trace function of 
$R\Delta^{\weights}_! (IC_{\Hecke_{G(D),\weights}} \boxtimes \mathcal L)$ is a Hecke 
kernel on a particular space of automorphic forms. In particular, in the case when 
$\weights$ is trivial, it is simply the idempotent projector onto this space of 
automorphic forms.

In the case where $G = SL_2$, $D$ is empty, and $\weights$ is trivial, the trace function 
of 
$R\Delta^{\weights}_* (IC_{\Hecke_{G(D),\weights}} \boxtimes \mathcal L)$ was calculated 
by Schieder \cite[Prop.8.15]{Schieder14}. Viewing the trace function as a kernel, 
the induced operator on the space of automorphic forms was calculated by Drinfeld and 
Wang, who found that it acts as the identity on cusp forms \cite[Prop.3.2.2(i),  
Theorem 1.3.4, and Equation 3.2]{DW15}. A similar calculation was done by Wang for 
general groups in \cite[Thm.C.7.2 and Thm.1.4.3]{Wang}.
 If this fact is true for the families of automorphic forms with more general 
 local conditions, then the trace function of $R\Delta^{\weights}_* 
 (IC_{\Hecke_{G(D),\weights}} \boxtimes \mathcal L)$ should equal the trace function of 
 $R\Delta^{\weights}_! (IC_{\Hecke_{G(D),\weights}} \boxtimes \mathcal L)$ as soon as one 
 of the local prescribed conditions ensures that the automorphic forms in the family are 
 cuspidal by 
 mandating that one of the local factors is supercuspidal. If we believe this, then we 
 might conjecture that they should agree as sheaves and not just trace functions as long 
 as the local condition also forces cuspidality over finite field extensions. \end{remark}

\section{Cleanness of the Hecke complex}\label{s:cleanness}

As before, let $X$ be a smooth projective curve over a finite field $k$, $G$ a split 
semisimple algebraic group over $k$, $D$ an effective divisor on $X$, $H$ a smooth 
factorizable 
subgroup of $G \lWR \mathcal O_D \rWR$, and $\mathcal L$ a character sheaf on $H$.

We begin, in \S\ref{defining-compactification}, by constructing a compactification of $\Hecke_{G(D),\weights} \times H$ over $\BunGD \times \BunGD$.  The advantage of having a compactification is that it reduces the cleanness property of $\Delta_W$ that we are trying to prove (in Theorem \ref{mainduality-semisimple}) to the corresponding cleanness statement for the open immersion $j$ of  $\Hecke_{G(D),\weights} \times H$ into its compactification (Theorem \ref{mainextension-semisimple}). We can prove this cleanness statement by working locally with individual points of the compactification. This compactification will also help in proving (in Lemma \ref{schematic-affine}) that $\Delta_W$ is schematic and affine. These facts (Lemma \ref{schematic-affine} and Theorem \ref{mainduality-semisimple}), will be the main results from this section that are relevant to subsequent sections, as they together imply very strong properties (Lemma \ref{purity-semisimple}) of $\Delta^{\weights}_! \left( IC_{\Hecke_{G(D),\weights} }  \boxtimes \mathcal L \right) $.

We define this compactification by giving explicit coordinates for a map of $G$-bundles. 
We do this by using a faithful representation $V$ of $G$. We then allow these 
coordinates to go to infinity. What this means in practice is described in 
\S\ref{lemmas-on-semisimple}, which is devoted to describing the projective closure of the 
affine variety $G \subset \operatorname{End} V$.  We give (in Lemma 
\ref{closure-characterization}) a classification of points on this affine closure, and then 
describe how $G$ acts on them. This will eventually allow us to classify the points of the 
compactification.

For an open immersion $j$, the cleanness can be interpreted as a vanishing of stalks. We begin the proof, in \S\ref{s:vanishing-cusp}, by proving the vanishing of stalks for a special set of points in the compactification, those ``near the cusps", which arise from highly unstable $G$-bundles (Lemma \ref{htvan-semisimple}). These $G$-bundles have extra symmetries, and we use these symmetries to obtain the vanishing. Roughly, we show that these symmetries act trivially on the stalk, and, if the stalk is nontrivial, they act nontrivially on it.

We continue the proof in \S\ref{s:Hecke-correspondences} by showing how to relate the stalks at different points in the compactification. We show that if a stalk vanishes at one point, it vanishes at certain related points. This will enable, in \S\ref{s:7-conclusion}, an inductive proof that the stalk vanishes everywhere in the compactification outside the original $\Hecke_{G(D),\weights} \times H$. We do this by defining a Hecke correspondence between the compactification and itself. Just as, in the classical setting of modular curves, the graph of a Hecke correspondence is itself a modular curve, and therefore admits Hecke correspondences at coprime places, we have a notion of Hecke correspondence for $\Hecke_{G(D),\weights} \times H$, and even its compactification. The most technically difficult part is checking that these Hecke correspondences are smooth (Lemma \ref{smooth-semisimple}). This then enables us to relate the stalks at two corresponding points by smooth base change (Lemma \ref{travelling-semisimple}).

We conclude in  \S\ref{s:7-conclusion} with an induction on the ``height" of a point, which 
we think of as a generalization of the $y$-coordinate of a point on the upper-half plane 
(Definition \ref{defi-height}). The larger this height is, the more a point is near the cusp. 
(The points ``near the cusp" are exactly the points with height over some threshold.) The 
key lemma for this induction is that every point is related by a Hecke correspondence to 
some point of greater height (Lemma \ref{heightgrowth-semisimple}).

 \subsection{A compactification of $\Hecke_{G(D),\weights} \times H$}\label{defining-compactification}
  Let $V$ be a faithful 
 representation of $G$, which we also view as a functor $\alpha \mapsto V(\alpha)$ from 
 $G$-bundles to vector bundles. 
\index{$V$, faithful representation of $G$}
Throughout this section, we will be working geometrically 
 and so we can and will assume that $k$ is algebraically closed.
We ssume that $V$ lifts to the Witt vectors of $k$ and the pairing of any root of $G$ with any weight of $V$ is less than the 
characteristic $p$ of 
 $k$ (this technical condition is used in Lemma \ref{parabolic-construction}, and the existence of a suitable $V$ is checked in Lemma \ref{V-existence}). We fix a maximal torus and a Borel $T\subset B$ inside $G$. 
As in the previous section, let $W$ be a function from $|X|$ to $\Lambda^+$ with 
finite support disjoint from the effective divisor $D=\sum_{x\in |X|} m_x [x]$.
 
 \begin{defi}  For each point $x \in |X|$, consider the composition $\G_m \stackrel{W_x}{\longrightarrow} G \to \GL(V)$ of the 
 representation $V$ with the cocharacter $\weights_x\in 
 \Lambda^+$. This is a representation of $\mathbb G_m$, hence is a sum of one-dimensional representations, which we can express as $\lambda \mapsto \lambda^{e_1},\dots, \lambda^{e_{\dim V}}$ for a tuple of integer weights $e_1,\dots, e_{\dim V}$. Let $\set{\weights}_x = - \min(e_1,\dots, e_{\dim V})$.
 
 Let $\{\weights\}:|X|\to \Z$ be the divisor, whose 
 multiplicity at each point $x\in |X|$ is $\set{W}_x$.\index{$\set{W}_x$, lowest weight attached to the cocharacter $W_x$}
\end{defi}

The support of $\set{W}$ is less than the support of $W$. In particular we have that 
$\{\weights\}$ is disjoint from $D$.

\begin{example}
(i) If $G = \Sp_{2n}$, $V$ is the standard representation, and $\weights_x$ is the cocharacter 
with eigenvalues $\lambda^{w_1}, 
 \dots, \lambda^{w_n}, \lambda^{-w_n}, \dots, \lambda^{-w_1}$ where $w_1,\dots,w_n$ are integers with $w_1 \geq \dots \geq w_n \geq 0$ then $\{\weights\}_x = 
 w_1$. 

(ii) If $G=\SL_n$, $V$ is the adjoint representation, and $\weights_x$ is the cocharacter whose 
eigenvalues on the standard representation are
 $\lambda^{w_1},\dots,\lambda^{w_n}$ for $w_1,\dots,w_n$ integers with $w_1 \geq \dots \geq w_n$ and $\sum_{i=1}^n w_i =0$, then its eigenvalues on the adjoint representation have the form $\lambda^{w_i-w_j}$, so $\{\weights\}_x=w_1-w_n$.
\end{example}

Before compactifying $\Hecke_{G(D),\weights} \times H$, we compactify $G$ by considering 
the projective completion of $\End(V)$:
 \begin{notation} Let $\overline{G}$ be the closure of $G \subseteq \End V \subseteq 
 \mathbb 
 P (\End V\oplus k )$, where we embed $\End V$ into the projective space $\mathbb P(\End 
 V\oplus k)$ by $x \mapsto [x:1]$. (The map $G \to 
 \GL(V) \to \End V$ is an immersion because $V$ is a faithful representation). 
\index{$\overline{G}$, compactification of $G$ inside $\mathbb{P}(\End V \oplus k )$}
\end{notation}

Given two pairs $(\alpha_1,t_1),(\alpha_2,t_2)$ of a $G$-bundle and a trivialization over 
$D$ and a projective section $\varphi \in \mathbb P (\Hom (V(\alpha_1),V(\alpha_2) 
\otimes \mathcal 
O_X(\{\weights\}) )\oplus k )$, because $\Hom (V(\alpha_1),V(\alpha_2) \otimes \mathcal 
O_X(\{\weights\}) )\oplus k$ is the vector space of global sections of 
\[
\mHom 
(V(\alpha_1),V(\alpha_2) \otimes \mathcal O_X(\{\weights\}) )\oplus \mathcal O_X,
\]
 we can 
view $\varphi$ as a nonzero global section of $ \mHom (V(\alpha_1),V(\alpha_2) \otimes 
\mathcal O_X(\{\weights\}) )\oplus \mathcal O_X$, well-defined up to scaling. Locally 
over any 
open set, closed set, or punctured formal neighborhood, that does not intersect the 
support of $\weights$ and where we have a trivialization of $\alpha_1$ and 
$\alpha_2$, 
we obtain a section of $(\End V \oplus k) \otimes \mathcal O_X$ up to scaling.

\begin{defi}
\index{$\overline{\Hecke}_{G(D),H,\weights,V}$, compactification of the Hecke stack}   
Let $\overline{ \Hecke}_{G(D),H,\weights,V}$ be the moduli space of five-tuples consisting of $\alpha_1,t_1,\alpha_2,t_2,\varphi$ where  
$(\alpha_1,t_1),(\alpha_2,t_2)$ are two pairs of a $G$-bundle and a trivialization over 
$D$ and
\[
\varphi \in 
\mathbb P (\Hom (V(\alpha_1),V(\alpha_2) \otimes \mathcal 
O_X(\{\weights\}) )\oplus k)
\]
 such that
\begin{enumerate}
 
 \item Over any point $x$ in the complement of the support of $\weights$, for any 
 trivialization of $\alpha_1$ and $\alpha_2$ over $x$, the induced point of 
 $(\End V \oplus k) \otimes \kappa_x$ lies in the affine cone of $\overline{G}$. (Note that $\overline{G}$ 
 is invariant under the left and right action of $G$, so this does not depend on the 
 choice of trivialization.)

 \item  In a punctured formal neighborhood of any point $x$ in the support of $\weights$, 
 for any trivialization of $\alpha_1$ and $\alpha_2$ over that punctured formal 
 neighborhood, the induced section of $\End V\oplus \mathcal O_X$, when viewed as a point in the 
 formal loop space $(\End V \oplus k ) \pseries{t}$, is in the closure of the set of 
 pairs $(\lambda V(g),\lambda)$ where $\lambda \in \mathbb G_m$ and $g \in G\pseries{t}$ 
 is in the Schubert cell associated to $W_x$.

 \item Over $D$, using the trivializations $t_1$ and $t_2$, the induced element of  
 $(\End V\oplus k) \lWR \mathcal O_D \rWR$ lies in the closure of the set of pairs 
 $(\lambda h, \lambda)$ where $\lambda \in \mathbb G_m$ and 
$h \in H \subseteq G \lWR \mathcal O_D \rWR \subseteq \End 
 V \lWR \mathcal O_D \rWR$. 
Equivalently, using an arbitrary trivialization over $D$,  
 $V(t_2) \circ \varphi|_D \circ V(t_1)^{-1}$ lies in this closure, where 
 $V(t_i):V(\alpha_i)|_D \isom V\langle \cO_D \rangle$ are the associated trivialization.
  
 \end{enumerate}
\end{defi}

For interpreting the last two conditions, remember that a global section of $\mathcal O_X$ is always 
constant over $X$, so forcing the last coordinate to be locally constant over $X$ is not 
any additional restriction.
	Recall from Definition~\ref{d:Hecke-G(D)} that 
$\Hecke_{G(D),\weights}$ is the moduli space of four-tuples $(\alpha_1,\alpha_2,f,t_1)$ consisting of a
pair of $G$-bundles $\alpha_1,\alpha_2$, an isomorphism 
$f: \alpha_1 \to 
	\alpha_2$ away from the support of $\weights$, that near each point in the support of 
	$\weights$ is in the closure of the cell of the affine Grassmannian associated to the 
	corresponding representation, and a trivialization $t_1$ of $\alpha_1$.

To understand $\overline{ \Hecke}_{G(D),H,\weights,V}$ geometrically, it helps to first describe the analogous moduli space without the conditions (1), (2), (3). We can describe this as a projective bundle.
	
\begin{lemma}\label{projectivization-coherent-sheaf} The moduli space of five-tuples 
$((\alpha_1,t_1), (\alpha_2, t_2), \varphi)$ where $(\alpha_1,t_1), (\alpha_2,t_2) \in \BunGD$ and $\varphi \in  
\mathbb P (\Hom (V(\alpha_1),V(\alpha_2) \otimes \mathcal 
O_X(\{\weights\}) )\oplus k)$ is a projective bundle over $\BunGD \times \BunGD$, in the sense of $\operatorname{Proj}$ of the symmetric algebra of a coherent sheaf on $\BunGD \times \BunGD$. \end{lemma}

\begin{proof} The projectivization of a vector space is  $\operatorname{Proj}$ of the symmetric algebra of the dual vector space. So it suffices to check that there is a coherent sheaf on $\BunGD \times \BunGD$ whose fiber at each point is the dual of $\Hom (V(\alpha_1),V(\alpha_2) \otimes \mathcal 
O_X(\{\weights\}) )\oplus k$. By Serre duality, this dual is $H^1 (X, V(\alpha_2)^\vee \otimes \mathcal 
O_X(-\{\weights\})\otimes  V(\alpha_1) \otimes K_X)\oplus k$. Because $H^1$ is the top cohomology group, its value at each point is the fiber of the coherent sheaf $R^ 1 \pi_* (  V(\alpha_2)^\vee \otimes \mathcal 
O_X(-\{\weights\})\otimes  V(\alpha_1) \otimes K_X) $, for $\pi$ the projection $X \times \BunGD \times \BunGD \to \BunGD \times \BunGD$.

Finally, the sum of $H^1$ with $k$ is the fiber of the sum of this coherent sheaf with $\mathcal O_{ \BunGD \times \BunGD}$. 
\end{proof}

\begin{lemma}\label{j-semisimple} 
\index{$j: \Hecke_{G(D),\weights} \times H \to \overline{ \Hecke}_{G(D),H,\weights,V} $}
There is a well-defined map $j: \Hecke_{G(D),\weights} \times H \to \overline{ 
\Hecke}_{G(D),H,\weights,V} $ 
	that sends $(\alpha_1,t_1,\alpha_2,f,h)$
	to 
$
((\alpha_1, t_1), (\alpha_2,  h \circ t_1 \circ f|_D^{-1}), \varphi)
$
 where 
\[
\varphi \in \Hom (V (\alpha_1),V(\alpha_2) \otimes \mathcal O_X(\{\weights\}))  \subseteq 
\mathbb P (\Hom (V(\alpha_1),V(\alpha_2) \otimes \mathcal O_X(\{\weights\}) )\oplus k)
\]
 is $V(f):V(\alpha_1)\to V(\alpha_2)$ tensored with the natural map $\mathcal O_X \to 
 \mathcal 
 O_X(\{\weights\})$. 
 \end{lemma}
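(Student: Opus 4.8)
The plan is to take the formula in the statement as the definition of $j$, to check that the five‑tuple $\bigl((\alpha_1,t_1),(\alpha_2,h\circ t_1\circ f|_D^{-1}),\varphi\bigr)$ it produces genuinely lies in $\overline{\Hecke}_{G(D),H,\weights,V}$ -- i.e.\ satisfies the three conditions of its definition -- and finally to observe that every step is compatible with base change, so that $j$ is a morphism of stacks. Fix a point $(\alpha_1,t_1,\alpha_2,f,h)$ of $\Hecke_{G(D),\weights}\times H$. Since the support of $\weights$, hence that of $\{\weights\}$, is disjoint from $D$, the isomorphism $f$ is defined in a neighbourhood of $D$, so $h\circ t_1\circ f|_D^{-1}$ really is a trivialization of $\alpha_2$ along $D$; and the natural inclusion $\iota\colon\mathcal O_X\hookrightarrow\mathcal O_X(\{\weights\})$ is an isomorphism away from the support of $\weights$, so $\varphi:=V(f)\otimes\iota$ is an isomorphism $V(\alpha_1)\isom V(\alpha_2)\otimes\mathcal O_X(\{\weights\})$ away from that support.

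The first substantial point is that $\varphi$ extends to a \emph{global} section of $\mHom(V(\alpha_1),V(\alpha_2)\otimes\mathcal O_X(\{\weights\}))\oplus\mathcal O_X$, with second coordinate $1$, so that it is a well‑defined and nonzero point of the projective space appearing in the definition. This is the pole bound. Near a point $x$ of the support of $\weights$, in local coordinates $f$ maps to a point of the closed Bruhat cell $\overline{\Gr_{\weights_x}}$, so $f$ lies in $\Gr_\mu$ for some dominant $\mu\le\weights_x$, and $V(f)$ then has a pole at $x$ of order $-\min_w\langle\mu,w\rangle$, the minimum taken over the weights $w$ of $V$. Because $\mu$ and $\weights_x$ are dominant with $\weights_x-\mu$ a nonnegative sum of simple coroots, we have $\langle\mu,\lambda\rangle\le\langle\weights_x,\lambda\rangle$ for every dominant weight $\lambda$; applying this to the highest weights of the irreducible summands of $V^\ast$ gives $-\min_w\langle\mu,w\rangle\le-\min_w\langle\weights_x,w\rangle=\{\weights\}_x$. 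Thus the pole of $V(f)$ at $x$ has order at most $\{\weights\}_x$, which is exactly the order of vanishing of $\iota$ there, so $\varphi$ is regular at $x$. (This is precisely why $\{\weights\}_x$ was defined to be minus the lowest weight of $\weights_x$ on $V$.)

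Now the three conditions. Over an open subset of the complement of the support of $\weights$, after trivializing $\alpha_1$ and $\alpha_2$ and using that $\iota$ is an isomorphism there, $\varphi$ induces the section $(V(g),1)$ of $\End V\oplus k$ with $g\in G$, which lies in the affine cone on $\overline G$; this is condition (1). In a punctured formal neighbourhood of a point $x$ of the support of $\weights$, trivializing $\mathcal O_X(\{\weights\})$ via $\iota$ (an isomorphism away from $x$) makes $\varphi$ induce the section $(V(g),1)$ with $g\in G\pseries{t}$ a local expression of $f$, whose image in the affine Grassmannian lies in $\overline{\Gr_{\weights_x}}$, the closure of $\Gr_{\weights_x}$; hence this section lies in the closure of $\{(\lambda V(g'),\lambda):\lambda\in\mathbb G_m,\ g'\in\Gr_{\weights_x}\}$, which is condition (2). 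Finally, over $D$, where $\iota$ is an isomorphism, a direct computation with $t_2:=h\circ t_1\circ f|_D^{-1}$ gives
\[
V(t_2)\circ\varphi|_D\circ V(t_1)^{-1}=V\bigl(t_2\circ f|_D\circ t_1^{-1}\bigr)=V(h)\in V(H)\subseteq\End V\lWR\mathcal O_D\rWR,
\]
so the induced element is $(V(h),1)=(\lambda h',\lambda)$ with $\lambda=1$, $h'=h$ (and using an arbitrary trivialization over $D$ only rescales $\lambda$ by a unit), which is condition (3).

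These verifications show $j(\alpha_1,t_1,\alpha_2,f,h)\in\overline{\Hecke}_{G(D),H,\weights,V}$, and since forming $t_2$, forming $\varphi$, and all the restrictions are functorial in the base scheme, $j$ is a morphism of stacks. The main obstacle is the pole bound of the second paragraph: matching the definition of $\{\weights\}_x$ to the worst pole occurring along the closed cell $\overline{\Gr_{\weights_x}}$ via the dominance inequalities for $V$. Everything else is a direct unwinding of the definitions.
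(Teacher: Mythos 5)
Your proposal is correct and follows essentially the same route as the paper: take the stated formula as the definition of $j$, show that $\varphi$ extends to a global homomorphism $V(\alpha_1)\to V(\alpha_2)\otimes\mathcal O_X(\{\weights\})$ by bounding its pole at each point of the support of $\weights$ by $\{\weights\}_x$, and then verify conditions (1)--(3) by unwinding the definitions, with (3) reducing to $t_2\circ f|_D\circ t_1^{-1}=h\in H$. The only difference is in the justification of the pole bound on the closed cell: the paper deduces it from the open cell by lower semicontinuity of pole order, whereas you stratify the closure by the cells $\Gr_\mu$ with $\mu\le\weights_x$ and use monotonicity of the pairing with dominant weights of $V^\ast$ in the dominance order --- both arguments are valid.
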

 
 \begin{proof} First we show that $\varphi$ is in fact a homomorphism from $V(\alpha_1)$ 
 to $V(\alpha_2) \otimes \mathcal O_X(\{\weights\})$ defined everywhere on $X$. This is 
 clear away from the support of $\weights$, where $f$ is an isomorphism. In a formal 
 neighborhood of each point $x$ in the support of $\weights$, for $f$ whose associated 
 point of $G\pseries{t}$ is in the Schubert cell corresponding to $\weights_x$, the order 
 of the pole of $V(f)$ is at most 
 $\{\weights\}_x$, by definition of $\{\weights\}$. For $f$ whose associated point 
 of $G\pseries{t}$ is in the closure of the Schubert cell, because the pole order is a 
 lower semicontinuous function, the order of the pole is also at most 
 $\{\weights\}_x$, and so it becomes a homomorphism after we tensor with $\mathcal 
 O(\{\weights\})$.
 
 Next we show that $\varphi$ satisfies the local conditions (1), (2), and (3) of the 
 definition of $ \Hom (V (\alpha_1),V(\alpha_2) \otimes \mathcal O_X(\{\weights\})) $. It 
 satisfies condition (1) because $f$ is an isomorphism away from the support of 
 $\weights$, condition (2) because $f$ is in the closure of the correct cell of the 
 affine Grassmannian near points in the support of $\weights$, and condition (3) because 
 over $D$, we have $t_2 \circ f|_D \circ t_1^{-1} = h \in H$. \end{proof}
 
Let $\overline{\Delta}^\weights: \overline{ \Hecke}_{G(D),H,\weights,V} \to 
  \BunGD \times \BunGD$ send $(\alpha_1,t_1,\alpha_2,t_2,\varphi)$ to 
  $((\alpha_1,t_1),(\alpha_2,t_2))$.

\begin{lemma}\label{compactification-semisimple} The map $\overline{\Delta}^\weights$ is 
projective and $\overline{\Delta}^\weights \circ j = \Delta^{\weights}$. \end{lemma}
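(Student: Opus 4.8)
The identity $\overline{\Delta}^{\weights}\circ j=\Delta^{\weights}$ I would settle in one line by unwinding the definitions. By Lemma~\ref{j-semisimple}, $j$ sends $(\alpha_1,t_1,\alpha_2,f,h)$ to the triple $\bigl((\alpha_1,t_1),(\alpha_2,h\circ t_1\circ f|_D^{-1}),\varphi\bigr)$, and $\overline{\Delta}^{\weights}$ then forgets $\varphi$, returning the pair $\bigl((\alpha_1,t_1),(\alpha_2,h\circ t_1\circ f|_D^{-1})\bigr)$; but this pair is exactly the value of $\Delta^{\weights}$ on $(\alpha_1,t_1,\alpha_2,f,h)$, by the definition of $\Delta^{\weights}$.

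The real content is projectivity of $\overline{\Delta}^{\weights}$, and the plan is to exhibit $\overline{\Hecke}_{G(D),H,\weights,V}$ as a closed substack of a relative projective space over $\BunGD\times\BunGD$. Write $\pi\colon \mathcal{X}=X\times(\BunGD\times\BunGD)\to\BunGD\times\BunGD$ for the universal curve, carrying the two universal $G$-bundles $\alpha_1,\alpha_2$ together with their trivializations along $D$; then $\mathcal{F}:=\mHom\bigl(V(\alpha_1),V(\alpha_2)\otimes\mathcal{O}_X(\{\weights\})\bigr)\oplus\mathcal{O}_X$ is a vector bundle on $\mathcal{X}$, flat over the base. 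Dropping conditions (1)--(3), the moduli problem of tuples $(\alpha_1,t_1,\alpha_2,t_2,\varphi)$ with $\bigl((\alpha_1,t_1),(\alpha_2,t_2)\bigr)$ a point of $\BunGD\times\BunGD$ and $\varphi$ a section up to scaling of the corresponding sheaf is precisely the relative projectivization $\mathbb{P}_{\BunGD\times\BunGD}(\pi_*\mathcal{F})$, a schematic morphism to $\BunGD\times\BunGD$; and $\overline{\Delta}^{\weights}$ is, by construction, the composite of the inclusion of $\overline{\Hecke}_{G(D),H,\weights,V}$ into this space with the structure morphism. Next I would observe that conditions (1)--(3) are closed conditions on $\varphi$: for (1), the affine cone over $\overline{G}\subseteq\mathbb{P}(\End V\oplus k)$ is a closed subscheme of $\End V\oplus k$, so demanding the induced local section land there cuts out a closed locus; and (2) and (3) are by construction the demands that $\varphi$ lie in the closure of a specified locus in the relevant (truncated) loop space, hence closed. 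Thus $\overline{\Hecke}_{G(D),H,\weights,V}$ is a closed substack of $\mathbb{P}_{\BunGD\times\BunGD}(\pi_*\mathcal{F})$, and since a closed immersion followed by a projective morphism is projective, it remains only to see that $\mathbb{P}_{\BunGD\times\BunGD}(\pi_*\mathcal{F})\to\BunGD\times\BunGD$ is projective.

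The only non-formal point, and the step I expect to require genuine care, is that $\pi_*\mathcal{F}$ need not be locally free and its formation need not commute with base change, so ``$\mathbb{P}(\pi_*\mathcal{F})$'' and its moduli interpretation need justification. I would handle this by using that projectivity is local on the target: over any quasi-compact open substack $S_0\subseteq\BunGD\times\BunGD$ the degrees of $V(\alpha_1)$ and $V(\alpha_2)$ are bounded, so after a sufficiently large twist $\mathcal{F}\mapsto\mathcal{F}\otimes\pi^*\mathcal{A}^{\otimes N}$ by (the pullback of) an ample line bundle $\mathcal{A}$ on $X$ one has $R^1\pi_*$ of the twist vanishing over $S_0$, hence $\pi_*$ of the twist locally free on $S_0$ with formation compatible with base change; the moduli of ``section of $\mathcal{F}$ up to scaling'' is unchanged by the twist (compose with the tautological section of $\mathcal{A}^{\otimes N}$), so over $S_0$ we are reduced to the ordinary projectivization of a vector bundle, which is manifestly projective over $S_0$. (Alternatively, one can establish properness of $\overline{\Delta}^{\weights}$ directly by the valuative criterion --- over a discrete valuation ring the global sections of the flat sheaf $\mathcal{F}$ form a free module, so a section up to scaling over the generic fibre saturates to a unique one over the whole ring, and the closed conditions (1)--(3), holding at the generic point, propagate to the closed point --- and then combine properness with the quasi-projectivity furnished by the embedding constructed above.) Assembling these observations gives that $\overline{\Delta}^{\weights}$ is a projective morphism.
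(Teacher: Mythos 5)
Your proof is correct and is essentially the paper's argument: the paper likewise disposes of $\overline{\Delta}^{\weights}\circ j=\Delta^{\weights}$ by unwinding the definitions, and proves projectivity by observing that $\overline{\Hecke}_{G(D),H,\weights,V}$ sits inside the projective bundle $\mathbb P\bigl(\Hom(V(\alpha_1),V(\alpha_2)\otimes\mathcal O_X(\{\weights\}))\oplus k\bigr)$ over $\BunGD\times\BunGD$ as the locus cut out by the three closed conditions, hence is proper over the base. Your extra discussion of why this "projective bundle" is legitimate (non-local-freeness of the pushforward) goes beyond what the paper says and is welcome, but one sentence in it is imprecise: after twisting by a large power of an ample line bundle pulled back from $X$ (note the slip: this pullback is along the projection to $X$, not along $\pi$), the moduli of sections up to scaling is \emph{not} unchanged --- multiplication by the chosen section $s_0$ of the twisting bundle is injective but not surjective on global sections, so the original space embeds as the closed linear substack of sections divisible by $s_0$ (equivalently, killed by restriction to the zero locus of $s_0$). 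That closed embedding into an honest projective bundle over each quasi-compact open still gives exactly what you need, so the argument stands after this rewording; your valuative-criterion alternative is also fine.
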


\begin{proof} The first claim follows immediately from Lemma \ref{projectivization-coherent-sheaf} because the graph of 
$\overline{\Delta}^\weights$ is defined as a subset of a projective bundle consisting of 
triples satisfying three closed conditions, and thus is a closed subset, hence projective.  
The second claim follows because $\overline{\Delta}^\weights \circ j $ sends $(\alpha_1, 
t_1, \alpha_2, f, h)$ to  $((\alpha_1,t_1), (\alpha_2, h\circ t_1 \circ f|D^{-1} ))$ which is precisely 
the 
definition of $\Delta^{\weights}$. \end{proof}
 
 \begin{lemma}\label{jopen-semisimple} $j$ is an open immersion, and its image is the 
 locus in $\overline{ \Hecke}_{G(D),H,\weights,V}$ where $\varphi  \in \Hom 
 (V(\alpha_1),V(\alpha_2) \otimes \mathcal O_X(\{\weights\}) ) \subseteq \mathbb P (\Hom 
 (V(\alpha_1),V(\alpha_2) \otimes \mathcal O_X(\{\weights\}) )\oplus k)$.
  \end{lemma}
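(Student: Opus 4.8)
The plan is to construct an explicit inverse to $j$ on the open locus $\cU\subseteq\overline{\Hecke}_{G(D),H,\weights,V}$ where $\varphi$ lies in the affine chart $\Hom(V(\alpha_1),V(\alpha_2)\otimes\cO_X(\{\weights\}))$ of the projective bundle. That $\cU$ is open is immediate, being the preimage of the affine chart of a projective bundle over $\BunGD\times\BunGD$; and $j$ factors through $\cU$ because the section $\varphi$ produced in Lemma~\ref{j-semisimple} is an honest homomorphism of vector bundles, i.e.\ has last coordinate the constant function $1$. So the task is to reconstruct, functorially, the data $(\alpha_1,t_1,\alpha_2,f,h)$ from a five-tuple $(\alpha_1,t_1,\alpha_2,t_2,\varphi)\in\cU$.

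The observation that makes this easy for conditions (1) and (3) is that for semisimple $G$ the affine chart of $\overline{G}$ has no boundary: $\det\circ\,V\colon G\to\G_m$ is a character of a semisimple group, hence trivial, so $G\subseteq\SL(V)$; since $\SL(V)$ is closed in $\End V$ and $G$ is closed in $\GL(V)$, $G$ is closed in $\End V$, whence $\overline{G}\cap\End V=G$. The same computation after Weil restriction gives $\overline{H}\cap\End V\lWR\cO_D\rWR=H$, where $\overline{H}$ is the closure of $H$. Therefore condition (1), read in the affine chart, says precisely that over $X-\operatorname{supp}(\weights)$ the local matrix of $\varphi$ takes values in $G$; as $\operatorname{supp}(\{\weights\})=\operatorname{supp}(\weights)$ for semisimple $G$ (a nontrivial cocharacter into $\SL(V)$ has a strictly negative weight on the faithful $V$), this means $\varphi$ restricts to an isomorphism of $G$-bundles $f\colon\alpha_1\isom\alpha_2$ over $X-\operatorname{supp}(\weights)$. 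Likewise condition (3) in the affine chart says that $h:=V(t_2)\circ\varphi|_D\circ V(t_1)^{-1}$ — which makes sense and is invertible since $D$ is disjoint from $\operatorname{supp}(\weights)$, a priori a point of $\End V\lWR\cO_D\rWR$ — in fact lies in $H$. Finally, condition (2) in the affine chart is exactly the requirement of Definition~\ref{d:Hecke-G(D)} that near each $x\in\operatorname{supp}(\weights)$ the modification $f$ lie in the closed Bruhat cell $\overline{\operatorname{Gr}_{\weights_x}}$: once the last coordinate is normalized to $1$, the loop-space closure of $\{(\lambda V(g),\lambda)\}$ becomes the image of $\overline{\operatorname{Gr}_{\weights_x}}$ under $g'\mapsto V(g')$, because $\overline{\operatorname{Gr}_{\weights_x}}$ is proper (so its image is closed) and contains the open cell densely, and because $g'$ is recovered from $V(g')$ via the closed immersion $\operatorname{Gr}_G\hookrightarrow\operatorname{Gr}_{\GL(V)}$ attached to the faithful $V$ — it is here that the hypothesis $\langle\text{root},\text{weight of }V\rangle<p$ is used, to guarantee $\overline{\operatorname{Gr}_{\weights_x}}$ has its expected structure. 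Altogether $(\alpha_1,t_1,\alpha_2,f,h)$ is a well-defined point of $\Hecke_{G(D),\weights}\times H$ depending algebraically on the five-tuple, so we obtain a morphism $k\colon\cU\to\Hecke_{G(D),\weights}\times H$.

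It remains to see that $k$ and $j$ are mutually inverse, which is a matter of bookkeeping. For $k\circ j=\mathrm{id}$: over $X-\operatorname{supp}(\weights)$ the section produced by $j$ is literally $V(f)$, so $k$ recovers $f$, and $V(t_2)\circ(V(f)|_D)\circ V(t_1)^{-1}=V(h)$ recovers $h$ by faithfulness of $V$. For $j\circ k=\mathrm{id}$: given $(\alpha_1,t_1,\alpha_2,t_2,\varphi)\in\cU$ with recovered data $(\alpha_1,t_1,\alpha_2,f,h)$, the identity $h=V(t_2)\circ\varphi|_D\circ V(t_1)^{-1}$ together with $\varphi|_D=V(f|_D)$ and faithfulness of $V$ gives $t_2=h\circ t_1\circ f|_D^{-1}$, which is exactly the trivialization in the formula of Lemma~\ref{j-semisimple}, while the homomorphism part of $j$'s output, $V(f)$ tensored with $\cO_X\hookrightarrow\cO_X(\{\weights\})$, agrees with $\varphi$ over $X-\operatorname{supp}(\weights)$ and, by the affine-chart reading of condition (2), on the formal disks about $\operatorname{supp}(\weights)$ — hence everywhere. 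Thus $j\colon\Hecke_{G(D),\weights}\times H\to\cU$ is an isomorphism, so $j$ is an open immersion with image $\cU$. The one step with genuine content — and where I expect to spend the most effort — is the affine-chart interpretation of condition (2): reconciling the loop-space closure in the definition of $\overline{\Hecke}_{G(D),H,\weights,V}$ with the ``closed Bruhat cell'' condition defining $\Hecke_{G(D),\weights}$, for which the reductions $\overline{G}\cap\End V=G$ and $\overline{H}\cap\End V\lWR\cO_D\rWR=H$ are of no help and one must actually use the geometry of the affine Grassmannian together with the characteristic hypothesis.
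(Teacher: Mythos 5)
Your proposal is correct and takes essentially the same route as the paper: openness of the affine chart, then an explicit inverse obtained by reading conditions (1)--(3) in that chart, using that $G$ (hence $H\subseteq G\lWR \cO_D\rWR$ after Weil restriction) is closed in $\End V$, and that faithfulness of $V$ transfers the closed-Bruhat-cell condition from $V(f)$ back to $f$. One small correction: the hypothesis that pairings of roots with weights of $V$ are less than $p$ is not needed for this lemma (the paper's proof never invokes it); it is used elsewhere, e.g.\ in Lemma~\ref{parabolic-construction} and Lemma~\ref{smooth-semisimple-algebraic}.
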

 
 \begin{proof} By construction, a point in the image of $j$ has $\varphi$ contained in 
 $\Hom (V(\alpha_1),V(\alpha_2) \otimes \mathcal O_X(\{\weights\}) )$. The subset where $\varphi  \in \Hom 
 (V(\alpha_1),V(\alpha_2) \otimes \mathcal O_X(\{\weights\}) )$ is the inverse image of a standard affine open chart of projective space, and thus is an open subset $U$ of $\overline{ \Hecke}_{G(D),H,\weights,V}$. Hence, to prove that $j$ is an open immersion 
 whose image is $U$, it suffices to find an inverse of $j$ over this $U$.
 
Fix a point $(\alpha_1,t_1,\alpha_2,t_2,\varphi) \in U$ and an open set away from the support of $\weights$ where 
$\alpha_1$ and $\alpha_2$ can be trivialized, so that $ \Hom (V(\alpha_1),V(\alpha_2) 
\otimes \mathcal O_X(\{\weights\}) ) = \End V \otimes \mathcal O_X$. Using this isomorphism, we can view the section $\varphi$ as a map from the curve $X$ to the vector space $\End V$. By definition, its image must lie in  $\End V \cap 
\overline{G} $. Because $\End V \cap \overline{G} = G$, we can view $\varphi$ as a map from $X$ to $G$. Remembering the trivialization, $\varphi$ defines an isomorphism of $G$-bundles $\alpha_1 
\to \alpha_2$.

Because changing the two trivializations acts on $\End V$ by left and right multiplication by $G$, this isomorphism does not depend on the choice of trivialization. Thus it glues to a 
global isomorphism away from $\weights$. Hence we obtain an isomorphism $f: \alpha_1 \to 
\alpha_2$ as $G$-bundles away from $\weights$. By assumption we know that $\varphi$, when viewed as a point in the 
 formal loop space $(\End V \oplus k ) \pseries{t}$, is in the closure of the set of 
 pairs $(\lambda V(g),\lambda)$ where $\lambda \in \mathbb G_m$ and $g \in G\pseries{t}$ 
 is in the Schubert cell associated to $W_x$. Because $\varphi = ( V(f),1)$ and $V$ is 
 faithful, this implies that $f$, when viewed as a point in $G \pseries{t}$, it is in the 
 closure of the Schubert cell associated to $\weights_x$, hence modulo $G 
 \bseries{t}$, it is in the closure of the cell of the affine Grassmannian associated to 
 $\weights_x$.

Over $D$, $t_2 \circ f \circ t^{-1}$ lies in the closure of the set of points $(h\lambda, 
\lambda)$ for $h \in H$. Because the last coordinate is nonzero, we may fix it to equal 
$1$, and thus take $\lambda=1$, so it lies in the closure of $H$ inside $\End V \lWR 
\mathcal O_D \rWR$. Because $H$ is a closed subgroup of $G \lWR \mathcal O_D \rWR$, which 
is closed in $\End V \lWR \mathcal O_D \rWR$,  in fact $t_2 \circ f \circ t_1^{-1}$ lies 
in $H$, so we may take $h$ to be $t_2 \circ f \circ t_1^{-1}$. 

Verifying that this is an inverse is a routine calculation.
 \end{proof} 
 
 \begin{lemma}\label{schematic-affine} The map $\Delta^{\weights}$ is schematic and affine. \end{lemma}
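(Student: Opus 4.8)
The plan is to exploit the factorization $\Delta^{\weights} = \overline{\Delta}^{\weights} \circ j$ supplied by Lemma~\ref{compactification-semisimple}, in which $j$ is an open immersion (Lemma~\ref{jopen-semisimple}) and $\overline{\Delta}^{\weights}$ is projective (Lemma~\ref{compactification-semisimple}). Schematicity is then immediate: an open immersion is schematic, a projective morphism is schematic, and schematic (i.e. representable-by-schemes) morphisms compose, so $\Delta^{\weights}$ is schematic.

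For affineness I would, using Lemma~\ref{jopen-semisimple}, identify via $j$ the source $\Hecke_{G(D),\weights} \times H$ with the open substack $\mathcal{U} \subseteq \overline{\Hecke}_{G(D),H,\weights,V}$ on which the projective datum $\varphi$ avoids the hyperplane at infinity, i.e. lies in $\Hom(V(\alpha_1),V(\alpha_2)\otimes \mathcal{O}_X(\{\weights\}))$; under this identification $\Delta^{\weights}$ becomes $\overline{\Delta}^{\weights}|_{\mathcal{U}}$, so it suffices to show that morphism is affine. First I would introduce the auxiliary stack $\mathcal{A} \to \BunGD \times \BunGD$ whose $S$-points are a pair $(\alpha_1,t_1),(\alpha_2,t_2)$ of objects of $\BunGD(S)$ together with a section $\varphi \in H^0\!\bigl(X\times S,\,\mHom(V(\alpha_1),V(\alpha_2)) \otimes \mathcal{O}_X(\{\weights\})\bigr)$ — that is, $\mathcal{A}$ is the total space, relative to the family, of the global-sections functor of a vector bundle on $X$. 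This functor is representable and affine over the base: it is $\underline{\mathrm{Spec}}$ of the symmetric algebra of a coherent sheaf on the base (the standard linear $\mathrm{Hom}$-scheme, EGA~III~7.7.6), or, concretely, on each quasi-compact open one twists by a large extra divisor to get an honest affine-space bundle and then imposes the closed condition of having no pole there. Then $\mathcal{U}$ is the closed substack of $\mathcal{A}$ cut out by conditions (1)--(3) in the definition of $\overline{\Hecke}_{G(D),H,\weights,V}$ — these are closed conditions on $\varphi$, the normalization ``last coordinate $=1$'' being automatic on $\mathcal{A}$ — so $\mathcal{U} \hookrightarrow \mathcal{A}$ is a closed immersion, hence $\mathcal{U} \to \BunGD \times \BunGD$ is affine and therefore so is $\Delta^{\weights}$.

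The main obstacle is precisely the affineness of $\mathcal{A} \to \BunGD \times \BunGD$: the sheaf $\mHom(V(\alpha_1),V(\alpha_2)) \otimes \mathcal{O}_X(\{\weights\})$ need not have vanishing $H^1$ uniformly over $\BunGD \times \BunGD$, so its global sections do not form a vector bundle on the base, and one must argue either through the general representability of linear $\mathrm{Hom}$-functors (which still produces a scheme affine over the base, namely $\underline{\mathrm{Spec}}$ of the symmetric algebra of a possibly non-locally-free coherent sheaf) or through the twist-and-impose-no-pole reduction above, which is legitimate since affineness is local on the target. An alternative route, avoiding $\mathcal{A}$ and leaning more directly on Lemma~\ref{compactification-semisimple}, is to observe that the complement of $\mathcal{U}$ in $\overline{\Hecke}_{G(D),H,\weights,V}$ is the zero locus of the ``constant coordinate'' of $\varphi$, a section of the line bundle obtained by restricting $\mathcal{O}(1)$ along the projective morphism $\overline{\Delta}^{\weights}$, hence a relatively ample line bundle; the complement of the zero locus of a relatively ample section, in a scheme projective over a base, is affine over that base. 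I would ultimately present whichever of these two arguments dovetails most cleanly with the construction of $\overline{\Hecke}_{G(D),H,\weights,V}$ used in the proof of Lemma~\ref{compactification-semisimple}.
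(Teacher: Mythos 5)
Your proposal is correct and follows essentially the same route as the paper: factor $\Delta^{\weights}=\overline{\Delta}^{\weights}\circ j$, note that $\overline{\Hecke}_{G(D),H,\weights,V}$ sits as a closed substack of the projective bundle $\mathbb P\bigl(\Hom(V(\alpha_1),V(\alpha_2)\otimes\mathcal O_X(\{\weights\}))\oplus k\bigr)$ over $\BunGD\times\BunGD$, and conclude affineness because $j$ is exactly the complement of the hyperplane at infinity inside a projective morphism. Your discussion of the representability/affineness of the linear $\Hom$-functor (via $\underline{\mathrm{Spec}}\,\mathrm{Sym}$ or the twist-and-no-pole reduction) and of the relatively ample section cutting out that hyperplane simply makes explicit a point the paper's proof treats implicitly, so no further changes are needed.
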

 
 \begin{proof}  By Lemma~\ref{compactification-semisimple} and~\ref{jopen-semisimple}, this map is the composition of the open immersion $j$ with the projective morphism $\overline{\Delta}^{\weights}$. Moreover, this open immersion is the complement of the hyperplane $ \mathbb P (\Hom 
 (V(\alpha_1),V(\alpha_2) \otimes \mathcal O_X(\{\weights\}) ))$ inside  $\mathbb P (\Hom 
 (V(\alpha_1),V(\alpha_2) \otimes \mathcal O_X(\{\weights\}) )\oplus k)$. Thus, by Lemma \ref{projectivization-coherent-sheaf}, 
 $\Delta^{\weights}$ is a hyperplane complement in a projective morphism, so it is 
 affine.\end{proof}

\begin{remark}
Throughout 
Section~\ref{s:cleanness}, we do not need the full formalism of \'{e}tale cohomology on 
stacks. This is because the relevant morphisms are schematic morphisms between Artin 
stacks, so we can define $\Delta^{\weights}_!$ and $\Delta^{\weights}_*$ smooth-locally 
as derived pushforwards with respect to morphisms of schemes.
\end{remark}
  
If $G = \SL_n$ and $V$ is the standard representation, we can classify the points 
of $\overline{ \Hecke}_{G(D),H,\weights,V}$ according to the generic rank of $\varphi$. For each rank, we can consider the maximal parabolic 
subgroup that preserves the kernel of $\varphi$, and its unipotent radical, elements of 
which fix $\varphi$ when acting by composition on the right. Sections of this unipotent 
radical act as local automorphisms of $\overline{\Hecke}_{G, H, \weights, V}$. These 
automorphisms can be used to show the vanishing of $ j_*  (IC_{\Hecke_{G(D),H}} \boxtimes 
\mathcal L)$ at these points. In the general group case, we will replace the study of the 
rank with the orbits in 
$\overline{G}$ of the joint  left and right  action of $G \times G$. We describe these 
orbits using the standard theory of reductive groups in the next subsection.

 \begin{remark} If $G$ is adjoint and $V$ is an irreducible representation whose highest weight is regular, i.e., not fixed by any 
 nontrivial 
 element of the Weyl group, then $\overline{G}$ is isomorphic to 
 ``wonderful compactification" of $G$. We expect in this case that 
 $\overline{\Hecke}_{G(0),H,1,V}$ is very close to the Drinfeld--Lafforgue--Vinberg 
 compactification of $\Bun_G$ as defined by Schieder~\cite{Schieder14}, which is closely connected to the wonderful compactification. Our proof 
 uses heavily the explicit representation $V$ as a form of coordinates, but it seems 
 plausible that a ``coordinate-free" proof of the same result can be obtained using the 
 abstract theory of the wonderful and Drinfeld--Lafforgue--Vinberg compactifications. 
 
However, for our proof, there is no reason to choose $V$ to be the representation 
associated to a regular weight. If we instead choose a representation like the standard 
representation (for $G$ a classical group), the compactification we use, and other 
concepts involved like the height, admit particularly simple descriptions. The reader may 
wish to follow along with the case $G=\Sp_{2g}$ in mind, say. \end{remark}
 
 \subsection{Lemmas on semisimple groups}\label{lemmas-on-semisimple}
Let $G$ be a split semisimple group, $V$ a faithful representation over $k$, and fix a 
split maximal torus $T$ of $G$.

 \begin{lemma}\label{closure-characterization} Any point in $\overline{G} - G  \subseteq 
 \mathbb P(\End V \oplus k)$ can be expressed as $(g_1 e g_2, 0)$ where $g_1, g_2 \in G$ 
 and $e$ is the idempotent projector onto the sum of eigenspaces of $T$ whose weights lie 
 in some proper face of the convex hull of the weights of $V$. \end{lemma}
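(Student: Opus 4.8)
The plan is to realise every point of $\overline{G}-G$ as the limit of a one-parameter family in $G$ written in Cartan normal form, and then read the limit off explicitly in coordinates. As a preliminary remark, $\overline{G}-G$ is contained in the hyperplane at infinity $\mathbb{P}(\End V)\subseteq\mathbb{P}(\End V\oplus k)$: since $G$ is semisimple it has no nontrivial characters, so $g\mapsto\det(V(g))$ is identically $1$ on $G$ and hence on the closure of $G$ in $\End V$; that closure therefore lies in $\GL(V)$, in which $G$ is already closed (as $V$ is faithful), so the affine part of $\overline{G}$ equals $G$. The argument below reproves this in passing.

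Let $P=[A:0]$ be a point of $\overline{G}-G$. Since $\overline{G}$ is a closed subscheme of a projective space it is proper, so by the valuative criterion, using $k=\overline{k}$, there is a morphism $\Spec k\bseries{s}\to\overline{G}$ whose generic point lands in the dense open $G$ and whose closed point is $P$; concretely this is an element $g\in G(k\pseries{s})$ such that the induced $k\pseries{s}$-point of $\mathbb{P}(\End V\oplus k)$ extends over $k\bseries{s}$ with closed fibre $P$. Because $G$ is split, the Cartan decomposition over $k\pseries{s}$ lets me write $g=g_1\,\mu(s)\,g_2$ with $g_1,g_2\in G(k\bseries{s})$ and $\mu\in X_*(T)$ dominant, $\mu(s)$ denoting $\mu$ evaluated at the uniformiser $s$.

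Now I compute the limit in a basis of $T$-weight vectors of $V$. Writing $m=\min_w\langle\mu,w\rangle$ over the weights $w$ of $V$, the matrix of $\mu(s)$ is diagonal with entries $s^{\langle\mu,w\rangle}$, so $s^{-m}\mu(s)$ tends, as $s\to0$, to the projector $e_\mu$ onto $\bigoplus_{\langle\mu,w\rangle=m}V_w$. Rewriting the point as $[\,s^{-m}g_1\mu(s)g_2:s^{-m}\,]$, the numerator $g_1(s)\,(s^{-m}\mu(s))\,g_2(s)$ reduces modulo $s$ to $g_1(0)\,e_\mu\,g_2(0)$, which is nonzero since $g_1(0),g_2(0)\in G(k)\subseteq\GL(V)$ are invertible and $e_\mu\ne0$; in particular the least valuation among the entries of $g_1\mu(s)g_2$ is exactly $m$, so no cancellation occurs. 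If $\mu=0$ then $g\in G(k\bseries{s})$ and the limit lies in $G$, contradicting $P\notin G$; hence $\mu\ne0$. As $V$ is faithful, the composite of $\mu$ with $V$ is then a nontrivial cocharacter of $\GL(V)$, so not all $\langle\mu,w\rangle$ vanish, while the triviality of the character $\det V$ of the semisimple group $G$ gives $\sum_w(\dim V_w)\langle\mu,w\rangle=0$; hence $m<0$. Therefore $s^{-m}\to0$ and the closed fibre is $[\,g_1(0)\,e_\mu\,g_2(0):0\,]$, so $A=g_1(0)\,e_\mu\,g_2(0)$ up to a scalar, with $g_1(0),g_2(0)\in G$.

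It remains to identify $e_\mu$. The set $\{w:\langle\mu,w\rangle=m\}$ is exactly the set of weights of $V$ lying on the face of the convex hull of the weights of $V$ on which $\langle\mu,-\rangle$ attains its minimum, and this is a proper face precisely because $\langle\mu,-\rangle$ is non-constant on the weights, which is what $m<0$ together with $\sum_w(\dim V_w)\langle\mu,w\rangle=0$ tells us. Thus $e_\mu$ is the idempotent projector onto the sum of the $T$-eigenspaces whose weights lie in a proper face of the convex hull, as claimed. The step that really needs attention is the ``no cancellation'' assertion making the valuation computation exact --- which is exactly where the invertibility of $g_1(0)$ and $g_2(0)$ is used --- together with the bookkeeping of invoking the valuative criterion and the Cartan decomposition over $k\pseries{s}$ in the form stated.
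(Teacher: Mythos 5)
Your proof is correct and follows essentially the same route as the paper's: the valuative criterion of properness, the Cartan decomposition $g=g_1\mu(s)g_2$ over $k\pseries{s}$, and the explicit computation of the limit of $\mu(s)$ in projective coordinates after rescaling by $s^{-m}$, identifying the limiting projector with a proper face of the convex hull of the weights. The only difference is that you spell out details the paper leaves implicit (the no-cancellation step via invertibility of $g_1(0),g_2(0)$, and the deduction $m<0$ from faithfulness of $V$ and triviality of $\det V$), which is a welcome but not essentially new addition.
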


 \begin{example}
Let us provide some examples of what these idempotent projectors look like:

 (i) Let $G = \SL_n$ and let $V$ be the standard representation. Then the weights of $V$ 
 are $n$ linearly independent vectors, forming the vertices of an ($n-1$)-simplex. Hence 
 any nonempty proper subset of the weights is the set of weights lying in some proper face of the convex hull. Thus any 
 diagonal matrix with all diagonal entries $0$ and $1$, not all $1$ and not all $0$, is 
 such an $e$. 

(ii) Let $G = \Sp_{2g}$ and let $V$ be the standard representation. Then the weights of $V$ 
are the vectors with one entry $\pm1$ and the rest $0$ in $\mathbb Z^g$. The convex 
polytope this forms is a cross-polytope, whose proper faces are all simplices. The weights 
lying in a face form a subset $S$ of these vectors, such that for any $v \in S$, $-v\not\in S$. Thus $e$ is an idempotent projector onto an isotropic subspace, whose kernel contains a maximal isotropic subspace.

(iii) Let $G=G_2$ and let $V$ be the unique seven-dimensional irreducible representation. 
Then the weights of $V$ form the six vertices and center of a hexagon. The proper faces 
consist of either one vertex or two adjacent vertices, so the sum of the eigenspaces is a 
subspace of dimension one or two. These subspaces are isotropic under the $G_2$-invariant 
quadratic form on $V$ and the two-dimensional subspaces are sent to zero by the unique 
$G_2$-equivariant map $\wedge^2 V \to V$ (as the product of their eigenvalues under $T$ 
is not a weight of $V$). 
\end{example}
 
 In all the above examples,  the stabilizer of the  sum of the eigenspaces of $T$ whose 
 weights 
 lie in a proper face is a maximal parabolic subgroup of $G$. We will later prove that 
 it is always a parabolic subgroup, but it need not be maximal --- for instance when 
 $G=\SL_n$ and $V$ is the adjoint representation, it need not be maximal for $n \geq 3$.

 \begin{proof}[Proof of Lemma~\ref{closure-characterization}] First note that any point $x$ of the closure of $G$ is the limit as $t$ goes to $0$ of a 
 $k'\pseries{t}$-valued point of $G$ for some field $k'$. To see this, choose a generic linear subspace $L$ of dimension $1+ \dim \End V - \dim G$ containing $x$. By genericity,  $L \cap \overline{G}$ has dimension $1$ and $L \cap (\overline{G} \setminus G )$ has dimension $0$. The normalization of $L \cap \overline{G}$ is a smooth curve mapping to $\overline{G}$ whose image contains $x$ but all but finitely many points of which map to $G$, and choosing a local coordinate at some point mapping to $x$ gives the desired $k'\bseries{t}$-valued point.

 By the Bruhat decomposition, any 
 such point can be written as $g_1(t) \chi(t) g_2(t)$ where $g_1,g_2$ are 
 $k'\bseries{t}$-valued points of $G$ and $\chi$ is a cocharacter of $T$. Now $\chi(t)$ 
 converges as $t$ goes to $0$ to a point $\chi(0) \in \mathbb P(\End V\oplus k)$, and because 
 the left and right group actions are continuous, $g_1(t) \chi(t) g_2(t)$ converges as 
 $t$ goes to $0$ to $g_1(0) \chi(0) g_2(0)$. 
 
 If $\chi$ is trivial, then $\chi(0)$ is the identity element and this limit is in $G$.
 
 Otherwise, in an eigenbasis, $\chi(t)$ is a diagonal matrix whose entries are integer powers of $t$, where the integer power appearing is a linear function of the weight. The projective coordinates for $\chi(t)$ are the entries of this matrix plus an additional $1$. Because $\chi$ is nontrivial, not all these exponents are $0$, 
 and because $G$ is semisimple, the sum of the exponents vanishes, so some are negative 
 and some are positive. To calculate the limit in projective space as $t$ goes to $0$, we first divide each coordinate by the minimal power of $t$ that appears, making each coordinate a nonnegative power of $t$, and then set $t=0$, making each coordinate $1$ or $0$. The $1$s occur exactly on the diagonal entries corresponding to eigenspaces with minimal exponent. Thus $\chi(0)$ is the idempotent projector $e$ onto the sum of eigenspaces of $T$ with minimal exponent. These are the eigenspaces where some nontrivial linear function of the weights is minimized, i.e. some proper face of the convex hull of the weights.
 
 The last coefficient of 
 $\chi(0)$ is $0$, so 
 multiplying on the left by $g_1(0)$ and the right by $g_2(0)$ we obtain $(g_1 e g_2, 0)$.\end{proof}

 Fix a proper face of the convex hull of the weights of $V$, and take the idempotent 
 projector $e$, so that $\im(e)$ is the sum of 
 the $T$-eigenspaces whose weights lie on that face and $ \ker(e)$ is the sum of the 
 $T$-eigenspaces whose weights do not lie on that face.

 Associated to a point in the $\overline{G}-G$ is a natural parabolic subgroup, the stabilizer of $\ker(e)$ (as we will see below, in Lemma \ref{parabolic-construction}). A key useful property is that its unipotent radical acts trivially on $e$ (Lemma \ref{levi-quotient}). In \S\ref{s:vanishing-cusp}, we will define a height function so that, at points of large height, there are many global automorphisms of the $G$-bundle $\alpha_1$ that lie in the unipotent radical. We will then exploit these extra symmetries. Thus, we will define our height function using this particular parabolic subgroup.
 
\begin{lemma}\label{parabolic-construction}
The stabilizer of $\ker(e)$ is a parabolic subgroup of $G$, and this 
  stabilizer remains smooth after lifting $G$ and $V$ to the Witt vectors of $k$.  
\end{lemma}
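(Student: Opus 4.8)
The plan is to realize $\ker(e)$ as one step of a one-parameter filtration of $V$, exhibit the associated dynamical parabolic inside the stabilizer, and then deduce smoothness from an explicit computation of the Lie algebra of the stabilizer using the standing hypotheses on $V$. First I would produce a cocharacter adapted to the face: the fixed proper face $F$ of the convex hull of $\mathrm{wt}(V)$ is the set of weights on which some linear functional is minimized, and since the functionals with this property form a nonempty open cone in $X_*(T)_{\mathbb R}$ we may choose $\lambda\in X_*(T)$ in it, so that $F=\{\,w\in\mathrm{wt}(V):\langle\lambda,w\rangle=m_0\,\}$ with $m_0=\min_w\langle\lambda,w\rangle$ and $\langle\lambda,w\rangle>m_0$ for all other weights of $V$. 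Then $\ker(e)=\bigoplus_{\langle\lambda,w\rangle>m_0}V_w$ is the $(>m_0)$-step of the $\lambda$-weight filtration of $V$. The dynamical parabolic $P(\lambda)\subseteq G$ — those $g$ for which $\lim_{s\to0}\lambda(s)\,g\,\lambda(s)^{-1}$ exists — is a smooth parabolic subgroup, and it preserves every step of the $\lambda$-weight filtration on any representation, in particular $\ker(e)$; hence $P(\lambda)\subseteq\mathrm{Stab}_G(\ker e)$. In particular $\mathrm{Stab}_G(\ker e)$ contains a Borel subgroup, so its reduced subscheme $P':=\mathrm{Stab}_G(\ker e)_{\mathrm{red}}$ is a parabolic subgroup.

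It then remains to prove that $\mathrm{Stab}_G(\ker e)$ is smooth, i.e.\ that $\dim\mathrm{Lie}\,\mathrm{Stab}_G(\ker e)=\dim P'$, for then $\mathrm{Stab}_G(\ker e)=P'$ is the asserted parabolic. Since $\ker(e)$ is $T$-stable, $\mathrm{Lie}\,\mathrm{Stab}_G(\ker e)$ is the infinitesimal stabilizer $\{\,Y\in\mathfrak g:Y(\ker e)\subseteq\ker e\,\}=\mathfrak t\oplus\bigoplus_{\beta\in S}\mathfrak g_\beta$, where $S$ is the set of roots $\beta$ whose root space carries $\ker(e)$ into itself, whereas $\dim P'=\dim T+\#\{\beta:U_\beta\subseteq P'\}$. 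I would show that these two sets of roots coincide. The inclusion $\{\beta:U_\beta\subseteq P'\}\subseteq S$ is immediate, since $U_\beta\subseteq P'\subseteq\mathrm{Stab}_G(\ker e)$ forces $\mathfrak g_\beta=\mathrm{Lie}(U_\beta)\subseteq\mathrm{Lie}\,\mathrm{Stab}_G(\ker e)$. For the reverse, suppose a nonzero $X\in\mathfrak g_\beta$ maps $\ker(e)$ into $\ker(e)$; then so does each power $X^j$. Here the hypotheses enter: because the pairing of every weight of $V$ with $\beta^\vee$ is less than $p$ in absolute value, no $\beta$-string of weights in $V$ spans $p\beta$ or more, so $U_\beta$ acts on $V$ through matrices that are polynomials in the parameter of $U_\beta$ whose coefficients are, up to the invertible scalars $1/j!$ with $0\le j<p$, the powers $X^j$; each of these therefore carries $\ker(e)$ into itself, so $U_\beta\subseteq\mathrm{Stab}_G(\ker e)$, hence (as $U_\beta$ is smooth) $U_\beta\subseteq P'$. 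This gives $S=\{\beta:U_\beta\subseteq P'\}$, the dimension equality, and thus smoothness of $\mathrm{Stab}_G(\ker e)$ over $k$.

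For the statement over the Witt vectors $W(k)$: the split group $G$ is defined over $W(k)$, by hypothesis $V$ lifts to a representation over $W(k)$, and hence the maximal torus, its weights with multiplicities, the face $F$, the idempotent $e$ and the subspace $\ker(e)$ all lift. The stabilizer $\mathrm{Stab}_{G_{W(k)}}(\ker e)$ is then a closed subgroup scheme — cut out in $G_{W(k)}$ as the zero locus of the evident section of $\mHom(\ker e,V/\ker e)$ — so its formation commutes with base change; its special fiber is thus the smooth $k$-group just analyzed, and its generic fiber, being in characteristic zero and containing the dynamical parabolic $P(\lambda)$, is again a smooth parabolic, of the same dimension $d_0=\dim P'$ (the combinatorics defining $S$ and $P'$ depends only on the characteristic-independent root datum and the weights of $V$). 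A routine flatness check together with the fiberwise criterion for smoothness then gives smoothness of $\mathrm{Stab}_{G_{W(k)}}(\ker e)$ over $W(k)$.

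I expect the one delicate point to be the root-by-root identification $S=\{\beta:U_\beta\subseteq P'\}$ of the second paragraph — equivalently, verifying that the scheme-theoretic stabilizer acquires no spurious infinitesimal directions in characteristic $p$ — which is precisely where the bound on pairings of weights with coroots (and the liftability of $V$) is needed. The remaining inputs, namely that $P(\lambda)$ is a smooth parabolic preserving the weight filtration and that a reduced subgroup containing a Borel is parabolic, are standard and can be drawn from the theory of reductive group schemes.
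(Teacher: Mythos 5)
Your characteristic-$p$ analysis is essentially sound, and the way you integrate a root vector $X\in\mathfrak g_\beta$ preserving $\ker(e)$ to its whole root group via the divided powers $X^j/j!$, $0\le j<p$ (using the lift of $V$ and the bound on pairings to force $X^p=0$) is a legitimate alternative to the paper's $\mathfrak{sl}_2$-computation for proving that the scheme-theoretic stabilizer over $k$ is smooth. Two omissions in that part: you never show the stabilizer is a \emph{proper} subgroup --- the paper does, by observing that the sum of the weights of any subrepresentation of the semisimple $G$ vanishes, whereas the sum of the weights occurring in $\ker(e)$ pairs nontrivially with the linear form defining the face and so is nonzero --- and properness is what later applications need (one takes the unipotent radical of this parabolic); and the ``routine flatness check'' at the end is asserted rather than argued.

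The genuine gap is the parenthetical claim that the special and generic fibers of $\mathrm{Stab}_{G_{W(k)}}(\ker e)$ have the same dimension because ``the combinatorics defining $S$ depends only on the characteristic-independent root datum and the weights of $V$.'' Membership of $\beta$ in $S$ is not a combinatorial condition on weights: it requires certain matrix entries of $X_\beta$, from weight spaces off the face to weight spaces on the face, to vanish, and such entries can vanish mod $p$ while being nonzero in characteristic zero. (For $G=SL_2$, $V$ the adjoint representation, $p=2$, and $e$ the projector onto the highest-weight line, the raising operator preserves $\ker(e)$ mod $2$ but not over $\mathbb{Q}$; this violates the standing hypothesis, but it shows the claim is not formal and that the hypothesis must actually be used at this point.) A priori you only get $S_0\subseteq S$, where $S_0$ is the characteristic-zero analogue; if the inclusion were strict, the special fiber would be a strictly larger parabolic than the generic fiber --- smoothness of both fibers does not exclude this, since fiber dimension can jump up at the closed point --- and then the stabilizer over $W(k)$ would not even be flat. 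Ruling out such a jump is precisely the content of the paper's argument (if $J^+$ stabilizes $\ker(e)$ mod $p$ but not in characteristic zero, then $J^+\im(e)=0$ and $J^-\ker(e)\subseteq\ker(e)$, so all eigenvalues of the coroot $[J^+,J^-]$ on $\im(e)$ are divisible by $p$, hence zero by the pairing bound, and $\mathfrak{sl}_2$-theory gives a contradiction). You need this, or an equivalent proof that $S=S_0$, before your fiberwise criterion applies; granted that, the flatness can indeed be made routine (the schematic closure of the generic fiber is flat with $d_0$-dimensional special fiber inside the integral $d_0$-dimensional special fiber of the stabilizer, hence coincides with it, and Nakayama then kills the torsion ideal).
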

 
 \begin{proof} Let $S$ be the stabilizer of $\ker(e)$, viewed as a group scheme over the Witt vectors $W(k)$. We first check that $S_{W(k)[1/p]} $ is a parabolic 
 subgroup. It suffices to check that $S_{W(k)[1/p]} $ is proper and contains a Borel subgroup.
 
 By Lemma \ref{closure-characterization}, there exists a linear form $\omega$ on the weight space such that $e$ is the idempotent projector onto the eigenspaces of weights that maximize $\omega$.
 
The stabilizer $S_{W(k)[1/p]} $ is 
 proper because the weights of $\im(e)$ are the weights maximzing $\omega$, so the sum of $\omega$ over the weights 
 of $\im(e)$ is positive, and thus the sum of $\omega$ over the weights of $\ker(e)$ is negative, 
 which is 
 impossible if $\ker(e)$ is a representation of $G$. Thus $\ker(e)$ is not $G$-stable and 
 so its 
 stabilizer $S_{W(k)[1/p]} $ is proper.
  
 To show that $S_{W(k)[1/p]}$ contains a Borel, note that the linear form $\omega$ is in some Weyl chamber of the dual to weight space. With regards 
 to the ordering induced by that Weyl chamber, $\omega$ takes nonnegative values 
 on all the simple roots, hence takes nonnegative values on all the positive roots.  
 Hence the set of weights of $V$ where $\omega$ takes its maximal value is closed 
 under addition of positive roots, and the complement of this set is closed under 
 addition of negative roots.  Therefore $\ker(e)$ is closed under the lowering operators 
 and 
 thus stable under the opposite Borel.
 
To show that $S$ is smooth over $W(k)$, and thus remains 
parabolic in characteristic $p$, it suffices to check that the cotangent space of $S$ at the identity is $p$-torsion 
free, in other words that every element of the Lie algebra of $S$ 
in 
characteristic $p$ is the reduction mod $p$ of an element in the Lie algebra of $S$ in 
characteristic zero. Because $\ker(e)$ is $T$-invariant, the Lie algebra of $S$ is 
a sum of $T$-eigenspaces, and so it is sufficient to check this for raising operators 
associated to roots. Let $J^+$ be the raising operator associated to a root and let $J^-$ 
be the lowering operator associated to the opposite root.

Suppose that $J^+$ does not 
stabilize $\ker(e)$ in characteristic zero but does in characteristic $p$. Because $J^+$ 
does not stabilize, it raises $\omega$, so $J^+\im(e)=0$,  and $J^-$ lowers $\omega$ so $J^- 
\operatorname{Im}(e) \subseteq \ker(e)$. Thus in characteristic $p$, $J^+ J^- \im(e) \subseteq J^+ 
\ker 
(e) \subseteq \ker(e)$, and $J^- J^+ \im(e) \subseteq J^- 0 = 0$, so $[J^+ , J^- ]  
\im(e) 
\subseteq \ker(e)$. Now $[J^+, J^-]$ is an element of the Lie algebra of the maximal 
torus. More precisely, $[J^+, J^-]$ is the coroot corresponding to $J^+$, so $\im(e)$ is a sum of eigenspaces of this 
coroot, and thus all the eigenvalues must be $0$ mod $p$. Because the eigenvalues are pairings of the coroot corresponding to $J^+$ with weights of $V$, and hence are integers 
at most $p$, they must be zero, by our assumption from the beginning of this section. Because $J^+ \im(e)=0$, all eigenvalues of $[J^+, J^-]$ 
on 
$\im(e)$ are highest weights of their corresponding representations, so  all irreducible 
representations of the $\mathfrak sl_2$ generated by $J^+, J^-$, and $[J^+,J^-]$ other 
than those contained in $\ker(e)$ have highest weight zero, hence are trivial, hence have 
$J^+$ vanish on them, which contradicts the assumption that $J^+$ does not stabilize 
$\ker(e)$ in characteristic zero.
 \end{proof}

 \begin{lemma}\label{levi-quotient} Let $P$ be the stabilizer of $\ker(e)$ and let $M$ be 
 its Levi subgroup.  The action of $P$ on $ V / \ker(e)$ factors through the projection 
 $P 
 \to M$. %
 \end{lemma}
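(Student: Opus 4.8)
The plan is to realize the unipotent radical $N$ of $P$ as a subgroup that is contracted by a one–parameter subgroup of $T$ under which $V/\ker(e)$ carries only a scalar action, and then to conclude by a standard limit argument. First I would fix, exactly as in the proofs of Lemma~\ref{closure-characterization} and Lemma~\ref{parabolic-construction}, a cocharacter $\lambda\colon \mathbb{G}_m \to T$ coming from an integral supporting functional of the chosen proper face, so that the face is precisely the set of weights $\mu$ of $V$ with $\langle\lambda,\mu\rangle = c$, where $c$ is the maximal value of $\langle\lambda,\cdot\rangle$ on the weights of $V$. Then $\im(e)=\bigoplus_{\langle\lambda,\mu\rangle=c} V_\mu$ and $\ker(e)=\bigoplus_{\langle\lambda,\mu\rangle<c} V_\mu$; in particular $\ker(e)$ is $T$-stable, and $\lambda(t)$ acts on the quotient $V/\ker(e)\cong \im(e)$ through the single scalar $t^c$.

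Next I would pin down the position of $N$ relative to $\lambda$. For any root $\alpha$ with $\langle\lambda,\alpha\rangle\le 0$ and any weight $\mu$ with $\langle\lambda,\mu\rangle<c$, one has $\langle\lambda,\mu+\alpha\rangle\le \langle\lambda,\mu\rangle<c$, so the root vector $X_\alpha$ carries $\ker(e)$ into itself, whence $\mathfrak{g}_\alpha\subseteq \operatorname{Lie}(P)$. Therefore $\operatorname{Lie}(P)$ contains the Lie algebra of the parabolic $P(-\lambda)$ attached to $-\lambda$, so $P(-\lambda)\subseteq P$. Since $P$ is parabolic by Lemma~\ref{parabolic-construction}, its unipotent radical $N$ is contained in the unipotent radical of $P(-\lambda)$, that is, $N$ lies in the set of $g$ with $\lim_{t\to\infty}\lambda(t)\,g\,\lambda(t)^{-1}=1$.

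Finally, for $n\in N$ and $v\in V$ with image $\bar v$ in $V/\ker(e)$, I would consider $g(t):=\lambda(t)\,n\,\lambda(t)^{-1}$, a morphism $\mathbb{G}_m\to G$ extending to $t=\infty$ with $g(\infty)=1$. From the identity $g(t)\lambda(t)=\lambda(t)n$ applied to $\bar v$, together with the fact that $\lambda(t)$ scales $V/\ker(e)$ by $t^c$, one gets $t^c\,g(t)\bar v=\lambda(t)\,n\bar v=t^c\,n\bar v$, hence $g(t)\bar v=n\bar v$ for all $t$; letting $t\to\infty$ gives $\bar v=g(\infty)\bar v=n\bar v$. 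Thus $N$ acts trivially on $V/\ker(e)$ and the $P$-action factors through $P/N=M$. The only genuinely delicate point is the sign: it is the opposite parabolic $P(-\lambda)$, not $P(\lambda)$, that sits inside $P$, precisely because $\ker(e)$ is the \emph{complement} of the top $\lambda$-graded piece of $V$ rather than a lower step of the $\lambda$-filtration. Everything else is formal and characteristic-free, the only input special to positive characteristic being the parabolicity of $P$, which we borrow from Lemma~\ref{parabolic-construction}.
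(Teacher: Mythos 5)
Your proof is correct, but it takes a genuinely different route from the paper's. The paper argues root by root: since $P$ contains the maximal torus, its Lie algebra and its unipotent radical are spanned by root operators, and if the operator of a root $\alpha$ acts nontrivially on $V/\ker(e)$ then two weights of the fixed face differ by $\alpha$, so $\alpha$ is parallel to the face, both $\pm\alpha$ lie in the stabilizer, and the corresponding root group sits in an $SL_2$ inside the Levi rather than in $N$; hence every generator of $N$ kills the quotient. You instead run a dynamic argument with a supporting cocharacter $\lambda$ of the face: $V/\ker(e)\cong\im(e)$ is a single $\lambda$-weight line of weight $c$, the containment $P\supseteq P(-\lambda)$ forces $N\subseteq U(-\lambda)$ to be contracted by $\lambda$-conjugation as $t\to\infty$, and the scalar action of $\lambda(t)$ on the quotient together with the specialization $g(t)\to 1$ gives triviality of $N$ on the quotient. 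Both arguments are characteristic-free once Lemma~\ref{parabolic-construction} is granted; yours replaces the paper's parallel-root/$\mathfrak{sl}_2$ observation by the standard $P(\lambda)$-formalism plus a limit argument, which is arguably more conceptual and makes the role of the face (a single $\lambda$-level) transparent, at the cost of invoking a bit more structure theory. Two small points worth making explicit: the passage from $\operatorname{Lie}(P(-\lambda))\subseteq\operatorname{Lie}(P)$ to $P(-\lambda)\subseteq P$ uses that $P$ is smooth and connected and contains $T$ (exactly what Lemma~\ref{parabolic-construction} provides), though you could bypass it by observing directly that each root group $U_\alpha$ with $\langle\lambda,\alpha\rangle\le 0$ maps $V_\mu$ into $\bigoplus_{k\ge 0}V_{\mu+k\alpha}$ and hence preserves $\ker(e)$; and in the final step one should note that the extension of $t\mapsto\lambda(t)n\lambda(t)^{-1}$ to $t=\infty$ lands in the closed subgroup $P$, so composing with the algebraic $P$-action on $V/\ker(e)$ and specializing the constant morphism $t\mapsto g(t)\bar v$ at $t=\infty$ is legitimate.
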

 
 \begin{proof} The set of weights in a proper face is the locus where some linear form $\omega$ on 
 the weight lattice takes its maximal value among the weights of $V$.
 
Because the subspace $\ker(e)$ is stable under the maximal torus, $P$ contains, 
and hence is normalized by, the maximal torus, so the Lie algebra of $P$ is generated by some 
subset of the raising and lowering operators corresponding to roots. The maximal unipotent subgroup of $P$ is generated by the operators corresponding to some further subset of 
the roots.
 
If the raising or lowering operator corresponding to some root $\alpha$ acts nontrivially on 
$V/ \ker(e)$, then there must be two weights maximizing $\omega$ that differ by $\alpha$, so we must have $\omega(\alpha)=0$, and thus the operator corresponding to 
$-\alpha$ is 
also in the stabilizer $P$, and hence the unipotent element corresponding to $\alpha$ is in some 
$SL_2$-triple and thus is not in the maximal unipotent subgroup of $P$.

Because no generator of the maximal unipotent subgroup of $P$ acts nontrivially on $V/ \ker (e)$,  the whole 
unipotent subgroup acts trivially, and so the action factors through $M$.\end{proof}

This statement will be useful to prove the smoothness of the Hecke correspondence later:

 \begin{lemma}\label{neighborhood-parabolic-structure} Let $e$ be the idempotent 
 projector onto the $T$-eigenspaces in some proper face of the convex hull of the 
 weights of $V$. Let $P$ be the parabolic subgroup of $G$ consisting of elements 
 stabilizing $\ker(e)$. Then
 \begin{enumerate}[(i)]
 \item   The natural map $\pi: G \to P \backslash G$ extends to a map $\pi'$ from an open 
 subset $U$ of $\overline{G}$ to  $P \backslash G$, such that $(e,0) \in U$ and $\pi'(e) = P 
 \in P\backslash G$. 
\item Let $\overline{P}$ be the projective closure of $P$ inside $\mathbb P(\End V\oplus k)$. 
Any element of $U$ sent to the identity under $\pi'$ lies in $\overline{P}$. 
\end{enumerate}\end{lemma}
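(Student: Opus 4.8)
The strategy is to build the extension $\pi'$ explicitly in coordinates using the representation $V$, exploiting the fact that $\overline{G} \subseteq \mathbb{P}(\operatorname{End} V \oplus k)$ and that $P\backslash G$ embeds in a Grassmannian (or a product of such) via the subspace $\ker(e)$. First I would observe that the quotient map $\pi\colon G \to P\backslash G$ can be realized concretely: $P\backslash G$ is the $G$-orbit of the subspace $\ker(e)$ inside the appropriate Grassmannian $\operatorname{Gr}(\dim\ker(e), V)$, and $\pi$ sends $g$ to $g^{-1}\ker(e)$ (or $\ker(e)\cdot g$, depending on the side convention fixed earlier). The point is that this map is given by $g \mapsto \ker(e)\circ V(g)$, i.e. by the image of the composite $V \xrightarrow{V(g)} V \to V/\ker(e)$, or dually by $V(g)^{-1}(\ker e)$ — a formula that visibly makes sense not just for $g \in G$ but for any $\varphi$ in $\operatorname{End} V$ (or its projectivization) for which the relevant rank is maximal, i.e. for which $\varphi^{-1}(\ker e)$ still has the correct dimension. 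So I would \emph{define} $U$ to be the locus of $\overline{G}$ where this rank condition holds — equivalently where $\varphi$ composed with the projection $V \to V/\ker(e)$ has the same rank as it does on $G$ — and define $\pi'$ by the same formula. This locus is open because rank is lower semicontinuous, and it contains $G$.

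The key verification is then that $(e,0) \in U$ and $\pi'(e) = P$. For this I would compute: $e$ is the idempotent onto $\operatorname{im}(e)$ with kernel $\ker(e)$, so $e$ composed with $V \to V/\ker(e)$ is precisely the isomorphism $\operatorname{im}(e) \xrightarrow{\sim} V/\ker(e)$ on the summand $\operatorname{im}(e)$ and zero on $\ker(e)$; its rank equals $\dim\operatorname{im}(e) = \dim V/\ker(e)$, which is the maximal possible, so $(e,0) \in U$. And $\pi'(e)$, computed as the preimage subspace, is exactly $\ker(e)$, whose stabilizer is $P$ by definition, so $\pi'(e)$ is the base point $P \in P\backslash G$ (here I am using Lemma~\ref{parabolic-construction} that $P$ is indeed a parabolic subgroup, hence that $P\backslash G$ is a projective variety into which this map lands). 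One should check the formula for $\pi'$ agrees with $\pi$ on $G$, which is immediate, and that $\pi'$ is a morphism of schemes on $U$ — this follows since the rank-stratification and the induced map to the Grassmannian are standard.

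For part (2): suppose $\varphi \in U$ with $\pi'(\varphi) = P$, the identity coset. By Lemma~\ref{closure-characterization}, if $\varphi \notin G$ then $\varphi = (g_1 e' g_2, 0)$ for some face idempotent $e'$ and $g_1, g_2 \in G$; if $\varphi \in G$ then $\varphi \in P$ already (since $\pi(\varphi) = P$ means $\varphi \in P$), which lies in $\overline{P}$, so we may assume $\varphi \notin G$. The condition $\pi'(\varphi) = P$ says the subspace cut out by $\varphi$ is $\ker(e)$ itself. Writing $\varphi = g_1 e' g_2$, the kernel of $e'$ (another sum of $T$-eigenspaces on a face) gets carried by the $g_i$'s to $\ker(e)$, and I would argue that this forces $g_1 e' g_2$ to preserve $\ker(e)$, hence to lie in the closure $\overline{P}$ of $P$. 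More carefully: $\varphi$ preserving $\ker(e)$ is a closed condition, it holds on all of $P$, and $\overline{P}$ is by definition the closure of $P$; conversely any point of $\overline{G}$ preserving $\ker(e)$ and mapping to the identity coset should be shown to lie in $\overline{P}$ by a valuative-criterion argument parallel to the proof of Lemma~\ref{closure-characterization}, writing $\varphi$ as a limit of points of $G$ that — because the limit preserves $\ker(e)$ and projects to $P$ — can be taken in $P$ itself (using Bruhat decomposition \emph{within} $P$ rather than $G$).

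\textbf{Main obstacle.} The delicate point is part (2): showing that preserving $\ker(e)$ (a \emph{linear-algebraic} condition on $\varphi \in \overline{G}$) together with $\pi'(\varphi) = P$ actually forces membership in the scheme-theoretic closure $\overline{P}$, rather than just in the set of points of $\overline{G}$ stabilizing $\ker(e)$ — these could a priori differ. I expect this requires either redoing the valuative-criterion/Bruhat argument of Lemma~\ref{closure-characterization} relative to $P$ (noting that the cocharacters appearing, when $\varphi$ stabilizes $\ker(e)$, are cocharacters whose associated limits stay in $\overline{P}$), or an argument that $\overline{P} \subseteq \overline{G}$ is cut out near $(e,0)$ precisely by the equations "$\varphi(\ker e) \subseteq \ker e$". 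The openness of $U$, the morphism property of $\pi'$, and the computation $\pi'(e)=P$ I expect to be routine once the formula for $\pi'$ in terms of $\ker(e)$ is set up correctly; the only care needed there is getting the left/right conventions consistent with the earlier definition of $\Hecke$ and $\Delta^{\weights}$.
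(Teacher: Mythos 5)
Your part (1) is essentially the paper's own proof: you define $U$ by the rank condition (equivalently $\rank(e\varphi)=\rank(e)$), map $U$ to the Grassmannian $\Gr(\dim\ker(e),\dim V)$ by $\varphi\mapsto\ker(e\varphi)=\varphi^{-1}(\ker e)$, and compute $\pi'(e)$ from $\ker(e^2)=\ker(e)$. The one point you pass over quickly — why the extended map actually factors through $P\backslash G$ — is exactly where the paper uses that $P\backslash G\hookrightarrow\Gr$ is a closed immersion (schematic stabilizer plus properness of $P\backslash G$) together with the density of $G$ in $U$; you should say this, but it is the same argument.

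Part (2), however, is not proved in your proposal, and you have correctly located the crux yourself: for $\varphi\in U$, the condition $\pi'(\varphi)=P$ only tells you that $\varphi$ stabilizes $\ker(e)$ as a point of $\overline{G}$, and that is a priori weaker than membership in the scheme-theoretic closure $\overline{P}$. Your attempted repair — writing $\varphi$ as a limit of points of $G$ which "can be taken in $P$ itself" — is circular as stated: being able to choose the approximating generic point inside $P$ is, by definition of closure, essentially the conclusion $\varphi\in\overline{P}$. The valuative idea can be salvaged, but only after a genuine extra step: take an $R$-point of $U$ ($R$ a DVR) with closed point $\varphi$ and generic point $g\in G(K)$; its image under $\pi'$ is an $R$-point of $P\backslash G$ with identity special fibre, which you lift to $h\in G(R)$ with $h(0)\in P$ using that $G\to P\backslash G$ is a Zariski-locally trivial $P$-bundle; then $gh_K^{-1}\in P(K)$, so the family obtained by right-translating by $h^{-1}$ lies generically in $P$, its special fibre $\varphi\,h(0)^{-1}$ lies in the closed set $\overline{P}$, and $\varphi\in\overline{P}$ since $\overline{P}$ is stable under right translation by $P$. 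The paper avoids valuations entirely and argues globally: it descends the multiplication map to $\gamma\colon P\backslash(\overline{P}\times G)\to\overline{G}$, which is proper with dense image and hence surjective; over $\gamma^{-1}(U)$ the composite with $\pi'$ agrees with the projection to $P\backslash G$ because the two agree on the dense open $G$ and $P\backslash G$ is separated; so any point of $U$ over the identity coset lifts to a point with second factor in $P$ and therefore lies in $\overline{P}$. Either route is fine, but one of them (or an argument that $\overline{P}$ is cut out near $(e,0)$ by the stabilizing equations) must actually be carried out; as written, your part (2) has a gap precisely at the step you flagged.
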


 \begin{proof} 
(i) Let $U \subseteq \overline{G} $ be the open subset consisting of $(x,\lambda) \in 
 \overline{G}$  where $\rank(ex) = \rank(e)$. There is a map $k$ from $U$ to the 
 Grassmannian  $\operatorname{Gr}( \dim \ker(e),\dim V) $ that sends $x$ to $\ker(ex)$.  
 Such a map is invariant under the left action of $P$, which by definition preserves 
 $\ker(e)$, so we have a commutative diagram
 
 \[\begin{tikzcd} G \arrow[r, "\pi"] \arrow[d] & P \backslash G \arrow[d,"i"] \\ U \arrow[r, "k" ] & \operatorname{Gr}( \dim \ker(e),\dim V) \\ \end{tikzcd} \]
 
 Because $P$ is the schematic stabilizer of the kernel of $e$, $i$ is an embedding, and because $P$ 
 is parabolic, $P \backslash G$ is proper, and so $i$ is a closed immersion. In particular, the image of $i$ is closed. Because $G$ is dense in $U$, the image of $k$ is contained in the image of $i$, so we can factor $k = i \circ \pi'$ for a unique map $\pi': U \to P \backslash G$. By commutativity, this extends $\pi$.
 
Because $e$ is idempotent, $\rank(e^2)=\rank(e)$, so by the definition of $U$, $(e,0)\in U$.
Furthermore $i \circ \pi'(e) = \ker(e^2) = \ker(e) = i (1)$, so because $i$ is injective, 
$\pi'(e)=1$. 
   
(ii) There is a map $m:  \overline{P} \times G \to \overline{G}$ defined by the embedding 
 $\overline{P} \subseteq \overline{G}$ and the right action of $G$ on $\overline{G}$.  Because $m$ is stable under the action of $p \in P$ on $\overline{P} \times G$ that sends $(x,g)$ to $(xp, p^{-1} g)$,  $m$ descends to a map $\gamma: P 
 \backslash (\overline{P} \times G ) \to \overline{G}$. Now $P \backslash (\overline{P} \times G) $ 
 is an $\overline{P}$-bundle on $P\backslash G$ and both of these are proper, so 
 $P \backslash (\overline{P} \times G) $ is proper.  Because the map to $\overline{G}$ is proper and 
 has dense image, it is surjective.  Let $U$ be the open subset of $\overline{G}$ on which 
 the map $\pi': U \to P \backslash G$ is defined. Then $\gamma^{-1}(U)$ admits two maps 
 to $P \backslash G$, the first given by $\pi' \circ \gamma$ and the second by projection to the second 
 factor, which agree on the dense subset $P \backslash (P \times G) = G$ and hence are 
 equal as $P \backslash G$ is separated.  Hence every point  that is sent to the identity 
 must be an element of $( \overline{P} \times G) / P$ with the second factor in $P$, in 
 other words an element of $\overline{P}$, as desired.
 \end{proof}
 
\begin{lemma}\label{V-existence} Let $G$ be a split semisimple algebraic group over a 
field $k$ of characteristic $p$. If $p>2$, then there exists a faithful representation $V$ of $G$ 
defined over $\mathbb Z$ such that the pairing of any weight of $V$ with any coroot 
of $G$ is less than $p$. 

If $p=2$, there exists such a representation if each nontrivial normal subgroup of $G$ has nontrivial 
center. \end{lemma}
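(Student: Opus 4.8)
The plan is to reduce to the case of a simple group and then build $V$ as a direct sum of small representations --- minuscule ones when available, and the adjoint representation otherwise --- controlling the pairings via the convex-hull description of weights. First I would reduce to $G$ simple: for $G=\prod_j G_j$ a faithful representation is obtained by pulling back a faithful representation of each $G_j$ along $G\to G_j$ and taking the direct sum, and the pairing hypothesis is checked one simple factor at a time, since the coroots of $G$ are exactly those of the individual factors. Note that the condition ``every simple factor of $G$ has nontrivial center'' is equivalent to ``no simple factor of $G$ is of adjoint type.''

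The key elementary input is the convex-hull bound: for an irreducible representation $V_\lambda$ with highest weight $\lambda$, every weight $\mu$ of $V_\lambda$ lies in the convex hull of the Weyl orbit $W\lambda$ (valid in all characteristics, for the Weyl module over $\Z$). Hence for any coroot $\alpha^\vee$ one has $\langle\mu,\alpha^\vee\rangle\le\max_{w\in W}\langle w\lambda,\alpha^\vee\rangle\le\langle\lambda,\theta^\vee\rangle$, where $\theta^\vee$ is the highest coroot; the last inequality holds because $\langle w\lambda,\alpha^\vee\rangle=\langle\lambda,w^{-1}\alpha^\vee\rangle$, $\lambda$ is dominant, and $\theta^\vee-\beta^\vee$ is a nonnegative combination of simple coroots for every positive coroot $\beta^\vee$. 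Since $\langle s_\alpha\mu,\alpha^\vee\rangle=-\langle\mu,\alpha^\vee\rangle$, it follows that $|\langle\mu,\alpha^\vee\rangle|\le\langle\lambda,\theta^\vee\rangle$ for all weights $\mu$ and coroots $\alpha^\vee$. Thus it suffices to find a faithful $V=\bigoplus_i V_{\lambda_i}$ with $\langle\lambda_i,\theta^\vee\rangle\le p-1$ for each $i$. As $\langle\omega_j,\theta^\vee\rangle$ is the $j$-th comark, a minuscule fundamental representation contributes $\langle\lambda_i,\theta^\vee\rangle=1$, while the adjoint representation, with highest weight the highest root $\theta$, contributes $\langle\theta,\theta^\vee\rangle=2$.

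Next I would split into cases. If $G$ is not of adjoint type, inspection of the classification shows that for each simple type among $A_n,B_n,C_n,D_n,E_6,E_7$ every nonzero class of the weight lattice modulo the root lattice has a minuscule representative; choosing minuscule dominant weights in $X^*(T)$ whose classes generate $X^*(T)/\Z\Phi$ and letting $V$ be the sum of the corresponding minuscule representations gives (once faithfulness is checked) a faithful representation all of whose weights pair with every coroot in $\{-1,0,1\}$, which works for every $p\ge 2$. If $G$ is of adjoint type, then $Z(G)$ is trivial, so the case $p=2$ does not occur, and for $p\ge 3$ I would take $V$ to be the adjoint representation with its Chevalley $\Z$-form, whose pairings are at most $2<p$; this also handles $E_8,F_4,G_2$, which admit no minuscule representation. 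This is also where sharpness lies: for adjoint $G$ every nonzero dominant weight is in the root lattice and therefore pairs with $\theta^\vee$ to at least $2$, so $V_\lambda$ always has a weight with a pairing $\ge 2$ and no faithful representation with all pairings $\le 1$ exists, which is why $p=2$ must be excluded.

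The step I expect to be the real obstacle is the verification of faithfulness in small characteristic. The pairing bound above is uniform and costs nothing, but the adjoint representation can fail to be faithful for an adjoint group in characteristics $2$ or $3$, and a sum of minuscule representations can fail to separate the infinitesimal part of $Z(G)$; in each case one must argue by a case analysis over the Dynkin diagram and the isogeny type, checking that the kernel of $G\to\GL(V)$ has trivial Lie algebra, hence is \'etale, hence trivial since $G$ is connected and $V$ is nonzero. When a single copy of the adjoint representation does not suffice one enlarges $V$ by another summand of the same kind, keeping all pairings $\le 2$. For $p=2$, faithfulness forces the minuscule summands to detect $Z(G)$, which is precisely where the assumption that each simple factor has nontrivial center enters.
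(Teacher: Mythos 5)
The decisive problem is your claim that the adjoint representation always has pairings at most $2$, i.e. that $\langle\theta,\theta^\vee\rangle=2$. This fails exactly when the Dynkin diagram has a triple bond: for $G_2$, with simple roots $\alpha_1$ short and $\alpha_2$ long, the highest root is $\theta=3\alpha_1+2\alpha_2$ and $\langle\theta,\alpha_1^\vee\rangle=3$, so the adjoint representation has a weight pairing to $3$ with a coroot (equivalently, $\langle\theta,\theta^\vee\rangle=3$ since the highest coroot is the coroot of the highest short root). Since $G_2$ has trivial center it falls into your adjoint-type branch, and at $p=3$ your chosen $V$ then has a pairing equal to $3$, violating the strict inequality $<p$ demanded by the lemma. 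The paper's proof flags exactly this point (the adjoint representation satisfies the bound if and only if the diagram has no edge of multiplicity greater than $2$) and replaces it, for $G_2$, by the seven-dimensional representation, whose weights are the short roots and $0$ and whose pairings are at most $2$; with that substitution your case division would go through. (For $F_4$, $E_8$ and the double-bond classical types your computation $\langle\theta,\theta^\vee\rangle=2$ is correct, and your convex-hull bound reducing everything to $\langle\lambda,\theta^\vee\rangle$ is fine.)

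The second gap is faithfulness, which you explicitly defer but whose sketched mechanism is also not sound as stated: showing the kernel of $G\to\GL(V)$ has trivial Lie algebra makes it \'etale, and an \'etale normal subgroup scheme of a connected group is central, not trivial; so you must still verify that the chosen summands separate the full scheme-theoretic center, and, for the intermediate isogeny types ($SL_n/\mu_d$, half-spin groups, etc.), that minuscule dominant weights lying in $X^*(T)$ and generating $X^*(T)/\Z\Phi$ really exist and do the job, including when the relevant $\mu_d$ is infinitesimal. The paper's bookkeeping avoids this case analysis: it reduces to the simply connected cover (a representation of the cover whose central character is pulled back from $G$ descends to $G$), and for each character of the center picks one representation with that central character and kernel contained in the center --- the adjoint representation (kernel equal to the center) for the trivial character and the minuscule representation for each nontrivial character --- so that summing over all central characters forces the kernel to be trivial; for $p=2$ the sum of all minuscule representations plays the same role under the nontrivial-center hypothesis. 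You would need either to adopt that argument or to actually carry out the promised case-by-case verification before your proof is complete.
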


\begin{proof} In fact, we will construct $V$ where all the pairings are at most $2$. We will construct it over $\mathbb Z$ as a sum of highest weight representations, and 
reduce modulo $p$.  It is sufficient to show that, for each character $\chi$ of the center $Z(G)$ of 
$G$, there exists such a representation $V_\chi$ whose central character is $\chi$ and 
whose kernel is contained in $Z(G)$. Then taking $V = \sum_\chi V_\chi$, the kernel of $V$ will be the 
intersection inside $Z(G)$ of the kernels of all characters of $Z(G)$, and hence be trivial.

For this statement, we can assume that $G$ is simply-connected, as any representation of 
the universal cover of $G$ with a central character pulled back from $G$ is in fact a 
representation of $G$ with the same central character.

For $G =G_1 \otimes G_2$, $V_1$ a representation of $G_1$ satisfying the condition on weights, and $V_2$ a representation of $G_2$ satisfying the condition on weights, $V_1 \otimes V_2$ satisifies the condition on weights. The same is true for the condition on kernels. Thus, expressing $G$ as a product of simple groups, we may assume that $G$ is simple. 

We will now check that, for each simply-connected simple $G$, and each character $\chi$, there exists such a representation $V_\chi$.

For the trivial character, the adjoint representation satisfies the pairing condition 
if and only if the Dynkin diagram has no edges of multiplicity greater than $2$, so we 
can use the adjoint representation for any group except $G_2$. Because the center of 
$G_2$ is trivial, we can use the seven-dimensional standard representation for $G_2$. 

It remains to handle the nontrivial characters. For any simple group, there exists a 
unique minuscule representation for each central character, and for any nontrivial 
character, the minuscule representation satisfies both conditions. Indeed, because it is 
not the trivial representation, its kernel is contained in the center, and because the 
Weyl group acts transitively on the weights (the definition of minuscule) the weights lie 
on a sphere, and so no three are collinear. But any weight whose pairing with a coroot is 
$k$ lies in a $k+1$-dimensional representation of the $SL_2$ containing the dual root, 
hence lies in a series of $k+1$ weights in a line, so we must have $ k \leq 1$. 

In the $p=2$ case, we can take $V$ to be the sum of all minuscule representations of $G$, 
which 
necessarily have all pairings $\leq 1$.  Because, for each character of the center of $G$, there exists a minuscule representation with that central character, $V$ is a faithful representation of the center of $G$. If $V$ were not a faithful representation of $G$, then some nontrivial normal subgroup of $G$ would act trivially on $V$. By assumption, this subgroup has nontrivial center, which also acts trivially, contradicting the faithfulness restricted to the center.  Thus $V$ is faithful.

\end{proof}

\subsection{Vanishing near the cusp}\label{s:vanishing-cusp}

Fix a point $(\alpha_1,t_1,\alpha_2,t_2,\varphi)$ of 
$\overline{\Hecke}_{G(D),H,\weights,V}$ not in the image of $j$. We will define, using this data, a parabolic subgroup $P$ of $G$ and a group scheme $\cP_{\alpha_1,\varphi}$ over $X$, locally isomorphic to $P$.

Let $U$ be an open subset of $X$ on which $\alpha_1$ and $\alpha_2$ are trivialization. Over $U$, we have $\Hom 
 (V(\alpha_1),V(\alpha_2) \otimes \mathcal O_X(\{\weights\}) )\oplus \cO_X \cong (\End V \oplus k) \otimes \mathcal O_X$. Restricting $\varphi$, which is a section of $\Hom 
 (V(\alpha_1),V(\alpha_2) \otimes \mathcal O_X(\{\weights\}) )\oplus \cO_X$ well-defined up to scaling, to $U$, we obtain a map $U \to  (\End V \oplus k)$ up to scaling, and hence a map $U \to \mathbb P (\End V \oplus k)$. By the definition of $\overline{\Hecke}_{G(D),H,\weights,V}$, this map has image in $\overline{G}$, and so we obtain a map $U \to \overline{G}$.  By Lemma \ref{jopen-semisimple}, the last coordinate of $\varphi$ vanishes, so this map has image contained in $\overline{G} \setminus G$. 
 
 By Lemma \ref{closure-characterization}, $\overline{G}\setminus G$ is a finite union of locally closed $G\times G$-orbits of the form $ G(e,0)G$. Because this union is finite, one must contain the image of an open subset $X_0 \subseteq U$. Let $e$ be this idempotent projector and let $P$ be the stabilizer of $\operatorname{\ker e}$, which by Lemma~\ref{parabolic-construction} is a 
 parabolic subgroup. Let $N$ be the unipotent radical of $P$.
 
 We note that $e$, and thus $P$, is independent of the choice of trivialization, since changing the trivializations would have the effect of multiplying on the left and right by maps $U \to G$, which preserves all $G \times G$ orbits.  
 
\begin{defi} The Grassmannian of $\dim (\ker e)$-dimensional subspaces of $V(\alpha_1)$ forms 
a fiber bundle over $X$. Over $U'$, $\ker \varphi$ defines a section of this bundle. Because the 
Grassmannian is proper, this extends to a section $s_\varphi$ over $X$. Let $\cP_{\alpha_1,\varphi} 
\subseteq \Aut(\alpha_1)$ be the group scheme over $X$ of automorphisms preserving $s_\varphi$. 
\end{defi} 
 \index{$\cP_{\alpha_1,\varphi}$, group scheme over $X$ locally conjugate to $P$}

We can check that $\cP_{\alpha_1,\varphi}$ is locally conjugate to $P$. The $G$-orbit of $\ker e$ 
inside the Grassmannian of $\dim (\ker e)$-dimensional subspaces of $V$ is isomorphic to $P 
\backslash G$, thus proper, hence closed. For any subset $U^*$ of $X$ on which $\alpha_1$ is 
trivialized, the image of $U^* \cap X_0$ under the section $\ker \varphi$ lies in $G \cdot (\ker e)$, 
so because $G \cdot (\ker e)$ is closed, the image of $s_{\varphi}$ is contained in $G \cdot (\ker e)$, 
and thus the stabilizer of $s_{\varphi}$ at any point is conjugate to the stabilizer $P$ of $\ker 
e$. 

\begin{defi} Let $\cN_{\alpha_1,\varphi}$ be the unipotent radical of $\cP_{\alpha_1,\varphi}$.   \index{$\cN_{\alpha_1,\varphi}$ } \end{defi} 

We can observe that the closed subset of the Grassmannian of subspaces of $V(\alpha_1)$ 
which locally under a trivialization is $G \cdot (\ker e)$ is 
\[( ( G \cdot (\ker e)) \times \alpha_1)/G = (P \backslash G \times \alpha_1)/G = P \backslash \alpha_1 \] and so the section $s_\varphi$ defines a reduction of $\alpha_1$ from a $G$-bundle to a $P$-bundle. Then $\mathcal P_{\alpha_1,\varphi}$ is the twist of $P$ by this $P$-bundle under the conjugation action of $P$, and similarly $\mathcal N_{\alpha_1,\varphi}$ is the twist of $N$.

\begin{lemma}\label{varphi-preserved} Let $\sigma $ be a section of $\cN_{\alpha_1, 
\varphi}$, viewed as an automorphism of $V ( \alpha_1)$. Then \[\varphi \circ \sigma = 
\varphi.\] \end{lemma}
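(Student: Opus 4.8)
The plan is to work locally on $X$ and exploit that, on the open subset $X_0 \subseteq X$ where the fiber-value of $\varphi$ lies in the chosen $(G\times G)$-orbit, the section $\sigma$ of $\cN_{\alpha_1,\varphi}$ acts on $V(\alpha_1)$ by an automorphism that stabilizes the relevant flag and, crucially, acts trivially on the quotient by $\ker(\varphi)$. First I would trivialize $\alpha_1$ and $\alpha_2$ on a Zariski-open $U \subseteq X_0$ disjoint from the supports of $W$ and $D$; then $\varphi|_U$ is, up to scaling, a section of $\End V$ whose value at each point of $U$ is $g_1 e g_2$ for the idempotent projector $e$ attached to the orbit, and $\cN_{\alpha_1,\varphi}|_U$ is conjugate to $N$, the unipotent radical of $P = \mathrm{Stab}(\ker e)$, via the same $G$-valued data that moves $\ker e$ to $\ker(\varphi)$. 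Then a section $\sigma$ of $\cN_{\alpha_1,\varphi}|_U$ is, pointwise, a conjugate of an element $n \in N$, and the identity $\varphi \circ \sigma = \varphi$ reduces to the statement that, writing $\varphi = g_1 e g_2$ with $\ker \varphi = g_2^{-1} \ker e$, we have $g_1 e g_2 \cdot (g_2^{-1} n g_2) = g_1 e g_2$, i.e.\ $e\, n\, g_2 = e\, g_2$ wherever $n$ fixes $\ker e$ pointwise modulo... — but more precisely, since $n \in N$ preserves $\ker e$, the composite $e n$ depends only on the induced map on $V/\ker e$, which by Lemma \ref{levi-quotient} factors through the Levi quotient $M$ on which $N$ acts trivially, so $e n = e$ on $V/\ker e$, hence $e n = e$ as maps $V \to \im e$, giving $\varphi \circ \sigma = \varphi$ on $U$.

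Next I would upgrade this from $U$ to all of $X$. Both $\varphi \circ \sigma$ and $\varphi$ are sections of the same coherent sheaf $\mHom(V(\alpha_1), V(\alpha_2)\otimes \mathcal O_X(\{W\})) \oplus \mathcal O_X$ (or rather the line in its space of global sections), and they agree on the dense open $U$; since this sheaf is torsion-free (it is locally free away from finitely many points, and in any case a subsheaf of a locally free sheaf on a smooth curve), two global sections agreeing on a dense open agree everywhere. One must take a little care at points in the support of $W$ — there $\varphi$ is only a section after twisting by $\mathcal O_X(\{W\})$ — but the argument is identical since $\sigma$ is an honest automorphism of $\alpha_1$ defined on all of $X$ and $\varphi$ is a genuine morphism of bundles (into the twist), so the equality of morphisms is again checked on a dense open and extended by torsion-freeness. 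Similarly at points of $D$ there is no issue since $\cN_{\alpha_1,\varphi}$ and $\varphi$ are both defined near $D$.

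The main obstacle, I expect, is the bookkeeping at the points of the support of $\{W\}$ (equivalently of $W$): there the pointwise identification of the fiber value of $\varphi$ with a point of $\overline G$ is only available after rescaling and after passing to the closure of a Bruhat cell, so one cannot naively say "$\varphi = g_1 e g_2$ at this point." The cleanest way around this is to avoid evaluating at those points altogether: establish $\varphi \circ \sigma = \varphi$ first on the complement of $\mathrm{supp}(W)$ inside $X_0$ (which is still dense in $X$), then invoke torsion-freeness of the Hom-sheaf to conclude globally, exactly as above. A secondary technical point is verifying that the section of $P\backslash\alpha_1$ used to define $\cP_{\alpha_1,\varphi}$ really does coincide with $\ker\varphi$ on $X_0$ and hence that $\cN_{\alpha_1,\varphi}$ is locally conjugate to $N$ in a way compatible with $\varphi$; this is built into the construction of $\cP_{\alpha_1,\varphi}$ preceding the lemma, so I would just cite it. With these in hand the proof is short: local trivialization, reduce to the group-theoretic identity $e n = e$ for $n \in N$ using Lemma \ref{levi-quotient}, then spread out by torsion-freeness.
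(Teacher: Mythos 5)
Your argument is correct and is essentially the paper's proof: reduce to the dense open $X_0$, trivialize so that $\varphi = g_1 e g_2$ and $\cN_{\alpha_1,\varphi} = g_2^{-1} N g_2$, and conclude from $e n = e$ for $n \in N$ via Lemma~\ref{levi-quotient}. The only (immaterial) difference is that you spread out from the dense open by torsion-freeness of the Hom-sheaf, whereas the paper simply notes that the equality $\varphi \circ \sigma = \varphi$ is a closed condition.
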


\begin{proof}  Because this equation is a closed condition, it suffices to check this 
over $X_0$, and to work locally. In particular, we may trivialize $\alpha_1$ and 
$\alpha_2$. Using that trivialization, from Lemma~\ref{closure-characterization}, $\varphi$ can be 
expressed as $g_1 e g_2$.  From the definition of $\cP_{\alpha_1,\varphi}$ and 
$\cN_{\alpha_1,\varphi}$, we see that $\cP_{\alpha_1, \varphi} =  g_2^{-1} P g_2$ and 
$\cN_{\alpha_1, \varphi} =  g_2^{-1} N g_2$. So it suffices to check that for $\sigma \in 
N$,   $ e \sigma = e$. Elements of $N$ certainly lie in $P$ and thus preserve the kernel 
of $e$, so to check $e \sigma = e$ it suffices to check that they act trivially on the 
quotient by this kernel, which is done in Lemma~\ref{levi-quotient}. \end{proof}

The global sections of $\cN_{\alpha_1, 
\varphi}$ will be crucial in our vanishing argument. We next give a reasonable criterion for there to be sufficiently many global sections of $\cN_{\alpha_1, 
\varphi}$, by writing it as an iterated extension of vector bundles, and then assuming those 
vector bundles have no low-degree quotient line bundle. The Riemann--Roch theorem then 
implies that there are enough sections in a precise sense --- see Lemma 
\ref{section-semisimple}.

Because $G$ is split, we may assume that $P$ is defined over $\mathbb Z$. 

\begin{defi} Let $N_{\mathbb Q} = N_{0,\mathbb Q} \supseteq N_{1,\mathbb Q} \supseteq N_{2,\mathbb Q} \supseteq \dots  \supseteq  N_{r,\mathbb Q} = 1$ be the derived series of $N_{\mathbb Q}$.  Let $N_{0,\mathbb Z} \supseteq N_{1,\mathbb Z} \supseteq N_{2,\mathbb Z}  \supseteq \dots  \supseteq  N_{r ,\mathbb Z}=1$ be their schematic closure in $N_{\mathbb Z}$, and let $N_0 \supseteq N_1 \supseteq N_2  \supseteq \dots  \supseteq  N_r =1$ be their reductions mod $p$.  \end{defi}\index{ $N_{i}$, filtration of a unipotent radical} 

\begin{lemma} For all $i$, $N_i$ is a smooth connected $P$-invariant subgroup of $N$, and 
$N_i/ N_{i+1}$ is isomorphic to a vector space (i.e., a power of $\mathbb G_a$), where 
the action of $P$ on $N_i / N_{i+1}$ is by vector space automorphisms. \end{lemma}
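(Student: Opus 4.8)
The plan is to reduce everything to the corresponding assertions in characteristic $0$, transporting them to characteristic $p$ by spreading out over $\mathbb{Z}$. The one elementary fact I will use repeatedly is: \emph{if $Z$ is an integral, $\mathbb{Z}$-flat scheme and $Z'\subseteq Z$ a closed subscheme whose generic fibre is all of $Z_{\mathbb{Q}}$, then $Z'=Z$}, since the generic point of $Z$ then lies on $Z'$ and $Z$ is irreducible. Since $G$ is split, $P$, $N$ and a split maximal torus $T\subseteq P$ are defined over $\mathbb{Z}$ (as noted before the definition, and $P$ is parabolic by Lemma~\ref{parabolic-construction}); I will also use the standard structure theory of pinned split reductive group schemes over $\mathbb{Z}$ (e.g. \cite{Conrad:reductive-gp-schemes}): for any $T$-stable closed set $\Psi$ of roots of $N$ and any enumeration of $\Psi$, the ordered product of root groups $\prod_{\alpha\in\Psi}U_{\alpha,\mathbb{Z}}$ is a $\mathbb{Z}$-flat closed subgroup scheme of $N_{\mathbb{Z}}$, isomorphic as a scheme to affine space $\mathbb{A}^{\lvert\Psi\rvert}_{\mathbb{Z}}$ and hence integral, and the multiplication is governed by the Chevalley commutator formula, whose structure constants are \emph{integers}.

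First I would describe the derived series over $\mathbb{Q}$ combinatorially. Because $N_{\mathbb{Q}}$ is generated by its root subgroups and the Chevalley formula sends $[U_\alpha,U_\beta]$ into the span of the $U_{i\alpha+j\beta}$ with $i,j\geq 1$, each term $N_{i,\mathbb{Q}}$ of the derived series is again generated by the root subgroups it contains; being also smooth, connected and $T$-stable, it is the direct span $N_{i,\mathbb{Q}}=\prod_{\alpha\in\Psi_i}U_{\alpha,\mathbb{Q}}$ for a decreasing chain of $T$-stable closed subsets $\Psi_i$, and $\Psi_{i+1}=\Psi_i\setminus\Psi_i^{\min}$ consists exactly of the \emph{non-minimal} roots of $\Psi_i$ (those that are sums of two roots of $\Psi_i$), since $N_{i+1,\mathbb{Q}}=[N_{i,\mathbb{Q}},N_{i,\mathbb{Q}}]$. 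Then $\prod_{\alpha\in\Psi_i}U_{\alpha,\mathbb{Z}}$ is a $\mathbb{Z}$-flat closed subscheme of $N_{\mathbb{Z}}$ with generic fibre $N_{i,\mathbb{Q}}$, so it coincides with the schematic closure $N_{i,\mathbb{Z}}$; consequently $N_i=N_{i,\mathbb{Z}}\otimes_{\mathbb{Z}}k\cong\mathbb{A}^{\lvert\Psi_i\rvert}_k$ is smooth and connected.

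Next, that $N_{i,\mathbb{Z}}$ and $N_{i+1,\mathbb{Z}}$ are closed subgroup schemes normalized by $P_{\mathbb{Z}}$ follows by applying the principle above to the restrictions of multiplication and inversion and of conjugation by $P_{\mathbb{Z}}$: the source schemes $N_{i,\mathbb{Z}}\times N_{i,\mathbb{Z}}$, $N_{i,\mathbb{Z}}$ and $P_{\mathbb{Z}}\times N_{i,\mathbb{Z}}$ are integral ($P_{\mathbb{Z}}$ being a smooth $\mathbb{Z}$-group scheme with geometrically connected fibres, hence regular and irreducible), while on generic fibres these maps land in $N_{i,\mathbb{Q}}$ because the derived series is characteristic in $N_{\mathbb{Q}}$; hence they land in $N_{i,\mathbb{Z}}$. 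This gives that $N_i$ and $N_{i+1}$ are $P$-invariant closed subgroups of $N$. For the quotient I would use the minimal-root-coordinate projection: in the coordinates $N_{i,\mathbb{Z}}\cong\prod_{\alpha\in\Psi_i}U_{\alpha,\mathbb{Z}}$, projection onto the factors indexed by $\Psi_i^{\min}$ is a homomorphism $N_{i,\mathbb{Z}}\to\mathbb{G}_{a,\mathbb{Z}}^{m_i}$ — it is a homomorphism because, by the Chevalley commutator formula, every correction term arising when a product is rewritten in standard order is supported on non-minimal root groups, i.e. on $N_{i+1,\mathbb{Z}}$ — surjective with kernel exactly $N_{i+1,\mathbb{Z}}=\prod_{\alpha\in\Psi_i\setminus\Psi_i^{\min}}U_{\alpha,\mathbb{Z}}$. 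Thus $N_{i,\mathbb{Z}}/N_{i+1,\mathbb{Z}}\cong\mathbb{G}_{a,\mathbb{Z}}^{m_i}$, and reducing mod $p$, $N_i/N_{i+1}\cong\mathbb{G}_{a,k}^{m_i}$ is a vector space. Finally, the $P$-action on $N_i/N_{i+1}$ is linear: the action morphism $P_{\mathbb{Z}}\times\mathbb{G}_{a,\mathbb{Z}}^{m_i}\to\mathbb{G}_{a,\mathbb{Z}}^{m_i}$ is given by polynomials $f_j\in\mathcal{O}(P_{\mathbb{Z}})[y_1,\dots,y_{m_i}]$ that are homogeneous linear after $\otimes_{\mathbb{Z}}\mathbb{Q}$ (over a field of characteristic $0$ every automorphism of $\mathbb{G}_a^{m_i}$ is linear), and since $\mathcal{O}(P_{\mathbb{Z}})$ is $\mathbb{Z}$-flat it injects into $\mathcal{O}(P_{\mathbb{Q}})$, forcing the non-linear coefficients of $f_j$ to vanish already over $\mathbb{Z}$, hence over $k$.

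The main obstacle I expect is the first step: extracting from the structure theory of split reductive group schemes over $\mathbb{Z}$ the precise facts that each term of the derived series of $N_{\mathbb{Q}}$ is a direct span of root groups, that its schematic closure over $\mathbb{Z}$ is the ``obvious'' root-group product $\mathbb{A}^{\lvert\Psi_i\rvert}_{\mathbb{Z}}$, and that the integrality of the Chevalley structure constants makes the minimal-coordinate projection a homomorphism over $\mathbb{Z}$. Once these are in place, the remaining steps are routine applications of the integral-scheme principle and of $\mathbb{Z}$-flatness of $\mathcal{O}(P_{\mathbb{Z}})$. It is worth emphasizing that defining $N_i$ as the reduction of the characteristic-$0$ derived series — rather than the derived series of $N$ over $k$ — is exactly what avoids small-characteristic pathologies (such as a successive quotient degenerating to a Witt-vector group): we never need to identify $N_i$ with a term of the derived series of $N$ itself.
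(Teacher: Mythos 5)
Your proof is correct and takes essentially the same route as the paper's: both work with the $\mathbb{Z}$-form and the ordered root-group product decomposition of the unipotent radical (citing Conrad) to identify each $N_{i,\mathbb{Z}}$ with the evident product of root groups, so that the reductions mod $p$ are visibly smooth and connected, and then deduce the subgroup, $P$-invariance, vector-group quotient, and linearity statements by verifying them over $\mathbb{Q}$ and specializing by flatness. Your write-up merely makes explicit, via the Chevalley commutator formula and the minimal-root projection, what the paper compresses into the remark that these are closed conditions holding over $\mathbb{Q}$ and hence mod $p$.
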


\begin{proof}We can verify all these facts by the theory of root groups.

Let $U$ be a maximal unipotent subgroup of $G$, defined over $\mathbb Z$, containing $N$. For 
each root $\alpha$ of $U$, there is a root group $U_\alpha$, a subgroup isomorphic to $\mathbb 
G_a$ over $\mathbb Z$, which in characteristic zero is the exponential of that 
root~\cite[Thm.4.1.4 and Def.4.2.3]{Conrad:reductive-gp-schemes}. (In general the root 
group may be a line bundle, but over $\mathbb Z$ the only line bundle is $\mathbb G_a$.) 
Moreover, $U$ is isomorphic as a scheme to the product of these root groups, with the 
isomorphism given by multiplication in the group law, for any fixed ordering of the roots 
\cite[Thm.5.1.13]{Conrad:reductive-gp-schemes}.

Choose an ordering where the roots not in $N_{\mathbb Q}$ are first, then the roots in $N_{0,\mathbb Q}$ but not in $N_{1,\mathbb Q}$, and so on, and use the induced isomorphism to a product of copies of $\mathbb G_a$ as coordinates on $U$. In this ordering, each of the closed subsets $N_{i,\mathbb Q}$ is defined by the vanishing of an initial segment of the coordinates. Hence their schematic closures, and the reductions mod $p$, are defined by the same equations. In particular, they are smooth and connected. The fact that these closed subsets are $P$-invariant, and are subgroups, can be expressed by algebraic equations and hence holds in the reduction mod $p$ because it holds over $\mathbb Q$. 

Because the commutator of two roots in $N_{i,\mathbb Q}$ necessarily lies in $N_{i+1,\mathbb Q}$, the group law on $N_{i,\mathbb Q}/ N_{i+1,\mathbb Q}$ is simply given by addition in our fixed coordinates, and thus the action of $P$ is linear in these coordinates. Because these are both closed conditions, they also hold modulo $p$. \end{proof}

\begin{defi}  Let $\cN_{\alpha_1,\varphi,i}$ be the subgroup of $\cN_{\alpha_1,\varphi}$ that is locally $P$-conjugate to $N_i$, which is well-defined since $N_i$ is a $P$-invariant subgroup of $N$.  \end{defi}

\begin{lemma}\label{N-filtration}   The quotient $\cN_{\alpha_1,\varphi,i}/ 
\cN_{\alpha_1,\varphi,i+1}$ is a vector bundle on $X$. \end{lemma}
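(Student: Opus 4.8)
The plan is to identify the chain $\cN_{\alpha_1,\varphi,i+1}\subseteq\cN_{\alpha_1,\varphi,i}\subseteq\cN_{\alpha_1,\varphi}$ as bundles associated to a single $P$-torsor on $X$, and then to recognize the quotient $\cN_{\alpha_1,\varphi,i}/\cN_{\alpha_1,\varphi,i+1}$ as the bundle associated to the linear representation $N_i/N_{i+1}$ of $P$, which is a vector bundle by the standard associated-bundle construction.

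First, I would note that the data of the $G$-bundle $\alpha_1$ together with the chosen global section of $P\backslash\alpha_1$ --- that is, a subbundle of $V(\alpha_1)$ everywhere $G$-conjugate to $\ker(e)$ --- is precisely a reduction of structure group of $\alpha_1$ to $P=\operatorname{Stab}_G(\ker e)$; let $\beta_1$ be the corresponding $P$-torsor on $X$, so that $\alpha_1=\beta_1\times^{P}G$. The section is a priori only defined over the open set $X_0$, but $P\backslash\alpha_1\to X$ is proper and $X$ is a smooth curve, so by the valuative criterion it extends over all of $X$; this is already implicit in the fact that $\cP_{\alpha_1,\varphi}$ is defined over all of $X$. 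Unwinding the definitions, $\cP_{\alpha_1,\varphi}=\beta_1\times^{P}P$ for the conjugation action of $P$ on itself, $\cN_{\alpha_1,\varphi}=\beta_1\times^{P}N$, and --- since each $N_i$ is a conjugation-invariant subgroup of $N$ by the preceding lemma --- $\cN_{\alpha_1,\varphi,i}=\beta_1\times^{P}N_i$, which agrees with the definition of $\cN_{\alpha_1,\varphi,i}$ as the subgroup of $\cN_{\alpha_1,\varphi}$ cut out by the $P$-invariant subgroup $N_i\subseteq N$.

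Then, since forming the fibrewise quotient of group schemes commutes with passing to the associated bundle, one gets $\cN_{\alpha_1,\varphi,i}/\cN_{\alpha_1,\varphi,i+1}=\beta_1\times^{P}(N_i/N_{i+1})$. By the preceding lemma $N_i/N_{i+1}$ is a power $\mathbb G_a^{d_i}$ of the additive group on which $P$ acts by vector space automorphisms, i.e.\ through a homomorphism $P\to\GL_{d_i}$; therefore $\beta_1\times^{P}\mathbb G_a^{d_i}$ is a vector bundle of rank $d_i$ on $X$, which is what we wanted.

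There is essentially nothing hard here: the content is entirely in the preceding lemma (the $P$-invariance of the $N_i$ and the linearity of the conjugation action on the successive quotients), and the remainder is bookkeeping with associated bundles. The one point deserving a sentence of care is the extension of the $P$-reduction $\beta_1$ from $X_0$ to all of $X$, which follows from properness of $P\backslash\alpha_1$ over the smooth curve $X$.
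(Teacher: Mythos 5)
Your proof is correct and is essentially the paper's argument: the paper's proof is the one-line observation that the claim follows from $N_i/N_{i+1}$ being a vector space on which $P$ acts linearly, which is exactly the associated-bundle ("twist by the $P$-torsor") mechanism you spell out. The extra care you take about extending the $P$-reduction from $X_0$ to all of $X$ is already built into the paper's construction of $\cP_{\alpha_1,\varphi}$, so nothing new is needed there.
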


\begin{proof} This follows from the fact that $N_i/ N_{i+1}$ is a vector space and $P$ acts by vector space automorphisms. \end{proof}

\begin{defi}\label{defi-height} Let the \emph{height} of $(\alpha_1,t_1, \alpha_2, 
t_2,\varphi)$ be minus the smallest degree of a line bundle which occurs as a quotient of 
any of the vector bundles $\cN_{\alpha_1,\varphi}$. \end{defi} \index{height of a point 
on $\overline{\Hecke}$}

We will first, in Lemma \ref{symmetry-semisimple}, see how the stalk of  $j_* 
  (IC_{\Hecke_{G(D),\weights} } \boxtimes \mathcal L )$ changes upon composing $t_1$ by an element $h \in H$. In Lemma \ref{htvan-semisimple}, we will contrast this with the fact that the stalk is invariant under composing $t_1$ by a global section of $\cN_{\alpha_1,\varphi}$, restricted to $H$, to show that the stalk vanishes.

\begin{lemma}\label{symmetry-semisimple} 
  Fix $(\alpha_1,\alpha_2,t_1,t_2, \varphi)$ in $\overline{\Hecke}_{G(D),H,\weights,V}$. 
  Consider the map $c: H \to  \overline{\Hecke}_{G(D),H,\weights,V}$ that sends $h \in H$ 
  to $(V_1,V_2,   h \circ t_1 ,  t_2,  \varphi)$. The pullback  $c^* j_* 
  (IC_{\Hecke_{G(D),\weights} } \boxtimes \mathcal L )$ is isomorphic to the tensor product of the stalk of 
  $j_*  (IC_{\Hecke_{G(D),\weights} } \boxtimes \mathcal L )$ at 
  $(\alpha_1,\alpha_2,t_1,t_2,  \varphi)$ with $\mathcal L^{-1} $. \end{lemma}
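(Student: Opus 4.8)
The plan is to exhibit the asserted symmetry as an instance of a (twisted) $H$-equivariance structure on the complex $j_*(IC_{\Hecke_{G(D),\weights}}\boxtimes\mathcal L)$ on $\overline{\Hecke}_{G(D),H,\weights,V}$, the twist being by $\mathcal L$ itself. First I would introduce the action $a\colon H\times\overline{\Hecke}_{G(D),H,\weights,V}\to\overline{\Hecke}_{G(D),H,\weights,V}$ sending $(h_0,(\alpha_1,t_1,\alpha_2,t_2,\varphi))$ to $(\alpha_1,h_0\circ t_1,\alpha_2,t_2,\varphi)$, i.e.\ twisting the trivialization along $D$ of the first bundle. This is well defined: conditions (1) and (2) in the definition of $\overline{\Hecke}_{G(D),H,\weights,V}$ only involve trivializations away from $D$ and are unaffected, while condition (3) is preserved because the closure inside $\End V\lWR\mathcal O_D\rWR$ of the set of pairs $(\lambda h,\lambda)$ with $\lambda\in\G_m$, $h\in H$ is stable under right multiplication by $V(h_0)^{-1}$ for $h_0\in H$ (since $Hh_0^{-1}=H$). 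The orbit map of $a$ through the fixed point $(\alpha_1,\alpha_2,t_1,t_2,\varphi)$ is precisely the map $c$. Moreover, since $a$ fixes the coordinate $\varphi$ and, by Lemma~\ref{jopen-semisimple}, the open locus $j(\Hecke_{G(D),\weights}\times H)$ is cut out by a condition on $\varphi$ alone, $a$ preserves this open locus together with its closed complement.

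Next I would transport $a$ through $j$. A direct computation with Lemma~\ref{j-semisimple} shows that the induced action $\tilde a$ on $\Hecke_{G(D),\weights}\times H$ is $\tilde a_{h_0}=b_{h_0}\times r_{h_0^{-1}}$, where $b_{h_0}$ twists $t_1$ on the factor $\Hecke_{G(D),\weights}$ and $r_{h_0^{-1}}$ is right translation by $h_0^{-1}$ on the factor $H$; the inverse on the $H$-factor comes from the identity $h\circ t_1\circ f|_D^{-1}=(hh_0^{-1})\circ(h_0\circ t_1)\circ f|_D^{-1}$. One has $j\circ\tilde a=a\circ(\mathrm{id}_H\times j)$, and since each $a_{h_0}$ is an automorphism preserving $\mathrm{im}(j)$ this square is Cartesian. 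Now $b_{h_0}^*IC_{\Hecke_{G(D),\weights}}\cong IC_{\Hecke_{G(D),\weights}}$ canonically ($b$ is a smooth action, so the $IC$ sheaf is equivariant), and from the character sheaf isomorphism $m^*\mathcal L\cong\mathcal L\boxtimes\mathcal L$ together with $\iota^*\mathcal L\cong\mathcal L^{-1}$ for $\iota$ the inversion, one gets, uniformly in $h_0$, an isomorphism
\[
\tilde a^{*}\bigl(IC_{\Hecke_{G(D),\weights}}\boxtimes\mathcal L\bigr)\;\cong\;\mathrm{pr}^{*}\bigl(IC_{\Hecke_{G(D),\weights}}\boxtimes\mathcal L\bigr)\otimes\mathrm{pr}_H^{*}\mathcal L^{-1}
\]
of complexes on $H\times(\Hecke_{G(D),\weights}\times H)$, where $\mathrm{pr}$ forgets the acting copy of $H$ and $\mathrm{pr}_H$ projects onto it.

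Then I would push forward along $\mathrm{id}_H\times j$. Since $a$ is smooth and the square above is Cartesian, smooth base change gives $a^{*}j_*(-)\cong(\mathrm{id}_H\times j)_*\tilde a^{*}(-)$; feeding in the displayed formula and using the projection formula to pull out the factor $\mathrm{pr}_H^{*}\mathcal L^{-1}$, which is pulled back from the acting copy of $H$, one obtains an isomorphism
\[
a^{*}j_*\bigl(IC_{\Hecke_{G(D),\weights}}\boxtimes\mathcal L\bigr)\;\cong\;\mathrm{pr}_{\overline{\Hecke}}^{*}\,j_*\bigl(IC_{\Hecke_{G(D),\weights}}\boxtimes\mathcal L\bigr)\otimes\mathrm{pr}_H^{*}\mathcal L^{-1}
\]
on $H\times\overline{\Hecke}_{G(D),H,\weights,V}$. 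Restricting along $H\times\{(\alpha_1,\alpha_2,t_1,t_2,\varphi)\}$ identifies the left-hand side with $c^{*}j_*(IC_{\Hecke_{G(D),\weights}}\boxtimes\mathcal L)$, the first factor on the right with the constant complex on $H$ whose value is the stalk of $j_*(IC_{\Hecke_{G(D),\weights}}\boxtimes\mathcal L)$ at that point, and the second factor with $\mathcal L^{-1}$; this is exactly the claim.

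The only real subtlety I anticipate is the bookkeeping in the middle step: keeping track of which side $h_0$ acts on so that the twist lands on the $H$-factor as $h\mapsto hh_0^{-1}$, and — more importantly — packaging the per-$h_0$ isomorphisms $\tilde a_{h_0}^{*}(IC\boxtimes\mathcal L)\cong(IC\boxtimes\mathcal L)\otimes\mathcal L_{h_0}^{-1}$ into a single isomorphism over all of $H\times(\Hecke_{G(D),\weights}\times H)$, which is what makes the conclusion an isomorphism of complexes on $H$ rather than merely a stalkwise statement. As a sanity check, when the fixed point lies in $\mathrm{im}(j)$ the map $c$ factors as $j\circ\tilde c$ with $\tilde c(h_0)$ having $H$-coordinate $h_{*}h_0^{-1}$, and the twist by $\mathcal L^{-1}$ can be read off at once; the equivariance argument above is what additionally handles the points outside $\mathrm{im}(j)$, where the orbit $c(H)$ misses the open locus entirely. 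Everything else — well-definedness of $a$, equivariance of the $IC$ sheaf under a smooth group action, smooth base change, and the projection formula — is routine.
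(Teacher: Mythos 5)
Your argument is correct and is essentially the paper's own proof: the paper uses the same twisting map $b\colon \overline{\Hecke}_{G(D),H,\weights,V}\times H\to\overline{\Hecke}_{G(D),H,\weights,V}$ (your action $a$), the Cartesian square with $j$, smooth base change, the character-sheaf isomorphism producing the $\mathcal L^{-1}$ factor, and the K\"unneth/restriction step along the orbit map $c$ through the fixed point. Your extra checks (well-definedness of the action via condition (3) of the compactification and the explicit formula $h\mapsto hh_0^{-1}$ on the $H$-factor of the open locus) are details the paper leaves implicit, but the route is the same.
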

  
\begin{proof} Consider the commutative diagram

\[ \begin{tikzcd}
\Hecke_{G(D),\weights} \times H \arrow[r,"j" ] & \overline{\Hecke}_{G(D),H,\weights,V} \\
\Hecke_{G(D),\weights} \times H \times H \arrow[u,"a"]\arrow[r,"j \times id"]& 
\overline{\Hecke}_{G(D),H,\weights,V} \times H \arrow[u,"b"] &  H\arrow[l,"d"] 
\arrow[ul,"c",swap] \\
    \end{tikzcd} \]
where the vertical map $b$ sends $((\alpha_1,\alpha_2,t_1,t_2, \varphi),h)$ to 
$(\alpha_1,\alpha_2, h \circ t_1,  t_2, \varphi)$ , the vertical map $a$ sends $((\alpha_1,\alpha_2, f, t_1), (h_1,h_2))$ to $((\alpha_1,\alpha_2, f, h_2 \circ t_1), h_1h_2^{-1})$, the arrow $d$ sends 
$h\in H$ to $\left((V_1,V_2,  t_1 ,t_2,  \varphi), h\right)$, and so $c$ sends 
$h\in H$ to  $(V_1,V_2,   t_1 , h \circ t_2,  \varphi)$.

We have \[c^* j_*  (IC_{\Hecke_{G(D),\weights} } \boxtimes \mathcal L )=d^*b^* j_* 
(IC_{\Hecke_{G(D),\weights} } \boxtimes \mathcal L).\] 

We have 
\[ \hspace{-.5in} b^* j_* (IC_{\Hecke_{G(D),\weights} } \boxtimes \mathcal L) =  (j\times id)_* a^* (IC_{\Hecke_{G(D),\weights} } \boxtimes \mathcal L)= (j \times id)_*  (  IC_{\Hecke_{G(D),\weights}} \boxtimes \mathcal L \boxtimes \mathcal L^ {-1} ) = j_*  (  IC_{\Hecke_{G(D),\weights} }\boxtimes \mathcal L)  \boxtimes \mathcal 
L^{-1} \] with the first identity by smooth base change, because the left square is 
Cartesian, the third identity is by the K\"{u}nneth formula, and the second identity requires some thought:  By the character sheaf property of $\mathcal L$, the pullback of $\mathcal L$ along the map $(h_1,h_2 ) \to (h_1 h_2^{-1})$ is $\mathcal L \boxtimes \mathcal L^{-1}$. The pullbacks of $IC_{\Hecke_{G(D),\weights}}$ along the morphisms sending $((\alpha_1,\alpha_2, f, t_1), (h_1,h_2))$ to $(\alpha_1,\alpha_2, f, h_2 \circ t_1)$ and $(\alpha_1,\alpha_2, f, t_1)$ are equal since these are both smooth morphisms of the same relative dimension. Thus $a^* (IC_{\Hecke_{G(D),\weights} } \boxtimes \mathcal L)=IC_{\Hecke_{G(D),\weights}} \boxtimes \mathcal L \boxtimes \mathcal L^ {-1}$.

Finally, $d^* ( j_*  (  IC_{\Hecke_{G(D),\weights} }\boxtimes \mathcal L)  \boxtimes \mathcal 
L^{-1})$ is the tensor product of the stalk of 
  $j_*  (IC_{\Hecke_{G(D),\weights} } \boxtimes \mathcal L )$ at 
  $(\alpha_1,\alpha_2,t_1,t_2,  \varphi)$ with $\mathcal L^{-1} $.\end{proof}

\begin{lemma}\label{section-semisimple} Let $\beta$ be a $P$-bundle on $X$. Let 
$\cP_\beta$ 
be the associated twisted form of $P$ and $\cN_\beta$ its unipotent radical. Assume that 
all vector bundles in the canonical filtration of $\cN_\beta$ have no nontrivial 
quotients 
of degree at most $2g-2+|D|$. Then there is a section of $\cN_{\beta}$ over 
$\operatorname{Res}_k^D  (\cN_{\beta}|_D) \times X$, whose restriction to 
$\operatorname{Res}_k^D  (\cN_{\beta}|_D) \times D$ is the canonical section.\end{lemma}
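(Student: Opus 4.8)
The plan is to extend the tautological $D$-section across $X$ by dévissage along the canonical filtration $\cN_\beta=\cN_{\beta,0}\supseteq\cN_{\beta,1}\supseteq\cdots\supseteq\cN_{\beta,r}=1$, reducing at each stage to the vanishing of one coherent cohomology group on $X$. Write $E_i:=\cN_{\beta,i}/\cN_{\beta,i+1}$ for the graded pieces, which are vector bundles on $X$ by Lemma~\ref{N-filtration}, put $B:=\operatorname{Res}_k^D(\cN_\beta|D)$ (an affine scheme of finite type over $k$, since it is an iterated vector group), and recall that the ``canonical section'' over $B\times D$ is the tautological one, corresponding to $\mathrm{id}_B$ under the Weil restriction adjunction.

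The one non-formal input is that $H^1(X,E_i(-D))=0$ for every $i$. Indeed, by Serre duality $H^1(X,E_i(-D))^\vee\cong\Hom_{\mathcal O_X}(E_i,\omega_X(D))$, and a nonzero homomorphism $E_i\to\omega_X(D)$ would have image a nonzero subsheaf of the line bundle $\omega_X(D)$, hence --- by torsion-freeness on the smooth curve $X$ --- a line bundle $M$ with $\deg M\le\deg\omega_X(D)=2g-2+|D|$ onto which $E_i$ surjects. This is exactly what the hypothesis forbids, so $\Hom_{\mathcal O_X}(E_i,\omega_X(D))$, and therefore $H^1(X,E_i(-D))$, vanishes. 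Note that the hypothesis is inherited by the quotient group $\cN_\beta/\cN_{\beta,r-1}$, whose canonical filtration (the image of that of $\cN_\beta$) has graded pieces $E_0,\dots,E_{r-2}$, so the induction below is legitimate.

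Now I induct on the filtration length $r$; the case $r=0$ is vacuous. Set $\cN'':=\cN_\beta/\cN_{\beta,r-1}$, so that $E_{r-1}=\cN_{\beta,r-1}$ is an abelian normal subgroup scheme of $\cN_\beta$, isomorphic to a vector bundle, fitting in $1\to E_{r-1}\to\cN_\beta\to\cN''\to1$. By induction there is a section $\sigma''$ of $\cN''$ over $\operatorname{Res}_k^D(\cN''|D)\times X$ restricting to the tautological section over $\operatorname{Res}_k^D(\cN''|D)\times D$; pulling back along the morphism $B\to\operatorname{Res}_k^D(\cN''|D)$ induced by $\cN_\beta|D\twoheadrightarrow\cN''|D$ yields $\sigma''_B\colon B\times X\to\cN''$ whose restriction to $B\times D$ is the image of the tautological $D$-section of $\cN_\beta|D$. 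It remains to lift $\sigma''_B$ to a section of $\cN_\beta$ agreeing with that tautological $D$-section over $B\times D$. Since $E_{r-1}$ is the \emph{kernel} of $\cN_\beta\to\cN''$, any two lifts of $\sigma''_B$ differ by left translation by a section of $\mathrm{pr}_X^*E_{r-1}$, and two such lifts agreeing over $B\times D$ differ by a section of $\mathrm{pr}_X^*E_{r-1}(-D)$; crucially, no conjugation twist enters. Moreover such lifts exist Zariski-locally on $B\times X$ --- away from $B\times D$ because $\cN_\beta\to\cN''$ is a Zariski-locally trivial $E_{r-1}$-torsor over $\cN''$, and near $B\times D$ because on an affine open $B\times U$ with $U\supseteq D$ affine in $X$ sections of the vector bundle $E_{r-1}$ restrict surjectively onto $B\times D$. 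So the sheaf of lifts agreeing over $B\times D$ is a torsor under $\mathrm{pr}_X^*E_{r-1}(-D)$, with obstruction class in $H^1(B\times X,\mathrm{pr}_X^*E_{r-1}(-D))$. As $B$ is affine, $\mathrm{pr}_X\colon B\times X\to X$ is affine and this group is $H^1(X,E_{r-1}(-D))\otimes_k\Gamma(B,\mathcal O_B)=0$ by the previous paragraph; hence a lift $\sigma_B\colon B\times X\to\cN_\beta$ exists and is the required section.

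The step demanding the most care is this last one: one must recognize that the obstruction lands in the \emph{untwisted} group $H^1(X,E_{r-1}(-D))$, pulled back to $B\times X$, rather than in the cohomology of some conjugation-twisted form of $E_{r-1}$ over the base --- it is precisely because $E_{r-1}$ occurs as the kernel, so that the space of lifts is a torsor for left translation, that the whole dévissage collapses onto the single curve-cohomology statement. That statement is in turn immediate from Serre duality once one observes that the degree threshold $2g-2+|D|$ in the hypothesis is exactly $\deg\omega_X(D)$.
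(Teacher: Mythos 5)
Your proof is correct and is essentially the paper's argument in unpacked form: the paper proves surjectivity of $\Gamma(X\times Y,\cN_\beta)\to\Gamma(X\times Y,i_*i^*\cN_\beta)$ for affine $Y$ by observing that the kernel is filtered with graded pieces $\left(\cN_{i,\beta}/\cN_{i+1,\beta}\right)\otimes\mathcal O(-D)$, whose $H^1$ vanishes by exactly your Serre-duality/degree-threshold computation, and then takes $Y=\operatorname{Res}_k^D(\cN_\beta|D)$ with its canonical element. Your explicit induction along the filtration, with the left-translation torsor under $\mathrm{pr}_X^*E_{r-1}(-D)$ and the obstruction in $H^1(X,E_{r-1}(-D))\otimes_k\Gamma(B,\mathcal O_B)$, is just a careful spelling-out of the non-abelian d\'evissage the paper compresses into ``the cokernel is contained in the $H^1$ of the kernel.''
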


\begin{proof} Let $i: D \to X$ be the immersion, so that $\Gamma(D, i^* \cN_\beta) = 
\Gamma ( X, i_* i^* \cN_\beta)$. First we will show that the map $\Gamma(X, \cN_{\beta}) 
\to \Gamma(X, i_* i^* \cN_{\beta}) $ is 
surjective. The cokernel is contained in the $H^1$ of $X$ with coefficients in the kernel 
of the natural map $\cN_\beta \to i_* i^* \cN_\beta$. The kernel of the natural map 
$\cN_\beta \to i_* i^* \cN_\beta$ has a filtration, induced by pulling back the 
filtration of $\cN_{\beta}$, whose associated graded objects are $\left( \cN_{i,\beta}/ 
\cN_{i+1,\beta} \right) 
\otimes \mathcal O(-D)$. By the assumption on height, $\left( \cN_{i,\beta}/ 
\cN_{i+1,\beta} 
\right) \otimes \mathcal O(-D)$ has no line bundle quotients of degree $2g-2$, thus 
admits no nontrivial maps to the canonical bundle, hence has vanishing $H^1$, so the 
kernel has vanishing $H^1$ as well, and the map is surjective.

Moreover, the $H^1$ of the kernel will still vanish when base changed by any affine 
scheme, as these are flat over the base field, and so the natural map $\Gamma(X \times Y, 
\cN_{\beta}) \to \Gamma(X \times Y, i_* i^* \cN_{\beta}) $ is surjective for any affine $Y$. We 
take $Y$ to be the Weil 
restriction $\operatorname{Res}_k^D  (\cN_{\beta}|_D)$ of $\cN_{\beta}$ from $D$ to $k$, 
over which there is a canonical  element of $\Gamma(D, \cN_{\beta})$. This gives a 
section 
of $\cN_{\beta}$ over $\operatorname{Res}_k^D  (\cN_{\beta}|_D) \times X$, whose 
restriction 
to $\operatorname{Res}_k^D  (\cN_{\beta}|_D) \times D$ is the canonical section.\end{proof}

\begin{lemma}\label{htvan-semisimple} Assume that some $(G,m_u, H_u , \mathcal L_u)$ is 
geometrically supercuspidal. Then the stalk of $j_* (IC_{\Hecke_{G(D),\weights} } 
\boxtimes \mathcal L )$ vanishes at points whose height is greater than $2g-2 + |D|$. 
\end{lemma}

\begin{proof} 

Consider a point $(\alpha_1,\alpha_2,t_1,t_2, \varphi)$ in 
$\overline{\Hecke}_{G(D),H,\weights,V}$ of height greater than $2g-2+|D|$. Let $\beta$ be 
the associated $P$-bundle, so that $\cP_{\beta} = \cP_{\alpha_1,\varphi}$. By Lemma 
\ref{section-semisimple}, there is a section $s$ of $\cN_{\alpha_1,\varphi}$ over 
$\operatorname{Res}_k^D  (\cN_{\alpha_1,\varphi}|_D) \times X$, whose restriction to 
$\operatorname{Res}_k^D  (\cN_{\alpha_1,\varphi}|_D) \times D$ is the canonical section.

Now consider the map $\tau$ from $\operatorname{Res}_k^D  (\cN_{\alpha_1,\varphi}|_D)$ to 
$\overline{\Hecke}_{G(D),H,\weights,V}$ that sends $g \in \operatorname{Res}_k^D  
(\cN_{\alpha_1,\varphi}|_D)$ to $(\alpha_1,\alpha_2, t_1 \circ s(g)|_D,t_2, \varphi)$. Because the 
restriction of $s$ to $D$ is the canonical section, $s(g)|_D$ is the section of 
$\cN_{\alpha_1,\varphi}$ over $D$ induced by $g$. This map 
is 
actually equal to the constant map by a diagram
\[ \begin{tikzcd}  \alpha_1 \arrow[r, "\varphi"] \arrow[d,"s(g)"] & \alpha_2 
\arrow[d,"id"]\\
\alpha_1 \arrow[r,"\varphi"] & \alpha_2  \\ \end{tikzcd} \]
which commutes by Lemma~\ref{varphi-preserved} because $s(g) \in \cN_{\alpha_1,\varphi}$.

Because $\tau$ is the constant map,
\[ \tau^* j_* (IC_{\Hecke_{G(D),\weights} } \boxtimes \mathcal L ) = \mathbb Q_\ell \boxtimes \left( j_* (IC_{\Hecke_{G(D),\weights} } 
\boxtimes \mathcal L ) \right) _{ (\alpha_1,\alpha_2,t_1,t_2, \varphi)} \] where ${}_{(\alpha_1,\alpha_2,t_1,t_2, \varphi)} $denotes the stalk at $(\alpha_1,\alpha_2,t_1,t_2, \varphi)$.

Now $t_1 \circ s(g)|_D=  (t_1 s(g)|_D t_1^{-1}) \circ t_1$.   Because $\alpha_1$ admits a trivialization over $N$, $\cN_{\alpha_1,\varphi}$ is 
conjugate over $D$ to $N$, and so $\operatorname{Res}_k^D  (\cN_{\alpha_1,\varphi} )$ is 
isomorphic to  $N \lWR \mathcal O_D \rWR$, in such a way that the embedding $g \mapsto 
(t_1 g t_1^{-1} )$ into $ G \lWR \mathcal O_D \rWR$ is conjugate to the standard 
embedding. 

Now consider the pullback of $j_* (IC_{\Hecke_{G(D),\weights} } \boxtimes \mathcal L )$ 
along 
the map that sends $h$ to $(\alpha_1,\alpha_2, t_1 \circ h,t_2, \varphi)$ for $h$ in the 
intersection of $H$ with this conjugate copy of $N \lWR \mathcal O_D \rWR$. This pullback 
is $ \mathbb Q_\ell \otimes  \left( j_* (IC_{\Hecke_{G(D),\weights} } 
\boxtimes \mathcal L ) \right) _{ (\alpha_1,\alpha_2,t_1,t_2, \varphi)}$. On the other hand, from Lemma~\ref{symmetry-semisimple}, we know 
that this same pullback is $\mathcal L^{-1} \otimes  \left( j_* (IC_{\Hecke_{G(D),\weights} } 
\boxtimes \mathcal L ) \right) _{ (\alpha_1,\alpha_2,t_1,t_2, \varphi)}$.  From the definition of 
geometric supercuspidal, we know that even restricting to a further intersection with 
$H_x$, 
the pullback of $\mathcal L^{-1}$ is not a geometrically constant sheaf, and so its 
tensor product with no nonzero vector space is geometrically constant, and hence the 
stalk $\left( j_* (IC_{\Hecke_{G(D),\weights} } 
\boxtimes \mathcal L ) \right) _{ (\alpha_1,\alpha_2,t_1,t_2, \varphi)}$ vanishes, as desired.
\end{proof}

 \subsection{Hecke Correspondences}\label{s:Hecke-correspondences}

 We will use the following space to compare the stalks of $j_* 
 (IC_{\Hecke_{G(D),\weights} } \boxtimes \mathcal L )$ at different points:
 
\begin{defi} Fix a geometric point $Q \in X$ that is neither in $D$ nor the support of 
$\weights$ and a cocharacter $\mu$ in the Weyl cone of $G$.  Let $ \Hecke_{Q,\mu} \left( 
\overline{\Hecke}_{G(D),H,\weights,V} \right)$ be the moduli space of quadruples consisting of two points 
$(\alpha_1,\alpha_2,t_1,t_2,\varphi)$ and $(\alpha_3,\alpha_4,t_3,t_4,\varphi')$ in 
$\overline{\Hecke}_{G(D),H,\weights,V} $ and isomorphisms $m_1: \alpha_3 \to \alpha_1 $ 
and 
$m_2: \alpha_4 \to \alpha_2$ away from $Q$, such that $ t_1 \circ m_1 |_D=t_3$, $t_2 \circ 
m_2 |_D= t_4$, $\varphi \circ V(m_1) = V(m_2) \circ \varphi'$, and such that $m_1$ and 
$m_2$, expressed as points in $G \pseries{t}$ via local coordinates at $Q$, are in $G \bseries{t} 
\mu(t) G \bseries{t}$.  (Note that here we use a Schubert cell and not its closure.)

Let  $pr_{12}$ and $pr_{34}:  \Hecke_{Q,\mu} \left( \overline{\Hecke}_{G(D),H,\weights,V} 
\right) \to \overline{\Hecke}_{G(D),H,\weights,V}$ be the maps induced by 
$(\alpha_1,\alpha_2,t_1,t_2,\varphi)$ and $(\alpha_3,\alpha_4,t_1,t_2,\varphi')$ 
respectively.
 \end{defi}

 Let $(\alpha_1,t_1,\alpha_2,t_2,\varphi)$ be a point of 
 $\overline{\Hecke}_{G(D),H,\weights,V}$ not in the image of $j$. As we did at the 
 beginning of the previous subsection, we can choose some open set $X_0$ where $\varphi$ 
 locally takes the form $g_1 e g_2$ for the idempotent projector $e$ onto the space of 
 $T$-eigenvalues of some 
 proper face of the convex hull of the weights of $V$. Equivalently, we can trivialize  
 $\alpha_1$ and $\alpha_2$ over $X_0$, using Lemma \ref{trivialization-generic}, so that  $\varphi$ in the induced 
 coordinates is an idempotent projector $e$. Fix such trivializations.
 
 Let $Q$ be a point in $X_0$ that does not 
 lie in $D$. Let $P$ be the stabilizer of the kernel of $e$. Let $\mu: \mathbb G_m \to T$ 
 be a 
 cocharacter such that the eigenvalue of $g \to \mu(\lambda)^{-1} g \mu(\lambda)$ is 
 a nonnegative power of $\lambda$ on roots in $P$ and is negative on roots not in $P$ 
 (which exists by~\cite[Prop.2.2.9]{CGP-pseudoreductive}).
 
 In this subsection, we will show how to choose a point of  $\Hecke_{Q,\mu} \left( 
 \overline{\Hecke}_{G(D),H,\weights,V} \right)$ whose image under $pr_{12}$ is 
 $(\alpha_1,\alpha_2,t_1, t_2,\varphi)$, whose image under $pr_{34} $ has greater height than 
 $(\alpha_1,\alpha_2, t_1, t_2,\varphi)$, and such that the stalks of the pullbacks of 
 $j_*  (IC_{\Hecke_{G(D),\weights} } \boxtimes \mathcal L )$ on its image under $pr_{12}$ 
 and its image under $pr_{34}$ are isomorphic. This is precisely what we will need to 
 inductively show that the stalk vanishes in the proof of 
 Theorem~\ref{mainextension-semisimple} in the next subsection. 
 
The key step in comparing the stalks is to show that the maps $pr_{12}$ and $pr_{34}$ are 
smooth, as it allows us to use the smooth base change theorem. This can be checked by 
comparing sections of the relevant stalks over the local ring, which can be reduced by a 
Beauville--Laszlo argument to a purely algebraic calculation, which we handle first:

\begin{lemma}\label{smooth-semisimple-algebraic} Let $R$ be a Henselian local ring, with 
maximal ideal $\mathfrak m$. Let $M \in \End V (R\bseries{t} )  $ be a matrix and $s\in 
R\bseries{t}$ an element such that $(M,s)$ are the projective coordinates of an 
$R\bseries{t}$-point of $\overline{G}$. Assume that $(M,s)$ is congruent to $(e,0)$ 
modulo $\mathfrak m$. Let $g_a$ and $g_b$ be elements of $G(R\bseries{t})$ such that $g_a 
\mu(t)^{-1} g_b$ is congruent to $\mu(t)$ mod $\mathfrak m$, where the cocharacter $\mu$ is as 
above.
 
	Then there exist elements $g_c$ and $g_d$ in $G(R\bseries{t})$, such that $g_c \mu(t) g_d$ 
	is 
	congruent to $\mu(t)$ mod $\mathfrak m$, and such that \[\left( g_a \mu(t)^{-1} 
	g_b\right)   M \left( g_c \mu(t) g_d\right) \] belongs to $\End V(R\bseries{t}) \subseteq 
	\End V(R\pseries{t})$. 
 
	Moreover the products $g_c \mu(t) g_d$ for all $g_c$ and $g_d$ satisfying these two 
	conditions lie in a single orbit under the right action of $G(R\bseries{t})$.

 \end{lemma}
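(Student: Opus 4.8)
The plan is to reduce the statement to the structure theory of $\overline{G}$ developed earlier, in particular Lemma~\ref{closure-characterization} and Lemma~\ref{neighborhood-parabolic-structure}, and then to exploit the Iwahori/Bruhat factorization for the loop group at $t$. First I would use that $(M,s)$ reduces mod $\mathfrak{m}$ to $(e,0)\in \overline{G}-G$, so by Lemma~\ref{neighborhood-parabolic-structure}(1) the point $(M,s)$ lies in the open set $U\subseteq\overline{G}$ where $\pi'$ is defined; here, because we work $R\bseries{t}$-pointwise and $R$ is Henselian local, the condition $\rank(e\cdot(\,\cdot\,))=\rank(e)$ is open and is satisfied on the special fiber, hence everywhere. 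Thus $\pi'(M,s)$ is an $R\bseries{t}$-point of $P\backslash G$ reducing to the base point $P\in(P\backslash G)(R/\mathfrak m)$, and since $P\backslash G$ is smooth over $\mathbb{Z}$ (Lemma~\ref{parabolic-construction}) with a section through the base point, this $R\bseries{t}$-point lifts to an element $g_d^{-1}\in G(R\bseries{t})$ reducing to the identity mod $\mathfrak m$. Replacing $M$ by $M g_d^{-1}$ on the right (absorbing $g_d$ into the sought $g_c\mu(t)g_d$), we may assume $\pi'(M,s)$ is the identity section, so by Lemma~\ref{neighborhood-parabolic-structure}(2) the matrix $M$ lies in $\overline{P}(R\bseries{t})\cap U$.

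Next I would analyze $M\in\overline{P}$. Writing $P=MN$ (Levi decomposition, $N$ the unipotent radical), the point $(M,s)$ of $\overline{P}$ reducing to $(e,0)$ is, after the left/right action of $N$ which fixes $(e,0)$ by Lemma~\ref{levi-quotient}, controlled by a point of the partial flag variety / Levi piece; concretely, the projectivized cone on $\overline{P}$ near $(e,0)$ is a smooth chart, and $M$ differs from $e$ by an element congruent to the identity. The key algebraic input is then the following: for the cocharacter $\mu$ contracting $G$ onto $P$ (negative on roots outside $P$, nonnegative on roots in $P$), conjugation $X\mapsto \mu(t)^{-1}X\mu(t)$ sends $\overline{P}$-type degenerations into $\End V(R\bseries{t})$ after multiplying on the left by $\mu(t)^{-1}g_b$ and on the right by $g_c\mu(t)$ — this is exactly the statement that $e=\lim_{t\to0}\mu(t)$ (or $\mu(t)^{-1}$, depending on sign convention) realizes the idempotent, so that $\mu(t)^{-1}(\cdot)\mu(t)$ "cancels" the poles that $M$ acquires. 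I would make this precise by choosing $g_c,g_d$ so that $g_c\mu(t)g_d \equiv \mu(t)\pmod{\mathfrak m}$ and $\bigl(g_a\mu(t)^{-1}g_b\bigr)M\bigl(g_c\mu(t)g_d\bigr)$ has no pole: the pole of $M$ is bounded by the weights of $V$ paired with $\mu$, which is precisely what $\mu(t)$ on the right and $\mu(t)^{-1}$ on the left are designed to absorb, using that $g_a,g_b,g_c,g_d\in G(R\bseries{t})$ do not worsen the pole order since the relevant Bruhat cell is $G\bseries{t}\mu(t)G\bseries{t}$.

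Finally, for the "moreover" clause — uniqueness of $g_c\mu(t)g_d$ up to the right action of $G(R\bseries{t})$ — I would argue that given two solutions, their "ratio" on the right is an element $h\in G(R\pseries{t})$ for which both $\mu(t)h$ and $\mu(t)$ lie in $G\bseries{t}\mu(t)G\bseries{t}$ and both clear the pole; combined with the fact that the integrality condition $\bigl(g_a\mu(t)^{-1}g_b\bigr)M(\,\cdot\,)\in\End V(R\bseries{t})$ together with invertibility over $R\pseries{t}$ pins down $g_c\mu(t)g_d$ modulo $G(R\bseries{t})$ on the right, exactly as the affine Grassmannian orbit $G\bseries{t}\mu(t)G\bseries{t}/G\bseries{t}$ is a single $G\bseries{t}$-orbit. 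The main obstacle I expect is the bookkeeping in the previous paragraph: one has to check that the lift $g_d$ from $P\backslash G$ and the pole-clearing conjugation by $\mu(t)$ are compatible — i.e., that after arranging $M\in\overline{P}$ one genuinely can clear the pole while staying congruent to $\mu(t)$ mod $\mathfrak m$ and without leaving the Bruhat cell — and this requires using the hypothesis on pairings of weights with coroots being $<p$ (via Lemma~\ref{parabolic-construction}, ensuring $P$ stays parabolic and the filtration behaves well) together with the Henselian lifting. The remaining verifications are routine matrix/valuation computations which I would not spell out in full.
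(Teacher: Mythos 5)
Your outline of the existence step follows the paper's skeleton (pass through $\pi'$ to reduce to $\overline{P}$, then use that conjugation by $\mu(t)$ has nonnegative $t$-weights on $P$, hence on its closure via the lift to characteristic zero), but there are two genuine gaps. First, you never deal with $g_a$ and $g_b$. Your claim that ``$g_a,g_b,g_c,g_d\in G(R\bseries{t})$ do not worsen the pole order'' is false as stated: integrality of $\mu(t)^{-1}M\mu(t)$ does not imply integrality of $\mu(t)^{-1}g_b M g_c\mu(t)$ for arbitrary $g_b\in G(R\bseries{t})$, since $g_b$ sits between $\mu(t)^{-1}$ and $M$ and is only constrained through the product $g_a\mu(t)^{-1}g_b$. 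The paper first reduces to $g_b\equiv 1\pmod{\mathfrak m}$ by lifting along the smooth orbit map $G\bseries{t}\to G\bseries{t}\backslash\bigl(G\bseries{t}\mu(t)^{-1}G\bseries{t}\bigr)$ (this is where the Henselian hypothesis is used), then absorbs $g_b$ into $M$ using that $\overline{G}$ is stable under left multiplication by $G$, and discards $g_a$ since left multiplication does not affect integrality. Without some such reduction your pole-clearing argument does not go through. (There is also a preliminary reduction to Noetherian $R$, needed to make the later $\mathfrak m$-adic induction legitimate, which you omit.)

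Second, and more seriously, the ``moreover'' clause has no argument in your proposal: saying that the integrality condition ``pins down $g_c\mu(t)g_d$ modulo $G(R\bseries{t})$, exactly as the affine Grassmannian orbit $G\bseries{t}\mu(t)G\bseries{t}/G\bseries{t}$ is a single $G\bseries{t}$-orbit'' is circular, since the cell being a single orbit over a field says nothing about two $R$-solutions lying in the same right coset. This uniqueness is the delicate half of the lemma (it is what makes the map $p'$ \'{e}tale rather than merely smooth in Lemma~\ref{smooth-semisimple-local} and \ref{smooth-semisimple}). The paper proves it by induction on congruence modulo $\mathfrak m^n$: one may assume $g_c=1+\tau$ with $\tau\in\mathfrak m^n\mathfrak g(R\bseries{t})$, deduces from integrality that $\mu(t)^{-1}M\tau\mu(t)$ is integral, uses $M\equiv e\pmod{\mathfrak m}$ to replace $M$ by $e$ modulo $\mathfrak m^{n+1}$, and then argues that $\mu(t)^{-1}\tau\mu(t)$ is integral by splitting $\mathfrak g$ into $\operatorname{Lie}(P)$ (contained in the kernel of left multiplication by $e$, and on which conjugation by $\mu(t)$ preserves integrality) and a $\mu$-invariant complement. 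None of this Lie-algebra bookkeeping is present or replaceable by the orbit remark you make, so as written the proposal does not establish the statement.
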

 
 \begin{proof} We make a series of reductions.
 
 First note that we may assume $R$ is Noetherian. This is because the problem only 
 depends on finitely many entries of $M, g_a, g_b$ --- those entries that are nonvanishing 
 mod a power of $t$ equal to the sum of the highest negative power of $t$ appearing in 
 entries of $\mu(t^{-1})$ and $\mu(t)$. Hence the problem is defined over a Henselization 
 of a finitely generated subring of $R$, which is Noetherian. For the uniqueness 
 statement, because \[ \left( G(R\bseries{t}) \mu(t) G(R\bseries{t}) \right)/ 
 G(R\bseries{t}) \] is 
 represented by a scheme of finite type  --- more specifically, a Schubert cell of the 
 affine 
 Grassmannian ---  we may check uniqueness in the Henselization of another finitely 
 generated subring of $R$, that generated by the finitely many entries of $M,g_a,g_b$ 
 plus the coordinates in this Schubert cell of two different possible values of $g_c,g_d$.
 
 Next we will show that, by possibly changing $g_a$, we may assume that $g_b$ is congruent to $1$ mod $\mathfrak m$. This is 
 because the map
\index{$G\bseries{t}$, $G\pseries{t}$, formal loop group}
 \[G\bseries{t} \to G\bseries{t} \backslash \left( G\bseries{t} \mu(t)^{-1} G\bseries{t} 
 \right) \] that sends $g$ to $ G\bseries{t}\mu(t)^{-1} g$ (equivalently to 
 $G\bseries{t}g_a \mu(t)^{-1} g$) is smooth at the identity, and so we can lift 
 $G\bseries{t} g_a \mu(t)^{-1} g_b$, which is congruent to $\mu(t)^{-1}$ mod $\mathfrak m$, to 
 an $R$-point of $G\bseries{t}$ congruent to $1$ mod $\mathfrak m$.
 
 Now because $\overline{G}$ is stable under left-multiplication by $G$, we may replace 
 $M$ by $g_b M$ and so assume $g_b=1$.  Because left-multiplication by $g_a$ does not 
 affect integrality, we may assume $g_a=1$.
 
 Now applying Lemma~\ref{neighborhood-parabolic-structure}(1), from $(M,s) \in 
 \overline{G}(R\bseries{t})$ we obtain a point $\pi'(M,s) \in \left( P \backslash 
 G\right) (R\bseries{t})$ congruent to $P$ mod $\mathfrak m$. Since the map $G \to P \backslash G$ is smooth, and the point $\pi'(M,s) \in (P\backslash G) (R\bseries{t})$ lifts mod $\mathfrak m$ to the point $1 \in G ( R \bseries{t}/\mathfrak m)$, it follows that $\pi'(M,s)$ lifts to some $\sigma \in G( R \bseries{t})$. We 
 can multiply $M$ on the right by $\sigma^{-1}$ without affecting the existence of 
 $g_c,g_d$ or their uniqueness, because we can always multiply $g_c$ on the left by 
 $\sigma$ to cancel it. So we may assume that $\pi'(M,s) $ is the identity, and hence by 
 Lemma~\ref{neighborhood-parabolic-structure}(2) that $(M,s)$ lies in 
 $\overline{P}(R\bseries{t})$.

 This implies the existence of a solution. In fact we can take $g_c=g_d=1$, so it 
 suffices to check that $\mu(t)^{-1} M \mu(t)$ is integral. By construction, all the 
 nonzero entries of elements of the Lie algebra of $P$ are multiplied by a nonnegative 
 power of $t$ when conjugated by $\mu(t)$. In characteristic zero, this implies that all 
 the nonzero entries of elements of $P$ are multiplied by a nonnegative power of $t$ when 
 conjugated by $\mu(t)$, as these are exponentials of the Lie algebra elements. Because 
 $V$ lifts to characteristic zero,  the same thing is true for the nonzero 
 entries in the characteristic $p$ representation, and thus the same thing is true for 
 elements of the closure $\overline{P}$ of $P$, including $(M,s)$. So indeed $\mu(t)^{-1} 
 M \mu(t)$ is integral, as desired.

 The argument for uniqueness is more subtle. It suffices to show that, for $M,g_a,g_b$ in 
 this special form, all solutions $g_c \mu(t) g_d$ map to the point $ \mu(t) 
 G(R\bseries{t})$ of the Schubert cell  \[ \left( G(R\bseries{t}) \mu(t) G(R\bseries{t}) 
 \right)/ G(R\bseries{t}) .\] By induction, it is 
 sufficient to assume that the solution maps to this point modulo $\mathfrak 
 m^n$ for some $n\geq 1$ and show that it also maps to this point modulo $\mathfrak m^{n+1}$. Here, to ensure that the map $R \to \hat{R}$ is injective, we use the Noetherian hypothesis. Because the map 
 $G(R\bseries{t}) \to \left( G(R\bseries{t}) \mu(t) G(R\bseries{t}) \right)/ 
 G(R\bseries{t}) $ sending $g$ to $g \mu(t) G(R \bseries{t})$ is smooth, and because $g_c \mu(t) g_d G(R \bseries{t})$ is 
 congruent to $\mu(t) G(R \bseries{t}) $ modulo $\mathfrak m^n$, we may assume $g_c$ is congruent to $1$ 
 modulo $\mathfrak m^{n}$. Then modulo $\mathfrak m^{n+1}$, $g_c$ is $1+\tau $ for some $\tau  \in \mathfrak m^n 
 \mathfrak g(R\bseries{t})$, where $\mathfrak g$ is the Lie algebra of $G$. Then we can 
 write \[  \mu(t^{-1}) M g_c  \mu(t) g_d = \mu(t^{-1} ) M (1+\tau) \mu(t) g_d = 
 \mu(t^{-1}) M \mu(t) g_d +  \mu(t^{-1}) M  \tau \mu(t)g_d.\]  We know that $ \mu(t^{-1}) 
 M \mu(t) g_d $ is integral, so this implies that $\mu(t^{-1}) M  \tau \mu(t)g_d$ is 
 integral, which, inverting $g_d$, implies that $\mu(t)^{-1} M \tau  \mu(t) $ is integral.  
 Because $\tau$ is divisible by $\mathfrak m^{n}$ and $M$ is congruent to $e$ modulo 
 $\mathfrak m$, modulo $\mathfrak m^{n+1}$ we have \[ \mu(t)^{-1} M \tau \mu(t) = 
 \mu(t)^{-1} e \tau \mu(t)  = e \mu(t)^{-1} \tau \mu(t).\] Thus $ e \mu(t)^{-1} \tau \mu(t)$ is integral.  If we write $\mu(t)^{-1} \tau \mu(t)$ as  $\sum_i v_i t^i$ for $i \in \mathbb Z$, then we have $e v_i =0$ for $i<0$. For $i<0$, since $e v_i =0$, $\operatorname{Im}(v_i) \subseteq \ker (e)$, so $v_i ( \ker(e)) \subseteq \ker(e)$, thus $v_i$ lies in the Lie algebra of $P$ because $P$ is by definition the stabilizer of $\operatorname{ker} (e)$.
 
Fix a basis of $\mathfrak g$ consisting of roots of the maximal torus and an arbitrary basis for the Lie algebra of the maximal torus. In such a basis, the Lie algebra $\operatorname{Lie}(P)$ is the span of a subset of the basis vectors, consisting of the roots in $P$ and the maximal torus. Thus, because $v_i \in \operatorname{Lie}(P)$ for $i<0$, if we express $\mu(t)^{-1} \tau \mu(t)$ as a $R\pseries{t}$-linear combination of the basis vectors, the coefficients of every basis vector not in $\operatorname{Lie}(P)$ will be integral. However, because the eigenvalues of conjugation by $\mu(t)$ on $\operatorname{Lie}(P)$ are nonnegative powers of $t$, the coefficient of every basis vector in in $P$ of $\mu(t)^{-1} \tau \mu(t)$ will be integral. So all coefficients are integral, and  thus $\mu(t)^{-1} \tau \mu(t)$ is integral.
 Finally, because $g_c 
 \mu(t) g_d \equiv  1 \mu(t) (1 + \mu(t)^{-1} \tau \mu(t)) g_d \mod m^{n+1},$ this shows that  $g_c \mu(t) g_d$ maps to the point $ \mu(t) 
 G(R\bseries{t})$ of the Schubert cell  $ \left( G(R\bseries{t}) \mu(t) G(R\bseries{t}) 
 \right)/ G(R\bseries{t}) $ modulo $\mathfrak m^{n+1}$, as desired.
 \end{proof}

 We will define a special point of $ \Hecke_{Q,\mu} \left( 
 \overline{\Hecke}_{G(D),H,\weights,V} \right) $ where the smoothness of $pr_{12}$ and 
 $pr_{34}$ is as easy as possible to check. Recall that we have already fixed 
 trivalizations of $\alpha_1$ and $\alpha_2$ on the open set $X_0$, and thus on a formal 
 neighborhood of $Q$.  Let $m_1: \alpha_3 \to \alpha_1$ and 
 $m_2: \alpha_4 \to \alpha_2$ be the unique modifications of $\alpha_1$ and $\alpha_2$ 
 respectively that are isomorphisms away from $Q$ and that in a formal neighborhood of 
 $Q$ are locally isomorphic to the map $\mu(t)$. (This uniquely characterizes them by 
 Beauville--Laszlo.) Let $t_3 = t_1 \circ m_1$ and $t_4 = t_2 \circ m_2$ be the 
 trivializations. Let $\varphi': V(\alpha_3) \to V(\alpha_4)$ be the map that, away from $Q$, 
 is $\varphi$, and in a formal neighborhood of $Q$, is $e$. Let $y=\left( 
 (\alpha_1,\alpha_2,t_1,t_2,\varphi),(\alpha_3,\alpha_4,t_1,t_2,\varphi'),m_1,m_2\right)$.
  Because $e$ commutes with $\mu(t)$, $\varphi \circ V(m_1) = V(m_2) \circ \varphi'$ and 
 so $y$ is a point of $ \Hecke_{Q,\mu} \left( \overline{\Hecke}_{G(D),H,\weights,V} 
 \right) $. 
 
 We can translate Lemma~\ref{smooth-semisimple-algebraic} into a geometric lifting lemma:

 \begin{lemma}\label{smooth-semisimple-local} Let $R$ be a Henselian local ring with 
 maximal ideal $\mathfrak m$. Let  $(\alpha_1^*,\alpha_2^*,t_1^*,t_2^*,\varphi^*)$ be an 
 $R$-point of of $\overline{\Hecke}_{G(D),H,\weights,V}$ that modulo the maximal ideal of 
 $R$ 
 is  $(\alpha_1,\alpha_2,t_1,t_2,\varphi)$.  Let $\alpha_4^*$ be a $G$-bundle on $X_R$ 
 and let $m_2^*$ be an isomorphism: $m_2^* : \alpha_4^* \to \alpha_2^*$ away from $Q$ 
 that expressed in local coordinates over a formal neighborhood of $Q$ lies in $G\bseries{t} \mu(t) G\bseries{t}$ and such 
 that $(\alpha_4^*, m_2^*)$ mod $\mathfrak m$ is isomorphic to $(\alpha_4, m_2)$. 
 
 Then there exists a unique triple of a $G$-bundle $\alpha_3^*$ on $X_R$, isomorphism 
 $m_1^*: \alpha_3^* \to \alpha_1^*$ away from $Q$ that in a formal neighborhood of $Q$ lies in $G\bseries{t} \mu(t) G\bseries{t}$, and $\varphi'^{*} \in \mathbb 
 P(\Hom_X(V(\alpha_3), V(\alpha_4) +k)$ such that $\varphi^* \circ V(m_1^*) = V(m_2^*) 
 \circ \varphi'^{*}$, that is congruent to $(\alpha_3, m_1,\varphi')$ modulo $\mathfrak 
 m$ 
 up to isomorphism.
 
 \end{lemma}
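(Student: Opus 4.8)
The plan is to translate the statement, by a Beauville--Laszlo gluing argument at the point $Q$, into the purely algebraic Lemma~\ref{smooth-semisimple-algebraic}. Since $Q$ lies on the open set $X_0$, outside $D$ and the support of $\weights$, I would first trivialize $\alpha_1^*$ and $\alpha_2^*$ over the formal disc $\Spec R\bseries{t}$ at $Q$, extending the trivializations already chosen on $X_0$, so that near $Q$ the section $\varphi^*$ becomes an $R\bseries{t}$-point $(M,s)$ of $\overline G$ congruent to $(e,0)$ modulo $\mathfrak m$; reading $m_2^*$ in suitable local trivializations of $\alpha_2^*$ and $\alpha_4^*$, it becomes an element $g_2 \in G(R\bseries{t})\mu(t)G(R\bseries{t})$ congruent to $\mu(t)$ modulo $\mathfrak m$. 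By Beauville--Laszlo, giving a $G$-bundle $\alpha_3^*$ together with an isomorphism $m_1^*\colon\alpha_3^*\to\alpha_1^*$ away from $Q$ that near $Q$ lies in the Bruhat cell $G\bseries{t}\mu(t)G\bseries{t}$ is the same as giving an element $g$ of that cell, well defined up to the right action of $G(R\bseries{t})$; and the requirement that $\varphi'^{*}:=V(m_2^*)^{-1}\circ\varphi^*\circ V(m_1^*)$, which away from $Q$ is just $\varphi^*$ carried over by the isomorphisms, extend to an honest (projective) homomorphism across $Q$ is exactly the condition that $g_2^{-1}M g$ lie in $\End V(R\bseries{t})$ --- no twist by $\mathcal O_X(\{\weights\})$ being seen at $Q$.

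Now Lemma~\ref{smooth-semisimple-algebraic}, applied with this $M$, $s$ and with $g_2^{-1}$ written in Bruhat form, produces such a $g=g_c\mu(t)g_d$, congruent to $\mu(t)$ modulo $\mathfrak m$ and unique up to the right $G(R\bseries{t})$-action, which is precisely what is needed. Unwinding the gluing, this gives the triple $(\alpha_3^*,m_1^*,\varphi'^{*})$, and I would then verify that $(\alpha_3^*,\alpha_4^*,t_1^*\circ m_1^*,t_2^*\circ m_2^*,\varphi'^{*})$ is an $R$-point of $\overline{\Hecke}_{G(D),H,\weights,V}$ and that the whole quadruple is an $R$-point of $\Hecke_{Q,\mu}\!\left(\overline{\Hecke}_{G(D),H,\weights,V}\right)$: conditions (1)--(3) in the definition hold away from $Q$ by transport of the corresponding conditions for $\varphi^*$, and at $Q$ the only constraint is that $g_2^{-1}M g$, a priori an $R\pseries{t}$-point of $\overline G$ and now integral, lie in $\overline G(R\bseries{t})$, which holds because $\overline G$ is closed in $\mathbb P(\End V\oplus k)$. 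The congruence modulo $\mathfrak m$ is immediate from $g\equiv\mu(t)$, $g_2\equiv\mu(t)$, $M\equiv e$ together with $e\mu(t)=\mu(t)e$, which also shows $\alpha_3^*$ and $m_1^*$ reduce to the Beauville--Laszlo data of $\mu(t)$, i.e.\ to $(\alpha_3,m_1)$. Uniqueness of the triple up to isomorphism follows from the uniqueness clause of Lemma~\ref{smooth-semisimple-algebraic}, since any second solution yields a second $g'$ with $g'\in g\,G(R\bseries{t})$.

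The main obstacle is not a single step but the bookkeeping of the Beauville--Laszlo dictionary: one must simultaneously keep track of the gluing data over $X_R\setminus Q$, over the formal disc, and of the modification itself; of the $\mathcal O_X(\{\weights\})$-twist on the $\Hom$-bundle carrying $\varphi'^{*}$, which is invisible at $Q$ but present globally; and of the fact that $V(m_1^*)$ and $V(m_2^*)$ are isomorphisms only away from $Q$, so that $\varphi^*\circ V(m_1^*)=V(m_2^*)\circ\varphi'^{*}$ must be read as the assertion that the meromorphic section $V(m_2^*)^{-1}\circ\varphi^*\circ V(m_1^*)$ has no pole at $Q$. Once this is organized, both existence and uniqueness are formal consequences of the algebraic lemma.
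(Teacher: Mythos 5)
Your proposal is correct and follows essentially the same route as the paper: fix formal-disc trivializations at $Q$ lifting the chosen ones mod $\mathfrak m$, use Beauville--Laszlo to identify $(\alpha_3^*,m_1^*)$ with an element of the Bruhat cell $G\bseries{t}\mu(t)G\bseries{t}$ modulo the right $G(R\bseries{t})$-action, observe that $\varphi'^{*}=V(m_2^*)^{-1}\circ\varphi^*\circ V(m_1^*)$ is forced and that its extension across $Q$ is exactly the integrality condition of Lemma~\ref{smooth-semisimple-algebraic}, whose existence and uniqueness clauses then give the result. The paper does the same, merely writing $m_2^{*}$ directly as $g_b^{-1}\mu(t)g_a^{-1}$ rather than as a single coset element whose inverse is put in Bruhat form.
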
 
 
 \begin{proof} Fix trivializations of $\alpha_1^*, \alpha_2^*,\alpha_4^*$ over the formal 
 neighborhood of $Q$ that agree modulo $\mathfrak m$ with the trivializations of 
 $\alpha_1$ and $\alpha_2$ we have chosen and with the trivialization of $\alpha_4$ in 
 which $m_2$ is $\mu(t)$.
 
	 By Beauville--Laszlo, the data of $\alpha_3^*$ is equivalent to the data of a 
	 $G$-bundle over a formal neighborhood of $Q$, a $G$-bundle over the complement of 
	 $Q$, and an isomorphism between the two over the punctured formal neighborhood. 
	 Because $m_1^*$ is an isomorphism over the complement of $Q$, we can take the 
	 $G$-bundle over the complement of $Q$ to be $\alpha_1^*$, so the data of 
	 $(\alpha_3^*, m_1^*)$ is simply a $G$-bundle over a formal neighborhood of $Q$ with 
	 an isomorphism to $\alpha_1^*$ over the punctured formal neighborhood.  Because we 
	 have a trivialization of $\alpha_1^*$, this data is equivalent to an element of 
	 $G(R\pseries{t})$ modulo the right action of $G(R\bseries{t})$. We can view this 
	 element as $m_1^*$ because it is the isomorphism from $\alpha_3^*$ to $\alpha_1^*$ 
	 in formal coordinates.
 
 The map $\varphi^{'*}$ is uniquely determined by the other data, as we must have 
 $V(m_2^*)^{-1}  \circ  \varphi^*  \circ V(m_1^*)= \varphi^{'*}$. However, this formula 
 may not define any $\varphi^{'*}$, as it defines a section of $\mHom(V(\alpha_3), 
 V(\alpha_4)) + \mathcal O_X$ away from $Q$ that may have a pole of $Q$.
 
 If we express $\varphi^*$ in our trivialization over the punctured formal neighborhood 
 as $(M,s)$, then by assumption $(M,s)$ are the projective coordinates of an 
 $R\bseries{t}$-point of $\overline{G}$ and are congruent to $(e,0)$ mod $\mathfrak m$.  
 
  If we view $m_2^*$ over the punctured formal neighborhood of $Q$ as an element of 
	$G(R\pseries{t})$, by assumption on $m_2$, it can be expressed as $g_b^{-1} \mu(t) 
	g_a^{-1}$ 
	for $g_a,g_b \in G(R\bseries{t})$ and it is congruent to $\mu(t)$ modulo $\mathfrak m$.
  
	Then the possible values of $(\alpha_1^* ,m_1^*)$ are parameterized by those elements 
	of $G(R\pseries{t})$ that are of the form $g_c \mu(t) g_d$ for $g_c, g_d \in G(R\bseries{t})$, that are congruent to 
	$\mu(t)$ modulo $\mathfrak m$, and such that $g_a \mu(t)^{-1} g_b M  g_c \mu(t) g_d$ 
	is integral, up to the right action of elements of $G(R\bseries{t})$ that are 
	congruent to $1$ modulo $\mathfrak m$. By Lemma~\ref{smooth-semisimple-algebraic}, 
	there is a unique such element up to equivalence.  
 \end{proof}
 We can now prove the desired smoothness statement: 
  \begin{lemma}\label{smooth-semisimple} Both $pr_{12}$ and $pr_{34}$ are smooth at $y$.  
  \end{lemma}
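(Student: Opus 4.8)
The plan is to prove smoothness of $pr_{12}$ and $pr_{34}$ at $y$ by verifying the infinitesimal lifting criterion (formal smoothness) at the point $y$, which by the local description is exactly what Lemma~\ref{smooth-semisimple-local} provides. Recall that a map of finite-type stacks is smooth at a point if and only if it is flat and the fibers are smooth there; equivalently, via the infinitesimal criterion, every lift problem along a square-zero (or Henselian local) thickening admits a solution. So I would first unwind what it means to lift a point of $\overline{\Hecke}_{G(D),H,\weights,V}$ along $pr_{12}$ or $pr_{34}$.

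For $pr_{34}$: given a Henselian local ring $R$ with residue field over which we have the point $y$, a lift of $pr_{34}(y) = (\alpha_3,\alpha_4,t_1,t_2,\varphi')$ to an $R$-point $(\alpha_3^*,\alpha_4^*,t_3^*,t_4^*,\varphi'^*)$ of $\overline{\Hecke}_{G(D),H,\weights,V}$, I need to produce a lift of $y$ itself, i.e. a lift of the second point $(\alpha_1,\alpha_2,t_1,t_2,\varphi)$ together with the modifications $m_1,m_2$. Here the modification $m_2^*: \alpha_4^* \to \alpha_2^*$ is \emph{not} yet given — rather, one produces $\alpha_1^*, \alpha_2^*$ and $m_1^*, m_2^*$ from $\alpha_3^*,\alpha_4^*$. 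But since $m_1,m_2$ lie in the Bruhat cell $G\bseries{t}\mu(t)G\bseries{t}$ which is smooth, one can first lift $\alpha_1^* := $ the modification of $\alpha_3^*$ along the universal family over this cell (and similarly $\alpha_2^*$ from $\alpha_4^*$), so that $m_1^*,m_2^*$ exist, and then apply Lemma~\ref{smooth-semisimple-local} with the roles reversed. For $pr_{12}$ the situation is the one stated verbatim in Lemma~\ref{smooth-semisimple-local}: a lift of $pr_{12}(y)=(\alpha_1,\alpha_2,t_1,t_2,\varphi)$ is given, one freely lifts $(\alpha_4,m_2)$ (again using smoothness of the Bruhat cell to lift $\alpha_4^*$ and $m_2^*$), and then the lemma produces a unique $(\alpha_3^*,m_1^*,\varphi'^*)$, giving the required lift of $y$.

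The key steps in order: (1) observe that $pr_{12}$ and $pr_{34}$ factor, up to the choice of the extra modification data $(\alpha_4,m_2)$ resp.\ $(\alpha_1,\alpha_2,m_1,m_2)$, through correspondences over $\overline{\Hecke}$ whose fibers are governed by the Beauville--Laszlo gluing picture; (2) reduce the smoothness of $pr_{34}$ to smoothness of $pr_{12}$ together with smoothness of the (well-known, homogeneous) Bruhat cell $G\bseries{t}\mu(t)G\bseries{t}$ in the affine Grassmannian, which lets us lift $\alpha_1^*,\alpha_2^*$ and the modifications first; (3) apply Lemma~\ref{smooth-semisimple-local}, whose statement is precisely the Henselian-local lifting property needed, to conclude that lifts exist; (4) note that finite type-ness of all the spaces involved (the correspondence $\Hecke_{Q,\mu}$ is built from Hecke modifications at a single point $Q$ bounded by a fixed cocharacter $\mu$, hence finite type) plus the infinitesimal lifting criterion over Henselian local rings upgrades ``lifts exist'' to ``the morphism is smooth at $y$.''

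The main obstacle I expect is bookkeeping rather than conceptual: carefully matching the data in Lemma~\ref{smooth-semisimple-local} (which is phrased asymmetrically, producing $(\alpha_3^*,m_1^*,\varphi'^*)$ from $(\alpha_1^*,\alpha_2^*,\varphi^*)$ and $(\alpha_4^*,m_2^*)$) with what each of $pr_{12}$ and $pr_{34}$ demands, and in particular checking that the extra modification data one is free to choose can indeed be lifted independently. For $pr_{34}$ one must make sure that the lift $\alpha_1^*$ of $\alpha_1$ produced as a modification of $\alpha_3^*$, together with $m_1^*$, is compatible with \emph{some} choice of $\varphi^*$ making $(\alpha_1^*,\alpha_2^*,t_1^*,t_2^*,\varphi^*)$ a genuine $R$-point of $\overline{\Hecke}_{G(D),H,\weights,V}$ — but this follows because $\varphi^*$ is forced by $\varphi^* = V(m_2^*)\circ\varphi'^* \circ V(m_1^*)^{-1}$ away from $Q$, and the integrality/boundary conditions (1)--(3) defining $\overline{\Hecke}$ are closed and satisfied modulo $\mathfrak m$, hence (by the uniqueness clause in Lemma~\ref{smooth-semisimple-algebraic}, which controls exactly the Bruhat-cell position near $Q$) hold over all of $R$. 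Once these matchings are in place the smoothness is immediate from the infinitesimal criterion.
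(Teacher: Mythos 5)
Your treatment of $pr_{12}$ is essentially the paper's argument: you separate off the auxiliary data $(\alpha_4,m_2)$, whose lifting is free because the modification at $Q$ ranges over the (smooth, locally trivial) Bruhat cell for $\mu$, and then you invoke Lemma~\ref{smooth-semisimple-local}, whose existence \emph{and uniqueness} of lifts over a Henselian local ring is exactly what makes the remaining map \'etale (the paper phrases this as a factorization $pr_{12} = (\text{forget }(\alpha_4,m_2)) \circ p'$ with $p'$ \'etale, rather than as a lifting criterion, but the content is the same).

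The genuine gap is in your handling of $pr_{34}$. You propose to lift \emph{both} pairs $(\alpha_1^*,m_1^*)$ and $(\alpha_2^*,m_2^*)$ independently as Bruhat-cell modifications of $\alpha_3^*,\alpha_4^*$, and then to declare that $\varphi^* = V(m_2^*)\circ\varphi'^*\circ V(m_1^*)^{-1}$ automatically gives a point of $\overline{\Hecke}_{G(D),H,\weights,V}$ because the defining conditions ``are closed and satisfied modulo $\mathfrak m$, hence hold over all of $R$.'' That implication is backwards: closed conditions specialize from $R$ to the residue field, not the other way, and for a generic independent choice of the second modification the section $\varphi^*$ acquires a pole at $Q$ (equivalently, fails condition (2) of the definition of $\overline{\Hecke}$). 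The whole point of Lemma~\ref{smooth-semisimple-algebraic} is that only \emph{one} of the two modifications at $Q$ can be chosen freely; the other is then determined uniquely (up to $G(R\bseries{t})$) by the integrality constraint $g_a\mu(t)^{-1}g_b\,M\,g_c\mu(t)g_d \in \End V(R\bseries{t})$, and its statement is asymmetric — it solves for the right-hand factor given the left-hand one, which is why Lemma~\ref{smooth-semisimple-local} produces source data for $pr_{12}$ and not for $pr_{34}$. To repair this you either need a mirror version of Lemmas~\ref{smooth-semisimple-algebraic} and~\ref{smooth-semisimple-local} with the two sides of $\varphi$ interchanged, or you do what the paper does: deduce the $pr_{34}$ case from the $pr_{12}$ case by replacing $V$ with its dual representation, which reverses all the arrows and preserves the hypothesis on the pairings of weights with coroots. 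Your dimension bookkeeping also signals the problem: over a point of $\overline{\Hecke}$ the fiber of $pr_{34}$ has dimension equal to that of a single Bruhat cell, not two, so a construction with two freely chosen modifications cannot parametrize it correctly.
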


 \begin{proof}We can factor $pr_{12}$ as the composition of first, the map $p'$ that projects 
 onto a point $(\alpha_1,\alpha_2,t_1,t_2,\varphi)$ of 
 $\overline{\Hecke}_{G(D),H,\weights,V}$ 
 with a $G$-bundle $\alpha_4$ and isomorphism $m_2 : \alpha_4 \to \alpha_2$ such that 
 $m_2$ near $Q$ is in the cell of the affine Grassmannian corresponding to $\mu$, with, 
 second,
 the map that forgets $\alpha_4$ and $m_2$. This second map is a locally trivial 
 fibration by the cell of the affine Grassmannian associated to $\mu$ and hence is 
 smooth.
 
 Thus it is sufficient to show that the first projection $p'$ is \'{e}tale at $y$. To do this 
 we 
 may ignore the trivializations $t_3, t_4$ as these are uniquely determined by the other 
 data. The projection $p'$ is then defined by adding $\alpha_3, m_1, \varphi'$. Then $p'$ is 
 schematic of finite type, since the data of the pair $(\alpha_3,m_1)$ is equivalent to a section of a 
 locally trivial fibration by 
 the cell of the affine Grassmannian associated to $\mu$, and then $\varphi'$ is a 
 section of a projective bundle satisfying a closed condition, so $p'$ is 
 represented by a closed subset of a projective bundle on a fibration by a variety.  To 
 check that $p'$ is \'{e}tale at the point $y$, we use the fact that each $R$-point of the 
 base for a Henselian local ring $R$ congruent mod $\mathfrak m$ to the image of $y$ has 
 a unique lift to an $R$-point of the total space congruent mod $\mathfrak m$ to $y$, 
 which is Lemma~\ref{smooth-semisimple-local}. This implies that there is a section of $p'$ over 
 the \'{e}tale local ring at the $p'(y)$, and that this section is equal over the \'{e}tale 
 local ring at $y$ to the identity, which implies the natural map from the \'{e}tale local ring at $p'(y)$ to the \'{e}tale local ring at $y$ is 
 an isomorphism and so the map is \'{e}tale.
   
 Finally, we can deduce the $pr_{34}$ case from the $pr_{12}$ case by symmetry, taking 
 the dual of $V$ and so reversing all the arrows. Note that the assumption on the weights 
 of $V$ is preserved by duality.  
  \end{proof}
  
  Using smoothness, we can prove an isomorphism of stalks, which will be a key ingredient in our induction argument:
 
 \begin{lemma}\label{travelling-semisimple} The stalks of $pr_{12}^*  j_*  
 (IC_{\Hecke_{G(D),\weights} } \boxtimes \mathcal L )$ and $pr_{34}^* j_*  
 (IC_{\Hecke_{G(D),\weights} } \boxtimes \mathcal L )$ at $y$ are isomorphic. \end{lemma}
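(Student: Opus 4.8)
The plan is to exploit the smoothness of $pr_{12}$ and $pr_{34}$ at $y$ established in Lemma~\ref{smooth-semisimple}, together with the smooth base change theorem, to reduce both stalks to stalks of $j_*(IC_{\Hecke_{G(D),\weights}}\boxtimes\mathcal L)$ on $\overline{\Hecke}_{G(D),H,\weights,V}$ itself, and then to compare those. First I would observe that the construction of the space $\Hecke_{Q,\mu}(\overline{\Hecke}_{G(D),H,\weights,V})$ is compatible with the open immersion $j$: there is a corresponding Hecke modification space $\Hecke_{Q,\mu}(\Hecke_{G(D),\weights}\times H)$ — formed in exactly the same way, using modifications $m_1,m_2$ away from $Q$ in the Bruhat cell of $\mu$ — and a commuting diagram whose two squares (one for $pr_{12}$, one for $pr_{34}$) relate it to $\Hecke_{G(D),\weights}\times H$ via the two projections. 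The key point is that each of these squares is Cartesian: a point of $\overline{\Hecke}_{Q,\mu}$ lying over a point in the image of $j$ under $pr_{12}$ (resp.\ $pr_{34}$) automatically has its other leg in the image of $j$ as well, because $j$ is precisely the locus where $\varphi$ (resp.\ $\varphi'$) lands in the honest Hom-space rather than its projective completion (Lemma~\ref{jopen-semisimple}), and the conditions $\varphi\circ V(m_1)=V(m_2)\circ\varphi'$ together with $V(m_i)$ being isomorphisms away from $Q$ force the two $\varphi$'s to degenerate simultaneously. Hence $pr_{12}^* j_* = j'_* pr_{12}^*$ and $pr_{34}^* j_* = j'_* pr_{34}^*$ by smooth base change, where $j'$ is the open immersion for the Hecke-modification spaces.

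Next I would compare the two pullbacks $pr_{12}^*(IC_{\Hecke_{G(D),\weights}}\boxtimes\mathcal L)$ and $pr_{34}^*(IC_{\Hecke_{G(D),\weights}}\boxtimes\mathcal L)$ on $\Hecke_{Q,\mu}(\Hecke_{G(D),\weights}\times H)$. Since modification at $Q$ — a point away from $D$ and away from the support of $\weights$ — does not interact with the $H$-factor or with the affine-Grassmannian conditions defining $\Hecke_{G(D),\weights}$, both $pr_{12}$ and $pr_{34}$ from $\Hecke_{Q,\mu}(\Hecke_{G(D),\weights}\times H)$ are smooth surjections (being locally trivial fibrations by the $\mu$-cell of the affine Grassmannian, as in the proof of Lemma~\ref{smooth-semisimple}), and the $H$-coordinate and $IC$-sheaf are pulled back along the same forgetful map in both cases. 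Concretely, $IC_{\Hecke_{G(D),\weights}}$ pulled back along either projection is, up to a shift by the (constant) relative dimension of the $\mu$-cell fibration, the $IC$-sheaf of $\Hecke_{Q,\mu}(\Hecke_{G(D),\weights})$, and $\mathcal L$ is pulled back along the projection to $H$ which is the same for $pr_{12}$ and $pr_{34}$; so the two pullbacks agree. Applying $j'_*$ and then smooth base change in reverse, the stalks of $pr_{12}^* j_*(IC\boxtimes\mathcal L)$ and $pr_{34}^* j_*(IC\boxtimes\mathcal L)$ at $y$ are each identified, up to the same shift, with the stalk of the pushforward on $\Hecke_{Q,\mu}(\Hecke_{G(D),\weights}\times H)$ at $y$, hence with each other.

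The main obstacle I anticipate is verifying carefully that the two relevant squares are genuinely Cartesian — i.e.\ that forming Hecke modifications at $Q$ commutes with the open immersion $j$ on the nose, including the subtle interaction between the boundary condition (the $\overline G$ versus $G$ dichotomy of Lemma~\ref{jopen-semisimple}) and the modification data. One has to check that $\varphi$ is an honest homomorphism (section of the Hom-sheaf, not the projective completion) if and only if $\varphi'$ is, given the compatibility $\varphi\circ V(m_1)=V(m_2)\circ\varphi'$ with $m_1,m_2$ modifications supported at the single point $Q\notin\operatorname{supp}(\weights)$; this uses that $V(m_1),V(m_2)$ are invertible away from $Q$ and that near $Q$ the point $\varphi$ either lies in $G\pseries t$ or not, a property preserved by composition with the fixed modification $\mu(t)$ since $e$ commutes with $\mu(t)$. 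Once this compatibility is nailed down the rest is a formal chase of smooth base change and Künneth, entirely parallel to the argument already used in Lemma~\ref{symmetry-semisimple}.
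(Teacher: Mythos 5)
Your proposal is correct and follows essentially the same route as the paper: pass to the open locus $\Hecke_{Q,\mu}\left(\Hecke_{G(D),\weights}\times H\right)$ cut out by the equivalence ``$\varphi$ nondegenerate iff $\varphi'$ nondegenerate'' coming from $\varphi\circ V(m_1)=V(m_2)\circ\varphi'$, commute $j_*$ past the pullbacks by smooth base change via Lemma~\ref{smooth-semisimple}, and compare the two pullbacks by splitting $IC_{\Hecke_{G(D),\weights}}\boxtimes\mathcal L$ into its $IC$- and $\mathcal L$-factors, the $IC$-parts agreeing because both primed projections are smooth. The one step you leave implicit --- that the two composite projections to $H$ literally coincide, i.e. $t_2\circ f\circ t_1^{-1}=t_4\circ f'\circ t_3^{-1}$ --- is exactly the short verification the paper carries out, using faithfulness of $V$ to get $m_2\circ f'=f\circ m_1$ together with $t_3=t_1\circ m_1$ and $t_4=t_2\circ m_2$; your heuristic that a modification supported at $Q\notin D$ cannot change the $D$-trivialization data is the correct reason this works.
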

 
 \begin{proof} By Lemma~\ref{jopen-semisimple}, the image of $j$ inside 
 $\overline{\Hecke}_{G(D),H,\weights,V}$ consists of those 
 $(\alpha_1,\alpha_2,t_1,t_2,\varphi)$ where the last coordinate of $\varphi$ is nonzero. 
 For a point of   $\Hecke_{Q,\mu} \left( \overline{\Hecke}_{G(D),H,\weights,V} \right) 
 $,  the equation $\varphi \circ V(m_1)= V(m_2) \circ \varphi'$ ensures that the last coordinate of $\varphi$ is nonzero if and only if the last coordinate of $\varphi'$ is nonzero. Let  
 $\Hecke_{Q,\mu} \left(\Hecke_{G(D),\weights} \times H \right) $ be the open subset where the last coordinates of $\varphi$ and $\varphi'$ are nonzero, $j'$ its inclusion into $\Hecke_{Q,\mu} \left( 
 \overline{\Hecke}_{G(D),H,\weights,V} \right) $, and $pr_{12}'$ and $pr_{34}'$ the 
 projections onto $\Hecke_{G(D),\weights} \times H$. This gives a commutative diagram:
 \[ \begin{tikzcd}
 \overline{\Hecke}_{G(D),H,\weights,V} & \Hecke_{Q,\mu} \left( 
 \overline{\Hecke}_{G(D),H,\weights,V} \right) \arrow[l, "pr_{12}"] \arrow[r,"pr_{34}"] & 
 \overline{\Hecke}_{G(D),H,\weights,V} \\
  \Hecke_{G(D),\weights} \times H \arrow[u,"j"] &\Hecke_{Q,\mu} 
  \left(\Hecke_{G(D),\weights} \times H \right)  \arrow[l, "pr_{12}'"] 
  \arrow[r,"pr_{34}'"]  \arrow[u,"j'"]& \Hecke_{G(D),\weights} \times H  \arrow[u,"j"]\\
  \end{tikzcd}
  \]

  To show the isomorphism, observe that in a neighborhood of $y$, $pr_{12}^*  j_*  
  (IC_{\Hecke_{G(D),\weights} } \boxtimes \mathcal L ) = j'_* pr_{12}^{'*}( 
  IC_{\Hecke_{G(D),\weights} } \boxtimes \mathcal L )$ by smooth base change and Lemma 
 ~\ref{smooth-semisimple}. So it suffices to show that $pr_{12}^{'*}( 
  IC_{\Hecke_{G(D),\weights} } \boxtimes \mathcal L )= pr_{34}^{'*}( 
  IC_{\Hecke_{G(D),\weights} } \boxtimes \mathcal L )$.  Let $p_c : \Hecke_{G(D),\weights} \times H \to \Hecke_{G(D),\weights}$ and $p_h: \Hecke_{G(D),\weights \times H} \to H$ be the projections.  We have $IC_{\Hecke_{G(D),\weights} } \boxtimes \mathcal L = p_c^* IC_{\Hecke_{G(D),\weights} } \otimes  p_h^* \mathcal L$ so \[ pr_{12}^{'*}( 
  IC_{\Hecke_{G(D),\weights} } \boxtimes \mathcal L ) = pr_{12}^{'*} p_c^* IC_{\Hecke_{G(D),\weights} } \otimes pr_{12}^{*'} p_h^* \mathcal L,\] and similarly for $pr_{34}$. Hence it suffices to show that   \[ pr_{12}^{'*} p_c^* IC_{\Hecke_{G(D),\weights} } = pr_{34}^{'*} p_c^* IC_{\Hecke_{G(D),\weights} } \]  and \[ pr_{12}^{'*} p_h^* \mathcal L = pr_{34}^{'*} p_h^* \mathcal L.\]

The map $pr_{12'}$ is smooth by Lemma~\ref{smooth-semisimple}, and $p_c$ is smooth because $H$ is. Thus, $ pr_{12}^{'*} p_c^* IC_{\Hecke_{G(D),\weights} } $ is simply a shift of $IC_{\Hecke_{Q,\mu} 
  \left(\Hecke_{G(D),\weights} \times H \right) }$. The same argument works for for $pr_{34}$, which gives the first desired identity.
  
  The second desired identity follows from $p_h \circ pr_{12}'  = p_h \circ pr_{34}'$, which can be expressed also as the commutativity of the extended diagram \[ \begin{tikzcd}
 \overline{\Hecke}_{G(D),H,\weights,V} & \Hecke_{Q,\mu} \left( 
 \overline{\Hecke}_{G(D),H,\weights,V} \right) \arrow[l, "pr_{12}"] \arrow[r,"pr_{34}"] & 
 \overline{\Hecke}_{G(D),H,\weights,V} \\
  \Hecke_{G(D),\weights} \times H \arrow[u,"j"] \arrow[dr, "p_h"] &\Hecke_{Q,\mu} 
  \left(\Hecke_{G(D),\weights} \times H \right)  \arrow[l, "pr_{12}'"] 
  \arrow[r,"pr_{34}'"]  \arrow[u,"j'"]& \Hecke_{G(D),\weights} \times H  \arrow[u,"j"] 
  \arrow[dl, "p_h"]\\
&  H & \\
  \end{tikzcd}
  \]
  
  If $(\alpha_1,\alpha_2, t_1,t_2,\varphi)$ is in the image under $j$ of some point $((\alpha_1,t_1,\alpha_2,f) , h) \in \Hecke_{G(D),\weights \times H}$, then $\varphi=V(f)$ for some isomorphism $f$ of $G$-bundles $\alpha_1\to\alpha_2$, and $t_2 = h \circ t_1 \circ f^{-1}|_D$, so $h = t_2 \circ f |_D\circ t_1^{-1}$. Similarly if $\varphi' = V(f')$ then we have $h'= t_4 \circ f'|_D \circ t_3^{-1}$.  To check that the diagram commutes, we must check $h=h'$.  Because $V$ is faithful, the  identity $V(m_1) \circ \varphi' = \varphi \circ V(m_2)$ implies $m_2 \circ f' = f \circ 
  m_1$.  Thus we have
  \[ t_2 \circ f|_D \circ t_1^{-1} = t_2 \circ f |_D\circ m_1 |_D\circ t_3^{-1} = t_2 \circ m_2|_D 
  \circ f' |_D\circ t_3^{-1} = t_4 \circ f'|_D \circ t_3^{-1} \]
showing that the diagram commutes and completing the proof.  
 \end{proof}
 
 The final ingredient in our induction is a lemma that checks that the height grows:
 
 \begin{lemma}\label{heightgrowth-semisimple} For $y=( 
 (\alpha_1,\alpha_2,t_1,t_2,\varphi),(\alpha_3,\alpha_4,t_3,t_4,\varphi'),m_1,m_2)$ 
 defined as before, the height of $(\alpha_3,\alpha_4,t_1,t_2,\varphi')$ is strictly 
 greater than the height of $(\alpha_1,\alpha_2,t_1,t_2,\varphi)$. \end{lemma}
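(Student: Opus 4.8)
The plan is to give an explicit description of $\cN_{\alpha_3,\varphi'}$ as a Hecke modification of $\cN_{\alpha_1,\varphi}$ supported at $Q$, and then to read off from this description that the smallest degree of a line bundle occurring as a quotient of a graded piece of the unipotent radical strictly decreases, which is exactly the assertion that the height strictly increases.

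First I would record the local picture of the point $y$. Near $Q$ the bundles $\alpha_1$ and $\alpha_2$ have been trivialized so that $\varphi=e$, and in these coordinates the reduction $\cP_{\alpha_1,\varphi}$ (the stabilizer of the subbundle $\ker\varphi$) is the standard copy of $P$; hence $\cN_{\alpha_1,\varphi}$ and each of its derived-series graded pieces $\cN_{\alpha_1,\varphi,i}/\cN_{\alpha_1,\varphi,i+1}$ is the trivial bundle near $Q$, canonically split into the $T$-weight lines $\mathfrak g_\alpha$ indexed by the roots $\alpha$ of $N$ that occur in $N_i/N_{i+1}$. By construction of $y$ we also have $\varphi'=e$ near $Q$ in a compatible trivialization of $\alpha_3$, so $\cP_{\alpha_3,\varphi'}$ is again the standard $P$ near $Q$; away from $Q$ the isomorphism $m_1$ identifies $(\alpha_3,\varphi')$ with $(\alpha_1,\varphi)$ compatibly with these reductions, and near $Q$ the two local trivializations are glued by $m_1$, which is $\mu(t)\in T\subseteq P$. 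Thus $\cP_{\alpha_3,\varphi'}$ is the Hecke modification of $\cP_{\alpha_1,\varphi}$ at $Q$ by the cocharacter $\mu$, and, applying the ``unipotent radical'' functor (i.e.\ the adjoint action $\mathrm{Ad}\colon P\to\Aut(N)$) together with the fact that the $N_i$ are $P$-stable, each $\cN_{\alpha_3,\varphi',i}/\cN_{\alpha_3,\varphi',i+1}$ is the Hecke modification at $Q$ of $\cN_{\alpha_1,\varphi,i}/\cN_{\alpha_1,\varphi,i+1}$ by the automorphism $\mathrm{Ad}(\mu(t))$ of the vector space $N_i/N_{i+1}$.

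I would then carry out the degree computation. In the local coordinates above, $\mathrm{Ad}(\mu(t))$ scales the weight line $\mathfrak g_\alpha$ by $t^{\langle\alpha,\mu\rangle}$. By the defining property of $\mu$, every root $\alpha$ of $N$ pairs with $\mu$ with a fixed, nonzero sign; since $\langle\alpha,\mu\rangle\in\Z$ this forces $|\langle\alpha,\mu\rangle|\ge 1$, and the sign is the one for which this modification replaces each $\cN_{\alpha_1,\varphi,i}/\cN_{\alpha_1,\varphi,i+1}$ by a subsheaf that agrees with it away from $Q$ and near $Q$ is contained in $(\cN_{\alpha_1,\varphi,i}/\cN_{\alpha_1,\varphi,i+1})\otimes\mathcal{O}_X(-Q)$. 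Consequently, for each $i$ and each line bundle quotient $\pi\colon\cN_{\alpha_1,\varphi,i}/\cN_{\alpha_1,\varphi,i+1}\twoheadrightarrow L$, composing the inclusion $\cN_{\alpha_3,\varphi',i}/\cN_{\alpha_3,\varphi',i+1}\hookrightarrow (\cN_{\alpha_1,\varphi,i}/\cN_{\alpha_1,\varphi,i+1})\otimes\mathcal{O}_X(-Q)$ with $\pi\otimes\mathcal{O}_X(-Q)$ gives a generically surjective map onto $L\otimes\mathcal{O}_X(-Q)$, whose image is a line bundle quotient of $\cN_{\alpha_3,\varphi',i}/\cN_{\alpha_3,\varphi',i+1}$ of degree at most $\deg L-1$. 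Taking the minimum over all $i$ and all such quotients, the smallest degree of a line bundle quotient of a graded piece of $\cN_{\alpha_3,\varphi'}$ is at most that for $\cN_{\alpha_1,\varphi}$ minus one; equivalently $\mathrm{height}(\alpha_3,\alpha_4,t_1,t_2,\varphi')\ge \mathrm{height}(\alpha_1,\alpha_2,t_1,t_2,\varphi)+1$.

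The step I expect to be the main obstacle is the identification in the middle paragraph: making the Beauville--Laszlo bookkeeping precise so that $\cP_{\alpha_3,\varphi'}$ really is the modification of $\cP_{\alpha_1,\varphi}$ by exactly $\mu$, and, crucially, pinning down the direction of the resulting twist on $\cN$ so that it is the one that increases the height. This is where one uses that $\varphi'=e$ near $Q$ (so that the $P$-reduction of $\alpha_3$ is itself unmodified there and only the gluing at the boundary of the formal disk contributes) together with the sign condition satisfied by $\mu$; once the direction of the twist is fixed, the remaining degree estimate on the graded vector bundles is routine.
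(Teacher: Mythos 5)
Your strategy coincides with the paper's own proof: identify the graded pieces of $\cN_{\alpha_3,\varphi'}$ with a modification at $Q$ of the graded pieces of $\cN_{\alpha_1,\varphi}$ by $\mathrm{Ad}(\mu(t))$, use that every root of $N$ pairs nontrivially with $\mu$, and then run the quotient-degree argument; your degree bookkeeping in the second paragraph is correct and is exactly the paper's final step. The gap is that the one step carrying all the content --- the \emph{direction} of the twist --- is asserted (``the sign is the one for which\dots'') rather than derived, and you yourself flag it in your closing paragraph as not done. This is not a routine verification one can defer. Concretely: since $m_1:\alpha_3\to\alpha_1$ is $\mu(t)$ near $Q$, a local section $h$ of $\cN_{\alpha_3,\varphi'}$ corresponds on the $\alpha_1$-side to $m_1\,h\,m_1^{-1}=\mu(t)\,h\,\mu(t)^{-1}$, which scales the root line $\mathfrak g_\alpha$ by $t^{\langle\alpha,\mu\rangle}$; hence your containment $\mathrm{gr}_i\cN_{\alpha_3,\varphi'}\subseteq(\mathrm{gr}_i\cN_{\alpha_1,\varphi})\otimes\cO_X(-Q)$ holds precisely when $\langle\alpha,\mu\rangle\ge 1$ for every root $\alpha$ of $N$. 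But the convention for $\mu$ fixed in the paper just before Lemma~\ref{smooth-semisimple-algebraic} (the eigenvalue of $g\mapsto\mu(\lambda)^{-1}g\mu(\lambda)$ is nonnegative on roots in $P$, negative on roots not in $P$) gives $\langle\alpha,\mu\rangle<0$ on roots of $N$, i.e.\ the opposite sign, under which the modification enlarges the graded bundles at $Q$ and the height of the $(\alpha_3,\alpha_4)$-point would come out strictly \emph{smaller}. So without pinning the sign down, your argument only shows that the two heights differ by at least one, not which is larger --- and the statement of the lemma is exactly the assertion about which is larger.

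For comparison, the paper's proof makes the corresponding computation explicit: it considers the map $\cN_{\alpha_1,\varphi}\to\cN_{\alpha_3,\varphi'}$, $g\mapsto m_1^{-1}gm_1=\mu(t)^{-1}g\mu(t)$, argues that on associated graded pieces it extends across $Q$ with a zero there, and then applies the same quotient-degree mechanism you use. Note, however, that this is precisely the delicate point: a map of vector bundles that is generically an isomorphism and vanishes at $Q$ produces the lower-degree line-bundle quotient on its \emph{source}, so the orientation of the conjugation must be reconciled carefully with the sign convention for $\mu$ (which is also used, with the other orientation, in the integrality argument of Lemma~\ref{smooth-semisimple-algebraic}). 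A complete write-up has to either verify that the convention in force makes $\langle\alpha,\mu\rangle$ positive on the roots of $N$ (adjusting $\mu$ or the description of $m_1$ accordingly and checking compatibility with the smoothness lemma), or redo the containment in the direction that convention actually dictates. As it stands, your proposal delegates exactly this verification, so the crucial inequality of the lemma is not established.
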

 
 \begin{proof}  Consider the natural isomorphism $\cN_{\alpha_1,\varphi} \to 
 \cN_{\alpha_3,\varphi'}$ away from $Q$ that is induced by the isomorphism $m_1$.   This 
 isomorphism respects the canonical filtration of $N$ by vector spaces. Hence 
 it defines an isomorphism from the associated graded vector bundles of 
 $\cN_{\alpha_1,\varphi}$ to the associated graded vector bundles of 
 $\cN_{\alpha_3,\varphi'}$. We will show that each map of vector bundles appearing this 
 way extends to a map of vector bundles over all of $X$ that vanishes over the fiber of 
 $Q$.  
 
 To do this, it is sufficient to calculate in a neighborhood of $Q$. Over that 
 neighborhood, we can assume that $\varphi$ and $\varphi'$ are both simply the map $e$, 
 so that $\cN_{\alpha_1,\varphi}$ and $\cN_{\alpha_3,\varphi'}$ are each $N$, and the 
 induced 
 map is the homomorphism $g \to m_1^{-1} \circ g \circ m_1 = \mu(t)^{-1} g \mu(t)$. So it 
 is sufficient to show that the eigenvalues of $\mu(t)$ acting by conjugation on the 
 associated graded module of the canonical filtration of $N$ are all positive powers of $t$. Because the associated graded is 
 also the associated graded of the Lie algebra of a filtration on the Lie algebra of $N$, it is sufficient to show that all 
 the eigenvalues of $\mu(t)$ on the Lie algebra of $N$ are positive powers of $t$. To do 
 this, observe that for any root in the Lie algebra of $N$, its dual root is not in the 
 Lie algebra of $P$, so the eigenvalue of $\mu(t)$ on it is a negative power of $t$.

Given a map $V_1\to V_2$ that is an isomorphism away from a point $Q$ and vanishes $Q$ , any line bundle $L$ that appears as a 
quotient of $V_2$  admits a nontrivial map from $V_1$ which vanishes at a point, and so $L_1(-Q)$ admits a nontrivial map from $V_1$, and thus some line bundle which maps to $L_1(-Q)$ and thus has degree $< \deg L_1$ must appear as a quotient of $V_1$. It follows that the height of $(\alpha_1,\alpha_2,t_1,t_2,\varphi)$ 
is less than the height of $(\alpha_3,\alpha_4,t_1,t_2,\varphi')$. 
    \end{proof}
        
\subsection{Conclusion}\label{s:7-conclusion}

    \begin{theorem} \label{mainextension-semisimple} Assume that $V$ lifts to the Witt vectors of $k$ and that the pairing of any weight of $V$ with any coroot of $G$ is less than $p$.
    
    Assume that $(G,m_u, H_u, 
    \mathcal L_u)$ is geometrically supercuspidal for some $u \in D$ and $\charr(k)>2$. Then the natural 
    map \[ j_!  (IC_{\Hecke_{G(D),\weights} } \boxtimes \mathcal L )\to j_*  
    (IC_{\Hecke_{G(D),\weights} } \boxtimes \mathcal L ) \] is an isomorphism.
    \end{theorem}

\begin{proof}

We check the isomorphism on stalks at each point.
By Lemma~\ref{jopen-semisimple}, $j$ is an open immersion, and thus the isomorphism holds 
for points in the image of $j$. At points outside the image of $j$, it is sufficient to 
prove that the stalk of $j_*  (IC_{\Hecke_{G(D),\weights} } \boxtimes \mathcal L ) $ 
vanishes. We do this by induction on the height. The base case when the height is greater 
than $2g-2 + |D|$ is handled by Lemma~\ref{htvan-semisimple}.

For the induction step, we assume it is true for height $>h$ and prove it for height $h$. 
Given a point  $(\alpha_1,\alpha_2,t_1,t_2,\varphi)$ of height $h$, we have defined a 
point $y$ of $ \Hecke_{Q,\mu} \left( \overline{\Hecke}_{G(D),H,\weights,V} \right)$. By 
Lemma~\ref{travelling-semisimple}, the stalk at $pr_{12}(y)$ is equal to the stalk at 
$pr_{34}(y)$. By Lemma~\ref{heightgrowth-semisimple}, the height of $p_{34}(y)$ is 
greater than $h$, so by our induction hypothesis the stalk vanishes, and then the stalk at 
$(\alpha_1,\alpha_2,t_1,t_2,\varphi)$  vanishes, completing the induction step.
\end{proof}

\begin{theorem}\label{mainduality-semisimple} Assume that $(G,m_u, H_u, \mathcal L_u)$ is 
geometrically supercuspidal for some $u \in D$ and $\charr(k)>2$. Then the natural map \[ 
\Delta^{\weights}_! \left( IC_{\Hecke_{G(D),\weights} }  \boxtimes \mathcal L \right) \to 
\Delta^{\weights}_* \left( IC_{\Hecke_{G(D),\weights} }  \boxtimes \mathcal L \right) \] 
is an isomorphism. \end{theorem}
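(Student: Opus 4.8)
The plan is to reduce immediately to the ``absolute'' cleanness statement Theorem~\ref{mainextension-semisimple}, which has already been proved, by exploiting the factorization of $\Delta^{\weights}$ through the compactification built in this section. Write $K := IC_{\Hecke_{G(D),\weights}} \boxtimes \mathcal L$ for the sheaf on $\Hecke_{G(D),\weights} \times H$ under consideration. By Lemma~\ref{jopen-semisimple} the map $j : \Hecke_{G(D),\weights} \times H \to \overline{\Hecke}_{G(D),H,\weights,V}$ is an open immersion, and by Lemma~\ref{compactification-semisimple} the map $\overline{\Delta}^{\weights} : \overline{\Hecke}_{G(D),H,\weights,V} \to \BunGD \times \BunGD$ is projective, hence proper, and satisfies $\overline{\Delta}^{\weights} \circ j = \Delta^{\weights}$. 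So $(j,\overline{\Delta}^{\weights})$ is a relative compactification of $\Delta^{\weights}$: an open immersion followed by a proper morphism, with all the compactifying data already in hand.

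First I would observe that, by the definition of $\Delta^{\weights}_!$ via such a compactification, there are canonical identifications $\Delta^{\weights}_! K = \overline{\Delta}^{\weights}_*\bigl( j_! K\bigr)$ and $\Delta^{\weights}_* K = \overline{\Delta}^{\weights}_*\bigl( j_* K\bigr)$ (using properness of $\overline{\Delta}^{\weights}$ to identify $\overline{\Delta}^{\weights}_! = \overline{\Delta}^{\weights}_*$), under which the natural transformation $\Delta^{\weights}_! K \to \Delta^{\weights}_* K$ is precisely $\overline{\Delta}^{\weights}_*$ applied to the natural map $j_! K \to j_* K$. Here the operations $!$ and $*$ make sense on these Artin stacks because the morphisms in question are schematic and affine by Lemma~\ref{schematic-affine}, so one works smooth-locally on $\BunGD \times \BunGD$ with morphisms of schemes, as in the remark following that lemma (and, the claim being about an isomorphism of sheaves, one may pass to $\overline k$).

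Then I would invoke Theorem~\ref{mainextension-semisimple}: under the hypothesis that $(G,m_u,H_u,\mathcal L_u)$ is geometrically supercuspidal for some $u \in D$ and $\charr(k) > 2$, the natural map $j_! K \to j_* K$ is an isomorphism. Applying $\overline{\Delta}^{\weights}_*$ to it yields the desired isomorphism $\Delta^{\weights}_! K \to \Delta^{\weights}_* K$, completing the proof. There is no real obstacle at this stage: the entire content has been packaged into Theorem~\ref{mainextension-semisimple}, and the only thing to verify here — that $\Delta^{\weights}$ genuinely factors as open-immersion-then-proper with the compactification $\overline{\Hecke}_{G(D),H,\weights,V}$ — is exactly Lemmas~\ref{jopen-semisimple} and~\ref{compactification-semisimple}.
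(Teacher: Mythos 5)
Your proposal is correct and is essentially the paper's own proof: the paper likewise writes $\Delta^{\weights}_! = \overline{\Delta}^{\weights}_* \circ j_!$ and $\Delta^{\weights}_* = \overline{\Delta}^{\weights}_* \circ j_*$ using properness of $\overline{\Delta}^{\weights}$ and the factorization $\overline{\Delta}^{\weights} \circ j = \Delta^{\weights}$, then applies Theorem~\ref{mainextension-semisimple}. No gap; the extra remarks about schematic affineness and working smooth-locally are consistent with the paper's setup.
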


\begin{proof}  By Lemma~\ref{V-existence}, there exists a representation $V$ satisfying the condition of Theorem \ref{mainextension-semisimple}.

We have observed that $\overline{\Delta}^\weights \circ j = \Delta^{\weights}$ and that 
$\overline{\Delta}^\weights$ is proper. We thus have
\begin{align*}
 \Delta^{\weights}_! \left( IC_{\Hecke_{G(D),\weights} }  \boxtimes \mathcal L \right) = 
 \overline{\Delta}^{\weights}_* j_! \left( IC_{\Hecke_{G(D),\weights} }  \boxtimes 
 \mathcal L \right) 
&= \overline{\Delta}^{\weights}_* j_* \left( IC_{\Hecke_{G(D),\weights} }  \boxtimes 
\mathcal L \right) \\
&= \Delta^{\weights}_* \left( IC_{\Hecke_{G(D),\weights} }  \boxtimes \mathcal L \right).
\qedhere
\end{align*}
 \end{proof}

In fact, this result also holds in characteristic $2$ if $G$ has no nontrivial normal subgroup with trivial center by a similar proof, using the second part of Lemma~\ref{V-existence}.

\section{Properties of the Hecke complex}\label{s:properties}
Let $X$ be a smooth projective curve over $k$, $G$ a split semisimple group over $k$,  $D$ an 
effective divisor on $X$, $H$ a smooth connected factorizable subgroup of $G \lWR 
\mathcal O_D \rWR$, and $\mathcal L$ a character sheaf on $H$.

For $\weights: |X| \to \Lambda^+$ a function with finite support, supported away from $D$, let 
\[K_{\weights} := \Delta^{W}_! \left( IC_{\Hecke_{G(D),\weights}}   
\boxtimes \mathcal L \right) [ \dim H].\]
\index{$K_W$, Hecke complex}

We will use Theorem~\ref{mainduality-semisimple}, and other tools, to show important 
properties of $K_{\weights}$. In \S\ref{sub:purity-perversity-ss} we will show 
it is a pure perverse sheaf. In \S\ref{sub:vanishing-ss} we will describe its 
support. In \S\ref{sub:trace-fn-ss} we will calculate its trace function. 

\subsection{Purity and Perversity}\label{sub:purity-perversity-ss}

\begin{notation} Let $d(\weights) := \sum_{x \in |X|}2 (\deg x) \langle \weights_x, \rho 
\rangle $ where $\rho$ is half the sum of the positive roots of the maximal torus of $G$. 
\end{notation}
\index{$d(W)$, total sum of degrees}

\begin{lemma}\label{dimension-count} 

\begin{enumerate}[(i)]

\item The dimension of $\Bun_{G(D)}$ is $( \dim G) (g+|D|-1)$.

\item The dimension of $\Hecke_{G(D),\weights}$ is $(\dim G) (g+|D|-1) + d(\weights)$

\end{enumerate}

\end{lemma}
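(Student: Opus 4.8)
The plan is to compute both dimensions by realizing the stacks as fibrations over stacks whose dimension is already understood, so that dimensions simply add; the whole argument is dimension bookkeeping on top of two standard inputs.

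For part (1), I would start from the standard fact that $\Bun_G$ is smooth and equidimensional of dimension $(\dim G)(g-1)$. This can be read off from deformation theory: the tangent complex of $\Bun_G$ at a bundle $E$ is $R\Gamma(X,\mathrm{ad}(E))[1]$, so the dimension at $E$ equals $-\chi(X,\mathrm{ad}(E)) = -\bigl(\deg\mathrm{ad}(E) + (\dim G)(1-g)\bigr)$; here $\deg\mathrm{ad}(E)=0$ because the adjoint representation has trivial determinant (the maximal torus acts trivially on the Cartan and pairs up the root spaces $\pm\alpha$), and smoothness — which makes this virtual dimension the actual one — follows from the vanishing of $H^2$ of a coherent sheaf on a curve. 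Then the forgetful map $\BunGD \to \Bun_G$ is a torsor under $G\lWR\mathcal O_D\rWR$, since any two trivializations of a $G$-bundle along $D$ differ by a unique element of $G(\mathcal O_D)$. As $\mathcal O_D$ is a $k$-algebra of $k$-dimension $\deg D = |D|$ and $G\lWR\mathcal O_D\rWR = \operatorname{Res}_k(G_{\mathcal O_D})$, Weil restriction along a degree-$|D|$ finite algebra gives $\dim G\lWR\mathcal O_D\rWR = (\dim G)|D|$, so $\dim\BunGD = (\dim G)(g-1) + (\dim G)|D| = (\dim G)(g+|D|-1)$.

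For part (2), I would consider the forgetful map $\Hecke_{G(D),\weights}\to\BunGD$ sending $(\alpha_1,\alpha_2,f,t_1)$ to $(\alpha_1,t_1)$. Its fiber over a geometric point $(\alpha_1,t_1)$ is the space of modifications $(\alpha_2,f)$ of $\alpha_1$ of type $\weights_x$ at each $x$ in the support of $\weights$; since this support is disjoint from $D$ the fiber does not see $t_1$, and since each affine Schubert variety $\overline{\Gr_{\weights_x}}$ is stable under $G(\ko_x)$ the fiber is independent of the auxiliary choice of local trivializations of $\alpha_1$. Thus every fiber is (noncanonically) isomorphic to $\mathcal F := \prod_{x\in\operatorname{supp}(\weights)}\operatorname{Res}_{\kappa_x/k}\overline{\Gr_{\weights_x}}$, where $\overline{\Gr_{\weights_x}}$ is taken in the affine Grassmannian of $G$ over the residue field $\kappa_x$ — the Weil restriction enters because a $k$-point of the local Hecke datum at $x$ is a $\kappa_x$-point of the affine Grassmannian over $\kappa_x$. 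Hence $\dim\Hecke_{G(D),\weights} = \dim\BunGD + \dim\mathcal F$, and using the standard dimension formula $\dim\overline{\Gr_\lambda} = \langle\lambda,2\rho\rangle$ together with the fact that Weil restriction along $\kappa_x/k$ multiplies dimension by $[\kappa_x:k]=\deg x$, one gets $\dim\mathcal F = \sum_x(\deg x)\langle\weights_x,2\rho\rangle = \sum_x 2(\deg x)\langle\weights_x,\rho\rangle = d(\weights)$, which combined with (1) yields the claim.

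There is no serious obstacle here; the one place that repays attention is the appearance of the residue-field degree $\deg x$, which is forced by the Weil restriction of the local affine Grassmannian and is invisible over an algebraically closed field, so one must take care to track it over a general finite base field. The only inputs external to the paper are the two standard dimension formulas $\dim\Bun_G = (\dim G)(g-1)$ and $\dim\overline{\Gr_\lambda} = \langle\lambda,2\rho\rangle$, either of which could equally well be cited rather than reproved.
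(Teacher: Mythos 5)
Your proposal is correct and follows essentially the same route as the paper: part (1) via the $G\lWR\mathcal O_D\rWR$-torsor structure of $\BunGD$ over $\Bun_G$, and part (2) via the fibration over $\BunGD$ whose fibers are products of Weil restrictions of closed affine Schubert cells of dimension $\langle\weights_x,2\rho\rangle$. The extra detail you supply (the deformation-theoretic computation of $\dim\Bun_G$ and the Schubert-cell dimension formula) is simply taken as standard input in the paper's proof.
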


\begin{proof} \begin{enumerate}[(i)]

\item $\Bun_{G(D)}$ is a $G\lWR \mathcal O_D \rWR$-torsor on $\Bun_{G}$. The dimension of 
$\Bun_{G}$ is $( \dim G) (g-1)$ and the dimension of $G  \lWR \mathcal O_D \rWR$ is $( 
\dim G) |D|$.

\item $\Hecke_{G(D),\weights}$ is a fiber bundle over $\Bun_{G(D)}$ in the \'{e}tale topology. The fiber over each 
point of $\Bun_{G(D)}$ is equal to the product over $x$ in the support of $\weights$ of the Weil 
restriction from 
$\kappa_x$ to $k$ of the closure of the Schubert cell of the affine Grassmannian associate 
with 
$\weights_x$. The dimension of this fiber is the sum over $x$ of $\deg x$ times 
the dimension of this cell. The dimension of the cell is $2 \langle \weights_x, \rho 
\rangle$ so the sum is $d(\weights)$. \qedhere
\end{enumerate}
\end{proof}

We refer the reader to \cite{bbd,KiehlWeissauer} for the foundations of the theory of perverse sheaves in characteristic $p$ and \cite{LO09} for the generalization to stacks. 

\begin{lemma}\label{purity-semisimple}  Assume that $(G, m_u, H_u, \mathcal L_u)$ is 
geometrically supercuspidal for some $u\in D$ and $\charr(k)>2$. Then the complex 
$K_{\weights}$ is perverse, pure of weight $(\dim G)(g+|D|-1) + d(\weights) + \dim H $, 
and 
geometrically semisimple. \end{lemma}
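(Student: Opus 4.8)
The plan is to deduce all three properties — perversity, purity, and geometric semisimplicity — from the cleanness isomorphism of Theorem~\ref{mainduality-semisimple} together with the decomposition theorem, after first establishing that $\Delta^{\weights}$ is an affine morphism and that $IC_{\Hecke_{G(D),\weights}}\boxtimes\mathcal L$ is a pure perverse sheaf on $\Hecke_{G(D),\weights}\times H$. First I would record that $IC_{\Hecke_{G(D),\weights}}$ is perverse and pure of weight $\dim\Hecke_{G(D),\weights} = \dim G(g+|D|-1)+d(\weights)$ by Lemma~\ref{dimension-count}, that $\mathcal L[\dim H]$ is perverse and pure of weight $\dim H$ (a shifted lisse rank-one sheaf on the smooth group $H$, pure of weight $0$), so the external tensor product $IC_{\Hecke_{G(D),\weights}}\boxtimes\mathcal L[\dim H]$ is perverse and pure of weight $\dim G(g+|D|-1)+d(\weights)+\dim H$ on the smooth-dimensional product. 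Note this is exactly the shift built into the definition $K_{\weights} = \Delta^{\weights}_!(IC_{\Hecke_{G(D),\weights}}\boxtimes\mathcal L)[\dim H]$, so $K_{\weights} = \Delta^{\weights}_!\bigl(IC_{\Hecke_{G(D),\weights}}\boxtimes\mathcal L[\dim H]\bigr)$ (the Hecke correspondence $\Delta^{\weights}$ acts only on the first factor, and $H$ is pushed to a point of $\BunGD\times\BunGD$ via the twisting, so the shift commutes past $\Delta^{\weights}_!$).

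Next I would invoke the key geometric input: $\Delta^{\weights}$ is schematic and affine by Lemma~\ref{schematic-affine}. Therefore $\Delta^{\weights}_!$ is right t-exact and $\Delta^{\weights}_*$ is left t-exact for the perverse t-structure (Artin's theorem on affine morphisms, in the form for stacks — valid here because the relevant morphisms are schematic). Applying right t-exactness of $\Delta^{\weights}_!$ to the perverse sheaf $IC_{\Hecke_{G(D),\weights}}\boxtimes\mathcal L[\dim H]$ gives that $K_{\weights}$ sits in perverse degrees $\le 0$; applying left t-exactness of $\Delta^{\weights}_*$ gives that $\Delta^{\weights}_*(IC_{\Hecke_{G(D),\weights}}\boxtimes\mathcal L[\dim H])$ sits in perverse degrees $\ge 0$. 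But Theorem~\ref{mainduality-semisimple} identifies these two complexes: the natural map $\Delta^{\weights}_!\to\Delta^{\weights}_*$ is an isomorphism (the hypotheses $\charr(k)>2$ and $(G,m_u,H_u,\mathcal L_u)$ geometrically supercuspidal are exactly those of that theorem, and are in force here). Hence $K_{\weights}$ is simultaneously concentrated in perverse degrees $\le 0$ and $\ge 0$, i.e. it is a perverse sheaf.

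For purity and semisimplicity I would argue as follows. For an affine morphism, $\Delta^{\weights}_!$ of a mixed complex of weight $\le w$ is mixed of weight $\le w$ (weights do not increase under $f_!$, by Deligne), so $K_{\weights}$ is mixed of weight $\le \dim G(g+|D|-1)+d(\weights)+\dim H =: w$. Dually, $\Delta^{\weights}_*$ of a complex of weight $\ge w$ is of weight $\ge w$. Since $K_{\weights}=\Delta^{\weights}_!(\cdots)=\Delta^{\weights}_*(\cdots)$ by the cleanness isomorphism and the input is pure of weight $w$, $K_{\weights}$ is both $\le w$ and $\ge w$, hence pure of weight $w$. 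Then, over $\overline k$, a pure perverse sheaf is geometrically semisimple by the decomposition theorem of Beilinson--Bernstein--Deligne--Gabber (in the form valid for pushforwards along morphisms of finite-type stacks — one may also reduce to the scheme case smooth-locally, as remarked in the excerpt after Lemma~\ref{schematic-affine}, though purity and the BBDG semisimplicity statement are cleanest to quote directly; alternatively, since $IC_{\Hecke_{G(D),\weights}}\boxtimes\mathcal L[\dim H]$ is geometrically semisimple and $\overline\Delta^{\weights}$ is proper, $\overline\Delta^{\weights}_*$ of it is geometrically semisimple, and $K_{\weights}=\overline\Delta^{\weights}_* j_!(\cdots)=\overline\Delta^{\weights}_*j_*(\cdots)$ is a direct summand — or all — of it by the decomposition theorem applied to the proper pushforward). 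This gives geometric semisimplicity.

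The main obstacle is the t-exactness step: one needs Artin vanishing (right t-exactness of $f_!$ for $f$ affine, and the dual statement for $f_*$) in the setting of Artin stacks rather than schemes. This is legitimate here precisely because $\Delta^{\weights}$ is \emph{schematic} and affine, so as the excerpt's remark after Lemma~\ref{schematic-affine} indicates, $\Delta^{\weights}_!$ and $\Delta^{\weights}_*$ can be computed smooth-locally on $\BunGD\times\BunGD$ as pushforwards along affine morphisms of schemes, where classical Artin vanishing applies; the perverse t-structure and weights are defined via smooth covers, and smooth pullback (up to shift) is t-exact and weight-preserving, so all the estimates descend. I would therefore spend most of the care on checking that the shift conventions match (that $IC_{\Hecke_{G(D),\weights}}\boxtimes\mathcal L[\dim H]$ really is perverse on the product, which uses that $H$ is smooth of pure dimension $\dim H$ and that $IC$ is perverse), and on citing the correct forms of Artin vanishing, weight monotonicity, and the decomposition theorem; the rest is formal.
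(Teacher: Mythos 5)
Your proposal has the same architecture as the paper's proof (pure perverse input, affineness of $\Delta^{\weights}$, Artin vanishing, the cleanness isomorphism of Theorem~\ref{mainduality-semisimple} to get perversity; Deligne's weight bounds for $f_!$ and $f_*$ plus cleanness to get purity; the decomposition theorem for semisimplicity), but one intermediate step is stated backwards. Artin's theorem for an affine morphism $f$ says that $f_*$ is right $t$-exact for the perverse $t$-structure (it preserves ${}^pD^{\le 0}$, i.e.\ semiperversity) and, dually, that $f_!$ is left $t$-exact (it preserves ${}^pD^{\ge 0}$, i.e.\ cosemiperversity) --- not the assignment you wrote. For instance, for $f:\mathbb{A}^1\to \mathrm{pt}$ one has $f_!\,\Ql[1]\cong \Ql(-1)[-1]$, which is not in ${}^pD^{\le 0}$, so ``$f_!$ is right $t$-exact for affine $f$'' is false; right $t$-exactness of $f_!$ would require zero-dimensional fibers, which $\Delta^{\weights}$ does not have in general. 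The correct deduction, as in the paper, is that $\Delta^{\weights}_*\bigl(IC_{\Hecke_{G(D),\weights}}\boxtimes\mathcal L[\dim H]\bigr)$ is semiperverse while $K_{\weights}=\Delta^{\weights}_!(\cdots)$ is cosemiperverse, and Theorem~\ref{mainduality-semisimple} identifies the two, forcing both to be perverse. Since cleanness symmetrizes the roles of $!$ and $*$, your mislabeling does not affect the final conclusion, but the two $t$-exactness claims as written are false and must be swapped.

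Two smaller points. First, purity of $\mathcal L$ of weight $0$ is not automatic for a rank-one lisse sheaf (a Tate twist of the constant sheaf is rank-one lisse and not of weight $0$); it uses the character-sheaf property: by Lemma~\ref{character-sheaf-uniqueness} the arithmetic monodromy of $\mathcal L$ is finite, so all Frobenius eigenvalues are roots of unity, which is exactly how the paper gets weight $0$. Your weight argument is otherwise the paper's (note that Deligne's bounds for $f_!$ and $f_*$ need only that $\Delta^{\weights}$ is schematic and separated of finite type; affineness plays no role in that step). Second, for geometric semisimplicity the paper simply quotes Sun's decomposition theorem for pure perverse sheaves on Artin stacks with affine stabilizers \cite{Sun12D}; your alternative route through the proper map $\overline{\Delta}^{\weights}$ and BBDG would also work but still requires a stacky form of the decomposition theorem, so quoting \cite{Sun12D} directly is the cleaner finish.
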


\begin{proof} The intersection cohomology complex $ IC_{\Hecke_{G(D),\weights}}$ is defined as 
the intermediate extension of $\mathbb Q_\ell [  \dim {\Hecke_{G(D),\weights}}] $ from the smooth 
locus of $\Hecke_{G(D),\weights}$ to the whole space, and thus is perverse by 
\cite[Thm.4.3(ii)]{bbd}. Because $\mathbb Q_\ell [ \dim IC_{\Hecke_{G(D),\weights}}]$ is pure of weight 
$ \dim {\Hecke_{G(D),\weights}}$ on the smooth locus \cite[\S5.1.8]{bbd}, and the intermediate 
extension preserves purity \cite[Cor.5.3.2]{bbd},
$IC_{\Hecke_{G(D),\weights}}$ is pure of weight $\dim {\Hecke_{G(D),\weights}}= ( \dim G) (g+|D|-1) + 
d(\weights)$ (by Lemma~\ref{dimension-count}).

Because $\mathcal L$ is lisse on a smooth variety of dimension $\dim H$, 
$\mathcal L [\dim H ]$ is perverse. By Lemma~\ref{schematic-affine}, $\Delta^{\weights}$ 
is schematic and affine. Thus by Artin's theorem, $\Delta^{\weights}_* \left( 
IC_{\Hecke_{G(D),\weights}} \boxtimes \mathcal L [\dim H] \right)$ is semiperverse \cite[Thm.4.1.1]{bbd} and 
$K_{\weights} = \Delta^{\weights}_! \left( IC_{\Hecke_{G(D),\weights}} \boxtimes \mathcal 
L [\dim H] \right)$ is cosemiperverse \cite[Cor.4.1.2]{bbd}. Because they are equal by Theorem 
\ref{mainduality-semisimple}, they are each perverse. (We can apply these results for schemes because perversity is a smooth-local condition, so we may check it locally, and Artin stacks are smooth-locally modeled by schemes.)  

By Lemma~\ref{character-sheaf-uniqueness}, $\mathcal L$ has arithmetic monodromy of finite order, so every Frobenius eigenvalue of $\mathcal L$ has finite order, and hence has absolute value $1$, so $\mathcal L$ is pure of weight $0$. Thus its shift $\mathcal L [\dim H]$ is pure of weight $\dim H$, so the exterior 
product $IC_{\Hecke_{G(D),\weights}} \boxtimes \mathcal L [\dim H]$ is pure of weight 
$(\dim G)  (g+|D|-1) + d(\weights) + \dim H $. Hence by Deligne's theorem (which we may 
apply because $\Delta^{\weights}$ is schematic), $K_{\weights} =\Delta^{\weights}_! 
\left( IC_{\Hecke_{G(D),\weights}} \boxtimes \mathcal L [\dim H] \right)$ is mixed of 
weight $\leq(\dim G)  (g+|D|-1) + d(\weights) + \dim H$ and  $\Delta^{\weights}_* \left( 
IC_{\Hecke_{G(D),\weights}} \boxtimes \mathcal L [\dim H] \right)$ is mixed of weight 
$\geq (\dim G)  (g+|D|-1) + d(\weights) + \dim H$ \cite[Stabilities 5.1.14(i,i*)]{bbd}. Because they are equal by Theorem 
\ref{mainduality-semisimple}, they are each pure of weight $(\dim G)  (g+|D|-1) + 
d(\weights) + \dim H$.

The geometric semisimplicity of a pure perverse sheaf on an Artin stack with affine stabilizers follows from 
\cite[Thm.1.2]{Sun12D}
\end{proof}

\begin{lemma}\label{complex-dual}   Assume that $(G, m_u, H_u, \mathcal L_u)$ is 
geometrically supercuspidal for some $u\in D$ and $\charr(k)>2$. Then the Verdier dual of 
$K_{\weights} $ is the analogue of $K_{\weights}$ defined with the dual character sheaf 
$\mathcal L^\vee$, twisted by $\Ql ( (\dim G) (g+|D|-1) + d(\weights) + \dim H ) $.

\end{lemma}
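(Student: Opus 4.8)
The plan is to unwind the definition $K_{\weights} = \Delta^{\weights}_!\left(IC_{\Hecke_{G(D),\weights}} \boxtimes \mathcal L\right)[\dim H]$ and apply Verdier duality $\mathbb D$ factor by factor, using Theorem~\ref{mainduality-semisimple} twice. Since $\Delta^{\weights}$ is schematic and affine by Lemma~\ref{schematic-affine}, the identity $\mathbb D \circ \Delta^{\weights}_! = \Delta^{\weights}_* \circ \mathbb D$ makes sense (working smooth-locally, as in the remark following Lemma~\ref{schematic-affine}), so $\mathbb D(K_{\weights}) = \Delta^{\weights}_*\,\mathbb D\!\left(IC_{\Hecke_{G(D),\weights}} \boxtimes \mathcal L\right)[-\dim H]$.

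First I would record the Verdier duals of the two external factors. By Lemma~\ref{dimension-count} the stack $\Hecke_{G(D),\weights}$ has pure dimension $(\dim G)(g+|D|-1)+d(\weights)$, and with the normalization of the excerpt $IC_{\Hecke_{G(D),\weights}}$ restricts to $\Ql$ shifted by that dimension on the smooth locus; hence its Verdier dual is $IC_{\Hecke_{G(D),\weights}}\bigl((\dim G)(g+|D|-1)+d(\weights)\bigr)$, since both sides are irreducible perverse and agree on a dense open. As $\mathcal L$ is a rank-one lisse sheaf on the smooth group $H$ of dimension $\dim H$, the Verdier dual of $\mathcal L[\dim H]$ is $\mathcal L^\vee[\dim H](\dim H)$. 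Combining via the Künneth formula for Verdier duality gives $\mathbb D\!\left(IC_{\Hecke_{G(D),\weights}}\boxtimes \mathcal L[\dim H]\right) = \left(IC_{\Hecke_{G(D),\weights}}\boxtimes \mathcal L^\vee[\dim H]\right)\!\bigl((\dim G)(g+|D|-1)+d(\weights)+\dim H\bigr)$. Substituting into the displayed expression for $\mathbb D(K_{\weights})$ and collecting the remaining shift yields $\mathbb D(K_{\weights}) = \Delta^{\weights}_*\!\left(IC_{\Hecke_{G(D),\weights}}\boxtimes \mathcal L^\vee\right)[\dim H]\bigl((\dim G)(g+|D|-1)+d(\weights)+\dim H\bigr)$.

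It then remains to replace $\Delta^{\weights}_*$ by $\Delta^{\weights}_!$, which is exactly Theorem~\ref{mainduality-semisimple} applied with the character sheaf $\mathcal L^\vee$ in place of $\mathcal L$. For this I would check that the hypotheses of that theorem still hold for $\mathcal L^\vee$: dualizing the structural isomorphism $\mathcal L\boxtimes\mathcal L\cong m^*\mathcal L$ shows $\mathcal L^\vee$ is again a character sheaf; it remains factorizable, with $\mathcal L^\vee = \boxtimes_{x}\operatorname{Res}_{\kappa_x}^k \mathcal L_x^\vee$; and $(G,m_u,H_u,\mathcal L_u^\vee)$ is still geometrically supercuspidal because a rank-one lisse sheaf on a connected group is geometrically trivial if and only if its dual is. Thus Theorem~\ref{mainduality-semisimple} gives $\Delta^{\weights}_*\!\left(IC\boxtimes \mathcal L^\vee\right)=\Delta^{\weights}_!\!\left(IC\boxtimes \mathcal L^\vee\right)$, and so $\mathbb D(K_{\weights})$ is the analogue of $K_{\weights}$ built from $\mathcal L^\vee$, twisted by $\Ql\bigl((\dim G)(g+|D|-1)+d(\weights)+\dim H\bigr)$, as claimed. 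I do not expect a genuine obstacle: the argument is entirely formal once Theorem~\ref{mainduality-semisimple} is available, and the only points needing care are the bookkeeping of shifts and Tate twists in the two normalizations (that of $IC$ and that of $\mathcal L[\dim H]$) and the verification that geometric supercuspidality and factorizability survive dualizing the character sheaf.
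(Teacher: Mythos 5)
Your argument is correct and is essentially the paper's own proof: apply $\mathbb{D}\circ\Delta^{\weights}_{!}=\Delta^{\weights}_{*}\circ\mathbb{D}$, dualize the two external factors (giving $IC_{\Hecke_{G(D),\weights}}$ and $\mathcal L^{\vee}$ up to the stated shift and Tate twist), and then use Theorem~\ref{mainduality-semisimple} for the dual character sheaf to convert $\Delta^{\weights}_{*}$ back to $\Delta^{\weights}_{!}$. Your explicit check that $\mathcal L^{\vee}$ is again a factorizable character sheaf with $(G,m_u,H_u,\mathcal L_u^{\vee})$ geometrically supercuspidal is left implicit in the paper (it is invoked later, in the proof of Lemma~\ref{very-unstable-semisimple}), but is exactly the right point to verify.
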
 \begin{proof}We have \[ DK _{\weights} = D \Delta^{W}_! \left( IC_{\Hecke_{G(D),\weights}}   
\boxtimes \mathcal L  [ \dim H]\right)=\Delta^{W}_* D \left( IC_{\Hecke_{G(D),\weights}}   
\boxtimes \mathcal L [ \dim H] \right)  \] \[= \Delta^{W}_*  \left( DIC_{\Hecke_{G(D),\weights}}   
\boxtimes D (\mathcal L[ \dim H] ) \right)  .\]

Now $D( \mathcal L [ \dim H] )=  \mathcal L^{\vee} (\dim H) [\dim H]$  and $D IC_{\Hecke_{G(D),\weights}} = IC_{\Hecke_{G(D),\weights}} ( \dim \Hecke_{G(D),\weights} ) = IC_{\Hecke_{G(D),\weights}}(  \dim G (g+|D|-1) + d(\weights) )$ so \[ DK_{\weights} =\Delta^{W}_*  \left( IC_{\Hecke_{G(D),\weights}}   
\boxtimes  \mathcal L \right) ( (\dim G) (g+|D|-1) + d(\weights) + \dim H ) [ \dim H]  \] \[=\Delta^{W}_!  \left( 
IC_{\Hecke_{G(D),\weights}}   
\boxtimes  \mathcal L \right) ( (\dim G) (g+|D|-1) + d(\weights) + \dim H )) [ \dim H].
\qedhere\]   
\end{proof}

\subsection{Vanishing Properties}\label{sub:vanishing-ss}

The following definition is one way of generalizing to the ramified case the very 
unstable bundles of Frenkel--Gaitsgory--Vilonen \cite[\S3.2]{FGV02}. 

\begin{defi} Let $P$ be a parabolic subgroup of $G$ with maximal unipotent subgroup $N$. 
To a $P$-bundle on $X$, we attach a form of $N$ twisted by the conjugation action of $P$ 
on $N$, which admits a natural filtration into vector bundles (see 
Lemma~\ref{N-filtration}). We say that a \emph{$P$-bundle is 
very unstable} if none of these vector bundles admit a nontrivial map to $K_X(D)$.  
We say that a \emph{$G$-bundle is very unstable} if it admits a reduction to a very 
unstable 
$P$-bundle for some maximal parabolic subgroup $P$ of $G$. 
\end{defi}
\index{very unstable $G$-bundle}

This  definition makes sense for $G$-bundles on $X$ defined over any field, and in 
particular an algebraically closed field. The utility of this definition is that it allows us to prove that the stalk of $K_{\weights}$ vanishes:

\begin{lemma}\label{very-unstable-semisimple} Assume that $(G, m_u, H_u, \mathcal L_u)$ 
is geometrically supercuspidal for some $u\in D$ and $\charr(k)>2$. Then the stalk of 
$K_{\weights}$ at a geometric point $((\alpha_1,t_1),(\alpha_2,t_2))$ of $\Bun_{G(D)} 
\times \Bun_{G(D)}$ vanishes if $V_1$ or $V_2$ is very unstable, as does the stalk 
of its Verdier dual. \end{lemma}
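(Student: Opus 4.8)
The key point is that $K_{\weights} = \Delta^{\weights}_!(IC_{\Hecke_{G(D),\weights}} \boxtimes \mathcal L)[\dim H]$, so to compute its stalk at $((\alpha_1,t_1),(\alpha_2,t_2))$ it suffices to understand the fiber of $\Delta^{\weights}$ over this point, together with the restriction of $IC_{\Hecke_{G(D),\weights}} \boxtimes \mathcal L$ to that fiber. I would reduce to a statement about stalks of $j_*(IC_{\Hecke_{G(D),\weights}} \boxtimes \mathcal L)$ on $\overline{\Hecke}_{G(D),H,\weights,V}$: since $\overline{\Delta}^{\weights}$ is proper and $\overline{\Delta}^{\weights}\circ j = \Delta^{\weights}$, and by Theorem~\ref{mainduality-semisimple} $j_! = j_*$ here, the stalk of $K_{\weights}$ at a point of $\Bun_{G(D)}\times \Bun_{G(D)}$ is computed by the cohomology of the fiber of $\overline{\Delta}^{\weights}$ with coefficients in $j_*(IC_{\Hecke_{G(D),\weights}}\boxtimes\mathcal L)$, or equivalently by the open part, the fiber of $\Delta^{\weights}$ itself.

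First I would make precise what the fiber of $\Delta^{\weights}$ over $((\alpha_1,t_1),(\alpha_2,t_2))$ looks like when $\alpha_1$ is $D$-very unstable: by definition $\alpha_1$ admits a reduction to a very unstable $P$-bundle $\beta$ for a maximal parabolic $P$, so the twisted unipotent radical $\cN_{\beta}$ has all graded pieces with no nontrivial map to $K_X(D)$. This is exactly the hypothesis of Lemma~\ref{section-semisimple} (with the height bound $2g-2+|D|$ replaced by the very-unstable condition, which is even stronger for maximal parabolics). Hence there is a section of $\cN_\beta$ over $\operatorname{Res}_k^D(\cN_\beta|D)\times X$ restricting to the canonical section over $D$, and as in the proof of Lemma~\ref{htvan-semsimple} this produces a nontrivial group $\operatorname{Res}_k^D(\cN_\beta|D) = N\lWR\mathcal O_D\rWR$ acting on the fiber. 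The action on the first bundle's trivialization $t_1$ by this copy of $N\lWR\mathcal O_D\rWR\subseteq H\lWR\mathcal O_D\rWR$ — intersected with $H$ — twists the sheaf $\mathcal L$ by its own restriction to this intersection, exactly as in Lemma~\ref{symmetry-semisimple}. Since $(G,m_u,H_u,\mathcal L_u)$ is geometrically supercuspidal, that restriction is geometrically nonconstant on the identity component of $gN\lWR\cdots\rWR g^{-1}\cap H$, so averaging $\mathcal L$ against it gives zero; this forces the relevant cohomology, hence the stalk, to vanish. The case of $\alpha_2$ very unstable follows by applying Verdier duality and Lemma~\ref{complex-dual}, which swaps the roles symmetrically (or directly, using that $\overline{\Hecke}$ is symmetric under dualizing $V$, as in the proof of Lemma~\ref{smooth-semisimple}); the statement about the dual of $K_{\weights}$ then follows from Lemma~\ref{complex-dual} since the dual is again a $K$-complex, now with $\mathcal L^\vee$, which is still geometrically supercuspidal.

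The main obstacle I anticipate is bookkeeping the relation between the very-unstable reduction of $\alpha_1$ (a $G$-bundle on $X$) and the parabolic structure $\cP_{\alpha_1,\varphi}$ that appears in Section~\ref{s:cleanness}: there the parabolic arose from the kernel of a degenerate $\varphi$, whereas here $\varphi$ ranges over the whole fiber of $\Delta^{\weights}$ and the parabolic is supplied externally by the instability of $\alpha_1$. I would need to check that the section-construction of Lemma~\ref{section-semisimple}, and the ensuing action on the Hecke data, go through for an arbitrary very-unstable reduction and not just the one extracted from $\varphi$; this amounts to observing that the construction of the $N\lWR\mathcal O_D\rWR$-action only used the existence of a $P$-reduction with the height/very-unstable property together with $u\in D$ and the geometric supercuspidality of $(G,m_u,H_u,\mathcal L_u)$, all of which are in hand. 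Once that is confirmed, the vanishing is immediate from the geometric supercuspidality as above.
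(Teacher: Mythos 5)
Your proposal is in substance the paper's own proof: identify the stalk of $K_{\weights}$ with the compactly supported cohomology of the fiber of $\Delta^{\weights}$, use the very unstable reduction $\beta$ together with Lemma~\ref{section-semisimple} to produce automorphisms of $\alpha_1$ whose restriction to $D$ is the canonical section, let the resulting group act on the fiber by precomposition with $\varphi$, and note that by the character-sheaf property this twists $\mathcal L$ by its restriction to the intersection with $H$, which is geometrically nontrivial on the identity component by supercuspidality at $u$; hence the cohomology, and so the stalk, vanishes. The dual statement is reduced to this via Lemma~\ref{complex-dual} and the stability of geometric supercuspidality under replacing $\mathcal L$ by $\mathcal L^\vee$, exactly as in the paper. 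Two points of caution. The detour through Theorem~\ref{mainduality-semisimple} and $\overline{\Hecke}_{G(D),H,\weights,V}$ is unnecessary: since $K_{\weights}$ is a $!$-pushforward, base change already identifies its stalk with $H^*_c$ of the fiber of $\Delta^{\weights}$, which is where your group acts. More importantly, Verdier duality together with Lemma~\ref{complex-dual} does not ``swap the roles'' of $\alpha_1$ and $\alpha_2$: the dual complex lives on the same product with the factors in the same order, and stalks of the dual are dual to costalks rather than stalks, so the case where $\alpha_2$ is very unstable cannot be obtained that way. The correct reduction is the geometric one you mention only parenthetically, namely reading the modification backwards (dualizing $V$ and reversing the arrows, as in the proof of Lemma~\ref{smooth-semisimple}), which switches $\alpha_1$ and $\alpha_2$ and replaces $\weights$ by the conjugate of $-\weights$ under the longest Weyl element; this is what the paper does. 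Finally, a small slip: the very unstable condition (no nontrivial maps to $K_X(D)$) is not stronger than the stated hypothesis of Lemma~\ref{section-semisimple} (no quotients of degree at most $2g-2+|D|$); rather, it is precisely what the proof of that lemma uses, since by Serre duality it gives the required vanishing of $H^1$ of the graded pieces twisted by $\mathcal O(-D)$, so the construction of the section still goes through.
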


\begin{proof} By Lemma~\ref{complex-dual}, and because geometric supercuspidality is preserved by duality, we can reduce 
to 
the case of $K_{\weights}$. By switching $\alpha_1$ and $\alpha_2$ and replacing 
$\weights$ by the conjugate of $-\weights$ under the longest element of the Weyl group, 
we can reduce to the case where $\alpha_1$ is very unstable.

By proper base change, the stalk of $K_{\weights}$ at $( (\alpha_1,t_1),(\alpha_2,t_2) )$ is the cohomology with 
compact supports of the fiber of $\Delta^\weights$ over $(\alpha_1,t_1),(\alpha_2,t_2)$ 
with coefficients in $IC_{\Hecke{G(D)}} \boxtimes \mathcal L$.  This fiber consists 
of isomorphisms $\varphi: \alpha_1 \to \alpha_2$ away from the support of $\weights$, 
satisfying local conditions at points in the support of $\weights$, such that $t_2 \circ 
\varphi|_D \circ t_1 \in H$. 

Let $\beta$ be a reduction of $\alpha_1$ to a very unstable $P$-bundle. By Lemma 
\ref{section-semisimple}, there is a section over $\Res_k^D (\cN_\beta|D) \times X$ of 
$\cN_\beta$, and therefore a section $s$ of the automorphism group of $\alpha_1$, that 
restricted to $D$ is the canonical section. Let $S$ be the subgroup of $\sigma \in 
\Res_k^D (\cN_\beta|D) \times X$ such that $ t_1^{-1} \circ \sigma \circ t_1 \in H$.  
Then 
$S$ acts on this fiber by sending $\varphi$ to $\varphi \circ s(\sigma)$, which satisfies 
\[t_2^{-1} \circ \varphi|_D \circ s(\sigma)|_D \circ t_1 = t_2^{-1} \circ \varphi|_D 
\circ \sigma  \circ t_1 \in H\] by assumption. This action preserves 
$IC_{{\Hecke{G(D)}}}$, because it is canonical, but acts on $\mathcal L$ by tensoring 
with $\mathcal L( t_1^{-1} \sigma t_1)$. Hence the action of the automorphism on the 
cohomology is by tensoring with $\mathcal L( t_1^{-1} \sigma t_1)$, which is nontrivial 
by the geometrically supercuspidal assumption, so the cohomology is equal to itself 
tensored with a nontrivial local system, hence the cohomology vanishes, as 
desired.\end{proof}

Next, we will describe an explicit open set of $\BunGD$ whose complement consists entirely of very unstable $G$-bundles. We will be able to restrict attention to this open subset, which has many useful properties (most crucially, it is quasicompact), for most calculations.

First, it is necessary to prove a version of the main theorem of reduction theory that is 
uniform in $q$. In the work~\cite{FGV02}, the role of this lemma is played by some 
calculations with the Harder--Narasimhan filtration.
See also~\cite{Harder:Chevalley-fn-fields}.
Recall that $G$ is split.
\begin{lemma}\label{siegel-set}  Every $G$-bundle on $X$ admits a reduction to a $B$-bundle 
whose induced $T$-bundle, composed with the character associated to any simple positive 
root to produce a line bundle, has degree $\geq -2g$. \end{lemma}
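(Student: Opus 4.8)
The plan is to prove this as a quantitative form of the classical reduction theory for $G$-bundles on a curve, making all bounds explicit and independent of the ground field. First I would reduce to the rank-one case. Given a $G$-bundle $\alpha$, pick the canonical (Harder-Narasimhan) reduction to a parabolic $P$, then further reduce to a Borel; the issue is to control the degrees $\langle\mu_\alpha,\check\alpha_i\rangle$ of the associated line bundles for the simple roots. Rather than working with the canonical filtration directly, I would argue by an exchange/induction argument: among all $B$-reductions, consider one for which the tuple of degrees $(d_i) = (\langle \mu_\alpha,\check\alpha_i\rangle)_i$ is, say, lexicographically maximal in a suitable sense (this exists because degrees of sub-line-bundles of a fixed bundle are bounded above). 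Then I claim each $d_i \geq -2g$.

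Here is the key step. Suppose some $d_i < -2g$ for a simple root $\alpha_i$. Consider the rank-$2$ (or $SL_2$/$PGL_2$) sub-quotient of $\alpha$ attached to $\alpha_i$: concretely, the reduction to $B$ gives, for the corresponding minimal parabolic $P_i = M_i N_i$ with $M_i$ of semisimple rank one, an $M_i$-bundle whose ``$SL_2$-part'' is a rank-two bundle $E$ with a line subbundle $L\subset E$ of degree $d_i$ relative to $\det E$; that is, $\deg L - \deg(E/L)$ is essentially $d_i$ up to a bounded correction coming from the other simple roots and the center. If $\deg L - \deg(E/L) < -2g$, then $\deg \mathcal{H}om(E/L, L) = \deg L - \deg(E/L) < -2g < 2g-2 = \deg K_X$, which forces $\operatorname{Hom}(L, E/L)\neq 0$ actually — wait, I want the opposite direction: I want to produce a \emph{different} sub-line-bundle of larger degree. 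The point is that $L$ being very negative relative to $E/L$ means $E$ is far from the HN-filtration being $L$; more precisely $E/L$ then has larger degree, and one shows that after an elementary modification (a Hecke modification at a point, changing the $B$-reduction by an element of the affine Weyl group through the simple reflection $s_i$) one obtains a new $B$-reduction with strictly larger $d_i$ while the other $d_j$ change in a controlled way, contradicting maximality. The bound $-2g$ arises because a sub-line-bundle $L'$ of $E$ with $\deg L' > \deg L$ exists whenever $h^0(E \otimes L^{-1})$ jumps, and Riemann-Roch on the curve of genus $g$ gives $h^0(\mathcal{L}) \geq \deg\mathcal{L} - g + 1 > 0$ once $\deg(E\otimes L^{-1}) \ge$ something like $-2g$, after accounting for the rank-two contribution.

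The cleanest route, which I would actually carry out, is: (i) choose \emph{any} $B$-reduction and let $(d_i)$ be its degree vector; (ii) if some $d_i < -2g$, use the elementary modification at the simple root $\alpha_i$ — i.e. modify the $B$-reduction by the simple reflection $s_i$ at a suitably chosen point of $X$, which is possible over the given field since $X$ has a point after a bounded constant-field extension, but one can avoid extending the field by instead using that the relevant Quot-scheme of sub-line-bundles is non-empty whenever the numerical bound fails — (iii) check this strictly increases $\sum_i \max(d_i, -2g)$ or a similar potential function bounded above, forcing termination; (iv) conclude. The main obstacle is making step (ii)–(iii) genuinely uniform in $q$: one must phrase the ``a more positive sub-line-bundle exists'' statement purely in terms of Riemann-Roch (so the only input is the genus $g$, not the size of $k$), and one must verify that a single elementary modification, though it may worsen the degrees at roots adjacent to $\alpha_i$ in the Dynkin diagram, does so by a bounded amount, so that iterating the procedure terminates with all $d_i \ge -2g$. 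This is exactly the kind of explicit bookkeeping with the Cartan matrix and Riemann-Roch that replaces the softer Harder-Narasimhan arguments of \cite{FGV02}; I would model the details on the treatment in \cite{Harder:Chevalley-fn-fields}.
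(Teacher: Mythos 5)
Your overall strategy (pick an extremal $B$-reduction, then improve at a bad simple root using Riemann--Roch on the associated rank-two bundle) is in the right spirit, but two of its load-bearing steps have genuine gaps. First, the extremal object you minimize/maximize over is not well chosen: the tuple of simple-root degrees $(d_i)$ need not admit a lexicographic maximum, because individual $d_i$ are \emph{not} bounded above over all $B$-reductions of a fixed bundle (already for $SL_3$: with a flag $L_1\subset L_2\subset E$ one has $d_1=\deg L_1-\deg(L_2/L_1)$, and letting $L_2/L_1$ become very negative sends $d_1\to+\infty$ while $d_2\to-\infty$). What is bounded is the degree of the line bundle attached to a \emph{strictly dominant} character (equivalently, a positive combination of the $d_i$), since that line bundle is ample on the $G/B$-bundle and its pullback along a section has degree bounded on one side; this single height is what the paper extremizes, and your hedge ``maximal in a suitable sense'' is exactly the point that needs to be, and is not, pinned down.

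Second, your improvement-and-termination scheme is not established. You propose Hecke/elementary modifications at points of $X$, concede that adjacent $d_j$ may worsen, and then wave at a potential function such as $\sum_i\max(d_i,-2g)$ without verifying it increases; since improving $d_i$ by a bounded amount can simultaneously decrease several $d_j$ that sit above $-2g$, monotonicity of that potential is false as stated, and the Cartan-matrix bookkeeping you defer is precisely the hard content. The paper's proof avoids iteration entirely: if $d_i<-2g$, it keeps the reduction to the parabolic $P_i$ fixed and only re-chooses the Borel direction, by viewing the Levi quotient as a $PGL_2$-bundle, twisting the corresponding rank-two bundle to degree $2g-1$ or $2g$, and using Riemann--Roch to produce a global section, i.e.\ a line subbundle of degree $\geq 0$ and hence a new reduction with $d_i\geq -2g$; since the new reduction agrees with the old one as a $P_i$-bundle, the degrees of all characters factoring through $P_i$ are unchanged, so writing the dominant character as (character through $P_i$) plus a positive multiple of $\alpha_i$ shows the height strictly drops, contradicting minimality in a single step. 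Also note that no rational point of $X$ or constant-field extension is needed anywhere (your step (ii) worries about this), since the only input is a section of a twisted rank-two bundle, which Riemann--Roch provides over any base field.
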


We use the convention that in the $SL_2$ triple where the upper-right nilpotent is the 
given positive root, the associated cocharacter is $t \mapsto \begin{pmatrix} t & 0 \\ 0 
& t^{-1} \end{pmatrix}$.

\begin{proof} Fix a $G$-bundle $\beta$. First we check $\beta$ admits a reduction to a $B$-bundle. To prove this, 
note that $\beta$ admits a trivialization over the generic point, hence a $B$-reduction over 
the generic point, which extends to the whole curve because the associated $G/B$-bundle 
is proper.

Next we define a height on the set of $B$-reductions of $\beta$. Observe that the associated 
$G/B$-bundle (i.e., $\beta$ modulo the right action of $B$) is a projective scheme $Y$ 
over $X$. Given a character $\chi_0$ of $T$, which induces a character of $B$, we can form the associated 
line bundle $L_{\chi_0}$ on $Y$ by composing the universal $B$-bundle with the inverse character $B \to \mathbb G_m$. Fix a character $\chi_0$ of $T$ that is in the interior of 
the Weyl chamber of $B$, so that it is positive on all the positive coroots. Then the 
associated line bundle $L_{\chi_0}$ is ample. (We use the inverse character so that dominant weights will correspond to ample line bundles.) 

Any $B$-reduction, consisting of a $B$-bundle  $\alpha \subseteq \beta$, defines a section $s: X\to Y$.  The Weil height of $s$ according to $L_{\chi_0}$ is defined to be the degree of $s^* L_{\chi_0}$. This is manifestly an integer and is bounded below.  Hence it takes a minimum 
value. For $s$ the section associated to a $B$-bundle $\alpha$,  $s^* L_{\chi_0}$ is the inverse of the composition of $\alpha$ with $\chi_0$ , so the height of $s$ is 
minus the degree of $\chi_0(\alpha)$. 
 Choose a $B$-reduction $\alpha_1$ whose height attains the minimum value. We will show that the composition of $\alpha_1$ with every simple root character has degree $\geq -2g$, giving the desired conclusion.

Fix a simple root. Let $\chi$ be the associated character of $B$ and let $P$ be the 
associated parabolic. Then the quotient of the Levi subgroup of $P$ by its center is a 
split adjoint-form group of rank one, hence is isomorphic to $\PGL_2$. We have a 
commutative diagram with Cartesian 
square.

\[ \begin{tikzcd}
\mathbb G_m & B(\PGL_2)\arrow[r] \arrow[l,swap,"\lambda_1/\lambda_2"]& \PGL_2 \\
& B \arrow[u] \arrow[ul,"\chi"] \arrow[r] & P\arrow[u]\\
\end{tikzcd}\]

By functoriality,  $\alpha_1$  defines a $P$-bundle $P(\alpha_1)$ and hence a $\PGL_2$-bundle 
$\PGL_2(\alpha_1)$, which we can view as 
a rank two vector bundle $V$ on $X$, up to a twist by a line bundle. After twisting, we 
may assume that $V$ has degree $2g-1$ or $2g$. By Riemann--Roch, $H^0(X,V)$ has 
dimension 
$\geq (2g-1) + 2-2g =1$, so it has a global section, and hence $V$ can be written as the 
extension by a line bundle $L_1$ of degree $\geq 0$ of another line bundle $L_2$, which 
necessarily has degree $\leq 2g$.  This gives a reduction $E$ of $\PGL_2(\alpha_1) $ to 
$B(\PGL_2)$. Let $\alpha_2$ be the fiber product \[ P(\alpha_1) \times_{ \PGL_2(\alpha_1) } E.\] Then 
because \[ B = P \times_{\PGL_2} B(\PGL_2),\] $\alpha_2$ is a $B$-bundle. Furthermore, $\alpha_2$ 
agrees with $\alpha_1$ when projected to $P$, and hence $\alpha_2$ is another 
$B$-reduction of $\beta$. Finally, $\alpha_2$ agrees with $E$ when projected to $B(\PGL_2)$.

We can express $\chi_0$ as a sum of some 
character that factors through $P$ with a positive multiple of $\chi$. This is because 
the characters that factor through $P$ form a wall of the Weyl chamber, to which $\chi$ 
is perpendicular, and pointing towards the interior of the Weyl cone.  Observe that the 
degree of $\chi(\alpha_2)$ is equal to the degree of $L_1$ minus the degree of $L_2$, 
which is at least $-2g$ by construction. So if $\chi(\alpha_1)<-2g$, then 
$\chi(\alpha_2)> \chi(\alpha_1)$ and thus $\chi_0(\alpha_2) > \chi_0(\alpha_1)$, which contradicts the assumption that the height  $- \chi_0(\alpha)$ is 
minimized by $\alpha_1$.\end{proof}

We are now ready to define our key open subset $U$:
Let $V$ be a faithful representation of $G$. Let $r$ be the maximum number of simple 
 roots that can be added to form a positive root of $G$ and 
let $k$ be the maximum $\ell^1$-norm of any weight of $V$, measured in a basis of simple 
roots of $G$.  Let $\epsilon$ be $1$ if $r=1$ and $D$ is empty and $0$ otherwise. Let $L$ 
be a line bundle on $X$ of degree at least $k( 2rg+\deg D + \epsilon)+2g-1 $.
\begin{defi} \label{U-qc-condition}
Let $U$ consist of $(\alpha,t) \in \Bun_{G(D)}$ such that $H^1( X , V(\alpha) \otimes L (-Q))$ 
vanishes for each point $Q$ in $X$. \end{defi} 
\index{$U$, open quasicompact subset of $\Bun_{G(D)}$}

\begin{lemma}\label{quasicompact-semisimple} \begin{enumerate}[(i)]

\item $U$ is an open subset of $\Bun_{G(D)}$.

\item $U$ is quasicompact.

\item $U$ is the quotient of a smooth scheme of finite type by a reductive algebraic 
group of finite type. 

\item Every vector bundle in the complement of $U$ inside $\Bun_{G(D)}$ is very 
unstable.

\item The stalk of $K_{\weights}$ vanishes on $\Bun_{G(D)} \times \Bun_{G(D)}$  outside 
$U \times U$.
\end{enumerate} \end{lemma}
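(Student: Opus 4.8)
The plan is to prove the five assertions of Lemma~\ref{quasicompact-semisimple} in order, since each builds on the previous ones, with the only genuine work being (4).

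\medskip

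\noindent\textbf{Parts (1), (2), (3).} First I would observe that $U$ is defined by an open condition: vanishing of $H^1$ of a family of coherent sheaves is open on the base by semicontinuity, and taking the intersection over the (proper) family of points $Q\in X$ keeps it open, because the non-vanishing locus is the image under the proper projection $X\times \Bun_{G(D)}\to \Bun_{G(D)}$ of a closed set. For quasicompactness, I would note that if $H^1(X,V(\alpha)\otimes L(-Q))=0$ for all $Q$, then in particular $H^1(X,V(\alpha)\otimes L)=0$ and $V(\alpha)\otimes L$ is globally generated (using $\deg L$ large and the standard argument: $H^1(X, V(\alpha)\otimes L(-Q))=0$ for all $Q$ forces global generation), so $V(\alpha)$ has degree $0$ (as $\det V$ is trivial, $G$ being semisimple — or more generally the degree is fixed) and bounded-below Harder--Narasimhan slopes, hence lies in a bounded family; the trivialization over $D$ adds only a $G\lWR\mathcal O_D\rWR$-torsor, which is of finite type. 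For (3), the global generation gives an embedding $V(\alpha)\hookrightarrow \mathcal O_X^N\otimes L$ for fixed $N = h^0$, so $U$ is covered by the stack of such embedded subbundles with a $G$-structure and $D$-trivialization, which is a quotient of a quasi-projective scheme (a locally closed subscheme of a Quot scheme with extra structure) by $\GL_N$; replacing $\GL_N$ by a reductive group is routine.

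\medskip

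\noindent\textbf{Part (4).} This is the main obstacle and the only place the numerical choice of $L$, $r$, $k$, $\epsilon$ enters. The point is a \emph{uniform-in-$q$} reduction theory statement. Given $(\alpha,t)\notin U$, there is a point $Q$ with $H^1(X,V(\alpha)\otimes L(-Q))\neq 0$, i.e.\ a nonzero map $V(\alpha)\otimes L(-Q)\to K_X$, i.e.\ a nonzero section of $V(\alpha)^\vee\otimes K_X\otimes L^{-1}(Q)$, a line bundle twist of $V(\alpha)^\vee$ of very negative degree. I would feed this instability of $V(\alpha)$ back to an instability of the $G$-bundle $\alpha$: by Lemma~\ref{siegel-set}, $\alpha$ admits a $B$-reduction with all simple-root degrees $\geq -2g$; combined with the existence of a destabilizing sub/quotient of $V(\alpha)$ of slope forced below $-\tfrac{1}{\text{(something)}}\deg L$, one gets that some \emph{maximal} parabolic reduction $\beta$ of $\alpha$ is so unstable that every vector bundle $N_{i,\beta}/N_{i+1,\beta}$ in the canonical filtration of $\cN_\beta$ has all its line-bundle quotients of degree $<2g-2+|D|=\deg K_X(D)$, hence admits no nonzero map to $K_X(D)$; that is exactly the definition of $D$-very unstable. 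The bookkeeping: a quotient line bundle of $N_{i,\beta}/N_{i+1,\beta}$ is built from at most $r$ simple roots worth of the $B$-reduction line bundles, each of which is controlled within an additive error by the slope of $V(\alpha)$ via the $\ell^1$-bound $k$ on weights of $V$ and the Siegel lower bound $-2g$ from Lemma~\ref{siegel-set}; chasing these inequalities, the hypothesis $\deg L \ge k(2rg+\deg D+\epsilon)+2g-1$ is precisely what makes the resulting degree land strictly below $2g-2+|D|$. The role of $\epsilon$ (namely $\epsilon=1$ exactly when $r=1$ and $D=\emptyset$) is to absorb the boundary case where the filtration of $N$ has a single step and the estimate is tightest. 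This is the step where I expect to spend real effort getting the constants exactly right; conceptually it is the ``uniform reduction theory'' that replaces the Harder--Narasimhan computations of~\cite{FGV02}.

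\medskip

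\noindent\textbf{Part (5).} This is immediate from (4) and Lemma~\ref{very-unstable-semisimple}: if $(\alpha_1,t_1)\times(\alpha_2,t_2)\notin U\times U$ then at least one of $\alpha_1,\alpha_2$ lies outside $U$, hence is $D$-very unstable by (4), hence the stalk of $K_{\weights}$ there vanishes by Lemma~\ref{very-unstable-semisimple}. Finally I would remark, as a sanity check, that since $K_{\weights}$ is perverse and pure by Lemma~\ref{purity-semisimple} and now supported on the quasicompact $U\times U$ by (2) and (5), its cohomology is finite-dimensional, which is what the later sections need; but that observation is not part of the lemma itself.
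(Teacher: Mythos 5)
Your overall strategy coincides with the paper's: (1) via semicontinuity plus properness of $X$, (3) via rigidifying by the global sections of $V(\alpha)\otimes L$ and quotienting by a general linear group, and (5) from (4) together with Lemma~\ref{very-unstable-semisimple}; your direct boundedness argument for (2) is an acceptable substitute for the paper's deduction of (2) from (3). (Two small slips there: global generation gives a surjection $\mathcal O_X^{N}\twoheadrightarrow V(\alpha)\otimes L$, not an embedding of $V(\alpha)$, and $GL_N$ is already reductive, so nothing needs replacing.)

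The genuine problem is in (4), the one step you acknowledge carries all the content: your target inequality points the wrong way, and the implication you draw from it is false. You propose to show that every graded piece $\cN_{i,\beta}/\cN_{i+1,\beta}$ has all line-bundle quotients of degree $<2g-2+|D|$ and to conclude $\Hom\bigl(\cN_{i,\beta}/\cN_{i+1,\beta},K_X(D)\bigr)=0$. But a nonzero map to $K_X(D)$ has image a line subsheaf of $K_X(D)$, so it is precisely a \emph{low}-degree quotient line bundle (degree $\le \deg K_X(D)$) that threatens to produce such a map; $D$-very instability is instead guaranteed when every line bundle occurring in these graded pieces has degree \emph{strictly greater} than $\deg K_X(D)$. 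That is what the constants are calibrated to give: Serre duality yields a nonzero map $V(\alpha)\to K_X\otimes L^{\vee}(Q)$, a line bundle of degree $\le -k(2rg+\deg D+\epsilon)$; filtering $V(\alpha)$ by the $B$-reduction of Lemma~\ref{siegel-set} and using that every weight of $V$ involves at most $k$ simple roots, some simple root composed with the reduction has degree of absolute value at least $2rg+\deg D+\epsilon>2g$, and the Siegel lower bound $\ge -2g$ then forces that degree to be $\ge 2rg+\deg D+\epsilon$ (this strictness, not a single-step filtration, is the role of $\epsilon$); taking $P$ to be the maximal parabolic omitting that simple root, every root of its unipotent radical involves it at least once and at most $r-1$ further simple roots, so every line bundle in the graded pieces of $\cN_\beta$ has degree at least $(2rg+\deg D+\epsilon)-(r-1)\cdot 2g=2g+\deg D+\epsilon>\deg K_X(D)$, whence no nonzero map to $K_X(D)$. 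As written, your degree chase is aimed at the opposite conclusion and would not establish that the complement of $U$ consists of $D$-very unstable bundles.
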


\begin{proof}

To prove assertion (i), observe that $U$ is the complement of the projection from 
$\Bun_{G(D)} \times X$ to $\Bun_{G(D)}$ 
of the locus where $H^1( X , V(\alpha) \otimes L (-Q)) \neq 0$. By the semicontinuity 
theorem, this locus is closed, and $X$ is proper, hence universally closed, so the 
projection is closed as well.

Assertion (ii) follows from assertion (iii). To prove assertion (iii), observe that a 
$G$-bundle $\alpha$ satisfies the condition from Definition \ref{U-qc-condition} if and only if 
$V(\alpha) \otimes L$ is 
globally generated and satisfies $H^1(X, V(\alpha) \otimes L)=0$. In this case, $H^0(X, 
V(\alpha) \otimes L)$ is a $(\dim V) ( \deg L+ 1-g)$-dimensional vector space. 

Thus, let $\mathcal M_3$ be the moduli space of triples of a $G$-bundle $\alpha $ satisfying the 
condition of Definition \ref{U-qc-condition}, a trivialization of $\alpha$ over $D$, and a basis 
for $H^0(X, V(\alpha) \otimes L)$. Then $U$ is the quotient of $\mathcal M_3$ by $\GL_{(\dim V) 
(\deg L +1-g)}$. (In particular, $\mathcal M_3$ is a $\GL_{(\dim V) 
(\deg L +1-g)}$-torsor over $U$, hence $\mathcal M_3$ is smooth.) Thus, to prove (iii), it suffices to check that $\mathcal M_3$ is a scheme of finite type.

Let $\mathcal M_2$ be the moduli space of pairs of a $G$-bundle $\alpha$ satisfying the condition of Definition \ref{U-qc-condition} and a basis for $H^0(X, V(\alpha) \otimes L)$. Then $\mathcal M_3$ is a $G \lWR \mathcal O_D\rWR$-torsor over $\mathcal M_2$, so it suffices to show that $\mathcal M_2$ is a scheme of finite type.

Given a point of $\mathcal M_2$, and in particular a basis for  $H^0(X, 
V(\alpha) \otimes L)$, we obtain a map from $X$ to the 
Grassmannian $\operatorname{Gr}$ of rank $\dim V$ quotients of a fixed $(\dim V) ( \deg L+ 1-g)$-dimensional 
vector space, where the map has degree $(\dim V) ( \deg L)$. Let $\mathcal M_1$ be the moduli 
space of maps $f$ from $X$ to $\operatorname{Gr}$ with degree $(\dim V) (\deg L)$. Let 
${V}_{\textrm{taut}}$ be the tautological bundle on $\operatorname{Gr}$.  Then $\mathcal M_1$ is a 
scheme of finite type, and $\mathcal M_2$ maps to $\mathcal M_1$. The image of this map is 
contained in the open subset $\mathcal M_1'$  of $\mathcal M_1$ where  $H^1 ( X, f^* 
V_{\textrm{taut}} )=0$ and the natural map  $H^0(\operatorname{Gr},V_{\textrm{taut}})  \to H^0(X, 
f^* V_{\textrm{taut}})$ is an isomorphism. The fiber of the map $\mathcal M_2 \to \mathcal M_1'$ 
parameterizes reductions of the structure group of $f^* V_{\textrm{taut}} \otimes L^{-1}$ to 
$G$.  Thus $\mathcal M_2 \to \mathcal M_1$ is a schematic morphism of finite type 
\cite[Cor.3.2.4]{WangModuli}, and so $\mathcal M_2$, and finally $\mathcal M_3$, are schematic of 
finite type. 

To prove assertion (iv), let $\alpha$ be a $G$-bundle outside $U$. Then for some point $Q$, we 
have $H^1(X, V(\alpha) \otimes L (-Q))\neq 0$. Hence by Serre duality we have $H^0 ( X, K_X 
\otimes V(\alpha)^\vee \otimes  L^\vee (Q)) \neq 0$, so 
$V(\alpha)$ admits a nontrivial map to the line bundle $K_X \otimes L^\vee (Q)$, which has degree 
at most $-k(2rg+\deg D + \epsilon)$.  Let $\gamma_1,\dots, \gamma_n$ be the simple roots of $B$. Choose a $B$-reduction of $\alpha$ as in Lemma 
\ref{siegel-set}, and let $\beta$ be the induced $T$-bundle, where $T$ is the maximal torus of $T$. Using Lemma~\ref{siegel-set}, we have chosen $\beta$ so that  \begin{equation}\label{gamma-lower-bound} \deg (\gamma_1(\beta)), \dots, \deg(\gamma_n(\beta)) \geq -2g.\end{equation} 

As a representation of $B$, $V$ admits a filtration by one-dimensional 
characters. The filtration of $V(\alpha)$ induced by this $B$-reduction is a filtration by line bundles, each arising by $\beta$ from a one-dimensional character of $T$. Because $V(\alpha)$ admits a nontrivial map to a line bundle of degree $\leq -k(2rg+\deg D + \epsilon)$, at least one of these line bundles has degree  $\leq- k(2rg+\deg D+ \epsilon)$. The degree of the line bundle associated to a character of $T$ is a linear form $\omega$  on the weight space. If we had \[ | \deg (\gamma_i(\beta)) | < 2 rg+ \deg D + \epsilon\]  for each root $\gamma_i$, then the linear form $\omega$ would have absolute value $< 2 rg+ \deg D + \epsilon$ on each basis vector, hence have absolute value $< k (2 rg+ \deg D + \epsilon)$ on each vector with $\ell^1$ norm at most $k$, so by the definition of $k$ have absolute value $< k (2rg+ \deg D + \epsilon)$ on each weight of $T$, giving a contradiction. Thus, for some $i$, we must have \[ |\deg(\gamma_i(\beta))| \geq 2 rg+ \deg D + \epsilon > 2g.\]  Combined with \eqref{gamma-lower-bound}, this implies that \[ \deg(\gamma_i(\beta) ) > 2 rg+ \deg D + \epsilon.\]

 Let $P$ be the parabolic subgroup defined by the set of all the 
 roots other than $\gamma_i$. Let $N$ be the unipotent radical of $P$. Then $N$ is an iterated extension, as an 
algebraic group, of one-dimensional representations of $B$, each a 
character of $B$ corresponding to a positive root in the unipotent radical of $P$ and thus to the sum of at most $r$ positive roots, at least one of which is $\gamma_i$. Because each of the other roots has degree $\geq -2g$ and $\gamma_i$ has degree 
$\geq 2rg + \deg D$, the product has degree at least $2g + \deg D$ and so does not admit 
a nontrivial map to $K_X(D)$. Hence none of the $N_i$'s do either, and the bundle is very 
unstable.

Assertion (v) follows from assertion (iv) and Lemma~\ref{very-unstable-semisimple}.\qedhere
\end{proof}

\section{The trace function of the Hecke complex}\label{s:trace-function}

We maintain the assumptions and notation of Section~\ref{s:properties}.

\subsection{Calculation of the trace function}\label{sub:trace-fn-ss}

To describe the trace function of $K_\weights$ explicitly, we will first give an explicit 
description of the points of $\BunGD(\mathbb F_q)$, that the trace function is a function on, in 
terms of adelic double cosets. This is a variant of the classical Weil parameterization. 
It will be helpful for later to give an adelic description of the automorphisms of a point 
of $\BunGD$, which we do in Lemma \ref{weil-automorphism-semisimple}. The trace function 
of $K_\weights$ can be calculated as a sum. We will describe the set to be summed over in 
Lemma \ref{adelization-semisimple}, and define the function to be summed in Definition 
\ref{def:test-function}, culminating in a description of the trace function in Lemma 
\ref{trace-function-semisimple}.

Recall some of our earlier notation: $\KK = \prod_{x \in |X - D| } G( 
\ko_x)  \times \prod_{x \in D} U_{m_x} (G(\ko_x))$, where $\ko_x= 
\kappa_x\bseries{t}$ and
$U_{m_x} ( G(\ko_x) )$  is the subgroup of $ G(\ko_x) $ 
consisting of elements congruent to $1$ modulo $t^{m_x}$.
\begin{lemma}\label{weil-parameterization-semisimple}   There is a bijection between 
 $ G(F) \backslash G (\mathbb A_F) / \KK $ and $\Bun_{G(D)}(k)$.

Moreover, this bijection arises from a bijection between $G(\mathbb A_F)$ and the set of 
tuples $(\alpha,z_\eta,(z_x)_{x\in |X|})$ of a $G$-bundle $\alpha$ and a 
trivialization 
$z_\eta: \alpha |_\eta \isom G_\eta$ of $\alpha$ over the generic point and a 
trivialization 
$z_x: \alpha |_{ \ko_x} \isom G_{\ko_x}$ for each closed 
point 
$x\in |X|$.
Explicitly, the bijection sends $(\alpha,z_\eta,(z_x)_{x\in |X|})$ to the tuple \[( 
z_\eta |_{\kappa_x\pseries{t}} \circ z_x^{-1}|_{\kappa_x\pseries{t}})_{x \in |X|} \in 
\sideset{}{'}\prod_{x \in |X|} G (\kappa_x\pseries{t}) = G (\mathbb A_F)\] of transition 
maps defined over the punctured formal neighborhood of $x$. 
Forgetting $z_\eta$ corresponds to quotienting out by $G(F)$ on the left, and keeping 
from $(z_x)_{x\in |X|}$ only the trivialization $z_x$ modulo $t^{m_x}$ for $x\in D$ 
corresponds to 
quotienting by $\KK$ on the right.  Here, the trivialization $z_x$ modulo $t^{m_x}$ for $x\in D$ matches the trivialization of $\alpha$ over $D$ that comes with a point of $\BunGD(k)$.
\end{lemma}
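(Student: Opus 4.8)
The plan is to establish the Weil uniformization description of $\Bun_{G(D)}$ by reducing to the case $D = 0$, which is the classical statement that $\Bun_G(k) = G(F)\backslash G(\mathbb A_F)/\prod_x G(\ko_x)$, and then tracking the trivialization along $D$. First I would use Lemma~\ref{trivialization-generic}: every $G$-torsor on $X$ admits a trivialization $z_\eta$ over the generic point $\eta = \Spec F$. Given a $G$-bundle $\alpha$ with such a generic trivialization, and a compatible family of formal trivializations $z_x$ at each closed point $x \in |X|$ (which exist since $\ko_x$ is a complete local ring, so $G$-torsors over $\Spec \ko_x$ are trivial), I would form the transition functions $g_x := z_\eta|_{F_x} \circ z_x^{-1}|_{F_x} \in G(F_x)$. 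For all but finitely many $x$ the trivializations $z_\eta$ and $z_x$ agree (an integral model argument: the bundle is trivial on a dense open $U \subseteq X$, and for $x \in U$ one can take $z_x$ induced by $z_\eta$, giving $g_x \in G(\ko_x)$), so $(g_x)_x$ lies in the restricted product $G(\mathbb A_F)$.

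Next I would check this assignment is a bijection onto $G(\mathbb A_F)$ and descends correctly. Conversely, given $(g_x)_x \in G(\mathbb A_F)$, one glues the trivial bundle on $\Spec F$ with the trivial bundles on each $\Spec \ko_x$ along the punctured formal neighborhoods $\Spec F_x$ via the $g_x$ (a Beauville--Laszlo / faithfully flat descent argument, valid precisely because $g_x \in G(\ko_x)$ for almost all $x$), recovering $\alpha$ together with $z_\eta$ and the $z_x$. The two constructions are mutually inverse by inspection. Changing $z_\eta$ by $h \in G(F)$ replaces $(g_x)_x$ by $(h g_x)_x$, so forgetting $z_\eta$ corresponds to the left quotient by $G(F)$; changing $z_x$ by an element $k_x \in G(\ko_x)$ for $x \notin D$, or by an element $k_x \in U_{m_x}(G(\ko_x))$ for $x \in D$, replaces $g_x$ by $g_x k_x^{-1}$, which is exactly the right quotient by $\KK$. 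What survives of $(z_x)_{x}$ after this quotient is, for $x \in D$, the class of $z_x$ modulo $t^{m_x}$, i.e.\ a trivialization $\alpha|_D \isom G \times \Spec(\cO_D)$, since $\cO_D \simeq \prod_{x \in D}\kappa_x[t]/t^{m_x}$ and the reduction of $z_x$ modulo $t^{m_x}$ is precisely a trivialization over $\Spec(\kappa_x[t]/t^{m_x})$.

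Finally I would verify that, under the forgetful map $\Bun_{G(D)} \to \Bun_G$, this matches the definition of $\Bun_{G(D)}$ as the moduli of $G$-bundles with a $D$-level structure: the data of $\alpha$ together with $(g_x)_{x}$ modulo $G(F)$ on the left and $\KK$ on the right is visibly the data of $\alpha$ together with a trivialization over $D$, since the only surviving part of the formal trivializations is their truncation modulo $t^{m_x}$, which by definition glues to a trivialization of $\alpha|_D$. I expect the main obstacle to be the careful gluing/descent step showing that an adelic element produces an honest $G$-bundle on $X$ (the $D = 0$ case of Weil's theorem), where one must invoke faithfully flat descent or the Beauville--Laszlo theorem and check that the almost-everywhere integrality of $(g_x)_x$ is exactly what is needed; the rest is bookkeeping about which quotient corresponds to forgetting which piece of data. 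Since this is the classical uniformization theorem (due to Weil, with the function-field version well documented, e.g.\ in the literature on $\Bun_G$), I would cite it for $D = 0$ and only spell out the modification at points of $D$.
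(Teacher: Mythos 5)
Your argument is correct and follows essentially the same route as the paper: a generic trivialization via Lemma~\ref{trivialization-generic}, formal trivializations at all closed points, transition functions giving an adelic element, and the bookkeeping identifying the two quotients with forgetting $z_\eta$ and truncating the $z_x$ at points of $D$. The one point to tighten is your justification that $G$-torsors over $\Spec \ko_x$ are trivial: this does not follow from $\ko_x$ being a complete local ring alone, but from Lang's theorem over the finite residue field $\kappa_x$ for the connected group $G$, combined with smoothness to lift the trivialization along the complete (Henselian) local ring — which is exactly what the paper invokes.
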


\begin{proof} This is the standard definition of the Weil parameterization. By Lemma~\ref{trivialization-generic}, for any $G$-bundle there in fact exists a trivialization over the generic point, and because there are no nontrivial torsors of connected algebraic groups over finite fields, there exists a trivialization over a formal neighborhood of every closed point.

One then checks that this map sends the set of all possible trivializations to a double coset in $ G(F) \backslash G (\mathbb A_F) / \KK $  and 
that each double coset arises from a unique isomorphism class of $G$-bundles.
\end{proof} 

Recall that $J_x$ is the inverse image of $H_x(\kappa_{x})$ under the map 
$G(\ko_x)\twoheadrightarrow G(\kappa_x)$.

\begin{defi} For $g \in G(\mathbb A_F)$, let $\Aut_D (g) $ be the subgroup of $\gamma \in 
G(F)$ such that $g^{-1} \gamma g \in \KK$. Let $\Aut_{D,H}(g)$ be the subgroup of 
$\gamma 
\in G(F)$ such that 
\[
g^{-1} \gamma g \in  \prod_{x \in |X - D|} G( \ko_x)  \times 
\prod_{x \in D} J_x.
\]
We have that $\Aut_D (g)$ is a normal subgroup of $\Aut_{D,H}(g)$.
\end{defi} 
\index{$\Aut_D (g)$, $\Aut_{D,H}(g)$, automorphism groups}

There is an action of $H(k)$ on $\Bun_{G(D)}(k)$ where $h \in H(k)$ acts by fixing the $G$-bundle $\alpha$ and composing the trivialization $t_D$ of $\alpha$ over $D$ with $H$. 

For the action of $H(k)$ on $\Bun_{G(D)}(k)$, we say that the stabilizer in $H(k)$ of a point $ (\alpha, t_D)$ consists of all elements $H(k)$ that send $(\alpha,t_D)$ to a point isomorphic to $(\alpha, t_D)$. Equivalently, this is the stabilizer of the isomorphism class of $(\alpha,t_D)$ for the induced action of $H(k)$ on the set of isomorphism classes. The analogous definition works for any group action of a groupoid.

\begin{lemma}\label{weil-automorphism-semisimple} Let $g$ be an element of $G(\mathbb 
A_F)$, and $(\alpha,t)$ be the point of $\Bun_{G(D)}(k)$ corresponding to the double 
coset of $g$. Then 

\begin{enumerate}[(i)]

\item The automorphism group of $(\alpha,t_D)$ is $\Aut_D(g)$.

\item Under the identification 
$H(k) = \prod_{x\in D} H_x(\kappa_x) = \prod_{x\in D} J_x  / \prod_{x \in D} U_{m_x} 
(G(\ko_x))$,
the action of $H(k)$ on $\Bun_{G(D)}(k)$ is intertwined with 
the action of $\prod_{x\in D} J_x$ by right 
multiplication on $G(F) \backslash G(\mathbb A_F) / \KK$. 

\item The stabilizer in $H(k)$ of a point $(\alpha,t_D)$ is 
$\Aut_{D,H}(g)/\Aut_D(g)$.
\end{enumerate}

 \end{lemma}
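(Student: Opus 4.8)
The three assertions are really one statement unwound through the Weil parameterization of Lemma~\ref{weil-parameterization-semisimple}, so I would treat them together, fixing once and for all a tuple $(\alpha,z_\eta,(z_x)_{x\in|X|})$ mapping to $g\in G(\mathbb A_F)$ and hence a point $(\alpha,t)\in\Bun_{G(D)}(k)$, with $t_D$ the induced trivialization along $D$ (i.e.\ the collection of $z_x\bmod t^{m_x}$ for $x\in D$). The guiding principle throughout is that an automorphism of the $G$-bundle $\alpha$ is the same as an element $\gamma\in G(F)$ (acting via $z_\eta$ on the generic fiber, using Lemma~\ref{trivialization-generic} that $z_\eta$ exists), and that $\gamma$ extends to an automorphism preserving a given bit of level structure exactly when, read through the local trivializations $z_x$, it lands in the corresponding local compact subgroup.

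For assertion (1): an automorphism of the pair $(\alpha,t_D)$ is an automorphism of $\alpha$ fixing the trivialization along $D$. Conjugating by $z_\eta$, an automorphism of $\alpha$ is an element $\gamma\in G(F)$; the condition that it extends to an automorphism of the bundle (no poles at any closed point) is exactly $z_x^{-1}\gamma z_x\in G(\ko_x)$ for all $x\in|X|$, which, after multiplying by the transition data, is the condition $g_x^{-1}\gamma g_x\in G(\ko_x)$, where $g_x = z_\eta|_{F_x}\circ z_x^{-1}|_{F_x}$. The additional requirement that it fix $t_D$ means that at $x\in D$ the induced automorphism of $\alpha|_D$ through $z_x\bmod t^{m_x}$ is trivial, i.e.\ $g_x^{-1}\gamma g_x\in U_{m_x}(G(\ko_x))$. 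Combining over all places gives precisely $g^{-1}\gamma g\in\KK$, i.e.\ $\gamma\in\Aut_D(g)$. This is essentially a bookkeeping argument once the correspondence between local trivializations and local group elements is set up carefully; the only thing to be careful about is that conjugating the ambiguity of $z_\eta$ (left multiplication by $G(F)$) and of the $z_x$ (right multiplication by $G(\ko_x)$ or $U_{m_x}$) is absorbed consistently on both sides, which is what makes the double-coset description well defined.

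For assertion (2): the action of $h=(h_x)_{x\in D}\in H(k)=\prod_{x\in D}H_x(\kappa_x)$ on $\Bun_{G(D)}(k)$ replaces $t_x$ by $h_x\circ t_x$ and leaves $t_\eta$ (hence $z_\eta$ and all $z_x$ for $x\notin D$) alone; tracing through the explicit formula for the bijection, the transition map $g_x = z_\eta|_{F_x}\circ z_x^{-1}|_{F_x}$ at $x\in D$ is replaced by $g_x\cdot \tilde h_x^{-1}$ for $\tilde h_x$ any lift of $h_x$ to $J_x$ (note $h_x\in H_x(\kappa_x)=J_x/U_{m_x}(G(\ko_x))$, so the change is well defined modulo $U_{m_x}$, which is exactly the right ambiguity quotiented out in $\KK$). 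This is right multiplication by $\prod_{x\in D}J_x$ on $G(F)\backslash G(\mathbb A_F)/\KK$, compatibly with the identification in the statement. I would just verify the formula on representatives.

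For assertion (3): $h\in H(k)$ fixes $(\alpha,t_D)$ iff there is an isomorphism of $G$-bundles $\gamma$ carrying $(\alpha,t_D)$ to $(\alpha,h\cdot t_D)$, i.e.\ $\gamma\in G(F)$ with $z_x^{-1}\gamma z_x\bmod t^{m_x}$ equal to $h_x$ in $H_x(\kappa_x)$ for $x\in D$ and $z_x^{-1}\gamma z_x\in G(\ko_x)$ for all $x$; through the transition maps this says $g^{-1}\gamma g\in\prod_{x\in|X-D|}G(\ko_x)\times\prod_{x\in D}J_x$, i.e.\ $\gamma\in\Aut_{D,H}(g)$, and the residue of the induced $h$ is the image of $\gamma$ under $\Aut_{D,H}(g)\to\prod J_x\to\prod H_x(\kappa_x)=H(k)$, whose kernel is $\Aut_D(g)$ by (1). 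So the stabilizer of $(\alpha,t_D)$ in $H(k)$ is the image of $\Aut_{D,H}(g)$, which is $\Aut_{D,H}(g)/\Aut_D(g)$ (using the already-noted normality).

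\textbf{Main obstacle.} None of the three steps is deep; the real work is purely organizational — pinning down conventions so that the ambiguities in $z_\eta$ and $(z_x)$ map correctly to the left $G(F)$- and right $\KK$-actions and checking that ``$h$ acts by $t_x\mapsto h_x\circ t_x$'' translates to right multiplication by a lift in $J_x$ with the correct inverse/normalization. The one genuinely substantive point to get right is that the action of $H(k)$ on trivializations along $D$ is \emph{free} relative to the choice of lift modulo $U_{m_x}$, which is what makes (2) and (3) consistent and makes the quotient $\Aut_{D,H}(g)/\Aut_D(g)$ (rather than something larger) the correct stabilizer; this follows from the short exact sequence $U_{m_x}(G(\ko_x))\to J_x\to H_x(\kappa_x)$ being the Cartesian pullback diagram displayed in the geometric setup.
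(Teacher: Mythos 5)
Your proposal is correct and follows essentially the same route as the paper: unwind the Weil parameterization so that automorphisms of $(\alpha,t_D)$ become elements $\gamma\in G(F)$ with $g_x^{-1}\gamma g_x$ landing in the appropriate local subgroups, identify the $H(k)$-action with right multiplication by lifts in $\prod_{x\in D}J_x$, and obtain (3) from the projection $\Aut_{D,H}(g)\to H(k)$ with kernel $\Aut_D(g)$. The only difference is that you track the inverse in $g_x\mapsto g_x\tilde h_x^{-1}$ explicitly, which the paper glosses over and which is harmless for the orbit/stabilizer conclusions.
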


\begin{proof}\begin{enumerate}[(i)]

\item Any automorphism of $(\alpha,t_D)$, when restricted to the generic point by the 
trivialization $t_\eta$, defines an element $\gamma \in G(F)$. Conversely, any element 
$\gamma\in G(F)$ defines an automorphism of $\alpha$ over the generic point. The 
condition that the 
automorphism extends to a place $x$ is precisely the condition that $g_x^{-1} \gamma 
g_x$ is in $G(\ko_x)$. For $x \in D$, the condition that the automorphism commute with 
the trivialization $t$ is the condition that $g_x^{-1} \gamma g_x \in U_{m_x} 
(G(\ko_x))$.

\item For $h \in H(k)$ write $h=(h_x)_{x\in D}$ under the identification $H(k)=\prod_{x\in D} 
H_x(\kappa_x)$.  The double coset corresponding to a $G$-bundle with a trivialization over $D$ arises, by Lemma~\ref{weil-parameterization-semisimple}, from all choices of a trivialization $z_\eta$ over the generic point and $z_x$ in a formal neighborhood over each point $x$, such that for $x \in D$, $z_x$ is congruent mod $t^{m_x}$ to the trivialization over $D$. Thus, the action of $h$ on the trivialization over $D$ is equivalent to composing $z_x$ with an element of $J_x$ in the inverse image of $h_x$. This is equivalent to multiplying $g_x$ by an element of $J_x$ in the inverse image of $h_x$.

\item $\Aut_D(g)$ is the kernel of the natural map from $\Aut_{D,H}(g)$ to $H(k)$ given 
by projection $\gamma \mapsto g_x^{-1} \gamma g_x$ from $J_x$ to $H_x(\kappa_x)$. The elements in the 
image are 
exactly those elements of $H(k)$ that can be lifted to elements  in $\prod_{x\in D} 
J_x$ 
whose action by right multiplication fixes the double coset of $ g$, i.e., the stabilizer 
in $H(k)$ of $(\alpha,t_D)$.\qedhere
\end{enumerate}
\end{proof}

From now on, let $k = \mathbb F_q$. We need a lemma 
about the compatibility of the geometric and classical Satake isomorphisms, which is 
well-known. This is implicit in the 1982 combinatorial formulas of Lusztig and Kato, 
whose relationship to the IC sheaf is the generalization to the affine Grassmannians of 
the calculations by Kazhdan--Lusztig of the trace of Frobenius on the IC-sheaves of 
the closure of Schubert cells 
in a complete flag variety.
 Our proof is an elaboration of a sketch by Richarz and Zhu~\cite[p.449]{RicharzZhu15}, 
 and we provide some details since we were not able to find a more 
detailed exposition in the literature.
\begin{lemma}\label{satake-compatibilty}  Let $\lambda \in \Lambda^+$ be a coweight of 
$G$. Let $IC_\lambda$ be the IC-sheaf of the closure  of the cell of the affine 
Grassmannian $\Gr_G=G\pseries{t} / G\bseries{t}$ associated to $\lambda$. The trace of 
Frobenius 
on the stalk $IC_{\lambda, x}$ of $IC_{\lambda}$ at a point $x \in \Gr_G(\F_q) = G ( 
\mathbb 
F_q\pseries{t})/ G(\mathbb F_q\bseries{t})$ is equal to the 
value at $G(\mathbb F_q\bseries{t}) xG (\mathbb F_q \bseries{t})$ of the  
function $a_\lambda\in \mathcal H(G)$ associated to the representation of $\widehat{G}$ with 
highest 
weight $\lambda$ 
by 
the Satake isomorphism, times $q^{\langle \lambda,\rho \rangle}$. 
\end{lemma}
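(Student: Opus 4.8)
The plan is to establish the compatibility between the geometric Satake sheaf $IC_\lambda$ and the classical spherical function via the method of Richarz–Zhu, reducing the statement over $\mathbb{F}_q$ to a known statement over an algebraically closed field together with a purity argument. First I would recall that the stalks of $IC_\lambda$ are pure (the affine Grassmannian cells, being built from Schubert-type stratifications, have pointwise-pure intersection cohomology by Lusztig's parity vanishing, so the Frobenius eigenvalues on $IC_{\lambda,x}$ are all of the form $q^{i/2}$ and the odd cohomology vanishes). This means the trace of Frobenius on $IC_{\lambda,x}$ is simply $q^{\langle\lambda,\rho\rangle}$ times the Poincaré polynomial of the stalk evaluated at $q^{-1}$, i.e.\ a combinatorial quantity: the local intersection cohomology Poincaré polynomial of $\overline{\Gr_\lambda}$ at $x$.

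Next I would identify this combinatorial quantity. By the geometric Satake equivalence, $IC_\lambda$ corresponds to the representation $V_\lambda$ of $\widehat G$ with highest weight $\lambda$, and its stalk at the point $x=\mu(t)G\bseries{t}$ (for $\mu$ a coweight) has graded dimensions given by the coefficients of the Lusztig $q$-analogue of weight multiplicity $m_\lambda^\mu(q)$ — this is the content of the 1982 results of Lusztig and Kato relating affine Kazhdan–Lusztig polynomials for the affine Grassmannian to weight multiplicities, exactly as alluded to in the statement. On the other side, the classical Satake isomorphism sends the spherical function for $V_\lambda$ to an element of the Hecke algebra whose value at $\mu(t)$ is, up to the normalization factor $q^{-\langle\lambda,\rho\rangle}$ or $q^{\langle\lambda,\rho\rangle}$ depending on conventions, precisely $q^{-\langle\mu,\rho\rangle}$ times this same Lusztig $q$-analogue $m_\lambda^\mu(q)$ — this is the Kato–Lusztig formula for the Satake transform. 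Matching the two normalizations (the passage between the unitary and algebraic normalizations discussed in Section~\ref{sub:Satake}, where $q^{\langle\lambda,\rho\rangle}a_\lambda$ is the trace function of $IC_{\overline{\Gr_\lambda}}$) then yields the claimed identity.

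The cleanest way to organize this, avoiding a direct comparison of two explicit combinatorial formulas, is the Richarz–Zhu argument: one shows that the function $x\mapsto \tr(\Frob, IC_{\lambda,x})$ lies in the spherical Hecke algebra $\cH(G)$ and that, under the convolution structure coming from the geometric Satake fusion product, the assignment $\lambda\mapsto (x\mapsto q^{-\langle\lambda,\rho\rangle}\tr(\Frob, IC_{\lambda,x}))$ is an algebra homomorphism from the representation ring of $\widehat G$ to $\cH(G)$; one then checks it agrees with the inverse Satake isomorphism on the classes $[V_\lambda]$ by verifying the leading term (the open cell $\Gr_\lambda$, where $IC_\lambda$ is just the shifted constant sheaf, contributes $q^{\langle\lambda,\rho\rangle}$ to the stalk at generic points, matching $\cS^{-1}$ of the spherical function up to lower-order terms) and invoking that both sides are determined by their leading terms under the triangularity of the change of basis between $\{a_\lambda\}$ and $\{\text{char.\ functions of } \Gr_\mu\text{-orbits}\}$. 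The purity/parity-vanishing input makes the trace of Frobenius computation reduce to counting $\mathbb{F}_q$-points of the stalk cohomology, and compatibility of geometric Satake convolution with the classical convolution product of the Hecke algebra (Lefschetz trace formula applied to the convolution diagram) does the rest.

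The main obstacle I expect is the bookkeeping of normalizations: there are three places where a power of $q$ enters (the shift in the definition of $IC_\lambda$ as a perverse sheaf, i.e.\ $\dim\overline{\Gr_\lambda}=\langle\lambda,2\rho\rangle$; the unitary-vs-algebraic normalization of the Satake transform; and the modulus character $\delta^{1/2}$ appearing in the definition of $\cS$), and getting the factor $q^{\langle\lambda,\rho\rangle}$ (as opposed to $q^{-\langle\lambda,\rho\rangle}$ or $q^{\langle\lambda,2\rho\rangle}$) exactly right requires care. A secondary technical point is justifying that the relevant stalks are pure and that parity vanishing holds — this is standard for affine Schubert varieties but relies on knowing the cells admit suitable affine pavings or Bott–Samelson resolutions, which I would cite from the literature (Kazhdan–Lusztig, Lusztig, or Görtz) rather than reprove. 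Everything else is formal once geometric Satake and the Lefschetz trace formula are in hand.
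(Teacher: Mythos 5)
Your proposal is correct in substance but takes a genuinely different route from the paper. Your main argument reduces the lemma to pointwise purity and parity vanishing of the stalks of $IC_\lambda$ plus the Lusztig--Kato combinatorial formula expressing $\cS^{-1}(\chi_\lambda)$ through affine Kazhdan--Lusztig polynomials (Lusztig $q$-analogues of weight multiplicities); this is precisely the classical 1982 route that the paper mentions but deliberately does not follow. The paper's proof instead takes the trace function of $IC_\lambda(\langle\lambda,\rho\rangle)$, notes it is bi-$G(\F_q\bseries{t})$-invariant, and computes its Satake transform directly: the constant-term sum over $N(\F_q\pseries{t})\mu(t)$ is identified with the sum over the $\F_q$-points of the Mirkovi\'c--Vilonen semi-infinite orbit $S_\mu$, hence by Grothendieck--Lefschetz with the trace of $\Frob$ on $H^*_c(S_\mu, IC_\lambda(\langle\lambda,\rho\rangle))$; the cited results of Zhu and Baumann--Riche say this cohomology is concentrated in the single degree $\langle 2\rho,\mu\rangle$, has all Frobenius eigenvalues equal to the predicted power of $q$, and has dimension the $\mu$-weight multiplicity of $V_\lambda$, so the Satake transform is the character of $V_\lambda$. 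The paper's route thus needs no pointwise stalk purity and no KL-polynomial bookkeeping, only the global weight-functor facts of geometric Satake, while your route buys explicit stalk formulas at the cost of importing the Kato--Lusztig identity and pointwise parity as black boxes. One caution about the reorganization you call cleanest (the fusion/algebra-homomorphism argument): for trace functions to multiply under Hecke convolution you must control the Frobenius action on the multiplicity spaces in the decomposition of the convolution $IC_\lambda \star IC_\mu$, which is itself a purity input, and ``leading term plus triangularity'' alone does not pin down an algebra homomorphism $R(\widehat G)\to\cH(G)$ without further argument; note also that the Richarz--Zhu sketch the paper elaborates is the semi-infinite-orbit computation above, not this convolution argument.
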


\begin{proof} Consider the function $f_\lambda \colon \Gr_G(\F_q) \to \overline{\mathbb Q}_\ell \cong \mathbb C$ 
defined by the stalks of $IC_\lambda$ times $q^{-\langle \lambda, \rho \rangle}$, i.e., 
the stalks of the twist $IC_{\lambda} ( \langle \lambda,\rho\rangle)$.  Because the 
Schubert cell is 
left $G(\mathbb F_q\bseries{t})$-invariant, $IC_{\lambda}$ is left $G(\mathbb 
F_q\bseries{t})$-invariant, and so $f_\lambda$ descends to a function on $G(\mathbb 
F_q\bseries{t}) \backslash 
G ( 
\mathbb 
F_q\pseries{t})/ G(\mathbb F_q\bseries{t})$. Because the Satake 
transform is an isomorphism, in order to verify that it coincides with $a_\lambda$, it 
suffices to check that the Satake transform of  $f_\lambda$ is the character of the 
representation of $\widehat{G}$ with highest weight $\lambda$. 

For $\mu:\mathbb G_m \to T $ a cocharacter, let $[\mu]\in X^*(\widehat T)$ be the 
associated character of the dual torus. Then by definition, the Satake transform of 
$f_\lambda$  is given by 
\[ \sum_{\mu: \mathbb G_m \to T} [\mu] \cdot  q^{ - \langle \mu, \rho \rangle} \int_{ h\in N(\F_q\pseries{t})} 
f_\lambda(h \mu(t)) dh  \] where $N$ is 
the unipotent radical of a Borel, and we take the Haar measure $dh$ on $N(\mathbb 
F_q\pseries{t})$  where $N(\mathbb 
F_q\bseries{t})$ has measure one. For \[g \in N(\mathbb F_q\bseries{t})  \mu(t) G( \mathbb F_q \bseries{t}) 
,\] 
consider the total measure assigned 
 by $dh$ to 
\[   g 
 G (\mathbb F_q\bseries{t}) \mu(t)^{-1}  \cap N (\F_q \pseries{t} ).\] Because the Haar measure $dh$ is left $N(\F_q\bseries{t})$-invariant, this equals the measure of \[ \mu(t) G(\mathbb F_q\bseries{t}) \mu(t)^{-1}   \cap N(\F_q \pseries{t}) = \mu(t)  \left(  N (\mathbb F_q\pseries{t})  \cap G( 
\mathbb 
F_q\bseries{t}) \right) \mu(t)^{-1} ,\] which by definition is the index of  $N(\mathbb F_q\pseries{t})  \cap G( \mathbb 
F_q\bseries{t})$ inside $\mu(t)  \left(  N (\mathbb F_q\pseries{t})  \cap G( 
\mathbb F_q\bseries{t}) \right) \mu(t)^{-1}$. By viewing $N$ as an iterated extension of root groups, and observing that the action of $\mu(t)$ on the root group associated to $\alpha$ is scaling by $q^{ \langle \mu,\alpha\rangle}$, we can see that this index is $q^{  2 \langle \mu, \rho \rangle}$, where $\rho$ as usual is half the sum of the positive roots.

On the other hand, if \[  g\notin N(\mathbb F_q\bseries{t})  \mu(t) G( \mathbb F_q \bseries{t}) \] then the total measure assigned to $g 
 G (\mathbb F_q\bseries{t}) $ by $dh$ is zero.

Thus,  \[ \int_{ h\in N(\F_q\pseries{t})} f_\lambda(h\mu(t)) dh = \sum_{ g \in N(\F_q\pseries{t}) \mu(t) G 
(\mathbb 
F_q\bseries{t})/G(\mathbb 
F_q\bseries{t})} f_\lambda(g)   q^{ 2 \langle \mu, \rho \rangle} \] so the Satake transform of $f_\lambda$ 
is \[ \sum_{\mu: \mathbb G_m \to T} [\mu] \cdot  q^{   \langle \mu, \rho \rangle}  \sum_{ g \in N(\F_q\pseries{t}) 
\mu(t) G (\mathbb 
F_q\bseries{t})/G(\mathbb 
F_q\bseries{t})} f_\lambda(g)  .\] 

The subset $N(\F_q\bseries{t}) \mu(t) G (\mathbb F_q\bseries{t})/G(\mathbb 
F_q\bseries{t}) \subseteq \Gr_G(\F_q)$ is the set of $\mathbb F_q$-points of the locally 
closed 
subscheme $S_{\mu}$ of the affine Grassmannian defined by Mirkovi\'{c}-Vilonen 
\cite[\S5.3.5]{ZhuIntro}, see also~\cite[\S3.2]{Baumann-Riche}. Hence the sum of the trace 
function $f_\lambda$ 
 of $IC_{\lambda}( \langle 
\lambda,\rho\rangle) $ over this set is the trace of Frobenius on $H^* _c ( S_{\mu,\overline{\mathbb F}_q}, IC_{\lambda}( \langle 
\lambda,\rho\rangle)) $. By \cite[Thm.5.3.9(2)]{ZhuIntro} 
and~\cite[Prop.10.1]{Baumann-Riche}, all eigenvalues of 
Frobenius on this cohomology group are equal to $q^{ -\langle \mu,\rho \rangle}$ and 
occur in degree $\langle 2 \rho, \mu \rangle$, so the trace of Frobenius is $q^{ -\langle 
\mu,\rho \rangle} \dim H^{\langle 2\rho, \mu \rangle}  _c ( S_{\mu,\overline{\mathbb F}_q}, IC_{\lambda}( \langle 
\lambda,\rho\rangle)) $. Thus, the Satake transform of $f_\lambda$  is 
\begin{equation}\label{satake-character-sum}\sum_{\mu: \mathbb G_m \to T} [\mu] \cdot \dim H^{\langle 
2\rho, \mu \rangle}  _c ( S_{\mu,\overline{\mathbb F}_q}, IC_{\lambda}( \langle 
\lambda,\rho\rangle)).\end{equation} By 
\cite[Thm.5.3.9(3) and Lem.5.3.17]{ZhuIntro}, this cohomology group is isomorphic to the 
$\widehat{T}$-eigenspace with character $[\mu]$ in the representation of $\widehat{G}$ with 
highest weight $\lambda$. This means the multiplicity of $[\mu]$ in the sum 
\eqref{satake-character-sum} is the multiplicity of $[\mu]$ in the representation $V_\lambda$ of 
$\widehat{G}$ 
with highest weight $\lambda$, so \eqref{satake-character-sum} is the character $\tr(V_\lambda)$ 
of that 
representation, as desired.
\end{proof}

To state and prove a bijection between isomorphisms $\varphi$ satisfying a list of conditions and $\gamma \in G(F)$ satisfying a different list of conditions, it is helpful to name these conditions. These will be used only in the following Lemma \ref{adelization-semisimple}.

Let $g_1, g_2$ be two elements of $G (\mathbb 
A_F)$, and let $(\alpha_1,t_1)$, $(\alpha_2,t_2)$ be the corresponding points of $\Bun_{G(D)} 
(k)$.

We say an isomorphism $\varphi:\alpha_1\to \alpha_2$ away from the support of $\weights$ satisfies condition (C-$\varphi$) if $\varphi$, expressed 
as an element of $G\pseries{t}$ by local coordinates near each point $x$ in the support of 
$\weights$, is in the closed cell of the affine Grassmannian associated to $\weights_x$, and if $t_2 \circ \varphi|_D \circ t_1^{-1}$ is contained in $H$.

In other words, $\varphi$ satisfies (C-$\varphi$) if and only if $( (\alpha_1,t_1),(\alpha_2,t_2),\varphi)$ is a point of $ 
\Hecke_{G(D),\weights} \times H$.

We say $\gamma \in G(F)$ satisfies condition (C-$\gamma$) if $g_2^{-1} \gamma g_1$ is in $G(\ko_x)$ at all 
points outside the support of $\weights$ and the support of $D$, is in 
the closure of the cell of the Bruhat decomposition of $G(F_x)$ associated to 
$\weights_x$ for each point $x$ in the support of $\weights$, and lies in 
$J_x$ for each point $x\in D$.

\begin{lemma}\label{adelization-semisimple} 
Let $g_1, g_2$ be two elements of $G (\mathbb 
A_F)$, and let $(\alpha_1,t_1)$, $(\alpha_2,t_2)$ be the corresponding points of $\Bun_{G(D)} 
(k)$.

There is a bijection between the set of isomorphisms  $\varphi: \alpha_1 \to \alpha_2 $ away 
from the support of $\weights$ satisfying the above condition (C-$\varphi$) and $\gamma\in G(F)$ 
satisfying the above condition (C-$\gamma$) such that, if $\varphi$ and $\gamma$ correspond under 
this bijection, we have the two identities:

\begin{enumerate}

\item  $t_2 \circ \varphi|_D \circ t_1^{-1} \in H(k) $ equals the product over $x \in D$ of the projection of 
the local component of $ g_2 \gamma g_1^{-1}$ from $J_x$ to $H_x(\kappa_{x})$.

\item The trace function at $IC_{  \Hecke_{G(D),\weights} }$ at  $(  \alpha_1, \alpha_2, \varphi, t_1)\in  \Hecke_{G(D),\weights} $ equals  $\prod_{x \in \weights} f^W_x ( g_2 \gamma g_1^{-1})$, where $f^W_x$ is the function on $G ( F_x)$ associated by the Satake isomorphism to the character of the representation of $\widehat{G}$ whose highest weight corresponds to $\weights_x$. 

\end{enumerate}

  \end{lemma}
  For interpreting the identities (1) and (2) above, it is helpful to note that the projection $\Hecke_{G(D),\weights} \times H\to H$ sends $((\alpha_1, t_1), (\alpha_2, t_2), \varphi  ) $ to $ t_2 \circ \varphi|_D \circ t_1^{-1}$ and the projection $\Hecke_{G(D),\weights} \times H \to \Hecke_{G(D),\weights} $ sends $((\alpha_1, t_1), (\alpha_2, t_2), \varphi  ) $ to $(\alpha_1,\alpha_2, \varphi,t_1)$.
  
  \begin{proof}  
  
  Let $t_{\eta,1}, t_{x,1}$, $t_{\eta,2}$, $t_{x,2}$ be the trivializations of $\alpha_1$ 
  and $\alpha_2$ at the generic point and in formal neigborhoods respectively.  Then because $t_{\eta,1}$ and $t_{\eta,2}$ are isomorphisms, there is 
  a bijection between isomorphisms $\varphi_\eta: \alpha_1\to\alpha_2$ over the generic points 
  and the elements $ t_{\eta,2} \circ \varphi_\eta \circ t_{\eta,1}^{-1}$ of $G(F)$. Let 
  $\gamma = t_{\eta,2} \circ \varphi_\eta \circ t_{\eta,1}^{-1}$.
  
  We define our bijection to send $\varphi$ to $\gamma$. The inverse map defines $\varphi_\eta$ over the generic point as $t_{\eta,2}^{-1} \circ \gamma \circ t_{\eta,1}$ and then extends $\varphi_\eta$ uniquely to an isomorphism $\varphi$ away from the support of $\weights$.
  
  To show this gives a bijection, it suffices to  check that the extension $\varphi$ of $\varphi_\eta$ exists and satisfies condition (C-$\varphi$) if and only if $\gamma$ satisfies condition (C-$\gamma$).
  To check this, first note that, restricted to the 
  punctured formal neighborhood of $x$,\[t_{x,2} \circ \varphi \circ t_{x,1}^{-1} = 
  t_{x,2} \circ t_{\eta,2}^{-1} \circ \gamma \circ t_{\eta,1} \circ t_{x,1}^{-1}  =  
  g_{2,x}^{-1} \gamma g_{1,x}\] is the local component  of $ g_2 \gamma g_1^{-1}$ at $x$.
  
  Now we check that the restriction (C-$\varphi$) places on $\varphi$ at a point $x$ is equivalent to a corresponding restriction (C-$\gamma$) places on the local component of $g_2 \gamma g_1^{-1}$ at the same point $x$: 
  
  \begin{itemize}
  
  \item For $x$ not in the support of $\weights$, the condition that $\varphi_\eta$ extends to an an isomorphism in a neighborhood of $x$ is equivalent to the condition that  $  g_2^{-1} \gamma g_1$ lies  in $G(\ko_x)$.  (If $x \in D$, this is implied by the stronger condition that $  g_2^{-1} \gamma g_1$ lies in $J_x$). 

	\item  Let $x$ be in the support of $\weights$. The condition that, expressed in local 
	coordinates at $x$, $\varphi$ is in the closure of the cell in the affine Grassmannian 
	associated to $\weights_x$ is equivalent to the condition that $ g_2^{-1}  \gamma g_1$ lies in the 
	closure of the cell of the Bruhat decomposition of $G (F_x)$ associated to 
	$\weights_x$. 
	
	\item The fact that $t_2 \circ \varphi|_D \circ t_1^{-1}$ lies in $H$  is 
	equivalent to the condition that $  g_2^{-1}  \gamma g_1$ is in $H$ modulo $D$, or 
	equivalently modulo $t^{m_x}$ for each $x$ in $D$, which is precisely the definition 
	of $J_x$. \end{itemize}
	
	Combining these equivalences at all points $x$, we see that (C-$\gamma$) is equivalent to (C-$\varphi$), together with the claim that the extension $\varphi$ of $\varphi_\eta$ exists, and so the map that sends $\varphi$ to $\gamma$ is a bijection.

The identity (1) follows from the fact that, for $x\in D$, $ g_{2,x}^{-1} \gamma g_{1,x}= t_{x,2} \circ \varphi \circ t_{x,1}^{-1} $ and thus is congruent to $t_2 \circ \varphi \circ t_1^{-1}$ modulo $t^{m_x}$. 

The identity (2) follows from Lemma~\ref{satake-compatibilty}.\end{proof}

\index{$f^W_x$, test functions}    
\begin{defi}\label{def:test-function} For $x$ a closed point of $X$, let $f_x^{\weights}$ 
on $G(F_x) $ equal:
\begin{itemize}
\item If $x$ is not contained in $D$ or the support of $\weights$, the characteristic 
 function of $G (\ko_x )$. 
 
\item If $x$ is contained in the support of $\weights$, the function associated by the 
 Satake isomorphism to the character of the representation of $\widehat{G}$ whose highest 
 weight corresponds to $\weights_x$, times $q^{ \deg x  \langle\weights_x , \rho\rangle}$.
 
\item If $x$ is contained in $D$, the function that vanishes outside of $J_x$ and is 
equal to $\chi_x$ on $J_x$. 
\end{itemize}
\end{defi}

 \begin{lemma}\label{trace-function-semisimple}  Let $g_1, g_2$ be two elements of $G 
 (\mathbb A_F)$. Let $(\alpha_1,t_1)$ and $(\alpha_2,t_2)$ be the points of $\Bun_{G(D)} (k)$ 
 corresponding to the double cosets of $g_1$ and $g_2$ respectively. Then the trace of 
 $\Frob_{k}$ on the stalk of $K_{\weights}$ at $((\alpha_1,t_1),(\alpha_2,t_2))$ is 
 
 \[  \sum_{\gamma \in G(F)} \prod_{x \in |X|}  f_x^{\weights}( g_2^{-1} \gamma g_1)   . \]
 
 \end{lemma}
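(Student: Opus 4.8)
The plan is to compute the trace function of $K_{\weights}$ by combining the Grothendieck--Lefschetz trace formula applied to the proper map $\overline{\Delta}^{\weights}$ with the explicit description of the fiber given in Lemma~\ref{adelization-semisimple}. First I would recall that, by Theorem~\ref{mainduality-semisimple}, $K_{\weights} = \Delta^{\weights}_! \left( IC_{\Hecke_{G(D),\weights}} \boxtimes \mathcal L \right)[\dim H] = \overline{\Delta}^{\weights}_* j_!\left( IC_{\Hecke_{G(D),\weights}} \boxtimes \mathcal L \right)[\dim H]$, so that proper base change identifies the stalk of $K_{\weights}$ at $((\alpha_1,t_1),(\alpha_2,t_2))$ with the compactly supported cohomology of the fiber of $\Delta^{\weights}$ over that point, with coefficients in $IC_{\Hecke_{G(D),\weights}} \boxtimes \mathcal L$ (the shift by $\dim H$ and the open-closed decomposition being harmless since $j$ is an open immersion and the complementary locus contributes nothing to this fiber after taking $j_!$). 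By the Grothendieck--Lefschetz trace formula, the trace of $\Frob_k$ on this cohomology is the sum over $k$-points of the fiber of the trace function of $IC_{\Hecke_{G(D),\weights}} \boxtimes \mathcal L$.

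Next I would unwind what a $k$-point of this fiber is. It is precisely an isomorphism $\varphi: \alpha_1 \to \alpha_2$ away from the support of $\weights$, lying in the closed affine-Grassmannian cell attached to $\weights_x$ near each such $x$, together with the constraint $t_2 \circ \varphi|_D \circ t_1^{-1} \in H(k)$ coming from the $H$-factor; this is exactly the first set of objects parameterized in Lemma~\ref{adelization-semisimple}. That lemma gives a bijection of this set with the set of $\gamma \in G(F)$ such that $g_2^{-1}\gamma g_1$ lies in $G(\ko_x)$ outside $D \cup \operatorname{supp}(\weights)$, in the closed Bruhat cell for $\weights_x$ at points of $\operatorname{supp}(\weights)$, and in $J_x$ at points $x \in D$. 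So the sum over fiber points becomes a sum over such $\gamma \in G(F)$.

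It then remains to match the summand. For $\gamma$ corresponding to $\varphi$, the trace of Frobenius on the stalk of $IC_{\Hecke_{G(D),\weights}} \boxtimes \mathcal L$ factors as a product over places: at $x \in D$ the $\mathcal L$-factor contributes the value $\chi_x$ of the character sheaf at the $H_x(\kappa_x)$-component of $g_2^{-1}\gamma g_1$ (using Lemma~\ref{trace-function-character-sheaf} and the last clause of Lemma~\ref{adelization-semisimple} identifying that component), while at $x$ in the support of $\weights$ the $IC$-factor contributes, by Lemma~\ref{satake-compatibilty} (or its restatement at the end of Lemma~\ref{adelization-semisimple}), the value $q^{\deg x \langle \weights_x, \rho\rangle}$ times the Satake function attached to the representation of $\widehat G$ of highest weight $\weights_x$, evaluated on the local component of $g_2^{-1}\gamma g_1$; and at the remaining places the $IC$ is the constant sheaf on a point, contributing the characteristic function of $G(\ko_x)$. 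These are exactly the three cases of Definition~\ref{def:test-function}, so the summand for $\gamma$ equals $\prod_{x\in |X|} f_x^{\weights}(g_2^{-1}\gamma g_1)$, and reindexing the sum over $\gamma$ (equivalently, the sum is over all of $G(F)$ since $f_x^{\weights}$ vanishes whenever $g_2^{-1}\gamma g_1$ fails the local support conditions) yields the claimed formula $\sum_{\gamma\in G(F)} \prod_{x\in|X|} f_x^{\weights}(g_2^{-1}\gamma g_1)$.

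The main obstacle is bookkeeping rather than a deep difficulty: one must be careful that the trace function of a box product $IC \boxtimes \mathcal L$ over the Hecke space genuinely factors as the product of the local trace functions, that the normalization conventions (the $q^{\langle\lambda,\rho\rangle}$-twist relating the unitary and algebraic Satake normalizations, recalled in \S\ref{sub:Satake}) are applied consistently, and that the passage from $\gamma$ to $g_2^{-1}\gamma g_1$ versus $g_2\gamma g_1^{-1}$ matches the orientation conventions in Lemma~\ref{adelization-semisimple}. One should also confirm that the convergence of the sum is not an issue here: for fixed $g_1,g_2$ only finitely many $\gamma \in G(F)$ make $g_2^{-1}\gamma g_1$ land in the prescribed compact-type subsets at every place, which is exactly the finiteness of the fiber of the schematic affine morphism $\Delta^{\weights}$ (Lemma~\ref{schematic-affine}) together with the properness supplied by $\overline{\Delta}^{\weights}$. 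Modulo these checks the proof is a direct application of the Lefschetz trace formula to the already-established geometric results.
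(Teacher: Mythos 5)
Your proof is correct and follows essentially the same route as the paper's: the Lefschetz formula applied over the (affine, finite-type) fiber of $\Delta^{\weights}$, the adelic parameterization of that fiber via Lemma~\ref{adelization-semisimple}, and the place-by-place matching of the trace function of $IC_{\Hecke_{G(D),\weights}}\boxtimes\mathcal L$ with the test functions of Definition~\ref{def:test-function}. The only cosmetic difference is that you route the stalk computation through $\overline{\Delta}^{\weights}$ and Theorem~\ref{mainduality-semisimple}, which is unnecessary for this lemma: base change for $\Delta^{\weights}_!$ (or, as the paper notes, the direct observation that the fiber is an affine scheme of finite type) already suffices.
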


 \begin{proof} By the Lefschetz formula, the trace is the sum of the trace function of 
 $IC_{\Hecke_{G(D), \weights} }  \boxtimes \mathcal L$ over 
 $(\Delta^{\weights})^{-1} ((\alpha_1,t_1),(\alpha_2,t_2))$, where $(\Delta_W)^{-1}$ denotes the inverse image. (The fiber  $(\Delta^{\weights})^{-1} ((\alpha_1,t_1),(\alpha_2,t_2))$ is an affine 
 scheme of finite type, so we do not need to apply the Lefschetz formula for stacks.)
  
By Definition~\ref{d:Hecke-G(D)},  $(\Delta^{\weights})^{-1} ((\alpha_1,t_1),(\alpha_2,t_2))$ consists of isomorphisms $\varphi: \alpha_1 \to \alpha_2$ away from the 
support of $\weights$, that expressed as elements of $G\pseries{t}$ by local 
coordinates near each point $x$ in the support of $\weights$ are in the closed cell of 
the affine Grassmannian associated to $\weights_x$, such that $t_2 \circ \varphi|_D \circ 
t_1^{-1}$ is contained in $H$.
 
 By Lemma~\ref{adelization-semisimple}, such maps $\varphi$ are in bijection with 
 $\gamma$ in $G(F)$ such that $g_2^{-1} \gamma g_1$ is in $G (\mathcal O_{F_v})$ at all 
 places outside the support of $\weights$ and the support of $D$, is in the closure of 
 the cell of the Bruhat decomposition of $G(F_x)$ for each place $x$ associated to 
 $\weights_x$ for each point $x$ in the support of $W$, and lies in $J_x$ for each point 
 $x$ of $D$.

Furthermore, the trace function of $IC_{\Hecke_{G(D), \weights}} \boxtimes \mathcal L$ 
is equal to the product of the trace function of $IC_{\Hecke_{G(D), \weights}}$ and 
the trace function of $\mathcal L$. The trace function of $IC_{\Hecke_{G(D), \weights}}$ 
is the product over the places lying in the support of $\weights$ of the function 
associated to the corresponding representation of $\widehat{G}$ in the Satake isomorphism 
times $q^{ \langle W_x, \rho \rangle}$ by 
Lemma~\ref{adelization-semisimple}. The trace function of $\mathcal L$ is a character of 
$H(k)$, which by definition is $\prod_{x \in D} \chi_x$. 

Examining, we see that the trace of the point associated to an element $\gamma$ is 
precisely $ \prod_{x \in |X|}  f_x^{\weights}( g_2^{-1} \gamma g_1)$. Summing over 
$\gamma$, we obtain the stated sum. \end{proof}

\begin{defi}\label{def:global-function} For $g_1,g_2 \in G(\mathbb A_F)$, let \[\FrK_{\weights} (g_1,g_2 ) =   
\sum_{\gamma \in G(F)} \prod_{x \in |X|}  f_x^{\weights}( g_2^{-1} \gamma g_1)     \] be 
the trace function of $K_{\weights}$. \end{defi}
\index{$\FrK_\weights$, automorphic kernel, trace function of $K_W$}

\subsection{Cohomological interpretation of the trace}

We can interpret the inner product of two functions $\FrK_{\weights_1}$, $\FrK_{\weights_2}$ cohomologically. Using this cohomological interpretation, we will get a very strong bound, in Theorem \ref{geometric-average-bound}. We will later express this inner product as the trace of a Hecke operator on our space of automorphic forms (in Proposition \ref{p:spectral-trace}), and therefore obtain bounds for traces of Hecke operators.

\begin{lemma}\label{arithmetic-to-geometry}  Assume that $p>2$ and some $(G, m_u, H_u, 
\mathcal L_u)$ is geometrically supercuspidal. Then we have 

\[ \sum_{g_1, g_2 \in G(F) \backslash G (\mathbb A_F) / \KK}  \frac{  \overline{ 
\FrK_{\weights_1} (g_1,g_2)} \FrK_{\weights_2}( g_1,g_2) }{ |\Aut_D (g_1) | | 
\Aut_D(g_2)|}\]

\[=  {q}^{ (\dim G)  (g+|D|-1) + d(\weights_1) + \dim H }  \sum_{i \in \mathbb Z}   
(-1)^i \tr(\Frob_{q} ,  H^i_c ( U_{\overline{k}} \times U_{\overline{k}},  
DK_{\weights_1} \otimes K_{\weights_2} ) ) \]
where the sum on the left is finitely supported and the sum on the right is absolutely 
convergent.

\end{lemma}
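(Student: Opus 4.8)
The plan is to apply the Grothendieck--Lefschetz trace formula for Artin stacks to the complex $DK_{\weights_1}\otimes K_{\weights_2}$ on the quasicompact open substack $U\times U$ of $\Bun_{G(D)}\times\Bun_{G(D)}$, and then to rewrite both sides using the explicit trace function computed in Lemma~\ref{trace-function-semisimple} together with the Verdier duality identity of Lemma~\ref{complex-dual}. Throughout, write $w_1:=(\dim G)(g+|D|-1)+d(\weights_1)+\dim H$, which by Lemma~\ref{purity-semisimple} is the weight of $K_{\weights_1}$, and set $\mathcal F:=DK_{\weights_1}\otimes K_{\weights_2}$.

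First I would reduce the left-hand side to a sum over $U\times U$ and identify it as a groupoid-weighted point count of the trace function of $\mathcal F$. By Lemma~\ref{weil-parameterization-semisimple}, a pair $(g_1,g_2)\in\bigl(G(F)\backslash G(\mathbb A_F)/\KK\bigr)^2$ corresponds to a pair $((\alpha_1,t_1),(\alpha_2,t_2))$ of $k$-points of $\Bun_{G(D)}$, and by Lemma~\ref{trace-function-semisimple} and Definition~\ref{def:global-function} the value $\FrK_{\weights_i}(g_1,g_2)$ is the trace of $\Frob_k$ on the stalk of $K_{\weights_i}$ at that point. Since each $(G,m_u,H_u,\mathcal L_u)$ is geometrically supercuspidal and $\charr(k)>2$, Lemma~\ref{quasicompact-semisimple}(5) shows this stalk vanishes unless $(\alpha_1,\alpha_2)\in U\times U$, so the summand $\overline{\FrK_{\weights_1}(g_1,g_2)}\,\FrK_{\weights_2}(g_1,g_2)$ is supported on the finite set $(U\times U)(k)$ (finite by Lemma~\ref{quasicompact-semisimple}(2)); this proves the finite-support assertion. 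Moreover $|\Aut_D(g_i)|$ is the order of the automorphism group of $(\alpha_i,t_i)$ in $\Bun_{G(D)}$ by Lemma~\ref{weil-automorphism-semisimple}(1), so after identifying the trace function of $\mathcal F$ the left-hand side becomes exactly $\sum_{(g_1,g_2)\in(U\times U)(k)}|\Aut_D(g_1)|^{-1}|\Aut_D(g_2)|^{-1}\,t_{\mathcal F}(g_1,g_2)$.

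Next I would compute $t_{\mathcal F}$ pointwise. One has $t_{\mathcal F}=t_{DK_{\weights_1}}\cdot t_{K_{\weights_2}}$, and $t_{K_{\weights_2}}=\FrK_{\weights_2}$ by Lemma~\ref{trace-function-semisimple}. For the first factor, Lemma~\ref{complex-dual} identifies $DK_{\weights_1}$ with the complex $K_{\weights_1}$ built using the dual character sheaf $\mathcal L^\vee$ in place of $\mathcal L$, Tate-twisted by $w_1$; hence $t_{DK_{\weights_1}}=q^{-w_1}$ times the function produced by Lemma~\ref{trace-function-semisimple} for $\mathcal L^\vee$. In Definition~\ref{def:test-function} the local test functions for $\mathcal L^\vee$ agree with those for $\mathcal L$ away from $D$ and equal $\chi_x^{-1}=\overline{\chi_x}$ at $x\in D$, since each $\chi_x$ has finite order and is therefore unitary; the test functions away from $D$ are real-valued, trivially for the characteristic functions, and for the Satake functions at $x\in\operatorname{supp}\weights_1$ because in the $\mathbb Z[q^{\pm1/2}]$-normalization the Satake image of an irreducible character lies in the Hecke algebra over $\mathbb Z[q^{\pm1/2}]$ (equivalently, by Lemma~\ref{satake-compatibilty}, because the affine-Grassmannian IC sheaves have pure Tate stalks). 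Consequently the $\mathcal L^\vee$-version of $\FrK_{\weights_1}$ is the complex conjugate of $\FrK_{\weights_1}$, and so $t_{DK_{\weights_1}}(g_1,g_2)=q^{-w_1}\,\overline{\FrK_{\weights_1}(g_1,g_2)}$, giving $t_{\mathcal F}(g_1,g_2)=q^{-w_1}\,\overline{\FrK_{\weights_1}(g_1,g_2)}\,\FrK_{\weights_2}(g_1,g_2)$.

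Finally I would invoke the Lefschetz trace formula for Artin stacks. By Lemma~\ref{quasicompact-semisimple}(3), $U\times U$ is the quotient of a smooth scheme of finite type by a reductive group, so the trace formula applies to $\mathcal F$ and yields
\[
\sum_{(g_1,g_2)\in(U\times U)(k)}\frac{t_{\mathcal F}(g_1,g_2)}{|\Aut_D(g_1)|\,|\Aut_D(g_2)|}=\sum_{i\in\mathbb Z}(-1)^i\,\tr\!\left(\Frob_q,\,H^i_c\!\left(U_{\overline k}\times U_{\overline k},\,\mathcal F\right)\right),
\]
the right-hand sum converging absolutely by the mixedness estimates of \cite{Sun12L} used in the proof of Lemma~\ref{cohbound-weak}: since $\mathcal F$ is mixed of weight $\le w_2-w_1$ (being $DK_{\weights_1}$, pure of weight $-w_1$, tensored with $K_{\weights_2}$, pure of weight $w_2$), the groups $H^i_c$ vanish for $i\gg0$ and have Frobenius weights tending to $-\infty$ as $i\to-\infty$. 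Substituting the formula for $t_{\mathcal F}$, multiplying both sides by $q^{w_1}$, and using the vanishing from the second step to enlarge the sum from $(U\times U)(k)$ to all of $\bigl(G(F)\backslash G(\mathbb A_F)/\KK\bigr)^2$, produces the stated identity with the exponent $q^{(\dim G)(g+|D|-1)+d(\weights_1)+\dim H}$.

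I expect the main obstacle to be the bookkeeping of normalizations in the third step: tracking the Tate twist from Lemma~\ref{complex-dual} and the shift $[\dim H]$ in the definition of $K_{\weights}$ against the explicit test functions of Definition~\ref{def:test-function}, and using the compatibility of the geometric and classical Satake isomorphisms (Lemma~\ref{satake-compatibilty}) with the correct power of $q$, so that the exponent comes out exactly $(\dim G)(g+|D|-1)+d(\weights_1)+\dim H$ rather than a value differing by a shift. The purely geometric inputs — the Lefschetz trace formula for stacks, purity, and the structure of $U$ — are standard once Lemma~\ref{quasicompact-semisimple} is available.
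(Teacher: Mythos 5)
Your proposal is correct and follows essentially the same route as the paper's proof: identify the left-hand side, via Lemmas~\ref{weil-parameterization-semisimple}, \ref{weil-automorphism-semisimple} and \ref{trace-function-semisimple}, with the groupoid-weighted sum of the trace function of $DK_{\weights_1}\otimes K_{\weights_2}$ over $k$-points, cut down to $U\times U$ by Lemma~\ref{quasicompact-semisimple}, and apply Sun's Lefschetz formula for Artin stacks, with absolute convergence from his weight estimates. The only real divergence is how you identify the trace function of $DK_{\weights_1}$: the paper invokes the Gabber--Katz result (cited as \cite[Lemma 1.8.1(1)]{mmp}) that for a pure perverse sheaf the trace function of the Verdier dual is the complex conjugate divided by $q^{w}$, resting on the purity statement of Lemma~\ref{purity-semisimple}; you instead use the explicit duality computation of Lemma~\ref{complex-dual}, which exhibits $DK_{\weights_1}$ as the $\mathcal L^{\vee}$-analogue of $K_{\weights_1}$ Tate-twisted by $w_1$, and then observe that passing from $\mathcal L$ to $\mathcal L^{\vee}$ conjugates the kernel because the test functions of Definition~\ref{def:test-function} are real away from $D$ (characteristic functions, and Satake functions which are polynomials in $q$ by Kazhdan--Lusztig purity, as in Lemma~\ref{integrality-start-semisimple}) and are finite-order characters on $D$. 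Both arguments are valid and of comparable length; yours is more hands-on and bypasses the Gabber--Katz citation at the cost of the explicit realness/unitarity bookkeeping, while the paper's is a one-line appeal to purity.
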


\begin{proof} By  \cite[Thm.4.2(i)]{Sun12L}, $ \sum_{i \in \mathbb Z}   
(-1)^i \tr(\Frob_{q} ,  H^i_c ( U_{\overline{k}} \times U_{\overline{k}},  
DK_{\weights_1} \otimes K_{\weights_2} )) $ is absolutely convergent and 
 the Lefschetz formula for algebraic stacks \cite[Thm.4.2(ii)]{Sun12L} reads
\[  \sum_{i \in \mathbb Z}   
(-1)^i \tr(\Frob_{q} ,  H^i_c ( U_{\overline{k}} \times U_{\overline{k}},  
DK_{\weights_1} \otimes K_{\weights_2} )  = \sum_{ ( (\alpha_1,t_1),(\alpha_2,t_2) ) \in U(k) \times U(k) }  \frac{ \tr ( \operatorname{Frob}_q, ( DK_{\weights_1} \otimes K_{\weights_2} )_{((\alpha_1,t_1),(\alpha_2,t_2))} }{ |\Aut( (\alpha_1,t_1), (\alpha_2,t_2))| } .\]

By Lemma~\ref{quasicompact-semisimple}, $( DK_{\weights_1} \otimes K_{\weights_2} )$ vanishes 
outside $U(k) \times U(k)$ and so the above is equal to
\[ \sum_{ ( (\alpha_1,t_1),(\alpha_2,t_2) ) \in \BunGD(k) \times \BunGD(k) }  \frac{ \tr ( \operatorname{Frob}_q, 
( DK_{\weights_1} \otimes K_{\weights_2} )_{((\alpha_1,t_1),(\alpha_2,t_2))} }{ |\Aut( (\alpha_1,t_1), 
(\alpha_2,t_2))| }. \] Furthermore, this sum is finitely supported. 

We have \[ \tr ( \operatorname{Frob}_q, ( DK_{\weights_1} \otimes K_{\weights_2} 
)_{((\alpha_1,t_1),(\alpha_2,t_2))} = \tr ( \operatorname{Frob}_q, ( 
DK_{\weights_1})_{((\alpha_1,t_1),(\alpha_2,t_2))}
 ) \tr (\operatorname{Frob}_q, (   K_{\weights_2} )_{((\alpha_1,t_1),(\alpha_2,t_2))} )\] 

\[  = {q}^{- (\dim G) (g+|D|-1) + d(\weights_1) - \dim H} \overline{  \tr ( \operatorname{Frob}_q, ( 
K_{\weights_1})_{((\alpha_1,t_1),(\alpha_2,t_2))} )  }  \tr (\operatorname{Frob}_q,   (K_{\weights_2} 
)_{((\alpha_1,t_1),(\alpha_2,t_2))} )  \] \[ =  {q}^{- (\dim G) (g+|D|-1) + d(\weights_1) - \dim H} 
K_{\weights_1}(g_1,g_2) K_{\weights_2} (g_1,g_2), \] where $g_1$ corresponds to $\alpha_1,t_1$ and 
$g_2 $ corresponds to $\alpha_2,t_2$ under the bijection of Lemma~\ref{adelization-semisimple}. 
The first identitiy is straightforward, the second identity uses an application due to Katz 
of a result of Gabber~\cite[Lem.1.8.1(1)]{mmp} and the fact that $K_{\weights_1}$ is pure and 
perverse of weight $(\dim G)  
(g+|D|-1) + d(\weights_1) + \dim H $, and the third identity uses Lemma~\ref{trace-function-semisimple}. 

Finally, we have $|\Aut( (\alpha_1,t_1), (\alpha_2,t_2))| = |\Aut_D (g_1) | | 
\Aut_D(g_2)|$ by Lemma~\ref{weil-automorphism-semisimple}.  Plugging these in, we get the stated formula. 
\end{proof}

Let $n$ be a natural number. Define $F_n = \mathbb F_{q^n}(X)$ and $X_n = X_{\mathbb 
F_{q^n}}$. We can base change the datum $(G,D, H , \mathcal L,\weights_1,\weights_2)$ from 
$\mathbb F_q$ to $\mathbb F_{q^n}$ in the following way: We pull back $G$ from $\mathbb 
F_q$ to $\mathbb F_{q^n}$, we pull back $D$ from $X$ to $X_n$, we compose $\weights_1$ 
and $\weights_2$ with the projection $|X_n| \to |X|$, and we base change $H$ and 
$\mathcal L$ from $G \lWR \mathcal O_D \rWR$ to $(G \lWR \mathcal O_D \rWR)_{\mathbb 
F_{q^n}}$. Let $f_x^{\weights_i,n}$ be the local factors defined by this new datum and 
$\FrK_{\weights_i,n} (g_1,g_2)= \sum_{\gamma \in G(F_n)} \prod_{x \in |X|}  
f_x^{\weights_i,n}( g_2^{-1} \gamma g_1) $ for $g_1,g_2 \in G(\mathbb A_{F_n})$. Let $\KK_n$ 
be defined also in terms of this base-changed datum.

\begin{theorem}\label{geometric-average-bound} Assume that $p>2$ and some $(G, m_u, H_u, \mathcal L_u)$ is geometrically 
supercuspidal. Then
\[ \sum_{g_1, g_2 \in G(F_n) \backslash G (\mathbb A_{F_n}) / \KK_n}  
\frac{ \overline{ \FrK_{\weights_1,n} (g_1,g_2)} \FrK_{\weights_2,n}( g_1,g_2) }{ { 
|\Aut_D (g_1) | | \Aut_D(g_2)|}}
 = O (  (q^n) ^{  (\dim G)  (g+|D|-1) + \frac{d(\weights_1)}{2}+ \frac{d(\weights_2)}{2} 
 + \dim H}).\]

\end{theorem}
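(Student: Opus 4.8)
The plan is to combine Lemma~\ref{arithmetic-to-geometry}, applied over the base field $\mathbb F_{q^n}$ in place of $\mathbb F_q$, with the purity statement of Lemma~\ref{purity-semisimple} and the cohomological estimate of Lemma~\ref{cohbound-weak}. First I would invoke Lemma~\ref{arithmetic-to-geometry} for the base-changed data $(G,D,H,\mathcal L,\weights_1,\weights_2)$ over $\mathbb F_{q^n}$. Since all the geometric input — the stacks $\Bun_{G(D)}$, $\Hecke_{G(D),\weights_i}$, the maps $\Delta^{\weights_i}$, the character sheaf $\mathcal L$, the open substack $U$, and the complexes $K_{\weights_i}$ — is defined over $\mathbb F_q$ and simply base-changed, the geometric supercuspidality hypothesis is preserved (it is a condition checked over $\overline k$), so Lemma~\ref{purity-semisimple} still applies: $K_{\weights_1}$ and $K_{\weights_2}$ are pure perverse sheaves over $\mathbb F_{q^n}$ of weights $(\dim G)(g+|D|-1)+d(\weights_i)+\dim H$, and the same cohomology groups $H^i_c(U_{\overline k}\times U_{\overline k}, DK_{\weights_1}\otimes K_{\weights_2})$ compute the trace over every $\mathbb F_{q^n}$ — only the Frobenius acting changes from $\Frob_q$ to $\Frob_{q^n}$. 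Thus the left-hand sum equals
\[
(q^n)^{(\dim G)(g+|D|-1)+d(\weights_1)+\dim H}\sum_{i\in\mathbb Z}(-1)^i\tr\bigl(\Frob_{q^n}, H^i_c(U_{\overline k}\times U_{\overline k}, DK_{\weights_1}\otimes K_{\weights_2})\bigr).
\]

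Next I would bound the alternating sum of traces. Apply Lemma~\ref{cohbound-weak} with $Y = U\times U$ (an Artin stack of finite type over $\mathbb F_q$ with affine stabilizers, by Lemma~\ref{quasicompact-semisimple}(3)), with $K_1=K_{\weights_1}$, $K_2=K_{\weights_2}$, and with $e=n$, $q$ the base size, taking $j$ large enough (e.g.\ larger than the top cohomological degree of the bounded complex $DK_{\weights_1}\otimes K_{\weights_2}$ on $Y$, so that the sum up to $j$ is the full alternating sum). With $w_1=(\dim G)(g+|D|-1)+d(\weights_1)+\dim H$ and $w_2=(\dim G)(g+|D|-1)+d(\weights_2)+\dim H$, Lemma~\ref{cohbound-weak} gives that the full alternating sum is $O\bigl((q^n)^{(j+w_2-w_1)/2}\bigr)$ with constant independent of $n$. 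Since only finitely many degrees $i$ contribute, I can in fact take $j$ to be that top degree; but a cleaner route is to note $DK_{\weights_1}\otimes K_{\weights_2}$ is mixed of weight $\le w_2-w_1$, so $H^i_c$ is mixed of weight $\le i+w_2-w_1$ and vanishes for $i$ above the (finite, $n$-independent) top degree $j_0$; applying the lemma with $j=j_0$ bounds the whole sum by $O\bigl((q^n)^{(j_0+w_2-w_1)/2}\bigr)$. Here $j_0$, being the top nonvanishing degree of cohomology with compact support of a perverse-sheaf tensor on an Artin stack of fixed dimension, is controlled by $2\dim(U\times U)$; but I do not actually need its precise value — I only need that multiplying by the prefactor $(q^n)^{w_1}$ and inserting $w_2-w_1 = d(\weights_2)-d(\weights_1)$ produces the target exponent.

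Carrying out the arithmetic: the left-hand side is
\[
(q^n)^{w_1}\cdot O\bigl((q^n)^{(j_0+w_2-w_1)/2}\bigr) = O\bigl((q^n)^{w_1/2 + w_2/2 + j_0/2}\bigr),
\]
and I need this to be $O\bigl((q^n)^{(\dim G)(g+|D|-1)+\frac{d(\weights_1)}2+\frac{d(\weights_2)}2+\dim H}\bigr) = O\bigl((q^n)^{w_1/2+w_2/2}\bigr)$. So the one genuinely nontrivial point — the main obstacle — is to show $j_0\le 0$, i.e.\ that $H^i_c(U_{\overline k}\times U_{\overline k}, DK_{\weights_1}\otimes K_{\weights_2})$ vanishes for $i>0$. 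This is exactly where Lemma~\ref{ext-relation}(2) enters: $DK_{\weights_1}\otimes K_{\weights_2}$ has $H^i_c$ dual to $\Ext^{-i}(K_{\weights_2},K_{\weights_1})$, and since $K_{\weights_1},K_{\weights_2}$ are perverse (Lemma~\ref{purity-semisimple}), $\Ext^{-i}$ vanishes for $i>0$ because perverse sheaves form the heart of a $t$-structure. (One subtlety: Lemma~\ref{ext-relation} is stated for stacks of finite type over an algebraically closed field, which $U\times U$ over $\overline k$ satisfies; and perversity is a property stable under restriction to the open substack and under base change to $\overline k$.) Hence $j_0\le 0$, the estimate with $j=0$ in Lemma~\ref{cohbound-weak} gives $O\bigl((q^n)^{(w_2-w_1)/2}\bigr)$ for the alternating trace sum, and multiplying by $(q^n)^{w_1}$ yields precisely $O\bigl((q^n)^{w_1/2+w_2/2}\bigr)$, which is the claimed bound. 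I would close by remarking that the implied constant is the alternating sum of the dimensions of the $H^i_c$, manifestly independent of $n$.
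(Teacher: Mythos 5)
Your proposal is correct and follows essentially the same route as the paper's proof: apply Lemma~\ref{arithmetic-to-geometry} over $\mathbb F_{q^n}$ (noting base-changing the data just base-changes the sheaves), use purity from Lemma~\ref{purity-semisimple} together with Lemma~\ref{cohbound-weak} at $j=0$, and kill the positive-degree cohomology via Lemma~\ref{ext-relation}(2) to obtain the stated exponent. The only cosmetic difference is your detour through a hypothetical top degree $j_0$ before reducing to $j=0$, which the paper skips by invoking the Ext-vanishing directly.
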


\begin{proof} By Lemma~\ref{purity-semisimple}, $K_{\weights_2}$ is pure of weight $w_2 = 
(\dim G) (g+|D|-1) + d(\weights_2) + \dim H $ and $K_{\weights_1}$ is pure of weight 
$w_1= (\dim G) (g+|D|-1) + d(\weights_1) + \dim H $, so $w_2 - w_1  
=d(\weights_2)-d(\weights_1).$

By Lemma~\ref{cohbound-weak}, taking $j=0$, it follows that 
 \[ \sum_{i =-\infty}^0   (-1)^i \tr(\Frob_{q^n} ,  H^i_c ( U_{\overline{k}} \times 
 U_{\overline{k}},  DK_{\weights_1} \otimes K_{\weights_2} ) ) = O \left( \left( 
 q^n\right)^{\frac{d(\weights_2)- d(\weights_1)}{2}}\right) .\] 

Applying Lemma~\ref{ext-relation}(2), this cohomology group vanishes for $i > 0$, so  \[ 
\sum_{i \in \mathbb Z}   (-1)^i \tr(\Frob_{q^n} ,  H^i_c ( U_{\overline{k}} \times 
U_{\overline{k}},  DK_{\weights_1} \otimes K_{\weights_2} ) ) = O \left( 
\left(q^n\right)^{\frac{d(\weights_2)- d(\weights_1)}{2}}\right) .\] 

Then we apply Lemma~\ref{arithmetic-to-geometry} over $\mathbb F_{q^n}$. It is clear that 
base changing all the data in this way is equivalent to base-changing 
$\Hecke_{G(D),\weights} \times H$ and thus to base-changing $K_{\weights_1}, 
K_{\weights_2}$, so we obtain
\[ \sum_{ g_1, g_2 \in G(F_n) \backslash G (\mathbb A_{F_n}) / \KK_n} \overline{ 
\FrK_{\weights_1,n} (g_1,g_n)} \FrK_{\weights_2,n}( g_1,g_2) \]
\[=  \left(q^n\right)^{ (\dim G)  (g+|D|-1) + d(\weights_1) + \dim H }  \sum_{i \in 
\mathbb Z}   (-1)^i \tr(\Frob_{q^n} ,  H^i_c ( U_{\overline{k}} \times U_{\overline{k}},  
DK_{\weights_1} \otimes K_{\weights_2} ) )\]\[ = O \left( \left(q^n\right)^{ (\dim G)  
(g+|D|-1) + d(\weights_1) + \dim H }\left(q^n\right)^{\frac{d(\weights_2)- 
d(\weights_1)}{2} }\right)=  O \left( \left (q^n\right) ^{  (\dim G)  (g+|D|-1) + 
\frac{d(\weights_1)}{2}+ \frac{d(\weights_2)}{2} + \dim H}\right).
\qedhere
\] 
\end{proof}

\subsection{Integrality and Weil numbers} 

Dimensions of spaces of automorphic forms over function fields can often be expressed naturally as sums of Weil numbers. The same is true for the traces of Hecke operators that we study here. In fact, these Weil numbers are algebraic integers. We prove this indirectly, by first proving, in Lemmas \ref{integrality-start-semisimple}, \ref{integrality-kernel-semisimple}, and \ref{integrality-trace-semisimple}, that the traces themselves are algebraic integers, then proving in Lemma \ref{eigenvalues-Weil-numbers} that the eigenvalues of Frobenius acting on the relevant cohomology groups are Weil numbers, but not necessarily integers, and finally combining these, in Lemma \ref{weil-numbers-semisimple}, to express the trace in terms of integral Weil numbers.

Let $m$ be the order of the arithmetic monodromy group of $\mathcal L$, which is equal to 
the order of the character $\chi$ by Lemma~\ref{character-sheaf-uniqueness}. It is also 
stable under finite field extension by Lemma~\ref{character-sheaf-uniqueness}, as the 
arithmetic and geometric monodromy groups are equal.

\begin{lemma}\label{integrality-start-semisimple} For all $x \in |X|$ and all $\weights: 
|X| \to \Lambda^+$, the function $f_x^{\weights}$ takes values in $\mathbb Z[\mu_m]$. 
\end{lemma}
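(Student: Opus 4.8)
The plan is to reduce the claim to the three cases of Definition~\ref{def:test-function} and check each directly. For a place $x$ outside $D$ and outside the support of $\weights$, the function $f_x^{\weights}$ is the characteristic function of $G(\ko_x)$, which takes only the values $0$ and $1$, hence lands in $\mathbb Z \subseteq \mathbb Z[\mu_m]$. For a place $x\in D$, the function vanishes off $J_x$ and equals $\chi_x$ on $J_x$; since $\chi_x$ is the character attached to $\mathcal L_x$ by Lemma~\ref{trace-function-character-sheaf} and Lemma~\ref{character-sheaf-uniqueness}, its values are $m$-th roots of unity (the order of $\chi_x$ divides $m$ by the stability of the monodromy group under field extension, again Lemma~\ref{character-sheaf-uniqueness}), so $f_x^{\weights}$ takes values in $\mu_m\cup\{0\}\subseteq \mathbb Z[\mu_m]$. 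The only case requiring real work is a place $x$ in the support of $\weights$.

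For such $x$, by Definition~\ref{def:test-function} the function $f_x^{\weights}$ is $q^{\deg x\,\langle \weights_x,\rho\rangle}$ times the spherical function attached via the Satake isomorphism to the character of the representation $V_{\weights_x}$ of $\widehat G$ with highest weight $\weights_x$. By Lemma~\ref{satake-compatibilty}, the value of this rescaled function on a double coset represented by a point $y$ of the affine Grassmannian $\Gr_G(\kappa_x)$ is exactly the trace of Frobenius on the stalk $IC_{\weights_x,y}$ of the intersection cohomology sheaf of the closure of the corresponding Bruhat cell. So it suffices to show that the trace of Frobenius on each stalk of $IC_{\weights_x}$ is an ordinary integer (indeed it lies in $\mathbb Z$, so a fortiori in $\mathbb Z[\mu_m]$). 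This is a standard fact: the affine Grassmannian cell closures are defined over the prime field, they admit cellular-type (affine paving) stratifications coming from the Bott--Samelson/Demazure resolution, the $IC$ sheaves have stalk cohomology concentrated in even degrees with Frobenius acting by powers of $q$ (the parity vanishing and Tate-type purity statement for affine Schubert varieties, which underlies the Lusztig--Kato formula already invoked in the proof of Lemma~\ref{satake-compatibilty}), and hence the alternating trace of Frobenius on the stalk is a nonnegative integer combination of powers of $q$. Concretely the stalk trace is the value of a Kazhdan--Lusztig-type polynomial in $q$ with nonnegative integer coefficients, evaluated at $q=q^{\deg x}$; this is manifestly in $\mathbb Z$.

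The main obstacle is thus purely the integrality of the stalks of $IC_{\weights_x}$ on the affine Grassmannian, and more precisely the rescaling: one must be careful that multiplying the Satake spherical function by $q^{\deg x\,\langle\weights_x,\rho\rangle}$ is exactly the normalization that clears denominators, which is the content of the ``algebraic normalization'' remark in Section~\ref{sub:Satake} and of Lemma~\ref{satake-compatibilty}. Granting that identification, the argument is the standard Kazhdan--Lusztig parity/purity statement for affine Schubert varieties, which holds over any finite field. I would state the integrality of $IC$-stalks on $\Gr_G$ as a cited input (it is implicit in the Lusztig--Kato combinatorial formula referenced before Lemma~\ref{satake-compatibilty}), then combine it with Lemma~\ref{satake-compatibilty} to conclude, and dispatch the other two cases in a sentence each as above.
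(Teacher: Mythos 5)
Your proposal is correct and follows essentially the same route as the paper: the same three-case split, with the values at places in $D$ being roots of unity of order dividing $m$, the unramified places giving $0$ and $1$, and the places in the support of $\weights$ handled via the Kazhdan--Lusztig purity statement for the $IC$ stalks on the affine Grassmannian (the paper cites exactly this, with your Lemma~\ref{satake-compatibilty} reduction being the same normalization step). The extra detail you give on the parity/purity input is fine but not a different argument.
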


\begin{proof} If $x$ lies in $D$, this follows from the fact that $\chi$ is an eigenvalue 
of Frobenius on $\mathcal L$ and hence is a root of unity in the monodromy group. If $x$ 
does not lie in $D$ or the support of $\weights$, then $f_x$ takes the values zero and one, 
both integers. If $x$ lies in the support of $W$, then the value is a polynomial in $q$ 
by the Kazhdan--Lusztig purity theorem. \end{proof} 

\begin{lemma}\label{integrality-kernel-semisimple} For all $g_1,g_2 \in G(\mathbb 
A_F)$,   $\FrK_W(g_1,g_2)$ is divisible in $\mathbb Z[\mu_m]$ by $|\Aut_{D,H} (g_1) |$ and 
by $|\Aut_{D,H}(g_2)|$. \end{lemma}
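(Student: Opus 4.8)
The plan is to exploit the two commuting translation actions on the summation index $\gamma\in G(F)$ in the formula
\[
\FrK_{\weights}(g_1,g_2)=\sum_{\gamma\in G(F)}\prod_{x\in|X|}f_x^{\weights}(g_2^{-1}\gamma g_1)
\]
of Definition~\ref{def:global-function}: right multiplication of $\gamma$ by $\Aut_{D,H}(g_1)$, and left multiplication by $\Aut_{D,H}(g_2)$. First I would note that this sum is finite — by Lemma~\ref{trace-function-semisimple} together with Lemma~\ref{adelization-semisimple}, its nonvanishing terms are indexed by the $k$-points of a finite-type affine scheme — and that every term lies in $\mathbb Z[\mu_m]$ by Lemma~\ref{integrality-start-semisimple}. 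It then suffices to treat the two divisibilities separately by an averaging argument, and I will only spell out the one for $g_1$.

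For $\gamma_0\in\Aut_{D,H}(g_1)$ the element $g_1^{-1}\gamma_0 g_1$ lies in $\sideset{}{'}\prod_{x\in|X-D|}G(\ko_x)\times\prod_{x\in D}J_x$, and its components at the places of $D$ project to an element $\overline{\gamma_0}\in H(k)$; by the bookkeeping of Lemma~\ref{weil-automorphism-semisimple}(3), realizing $\Aut_{D,H}(g_1)/\Aut_D(g_1)$ as a subgroup of $H(k)$, the assignment $\psi_1\colon\gamma_0\mapsto\chi(\overline{\gamma_0})$ is a group homomorphism $\Aut_{D,H}(g_1)\to\mu_m$, since $\chi$ has order $m$. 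Using that $f_x^{\weights}$ is right $G(\ko_x)$-invariant at the places outside $D$ (it is the characteristic function of $G(\ko_x)$, or a bi-$G(\ko_x)$-invariant spherical function at the places in the support of $\weights$), and that $f_x^{\weights}(hk)=f_x^{\weights}(h)\,\chi_x(k)$ for $x\in D$ and $k\in J_x$ — immediate from $f_x^{\weights}=\chi_x\cdot\mathbf{1}_{J_x}$ and the fact that $J_x$ is a group — one obtains, from $g_2^{-1}\gamma\gamma_0 g_1=(g_2^{-1}\gamma g_1)(g_1^{-1}\gamma_0 g_1)$, the identity
\[
\prod_{x\in|X|}f_x^{\weights}\big(g_2^{-1}\gamma\gamma_0 g_1\big)=\psi_1(\gamma_0)\prod_{x\in|X|}f_x^{\weights}\big(g_2^{-1}\gamma g_1\big).
\]

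I would then group the sum defining $\FrK_{\weights}(g_1,g_2)$ according to the right cosets $\gamma\,\Aut_{D,H}(g_1)$ in $G(F)$. By the displayed identity, the contribution of a coset with representative $\gamma$ is $\big(\sum_{\gamma_0\in\Aut_{D,H}(g_1)}\psi_1(\gamma_0)\big)\prod_{x\in|X|}f_x^{\weights}(g_2^{-1}\gamma g_1)$, and the inner sum equals $|\Aut_{D,H}(g_1)|$ when $\psi_1$ is trivial and $0$ otherwise; in either case it is divisible by $|\Aut_{D,H}(g_1)|$ in $\mathbb Z[\mu_m]$. Since each term $\prod_{x\in|X|}f_x^{\weights}(g_2^{-1}\gamma g_1)$ lies in $\mathbb Z[\mu_m]$, summing over cosets yields the divisibility of $\FrK_{\weights}(g_1,g_2)$ by $|\Aut_{D,H}(g_1)|$. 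The case of $g_2$ is identical, with left translation $\gamma\mapsto\gamma_0\gamma$ for $\gamma_0\in\Aut_{D,H}(g_2)$, using $f_x^{\weights}(kh)=\chi_x(k)\,f_x^{\weights}(h)$ at $x\in D$ and left $G(\ko_x)$-invariance elsewhere. I do not expect a genuine obstacle here; the only points needing care are the transformation rules for $f_x^{\weights}$ at the places in the support of $\weights$ (where one uses that functions in the image of the Satake transform are bi-$G(\ko_x)$-invariant) and the well-definedness of $\psi_1$, which is exactly the content of Lemma~\ref{weil-automorphism-semisimple}.
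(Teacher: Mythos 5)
Your proposal is correct and follows essentially the same route as the paper: decompose the sum over $\gamma\in G(F)$ into cosets of $\Aut_{D,H}(g_1)$ acting by right translation, observe via the local transformation rules that the summand transforms by the character $\gamma_0\mapsto\prod_{x\in D}\chi_x(g_1^{-1}\gamma_0 g_1)$, and conclude divisibility (symmetrically on the left for $g_2$). The only cosmetic difference is that you evaluate the character sum coset by coset (getting $0$ or $|\Aut_{D,H}(g_1)|$ uniformly), whereas the paper first splits into the two global cases ``character nontrivial, hence $\FrK=0$'' versus ``character trivial, hence summand constant on cosets''; the content is identical.
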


\begin{proof} Let $\gamma'$ be an element of  $\Aut_{D,H}(g_1)$.   Then for all $x \in | 
X - D|$, $ g_1^{-1} \gamma' g_1 \in G( \kappa_x \bseries{t})$ and so 
$f_x^{\weights} (g_2^{-1} \gamma \gamma' g_1) = f_x^{\weights} ( g_2^{-1} \gamma g_1)$.  
For $x \in D$,  $ g_1^{-1} \gamma' g_1 \in J_x$ and so 
\[
f_x^{\weights} (g_2^{-1} \gamma 
\gamma' g_1) = f_x^{\weights} ( g_2^{-1} \gamma g_1) \chi_x(g_1^{-1} \gamma' g_1).
\]
 Hence 
right multiplication by $\gamma'$ multiplies $\prod_{x \in |X|}  f_x^{\weights}( g_2^{-1} 
\gamma g_1)$ by $\prod_{x\in D} \chi_x(g_1^{-1} \gamma' g_1)$. It follows that 
$\FrK_W(g_1,g_2) = \FrK_W(g_1,g_2) \prod_{ x \in D} \chi_x(g_1^{-1} \gamma' g_1)$ and hence $\FrK(g_1,g_2)=0$, 
and we are done, unless $\prod_{ x\in D} \chi_x(g_1^{-1} \gamma' g_1) =1 $. So we may assume that  $\prod_{ x\in D} \chi_x(g_1^{-1} \gamma' g_1) =1 $ for all 
$\gamma' \in \Aut_{D,H}(g_1)$

This implies that $\prod_{x \in |X|}  f_x^{\weights}( g_2^{-1} \gamma g_1)$ is invariant under 
right multiplication of $\gamma$ by elements of $\Aut_{D,H}(g_1)$.  We can write 
$\sum_{\gamma \in G(F)} \prod_{x \in |X|}  f_x^{\weights}( g_2^{-1} \gamma g_1)$ as a sum 
over orbits of this right multiplication action. Because the action is by multiplication in a group, its orbits are cosets of $\Aut_{D,H}(g_1)$, 
and so the size of each orbit is $|\Aut_{D,H}(g_1)|$, and by Lemma 
\ref{integrality-start-semisimple}, the sum over each orbit is an element of $\mathbb 
Z[\mu_m]$ times $|\Aut_{D,H}(g_1)|$, so the final (finite) sum is divisible by 
$|\Aut_{D,H}(g_1)|$. 

A symmetrical argument works for $\Aut_{D,H}(g_2)$, using left multiplication instead.\end{proof}

\begin{lemma}\label{integrality-trace-semisimple} The sum \[ \frac{1}{ | H(k)|^2}  \sum_{g_1, g_2 \in G(F) \backslash G (\mathbb A_F) / \KK }\frac{  \overline{ 
\FrK_{\weights_1} (g_1,g_2)} \FrK_{\weights_2}( g_1,g_2) }{ |\Aut_D (g_1) | | 
\Aut_D(g_2)|}\] is an element of $\mathbb Z[\mu_m]$. \end{lemma}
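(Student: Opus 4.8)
The plan is to prove integrality of the normalized sum by combining the integrality of the kernel functions established in Lemma~\ref{integrality-kernel-semisimple} with a counting argument relating $|\Aut_D|$, $|\Aut_{D,H}|$, and $|H(k)|$. First I would observe, using Lemma~\ref{weil-automorphism-semisimple}, that for $g \in G(\mathbb A_F)$ the index $[\Aut_{D,H}(g):\Aut_D(g)]$ equals the order of the stabilizer of the point $(\alpha,t_D) \in \Bun_{G(D)}(k)$ under the right $H(k)$-action, so that $|\Aut_{D,H}(g)| = |\Aut_D(g)| \cdot |\mathrm{Stab}_{H(k)}(\alpha,t_D)|$. The quantity I actually want to control is
\[
\frac{\overline{\FrK_{\weights_1}(g_1,g_2)}\,\FrK_{\weights_2}(g_1,g_2)}{|\Aut_D(g_1)|\,|\Aut_D(g_2)|\,|H(k)|^2}.
\]
By Lemma~\ref{integrality-kernel-semisimple}, $\FrK_{\weights_i}(g_1,g_2)$ is divisible in $\mathbb Z[\mu_m]$ by both $|\Aut_{D,H}(g_1)|$ and $|\Aut_{D,H}(g_2)|$; hence the product $\overline{\FrK_{\weights_1}(g_1,g_2)}\,\FrK_{\weights_2}(g_1,g_2)$ is divisible by $|\Aut_{D,H}(g_1)|^2\,|\Aut_{D,H}(g_2)|^2$. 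Actually we only need divisibility by $|\Aut_{D,H}(g_1)|\,|\Aut_{D,H}(g_2)|$ for a first pass, but the extra factors are what will absorb the $|H(k)|^2$.

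The key step is then the following reorganization: instead of summing over double cosets $g \in G(F)\backslash G(\mathbb A_F)/\KK$, I would sum over double cosets for the larger group $\prod_{x\in |X-D|} G(\ko_x) \times \prod_{x\in D} J_x$, i.e.\ over the $H(k)$-orbits on $\Bun_{G(D)}(k)$. For a double coset $g$ with respect to $\KK$, the stabilizer computation gives that the $H(k)$-orbit of $(\alpha,t_D)$ has size $|H(k)|/|\mathrm{Stab}_{H(k)}(\alpha,t_D)| = |H(k)|\cdot|\Aut_D(g)|/|\Aut_{D,H}(g)|$. Grouping the terms of the sum into $H(k)$-orbits and using that $\FrK_{\weights_i}$ transforms by a character of $H(k)$ under the $H(k)$-action (as in the proof of Lemma~\ref{integrality-kernel-semisimple}, where right multiplication by $\gamma'\in\Aut_{D,H}$ multiplies the summand by $\prod_{x\in D}\chi_x$), the contribution of each $H(k)$-orbit telescopes: if the characters do not match on the stabilizer the whole orbit contributes zero, and if they do match, the orbit contributes $|H(k)|\cdot|\Aut_D(g)|/|\Aut_{D,H}(g)|$ copies of a single summand. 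After dividing by $|\Aut_D(g_1)||\Aut_D(g_2)||H(k)|^2$, each nonzero orbit-pair thus contributes $\overline{\FrK_{\weights_1}}\,\FrK_{\weights_2}$ divided by $|\Aut_{D,H}(g_1)||\Aut_{D,H}(g_2)||H(k)|^2 \cdot |H(k)|/(\text{orbit-size factors})$ — and tracking the exact powers shows the denominator divides the numerator by Lemma~\ref{integrality-kernel-semisimple}.

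The main obstacle I expect is bookkeeping the exact powers of $|H(k)|$, $|\Aut_D|$, and $|\Aut_{D,H}|$ so that everything cancels cleanly, and in particular being careful that the double sum is finite (which follows from Lemma~\ref{quasicompact-semisimple}, since the trace function vanishes outside $U(k)\times U(k)$, a finite set). A clean way to package the argument is to rewrite the sum over $\KK$-double cosets weighted by $1/|\Aut_D|$ as a sum over the set $\Bun_{G(D)}(k)/H(k)$ weighted by $1/(|\Aut_{D,H}|\cdot |\text{image of }\chi\text{-matching indicator}|)$; concretely, for a fixed pair of $H(k)$-orbits, either $\overline{\FrK_{\weights_1}}\FrK_{\weights_2}$ vanishes identically on it, or it is constant on it and equal to an element of $\mathbb Z[\mu_m]$ times $|\Aut_{D,H}(g_1)|\,|\Aut_{D,H}(g_2)|$, while the cardinality of the orbit-pair times $1/(|\Aut_D(g_1)||\Aut_D(g_2)||H(k)|^2)$ equals $1/(|\Aut_{D,H}(g_1)|\,|\Aut_{D,H}(g_2)|)$. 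Combining these two facts gives that each orbit-pair contributes an element of $\mathbb Z[\mu_m]$, and summing the finitely many contributions yields the result.
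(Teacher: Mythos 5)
Your proposal is correct and is essentially the paper's own proof: the paper likewise groups the double sum into pairs of $H(k)$-orbits, notes that $\overline{\FrK_{\weights_1}}\FrK_{\weights_2}$ is constant on each orbit pair because the character factors $\prod_{x\in D}\chi_x(h)$ are roots of unity, computes the orbit size via Lemma~\ref{weil-automorphism-semisimple} and orbit--stabilizer so that the orbit size times $1/(|H(k)|^2|\Aut_D(g_1)||\Aut_D(g_2)|)$ equals $1/(|\Aut_{D,H}(g_1)||\Aut_{D,H}(g_2)|)$, and concludes with the divisibility from Lemma~\ref{integrality-kernel-semisimple}. Your closing paragraph is exactly this argument; the earlier remark that the squared divisibility ``absorbs $|H(k)|^2$'' is a red herring (the $|H(k)|^2$ is absorbed by the orbit sizes, as your own final computation shows), but it does not affect correctness.
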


\begin{proof} Break the sum into a sum over orbits under the action of $H(k) \times H(k) $ on 
$ G(F) \backslash G (\mathbb A_F) / \KK \times G(F) \backslash G (\mathbb A_F) / \KK $ by right multiplication. 
It suffices to show that the sum over each orbit, divided by $| H(k)|^2$, lies 
in $\mathbb Z[\mu_m]$. 

Because this action corresponds to right multiplication by $\prod_{x \in D} J_x$, it 
multiplies $\prod_{x \in |X|} f_x^W (g_1^{-1} \gamma g_2)$ by $\prod_{x \in D} 
\chi_x(h)$, so it multiplies $\FrK_{\weights_i} (g_1,g_2)$ by $\prod_{x \in D} \chi_x 
(h)$, which is a root of unity, so it fixes $\overline{ \FrK_{\weights_1} (g_1,g_2)} 
\FrK_{\weights_2}( g_1,g_2)$.   Hence the sum over each orbit is the size of that orbit 
times $\frac{\overline{ {\FrK}_{{\weights}_1} (g_1,g_2)}  {\FrK}_{{\weights}_2}( g_1,g_2) 
}{ |\Aut_D (g_1) | | \Aut_D(g_2)|}$ for some $g_1,g_2$ in that orbit.   By the orbit-stabilizer theorem and Lemma~\ref{weil-automorphism-semisimple}, the size of the orbit is $\frac{ |H(k)|^2 |\Aut_D (g_1) | | \Aut_D(g_2)| }{|\Aut_{D,H} (g_1) | | \Aut_{D,H}(g_2)|}$.  Hence the sum over the 
orbit, divided by $|H(k)|^2$, is   $\frac{\overline{ \FrK_{\weights_1} (g_1,g_2)} 
\FrK_{\weights_2}( g_1,g_2) }{ |\Aut_{D,H} (g_1) | | \Aut_{D,H}(g_2)|}$, which is an 
algebraic integer by Lemma~\ref{integrality-kernel-semisimple}.  
 \end{proof}
 
 We use the convention (following \cite[Def.10.1]{Sun12L}) that Weil $q$-numbers 
 are algebraic numbers whose absolute values are a power of $\sqrt{q}$ independent of the 
 choice of complex embedding, while Weil $q$-integers are algebraic integers with the 
 same property.
 
 \begin{lemma}\label{eigenvalues-Weil-numbers} All the eigenvalues of $\Frob_q$ on $H^i_c ( U_{\overline{k}} \times 
 U_{\overline{k}},  DK_{\weights_1} \otimes K_{\weights_2} ) $ are Weil $q$-numbers. 
 \end{lemma}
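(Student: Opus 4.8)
The plan is to deduce this from the purity and properness results already established, together with Deligne's theorem on weights. First I would observe that by Lemma~\ref{purity-semisimple}, $K_{\weights_1}$ and $K_{\weights_2}$ are pure perverse sheaves (of weights $w_1$ and $w_2$ respectively), defined over $k=\mathbb F_q$; in particular they are mixed in the sense of Deligne, so all Frobenius eigenvalues on their stalks are Weil $q$-numbers. Consequently the tensor product $DK_{\weights_1}\otimes K_{\weights_2}$ is a mixed complex on $U\times U$: the dual $DK_{\weights_1}$ has weights bounded above by $-w_1$ (it is pure of weight $-w_1$ since $K_{\weights_1}$ is pure), and the tensor product of mixed complexes is mixed, with weights of the tensor product bounded by the sum of the bounds.

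Next I would invoke the stability of the class of mixed complexes under the six operations over a finite field. Since $U$ is an Artin stack of finite type over $\mathbb F_q$ with affine stabilizers (Lemma~\ref{quasicompact-semisimple}, parts (2) and (3)), and hence so is $U\times U$, the cohomology with compact supports $H^i_c(U_{\overline k}\times U_{\overline k}, DK_{\weights_1}\otimes K_{\weights_2})$ is again mixed, by the stack version of Deligne's theorem — this is exactly the input we already used in the form of~\cite[Theorem 1.4]{Sun12L} in the proof of Lemma~\ref{cohbound-weak}. A mixed $\ell$-adic Galois representation has all its Frobenius eigenvalues equal to Weil $q$-numbers by definition of mixedness, so the eigenvalues of $\Frob_q$ on these cohomology groups are Weil $q$-numbers, as claimed.

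The only point requiring a little care — and the step I expect to be the main (minor) obstacle — is making sure the weight-boundedness argument goes through cleanly for the dual and the tensor product on a stack rather than a scheme: one needs that $D$ preserves mixedness and negates the weight bound, and that $\otimes$ is compatible with weight bounds, in the stacky setting. Both facts are available in the references already cited in the excerpt (the formalism of weights on stacks with affine stabilizers used for Lemma~\ref{cohbound-weak} and Lemma~\ref{purity-semisimple}, via~\cite{Sun12L} and~\cite{Sun12D}), so this amounts to citing them in the right order rather than proving anything new. I would therefore structure the short proof as: (i) $K_{\weights_i}$ is pure hence mixed by Lemma~\ref{purity-semisimple}; (ii) $DK_{\weights_1}\otimes K_{\weights_2}$ is mixed on $U\times U$; (iii) by the Lefschetz/weights formalism for stacks with affine stabilizers, its compactly-supported cohomology is mixed; (iv) mixed implies all Frobenius eigenvalues are Weil $q$-numbers.
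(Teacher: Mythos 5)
Your argument establishes, via the weight formalism of \cite{Sun12L}, that the groups $H^i_c ( U_{\overline{k}} \times U_{\overline{k}},  DK_{\weights_1} \otimes K_{\weights_2} )$ are $\iota$-mixed with the expected weight bounds, but that is strictly weaker than what the lemma asserts, and step (iv) is a genuine gap. With the paper's convention (following \cite[Definition 10.1]{Sun12L}), a $q$-Weil number is an \emph{algebraic} number whose absolute value is the same power of $\sqrt q$ under \emph{every} complex embedding. By contrast, $\iota$-mixedness is defined relative to one fixed isomorphism $\iota:\Qellbar\to\C$: it constrains $|\iota(\alpha)|$ for the Frobenius eigenvalues $\alpha$, but it neither forces $\alpha$ to be algebraic nor says anything about the absolute values of its conjugates under other embeddings. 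Even running your argument for all $\iota$ simultaneously only gives upper bounds on weights for each $\iota$, not the exact, embedding-independent absolute values that the notion of Weil number requires. So the assertion that ``mixed implies all Frobenius eigenvalues are Weil $q$-numbers by definition of mixedness'' is false, and this is not a pedantic point: the subsequent argument (Theorem~\ref{weil-numbers-semisimple}) feeds these eigenvalues into a rationality/integrality argument over $\Z[\mu_m]$ and genuinely uses algebraicity and embedding-independence.

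The paper closes exactly this gap by reducing to sheaves of geometric origin rather than invoking the abstract weight formalism. Using Lemma~\ref{complex-dual} and the K\"unneth/base change formula, the cohomology group is rewritten as compactly supported cohomology of the fiber product $Y$ of the two Hecke correspondences over $U\times U$, with coefficients the pullbacks of $IC_{\Hecke_{G(D),\weights_i}}\boxtimes \mathcal L^{\mp 1}$. One then stratifies by (inverse images of) Bruhat cells, where the Kazhdan--Lusztig purity theorem makes the intersection complexes shifts of Tate twists of constant sheaves, and one absorbs $\mathcal L$ and $\mathcal L^{-1}$ as direct summands of the pushforward of the constant sheaf along the Lang isogeny of $H\times H$ (Lemma~\ref{characters-come-from-Lang}). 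This exhibits the group as a summand of cohomology of an algebraic stack with \emph{constant} coefficients, to which \cite[Lemma 10.2]{Sun12L} applies; that lemma, not the mixedness theorem, is what delivers algebraicity and embedding-independence. To repair your outline you would need to replace (iv) by such a reduction (or by an independent rationality argument), since that is precisely the step where geometric origin, rather than purity bookkeeping for a single $\iota$, is required.
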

 
 \begin{proof} By Lemma~\ref{complex-dual} have 
\[DK_{\weights_1} =  \Delta^{\weights_1}_! \left( 
IC_{\Hecke_{G(D),\weights_1}}   
\boxtimes \mathcal L^{-1}  \right) [  \dim 
H]((\dim G)(g +|D|-1 ) + d(\weights_1) + \dim H) .\]
Then if we form a Cartesian square
\[ \begin{tikzcd} Y\arrow[r,"p_1"] \arrow[d,"p_2"] &  \Hecke_{G(D),\weights_1}\times H 
\arrow[d,"\Delta^{\weights_1}"] \\
\Hecke_{G(D), \weights_2} \times H \arrow[r, "\Delta^{\weights_2}"] &  U \times U\\ 
\end{tikzcd} \]
By the projection formula, proper base change, and the projection formula again 
 \begin{equation*} \begin{split}  H^i_c ( U_{\overline{k}} \times U_{\overline{k}}& ,  
DK_{\weights_1} \otimes K_{\weights_2} )\\
 = H^{i + \dim H} _c(  \Hecke_{G(D), \weights_2} \times H& ,\Delta^{W_2 *}  DK_{\weights_1} \otimes  ( IC_{\Hecke_{G(D), \weights_2}} \boxtimes 
\mathcal L ) ) \\
= H^{i+2 \dim H} _c(\Hecke_{G(D), \weights_2} \times H& , p_{2!}p_1^* ( IC_{\Hecke_{G(D), \weights_1}} 
\boxtimes \mathcal L^{-1} )   \otimes  ( IC_{\Hecke_{G(D), \weights_2}} \boxtimes 
\mathcal L ))\\
= H^{i + 2\dim H}_c ( Y_{\overline{k}}&, p_1^* ( IC_{\Hecke_{G(D), \weights_1}} 
\boxtimes \mathcal L^{-1} ) \otimes p_2^*  ( IC_{\Hecke_{G(D), \weights_2}} \boxtimes 
\mathcal L ) ).\end{split} \end{equation*} 

We can stratify $\Hecke_{G(D)}^{\weights_i}$ into strata, the inverse images of 
Schubert cells, on which the Kazhdan--Lusztig purity theorem implies that 
$IC_{\Hecke_{G(D), \weights_1}}$ is a shift of a Tate twist of a constant sheaf. By excision, it 
suffices to prove that all eigenvalues of Frobenius on the cohomology of the inverse images of these strata are $q$-Weil numbers. We can remove the $\mathcal 
L$ and $\mathcal L^{-1}$  terms by noting that these are summands of the pushforward of 
the constant sheaf along the Lang isogeny (Lemma~\ref{characters-come-from-Lang}), so the whole cohomology group is a summand of 
the cohomology of the inverse image of one of these strata under the Lang isogeny of $H 
\times H$. Because this is an algebraic stack, it follows from \cite[Lem.10.2]{Sun12L} 
that all eigenvalues of Frobenius on its cohomology are $q$-Weil numbers.
 \end{proof} 
 
 \begin{theorem}\label{weil-numbers-semisimple}  There exists a natural number $N$, 
 $q$-Weil integers $\alpha_1,\dots,\alpha_N$ of weight $\le  2(\dim G)  (g+|D|-1) + 
 d(\weights_1)+ d(\weights_2)- 2\dim H$, and signs 
 $\epsilon_1,\dots,\epsilon_N\in \{\pm 1\}$, such that for all $n$, \[ \frac{1}{ |H ( 
 \mathbb F_{q^n} ) |^2} \sum_{g_1, g_2 \in G(F_n) \backslash G (\mathbb A_{F_n}) 
 / \KK_n} \frac{  \overline{ \FrK_{\weights_1,n} (g_1,g_2)} \FrK_{\weights_2,n}( 
 g_1,g_2)} { { |\Aut_D (g_1) | | \Aut_D(g_2)|} }  = \sum_{i=1}^N \epsilon_i \alpha_i^n.\] 
Furthermore, we may arrange such that 
\begin{itemize}
\item
$\alpha_1,\ldots, \alpha_{\dim \Hom_{\overline{\mathbb 
F}_q}(K_{\weights_1}, K_{\weights_2})}$ are $q ^{  (\dim G)  (g+|D|-1) + d(\weights_1)- 
\dim H}$ times the eigenvalues of $\Frob_q$ on $\Hom_{\overline{\mathbb F}_q} 
(K_{\weights_1},K_{\weights_2} )$, which are of weight $d(W_2)-d(W_1)$, 
\item  
$\epsilon_1,\ldots,\epsilon_{\dim \Hom_{\overline{\mathbb F}_q}(K_{\weights_1}, 
K_{\weights_2})}$ are all equal to $1$,  
\item $\alpha_i$ has weight $< 2(\dim G)  
(g+|D|-1) + 
d(\weights_1)+ d(\weights_2) -2 \dim H$ for $i > \dim 
\Hom_{\overline{\mathbb F}_q}(K_{\weights_1}, K_{\weights_2})$.
\end{itemize}
\end{theorem}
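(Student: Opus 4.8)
The plan is to make the left-hand side geometric via Lemma~\ref{arithmetic-to-geometry}, read off from the resulting cohomology its shape as a signed sum of $n$th powers of Frobenius eigenvalues on a single fixed cohomology group, and then use the integrality statement of Lemma~\ref{integrality-trace-semisimple} to promote those eigenvalues (\emph{a priori} only $q$-Weil numbers) to $q$-Weil integers. We work under the running hypotheses $\charr(k)>2$ and that some $(G,m_u,H_u,\mathcal L_u)$ is geometrically supercuspidal, so that the cited lemmas apply after base change to any $\F_{q^n}$ (the order $m$ of the monodromy of $\mathcal L$ being unchanged by Lemma~\ref{character-sheaf-uniqueness}), and in our situation $H$ is unipotent (each factor $H_x$ is trivial or, by Lemma~\ref{supercuspidal-parahoric}, unipotent), so $|H(\F_{q^n})|=(q^n)^{\dim H}$ exactly. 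First I would feed the $\F_{q^n}$-version of Lemma~\ref{arithmetic-to-geometry} into the left-hand side and divide by $|H(\F_{q^n})|^2=(q^n)^{2\dim H}$, obtaining
\[
\frac{1}{|H(\F_{q^n})|^2}\sum_{g_1,g_2\in G(F_n)\backslash G(\A_{F_n})/\KK_n}\frac{\overline{\FrK_{\weights_1,n}(g_1,g_2)}\,\FrK_{\weights_2,n}(g_1,g_2)}{|\Aut_D(g_1)|\,|\Aut_D(g_2)|}=(q^n)^{C}\sum_{i\in\Z}(-1)^i\tr\!\bigl(\Frob_q^{\,n},H^i_c(U_{\overline{k}}\times U_{\overline{k}},DK_{\weights_1}\otimes K_{\weights_2})\bigr),
\]
with $C=(\dim G)(g+|D|-1)+d(\weights_1)-\dim H$. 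The cohomology groups on the right are defined purely over $\overline{k}$, hence are finite-dimensional and independent of $n$; so the right-hand side is visibly a finite integer-linear combination of $n$th powers of the numbers $q^{C}\lambda$, for $\lambda$ a Frobenius eigenvalue on some $H^i_c$. This produces a provisional $N$, the provisional $\alpha_i=q^{C}\lambda$, and the signs $(-1)^i$.

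Next I would identify the terms and bound their weights. By Lemma~\ref{ext-relation}(2), $H^i_c$ vanishes for $i>0$. By Lemma~\ref{ext-relation}(3) and the geometric semisimplicity of $K_{\weights_1},K_{\weights_2}$ from Lemma~\ref{purity-semisimple}, $H^0_c(U_{\overline{k}}\times U_{\overline{k}},DK_{\weights_1}\otimes K_{\weights_2})$ is, $\Frob_q$-equivariantly, isomorphic to $\Hom_{\overline{\mathbb{F}}_q}(K_{\weights_1},K_{\weights_2})$, which, being the Hom space between pure perverse sheaves of weights $w_i=(\dim G)(g+|D|-1)+d(\weights_i)+\dim H$, is pure of weight $w_2-w_1=d(\weights_2)-d(\weights_1)$. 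For $i<0$, $DK_{\weights_1}\otimes K_{\weights_2}$ is mixed of weight $\le w_2-w_1$, so by Sun's weight bound (as used in Lemma~\ref{cohbound-weak}) $H^i_c$ is mixed of weight $\le i+w_2-w_1<w_2-w_1$, and its eigenvalues are $q$-Weil numbers by the lemma preceding the theorem. Multiplying through by $q^{C}$ and attaching the sign $(-1)^i$, Step~1 therefore already exhibits the left-hand side as a signed sum of $n$th powers of $q$-Weil numbers meeting the stated weight bound, with the $i=0$ block contributing exactly $q^{C}$ times the eigenvalues of $\Frob_q$ on $\Hom_{\overline{\mathbb{F}}_q}(K_{\weights_1},K_{\weights_2})$, all with sign $+1$ and of the common weight $2C+d(\weights_2)-d(\weights_1)$, and the $i<0$ blocks of strictly smaller weight.

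Finally I would upgrade "$q$-Weil number" to "$q$-Weil integer". Collapsing coinciding terms, write the left-hand side as $a_n=\sum_k c_k\gamma_k^{\,n}$ with distinct $q$-Weil numbers $\gamma_k$ and $c_k\in\Z\setminus\{0\}$. By the $\F_{q^n}$-version of Lemma~\ref{integrality-trace-semisimple}, $a_n\in\Z[\mu_m]$ for every $n\ge1$. The generating series $\sum_{n\ge1}a_nt^n=P(t)/Q(t)$ with $Q(t)=\prod_k(1-\gamma_kt)$, $Q(0)=1$, and $P/Q$ in lowest terms (the $\gamma_k$ being distinct and the $c_k$ nonzero); since $\Z[\mu_m]$ is the ring of integers of $\Q(\mu_m)$, a Fatou-type rationality/integrality lemma (in the spirit of the arguments in \cite[\S3]{Deligne:Weil-I} and \cite{Bombieri-Katz}) forces $P,Q\in\Z[\mu_m][t]$, whence the monic reverse polynomial of $Q$ lies in $\Z[\mu_m][t]$ and has the $\gamma_k$ as roots, so each $\gamma_k$ is an algebraic integer and hence a $q$-Weil integer. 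Re-expanding each $\gamma_k$ with multiplicity $|c_k|$ and sign $\operatorname{sgn}(c_k)$ then yields $N$, the $\alpha_i$ (now $q$-Weil integers) and signs $\epsilon_i\in\{\pm1\}$. For the refinement, note that the top weight $2C+d(\weights_2)-d(\weights_1)$ among the eigenvalues of Step~2 is realized only by the $i=0$ block, whose terms all have sign $+1$; such terms cannot be cancelled in the collapsing, so they survive verbatim and account for exactly the first $\dim\Hom_{\overline{\mathbb{F}}_q}(K_{\weights_1},K_{\weights_2})$ of the $\alpha_i$, with $\epsilon_i=1$, while all remaining $\alpha_i$ have strictly smaller weight.

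The crux is this last step. The individual Frobenius eigenvalues on the stacky cohomology $H^i_c$ need not be algebraic integers, only $q$-Weil numbers, so integrality of the $\alpha_i$ cannot be read off termwise and must be extracted from integrality of the \emph{whole} sum via the Fatou/rationality argument; this forces one to collapse coincidences first and re-expand with $\pm1$ signs only afterwards, all the while tracking, through the weight separation of Step~2, that the $\Hom$-block is untouched by the collapsing.
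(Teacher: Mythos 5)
Your Steps 1 and 2 follow the paper's route, but there is a genuine gap at the point you yourself call the crux. You assert that the right-hand side of Lemma~\ref{arithmetic-to-geometry} is ``visibly a finite integer-linear combination of $n$th powers'' because each $H^i_c$ is finite-dimensional. That is not enough: $U$ is an Artin stack (a quotient of a finite-type scheme by $GL_N$, Lemma~\ref{quasicompact-semisimple}), and its compactly supported cohomology with coefficients in $DK_{\weights_1}\otimes K_{\weights_2}$ is in general nonzero in unboundedly many negative degrees, exactly as for $B\mathbb{G}_m$. This is why Lemma~\ref{arithmetic-to-geometry} only claims absolute convergence of the sum over $i$, and why the paper needs Lemma~\ref{cohbound-weak} for uniform-in-$n$ control of the tail. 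Consequently your collapsing step produces a priori infinitely many distinct Weil numbers $\gamma_k$, and the expression $Q(t)=\prod_k(1-\gamma_k t)$ is not a polynomial; the generating-function manipulation you build on it does not get off the ground. In the paper, the finiteness of $N$ is not free: it is extracted by combining the integrality of the coefficients (Lemma~\ref{integrality-trace-semisimple}) with meromorphy of the generating series and the Borel--Dwork rationality criterion, which shows that all but finitely many $\gamma_k$ occur with zero multiplicity; only after that does the $\ell$-adic radius-of-convergence (Fatou-type) argument give integrality of the surviving $\alpha_i$. So the integrality lemma is doing double duty in the paper --- first finiteness, then integrality --- whereas you use it only for the second purpose and assume the first.

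A secondary point: your replacement of $1/|H(\F_{q^n})|^2$ by $(q^n)^{-2\dim H}$ requires every local factor $H_x$ to be unipotent. The hypotheses of the theorem only give this for the geometrically supercuspidal factor $H_u$ (Lemma~\ref{supercuspidal-parahoric}); the other factors are arbitrary smooth connected subgroups of $G\lWR \kappa_x[t]/t^{m_x}\rWR$, e.g.\ possibly the full group or a torus, and the theorem (and the Weil-number statements of \S\ref{sub:sums-Weil} that rely on it) is stated in that generality. The paper avoids this by expanding $1/|H(\F_{q^n})|^2$ itself as a convergent signed sum of $n$th powers of Weil numbers with leading term of weight $-4\dim H$ and multiplying the two series; your shortcut proves the statement only in the special case needed for the main theorem's application. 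The weight bookkeeping and the argument that the $\Hom$-block survives uncancelled are fine and agree with the paper.
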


\begin{proof}
Let $S_n$ be the left-hand side of the formula.
We apply Lemma~\ref{arithmetic-to-geometry} over $\mathbb F_{q^n}$. It is clear that 
base-changing all the data in this way is equivalent to base-changing 
$\Hecke_{G(D),\weights} \times H$ and thus to base-changing $K_{\weights_1}, 
K_{\weights_2}$, so we obtain
\[
|H ( 
 \mathbb F_{q^n} ) |^2 \cdot S_n
=  \left(q^n\right)^{ (\dim G)  (g+|D|-1) + d(\weights_1) + \dim H }  \sum_{i \in \mathbb 
Z}   (-1)^i \tr(\Frob_{q^n} ,  H^i_c ( U_{\overline{k}} \times U_{\overline{k}},  
DK_{\weights_1} \otimes K_{\weights_2} ) )\]

By Lemma~\ref{purity-semisimple}, $K_{\weights_2}$ is pure of weight $w_1 = (\dim G) 
(g+|D|-1) + d(\weights_2) + \dim H $ and $K_{\weights_1}$ is pure of weight $w_2= (\dim 
G) (g+|D|-1) + d(\weights_1) + \dim H $, so $w_2 - w_1  =d(\weights_2)-d(\weights_1)$. 
Hence the eigenvalues of $\Frob_q$ on $H^i_c ( U_{\overline{k}} \times U_{\overline{k}},  
DK_{\weights_1} \otimes K_{\weights_2} ) $ are Weil numbers of weight $\leq  
d(\weights_2) - d(\weights_1)+i$. 

By Lemma~\ref{ext-relation}(2), this cohomology group vanishes for $i>0$.  Hence we can 
write $|H (\mathbb F_{q^n} ) |^2 \cdot S_n$ as a convergent signed sum 
of $n$th powers of Weil numbers, with the largest possible weight being 
\begin{align*}
&2(\dim G)  
(g+|D|-1) + 2d(\weights_1) + 2\dim H  + d(\weights_2) - d(\weights_1)\\
&=
2(\dim 
G)  (g+|D|-1) + 2\dim H  + d(\weights_1) +   d(\weights_2),
\end{align*} 
and appearing in 
$H^0$.

Now $|H(\mathbb F_{q^n})|$ is a finite signed sum of $n$th powers of Weil numbers, with 
the largest weight $2\dim H$ appearing with multiplicity $1$ and sign $1$, 
because $H$ is smooth and geometrically connected. Hence  $\frac{1}{|H(\mathbb F_{q^n})|^2}$ is a 
convergent signed sum of $n$th powers of Weil numbers, with the largest weight $-4\dim H$ 
appearing with multiplicity $1$ and sign $1$.

Hence the product $S_n=\frac{1}{|H(\mathbb F_{q^n})|^2} \cdot \left(  |H (\mathbb F_{q^n} ) |^2 \cdot S_n\right) $ is also a convergent signed sum of $n$th powers of Weil 
 numbers.  By Lemma~\ref{cohbound-weak} this convergence is uniform in $n$. Thus the 
 generating function $\sum_{n=1}^{\infty}  u^n S_n$ is a signed sum of terms 
 of the form $\frac{ \alpha_i u}{ 1- \alpha_i u} $, with the $\alpha_i$ Weil numbers. In 
 particular, it is a meromorphic function with poles of order $1$ at inverses of Weil 
 numbers $\alpha_i$ and with residues integer multiples of $1/\alpha_i$.

However, it is also a power series with coefficients in the ring of integers of a number field $\mathbb Q(\mu_m)$. A variant due to 
Dwork of a result of E.~Borel implies that it is a rational function \cite[Thm.3, 
p.645]{Dwork-rationality}, so all but finitely many of the $\alpha_i$ occur with zero 
multiplicity, and we have the stated claim, except with $q$-Weil numbers rather than 
$q$-Weil integers. To check they are algebraic integers, it is sufficient to check that they are
$\ell$-adic integers for each prime $\ell$. The $\ell$-adic radius of convergence of this 
rational function is at least one, because all its coefficients are algebraic integers, 
so all its poles have $\ell$-adic norm at least one, and the $\alpha_i$ are the inverses 
of its poles.

The maximum weight of the Weil numbers occurring is 
\begin{align*}
&-4\dim H + 2(\dim G)  
(g+|D|-1) 
+ 2\dim H  + d(\weights_1) +   d(\weights_2)\\
&= 2(\dim G)  (g+|D|-1) - 2\dim 
H  + d(\weights_1) +   d(\weights_2).
\end{align*}
 A Weil number meets that bound only if 
it a Weil number from $H^0$ multiplied by the constant $q^{ (\dim G)  (g+|D|-1) + 
d(\weights_1) + \dim H } $ and then multiplied by $q^{- 2 \dim H}$. By Lemma 
\ref{ext-relation}(3), $H^0$ is isomorphic to $\Hom(K_1,K_2)$. Because $K_1$ and $K_2$ 
are pure, all eigenvalues on $\Hom(K_1,K_2)$ actually have size $q^{\frac{  d(W_1) - 
d(W_2) }{2}}$, so a Weil number meets that bound if and only if it comes from $H^0$ in 
this way. Bringing these numbers to the front of the line we obtain the stated claim.
\end{proof}

\section{$q$-aspect families}\label{s:family}
We continue with the set-up of Sections~\ref{s:properties} and~\ref{s:trace-function}, 
that is $G$ is a 
split semisimple group over $k$, $D$ 
is an effective divisor on $X$, and $\cL$ is a character sheaf on the factorizable subgroup 
$H$ of $G \langle \cO_D 
\rangle$. Suppose $k=\F_q$. 
We shall define the $q$-aspect family 
$\lV=\lV(G,X,D,H,\cL)$. This $q$-aspect family will be crucial in turning our bound for the trace of a Hecke operator into a bound for the individual Hecke eigenvalues.

\index{$\lV_n$, family in the $q$-aspect}
For every $n\ge 1$, let $F_n := F \otimes_{\mathbb F_q} \mathbb 
F_{q^n}$.
We define $\lV_n$ as consisting of automorphic representations $\Pi$ of $G(\mathbb 
A_{F_n})$, that are $G(\ko_y)$-unramified for every $y\in |(X- D)_n|$, and at each 
place $y \in D_n$ lying over a place $x \in D$, 
and with
residue field $\kappa_y/\kappa_x$,
admit a vector on which the preimage $J_y \subset G(\kappa_y\bseries{t})$ of 
$H(\kappa_y)$ 
acts by 
the character $\chi_y$ associated to the sheaf $\mathcal L$.
The automorphic representations are counted with multiplicity, more precisely it is the 
product 
of the automorphic multiplicity of $\Pi$ with the dimension of the space of 
$(J_y,\chi_y)$-invariant 
vectors in $\Pi_y$ for every $y \in D_n$.

\subsection{Spectral expansion of the trace}
For any $n\ge 1$, $\Pi\in \lV_n$, and $y\in |(X - D)_n|$, 
the representation $\Pi_y$ is $G(\ko_y)$-spherical.
Recall from \S\ref{sub:Satake} that to every $G(\ko_y)$-unramified irreducible 
representation 
$\Pi_y$ is attached a Satake parameter 
$t_{\Pi_y}\in \widehat{T}(\C)/W$.
For a dominant weight $\lambda\in \Lambda^+$, we have defined
\[
\tr_\lambda(\Pi_y):= 
 \tr(\Pi_y)(a_\lambda) 
=
\tr(t_{\Pi_y} | V_\lambda ).
\]
For a function $W:|X|\to \Lambda^+$ of finite support $\operatorname{supp}(W)$ disjoint from $D$,
let
\[
\tr_W(\Pi) := \prod_{y\in \operatorname{supp}(W)_n} \tr_{W_y}(\Pi_y).
\]

\begin{proposition}\label{p:spectral-trace} 
 For every $W_1,W_2:|X|\to \Lambda^+$ with 
finite support 
disjoint from $D$, and 
for every $n\ge 1$,
\[
\left| H( \mathbb F_{q^n})\right|^2
\sum_{\Pi \in \lV_n}
\tr_{W_1}(\Pi)
\overline{
\tr_{W_2}(\Pi)}
= \frac{ 1}{\left(q^n\right)^{\frac{d(W_1)+d(W_2)}{2}} } 
\sum_{g_1,g_2\in G(F_n) \backslash G( \mathbb A_{F_n})  / \KK } 
\frac{
 \FrK_{W_1} (g_1,g_2)
\overline{
\FrK_{W_2}(g_1,g_2)}
}
{|\Aut_D(g_1)||\Aut_D(g_2)|}
\]
\end{proposition}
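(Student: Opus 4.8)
\textbf{Proof strategy for Proposition~\ref{p:spectral-trace}.}
The plan is to identify the space of automorphic forms underlying $K_{\weights}$ as a Hecke module and then diagonalize the Hecke action. First I would use Lemma~\ref{weil-parameterization-semisimple} and Remark~\ref{space-of-automorphic-forms} to describe the relevant space: the functions on $\Bun_{G(D)_n}(\F_{q^n}) = G(F_n) \backslash G(\mathbb A_{F_n}) / \KK_n$ that are $\chi$-equivariant for the right action of $H(\F_{q^n})$, which I will denote $\mathcal A_n$. This space decomposes, under the action of the unramified Hecke algebras at places away from $D$, as a direct sum over $\pi \in \lV_n$, where $\pi$ contributes a subspace whose dimension is the automorphic multiplicity of $\pi$ times $\prod_{y \in D_n} \dim \pi_y^{(J_y,\chi_y)}$ — this is exactly the counting convention built into the definition of $\lV_n$. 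On this $\pi$-isotypic subspace, the Hecke operator at $x$ for cocharacter $W_x$ acts by the scalar $\tr_{W_x}(\pi_x)$ via the Satake isomorphism (Section~\ref{sub:Satake}).

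Second, I would interpret $\FrK_{\weights}$ as the kernel of a Hecke operator on $\mathcal A_n$. By Definition~\ref{def:global-function} and Lemma~\ref{adelization-semisimple}, $\FrK_{\weights}(g_1,g_2) = \sum_{\gamma \in G(F_n)} \prod_x f_x^{\weights}(g_2^{-1}\gamma g_1)$, where the local factors $f_x^{\weights}$ are: the characteristic function of $G(\ko_x)$ away from $D$ and $\operatorname{supp}(\weights)$; the Satake function for $V_{\weights_x}$ times $q^{\deg x \langle \weights_x,\rho\rangle}$ at points in $\operatorname{supp}(\weights)$; and the function $f_{\chi_x}$ supported on $J_x$ at points of $D$. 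The factors at $D$ implement the $\chi$-averaging/projection onto $\mathcal A_n$, while the factors in $\operatorname{supp}(\weights)$ give the composite Hecke operator $\prod_x T_{\weights_x}$ in the algebraic normalization. Using the example in Section~\ref{sub:Satake} computing $\tr_\lambda(\mathbf 1) = q^{d(\lambda)/2}$ together with the relation $d(\weights) = \sum_x 2(\deg x)\langle \weights_x,\rho\rangle$, the algebraic normalization differs from the unitary one by exactly the factor $(q^n)^{d(\weights)/2}$; this accounts for the power of $q^n$ on the right-hand side. Concretely, as an operator on $\mathcal A_n$, the kernel $\FrK_{\weights}$ acts (up to the overall volume normalization by $|\Aut_D(g)|$ coming from the groupoid structure on $\Bun_{G(D)_n}(\F_{q^n})$) as $(q^n)^{d(\weights)/2}$ times the operator with eigenvalue $\tr_{\weights}(\pi)$ on the $\pi$-part.

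Third, I would expand the right-hand side as a sum over the groupoid $\Bun_{G(D)_n}(\F_{q^n})$, i.e., a weighted sum over double cosets $g_1, g_2$ with weights $1/|\Aut_D(g_i)|$, of $\FrK_{\weights_1}(g_1,g_2)\overline{\FrK_{\weights_2}(g_1,g_2)}$. Viewing $\FrK_{\weights_i}$ as an endomorphism of the finite-dimensional space of \emph{all} functions on $\Bun_{G(D)_n}(\F_{q^n})$ (with the natural inner product weighted by $1/|\Aut_D|$), this double sum is $(q^n)^{(d(\weights_1)+d(\weights_2))/2}$ times $\operatorname{tr}\!\big( A_{\weights_2}^{} A_{\weights_1}^{*}\big)$, where $A_{\weights_i}$ is the operator with eigenvalues $\tr_{\weights_i}(\pi)$ on $\mathcal A_n$ and zero on the orthogonal complement coming from non-$\chi$-equivariant functions. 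Here I must be careful that the $H(\F_{q^n})$-averaging built into the factors at $D$ really does project onto $\mathcal A_n$: the projector picks up a normalization $|H(\F_{q^n})|$, and since the kernel involves the factor at each place of $D$, the squared kernel produces $|H(\F_{q^n})|^2$, which is why this factor appears on the left. Expanding the trace over the orthonormal-up-to-multiplicity basis indexed by $\pi \in \lV_n$ (and using that $\overline{\tr_{\weights}(\pi)}$ is the eigenvalue of the adjoint since $a_\lambda$ is self-adjoint for the unramified Hecke algebra with respect to this inner product), I get $\sum_{\pi \in \lV_n} \tr_{W_1}(\pi)\overline{\tr_{W_2}(\pi)}$, with the dimension-counting multiplicities absorbed into the definition of the sum over $\lV_n$. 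Matching the powers of $q^n$ and the factor $|H(\F_{q^n})|^2$ then gives the identity.

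The main obstacle I anticipate is bookkeeping the normalizations precisely: reconciling (i) the algebraic versus unitary normalization of the Satake transform, which introduces the $(q^n)^{d(\weights_i)/2}$ factors; (ii) the groupoid volume factors $1/|\Aut_D(g_i)|$ versus the multiplicities $\dim \pi_y^{(J_y,\chi_y)}$ and the automorphic multiplicity appearing in the definition of $\lV_n$ — these are related by Lemma~\ref{weil-automorphism-semisimple}, whose parts (1)--(3) must be invoked to pass between $\Aut_D$, $\Aut_{D,H}$, and stabilizers of points under $H(\F_{q^n})$; and (iii) the appearance of $|H(\F_{q^n})|^2$ as the square of the projector normalization onto $\chi$-equivariant functions. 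Once these three normalizations are tracked consistently, the statement reduces to the elementary fact that the Hilbert--Schmidt inner product of two diagonalizable operators on $\mathcal A_n$ sharing the eigenbasis $\{\pi\}$ equals $\sum_\pi \lambda_1(\pi)\overline{\lambda_2(\pi)}$, weighted by the dimension of each eigenspace.
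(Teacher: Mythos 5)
Your proposal is correct and follows essentially the same route as the paper's proof: you interpret $\FrK_{\weights}$ as the kernel of the convolution operator by $\prod_x f_x^{\weights}$, diagonalize it on a Hecke eigenbasis of the $\chi$-equivariant cusp forms with eigenvalue $\left| H(\mathbb F_{q^n})\right| (q^n)^{d(\weights)/2}\tr_{\weights}(\pi)$ (the volume factor at the places of $D$ and the algebraic-normalization Satake factor at $\operatorname{supp}(\weights)$), and conclude by orthogonality, which is exactly the paper's argument phrased as a Hilbert--Schmidt trace. The only quibble is your appeal to self-adjointness of $a_\lambda$ (false unless $\lambda$ is self-dual, since the adjoint corresponds to $-w_0\lambda$); it is also unnecessary, as the adjoint of an operator diagonal in an orthonormal basis automatically has conjugate eigenvalues.
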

\begin{proof}
Recall Definition~\ref{def:test-function} of the test functions $f^W_y$ for $y\in 
|X_n|$, and let $f:=\prod_{y\in |X_n|} f_y^W$. 
By Definition~\ref{def:global-function}, $\FrK_W(g_1,g_2)$ is the kernel of the convolution operator $*f$ on the vector space 
of all forms. Here, and below, we shall work with the counting measure on $G( \mathbb 
A_{F_n})  / \KK $ when forming convolutions.

Since $f^W_u$ is a cuspidal function for the place $u$, 
the operator $*f$ has image inside the space of cusp forms.
More precisely, consider an orthonormal Hecke basis $\cB_n=\{\varphi\}$ of the space of cuspidal
automorphic forms on $G(F_n) \backslash G( \mathbb A_{F_n})  / \KK $, where the inner product is
\[
\frac{1}{|H(\F_{q^n})|}
\sum_{g \in G(F_n) \backslash G( \mathbb A_{F_n})  / \KK } 
\frac{|\varphi(g)|^2}{|\Aut_D(g)|}.
\] 
Since automorphic representations in $\lV_n$ are counted with multiplicity, this implies 
that we can arrange the basis so that there is an injection $\lV_n \hookrightarrow 
\cB_n$, which we shall denote by $\Pi 
\mapsto \varphi_\Pi$. In other words, $\{\varphi_\Pi\}$ is a basis of the subspace of 
automorphic functions on $\Bun_{G(D)}(\F_q)$ which are $(\prod_{y\in D} J_y,\prod_{y\in 
D} 
\chi_y)$-equivariant. We can also arrange so that $\varphi*f=0$ if $\varphi 
\in \cB_n - 
\cV_n$ (for this consider the case $W=0$, in which case the operator $*f$ is idempotent, 
and its kernel forms an orthogonal complementary subspace).

The convolution operator $*f$ is an integral operator with 
kernel 
\[
\sum\limits_{\varphi\in \cB_n} (\varphi*f)(g_1) \overline{\varphi(g_2)}
=\sum\limits_{\Pi\in \lV_n} (\varphi_\Pi*f)(g_1) \overline{\varphi_\Pi(g_2)}
\]
We can show that 
we have $\varphi_\Pi *f = \left| H( \mathbb F_{q^n})\right| q^{\frac{d(W)}{2}}\tr_W(\Pi) 
\varphi_\Pi$.
 To 
do this, observe that for every $y\in \operatorname{supp}(W)_n$, $*f_y^W$ acts on the 
representation 
$\Pi_y$ 
by scalar multiplication by $\tr_{W_y}(\Pi_y) 
\left(q^n\right) ^{ \langle W_y, \rho \rangle}$, and that for $y\in D$, $*f^W_y$ acts on 
$\varphi_\Pi$ by a volume factor.
Precisely,
\[
\sum_{g_2 \in  G( \mathbb A_{F_n})  / \KK}   
\prod_{y 
\in |X_n|}  f_y^W( g_2^{-1}  g_1) \varphi_\Pi(g_2) = \frac{1 }{ \vol (\KK)} \int_{g_2 \in 
G(\mathbb A_{F_n})} \prod_{y \in |X_n|} f_y^W ( g_2^{-1} g_1) \varphi_\Pi(g_2)\]  
\[ =  
\frac{1 
}{ \vol (\KK)} \prod_{y \in |X_n|} 
\int_{h \in G(F_y)}
f_y^W ( h) 
\varphi_\Pi(g_1 
h^{-1})\]
\[
= \frac{\prod_{y\in D} \vol(J_y)}{\vol(\KK)}
\cdot
\prod_{y\in \operatorname{supp}(W)_n}
\tr_{W_x}(\Pi_y) 
\left(q^n\right) ^{ \langle W_y, \rho \rangle}
\cdot
\varphi_\Pi(g_1),
\]
and the ratio of volumes is equal to $\left| H( \mathbb F_{q^n})\right|$ by 
Definition~\ref{def:Jx}.
We deduce the identity
\[
K_{W}(g_1,g_2) = \left| H( \mathbb F_{q^n})\right| \left(q^n\right) ^{ d(W)/2} 
\sum_{\Pi\in \lV_n}  \tr_{W}(\Pi) \varphi_\Pi(g_1) 
\overline{\varphi_\Pi(g_2)}.
\]
The proposition now follows from orthogonality relations for the orthonormal basis 
$\cB_n$.
\end{proof}

\subsection{Average Ramanujan bound}
We fix a place $v\in |X - D|$.
\begin{theorem}\label{t:average-Ramanujan}
Let $\lambda\in \Lambda^+$ be a dominant weight.
For every integer $n\ge 1$,
\[
\sum_{\Pi\in \lV_n} 
\prod_{w|v}
|\tr_{\lambda}(\Pi_w)|^2
\ll
q^{n(\dim G(g+|D|-1)-\dim H)}.
\]
The multiplicative constant is independent 
of $n$, it depends only on $X, v,\lambda,(G,D,H,\cL)$. 
\end{theorem}

\begin{proof} Let $W:|X|\to \Lambda^+$ be defined by
\[
W_x:= 
\begin{cases}
\lambda,& \text{if $x=v$,}\\
0,& \text{if $x\neq v$.}
\end{cases}
\]
Let $\FrK_W$ 
be the function defined in Definition~\ref{def:global-function}. Then by Proposition~\ref{p:spectral-trace} and Theorem~\ref{geometric-average-bound} 
\[
\sum_{\Pi \in \lV_n}
|\tr_{W}(\Pi)|^2
= \frac{ 1}{(q^n) ^{d(W)} \left| H( \mathbb F_{q^n})\right|^2 } 
\sum_{g_1,g_2\in G(F_n) \backslash G( \mathbb A_{F_n})  / \KK } 
\frac{
 \FrK_{W} (g_1,g_2)
\overline{
\FrK_{W}(g_1,g_2)}
}
{|\Aut_D(g_1)||\Aut_D(g_2)|}\] \[ = \frac{ O \left(  \left(q^n\right)^{ (\dim G) 
(g+|D|-1) + d(W) + \dim H} \right) }{ (q^n) ^{d(W)} \left| H( \mathbb 
F_{q^n})\right|^2} =O 
\left(  \left(q^n\right)^{ (\dim G) (g+|D|-1) - \dim H} \right).
\qedhere
\]
 \end{proof}

\begin{corollary}\label{cor:average-ram-one}
Let $n\ge 1$, $\Pi\in \lV_n$, and let $\lambda$ be a dominant weight of $G$. Then  \[ \prod_{ w \mid 
v}  |\tr_{\lambda}(\Pi_w)|^2
\ll
q^{n(\dim G(g+|D|-1)-\dim H)}\] 
with the constant independent of $n$. 
\end{corollary}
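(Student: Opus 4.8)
The plan is to obtain Corollary~\ref{cor:average-ram-one} as an immediate consequence of Theorem~\ref{t:average-Ramanujan} together with positivity. First I would observe that, since $v\in|X-D|$ and every member of $\lV_n$ is unramified away from $D_n$, the local component $\pi_w$ at each place $w$ of $F_n$ lying over $v$ is $G(\ko_w)$-unramified; hence $t_{\pi_w}$ and $\tr_\lambda(\pi_w)=\tr(t_{\pi_w}\mid V_\lambda)$ are well defined and $|\tr_\lambda(\pi_w)|^2\ge 0$. Thus every summand on the left-hand side of Theorem~\ref{t:average-Ramanujan} is a non-negative real number.

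The only real step is then the elementary fact that a single term of a sum of non-negative reals is bounded by the whole sum. For the given $\pi\in\lV_n$ this gives
\[
\prod_{w\mid v}|\tr_\lambda(\pi_w)|^2\;\le\;\sum_{\pi'\in\lV_n}\prod_{w\mid v}|\tr_\lambda(\pi'_w)|^2\;\ll\;q^{n(\dim G(g+|D|-1)-\dim H)},
\]
the implied constant being precisely the one furnished by Theorem~\ref{t:average-Ramanujan}, which depends only on $(G,\lambda,X,D,H,\cL)$ and not on $n$; the uniformity claimed in the corollary is therefore inherited verbatim. To pass from the product over all $w\mid v$ to the single factor $|\tr_\lambda(\pi_w)|^2$, note that when $\gcd(\deg v,n)=1$ --- in particular when $\deg v=1$, and in any case along the subsequence of $n$ coprime to $\deg v$ which is all that the amplification argument used to prove Theorem~\ref{t:intro:main} requires --- the place $v$ is inert in $F_n$, so there is a unique such $w$ and the product is already that one factor.

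I do not expect any genuine obstacle here: all of the analytic and geometric input has been absorbed into Theorem~\ref{t:average-Ramanujan} (through Proposition~\ref{p:spectral-trace} and Theorem~\ref{geometric-average-bound}, and ultimately the purity of $K_\weights$ in Lemma~\ref{purity-semisimple} and the cleanness statement Theorem~\ref{mainduality-semisimple}). The one point that must not be glossed over is exactly the product-versus-single-place issue just discussed, and the fact that the constant in Theorem~\ref{t:average-Ramanujan} is uniform in $n$ --- which is, after all, the whole reason this average bound is useful for deducing temperedness of an individual representation.
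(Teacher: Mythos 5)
Your core step is exactly the paper's proof: each summand in Theorem~\ref{t:average-Ramanujan} is non-negative, so any single term is bounded by the whole sum, and the implied constant is inherited unchanged; this yields $\prod_{w\mid v}|\tr_\lambda(\pi_w)|^2\ll q^{n(\dim G(g+|D|-1)-\dim H)}$. That product bound is in fact the form in which the corollary is invoked later (in the proof of Theorem~\ref{t:tempered} it is quoted precisely as a bound on the product over $w\mid v$), so up to this point you and the paper agree, and your observation that the statement as printed elides the product-versus-single-factor issue is fair.

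Where you go wrong is the proposed fix. You claim that restricting to $n$ coprime to $\deg v$ (so that $v$ is inert in $F_n$) costs nothing because ``the amplification argument only requires'' that subsequence. It requires the opposite: in the proof of Theorem~\ref{t:tempered} one sets $n_0=\gcd(n,[\kappa_v:k])$, $n_1=n/n_0$, and then takes $n=[\kappa_v:k]\,n_1$, i.e.\ $n$ a \emph{multiple} of $\deg v$, precisely so that every relative degree $n_1\ge 1$ occurs and one has $|\tr(t(\pi_v)^{n_1}\mid V_\lambda)|\ll (q^{n_1})^{d/2}$ for \emph{all} $n_1$, which is what Lemma~\ref{l:recursive-sequence} needs: the spectral radius is a $\limsup$ over all powers, and along a subsequence of exponents coprime to $\deg v$ the traces can cancel identically (e.g.\ if the top eigenvalues of $t(\pi_v)$ in $V_\lambda$ differ by roots of unity of order dividing $\deg v$), so bounds on that subsequence alone do not control the spectral radius and the deduction of temperedness would break. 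Moreover the passage from the product to a single factor is made in the paper not by inertness but by the opposite mechanism: for the base-change representations $\Pi_n$ all $n_0$ factors at places over $v$ are equal (same Hecke eigenvalue), so one extracts an $n_0$-th root, absorbing $C^{1/n_0}\le C$. So your proof establishes the product form of the corollary (which suffices for the paper's use), but the extra paragraph reducing to a single place is both unnecessary and based on a false premise about the downstream argument; if you want the literal single-factor statement for all $n$, the route is the equality of the local factors for places over $v$ in the relevant application, not a coprimality restriction on $n$.
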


\begin{proof} This follows from Theorem~\ref{t:average-Ramanujan} because the left side is a sum of squares and hence any term is bounded by the whole. \end{proof}

\subsection{Sums of Weil numbers}\label{sub:sums-Weil}
In the course of the proof above we have shown that several spectral quantities are sums 
of Weil numbers.
Such results are of independent interest, and we spell them out in more detail in this 
subsection.

\begin{proposition}\label{p:count-Weil}
There exist $q$-Weil integers $\alpha_i$ of weight $\le 2 (\dim G)(g+|D|-1) - 2\dim H$, such 
that
\[
|\lV_n|=
\sum\nolimits_i \alpha_i^n,\quad
\text{for every $n\ge 1$.}
\]
\end{proposition}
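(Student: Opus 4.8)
The plan is to apply Theorem~\ref{weil-numbers-semisimple} in the special case $\weights_1 = \weights_2 = 0$. In that case $d(\weights_1) = d(\weights_2) = 0$, the test functions $f_x^{0}$ are simply the characteristic functions of $G(\ko_x)$ away from $D$ and the character $\chi_x$ on $J_x$ at places of $D$, and the kernel $\FrK_{0}$ is the idempotent projector onto the space of $(\prod_{y\in D} J_y, \prod_{y\in D}\chi_y)$-equivariant automorphic functions on $\Bun_{G(D)}(\F_{q^n})$. First I would invoke Proposition~\ref{p:spectral-trace} with $W_1 = W_2 = 0$: the left-hand side becomes $|H(\F_{q^n})|^2 \sum_{\pi\in\lV_n} 1 = |H(\F_{q^n})|^2 \, |\lV_n|$, since $\tr_0(\pi) = 1$ (the empty product) and representations in $\lV_n$ are counted with the appropriate multiplicity, while the right-hand side is exactly
\[
\sum_{g_1,g_2\in G(F_n)\backslash G(\mathbb A_{F_n})/\KK_n}
\frac{\FrK_{0}(g_1,g_2)\overline{\FrK_{0}(g_1,g_2)}}{|\Aut_D(g_1)||\Aut_D(g_2)|}.
\]

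Next I would feed this into Theorem~\ref{weil-numbers-semisimple}, again with $\weights_1 = \weights_2 = 0$. That theorem asserts precisely that
\[
\frac{1}{|H(\F_{q^n})|^2}\sum_{g_1,g_2}\frac{\overline{\FrK_{0,n}(g_1,g_2)}\,\FrK_{0,n}(g_1,g_2)}{|\Aut_D(g_1)||\Aut_D(g_2)|} = \sum_{i=1}^N \epsilon_i \alpha_i^n
\]
for $q$-Weil integers $\alpha_i$ of weight $\le (\dim G)(g+|D|-1) + \tfrac{d(\weights_1)}{2} + \tfrac{d(\weights_2)}{2} - \dim H = (\dim G)(g+|D|-1) - \dim H$ and signs $\epsilon_i\in\{\pm 1\}$. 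Combining the two displays gives $|\lV_n| = \sum_i \epsilon_i \alpha_i^n$. The remaining point is to remove the signs $\epsilon_i$, i.e.\ to rewrite a signed sum of $n$th powers of Weil integers as an unsigned one with Weil integers of no larger weight. For this I would simply absorb each sign into the base: if $\epsilon_i = -1$, replace $\alpha_i$ by $-\alpha_i$, which is still a $q$-Weil integer of the same weight (negation does not change absolute values under any complex embedding, nor algebraic integrality). After this relabelling we obtain $|\lV_n| = \sum_i \alpha_i^n$ with all $\alpha_i$ being $q$-Weil integers of weight $\le (\dim G)(g+|D|-1) - \dim H$, as claimed.

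I do not expect any serious obstacle here: the proposition is essentially a corollary of Theorem~\ref{weil-numbers-semisimple}, and the only genuine content is recognizing that the $\weights_i = 0$ specialization of the left-hand side of Proposition~\ref{p:spectral-trace} counts $|\lV_n|$ (weighted by $|H(\F_{q^n})|^2$). If a point needs care, it is bookkeeping: checking that the multiplicity convention for $\lV_n$ in Section~\ref{s:family} matches the one implicit in the orthonormal basis $\cB_n$ used in the proof of Proposition~\ref{p:spectral-trace} — but this compatibility is exactly what is arranged there via the injection $\lV_n \hookrightarrow \cB_n$, $\pi\mapsto \varphi_\pi$. One should also note that $N$ may a priori depend on the data but not on $n$, which is already part of the statement of Theorem~\ref{weil-numbers-semisimple}, so the $\alpha_i$ in Proposition~\ref{p:count-Weil} form a single finite list valid for all $n\ge 1$.
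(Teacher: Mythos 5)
Your reduction is exactly the paper's: specialize Proposition~\ref{p:spectral-trace} to $W_1=W_2=0$, note $\tr_0(\Pi)=1$ so the spectral side is $|H(\F_{q^n})|^2\,|\lV_n|$, and feed the geometric side into Theorem~\ref{weil-numbers-semisimple}, where $d(W_1)=d(W_2)=0$ gives the weight bound $(\dim G)(g+|D|-1)-\dim H$. Up to that point you have reproduced the paper's (two-line) proof.

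The extra step you add to remove the signs is, however, incorrect: if $\epsilon_i=-1$ you need the term $-\alpha_i^n$ for \emph{every} $n$, whereas $(-\alpha_i)^n=(-1)^n\alpha_i^n$, which agrees with $-\alpha_i^n$ only for odd $n$. Nor can any relabelling fix this in general: since the functions $n\mapsto\alpha^n$ for distinct $\alpha$ are linearly independent, an identity $\sum_j\beta_j^n=\sum_i\epsilon_i\alpha_i^n$ valid for all $n\ge 1$ forces the multiset of $\beta_j$'s to coincide with the $\alpha_i$'s counted with their signed multiplicities, so negative terms can only disappear by genuine cancellation (e.g.\ $q^n-1$ is not an unsigned sum of $n$th powers of Weil integers). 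What Theorem~\ref{weil-numbers-semisimple} actually delivers is the signed expansion $|\lV_n|=\sum_i\epsilon_i\alpha_i^n$; the paper's own proof is silent on this point and, read literally, proves the signed statement. So you should simply drop the sign-absorption step and state the conclusion with the signs $\epsilon_i\in\{\pm1\}$ (or supply a separate argument if the unsigned form is really wanted); everything before that step is fine and is the same argument as in the paper.
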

\begin{proof} This follows from
Theorem~\ref{weil-numbers-semisimple} and Proposition~\ref{p:spectral-trace}, taking $W_1=W_2=0$. In this case $d(W_1)=d(W_2)=0$ so the factor of $q^{ d(W_1)/2+ d(W_2)/2}$ may be ignored.
\end{proof}

\begin{proposition}
For every $W:|X|\to \Lambda^+$ of finite support disjoint from $D$,
there exist $q$-Weil integers $\beta_{j}$ of weight $\le 2(\dim G)(g+|D|-1) -2 \dim H + 
d(W)$, such that
\[
q^{n\frac{d(W)}{2}}
\sum_{\Pi\in \lV_n}
\tr_W(\Pi)
=
\sum\nolimits_j \beta_{j}^n,\quad
\text{for every $n\ge 1$.}
\]
\end{proposition}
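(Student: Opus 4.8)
The plan is to reduce this statement to Theorem~\ref{weil-numbers-semisimple} via Proposition~\ref{p:spectral-trace}, exactly as in the proof of Proposition~\ref{p:count-Weil} but now retaining a nontrivial $W$. First I would apply Proposition~\ref{p:spectral-trace} with $W_1 = W$ and $W_2 = 0$. Since $d(W_2) = 0$, this gives
\[
q^{n\frac{d(W)}{2}}\left|H(\mathbb F_{q^n})\right|^2 \sum_{\pi\in\lV_n} \tr_W(\pi)\overline{\tr_0(\pi)}
=
\sum_{g_1,g_2} \frac{\FrK_W(g_1,g_2)\overline{\FrK_0(g_1,g_2)}}{|\Aut_D(g_1)||\Aut_D(g_2)|}.
\]
Now $\tr_0(\pi) = 1$ since $V_0$ is the trivial representation, so the left side is $q^{n d(W)/2}|H(\mathbb F_{q^n})|^2 \sum_{\pi\in\lV_n}\tr_W(\pi)$. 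Dividing by $|H(\mathbb F_{q^n})|^2$, the right side is exactly the quantity controlled by Theorem~\ref{weil-numbers-semisimple} (with $\weights_1 = W$, $\weights_2 = 0$): it equals $\sum_{i=1}^N \epsilon_i\alpha_i^n$ for $q$-Weil integers $\alpha_i$ of weight at most $(\dim G)(g+|D|-1) + \frac{d(W)}{2} + \frac{d(0)}{2} - \dim H = (\dim G)(g+|D|-1) + \frac{d(W)}{2} - \dim H$.

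The remaining issue is the factor $q^{n d(W)/2}$ and the signs $\epsilon_i$. Multiplying $\sum_i \epsilon_i\alpha_i^n$ by $q^{n d(W)/2} = (q^{d(W)/2})^n$ absorbs the power of $q$ into each Weil number: set $\beta_i := \epsilon_i \, q^{d(W)/2}\alpha_i$ if $d(W)$ is even (absorbing the sign as well, so one does not even need $\pm$), and more carefully if $d(W)$ is odd one notes $q^{d(W)/2}\alpha_i$ is still a $q$-Weil number and the sign $\epsilon_i = \pm 1$ can be folded in by allowing $\beta_i$ to be a $q$-Weil integer times $-1$, which is again a $q$-Weil integer. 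The weight of $q^{d(W)/2}\alpha_i$ is at most $(\dim G)(g+|D|-1) - \dim H + d(W)$, which is the asserted bound. One small point to check is that $q^{d(W)/2}\alpha_i$ is an algebraic \emph{integer}, not merely an algebraic number: this is automatic since $q$ is an integer and $d(W) \in \mathbb Z_{\ge 0}$, so $q^{d(W)/2}\alpha_i$ is a product of the algebraic integer $\alpha_i$ with either an integer (if $d(W)$ even) or $\sqrt{q}$ times an integer (if $d(W)$ odd); in the latter case one uses that $\alpha_i$ has all conjugates of absolute value a power of $\sqrt q$, and being a $q$-Weil integer is preserved under multiplication by $q^{1/2}$ only when combined with the integrality already present — here it is cleanest to observe directly that $q^{n d(W)/2}\sum_i\epsilon_i\alpha_i^n$ is, by the left-hand side, a sum over $\pi\in\lV_n$ of algebraic integers in $\mathbb Q(\mu_m)$ times the algebraic integer $q^{nd(W)/2}$ divided by automorphism orders, and one re-runs the Dwork--Borel rationality argument of Theorem~\ref{weil-numbers-semisimple} on the generating function $\sum_n u^n q^{nd(W)/2}\sum_i\epsilon_i\alpha_i^n$ to conclude the $\beta_j$ are $\ell$-adic integers for all $\ell$, hence algebraic integers.

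\textbf{Main obstacle.} The genuinely new content beyond Proposition~\ref{p:count-Weil} is essentially bookkeeping: confirming that the extra factor $q^{n d(W)/2}$ and the signs can be folded into a clean list of $q$-Weil \emph{integers} with the stated weight bound. I expect the only real care needed is in the integrality argument when $d(W)$ is odd, where one should invoke the generating-function / Dwork--Borel step rather than trying to multiply individual Weil numbers by $\sqrt q$ naively; all the analytic input (purity, $H^{>0}_c$ vanishing, uniform convergence) is already packaged in Theorem~\ref{weil-numbers-semisimple} and Lemma~\ref{cohbound-weak}.
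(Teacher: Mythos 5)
Your route is the paper's route---combine Proposition~\ref{p:spectral-trace} (with $W_1=W$, $W_2=0$, so $\tr_0(\pi)=1$) with Theorem~\ref{weil-numbers-semisimple}---but you then double-count the normalizing power of $q$. In your first display you have already moved the factor $(q^n)^{d(W)/2}$ to the left-hand side; after dividing by $|H(\mathbb F_{q^n})|^2$ you therefore hold precisely the identity $q^{n d(W)/2}\sum_{\pi\in\lV_n}\tr_W(\pi)=\sum_i\epsilon_i\alpha_i^n$, with $\alpha_i$ $q$-Weil integers of weight at most $(\dim G)(g+|D|-1)+\tfrac{d(W)}{2}-\dim H$. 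That is already the assertion of the proposition (indeed with a weight bound stronger than the stated $(\dim G)(g+|D|-1)+d(W)-\dim H$), and nothing further remains to be done with powers of $q$. Multiplying once more by $q^{n d(W)/2}$, as you do when you set $\beta_i=q^{d(W)/2}\alpha_i$, produces a quantity equal to $q^{n d(W)}\sum_{\pi}\tr_W(\pi)$ rather than $q^{n d(W)/2}\sum_{\pi}\tr_W(\pi)$, so the identity you end up asserting is off by a factor $q^{n d(W)/2}$; correspondingly your worry about multiplying Weil integers by $\sqrt q$, and the proposed re-run of the Dwork--Borel argument, are moot, since the needed integrality is already contained in the $\alpha_i$ supplied by Theorem~\ref{weil-numbers-semisimple}. (The only genuine mismatch worth a word is the placement of complex conjugation: the theorem involves $\overline{\FrK_{W_1}}\,\FrK_{W_2}$ while Proposition~\ref{p:spectral-trace} produces $\FrK_{W_1}\overline{\FrK_{W_2}}$; either apply the theorem with $\weights_1=0$, $\weights_2=W$, or note that complex conjugation preserves $q$-Weil integers and their weights.)

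A second slip is the treatment of the signs: setting $\beta_i=\epsilon_i\alpha_i$ (or $\epsilon_i q^{d(W)/2}\alpha_i$) gives $\beta_i^n=\epsilon_i^n\alpha_i^n$, which differs from $\epsilon_i\alpha_i^n$ for even $n$, so the $\epsilon_i$ cannot be folded into the Weil integers in the way you describe. What Theorem~\ref{weil-numbers-semisimple} genuinely yields is a \emph{signed} sum $\sum_i\epsilon_i\alpha_i^n$; this is how the paper itself describes these quantities in the introduction, and the same gloss occurs in Proposition~\ref{p:count-Weil}, so the statement should be read (or restated) with signs---but your claimed removal of them is not a valid argument.
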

\begin{proof}This follows from
Theorem~\ref{weil-numbers-semisimple} and Proposition~\ref{p:spectral-trace}, taking $W_1=W$ and $W_2=0$. 
\end{proof}

\subsection{The main theorem}

To prove the main theorem, we shall embed the automorphic representation $\pi$ of 
$G(\A_F)$ in a 
suitable 
automorphic family 
$(\lV_n)_{n\ge 1}$ in 
the $q$-aspect: %
at the place $u$, we shall use the
mgs datum, and at the other ramified places, we shall choose a datum 
with 
trivial character, and with sufficient depth that $\pi$ and its base changes $\Pi_n$ have a 
nonzero invariant vector.

\begin{lemma}\label{moy-prasad-metric} Let $G$ be a reductive group over a local field. 
There is a constant $c$ such that for any two points $x,y$ in the Bruhat--Tits building, 
for all depths $r$, the Moy--Prasad subgroup $G_{x,r}$ contains a conjugate of 
$G_{y,r+c}$. If $G$ is split, we can take $c$ to depend only on the root data of $G$ and 
not on the base field. \end{lemma}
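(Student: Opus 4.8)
The plan is to reduce the statement to a uniform comparison of Moy--Prasad filtrations at two vertices of a single apartment, and then to bound the ``distance penalty'' combinatorially in terms of the root datum. First I would recall the standard description of the Moy--Prasad subgroups: fixing a maximal split torus $S$ with apartment $\mathcal A$ containing a chamber, and writing $\Phi$ for the relative roots, the group $G_{x,r}$ is generated by $S(F)_r$ (the filtration on the torus, which does not depend on $x$) together with the affine root groups $U_{\psi}$ for affine roots $\psi = \alpha + n$ with $\psi(x) \ge r$, i.e.\ $n \ge r - \alpha(x)$. The key observation is that for $x,y$ in the same apartment, $U_{\alpha, n}$ lies in $G_{x,r}$ as soon as $n \ge r - \alpha(x)$, and lies in $G_{y,r'}$ iff $n \ge r' - \alpha(y)$; so $G_{y,r'} \subseteq G_{x,r}$ whenever $r' - \alpha(y) \ge r - \alpha(x)$ for every $\alpha$, that is $r' \ge r + \max_{\alpha \in \Phi}(\alpha(y) - \alpha(x))$, together with the (automatic, $x$-independent) containment on the torus part. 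Since $x,y$ lie in a bounded region controlled by a fundamental domain for the affine Weyl group, the quantity $\max_\alpha(\alpha(y)-\alpha(x))$ is bounded by a constant $c_0$ depending only on $\Phi$ and the diameter of an alcove, hence only on the root datum.

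Next I would remove the same-apartment hypothesis. Given arbitrary $x,y$ in the building, the affine Weyl group (or, more precisely, conjugation by $G(F)$, which acts transitively on apartments and on chambers) lets us move $y$ into the apartment $\mathcal A$ containing $x$: there is $g \in G(F)$ with $gy \in \mathcal A$, and then $g G_{y,r+c} g^{-1} = G_{gy, r+c}$. So it suffices to prove the containment $G_{gy, r+c} \subseteq G_{x,r}$ for two points of the common apartment $\mathcal A$, which is exactly the computation of the previous paragraph with $c = c_0$. This gives the existence of $c$, and the derivation shows $c$ is read off from the relative root system and the geometry of an alcove.

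For the final ``split'' refinement, when $G$ is split over $F$ the relative roots are the absolute roots, the affine root groups are the usual ones $U_\alpha(F)$ filtered by $\mathrm{val}$, and the filtration on the torus is $S(F)_r = \{s : \mathrm{val}(\chi(s)-1) \ge r \text{ for all } \chi\}$ with $S(F)_0 = S(\mathcal O_F)$; none of these depend on the field beyond its valuation, and the alcove and $\Phi$ depend only on the Dynkin diagram and cocharacter lattice. Thus the constant $c_0 = \max_{\alpha \in \Phi} \mathrm{diam}_\alpha(\text{alcove})$ is manifestly independent of the base field, which is what is needed in the application (Lemma~\ref{moy-prasad-metric} is invoked with a uniform bound over the extensions $F_n$). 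The main obstacle I anticipate is purely bookkeeping: getting the inequalities between the real parameters $r$, $r+$, $r+c$ and the discrete affine-root levels $n$ exactly right, including the boundary cases $r^+$ versus $r$, and checking that the torus part of $G_{y,r+c}$ genuinely lands in the torus part of $G_{x,r}$ (it does, since the torus filtration is independent of the point and $r+c \ge r$) — there is no deep input beyond the standard structure theory of Bruhat--Tits, so the proof should be short once the normalization of the filtration is pinned down.
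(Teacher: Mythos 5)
Your proposal is essentially the paper's own argument: conjugate $y$ into the situation of a common apartment, measure the depth shift by $\max_{\alpha}|\alpha(x)-\alpha(y)|$ using the affine-root-group description of $G_{x,r}$, and bound this uniformly by cocompactness of the affine Weyl group action, so that the constant is combinatorial and hence independent of the (split) base field. The only point to tighten is that merely placing $y$ in the apartment of $x$ does not bound $\max_\alpha(\alpha(y)-\alpha(x))$; as in the paper, one must conjugate further by an element inducing an affine Weyl transformation so that the conjugate of $y$ lies within bounded distance of $x$ (e.g.\ in a common closed alcove), which is exactly the ``minimum distance over affine Weyl conjugates'' step you allude to via transitivity on chambers.
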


\begin{proof} After conjugation, we may assume that $x$ and $y$ are contained in the same 
apartment. Define a metric on this apartment where the distance $d(x,y)$ is the max over 
all roots of the absolute value of the difference between the evaluations of the linear 
function associated to this root on $x$ and $y$. Then by construction, it is clear that 
$G_{x,r}$ contains $G_{y, r+ d(x,y)}$. Take $c$ to be the supremum over pairs $x,y$ of the 
minimum distance between $x$ and any conjugate of $y$ under the affine Weyl group action. 
Because this action is cocompact, a finite supremum in fact exists. Because the metric on 
the apartment and the affine Weyl group can be defined combinatorially, $c$ depends only 
on the underlying root data.  \end{proof} 

\begin{lemma}\label{family-existence} Let $G$ be a split semisimple algebraic group. Let 
$F = \mathbb F_q(X)$. Let $\pi$ be an automorphic representation of $G(\A_F)$, mgs at a 
place $u$, with \condbc{}. Then there exists an effective divisor $D$ on $X$, a subgroup 
$H 
\subseteq G \lWR \mathcal O_D \rWR$, and a character sheaf $\mathcal L$ on $H$, that is 
geometrically supercuspidal on $U$, and such that for all $n$, the base change $\Pi_n$ of 
$\pi$ to $ F_n$ is contained in the associated family $\mathcal V_n$. \end{lemma}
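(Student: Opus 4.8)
The plan is to construct $(D,H,\mathcal L)$ by combining the finite set of mgs data at $u$ furnished by \condbc{} with a carefully chosen deep congruence condition at all remaining places where $\pi$ or some $\Pi_n$ ramifies. First I would invoke \condbc{}: by hypothesis $\pi$ is base-changeable, so there is a finite set $\mathcal{S}$ of mgs data at $u$ such that for every $n$ the base change $\Pi_n$ exists, is unramified and Satake-compatible at the unramified places of $\pi$, is mgs with one of the data in $\mathcal{S}$ at each place over $u$, and has depth bounded by some constant $R$ independent of $n$ at all other places. The place $u$ contributes the mgs datum; the issue is to package the other ramified places into a factorizable $H$ with trivial character sheaf so that each $\Pi_n$ has a nonzero invariant vector there.

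The construction of $D$ away from $u$ goes as follows. Let $T$ be the finite set of places of $X$ at which $\pi$ is ramified, other than $u$; note the ramified places of $\Pi_n$ all lie over $T\cup\{u\}$, since base change along a constant field extension does not introduce new ramification. For $x\in T$, the local component $\Pi_{n,y}$ at a place $y\mid x$ has depth $\le R$, hence by the theory of Moy--Prasad types has a nonzero vector fixed by $G(\mathfrak o_y)_{x_0, R+}$ for $x_0$ the hyperspecial point, i.e.\ by the principal congruence subgroup $U_{m}(G(\ko_y))$ for $m$ depending only on $R$ (and the root datum of $G$) via Lemma~\ref{moy-prasad-metric}. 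Here I would use Lemma~\ref{moy-prasad-metric} precisely to get an $m=m(R,G)$, uniform in $n$ and in the choice of hyperspecial point, such that $U_m(G(\ko_y))$ is contained in the relevant Moy--Prasad subgroup up to conjugacy — and since the standard hyperspecial $G(\ko_y)$ is already the one we use, after translating the fixed vector by a suitable element of $G(F_y)$ we may assume it is $U_m(G(\ko_y))$-fixed. Then set $D = m_u\,[u] + \sum_{x\in T} m\,[x]$ where $m_u$ is the modulus of the largest of the finitely many mgs data in $\mathcal{S}$, and define $H \subseteq G\lWR\mathcal O_D\rWR$ to be the product: at $u$, the subgroup $H_u$ of the (single, after possibly enlarging $m_u$ and embedding) mgs datum; at each $x\in T$, the trivial subgroup; with $\mathcal L$ the corresponding character sheaf (the mgs character sheaf at $u$, trivial elsewhere). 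This $H$ is factorizable and $(G,m_u,H_u,\mathcal L_u)$ is geometrically supercuspidal by construction, so the geometric-supercuspidality hypothesis of the main theorems holds at $u\in D$.

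It remains to check that each $\Pi_n$ lies in $\mathcal V_n=\mathcal V_n(G,X,D,H,\mathcal L)$, which by the definition in Section~\ref{s:family} requires: unramified and $G(\ko_y)$-spherical at every $y\in |(X-D)_n|$; at each $y$ over $u$, a vector on which $J_y$ acts by $\chi_y$; and at each $y$ over $x\in T$, a vector fixed by $J_y = U_m(G(\ko_y))$ (since $H_x$ is trivial, $J_y$ is exactly the principal congruence subgroup and $\chi_y$ is trivial). The first holds because $\Pi_n$ is unramified outside $T\cup\{u\}$, hence outside the support of $D_n$. The second holds because $\Pi_{n,y}$ is mgs with one of the data in $\mathcal{S}$, which we arranged to all be captured by $(G,m_u,H_u,\mathcal L_u)$ — here I would expand the single datum to a finite disjoint union if $\mathcal{S}$ has more than one element, noting that Definition~\ref{def:geometric-ind-data} and the family definition are compatible with taking $H_u$ a disjoint union of geometrically supercuspidal pieces, or alternatively choose $m_u$ and $H_u$ large enough that every member of $\mathcal S$ factors through it; this bookkeeping is the one genuinely fiddly point but is routine. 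The third holds by the Moy--Prasad depth bound as above. Finally, multiplicities match because the definition of $\mathcal V_n$ counts each representation with the product of its automorphic multiplicity and the dimensions of the local invariant spaces, and $\Pi_n$ occurring in $\mathcal V_n$ just means this product is positive, which we have shown.

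The main obstacle is the uniformity in $n$ of the depth-to-congruence-subgroup translation: one must ensure the single modulus $m$ works simultaneously for all $y$ over all $x\in T$ and all $n$, including the subtlety that places of $F_n$ over a fixed $x$ may have varying residue degree. This is exactly what Lemma~\ref{moy-prasad-metric} delivers in the split case — $c$, and hence $m$, depends only on the root datum of $G$, not on the residue field — so the obstacle is resolved by that lemma; the remaining work is the combinatorial packaging of the finite set of mgs data at $u$ into a single factorizable datum, which is straightforward.
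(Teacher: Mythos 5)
Your proposal is correct and follows essentially the same route as the paper: extract the mgs datum at $u$ and the uniform depth bound from \condbc{}, convert the depth bound into invariance under a principal congruence subgroup of uniform level via Lemma~\ref{moy-prasad-metric} (with the constant depending only on the root datum, hence uniform in $n$ and in the residue degree), and take $D$ with multiplicity $m_u$ at $u$ and $m$ at the other ramified places, $H=H_u$ with $\mathcal L=\mathcal L_u$ at $u$ and the trivial datum elsewhere. The only divergence is your bookkeeping for the finite set of mgs data: the paper simply fixes a single datum furnished by \condbc{}, which is the cleaner resolution, since your ``disjoint union'' alternative would conflict with the requirement that $H$ be connected (character sheaves and geometric supercuspidality are set up only for connected $H$).
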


\begin{proof} By the definition of \condbc{}, there exists mgs datum 
$(G_{\kappa_u},m_u,H_u,\cL_u)$ such that for all $n$, for all places $u'$ of $F_n$ lying 
over $u$ with local field $E_{u'}$, $\Pi_{n,u'}$ is a quotient of 
$\cind^{G(E_{u'})}_{J_{u,E}} 
\chi_{u,E}$.

Let $S$ be the set of ramified places of $\pi$ other than $u$.  Again by the definition 
of \condbc{}, $\Pi_n$ is unramified outside $S \cup \{u\}$, with a bound on the depth 
inside 
$S$. Let $m$ be some integer greater than this bound on the depth plus the constant of 
Lemma~\ref{moy-prasad-metric}. It follows that for all places $x$ lying over a place in 
$S$,  $\Pi_n$ contains a vector invariant under the depth $m$ subgroup of the standard 
hyperspecial maximal compact, which is the subgroup of elements of $G(\kappa_x[[t]])$ 
congruent to $1$ mod $t^m$.

It follows that if we let $D$ be the divisor of multiplicity $m$ at each point of $S$ and 
multiplicity $m_u$ at $u$, $H = H_u$, and $\mathcal L  = \cL_u$, then $\Pi_n \in 
\mathcal V_n$ for all $n$.

Finally, $(G, D, H, \mathcal L)$ is geometrically supercuspidal at $u$ because 
$(G_{\kappa_u}, m_u, H_u, \cL_u)$ is geometrically supercuspidal. \end{proof} 

To improve the bound of Corollary~\ref{cor:average-ram-one} for this family, and obtain 
the main theorem, we use a variant of the tensor power trick, where bounds for large $n$ 
will imply stronger bounds for small $n$.

\begin{theorem}\label{t:tempered}
 Let $G$ be a split semisimple algebraic group. Assume the characteristic 
of $F$ is not $2$. Let $\pi$ be an automorphic representation of $G(\A_F)$, mgs at a 
place $u$, and satisfying \condbc{}. Let $v$ be a place at which $\pi$ is unramified for 
the 
standard hyperspecial maximal compact subgroup $G(\ko_v)$. Then $\pi$ is tempered at $v$. 
\end{theorem}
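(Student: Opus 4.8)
The plan is to embed $\pi$ in a $q$-aspect family as in Lemma~\ref{family-existence}, apply the average Ramanujan bound of Corollary~\ref{cor:average-ram-one} uniformly over all constant field extensions, and then bootstrap via the tensor power trick and Lemma~\ref{l:recursive-sequence}. First I would invoke Lemma~\ref{family-existence} to produce an effective divisor $D$, a factorizable subgroup $H \subseteq G\lWR \cO_D \rWR$, and a character sheaf $\cL$ on $H$ that is geometrically supercuspidal at $u$, such that for every $n \ge 1$ the base change $\Pi_n$ of $\pi$ to $F_n$ lies in the associated family $\cV_n$. Fix a place $w$ of $F_n$ over $v$; since $v$ is split we may take $w$ of degree $1$, or more simply take $n$ along a subsequence so that $v$ splits completely, so that $\Pi_{n,w}$ is the base change of $\pi_v$ and hence $t_{\Pi_{n,w}} = t_{\pi_v}^n$ as recorded in Section~\ref{s:Satake}. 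Thus $\tr_\lambda(\Pi_{n,w}) = \tr(t_{\pi_v}^n \mid V_\lambda)$.

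Next I would apply Corollary~\ref{cor:average-ram-one} to $\Pi_n \in \cV_n$: it gives $|\tr_\lambda(\Pi_{n,w})|^2 \ll q^{n(\dim G(g+|D|-1) - \dim H)}$ with the implied constant independent of $n$. Writing $d := \dim G(g+|D|-1) - \dim H$, this reads $|\tr(t_{\pi_v}^n \mid V_\lambda)| \ll q^{nd/2}$, uniformly in $n$. Now I apply Lemma~\ref{l:recursive-sequence} with $V = V_\lambda$ and $t$ the image of $t_{\pi_v}$ acting on $V_\lambda$: the $\limsup$ of $|\tr(t^n \mid V_\lambda)|^{1/n}$ is at most $q^{d/2}$, so the spectral radius $\rho$ of $t_{\pi_v}$ acting on $V_\lambda$ satisfies $\rho \le q^{d/2}$. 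This already shows the eigenvalues of $t_{\pi_v}$ on every $V_\lambda$ are bounded, but the exponent $d/2$ is too large; the point is to upgrade it.

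The bootstrap is the key step. Since the bound $\rho \le q^{d/2}$ holds for \emph{every} dominant $\lambda$, I would apply it to the tensor powers: the Satake parameter on $V_\lambda^{\otimes N}$ is $t_{\pi_v}^{\otimes N}$, whose largest eigenvalue is the $N$th power of the largest eigenvalue $\alpha$ of $t_{\pi_v}$ on $V_\lambda$. Decomposing $V_\lambda^{\otimes N}$ into irreducibles $V_\mu$ and applying the bound $|\tr(t_{\pi_v} \mid V_\mu)| \le \dim(V_\mu) q^{d/2}$ to each factor (this uses Lemma~\ref{l:recursive-sequence} to pass from the $\limsup$ bound to a bound for every power, in particular $n=1$), one gets $|\alpha|^N \le \tr(t_{\pi_v}^{\otimes N}\!\cdot\overline{t_{\pi_v}^{\otimes N}} \mid \cdots)^{1/2} \ll_N q^{d/2}$ — more precisely, $|\alpha|^N = |$ leading eigenvalue of $t_{\pi_v}$ on $V_\lambda^{\otimes N}| \le \sum_\mu (\text{mult}) |\tr(t_{\pi_v}\mid V_\mu)| \ll C_N q^{d/2}$ where $C_N$ depends only on $N$ and $\lambda$. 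Taking $N$th roots and letting $N \to \infty$ forces $|\alpha| \le 1$, i.e. $|\tr_\lambda(\pi_v)| \le \dim V_\lambda$ for all $\lambda$, which is exactly temperedness of $\pi_v$ by the criterion recalled after Definition~\ref{def:test-function}… (the criterion in Section~\ref{s:Satake}). I expect the main obstacle to be the bookkeeping in the tensor-power step: one must be careful that the implied constants in Corollary~\ref{cor:average-ram-one}, while independent of $n$, do depend on $\lambda$ (hence on $\mu$ when one enlarges to tensor powers), so the amplification must be arranged so that only finitely many $\mu$ (for fixed $N$) are involved at each stage and the constants do not accumulate in a way that defeats the $N \to \infty$ limit — this is precisely why one first extracts the clean spectral-radius statement $\rho \le q^{d/2}$ for all $\lambda$ from the uniform-in-$n$ bound, and only then runs the $\lambda$-aspect amplification with $n$ fixed.
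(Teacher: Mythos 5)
Your overall route is the paper's own: embed $\pi$ in the family produced by Lemma~\ref{family-existence}, apply the average bound (Theorem~\ref{t:average-Ramanujan}/Corollary~\ref{cor:average-ram-one}) uniformly in $n$, interpret the local trace of the base change at $w\mid v$ as a trace of a power of $t_{\pi_v}$, extract a spectral-radius bound via Lemma~\ref{l:recursive-sequence}, and then remove the $\lambda$-independent constant by amplification in $\lambda$. One bookkeeping slip first: for $w\mid v$ in $F_n$ one has $t_{\Pi_{n,w}} = t_{\pi_v}^{n_1}$ with $n_1 = n/\gcd(n,\deg v)$, not $t_{\pi_v}^{n}$, and neither ``take $w$ of degree $1$'' nor ``take $n$ so that $v$ splits completely'' is available for a general place $v$ (splitness of $G$ has nothing to do with $\deg v$). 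This is harmless: restricting to $n$ divisible by $\deg v$, every exponent $n_1$ still occurs and you get $\rho_\lambda \le q^{\deg(v)\,d/2}$ (your $d$), which is still independent of $\lambda$ and is all the final step needs; the paper instead exploits that the $\gcd(n,\deg v)$ equal factors in the product over $w\mid v$ recover the exponent $n_1$.

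The genuine gap is in your tensor-power step: you bound the leading eigenvalue $|\alpha|^N$ of $t_{\pi_v}$ on $V_\lambda^{\otimes N}$ by $\sum_\mu (\mathrm{mult}_\mu)\,|\tr(t_{\pi_v}\mid V_\mu)|$. That inequality is false in general: the trace of $t_{\pi_v}$ on a constituent $V_\mu$ can vanish by cancellation among eigenvalues of large modulus, so bounds on $|\tr(t\mid V_\mu)|$ at the single exponent $n=1$ do not bound eigenvalues — this is precisely the difficulty that forces the detour through traces of \emph{all} powers and Lemma~\ref{l:recursive-sequence} in the first place. The repair is immediate from what you already proved: you have $\rho_\mu \le q^{d/2}$ (or $q^{\deg(v)d/2}$) for \emph{every} dominant $\mu$, uniformly in $\mu$; since the spectral radius of $t_{\pi_v}$ on $V_\lambda^{\otimes N}$ equals $\rho_\lambda^N$ and also equals $\max_\mu \rho_\mu$ over the finitely many constituents, $\rho_\lambda^N \le q^{d/2}$ for all $N$, hence $\rho_\lambda \le 1$ and $|\tr_\lambda(\pi_v)| \le \dim V_\lambda$ for all $\lambda$ (in particular your worry about accumulating constants disappears, as no dimensions or multiplicities enter). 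The paper sidesteps this entirely by citing the criterion recalled in \S\ref{sub:Satake}, that $|\tr_\lambda(\pi_v)| \le C\dim V_\lambda$ for all $\lambda$ with $C$ independent of $\lambda$ already implies temperedness.
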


\begin{proof} 
Let $\lambda\in \Lambda^+$ be a dominant weight.
We apply Corollary~\ref{cor:average-ram-one} to the family produced by 
Lemma~\ref{family-existence} to obtain that 
\[
\prod_{w|v}
|\tr_\lambda(\Pi_{n,w})|^2
\ll 
 (q^n) ^{ 
(\dim G )  (g+ |D| -1 ) - \dim H }. 
\]

Let $n_0:=\operatorname{gcd}(n,[\kappa_v:k])$, and $n_1:=n/n_0$.
All the places $w|v$ have 
isomorphic residue field $\kappa_w$, with $[\kappa_w:\kappa_v]=n_1$, and by the 
definition of base change, they have 
the same Satake parameter. 
So all of the $n_0$ terms in the above product are equal to each other, and we deduce 
\[
|\tr_\lambda(\Pi_{n,w}) |\ll 
(q^{n_1})^{ \left( (\dim G ) (g+ |D| -1 )  - \dim H \right)/2}.
\]

Let $t_{\pi_v}$ be the Satake parameter of $\pi_v$.
Then the Satake parameter of $\Pi_{n,w}$ is equal to $t_{\pi_v}^{n_1}$, hence
$\tr_\lambda(\Pi_{n,w}) = \tr(t_{\pi_v}^{n_1}|V_\lambda)$.
Because all $n_1\ge 1$ arise for some $n$ (specifically for $n=[\kappa_v:k]n_1$),
Lemma~\ref{l:recursive-sequence} implies that we have the improved inequalities
\[
|\tr(t_{\pi_v}^{n_1}|V_\lambda)|
\le
\dim V_\lambda
\cdot
(q^{n_1})^{ \left( (\dim G ) (g+ |D| -1 )  - \dim H \right)/2}.
\]
In particular for $n_1=1$,
\[
|\tr_\lambda(\pi_v)|
\le
\dim V_\lambda
\cdot
q^{ \left( (\dim G ) (g+ |D| -1 )  - \dim H \right)/2}.
\]

Since the inequality holds for every $\lambda\in \Lambda^+$, we deduce by 
Proposition~\ref{p:tempered} 
that in fact 
$|\tr_\lambda(\pi_v)|\le \dim V_\lambda$, and $\pi_v$ is tempered. \end{proof}

\begin{remark} A close analogue of the argument may be found in the 
Bombieri--Stepanov 
proof of the Riemann hypothesis for curves over finite fields. Weil's proof for a curve 
$C$ of genus $g$ over $\mathbb F_q$ immediately proves in one stroke the Riemann bound 
$|\#C(\mathbb F_q)  - q -1| \leq 2 g \sqrt{q}$. The proof of Bombieri--Stepanov, say in the 
special case of a Galois cover of $\mathbb P^1$, involves more steps. One first deduces an 
estimate $\#C(\mathbb 
F_q)  \leq 1 + q + (2g+1)\sqrt{q}$, then by applying this bound to twists of $C$, obtains $\#C(\mathbb F_q)  - q -1 \geq 1 + q - O( (2g+1)) \sqrt{q})$, with a constant depending on the order of the Galois group. To improve the constant 
from $O(2g+1)$ to the correct value $2g$, it is necessary to use the rationality of the 
zeta function. From the estimate for $\#C(\mathbb F_{q^n})$ for $n$ large, one deduces the 
sharp bound for the zeroes of the zeta function and thus a sharp bound for the number of 
points.

Our method closely follows the strategy of the last deduction. Instead of the zeroes of the zeta function, we are attempting to bound the eigenvalues of the Satake parameter. Instead of using the rationality of the zeta function, we use cyclic base change to compare the Satake eigenvalues for the base changed automorphic form to the Satake eigenvalues of the original form. The main difference is that, while the bound 
$(2g+1)\sqrt{q}$ is sufficient for most practical purposes, the constant factor which we 
amplify away is ineffective, and would render the estimate useless in the $\lambda$ 
aspect if not dealt with.
\end{remark}

\begin{remark} We compare our use of the tensor power trick to Rankin's trick. In both 
cases, some special case 
of functoriality is used to amplify a weaker bound into a stronger one. The needed 
functoriality is rather weak in our case, where it is cyclic base change.
However, our argument and Rankin's trick are different in one crucial respect, other than 
the different versions of functoriality applied. Rankin's trick produces an improvement 
in the dependence on $q$ in the bound. Speaking geometrically, we may refer to it as an 
improvement of the weight. In our method, however, the weight is fixed as $q$ varies 
(unsurprising as it arises geometrically as the weight of a cohomology group), and is not 
improved directly. Instead, we pass to the large $q^n$ limit to handle a constant term 
independent of $q$. 
\end{remark}

\subsection{Hecke eigenvalues are Weil numbers}
 We establish the following 
strengthening of the previous Theorem~\ref{t:tempered}. Assumptions are as before.
\begin{theorem}\label{t:individual-sum-Weil}
 For every $\lambda\in 
\Lambda^+$, the trace  
$q^{\langle \lambda,\rho \rangle} \tr_\lambda(\pi_v)$ of the $\lambda$-Hecke operator is 
a sum of length 
$\dim(V_\lambda)$ of $q$-Weil integers 
of weight $\langle \lambda, 2\rho \rangle$. 
\end{theorem}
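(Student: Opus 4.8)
\textbf{Proof plan for Theorem~\ref{t:individual-sum-Weil}.} The plan is to bootstrap the information already extracted from the cohomology of $\Bun_{G(D)}$, but rather than using the inequality $|\tr_\lambda(\pi_v)|^2 \le \sum_{\pi'\in\lV_n}\prod_{w|v}|\tr_\lambda(\pi'_w)|^2$ as a bound, to use the exact identity from Theorem~\ref{weil-numbers-semisimple} together with the tensor-power amplification of Theorem~\ref{t:tempered} to pin down the individual Hecke eigenvalue. First I would embed $\pi$ into the family $\lV=\lV(G,X,D,H,\cL)$ produced by Lemma~\ref{family-existence}, so that for every $n$ the base change $\Pi_n$ lies in $\lV_n$, with $\Pi_n$ unramified at the places $w\mid v$ and Satake parameter $t(\pi_v)^{n_1}$ where $n_1 = n/\gcd(n,[\kappa_v:k])$. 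By Theorem~\ref{t:tempered} we already know $\pi_v$ is tempered at $v$, so $t(\pi_v)$ is a compact element; the point now is integrality and weight, not size.

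Next I would combine Proposition~\ref{p:spectral-trace} (with $W_1 = W$ supported at $v$ with value $\lambda$ and $W_2 = 0$) with the Weil-number statement of Theorem~\ref{weil-numbers-semisimple}: the quantity $q^{n d(W)/2}\sum_{\pi'\in\lV_n}\tr_W(\pi')$ is a signed sum $\sum_j \epsilon_j \beta_j^n$ of $n$th powers of $q$-Weil integers $\beta_j$ of weight $\le (\dim G)(g+|D|-1) - \dim H + d(W)$. The idea is to isolate the contribution of $\pi$ itself from this global sum. This is where the argument needs care: the left-hand side is a sum over all of $\lV_n$, not just over $\Pi_n$. I would handle this by working instead with the full matrix of Hecke operators — that is, by varying $W_1$ over all finitely-supported functions into $\Lambda^+$ and using the resulting system of identities, or equivalently by diagonalizing. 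Concretely, the operator $\ast f^W_v$ acts on the $(J_y,\chi_y)$-isotypic space of cusp forms, and the generating function $\sum_n u^n \cdot (\text{trace of this operator over }\lV_n)$ is, by the Dwork--Borel rationality argument already used in the proof of Theorem~\ref{weil-numbers-semisimple}, a rational function whose poles are inverses of $q$-Weil integers. Applying this simultaneously for $W_v = \lambda$ and $W_v$ running over a spanning set of representations of $\widehat G$, and using that $\tr_\lambda(\Pi_{n,w}) = \tr(t(\pi_v)^{n_1}\mid V_\lambda) = \sum_{i=1}^{\dim V_\lambda}\mu_i^{n_1}$ for $\mu_1,\dots,\mu_{\dim V_\lambda}$ the eigenvalues of $t(\pi_v)$ on $V_\lambda$, one gets that each $q^{\langle\lambda,\rho\rangle/[\kappa_v:k]}$-power combination of the $\mu_i$ appears among the $\beta_j$, forcing each $q^{\langle\lambda,\rho\rangle}\mu_i^{[\kappa_v:k]}$ (and hence, refining over all $n$, each $q^{\langle\lambda,\rho\rangle/\text{something}}\mu_i$) to be a $q$-Weil integer. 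Combined with temperedness, which fixes $|\mu_i| = 1$ and hence forces the weight of $q^{\langle\lambda,\rho\rangle}\mu_i$ to be exactly $\langle\lambda,2\rho\rangle$, this yields the claim: $q^{\langle\lambda,\rho\rangle}\tr_\lambda(\pi_v) = \sum_{i=1}^{\dim V_\lambda} q^{\langle\lambda,\rho\rangle}\mu_i$ is a sum of $\dim V_\lambda$ many $q$-Weil integers of weight $\langle\lambda,2\rho\rangle$.

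The main obstacle I anticipate is the separation step: extracting the contribution of a single automorphic representation $\pi$ from the family-wide sum of Weil numbers, cleanly enough to conclude that the individual $q^{\langle\lambda,\rho\rangle}\mu_i$ are $q$-Weil integers rather than merely $q$-Weil numbers. The rationality input from Dwork's theorem gives algebraicity and $\ell$-adic integrality of the $\beta_j$ for every $\ell$, hence that they are algebraic integers; the issue is routing this through the spectral decomposition so that it lands on the eigenvalues $\mu_i$ of $t(\pi_v)$ individually. I expect this to work because the Hecke eigenvalue system $\lambda \mapsto \tr_\lambda(\pi_v)$ together with the base-change relation $t(\Pi_{n,w}) = t(\pi_v)^{n_1}$ determines the $\mu_i$ as algebraic functions of finitely many of the $\beta_j$, and the integrality of the $\beta_j$ propagates; but making this precise, especially controlling the field of definition uniformly in $n$ and $\lambda$, is the delicate part. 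The weight bound and temperedness, by contrast, are immediate from Theorem~\ref{t:tempered} and the purity already established in Lemma~\ref{purity-semisimple}.
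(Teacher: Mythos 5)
Your plan has a genuine gap at exactly the step you flag, and the gap is not repairable by the tools you invoke. The identity of Proposition~\ref{p:spectral-trace} combined with Theorem~\ref{weil-numbers-semisimple} expresses the normalized family-wide quantity $q^{nd(W)/2}\sum_{\Pi\in\lV_n}\tr_W(\Pi)$, for all $n$ simultaneously, as a finite signed sum of $n$th powers of $q$-Weil integers $\beta_j$; but the spectral side is \emph{not} a finite exponential sum indexed by a fixed set of automorphic representations over $F$. The multisets $\lV_n$ change with $n$ (new forms appear over constant field extensions; they are not just the base changes $\Pi_n$ of the members of $\lV_1$), and even the contribution of $\Pi_n$ itself, namely $\prod_{w\mid v}\tr_\lambda(\Pi_{n,w})=\tr(t(\pi_v)^{n_1}\mid V_\lambda)^{n_0}$ with $n_0=\gcd(n,[\kappa_v:k])$, is not of the form $\gamma^n$ for a fixed $\gamma$. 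So there is no diagonalization matching individual members of the family to subsets of the $\beta_j$, and no argument that $q^{\langle\lambda,\rho\rangle}\mu_i$ (or any fixed power of it) occurs among the $\beta_j$; cancellation among the signs $\epsilon_j$ makes even a multiplicity statement delicate. Dwork rationality enters Theorem~\ref{weil-numbers-semisimple} only on the geometric side and gives you no access to single spectral terms.

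The deeper missing idea is how to control \emph{all} complex embeddings. Your only archimedean input is Theorem~\ref{t:tempered}, which gives $|\mu_i|=1$ for one fixed embedding of the coefficient field into $\C$; a $q$-Weil number of weight $\langle\lambda,2\rho\rangle$ must have absolute value $q^{\langle\lambda,\rho\rangle}$ under \emph{every} embedding. The paper's proof supplies exactly this: the Hecke eigenvalues are algebraic because cusp forms with the prescribed local conditions have finite support, and for any automorphism of $\C$ the conjugated eigensystem comes from another automorphic representation which is again mgs at $u$ (possibly with a different mgs datum) and again satisfies \condbc{}, so Theorem~\ref{t:tempered} applies to the conjugate and yields $|\sigma(\mu_i)|=1$ for every embedding $\sigma$; integrality is then imported either from Lafforgue's $p$-adic valuation estimates or from Lemma~\ref{integrality-trace-semisimple} by varying $\lambda$, not by routing through the $\beta_j$. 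Unless you can actually prove your separation claim (which would be a substantial new argument), your proposal establishes neither the equal-absolute-value property under all embeddings nor integrality, i.e.\ not the Weil-integer statement.
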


\begin{proof}
Hecke eigenvalues are algebraic numbers because of the finiteness of the support of 
cuspidal automorphic functions with prescribed local conditions.
Next we will prove that the Hecke eigenvalues have size $q^{\langle \lambda,\rho 
\rangle}$ for every 
embedding of the coefficient field into $\C$. Every embedding comes from another 
automorphic form satisfying the same assumptions, possibly with a different mgs datum. 
Indeed the local mgs condition at $u$ is preserved under $\Aut(\C)$, and also the global 
\condbc. Thus the previous Theorem~\ref{t:tempered} applies.
Finally the integrality follows either
from~\cite[Prop.2.1]{Lafforgue:valuations-padiques}, or from 
Lemma~\ref{integrality-trace-semisimple} by varying $\lambda\in \Lambda^+$. 
\end{proof}

\begin{example}
Consider the rigid automorphic sheaves constructed 
in~\cite{HNY:Kloosterman,Yun:epipelagic}.
The \condbc{} is satisfied because the trace function over each finite extension 
$\F_{q^n}$ defines an automorphic function that generates a corresponding automorphic 
representation (see Remark~\ref{r:eigensheaf}). We have seen in Section~\ref{sub:epipelagic} that 
epipelagic representations are 
mgs. Thus Theorem~\ref{t:tempered} applies, and the temperedness is consistent with the 
results of \emph{loc. cit.}, indeed the construction of $\ell$-adic sheaves on 
$\PP^1_{\backslash \{0,\infty\}}$ 
that 
generalize Kloosterman sums. The conclusion of Theorem~\ref{t:individual-sum-Weil} on 
integrality is also consistent with \emph{loc.~cit.}, precisely, it follows 
from~\cite[(5.8)]{HNY:Kloosterman}, which explicates $\operatorname{Kl}^{V_\lambda}$ as 
an exponential sum, and because each of the Kummer, 
Artin--Schreier, and IC sheaves is integral. This is analogous to 
Lemma~\ref{integrality-trace-semisimple}.
\end{example}

\section{Relationship with Lafforgue--Langlands parameters and Arthur 
parameters}
\label{s:Lafforgue}

In this section, we will describe a potential approach to provide a different proof of 
the main theorem of this paper, using V. Lafforgue's Langlands parameterization, the 
Lafforgue--Genestier semisimplified local Langlands parameterization, and some 
conjectural 
explicit calculations with that parameterization.  We will then express the same 
strategy, or a very similar strategy, in the language of Arthur parameters, and again 
without direct reference to parameters of any kind, using only the notion of 
two representations being in the same $L$-packet. 

The starting point of all three approaches will be a guess about the Langlands parameters 
of mgs representations. We can verify this conjecture in the $\GL_r$ case, where the 
local 
Langlands correspondence is known by results of Laumon--Rapoport--Stuhler, and 
Henniart--Lemaire~\cite{Henniart-Lemaire:changement-base}.

\begin{proposition}\label{p:gln-irreducible} Let $F_u$ be a non-archimedean local field 
and let 
$\pi_u$ be a mgs 
representation of 
$\GL_r(F_u)$. Then its local Langlands parameter $\sigma_u: W_{F_u} \to \GL_r( 
\Ql)$ is irreducible when restricted to the inertia group of $F_u$. 
\end{proposition}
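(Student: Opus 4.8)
The plan is to exploit the characteristic property of mgs representations --- that they stay supercuspidal after every unramified base change of the local field --- and feed this into the local Langlands correspondence for $GL_n$ and its compatibility with base change. First I would record that an mgs representation is in particular supercuspidal (Corollary~\ref{c:vanishing-Jmod}), so by the local Langlands correspondence for $GL_n$ over a function field (Laumon--Rapoport--Stuhler, Henniart--Lemaire~\cite{Henniart-Lemaire:changement-base}) the parameter $\sigma_u$ is an irreducible continuous $n$-dimensional $\Ql$-representation of $W_{F_u}$. Since $\ell$ is prime to the residue characteristic of $F_u$, Grothendieck's $\ell$-adic monodromy theorem together with irreducibility forces $\sigma_u$ to have finite image on the inertia subgroup $I_{F_u}$; hence $\sigma_u|_{I_{F_u}}$ is semisimple and ordinary Clifford theory applies to the normal subgroup $I_{F_u}\triangleleft W_{F_u}$, whose quotient $W_{F_u}/I_{F_u}\cong\Z$ is topologically generated by a Frobenius. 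The strategy is then to prove irreducibility of $\sigma_u|_{W_{E}}$ for every finite unramified extension $E/F_u$ and descend to $I_{F_u}=\bigcap_E W_E$.

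The heart of the argument is that for every finite unramified extension $E_d/F_u$ of degree $d$ the base change $\mathrm{BC}_{E_d/F_u}(\pi_u)$ is again supercuspidal, whence $\sigma_u|_{W_{E_d}}$ is irreducible by compatibility of the local Langlands correspondence with base change. Fixing mgs data $(GL_n,m,H,\mathcal L)$ for $\pi_u$, Lemma~\ref{checking-supercuspidality} shows that the base-changed datum over the residue extension is again geometrically supercuspidal, so $\operatorname{c-Ind}_{J_{E_d}}^{GL_n(E_d)}\chi_{E_d}$ has vanishing Jacquet modules; by Frobenius reciprocity (Lemma~\ref{l:Frob-reciprocity}) every irreducible representation of $GL_n(E_d)$ admitting a nonzero $(J_{E_d},\chi_{E_d})$-eigenvector is therefore supercuspidal. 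It thus suffices to check that $\mathrm{BC}_{E_d/F_u}(\pi_u)$ carries such an eigenvector, and this is where Theorem~\ref{kottwitz-stable} enters: the matching of the (stable, twisted) orbital integrals of the mgs test functions $f_\chi$ on $GL_n(F_u)$ and $f_{\chi_{E_d}}$ on $GL_n(E_d)$, inserted into the comparison of the trace formula and the twisted trace formula underlying cyclic base change for $GL_n$, yields the spectral identity $\tr\,\mathrm{BC}_{E_d/F_u}(\pi_u)(f_{\chi_{E_d}};\theta)=\tr\,\pi_u(f_\chi)$; since $\pi_u$ has a $(J,\chi)$-eigenvector the right-hand side is nonzero, so $\mathrm{BC}_{E_d/F_u}(\pi_u)$ has a $(J_{E_d},\chi_{E_d})$-eigenvector.

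It then remains to run the Clifford descent. Suppose $\sigma_u|_{I_{F_u}}$ were reducible and let $V_1,\dots,V_s$ be its isotypic components; a Frobenius permutes them transitively, so the action factors through $\Z/s$ and $\operatorname{Frob}^s$ stabilizes each $V_i$, whence $V_1$ is a proper nonzero $W_{E_s}$-subrepresentation --- contradicting irreducibility of $\sigma_u|_{W_{E_s}}$ unless $s=1$. If $s=1$ with multiplicity $e$, the irreducible $I_{F_u}$-constituent $\tau$ is $W_{F_u}$-stable and extends to a representation $\widetilde\tau$ of $W_{F_u}$, the obstruction lying in $H^2(W_{F_u}/I_{F_u},\Ql^\times)=H^2(\Z,\Ql^\times)=0$; then $\sigma_u\cong V\otimes\widetilde\tau$ with $V=\operatorname{Hom}_{I_{F_u}}(\tau,\sigma_u)$ an unramified representation of dimension $e$, and irreducibility of $\sigma_u$ together with commutativity of $W_{F_u}/I_{F_u}$ forces $e=1$. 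Hence $\sigma_u|_{I_{F_u}}$ is irreducible.

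The main obstacle is the spectral identity used in the second paragraph: turning the orbital-integral matching of Section~\ref{s:kottwitz} into the statement that the standard base change lift of $\pi_u$ carries a $(J_{E_d},\chi_{E_d})$-eigenvector. This requires invoking enough of the established theory of cyclic base change for $GL_n$ over function fields --- the twisted character identities, obtained via the twisted simple trace formula or the explicit local classification of \cite{Henniart-Lemaire:changement-base} --- to know that this lift is indeed computed by the transfers of Theorem~\ref{kottwitz-stable}; a globalization, embedding $\pi_u$ as a local component of a global cuspidal representation and base changing globally, would serve the same purpose. An alternative route bypassing base change entirely would be to use the Bushnell--Henniart ramification-theoretic description of the correspondence for $GL_n$ to show directly that an mgs representation has totally wildly ramified --- in particular inertially irreducible --- parameter, but that would demand a considerably longer detour through explicit type theory.
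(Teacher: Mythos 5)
Your proposal is correct and follows essentially the same route as the paper: the key content in both is that unramified base change of an mgs representation remains supercuspidal with compatible datum (via the orbital-integral matching of Theorem~\ref{kottwitz-stable} together with the base-change results of \cite{Henniart-Lemaire:changement-base}, which is exactly how the paper discharges the ``obstacle'' you flag), and that the parameter of the base change is $\sigma_u|_{W_{E}}$. The only difference is the elementary endgame, where you run a Clifford-theoretic argument (isotypic components permuted by Frobenius, then an extension-and-tensor decomposition), while the paper instead uses finiteness of $\sigma_u(I_{F_u})$ to find $m$ with $\Frob_u^m$ central in $\sigma_u(W_{F_u'})$ and applies Schur's lemma; both finishes are valid.
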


\begin{proof} 
For each unramified extension $F_u'$ of $F_u$, let $\pi'_u$ be the base change 
representation of $\pi_u$.
It follows from \cite[Prop.II.2.9]{Henniart-Lemaire:changement-base}, 
\cite[Prop.II.5.15.2]{Henniart-Lemaire:changement-base}, and the orbital integral 
identity in Theorem~\ref{kottwitz-stable} that $\pi'_u$ is an mgs representation, with 
datum compatible with that of $\pi_u$.
In particular $\pi'_u$ is supercuspidal.

It is established in~\cite[Thm.IV.1.5]{Henniart-Lemaire:changement-base} that the 
Langlands parameter of $\pi'_u$ is the restriction of the 
Langlands parameter $\sigma_u$ to $W_{F_u'}$.
Since $\pi'_u$ is supercuspidal, we have that $\sigma_u$ restricts to an 
irreducible $W_{F_u'}$ representation.

Because $\sigma_u(I_{F_u})$ is a finite group, the action of $\sigma_u(\Frob_u)$ on it by 
conjugation has finite order $m$. Let $F_u'$ be an unramified extension of $F_u$ of 
degree $m$. Then $\sigma_u(W_{F_u'})$ is generated by $\sigma_u(I_{F_u})$ and the $m$th 
power of $\Frob_u$, which commutes with it. Hence $\sigma_u( \Frob_u^m)$ lies in the 
center of $\sigma_u(W_{F_u'})$, which acts irreducibly, so $\sigma_u( \Frob_u^m)$ is 
a scalar, and hence $\sigma_u(I_{F_u})$ acts irreducibly, as desired. \end{proof}

To conjecturally apply this to general groups, and use it to verify Ramanujan, we use the 
work of V. Lafforgue and Genestier--Lafforgue on the Langlands correspondence over 
function fields, which we now review:
Recall that $D$ is an effective divisor on $X$, and $\KK$ is 
the compact subgroup of the adelic points $G(\A_F)$ of the split semisimple $G$ consisting 
at each 
place of local 
sections of the group scheme congruent to the identity modulo $D$.

Lafforgue~\cite{Lafforgue:reductifs-chtoucas} defines a $\cC_c( \KK \backslash G( \mathbb 
A_F) / \KK,\Ql)$-module 
decomposition 
of  $\cC^{\rm cusp}_c( \Bun_{G(D)}(\mathbb F_q), \Ql)$ indexed by continuous semisimple 
representations $\sigma:  \Gal(\overline{F} / F) \to \widehat{G} (\Ql)$, unramified away 
from $D$. 
Since $\pi^{\KK}$ is irreducible and nonzero, it appears inside a module of this decomposition.

Letting $\iota$ be an embedding $\Ql \to \mathbb C$, we say a continuous representation of
$\Gal(\overline{F} / F)$ is \emph{$\iota$-pure} of weight $w$ if for each unramified place $v$, 
the image by $\iota$ of the eigenvalues of $\Frob_{v}$ on the representation are complex 
numbers of norm $|\kappa_v|^{\frac w2}$. We say that a representation is \emph{$\iota$-mixed} if 
it 
has a filtration whose associated graded components are $\iota$-pure of 
increasing weights. All representations $\sigma$ appearing in the above decomposition, 
composed with any representation of $\widehat{G}$, are $\iota$-mixed. (In fact 
by~\cite{Lafforgue:chtoucas} this is known for any representation, but it has a direct 
proof in this case.)

Genestier--Lafforgue~\cite{Genestier-Lafforgue} define for each local representation 
$\pi_u$ a semisimple 
representation $\sigma_{\pi_u}: \Gal( \overline{F_u} / F_u) \to \widehat{G} (\Ql)$, which 
satisfies the following compatibility condition: Whenever $\pi^{\KK}$ appears as an 
irreducible $\cC_c( \KK \backslash G( \mathbb A_F) / \KK,\Ql)$-module inside the summand of \\
$\cC^{\rm cusp}_c( \Bun_{G(D)}(\mathbb F_q), \Ql)$ indexed by a representation $\sigma:  \Gal(\overline{F} 
/ F) \to \widehat{G} (\Ql)$, 
the 
semisimplification of the restriction of $\sigma$ to $\Gal( \overline{F_u} / F_u) $ is 
equal to $\sigma_{\pi_u}$.

The key conjecture, which is expected to generalize Proposition~\ref{p:gln-irreducible}, 
is as follows.
In the case of an epipelagic representation $\pi_u$, it is consistent with the 
conjectures 
of~\cite[\S7.1]{Reeder-Yu:epipelagic}, in which the assertion is expressed in the form 
$\widehat{\mathfrak{g}}^{\sigma_{\pi_u}(I_{F_u})}=0$.
\begin{conjecture}\label{inertial-parameters} For $\pi_u$ a mgs representation, the image 
of the inertia subgroup $I_{F_u}$ of $\Gal( \overline{F_u} / F_u) $ under the parameter 
$\sigma_{\pi_u}$ is not contained in any proper parabolic 
subgroup of $\widehat{G}( \Ql)$. 
\end{conjecture}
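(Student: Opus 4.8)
\emph{Sketch of a possible approach.} The plan is to follow the proof of Proposition~\ref{p:gln-irreducible}, with the role played for $GL_n$ by the Laumon--Rapoport--Stuhler and Henniart--Lemaire local Langlands correspondence taken over by the Genestier--Lafforgue parameterization and some of its expected properties. The first step is to show that for every finite unramified extension $F_u'$ of $F_u$, the base change $\pi_u'$ of the mgs representation $\pi_u$ is again mgs, with a datum compatible with that of $\pi_u$; in particular $\pi_u'$ is supercuspidal for all unramified $F_u'$. For $GL_n$ this combines \cite{Henniart-Lemaire:changement-base} with the orbital integral identity of Theorem~\ref{kottwitz-stable}; in general, that same identity, applied to the test function $f_\chi$ which detects the $(J,\chi)$-condition, reduces the statement to the stabilization of the local (twisted) trace formula for unramified cyclic base change, which is the local content of \condbc{}. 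The second step is to invoke the expected compatibility of the Genestier--Lafforgue correspondence with unramified base change, $\sigma_{\pi_u'}=(\sigma_{\pi_u}|_{W_{F_u'}})^{\mathrm{ss}}$; since $\sigma_{\pi_u}$ is semisimple as a $\widehat G$-valued parameter and the restriction of a semisimple parameter to a finite-index subgroup remains semisimple, this reads $\sigma_{\pi_u'}=\sigma_{\pi_u}|_{W_{F_u'}}$, so that $\sigma_{\pi_u}|_{W_{F_u'}}$ is the parameter of a supercuspidal representation for every unramified $F_u'$.

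The third step is a group-theoretic amplification, replacing ``irreducible on inertia'' by ``not contained in a proper parabolic on inertia''. Suppose, for contradiction, that $\sigma_{\pi_u}(I_{F_u})$ is contained in a proper parabolic $\widehat P\subset\widehat G$. As the image of inertia under a semisimple parameter is finite, hence linearly reductive in characteristic zero, it is conjugate into a Levi factor, so after conjugating $\sigma_{\pi_u}$ we may assume $\sigma_{\pi_u}(I_{F_u})\subseteq\widehat M$ for a proper Levi $\widehat M$. Put $\widehat C:=Z_{\widehat G}(\sigma_{\pi_u}(I_{F_u}))^{\circ}$; it is connected reductive and contains $Z(\widehat M)^{\circ}\neq 1$. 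Since $I_{F_u}$ is normal in $W_{F_u}$, conjugation by $\sigma_{\pi_u}(\Frob)$ preserves $\sigma_{\pi_u}(I_{F_u})$, hence $\widehat C$; as $\mathrm{Out}(\widehat C)$ is finite, after passing to an unramified extension $F_1/F_u$ and multiplying by a suitable element of $\widehat C$ one gets $\sigma_{\pi_u}(W_{F_1})\subseteq\widehat C\cdot Z_{\widehat G}(\widehat C)$. Because $\widehat G$ is semisimple, $\widehat C\cdot Z_{\widehat G}(\widehat C)$ is a proper subgroup of $\widehat G$, except in the degenerate case where $\sigma_{\pi_u}(I_{F_u})$ has image in the center of some quasi-simple factor of $\widehat G$; that case would force a quasi-simple factor $\pi_{u,j}$ of $\pi_u$ to be unramified, which is excluded by Lemma~\ref{l:mgs-product} and the positivity of the depth of an mgs representation. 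Hence $\sigma_{\pi_u}|_{W_{F_1}}$ would be contained in a proper parabolic, contradicting the expectation that the parameter of a supercuspidal representation is not.

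The hard part is precisely that last expectation: that the Genestier--Lafforgue parameter of a supercuspidal (indeed, mgs) representation is not contained in any proper parabolic of $\widehat G$ --- the general form of the irreducibility of the parameters of supercuspidals of $GL_n$. In the $GL_n$ case it is available. For a general split semisimple $G$ it is not yet known; it would follow from the compatibility of the Genestier--Lafforgue correspondence with parabolic descent (a supercuspidal not being a subquotient of a properly parabolically induced representation) together with the exclusion of unipotent supercuspidal types --- which is exactly where the mgs hypothesis is essential, since this last statement fails for unipotent supercuspidals. Alternatively, and independently of the base-change input of the first step, one could try to compute $\sigma_{\pi_u}|_{I_{F_u}}$ directly from the geometric supercuspidal datum $(G,m,H,\mathcal L)$ and read off that its image is not contained in a proper parabolic; this is the kind of explicit calculation with the Genestier--Lafforgue correspondence alluded to above, and it is where the substantive new work would lie. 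A secondary obstacle is making the base-change step unconditional for groups other than $GL_n$.
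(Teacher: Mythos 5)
The statement you were asked about is stated in the paper as a conjecture: the paper offers no proof of it, only supporting evidence, namely the $GL_n$ case (Proposition~\ref{p:gln-irreducible}), which is proved via Henniart--Lemaire's unramified base change and the known local Langlands correspondence for $GL_n$, together with the orbital integral identity of Theorem~\ref{kottwitz-stable}. Your first two steps essentially reproduce that evidence in conjectural form for general $\widehat G$, and you are candid that the decisive inputs --- compatibility of the Genestier--Lafforgue parameterization with unramified base change, an unconditional local base change transfer for mgs data beyond $GL_n$, and above all the property that the parameter of a supercuspidal representation is not contained in a proper parabolic of $\widehat G$ --- are not available. So your proposal is not a proof, and it could not be measured against one, since none exists in the paper; as a statement of strategy it is reasonable and consistent with the paper's own framing (the paper likewise points to Theorem~\ref{kottwitz-stable} and to explicit computation with the local parameterization as the routes forward). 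Note also that your key unproven input (supercuspidal parameters avoid proper parabolics) is very close to the conjecture itself, so the reduction buys less than it may appear to.

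Beyond the openly conditional inputs, the one step you do argue in detail --- the amplification from ``the Weil-group image avoids proper parabolics'' to ``the inertia image avoids proper parabolics'' --- has a concrete gap. After arranging $\sigma_{\pi_u}(W_{F_1})\subseteq \widehat C\cdot Z_{\widehat G}(\widehat C)$ with $\widehat C=Z_{\widehat G}(\sigma_{\pi_u}(I_{F_u}))^{\circ}$, you conclude that since this is a proper subgroup of the semisimple group $\widehat G$, the image of $W_{F_1}$ lies in a proper parabolic. Properness of a subgroup does not imply containment in a proper parabolic (proper reductive subgroups can be $\widehat G$-irreducible), so no contradiction with your hypothesis on supercuspidal parameters is reached as written. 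To make this work you would need to exhibit a nontrivial torus centralizing the full image of $W_{F_1}$ (then its centralizer is a proper Levi, since $Z(\widehat G)$ is finite); the natural candidate $Z(\widehat M)^{\circ}$ centralizes $\sigma_{\pi_u}(I_{F_u})$ but is not obviously normalized, let alone centralized, by your corrected Frobenius, and $\widehat C$ itself may be semisimple. In the $GL_n$ proof the analogous step is carried by Schur's lemma for an irreducible representation, which has no direct substitute here; this is an additional piece of work your sketch would need even granting all the conjectural inputs.
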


It follows from this conjecture that, if $\pi$ is mgs at one place, then $\pi$ is tempered at all unramified places. This follows from the below chain of reasoning, which depends on the Lemmas \ref{parabolic-parameter-restriction},\ref{mixed-parameter}, and \ref{pure-parameter} immediately afterwards.

\begin{itemize}
\item[(1)] Assume that $\pi_u$ is mgs. 
\end{itemize}
Then, under Conjecture~\ref{inertial-parameters}:
\begin{itemize}
\item[(2)] The image of $\Gal( \overline{F_u} / F_u) $ under $\sigma_{\pi_u}$ is not contained 
in 
any proper parabolic subgroup of $\widehat{G}(\Ql)$.
\end{itemize}
Thus we deduce:
\begin{itemize}
\item[(3)] The image of $\Gal(\overline{F} / F)$ under the Lafforgue--Langlands parameter 
$\sigma$ of $\pi$ is not contained in any proper parabolic subgroup of $\widehat{G}(\Ql)$. 
\item[(4)] The composition of the Lafforgue--Langlands parameter $\sigma$ of $\pi$ with 
every 
representation of $\widehat{G} (\Ql)$ is pure of weight $0$. 
\item[(5)] $\pi$ is tempered at every unramified place.
\end{itemize}
Indeed the implication (2) $\implies$ (3) is Lemma~\ref{parabolic-parameter-restriction},
then (3) $\implies$ (4) is Lemma~\ref{mixed-parameter}, and Lemma~\ref{pure-parameter} 
gives (4) $\implies$ (5).

\begin{lemma}\label{parabolic-parameter-restriction} Let $\sigma: \Gal(\overline{F} / F) 
\to \widehat{G}(\Ql)$ be a representation with image contained in a proper parabolic 
subgroup. For any place $u$ of $F$, the image of the semisimplification of the restriction 
of  
$\sigma$ to  $\Gal( \overline{F_u} / F_u) $ is contained in a proper parabolic subgroup. 
\end{lemma}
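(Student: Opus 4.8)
The plan is to reduce the statement to the basic fact that if a subgroup of a reductive group lies in a parabolic $P$ with Levi quotient $L$, then its image in $L$ under the projection $P \to L$ is the semisimplification of its original (reducible) action, in a suitable sense. Concretely, suppose $\sigma(\Gal(\overline{F}|F)) \subseteq P(\mathbb{C})$ for some parabolic $P \subseteq \widehat{G}$ with unipotent radical $N$ and Levi $L = P/N$. Composing with $P \to L$ gives a representation $\overline{\sigma}: \Gal(\overline{F}|F) \to L(\mathbb{C})$ whose image is a priori smaller. The first step is to observe that $\overline{\sigma}$ is, up to conjugacy in $\widehat{G}$, the semisimplification $\sigma^{\mathrm{ss}}$ of $\sigma$: this is the standard statement that semisimplification of a representation landing in $P$ is computed by projecting to the Levi, which one can see by choosing a cocharacter $\mu$ of the center of $L$ that is strictly contracting on $N$ and taking the limit $\lim_{t\to 0}\mu(t)\sigma(-)\mu(t)^{-1}$, exactly as in the proof of Lemma~\ref{closure-characterization}. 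Since $\sigma$ in the statement may already be assumed semisimple (the hypotheses elsewhere in this section always concern semisimple Lafforgue parameters), we may in fact take $\overline\sigma$ conjugate to $\sigma$ itself; but it is cleaner to argue directly with $\overline\sigma$.

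Next I would restrict to the decomposition group at $v$. Let $W_{F_v} \subseteq \Gal(\overline{F}|F)$ (or rather a choice of such an embedding, well-defined up to conjugacy) be the local Weil/Galois group at $v$. Restricting $\overline{\sigma}$ to $W_{F_v}$ we obtain a homomorphism $W_{F_v} \to L(\mathbb{C}) \hookrightarrow \widehat{G}(\mathbb{C})$. The image lands in $L$, which is a Levi of the proper parabolic $P$, hence in particular $\overline{\sigma}|_{W_{F_v}}$ has image contained in the proper parabolic subgroup $P$. The only remaining point is to compare $\overline{\sigma}|_{W_{F_v}}$ with the \emph{semisimplification} of $\sigma|_{W_{F_v}}$: both are obtained by the same contracting-cocharacter limit procedure, one globally and then restricted, the other directly locally, and these commute because the limit is taken in the algebraic group $\widehat{G}$ and restriction of representations is compatible with it. Therefore $(\sigma|_{W_{F_v}})^{\mathrm{ss}}$ is $\widehat{G}(\mathbb{C})$-conjugate to $\overline{\sigma}|_{W_{F_v}}$, whose image lies in $P$.

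I expect the main obstacle to be purely bookkeeping rather than conceptual: making precise the sense in which ``semisimplification commutes with restriction'' for parameters valued in a general reductive group $\widehat{G}$ (as opposed to $GL_n$, where it is the elementary Jordan--H\"older statement). The cleanest route is to fix once and for all a cocharacter $\mu: \mathbb{G}_m \to Z(L)^\circ$ whose conjugation action on $\mathrm{Lie}(N)$ has all weights positive (such $\mu$ exists by \cite[Proposition 2.2.9]{CGP-pseudoreductive}, as used already in Section~\ref{s:cleanness}), note that $g \mapsto \lim_{t\to 0}\mu(t) g \mu(t)^{-1}$ defines a morphism $P \to L$ of algebraic groups realizing the Levi projection, and then observe that applying this morphism to $\sigma$ and then restricting to $W_{F_v}$ gives the same homomorphism as restricting first and then applying it. Since the Zariski closure of the image of a semisimple (completely reducible) parameter in $\widehat{G}$ is a reductive subgroup, and the limit construction sends it to a conjugate reductive subgroup contained in $L$, one gets that the result is again semisimple and is the semisimplification; for $L = \widehat G$ this recovers the tautology, and for proper $P$ it yields the claim. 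Once this compatibility is in place the lemma is immediate.
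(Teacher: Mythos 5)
Your argument correctly establishes two things: that the Levi projection $\overline{\sigma}=\pi_L\circ\sigma$ of the global parameter has image in $L\subseteq P$, and that forming $\pi_L\circ(-)$ commutes with restriction to the decomposition group, i.e.\ $\overline{\sigma}|_{W_{F_v}}=\pi_L\circ(\sigma|_{W_{F_v}})$. The gap is in the step where you conclude that $(\sigma|_{W_{F_v}})^{\mathrm{ss}}$ is $\widehat{G}$-conjugate to $\overline{\sigma}|_{W_{F_v}}$ "because the contracting-cocharacter limit commutes with restriction." Semisimplification does not commute with restriction in this strong sense: the local semisimplification is computed with a parabolic that is \emph{minimal} among those containing the (smaller) local image, together with a cocharacter adapted to that parabolic, not with the global pair $(P,\mu)$. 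Applying the global limit to $\sigma|_{W_{F_v}}$ does give $\overline{\sigma}|_{W_{F_v}}$, but this need not be semisimple, hence need not be the local semisimplification. Concretely, take $\widehat{G}=GL_3$ and $P$ the stabilizer of a line, so $L\cong GL_1\times GL_2$; if the $GL_2$-block of $\overline{\sigma}|_{W_{F_v}}$ is a non-split extension of two characters, then $(\sigma|_{W_{F_v}})^{\mathrm{ss}}$ has image in a torus while $\overline{\sigma}|_{W_{F_v}}$ has nontrivial unipotent image, and the two are not conjugate. (The same issue undercuts the remark that global semisimplicity of $\sigma$ lets you replace $\overline{\sigma}$ by $\sigma$: a restriction of a semisimple parameter need not be semisimple.) What is true is only the weaker statement that $(\sigma|_{W_{F_v}})^{\mathrm{ss}}$ and $(\overline{\sigma}|_{W_{F_v}})^{\mathrm{ss}}$ are conjugate, and quoting that does not finish the proof without a further input.

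That missing input is exactly the content of the paper's (very short) argument, which bypasses the global semisimplification entirely: (i) containment of the image in the proper parabolic $P$ is obviously inherited by $\sigma|_{W_{F_v}}$; (ii) by definition, the semisimplification of $\sigma|_{W_{F_v}}$ is obtained by choosing a parabolic $Q$ minimal among those containing its image and projecting to the Levi of $Q$; since $P$ is a proper parabolic containing the image, any such minimal $Q$ is proper, so the image of the semisimplification lies in a proper Levi, hence in a proper parabolic. If you patch your argument by replacing the false conjugacy with "$(\sigma|_{W_{F_v}})^{\mathrm{ss}}\cong(\overline{\sigma}|_{W_{F_v}})^{\mathrm{ss}}$" you still need step (ii) to see that semisimplifying something with image in $L\subseteq P$ keeps the image in a proper parabolic, so the detour through $\overline{\sigma}$ and the cocharacter limit buys nothing; the direct two-step argument above is both necessary and sufficient.
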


\begin{proof} That the property of being contained in a parabolic subgroup is stable 
under restriction is obvious. That it is preserved under semisimplification is immediate 
from the definition of semisimplification --- we take a minimal parabolic subgroup 
containing the image of the representation, if any, and then project onto the Levi of that 
parabolic. Furthermore, the semisimplification is independent of which minimal parabolic we take. Thus, as long as some proper parabolic subgroup contains the image, some proper Levi subgroup contains the 
image of the semisimplification. \end{proof} 

\begin{lemma}\label{mixed-parameter} Let $\sigma: \Gal(\overline{F} / F) \to 
\widehat{G}(\Ql)$ be a $\iota$-mixed representation whose image is not contained in a proper 
parabolic 
subgroup. Then for every representation  $V$  of 
$\widehat{G}$, the composite $V(\sigma)$ is pure of weight zero.
\end{lemma}

\begin{proof} Because $V(\sigma)$ is $\iota$-mixed, it has a canonical filtration into 
pure 
representations. The image of $\sigma$ is contained in the stabilizer of this filtration 
inside $\widehat{G}$. We will show that either this stabilizer is a proper parabolic subgroup 
of 
$\widehat{G}$ or $V(\sigma)$ is pure of weight zero.

Let $v$ be a place at which $\sigma$ is unramified and let $T$ be a torus containing the 
semisimplication $\Frob_v^{ss} $ of $\Frob_v$. Then the generalized eigenspaces of $\Frob_v$ are sums of 
eigenspaces of $T$. For $\chi$ a character of $T$, let $\omega(\chi) = \log | \iota (\chi( \Frob_v^{ss} ))|$. Then $\omega$ is a linear function on the weight lattice of $T$. Because each associated graded of the 
weight filtration is pure of increasing weight, the eigenvalues of $\Frob_v$ on each 
associated graded all have the same absolute value, so each associated graded of the weight filtration is a sum of eigenspaces of $T$ where $\omega$ takes a fixed value, and this value of $\omega$ is increasing in the filtration. Thus an element 
preserves the weight filtration if and only if it sends eigenspaces of $T$ to eigenspaces 
of $T$ where $\omega$ takes equal or lower values on their weights.

This is exactly the subgroup of $\widehat{G}$ generated by all roots where $\omega$ takes a nonnegative value on their weights. This subgroup is parabolic unless 
it contains every root, in which case $\omega$ is zero on all roots, which 
because $\widehat{G}$ is semisimple implies it is zero on all characters of $T$, so the 
representation is pure of weight zero.   \end{proof}

\begin{lemma}\label{pure-parameter} Let $\pi$ be a representation of $G(\mathbb A_F)$ 
such that $\pi^{\KK}$ is nonzero and appears inside the summand of 
$\cC_c^{\rm cusp}( 
\Bun_{G(D)}(\mathbb F_q), \Ql)$ indexed by a parameter $\sigma$ such that $V(\sigma)$ is $\iota$-pure 
of weight zero for every representation $V$ of 
$\widehat{G}$.
Then $\pi$ is tempered at all unramified places.
\end{lemma}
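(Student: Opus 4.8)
The plan is to read off the Satake parameter of $\pi_v$ at each unramified place directly from the global parameter $\sigma$ and then to invoke the temperedness criterion recalled in \S\ref{sub:Satake}. First I would use the defining compatibility of the decomposition of $\cC^{cusp}(\Bun_{G(D)}(\F_q),\Ql)$ constructed in \cite{Lafforgue:reductifs-chtoucas} with the Satake isomorphism: since $\pi^{\KK}\neq 0$ occurs, as a module over $\cC_c(\KK\backslash G(\mathbb A_F)/\KK)$, inside the summand indexed by $\sigma$, the unramified Hecke algebra at each place $v\in|X-D|$ acts on $\pi_v^{G(\ko_v)}$ through the class $\sigma(\Frob_v)$. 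In other words, for every $v\notin D$ the Satake parameter $t_{\pi_v}\in\widehat G(\Ql)/W$ of $\pi_v$ equals the semisimple conjugacy class $\sigma(\Frob_v)$ (only the conjugacy class of the semisimplification of $\sigma(\Frob_v)$ intervenes, and $\sigma$ is semisimple, so this is unambiguous). Fixing the embedding $\iota$, we view $t_{\pi_v}\in\widehat G(\mathbb C)$.

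Next, for each dominant coweight $\lambda\in\Lambda^+$ I would apply the hypothesis to $\rho=V_\lambda$, the irreducible representation of $\widehat G$ with highest weight $\lambda$. By assumption $\rho\circ\sigma$ is $\iota$-pure of weight $0$, so every eigenvalue of $\Frob_v$ acting on $V_\lambda$ has complex absolute value $|\kappa_v|^{0}=1$. But these eigenvalues are precisely the eigenvalues of $\sigma(\Frob_v)=t_{\pi_v}$ acting on $V_\lambda$; hence $t_{\pi_v}$ is a compact (unitary) element of $\widehat G(\mathbb C)$, and in particular $|\tr_\lambda(\pi_v)|=|\tr(t_{\pi_v}\mid V_\lambda)|\le\dim V_\lambda$. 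Since this holds for all $\lambda\in\Lambda^+$, the criterion of \S\ref{sub:Satake} (equality of the conditions ``$t_\pi$ compact'' and ``$|\tr_\lambda(\pi)|\le\dim V_\lambda$ for all $\lambda$'') shows that the unramified representation $\pi_v$ is tempered. As $v$ ranges over all places outside $D$ — and $\pi$ is unramified at every such place — we conclude that $\pi$ is tempered at all unramified places.

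The step I expect to be the main obstacle is the normalization bookkeeping in the first paragraph: one must be certain that the parameter $\sigma$ furnished by \cite{Lafforgue:reductifs-chtoucas} is matched to the \emph{unitary} normalization of the Satake parameter used throughout this paper, with no residual half–Tate twist, for otherwise the condition ``weight $0$'' would translate into a shifted, non-tempered bound on $t_{\pi_v}$. I would settle this by tracking the normalization through Lafforgue's compatibility with the Satake isomorphism — equivalently, by noting that the excursion operators act through the unnormalized $\widehat G$-valued Frobenius class, while the $q^{\langle\lambda,\rho\rangle}$ factor relating $a_\lambda$ to the trace function of $\IC_{\overline{\Gr_\lambda}}$ (as recalled in \S\ref{sub:Satake}) accounts exactly for the passage between the algebraic and unitary normalizations. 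A secondary, easier point to record is that ``appearing in the $\sigma$-summand'' genuinely fixes the Hecke eigenvalue at \emph{every} unramified place and not merely generically, which is immediate since the summand is cut out by the action of the full unramified Hecke algebra at each such place.
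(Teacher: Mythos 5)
Your proposal is correct and is essentially the paper's own argument, just spelled out in more detail: the paper's proof simply cites the compatibility between the unramified Hecke action on the $\sigma$-summand and the conjugacy class $\sigma(\Frob_v)$, which together with the weight-zero purity hypothesis and the temperedness criterion of \S\ref{sub:Satake} gives the claim exactly as you describe. Your extra care about the normalization of the Satake parameter is a reasonable point to check, but it is the same route.
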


\begin{proof} This follows from Proposition~\ref{p:tempered} and the compatibility between the action of 
$\cH(G(F_v),G (\mathfrak o_{v}))$ on the summand of 
$\cC^{\rm cusp}_c( \Bun_{G(D)}(\mathbb F_q), \Ql)$ indexed by $\sigma$ and the conjugacy class 
of 
$\sigma(\Frob_v)$. \end{proof}

We now sketch two, more conjectural, analogues of this argument.

The first is based on 
Arthur parameters, and explains how we expect our main theorem can be related to Arthur's 
conjectures. We can, conditionally on different conjectures, prove that all 
representations $\pi$ mgs at one place are tempered at every unramified place by a 
modified chain of deductions (1) $\implies$ (2) $\implies$ (3) $\implies$ (4') $\implies$ (5), where (4') is as 
follows.

\begin{itemize}

\item[(4')] The image of $\SL_2$ in every global Arthur parameter of $\pi$ is trivial. 
\end{itemize}

The implication (3) $\implies$ (4') depends on the conjectural existence of Arthur parameterizations compatible with Lafforgue's Langlands parameterization. Using this, the proof is similar to the proof of Lemma 
\ref{mixed-parameter}, but with a diagonal element in $\SL_2$ replacing the Frobenius 
element.  The implication (4') $\implies$ (5) is part of Arthur's conjectures on Arthur 
parameters.
It is clear that if the conjectural relationship of Lafforgue--Langlands
parameters with Arthur parameters could be proved, then this argument would be 
essentially the same as the previous argument.

The second analogue avoids mentioning parameters of any kind, except through their 
$L$-packets, and relies on conjectures only in terms of automorphic representations. Conditionally on conjectures, we can prove (1) $\implies$ (5) via a chain of implications (1) $\implies$ (2'') $\implies$ (3'') $\implies$ (4'') $\implies$ (5), where (2''), (3''), (4'') are as follows.

\begin{itemize}

\item[(2'')] All representations of $G(F_u)$ in the $L$-packet containing $\pi_u$ are 
supercuspidal. 

\item[(3'')] All automorphic representations $\pi'$ such that $\pi_v$ 
and 
$\pi'_v$ are in the same $L$-packet for every place $v$ of $F$, are cuspidal.

\item[(4'')] All automorphic representations $\pi'$ such that $\pi_v \simeq \pi'_v$ for all but 
finitely 
many places $v$ of $F$, are cuspidal.

\end{itemize}

The implication (4'') $\implies$ (5) is consequence of the conjecture~\cite{Gan-Gurevich:CAP} 
that non-tempered cuspidal automorphic representations are CAP.
The implications (2'') $\implies$ (3'') $\implies$ (4'') are trivial, and
the implication (1) $\implies$ (2'') is a variant of Conjecture 
\ref{inertial-parameters}.

Our method of proof of the main result is also purely automorphic, and in some respects 
follows this last strategy.
Indeed property (4'') is necessary to construct a spectral set $\mathcal{V}$, prescribed 
by local behavior containing
$\pi_u$, which is obtained by projection from an automorphic kernel $K(x,y)$ 
of compact support. See the related discussion in \S\ref{sub:intro-mgs}.
Properties (2'') and (3'') appear implicitly in \condbc{}, since the theory of base 
change 
and stabilization of trace formulas is related to the notion $L$-packet. 

\begin{remark}
Many of the reverse implications are known or conjectured. In the Arthur parameter 
setting, (4') implies (3), since discrete series representations should have elliptic 
Arthur parameters, meaning that the Weil group and $\SL_2$ aren't both contained in 
the 
same parabolic subgroup. The same statement is true in the Lafforgue--Langlands 
parameter setting, 
conditional on conjectural relationship with
Arthur parameters. In every setting, (5) is known to imply (4) (resp. (4'), (4'')). 
However (3) never implies (2) as cuspidality of an automorphic representation 
cannot imply local supercuspidality of its constituents. Hence it is not possible to 
prove the conjecture that (1) implies (2) as a corollary of our main result. 
\end{remark}

Finally, we include for comparison a proof of a part of a conjecture of 
Clozel~\cite[Conj.4(1)]{Clozel:park-city} 
in the function field case, obtainable unconditionally from the work of V. 
Lafforgue~\cite{Lafforgue:reductifs-chtoucas}, which 
we mentioned in Remark~\ref{rem:unramified} of the introduction. 

\begin{theorem}\label{temperedOneplace} Let $G$ be a split semisimple group over a 
function field $F$ and 
$\pi$ a cuspidal automorphic representation of $G(\mathbb A_F)$. If $\pi$ is tempered at one 
unramified place, then $\pi$ is tempered at all unramified places. \end{theorem}

\begin{proof} 
Choose some compact open subgroup $\KK$ which fixes a nonzero vector $f\in \pi^{\KK}$, where 
$D$ is an 
effective divisor containing  the ramified places of $\pi$. Viewing $\pi$ as a 
subrepresentation of 
$L^2( G(F) \backslash G(\mathbb A_F))$, this vector defines a locally constant function $f$ on 
$G(F) \backslash G(\mathbb 
A_F) / \KK$. Because $\pi$ is cupsidal, $f$ is compactly supported. 
Fix an isomorphism $\iota\colon \overline{\mathbb Q}_\ell \cong \mathbb C$.
Lafforgue's 
theorem~\cite{Lafforgue:reductifs-chtoucas} gives a decomposition of 
 $\cC^{\rm cusp}_c( \Bun_{G(D)}(\mathbb F_q), \Ql)$ indexed by continuous semisimple 
representations $\sigma\colon   \Gal(\overline{F} / F) \to \widehat{G} (\Ql)$. Because $f$ is nonzero, there must exist a parameter $\sigma$ such that the projection of $f$ onto the module indexed by $\sigma$ is nonzero. 

Similarly, we can choose a parameter $\sigma'$ such that the projection of the complex 
conjugate $\overline{f}$ to the space indexed by $\sigma'$ is nonzero.

Now for $v$ an unramified place of $\pi$, and $V$ any representation of $\widehat{G}$, because 
$f$ is 
a $G(\mathfrak o_{v})$-invariant vector in the representation space of $\pi$, it is an 
eigenfunction of the corresponding $V$-Hecke operator, with eigenvalue 
 $\tr (t_{\pi_v}, V)$, where $t_{\pi_v}$ is the Satake parameter of $\pi_v$.
By \cite{Lafforgue:reductifs-chtoucas}, this eigenvalue coincides with 
$\tr(\operatorname{Frob}_v, V(\sigma))$. So we must have
\[  
\iota(\tr(\operatorname{Frob}_v, V(\sigma)))  = \tr (t_{\pi_v}, V).
 \]
 Similarly, we have
$ \iota( \tr(\operatorname{Frob}_v, V(\sigma'))) = \overline{\tr (t_{\pi_v}, V)}$. More strongly, the 
characteristic polynomials of $\Frob_v$ acting on $V(\sigma)$ is sent by $\iota$ to the 
characteristic polynomial of $t_{\pi_v}$ acting on $V$, while the characteristic polynomial 
of $\Frob_v$ acting on $V(\sigma')$ is sent by $\iota$ to the complex 
conjugate polynomial. Thus $V(\sigma) \oplus V(\sigma')$ is $\iota$-real in the sense that its 
characteristic polynomial of Frobenius has real coefficients (at every unramified place, 
under $\iota$).

Now assume $\pi_v$ is tempered for the given unramified place $v$. Then the Satake 
paramater 
$t_{\pi_v}$ is unitary by Proposition~\ref{p:tempered}, so by this previous identity of 
characteristic 
polynomials, all the 
eigenvalues 
of $\operatorname{Frob}_v$ on $V(\sigma)$ are sent by $\iota$ to complex numbers of norm $1$.  
The same is true for their complex conjugates, the images under $\iota$ of the eigenvalues 
of $\operatorname{Frob}_v$ on $V( \sigma')$. We can now apply \cite[Thm.4.1]{KatzNote} to 
$V(\sigma) \oplus V(\sigma')$ --- because it is $\iota$-real and its eigenvalues of Frobenius at one 
place are complex numbers of norm $1$, it follows that its eigenvalues of Frobenius at 
every place are complex numbers of norm $1$. It follows at every other unramifed 
place $w$ that the eigenvalues of the Satake parameter $t_{\pi_w}$ on $V$ have norm $1$. 
\end{proof}

\subsection*{Acknowledgements}
We thank Jean-Pierre Labesse, Vincent Lafforgue, Bau-Ch\^au Ng\^o, and Sug Woo Shin for 
helpful discussions, and Paul Nelson for a careful reading.  We also thank the anonymous 
referees for their many helpful comments.
This article begun while both the authors were in residence at the MSRI, supported by the 
NSF under Grant No. DMS-1440140. The authors received funding from the European Research Council under the European Community's Seventh Framework Programme (FP7/2007-2013) / ERC Grant agreement no. 290766 (AAMOT) to visit IHES.  W.S. was supported by Dr. Max R\"ossler, the Walter Haefner Foundation and the ETH 
Z\"urich Foundation. N.T. was supported by the NSF-CAREER under agreement No. 
DMS-1454893, and by a Simons Fellowship under agreement 500294.

\def\cprime{$'$}
\providecommand{\bysame}{\leavevmode\hbox to3em{\hrulefill}\thinspace}
\providecommand{\MR}{\relax\ifhmode\unskip\space\fi MR }
\providecommand{\MRhref}[2]{%
  \href{http://www.ams.org/mathscinet-getitem?mr=#1}{#2}
}
\providecommand{\href}[2]{#2}

\newgeometry{left=1cm,right=1cm,tmargin=2.5cm, marginpar=1cm}
{\small  \printindex}
\end{document}